\documentclass[a4paper,oneside, 11pt]{amsart}
\usepackage{amssymb}
\usepackage{amsmath}
\usepackage{amsfonts}
\usepackage{amsthm}
\usepackage{subfigure}
\usepackage{epsfig}
\usepackage[all]{xy}
\usepackage[utf8]{inputenc}
\usepackage{mathabx}
\usepackage{graphicx}
\usepackage{xcolor}
\usepackage[T1]{fontenc}

\addtolength\topmargin{-.5in} 
\addtolength\textheight{1.in} 
\addtolength\oddsidemargin{-.1\textwidth} 
\addtolength\textwidth{.15\textwidth} 


\tolerance=1414 
\setlength\emergencystretch{1.5em} 
\hbadness=1414 
\setlength\hfuzz{4pt} 
\widowpenalty=10000 
\raggedbottom 
\setlength\vfuzz{3pt} 

\usepackage{imakeidx}
\usepackage[refpage]{nomencl}
\makeindex[columns=3]
\makenomenclature
\usepackage[pageanchor]{hyperref}

\DeclareFontFamily{T1}{calligra}{}
\DeclareFontShape{T1}{calligra}{m}{n}{<->s*[1.5]callig15}{}
\DeclareMathAlphabet\mathcalligra   {T1}{calligra} {m} {n}
\DeclareMathAlphabet\mathzapf       {T1}{pzc} {mb} {it}
\DeclareMathAlphabet\mathchorus     {T1}{qzc} {m} {n}
\DeclareMathAlphabet\mathrsfso      {U}{rsfso}{m}{n}
\usepackage{ulem}
\usepackage{comment}
\usepackage{stmaryrd}
\usepackage{enumitem}
\usepackage{dynkin-diagrams}
\usepackage{hyperref} 
\hypersetup{colorlinks=true,linkcolor=blue,citecolor=red,urlcolor=blue}

\newtheorem{theorem}{Theorem}[section] 
\newtheorem{lemma}[theorem]{Lemma}     
\newtheorem{corollary}[theorem]{Corollary}
\newtheorem{proposition}[theorem]{Proposition}

\theoremstyle{definition}
\newtheorem{example}[theorem]{Example}
\newtheorem{remark}[theorem]{Remark}
\newtheorem{notation}[theorem]{Notation}
\newtheorem{definition}[theorem]{Definition}

\title[Quotients of the Bruhat-Tits building by $\{\p\}$-arithmetic subgroups]{Arithmetic subgroups of Chevalley group schemes over function fields I: quotients of the Bruhat-Tits building by $\{\p\}$-arithmetic subgroups}


\newcommand{\Chi}{X}
\newcommand{\stab}{\operatorname{Stab}}
\newcommand{\fix}{\operatorname{Fix}}
\newcommand{\cl}{\operatorname{cl}}
\newcommand{\Sec}{\operatorname{Sec}}
\newcommand{\sSec}{\operatorname{sp-Sec}}

\newcommand{\spec}{\operatorname{Spec}}

\newcommand{\p}{P}

\setcounter{tocdepth}{1}

\begin{document}

\maketitle

\author{\textbf{Claudio Bravo}
\footnote{Universidad de Chile, Facultad de Ciencias, Casilla 653, Santiago, Chile. Email: \email{claudio.bravo.c@ug.uchile.cl}}
\footnote{Centre de Mathématiques Laurent Schwartz, École Polytechnique, Institut Polytechnique de Paris, 91128 Palaiseau Cedex, France. Email: \email{claudio.bravo-castillo@polytechnique.edu}}
}
\author{\textbf{Benoit Loisel}
\footnote{Université de Poitiers (Laboratoire de Mathématiques et Applications, UMR7348), Poitiers, France. Email: \email{benoit.loisel@math.univ-poitiers.fr}}
\footnote{ENS de Lyon, (Unité de Mathématiques Pures et Appliquées, UMR5669), Lyon, France}
}

\vspace{0.5cm}

\begin{abstract}
Let $\mathbf{G}$ be a reductive Chevalley group scheme (defined over $\mathbb{Z}$). Let $\mathcal{C}$ be a smooth, projective, geometrically integral curve over a field $\mathbb{F}$. Let $\p$ be a closed point on $\mathcal{C}$. Let $A$ be the ring of functions that are regular outside $\lbrace \p \rbrace$. The fraction field $k$ of $A$ has a discrete valuation $\nu=\nu_{\p}: k^{\times} \rightarrow \mathbb{Z}$ associated to $\p$.
In this work, we study the action of the group $ \textbf{G}(A)$ of $A$-points of $\mathbf{G}$ on the Bruhat-Tits building $\mathcal{X}=\mathcal{X}(\textbf{G},k,\nu_\p)$ in order to describe the structure of the orbit space $ \textbf{G}(A)\backslash \mathcal{X}$.
We obtain that this orbit space is the ``gluing'' of a closed connected CW-complex with some sector chambers.
The latter are parametrized by a set depending on the Picard group of $\mathcal{C} \smallsetminus \{\p\}$ and on the rank of $\mathbf{G}$.
Moreover, we observe that any rational sector face whose tip is a special vertex contains a subsector face that embeds into this orbit space. \\

\textbf{MSC Codes:} 20G30, 11R58, 20E42 (primary) 14H05,  20H25 (secondary)

\textbf{Keywords:} Arithmetic subgroups, Chevalley groups, Bruhat-Tits buildings, Global function fields.
\end{abstract}

\tableofcontents

\section{Introduction}\label{section intro}

In Lie theory, symmetric spaces are useful to study arithmetic groups via their action.
In order to study reductive groups over local or global fields, Tits introduced certain complexes, called buildings, that are analogous to the symmetric spaces associated to reductive groups over $\mathbb{R}$.
More precisely, to any Henselian discretely valued field $K$ and any reductive $K$-group $\mathbf{G}$, Bruhat and Tits associated a polysimplicial complex $\mathcal{X} = \mathcal{X}(\mathbf{G},K)$, called the Bruhat-Tits building of $\mathbf{G}$ over $K$ (c.f.~\cite{BT} and~\cite{BT2}). When $\mathbf{G}$ is a reductive group of semisimple rank $1$, for instance when $\mathbf{G}=\mathrm{SL}_2$, the associated building is actually a semi-homogeneous tree.
Buildings allow us to study subgroups of the group of $K$-points of a reductive group scheme.

Let $\mathcal{C}$ be a smooth, projective, geometrically integral curve over a field $\mathbb{F}$. The function field $k$ of $\mathcal{C}$ is a separable extension of $\mathbb{F}(x)$, where $x \in k$ is transcendental over $\mathbb{F}$.
Hence, it follows from \cite[I.1.5, p.6]{Stichtenoth} that the closure $\tilde{\mathbb{F}}$ of $\mathbb{F}$ in $k$ is a finite extension of $\mathbb{F}$.
In all that follows, without loss of generality, \emph{we assume that $\tilde{\mathbb{F}}=\mathbb{F}$}, i.e.~\emph{$\mathbb{F}$ is algebraically closed in $k$}.
Let $\p$ be a closed point on $\mathcal{C}$, and let $A$ be the ring of functions of $\mathcal{C}$ that are regular outside $\lbrace \p \rbrace$.
Let $\nu=\nu_{\p}: k^{\times} \rightarrow \mathbb{Z}$ be the discrete valuation defined from $\p$, and let us denote by $K=k_{\p}$ the completion of $k$ with respect to the valuation $\nu$.

In~\cite{S}, Serre describes the structure of the orbit space for the action of $\mathrm{SL}_2(A)$ on its Bruhat-Tits tree $\mathcal{X}(\mathrm{SL}_2,K)$.

\begin{theorem}\cite[Ch. II, \S 2, Th. 9]{S}\label{serre graph}
The quotient graph $\mathrm{SL}_2(A)\backslash \mathcal{X}(\mathrm{SL}_2,K)$ is the union of a connected graph of finite diameter, and a family of rays $\lbrace r(\sigma) \rbrace$ called cusps, such that:
\begin{itemize}
\item the vertex set of $\mathcal{Z} \cap r(\sigma)$ consists only in a single element and the edge set of $\mathcal{Z} \cap r(\sigma)$ is empty;
\item one has $r(\sigma) \cap r(\sigma')=\emptyset$ if $\sigma \neq \sigma'$.
\end{itemize}
Such a family of rays $r(\sigma)$ can be indexed by elements $\sigma$ in $\mathrm{Pic}(A)$.
\end{theorem}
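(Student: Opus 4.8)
The plan is to translate the whole problem into the language of rank-$2$ vector bundles on $\mathcal{C}$. Write $O_K$ for the valuation ring of $K=k_{\p}$, with uniformizer $\pi$ and residue field $\kappa(\p)$, and let $g$ be the genus of $\mathcal{C}$. The vertices of $\mathcal{X}=\mathcal{X}(\mathrm{SL}_2,K)$ are the homothety classes of $O_K$-lattices in $K^2$, adjacent when they admit representatives $L'\subsetneq L$ with $L/L'$ of length $1$; the neighbours of $[L]$ are then indexed by the lines of $L/\pi L\cong\kappa(\p)^2$. To a lattice $L$ I attach the rank-$2$ bundle $E_L$ obtained by gluing the trivial bundle $\mathcal{O}_{\mathcal{C}\smallsetminus\{\p\}}^{\,2}$ on $\mathcal{C}\smallsetminus\{\p\}=\spec A$ with $L$ over the completed local ring at $\p$; it is a rank-$2$ bundle that is trivial on $\spec A$, and every such bundle arises this way. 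The first step is to set up the dictionary: $E_L\cong E_{L'}$ iff $L$ and $L'$ lie in the same $\mathrm{GL}_2(A)$-orbit; the homothety $L\mapsto\pi L$ corresponds to the twist by $\mathcal{O}_{\mathcal{C}}(\p)$; adjacency corresponds to an elementary modification at $\p$; the tree distance of two vertices is read off from the elementary divisors of one representative lattice relative to another; and $[\det E_L]=0$ in $\mathrm{Pic}(A)$, since $E_L|_{\spec A}$ is free. Because the stabilizer in $\mathrm{GL}_2(A)$ of a cuspidal configuration surjects onto $A^{\times}=\mathbb{F}^{\times}$ through the determinant, $\mathrm{SL}_2(A)$ and $\mathrm{GL}_2(A)$ have the same orbits on the cuspidal part, so I would compute with $\mathrm{GL}_2(A)$: its orbits on vertices are the isomorphism classes of rank-$2$ bundles $E$ on $\mathcal{C}$ with $[\det E]=0$ in $\mathrm{Pic}(A)$, taken modulo twist by $\mathcal{O}_{\mathcal{C}}(\p)$.

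Next I would introduce the instability $\mathrm{inst}(E)=\max_{\mathcal{L}}\bigl(\deg\mathcal{L}-\deg(E/\mathcal{L})\bigr)$, the maximum over line sub-bundles $\mathcal{L}\subseteq E$, which descends to orbits, and use two standard facts: if $\mathrm{inst}(E)>2g-2$ then the maximal $\mathcal{L}$ is unique (the Harder--Narasimhan sub-bundle $\mathcal{L}_E$), and $H^1(\mathcal{C},\mathcal{L}_E\otimes(E/\mathcal{L}_E)^{-1})=0$, so $E\cong\mathcal{L}_E\oplus(E/\mathcal{L}_E)$. Fix a threshold $c>2g-2+\deg\p$. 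In the tree, $\{\mathrm{inst}>c\}$ is a disjoint union of subtrees: at a vertex $[\mathcal{L}\oplus\mathcal{M}]$ with $\deg\mathcal{L}\gg\deg\mathcal{M}$, exactly one neighbour (the modification along the Harder--Narasimhan line of the fibre) has instability smaller by $\deg\p$, and every other neighbour has instability larger by $\deg\p$, so each such subtree detaches along a single ``root'' edge joining it to $\{\mathrm{inst}\le c\}$. Then I would pass to the quotient: the unipotent automorphisms $\mathrm{Hom}(\mathcal{M},\mathcal{L})=H^0(\mathcal{C},\mathcal{L}\otimes\mathcal{M}^{-1})$ of $\mathcal{L}\oplus\mathcal{M}$ lie in $\mathrm{SL}_2(A)$ and fix the vertex, and since $H^0(\mathcal{C},\mathcal{L}\otimes\mathcal{M}^{-1})\to(\mathcal{L}\otimes\mathcal{M}^{-1})\otimes\kappa(\p)$ is surjective when $\mathrm{inst}$ is large, they act transitively on the neighbours other than the Harder--Narasimhan one. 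Hence in $\mathrm{SL}_2(A)\backslash\mathcal{X}$ such a vertex has exactly two neighbours, and the image of $\{\mathrm{inst}>c\}$ together with the root edges is a disjoint family of rays $r(\sigma)$, each meeting the remainder of the quotient in the single vertex carrying its root edge. This yields both intersection properties: distinct rays are disjoint because $\mathrm{inst}$ strictly increases along each of them and, by uniqueness of $\mathcal{L}_E$, a vertex of $\mathcal{Z}$ carries at most one root edge.

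To pin down the index set, observe that a ray corresponds to a $\mathrm{GL}_2(A)$-orbit of such subtrees, equivalently to an isomorphism class, modulo twist by $\mathcal{O}_{\mathcal{C}}(\p)$, of a root bundle $\mathcal{L}\oplus\mathcal{M}$ whose instability lies in a fixed window of length $\deg\p$ and which satisfies $[\mathcal{L}]+[\mathcal{M}]=0$ in $\mathrm{Pic}(A)$. Writing $[\mathcal{M}]=-[\mathcal{L}]+n[\p]$, the window condition determines $n$ once $[\mathcal{L}]$ is fixed, so the datum is just $[\mathcal{L}]\in\mathrm{Pic}(\mathcal{C})$, and the twist identifies $[\mathcal{L}]$ with $[\mathcal{L}]+[\p]$; hence the rays are parametrized by $\mathrm{Pic}(\mathcal{C})/\mathbb{Z}[\p]=\mathrm{Pic}(A)$, as asserted. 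It remains to treat $\mathcal{Z}$, the complement of the open rays (the image of $\{\mathrm{inst}\le c\}$ and the edges among its vertices). Connectedness is immediate, since $\mathcal{X}$ is connected and each cuspidal subtree detaches along one edge. The substantive point is that $\mathcal{Z}$ has finite diameter; here I would invoke reduction theory, bounding the tree distance between any two vertices of bounded instability uniformly in terms of $g$ and $\deg\p$ --- one descends along the Harder--Narasimhan direction to a semistable bundle, and then a Riemann--Roch / global-generation estimate on semistable rank-$2$ bundles that are trivial on $\spec A$ controls their position in the tree. When $\mathbb{F}=\mathbb{F}_q$ the same estimate shows that there are only finitely many such bundles, so $\mathcal{Z}$ is then a finite graph; for general $\mathbb{F}$ it is merely of finite diameter.

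The step I expect to be the main obstacle is the finite diameter of $\mathcal{Z}$: the cuspidal part is soft, governed entirely by the uniqueness of the Harder--Narasimhan sub-bundle and the surjectivity of an evaluation map, whereas bounding the diameter of $\mathcal{Z}$ genuinely requires reduction theory and is where the genus of $\mathcal{C}$ enters the picture. A secondary delicate point is the exact bookkeeping identifying the cusps with $\mathrm{Pic}(A)=\mathrm{Pic}(\mathcal{C})/\mathbb{Z}[\p]$ --- in particular the interplay between twisting by $\mathcal{O}_{\mathcal{C}}(\p)$, shifting along a ray, the determinant constraint, and the comparison of $\mathrm{SL}_2(A)$ with $\mathrm{GL}_2(A)$.
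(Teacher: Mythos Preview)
The paper does not prove this theorem; it is quoted from Serre's book \cite[Ch.~II, \S 2, Th.~9]{S} as motivation, and the paper's contribution is to generalise it to arbitrary split Chevalley groups. So there is no ``paper's own proof'' to compare against directly. That said, the paper's general machinery, specialised to $\mathrm{SL}_2$, does recover the statement, and its route is genuinely different from yours.

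Your approach is essentially Serre's original one: translate vertices of the tree into rank-$2$ bundles on $\mathcal{C}$ trivialised on $\spec A$, use the Harder--Narasimhan filtration to detect the cuspidal part, and identify the cusps with $\mathrm{Pic}(A)$ via the destabilising line sub-bundle. The paper explicitly avoids the vector-bundle language (see the end of \S\ref{section intro}): instead it works directly with the root-group datum, parametrises the unipotent subgroups $\mathbf{U}_\alpha$ by $\mathbb{G}_a$, bounds the integral points $M_\alpha(h)$ between fractional $A$-ideals (Propositions~\ref{prop ideal contained} and~\ref{prop fractional ideals}), and then applies Riemann--Roch to these ideals rather than to $\mathrm{Hom}(\mathcal{M},\mathcal{L})$. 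The foldings that you obtain from unipotent automorphisms of $\mathcal{L}\oplus\mathcal{M}$ correspond to the paper's Proposition~\ref{prop starAction}; your uniqueness of the Harder--Narasimhan sub-bundle plays the role of Corollary~\ref{corollary unique fixed face}; and your $\mathrm{Pic}(A)$ count is recovered via \'etale cohomology in Corollary~\ref{coro cusps and pic}. What your approach buys is geometric transparency in rank one; what the paper's approach buys is that it generalises uniformly to all Chevalley groups, where there is no vector-bundle dictionary.

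One caveat: the paper's general Theorem~\ref{theorem quotient and cuspidal rational sector chambers} only asserts that $\mathcal{Z}$ is closed and connected, not that it has finite diameter. The finite-diameter claim (which you correctly flag as the hard step) is specific to Serre's original argument and is not reproved by the paper's methods; the paper simply inherits it from the citation.
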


Theorem \ref{serre graph} has some interesting consequences on the involved groups.
For instance, by using Theorem \ref{serre graph} and the Bass-Serre theory (c.f.~\cite[Ch. I, \S 5]{S}), Serre describes the structure of groups of the form $\mathrm{SL}_2(A)$ as an amalgamated product of simpler subgroups (c.f.~\cite[Ch. II, \S 2, Th. 10]{S}).
Then, Serre applies this description on groups in order to study its homology and cohomology groups with coefficient in certain modules (c.f.~\cite[Ch. II, \S 2.8]{S}). 
For instance, Serre obtains that $H_i(\mathrm{SL}_2(A), \mathbb{Q})=0$, for all $i >1$ and $H_1(\mathrm{SL}_2(A), \mathbb{Q})$ is a finite dimensional $\mathbb{Q}$-vector space.
Another interesting application of Theorem~\ref{serre graph} is the description of the conjugacy classes $\mathfrak{U}$ of maximal unipotent subgroups of certain subgroups $G$ of $\mathrm{SL}_2(A)$, as principal congruence subgroups.
In the same way, Serre describes the relative homology groups of $G$ modulo $\mathfrak{U}$ in terms of its Euler-Poincar\'e characteristic (c.f.~\cite[Ch. II, \S 2.9]{S}).


In a building of arbitrary rank, one can consider that a sector chamber (c.f.~\S \ref{intro vector faces}) has a role analogous as a ray in a rank $1$ building.
When $\mathcal{C}=\mathbb{P}^1_{\mathbb{F}}$, $\p=\infty$ and $\mathbf{G}$ is a split simply connected semisimple $k$-group, Soul\'e
describes, in \cite{So}, the topology and combinatorics of the corresponding orbit space as follows:

\begin{theorem}\cite[\S 1, Th. 1]{So}\label{soule quotient}
Let $\mathbf{G}$ be a split simply connected semisimple $k$-group.
Then $\mathbf{G}(\mathbb{F}[t])\backslash \mathcal{X}(\mathbf{G},\mathbb{F}(\!(t^{-1})\!))$ is isomorphic to a sector chamber of $\mathcal{X}(\mathbf{G},\mathbb{F}(\!(t^{-1})\!))$.
\end{theorem}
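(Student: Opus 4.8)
The plan is to exhibit $\mathcal{Q}$ as a strict fundamental domain for $\Gamma := \mathbf{G}(\mathbb{F}[t])$ acting on $\mathcal{X} := \mathcal{X}(\mathbf{G},\mathbb{F}(\!(t^{-1})\!))$. Fix a split maximal torus $\mathbf{T}$ and a Borel $\mathbf{B} = \mathbf{T}\mathbf{U}$ of $\mathbf{G}$, with opposite unipotent $\mathbf{U}^-$; write $\Phi\supseteq\Delta$ for the roots and simple roots, $W$ for the finite Weyl group, $K = \mathbb{F}(\!(t^{-1})\!)$, $\mathcal{O} = \mathbb{F}[[t^{-1}]]$, $\mathfrak{m} = t^{-1}\mathcal{O}$, and $\mathbf{K} = \mathbf{G}(\mathcal{O})$, the hyperspecial maximal parahoric fixing a vertex $x_0$. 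Let $\mathbb{A}$ be the apartment attached to $\mathbf{T}$, with $x_0$ at the origin, and let $\mathcal{Q}\subseteq\mathbb{A}$ be the closed sector chamber with tip $x_0$ normalised so that $\lambda(t)\cdot x_0\in\mathcal{Q}$ for every dominant cocharacter $\lambda$; its chamber at infinity $\mathfrak{c}_\infty$ is the one whose stabiliser in $\mathbf{G}(K)$ is $\mathbf{B}(K)$. Since $\mathcal{Q}$ is a subcomplex of $\mathcal{X}$, it suffices to prove: (a) $\Gamma\cdot\mathcal{Q} = \mathcal{X}$; (b) the induced map $\mathcal{Q}\to\Gamma\backslash\mathcal{X}$ is injective; and (c) this bijection is a polysimplicial isomorphism. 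Part (c) only uses that $\mathbf{G}(K)$ acts type-preservingly on $\mathcal{X}$ (so no face of $\mathcal{Q}$ gets folded onto itself), together with (b) and the description of face stabilisers below, which make the quotient map a local homeomorphism on $\mathcal{Q}$.

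For (a) I would start from two elementary facts: the Iwasawa decomposition $\mathbf{G}(K) = \mathbf{U}(K)\mathbf{T}(K)\mathbf{K}$, and the direct sum $K = \mathbb{F}[t]\oplus\mathfrak{m}$ (so $\mathbb{F}[t] + \mathcal{O} = K$ and $\mathbb{F}[t]\cap\mathcal{O} = \mathbb{F}$). Feeding the latter through the root-group parametrisations and inducting along the descending central series yields $\mathbf{U}(K) = \mathbf{U}(\mathbb{F}[t])\cdot\mathbf{U}(\mathcal{O})$, and likewise for $\mathbf{U}^-$ and for any unipotent subgroup of a closed set of roots. Since $W$ has representatives in $\mathbf{G}(\mathbb{Z})\subseteq\mathbf{G}(\mathbb{F}) = \Gamma\cap\mathbf{K}$, these facts already show that every vertex of $\mathcal{X}$ of type $x_0$ is $\Gamma$-equivalent to a vertex lying in the $\mathbf{K}$-orbit of some $\lambda(t)\cdot x_0\in\mathcal{Q}$, $\lambda$ dominant. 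The genuine work is to push such a vertex — and, more generally, an arbitrary face of $\mathcal{X}$ — all the way into $\mathcal{Q}$. For this I would use that $\Gamma$ is generated by the root subgroups $U_\alpha(\mathbb{F}[t])$, $\alpha\in\Phi$ (because $\mathbb{F}[t]$ is Euclidean; alternatively work with the subgroup they generate and recover the equality afterwards), introduce a height function $h\colon\mathcal{X}\to\mathbb{Z}_{\ge0}$ adapted to $\mathfrak{c}_\infty$ with $h^{-1}(0) = \mathcal{Q}$, and show that whenever $h(p)>0$ some generator $U_\alpha(f)$ strictly decreases $h$. The decreasing step reduces to an explicit computation inside the rank-one subgroup $\mathbf{G}_\alpha$ associated with $\alpha$, where it is governed precisely by Serre's analysis of $\mathrm{SL}_2(\mathbb{F}[t])$ on its tree (Theorem~\ref{serre graph}); the combinatorial core is to organise the moves in the various simple-root directions so that the reduction terminates and, importantly, is confluent.

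For (b), the transparent case is $\gamma\in\Gamma\cap\mathbf{B}(K)$, which equals $\mathbf{B}(\mathbb{F}[t]) = \mathbf{T}(\mathbb{F})\ltimes\mathbf{U}(\mathbb{F}[t])$ since $\mathbb{F}[t]^\times = \mathbb{F}^\times$: the retraction $\rho := \rho_{\mathfrak{c}_\infty,\mathbb{A}}\colon\mathcal{X}\to\mathbb{A}$ restricts to the identity on $\mathbb{A}\supseteq\mathcal{Q}$ and is invariant under the unipotent radical $\mathbf{U}(K)$ of $\mathbf{B}(K)$ and under $\mathbf{T}(\mathcal{O})$, so for $q,q'\in\mathcal{Q}$ with $\gamma q = q'$ one gets $q' = \rho(\gamma q) = \rho(q) = q$. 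The general case I would reduce to this one through the confluence of step (a): every $\Gamma$-orbit meets $\mathcal{Q}$ in exactly one point because the reduction algorithm outputs a representative independent of the choices made — this is the incarnation over $\mathbb{P}^1$ of the fact that the relevant rational parabolic subgroups of $\mathbf{G}$ fall into a single $\Gamma$-conjugacy class ("one cusp"). Together with the computation — again via $\mathbf{G}_\alpha$ and Theorem~\ref{serre graph} — that the stabiliser in $\Gamma$ of a face $F\subseteq\mathcal{Q}$ is $\mathbf{P}_F(\mathbb{F})\ltimes\mathbf{U}_{\mathbf{P}_F}(\mathbb{F}[t])$ for the standard parabolic $\mathbf{P}_F$ determined by $F$, a group fixing $F$ and no other face of $\mathcal{Q}$, this supplies the local-homeomorphism statement needed for (c) (and, as a by-product, the generation of $\Gamma$ by root subgroups, if one started from the subgroup they generate).

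The main obstacle is step (a) together with its uniqueness counterpart in (b): upgrading the soft output of the Iwasawa decomposition — which only places vertices in the correct $\mathbf{K}$-orbit — to the sharp statement that the whole building folds onto the single sector $\mathcal{Q}$ with no residual identifications. Concretely, the delicate point is controlling, inside the unipotent radical, the interference between reduction moves belonging to different simple roots; the reason this works out so cleanly over $\mathbb{P}^1$ — giving exactly a sector chamber rather than a thickened region or several cusps — is the arithmetic of $\mathbb{F}[t]$: it is a principal ideal domain, its units are just $\mathbb{F}^\times$, and $K$ splits as $\mathbb{F}[t]\oplus\mathfrak{m}$.
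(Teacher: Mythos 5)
This theorem is not proved in the paper: it is quoted from Soul\'e \cite[\S 1, Th.~1]{So}, and the body of the paper (\S\ref{section structure of X}--\S\ref{sec number cusp faces}) develops machinery that recovers it as the special case $\mathcal{C}=\mathbb{P}^1_{\mathbb{F}}$, $A=\mathbb{F}[t]$, $\operatorname{Pic}(A)=0$. Your skeleton (sector as strict fundamental domain; Iwasawa decomposition plus $K=\mathbb{F}[t]\oplus\mathfrak{m}$ and the resulting factorization $\mathbf{U}(K)=\mathbf{U}(\mathbb{F}[t])\cdot\mathbf{U}(\mathcal{O})$ for surjectivity; rank-one foldings governed by the $\mathrm{SL}_2$ picture; the retraction $\rho_{\mathfrak{c}_\infty,\mathbb{A}}$ for the case $\gamma\in\mathbf{B}(\mathbb{F}[t])$) is exactly the strategy of Soul\'e and of this paper, and the Borel-case retraction argument you give is correct and is a clean substitute for the dimension-counting invariant used in Proposition~\ref{prop racional ArXiv}.

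The genuine gap is the reduction of injectivity to the Borel case. You derive it from the ``confluence'' of your reduction algorithm, but confluence here \emph{is} the statement that each $\Gamma$-orbit meets $\mathcal{Q}$ at most once; asserting it proves nothing, and the one-cusp fact you invoke ($\mathbf{G}(A)\backslash\mathbf{G}(k)/\mathbf{B}(k)$ a point, i.e.\ Corollary~\ref{coro cusps and pic} with $\operatorname{Pic}(\mathbb{F}[t])=0$) is strictly weaker: it says all chambers at infinity are $\Gamma$-conjugate, whereas what you need is that any $\gamma\in\Gamma$ carrying a point of $\mathcal{Q}$ to a point of $\mathcal{Q}$ must stabilize $\partial_\infty\mathcal{Q}$ --- only then does $\gamma\in\Gamma\cap\mathbf{B}(K)=\mathbf{B}(\mathbb{F}[t])$ and your retraction argument applies. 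This implication is the hard core of the proof. In this paper it is Proposition~\ref{proposition facets in the same orbit ArXiv}, proved by showing first that the stabilizer of a point $v\in\mathcal{Q}$ fixes a unique face of prescribed type in the local spherical building $\mathcal{X}_k(v)$, namely $\mathfrak{C}(v,D)$ (Corollary~\ref{corollary unique fixed face}, which rests on the folding statement of Proposition~\ref{prop starAction}), and then propagating this along the increasing chain $\Omega_n(v,D)$ of enclosures of germs whose union has the same enclosure, hence the same visual boundary, as the whole subsector; Soul\'e's original route is an equivalent outward induction on stars starting from $\operatorname{Stab}_\Gamma(x_0)=\mathbf{G}(\mathbb{F})$. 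Either way, a substantive argument is required where you have written ``confluent.''

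Two smaller inaccuracies in the part you do spell out. First, the stabilizer of a face $F\subset\mathcal{Q}$ is not $\mathbf{P}_F(\mathbb{F})\ltimes\mathbf{U}_{\mathbf{P}_F}(\mathbb{F}[t])$: for the vertex $\lambda(t)\cdot x_0$ it is $\lambda(t)\mathbf{G}(\mathcal{O})\lambda(t)^{-1}\cap\Gamma$, whose positive root coordinates are polynomials truncated in degree by $\langle\lambda,\alpha\rangle$ (compare Equality~\eqref{equality Pw}), not all of $\mathbb{F}[t]$. Second, this stabilizer fixes the entire subsector $Q(\lambda(t)\cdot x_0,D_0)$ pointwise (this is Proposition~\ref{prop igual stab}), so it certainly fixes other faces of $\mathcal{Q}$; the statement you actually need for step (c) is that distinct faces of $\operatorname{star}(F)\cap\mathcal{Q}$ lie in distinct $\Gamma$-orbits, which again is part of the uniqueness you have not established.
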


In the same article, Soul\'e describes $\mathbf{G}(\mathbb{F}[t])$ as an amalgamated sum of certain well-known subgroups of $\mathbf{G}(\mathbb{F}[t])$ (c.f.~\cite[\S 2, Th. 3]{So}).
Moreover, analyzing the preceding action, Soul\'e obtains some results on the homology groups $H_{\bullet}(\mathbf{G}(\mathbb{F}[t]), \ell)$, for some fields $\ell$ of suitable characteristic.
More specifically, Soul\'e obtains an homotopy-invariance property of the homology group of $\mathbf{G}(\mathbb{F})$ (c.f.~\cite[\S 3, Th. 5]{So}).
In \cite{Margaux}, Margaux extends this work of Soul\'e to the case of an isotrivial simply connected semisimple $k$-group, that is a group $\mathbf{G}$ splitting over an extension of the form $\ell = \mathbb{E}k$, where $\mathbb{E}/\mathbb{F}$ is a finite extension.

Let $\mathcal{S}$ be a finite set of closed places of $\mathcal{C}$.
Let us denote by $\mathcal{O}_{\mathcal{S}}$ the ring of functions that are regular outside $\mathcal{S}$. 
In particular, we have that $\mathcal{O}_{\lbrace \p\rbrace}=A$.
Let $\mathbf{G}$ be a $k$-isotropic and non-commutative algebraic group.
Choose a particular realization $\mathbf{G}_{\mathrm{real}}$ of $\mathbf{G}$ as an algebraic $k$-set of some affine space.
Given this realization, we define $G$ as the group of $\mathcal{O}_{\mathcal{S}}$-points of $\mathbf{G}_{\mathrm{real}}$.
The group $G$ is called an $\mathcal{S}$-arithmetic subgroup of $\mathbf{G}(k)$.
The $\mathcal{S}$-arithmetic group $G$ depends on the chosen realization of $\mathbf{G}$, and any two such choices lead to commensurable $\mathcal{S}$-arithmetic subgroups.
Let $\mathcal{X}(\mathbf{G},\mathcal{S})$ be finite product of buildings $\prod_{\p \in \mathcal{S}}\mathcal{X}(\mathbf{G},k_{\p})$.

A general result on rational cohomology, due to Harder in~\cite{H2}, is the following:
given a split simply connected semisimple group scheme $\mathbf{G}$, we have that $H^u(\mathbf{G}(\mathcal{O}_{\mathcal{S}}), \mathbb{Q})=0$ for $u \not\in\{0, \mathbf{t}s\}$, where $\mathbf{t}$ is the rank of $\mathbf{G}$ and $s=\mathrm{Card}(\mathcal{S})$.
Moreover, Harder also describes the dimension of the non-trivial cohomology groups in terms of representations of the group $\prod_{\p' \in \mathcal{S}} \mathbf{G}(k_{\p'})$. 
In the proof, one of the main arguments is
the interpretation of the action of $\mathbf{G}(\mathcal{O}_{\mathcal{S}})$ on $\mathcal{X}(\mathbf{G},\mathcal{S})$ in terms of the reduction theory, which mainly consists in the description of a fundamental domain for the action of $\mathbf{G}(k)$ on subgroups of the group of the adelic points $\mathbf{G}(\mathbb{A}_{\mathcal{S}})$ of $\mathbf{G}$.
More specifically, Harder uses this fact in order to describe a covering of the orbit space $\mathbf{G}(\mathcal{O}_{\mathcal{S}}) \backslash \mathcal{X}(\mathbf{G},\mathcal{S})$ in terms of some spaces indexed by $\mathbf{G}(\mathcal{O}_{\mathcal{S}})$-conjugacy classes of parabolic subgroups of $\mathbf{G}(k)$ (c.f.~\cite[Lemma 1.4.6]{H2}).


Heuristically, the translation of the Harder's reduction theory into the language of quotient of buildings proceeds via ``pretending'' that the building $\mathcal{X}(\mathbf{G}, \mathcal{S})$ can be identified with the orbit space $\mathbf{G}(\mathbb{A}_{\mathcal{S}})/\mathbf{G}(\mathcal{O}_{\mathcal{S}})$ (c.f.~\cite[\S 12]{B}).
Using this idea, Bux, Köhl and Witzel~\cite{B} study the action of $G$ on $\mathcal{X}(\mathbf{G},\mathcal{S})$.
They describe some finiteness properties of the orbit space by exhibiting the following cover of $\mathcal{X}(\mathbf{G},\mathcal{S})$:

\begin{theorem}\cite[Prop 13.6]{B}\label{witzel}
Assume that $\mathbb{F}$ is finite. Let $\mathbf{G}$ be a $k$-isotropic and non-commutative algebraic group. Let $G$ and $\mathcal{X}(\mathbf{G},\mathcal{S})$ as above. Then, there exists a constant $\kappa$ and finitely many sector chambers $Q_1, \cdots, Q_s$ of $\mathcal{X}(\mathbf{G},\mathcal{S})$ such that:
\begin{itemize}
\item the $G$-translates of the $\kappa$-neighborhood of $\bigcup_{i=1}^{s} Q_i$ cover $\mathcal{X}(\mathbf{G},\mathcal{S})$, and
\item for $i\neq j$, the $G$-orbits of $Q_i$ and $Q_j$ are disjoint.
\end{itemize} 
\end{theorem}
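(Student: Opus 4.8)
The plan is to transport Harder's adelic reduction theory for the action of $\mathbf{G}(k)$ on $\mathbf{G}(\mathbb{A}_{\mathcal{S}})$ to the building, along the heuristic recalled above. Fix a special vertex $o_{\p}$ in each local building $\mathcal{X}(\mathbf{G},k_{\p})$ for $\p\in\mathcal{S}$, and put $o=(o_{\p})_{\p\in\mathcal{S}}\in\mathcal{X}(\mathbf{G},\mathcal{S})$. The assignment $g=(g_v)_v\mapsto(g_{\p}\cdot o_{\p})_{\p\in\mathcal{S}}$ is a $\mathbf{G}(\mathbb{A}_{\mathcal{S}})$-equivariant surjection $\mathbf{G}(\mathbb{A}_{\mathcal{S}})\to\mathcal{X}(\mathbf{G},\mathcal{S})$ whose fibres are cosets of the open compact subgroup $\mathbf{K}=\prod_{\p\in\mathcal{S}}K_{\p}\times\prod_{v\notin\mathcal{S}}\mathbf{G}(\mathcal{O}_v)$, where $K_{\p}=\stab_{\mathbf{G}(k_{\p})}(o_{\p})$; since $G$ is commensurable with $\mathbf{G}(k)\cap\prod_{v\notin\mathcal{S}}\mathbf{G}(\mathcal{O}_v)$ it suffices to treat this latter group, and then $G\backslash\mathcal{X}(\mathbf{G},\mathcal{S})$ is a quotient of the double-coset space $\mathbf{G}(k)\backslash\mathbf{G}(\mathbb{A}_{\mathcal{S}})/\mathbf{K}$, which is \emph{finite} precisely because $\mathbb{F}$ is finite. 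Fix also a maximal $k$-split torus $\mathbf{S}$, a minimal parabolic $k$-subgroup $\mathbf{P}\supseteq\mathbf{S}$, and the basis $\Delta$ of relative roots it determines.

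First I would invoke reduction theory over the global function field $k$: there exist a finite set $F\subseteq\mathbf{G}(\mathbb{A}_{\mathcal{S}})$, a compact subset $\Omega$ of the $\mathbb{A}_{\mathcal{S}}$-points of $\mathbf{P}$, and a threshold $t$ so that the Siegel set $\mathfrak{S}_t=\Omega\cdot\mathbf{S}_{\mathcal{S}}(t)\cdot\mathbf{K}$ — where $\mathbf{S}_{\mathcal{S}}(t)=\{\,s\in\prod_{\p\in\mathcal{S}}\mathbf{S}(k_{\p})\,:\,\nu_{\p}(\alpha(s))\le -t\ \text{for all}\ \p\in\mathcal{S},\ \alpha\in\Delta\,\}$ — satisfies $\mathbf{G}(k)\cdot\mathfrak{S}_t\cdot F=\mathbf{G}(\mathbb{A}_{\mathcal{S}})$. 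Projecting to the building: the image of $\mathbf{S}_{\mathcal{S}}(t)\cdot o$ is, in each factor, a sector chamber based at $o_{\p}$, so $\mathbf{S}_{\mathcal{S}}(t)\cdot o$ is a single sector chamber $Q_0$ of $\mathcal{X}(\mathbf{G},\mathcal{S})$; the compact $\Omega$ and the finite $F$ move the base point only boundedly, because a bounded subset of $\prod_{\p\in\mathcal{S}}\mathbf{G}(k_{\p})$ acts with bounded displacement on the $\mathrm{CAT}(0)$ space $\mathcal{X}(\mathbf{G},\mathcal{S})$ (a finite product of $\mathrm{CAT}(0)$ buildings). Hence there are a constant $\kappa_0\ge 0$ and finitely many sector chambers $Q_1,\dots,Q_m$, each a $\mathbf{G}(k)$-translate of $Q_0$, with $\mathfrak{S}_t\cdot F\cdot o$ contained in the $\kappa_0$-neighbourhood of $\bigcup_i Q_i$; combined with $\mathbf{G}(k)\cdot\mathfrak{S}_t\cdot F=\mathbf{G}(\mathbb{A}_{\mathcal{S}})$ and the surjection above, the $G$-translates of that neighbourhood cover $\mathcal{X}(\mathbf{G},\mathcal{S})$.

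It remains to thin $\{Q_i\}$ to a subfamily with pairwise disjoint $G$-orbits, and this is where I expect the main obstacle. Each $Q_i$ is a rational sector chamber and hence determines a minimal parabolic $k$-subgroup $\mathbf{P}_i$ (its parabolic ``at infinity''), with $gQ_i$ determining ${}^g\mathbf{P}_i$ for $g\in G$; since the $G$-conjugacy classes of minimal parabolic $k$-subgroups form a finite set — again by finiteness of $\mathbf{G}(k)\backslash\mathbf{G}(\mathbb{A}_{\mathcal{S}})/\mathbf{K}$ — only finitely many orbits $GQ_i$ occur. To upgrade ``finitely many orbits'' to ``pairwise disjoint orbits'', I would use the separation part of reduction theory (only finitely many $\gamma\in\mathbf{G}(k)$ have $\gamma\mathfrak{S}_t\cap\mathfrak{S}_t\neq\emptyset$) to push the torus parts $\mathbf{S}_{\mathcal{S}}(t)$ deeper, i.e.\ increase $t$, replacing each $Q_i$ by a sub-sector-chamber far from $o$; this only enlarges $\kappa_0$ to some finite $\kappa$, and forces any non-empty intersection $gQ_i\cap Q_j$ with $g\in G$ to entail ${}^g\mathbf{P}_i=\mathbf{P}_j$, so that keeping one $Q_i$ per $G$-conjugacy class yields the desired $Q_1,\dots,Q_s$. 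The delicate points I anticipate are: (i) carrying out the deepening uniformly, so the cover survives with a single finite $\kappa$ — this needs a uniform bound, over the finitely many $Q_i$, on how deep ``deep enough'' must be; (ii) the hypothesis that $\mathbf{G}$ is merely $k$-isotropic and non-commutative rather than reductive, which I would handle by feeding the reductive quotient into the reduction-theoretic input and checking that the unipotent radical contributes only bounded distortion to the action on the building; and (iii) ensuring all the constants are genuinely global rather than place-by-place, which once more rests on $\mathbb{F}$ being finite.
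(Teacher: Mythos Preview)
This statement is not proved in the paper: it is quoted verbatim from \cite[Prop.~13.6]{B} as background, with no accompanying proof. The paper's own contribution is rather to \emph{refine} this result in the special case $\mathcal{S}=\{\p\}$ (Theorems~\ref{main theorem 1}--\ref{main theorem 3}), by methods it explicitly advertises as independent of reduction theory.

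Your sketch via Harder's adelic reduction theory is precisely the approach the paper attributes to \cite{B}; indeed the paragraph preceding Theorem~\ref{witzel} says that Bux--K\"ohl--Witzel proceed by ``pretending'' that $\mathcal{X}(\mathbf{G},\mathcal{S})$ is $\mathbf{G}(\mathbb{A}_{\mathcal{S}})/\mathbf{G}(\mathcal{O}_{\mathcal{S}})$ and transporting Harder's Siegel-set decomposition to the building. So your outline is consonant with the original source, but there is nothing in the present paper to compare it against. The delicate points you flag (uniform deepening, handling the unipotent radical for non-reductive $\mathbf{G}$, globalizing constants) are genuine, and resolving them is exactly what \cite{B} does in its \S\S12--13; if you want to turn your sketch into a proof you should consult that source directly rather than this paper.
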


Bux, Köhl and Witzel in \cite[Prop 13.8]{B} use the preceding result in order to prove that $G$ is a lattice in the automorphism group of $\mathcal{X}(\mathbf{G}, \mathcal{S})$.

\bigskip

In the present work, one goal is to determine the quantity $s$ in Theorem~\ref{witzel} (see Th.~\ref{main theorem 3}).
Another objective is to understand what becomes the $\kappa$-neighborhood of a sector chamber $Q_i$ in the quotient of the building by $\mathbf{G}(A)$ (see Th.~\ref{main theorem 2}).


Roughly speaking, the first part of the article (cf.~\S~\ref{section commutative}-\S~\ref{section Stabilizer of points in the Borel variety}) is devoted to develop an alternative method than the one defined from reduction theory, in order to describe a fundamental domain for the action of an arithmetic subgroup.
This method focuses in the structure of the root group datum of $\mathbf{G}$. 
More specifically, in order to obtain it, we study in \S~\ref{section commutative} subsets $\Psi$ containing the highest root of $\mathbf{G}$.
We show that suitable subsets $\Psi$ define commutative unipotent subgroups $\mathbf{U}_{\Psi}$ with a linear action of a Borel of $\mathbf{G}$.
In \S~\ref{section Stabilizer of points in the Borel variety}, given an arbitrary Dedekind domain $A_0$, we bound the group of $A_0$-points of $\mathbf{U}_{\Psi}$ by a finite sum of fractional ideals of $A_0$.

In the second part of this work (cf.~\S~\ref{section structure of X}-\S~\ref{sec number cusp faces}), we apply the aforementioned general results on the integral points of unipotent subgroups to the case of $A_0= \mathcal{O}_{\lbrace \p \rbrace}$. 
More specifically, using a method analogous to that of the Mason's approach in \cite{M}, we prove that, under suitable assumptions, any (rational) sector face of $\mathcal{X}$ has a subset which injects in the quotient $\mathbf{G}(A) \backslash \mathcal{X}$.
Then, by controlling the neighborhood relation of sector chambers, we describe the orbit space $\mathbf{G}(A) \backslash \mathcal{X}$ in terms of a gluing of some ``sector chambers'' and a remaining complex, extending Theorem~\ref{serre graph} and Theorem~\ref{soule quotient} to the context of general Chevalley groups over general rings of the form $A=\mathcal{O}_{\lbrace \p \rbrace}$.
This description precises Theorem~\ref{witzel}, since we do not only obtain a covering of a fundamental domain, but also a description of such a fundamental domain.
We also get a precise control on the images in the orbit space of a particular set of sector chambers.
They constitute all the ``sector chambers'' that appear in the orbit space when $\mathbb{F}$ is finite.


Contrary to some works of Serre \cite{S} and Stuhler \cite{Stuhler}, this method does not rely on the language of vector bundles but on the language of Euclidean buildings and arithmetic groups.

\section{Main results}\label{main}

Let $\mathcal{C}$ be an arbitrary smooth, projective, geometrically integral curve defined over a field $\mathbb{F}$. As in \S~\ref{section intro}, let $\mathcal{O}_{\lbrace \p \rbrace}$ be the ring of functions of $\mathcal{C}$ that are regular outside a closed point $\p$.
For simplicity, we denote it by $A=\mathcal{O}_{\{\p\}}$.
Let $K$ be the completion of $k=\mathbb{F}(\mathcal{C})$ defined from the discrete valuation induced by $\p$.
Let $\mathbf{G}_k$ be an arbitrary split reductive linear algebraic group over $k$.
By splitness, $\mathbf{G}_k$ can be realized as the scalar extension to $k$ of a Chevalley group scheme $\mathbf{G}$ over $\mathbb{Z}$ (c.f.~\S~\ref{intro Chevalley groups}).

We denote by $\mathcal{X}$ the Euclidean Bruhat-Tits building of $\mathbf{G}$ over $K$.
It appears to be useful to not consider the complete system of apartments of the Bruhat-Tits building of $\mathbf{G}$ over $K$, but an incomplete subsystem of apartments arising from the field $k$ of rational functions over $\mathcal{C}$.
Let us denote by $\mathcal{X}_k$ the Bruhat-Tits building (c.f.~\S~\ref{Aff build}).
We call it the ``rational'' building of $\mathbf{G}$ over $k$.
It will be useful to work with certain conical cells of the building, called sector chambers whose faces are called sector faces (see \S \ref{intro sector faces}). More specifically, we will focus on the $\mathbf{G}(k)$-translates of a ``standard'' sector chamber. These $\mathbf{G}(k)$-translates are called the ``rational'' sector chambers (see \S \ref{intro sector faces}).

We consider the action of the $\{\p\}$-arithmetic subgroup $\mathbf{G}(A)$ on the rational building $\mathcal{X}_k$.
In this work, we want to get a better understanding of the geometry of the orbit space $\mathbf{G}(A) \backslash \mathcal{X}_k$. More precisely, we find suitable sector faces that embed in this quotient space. The proof along \S \ref{section structure of X}, consists in finding, successively, suitable subsector faces of a given sector face having suitable properties with respect to the action of $\mathbf{G}(A)$.
Among this section, we firstly obtain the following result that, heuristically, means that, far enough, the orbit space $\mathbf{G}(A)\backslash\mathcal{X}_k$ does not branch.

\begin{theorem}\label{main theorem 0}
Given a rational sector chamber $Q$, there exists a subsector chamber $Q'$ such that for any $y \in Q'$, the $1$-neighborhood of $y$ in $\mathcal{X}_k$ is covered by the $\mathbf{G}(A)$-orbits of the $1$-neighborhood of $y$ in $Q$.
\end{theorem}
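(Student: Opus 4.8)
My plan is to reduce the statement to a statement about integral points of unipotent groups and to exploit the gallery/chamber structure of the building near a point of a sector chamber. First I would fix a rational sector chamber $Q$, sitting inside a rational apartment $\mathbb{A}$ corresponding to a maximal split torus $\mathbf{T}$ and a Borel $\mathbf{B} \supseteq \mathbf{T}$; write $Q = x_0 + \mathcal{D}$ where $\mathcal{D}$ is the Weyl chamber cut out by the simple roots. The $1$-neighborhood of a point $y$ in the full building $\mathcal{X}_k$ consists of all closed chambers containing $y$ (more precisely the carrier of $y$), and a chamber $C$ adjacent to $y$ but not contained in $Q$ is obtained by crossing a panel $\pi$ through $y$; such a chamber lies in some apartment $g\mathbb{A}$ and is described, via the root group data, by applying a root group element $u_{-\alpha}(t)$ (for a suitable root $\alpha$ positive on $\mathcal{D}$, so that $-\alpha$ points ``outward'') to the standard chamber. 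The goal is: for $y$ deep enough in $Q$, every such $C$ is already in the $\mathbf{G}(A)$-orbit of a chamber of the $1$-neighborhood of $y$ inside $Q$.

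The key mechanism is the following. For the highest root $\theta$ (and, after the analysis of \S\ref{section commutative}, suitable subsets $\Psi$ of roots containing $\theta$), the unipotent group $\mathbf{U}_\Psi$ is commutative and normalized by $\mathbf{B}$, and by the results of \S\ref{section Stabilizer of points in the Borel variety} its group of $A$-points $\mathbf{U}_\Psi(A)$ is, up to the $\mathbf{T}(A)$-action, ``large'': it fills up a finite sum of fractional ideals of $A$. Concretely, an element of $\mathbf{G}(A)$ of the form $u_\theta(a)$ with $a \in A$ of valuation $\nu_\p(a) = -n$ moves points of the building in the direction of $\theta$ by an amount growing with $n$; since $a$ ranges over all of $A$ (and $A$ contains elements of arbitrarily negative valuation at $\p$, because $A$ is the ring of functions regular away from $\p$), one can realize, using these unipotent elements together with the torus $\mathbf{T}(A)$, any ``horospherical'' move needed to bring an exterior adjacent chamber back inside $Q$. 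The point $y$ must be chosen far enough inside $Q$ (i.e.\ in a subsector chamber $Q'$, translated away from the walls and the tip) precisely so that the relevant chamber $C$ lies in a half-apartment on which the needed unipotent element acts by a translation, and so that the image of $C$ under that element stays in $Q$; the ``depth'' required is controlled by the finitely many generators of the fractional ideals appearing in the bound of \S\ref{section Stabilizer of points in the Borel variety} and by the (finite) number of root directions.

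Carrying this out, I would proceed as follows. Step 1: describe the carrier of $y \in Q$ and enumerate the chambers in its $1$-neighborhood, classifying them as ``interior'' (contained in $Q$) or ``exterior'' according to which panel is crossed; only finitely many combinatorial types occur, governed by the simple roots. Step 2: for each exterior type, identify the root $\alpha$ (positive on $\mathcal{D}$) such that the exterior chamber is $u_{-\alpha}(t)\cdot C_0$ for some interior chamber $C_0$ and some $t \in K$; the relevant $t$ lies in a bounded fractional-ideal's worth of possibilities once $y$ is fixed, by local finiteness. Step 3: invoke \S\ref{section commutative}--\S\ref{section Stabilizer of points in the Borel variety} to find $\gamma \in \mathbf{G}(A)$ (a product of a $\mathbf{U}_\Psi(A)$-element and a $\mathbf{T}(A)$-element) whose action on the relevant apartment cancels the ``$u_{-\alpha}(t)$'' displacement, i.e.\ $\gamma$ maps the exterior chamber into $Q$, provided $y$ is at combinatorial distance at least some explicit $N$ from the boundary walls of $Q$; set $Q'$ to be the subsector chamber obtained by moving the tip of $Q$ inward by $N$. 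Step 4: check that, for all $y \in Q'$, these finitely many $\gamma$'s simultaneously send the whole exterior part of the $1$-neighborhood of $y$ into $Q$, so that the $\mathbf{G}(A)$-orbits of the interior $1$-neighborhood of $y$ cover the full $1$-neighborhood of $y$.

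The main obstacle I expect is Step 3: making the unipotent-and-torus element $\gamma \in \mathbf{G}(A)$ act on the correct apartment with the correct translation vector, uniformly over all $y \in Q'$ and over all exterior directions at once. This requires carefully matching the fractional-ideal bounds of \S\ref{section Stabilizer of points in the Borel variety} (which control $\mathbf{U}_\Psi(A)$) against the valuations of the displacement parameters $t$, and controlling how $\mathbf{T}(A)$ (a group of units, so bounded valuation, but acting by dilation on root coordinates) can be combined with $\mathbf{U}_\Psi(A)$ to reach everything needed; the commutativity of $\mathbf{U}_\Psi$ and its $\mathbf{B}$-stability, established in \S\ref{section commutative}, are exactly what make this bookkeeping tractable. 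A secondary subtlety is ensuring the argument only uses the rational apartment system (so $\gamma \in \mathbf{G}(k)$, indeed $\mathbf{G}(A)$, and the chambers stay in $\mathcal{X}_k$), which is why one works with rational sector chambers throughout.
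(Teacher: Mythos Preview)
Your overall architecture---reduce to root groups, go deep enough in $Q$, fold adjacent chambers back---is right, but several of the specific mechanisms you invoke are wrong or misplaced, and the decisive arithmetic input is missing.

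First, two misconceptions. The torus $\mathbf{T}(A)$ is useless here: since $A^\times=\mathbb{F}^\times$ (any unit of $A$ has zero divisor on $\mathcal{C}$, hence is constant), every $t\in\mathbf{T}(A)$ has $\nu_\p(\chi(t))=0$ for all characters $\chi$, so $\mathbf{T}(A)$ fixes $\mathbb{A}_0$ pointwise. There is no ``dilation'' to exploit. Second, the commutative groups $\mathbf{U}_\Psi$ of \S\ref{section commutative} are not what drives this theorem; they are used for the \emph{embedding} statement (Theorem~\ref{main theorem 1}), where one needs an invariant that separates points. For the \emph{covering} statement you only need the individual root groups $\mathbf{U}_\alpha$ and the lower bound of Proposition~\ref{prop ideal contained}: for each $\alpha$ there is a nonzero $A$-ideal $J_\alpha(h)\subseteq M_\alpha(h)=\theta_\alpha^{-1}\big(\mathbf{U}_\alpha(k)\cap h\mathbf{G}(A)h^{-1}\big)$.

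The picture of unipotents ``translating'' is also off: an element $\theta_\alpha(a)$ fixes a half-apartment and folds the building across its bounding wall. The right setting is the \emph{residue} (local spherical) building $\mathcal{X}_k(y)$ at $y$, of type $\Phi_y$. After conjugating the sector to standard position by some $h\in H$ (this is where the dependence on $h$ in $J_\alpha(h)$ comes from, and why you cannot simply take $u_\alpha(a)$ with $a\in A$), the task becomes: show that for every positive root $\alpha\in\Phi_\Theta^+\cap\Phi_y$ the \emph{full} residue root group $\overline{U}_\alpha\cong\mathbf{U}_\alpha(\mathbb{F}(\p))$ is realized by elements of $h\mathbf{G}(A)h^{-1}$ fixing $y$. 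Once this holds, the group generated by these $\overline{U}_\alpha$ acts strongly transitively on the directed star and sends every chamber of $\mathcal{X}_k(y)$ into $\mathbb{A}_0\cap\mathcal{X}_k(y)\subset Q$; see Proposition~\ref{prop starAction}.

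The missing arithmetic step is Riemann--Roch. With $m=\alpha(y)\in\mathbb{Z}$ (for $y$ special) and $d=\deg(\p)$, one has
\[
\dim_\mathbb{F}\big(J_\alpha(h)\cap\pi^{-m}\mathcal{O}\big)-\dim_\mathbb{F}\big(J_\alpha(h)\cap\pi^{-m+1}\mathcal{O}\big)=d
\]
as soon as $(m-1)d\geq\deg(J_\alpha(h))+2g-1$. Since $d=\dim_\mathbb{F}\mathbb{F}(\p)$, this says the natural map $J_\alpha(h)\cap\pi^{-m}\mathcal{O}\twoheadrightarrow \pi^{-m}\mathcal{O}/\pi^{-m+1}\mathcal{O}$ is surjective, i.e.\ $\theta_\alpha(J_\alpha(h))$ already surjects onto $\overline{U}_\alpha$. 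Choosing $Q'$ so that these finitely many inequalities (one per $\alpha\in\Phi_\Theta^+$) hold for every $y\in Q'$ is exactly Lemma~\ref{lem subsector face and roots}. This replaces your Steps~3--4 and removes the need for any torus element.
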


For rational sector faces, a more technical result is given in Proposition~\ref{prop starAction}.
The main idea of the proof of Theorem \ref{main theorem 0} consists in finding unipotent elements provide enough foldings, at each $y \in Q'$, of the rational building onto the sector chamber $Q$.
These unipotent elements are obtained by applying the Riemann-Roch Theorem in the Dedekind ring $A$ using parametrization of the root subgroups of $\mathbf{G}$.
Choosing suitable subsector faces of the sector faces given by Theorem~\ref{main theorem 0}, it provides the central result:
\begin{theorem}\label{main theorem 1}
Let $Q$ be a rational sector face of $\mathcal{X}_k$.
Assume that one of the following cases is satisfied:
\begin{itemize}
    \item $\mathbf{G} = \mathrm{SL}_n$ or $\mathrm{GL}_n$, for some $n \in \mathbb{N}$;
    \item $Q$ is a sector chamber;
    \item $\mathbb{F}$ is a finite field.
\end{itemize}
Then, there exists a subsector face $Q'$ of $Q$ which is embedded in $\mathbf{G}(A) \backslash \mathcal{X}_k$.
\end{theorem}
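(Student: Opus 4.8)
\medskip
\noindent\textit{Proof strategy.}
The plan is to prove the slightly stronger statement that the restriction of the canonical projection $\pi\colon\mathcal{X}_k\to\mathbf{G}(A)\backslash\mathcal{X}_k$ to a deep enough subsector face $Q'$ of $Q$ is injective; since $\pi$ is already a morphism of polysimplicial complexes, this automatically makes $Q'$ embedded. Realize $Q$ as a face of a rational sector chamber $\widehat{Q}$, and let $\xi$ be the point of the rational spherical building at infinity determined by $Q$, with associated parabolic $k$-subgroup $\mathbf{P}_\xi$ and a fixed Levi decomposition $\mathbf{P}_\xi=\mathbf{L}_\xi\ltimes\mathbf{R}_u(\mathbf{P}_\xi)$. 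First I would invoke Theorem~\ref{main theorem 0} (and, when $Q$ is a proper face rather than a chamber, its refinement Proposition~\ref{prop starAction}) to shrink $Q$ to a subsector face along which $\pi$ is ``locally surjective'': for every $y$ in this subsector, each point in the $1$-neighbourhood of $y$ in $\mathcal{X}_k$ is $\mathbf{G}(A)$-equivalent to a point of the $1$-neighbourhood of $y$ lying in $\widehat{Q}$. Granting this, the task reduces to producing a further subsector face $Q'$ on which no two distinct points are $\mathbf{G}(A)$-equivalent.

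For the injectivity step, suppose $g\in\mathbf{G}(A)$ and $x,y\in Q'$ satisfy $gx=y$. The key geometric input I would establish is that, once $x$ and $y$ lie deep enough in the sector direction, $g$ must stabilize $\xi$, i.e. $g\in\mathbf{P}_\xi(k)$: the isometry $g$ sends the germ of $\widehat{Q}$ at $x$ to the germ at $y$ of the rational sector chamber $g\widehat{Q}$, and, using the folding supplied by Theorem~\ref{main theorem 0} to match that germ with $\widehat{Q}$ near $y$ (after possibly modifying $g$ by an element of $\mathbf{G}(A)$ fixing a neighbourhood of $y$), together with the convexity of sectors and the combinatorics of the spherical building at infinity, one forces $g\xi=\xi$. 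Writing $g=mu$ with $u\in\mathbf{R}_u(\mathbf{P}_\xi)(A)$ and $m$ in the arithmetic subgroup $\mathbf{L}_\xi(A)$ of the Levi factor, I would control the unipotent part with the results of \S\ref{section Stabilizer of points in the Borel variety}: decomposing $\mathbf{R}_u(\mathbf{P}_\xi)$ into the commutative pieces $\mathbf{U}_\Psi$ of \S\ref{section commutative}, the group $\mathbf{R}_u(\mathbf{P}_\xi)(A)$ is bounded by a finite sum of fractional ideals of $A$, so that, deep in the sector, $u$ either fixes the relevant germ or moves it inside a fixed discrete set of directions, which a further shrinking of $Q'$ avoids. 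This leaves the action of $m\in\mathbf{L}_\xi(A)$ on $Q'$ to be handled.

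Finally I would split into the three cases of the statement. If $Q$ is a sector chamber, then $\xi$ is a chamber of the spherical building, $\mathbf{P}_\xi$ is a Borel subgroup and $\mathbf{L}_\xi$ is a maximal torus; the arithmetic torus $\mathbf{L}_\xi(A)$ is finite modulo the elements translating the standard apartment, and on the interior of a deep sector chamber it therefore cannot identify distinct points, so one concludes exactly as in Soulé's argument for Theorem~\ref{soule quotient}. If $\mathbb{F}$ is finite, $\mathcal{X}_k$ is locally finite, the stabilizers of sector germs appearing above are finite up to a translation lattice, and a compactness argument shows that only finitely many $\mathbf{G}(A)$-translates of $\widehat{Q}$ meet a given bounded region; hence there are only finitely many potential collisions, and one last shrinking yields $Q'$. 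If $\mathbf{G}=\mathrm{SL}_n$ or $\mathrm{GL}_n$, I would instead run the estimate on the Levi part explicitly in matrix form, in the spirit of Mason's treatment of $\mathrm{GL}_n$: the Levi factors are products of smaller general linear groups, and their integral points admit a normal form whose action on $Q'$ is computed directly. I expect the genuine obstacle to be precisely this last point when $\mathbb{F}$ is infinite and $\mathbf{G}$ is arbitrary: there the arithmetic Levi subgroup $\mathbf{L}_\xi(A)$ can be infinite and move points of $Q$ in the transverse directions, and no substitute for the local finiteness used in the finite-field case is available — which is exactly why the theorem is stated only in these three cases.
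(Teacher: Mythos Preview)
Your overall architecture is right up to the point where you establish $g\in\mathbf{P}_\xi(k)$, but the injectivity argument thereafter has a genuine gap and misses the paper's main idea.

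\textbf{The decomposition $g=mu$ is not available.} The parabolic $\mathbf{P}_\xi$ is a $\mathbf{G}(k)$-conjugate $h^{-1}\mathbf{P}_\Theta h$ of a standard parabolic, with $h\in\mathbf{G}(k)$ in general not in $\mathbf{G}(A)$. There is no reason for $\mathbf{G}(A)\cap\mathbf{P}_\xi(k)$ to split as $\mathbf{L}_\xi(A)\ltimes\mathbf{R}_u(\mathbf{P}_\xi)(A)$: the Levi decomposition of $g$ over $k$ gives factors in $h^{-1}\mathbf{L}_\Theta(k)h$ and $h^{-1}\mathbf{U}_\Theta(k)h$, neither of which need lie in $\mathbf{G}(A)$. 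Your subsequent control of $u$ via the bounds of \S\ref{section Stabilizer of points in the Borel variety} and of $m$ case by case therefore never gets off the ground. Moreover, the full unipotent radical $\mathbf{R}_u(\mathbf{P}_\xi)$ is not commutative, so the ideal bounds of \S\ref{section Stabilizer of points in the Borel variety} (stated for commutative $\mathbf{U}_\Psi$) do not directly bound it.

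\textbf{What the paper actually does.} The paper never decomposes $g$. After conjugating by $h$ so that $b=hgh^{-1}\in\mathbf{P}_\Theta(k)$, it attaches to each special vertex $z$ in the standard sector face a family of $\mathbb{F}$-vector spaces $V_i[z]=\operatorname{Vect}_{\mathbb{F}}\bigl(M_{\Psi_i^\Theta}(h)\cap\bigoplus_{\alpha\in\Psi_i^\Theta}\pi^{-\alpha(z)}\mathcal{O}\bigr)$, built from the commutative groups $\mathbf{U}_{\Psi_i^\Theta}$ of \S\ref{section commutative}. By Corollary~\ref{cor k-linear action parabolic}, the conjugation action of $b\in\mathbf{P}_\Theta(k)$ on $\mathbf{U}_{\Psi_i^\Theta}(k)$ is $k$-linear, so it carries $V_i[v']$ isomorphically onto $V_i[w']$. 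One space is contained in the other by monotonicity in the coordinates; equal dimensions then force $V_i[v']=V_i[w']$. Intersecting with the line coming from a single root $\beta_i$ and applying Riemann--Roch (Equation~\eqref{rri}) recovers $\beta_i(v')=\beta_i(w')$ for each $i$, whence $v'=w'$. The three hypotheses of the theorem enter at exactly one point, Lemma~\ref{lemma finite dimensional}, to guarantee that $V_i[z]$ is finite-dimensional over $\mathbb{F}$; they are not three separate endgames as you propose. In particular the sector-chamber case is not reduced to a torus argument \`a la Soul\'e, and the finite-field case is not a compactness argument: both are the same dimension-count.
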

A more technical statement is given in Corollary~\ref{cor conclusion without special hyp}.
Note that, in particular, Theorem~\ref{main theorem 1} describes the images in the quotient of the sector chambers introduced in Theorem~\ref{witzel} in the context where $\mathbb{F}$ is finite.

The proof of this theorem consists in finding a suitable invariant.
This invariant consists in associating to each special vertex $x$ a family of finite dimensional $\mathbb{F}$-vector spaces whose dimensions characterize the coordinates of $x$ in a given apartment.
In order to construct such vector spaces, we firstly prove that:
\begin{proposition}\label{main prop commutative}
Let $k$ be a field of characteristic different from $2$ and $3$.
Let $\mathbf{G}$ be a split reductive $k$-group scheme.
Let $\mathbf{B}$ be a Borel subgroup of $\mathbf{G}$ and let $\mathbf{U}$ be a closed commutative unipotent subgroup of $\mathbf{G}$ normalized by $\mathbf{B}$.
Then $\mathbf{U}(k)$ is a $k$-vector space and the action by conjugation of $\mathbf{B}(k)$ onto $\mathbf{U}(k)$ is $k$-linear.
\end{proposition}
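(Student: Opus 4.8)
The plan is to analyze the structure of $\mathbf{U}$ via the root group datum of $\mathbf{G}$ relative to a maximal split torus $\mathbf{T} \subset \mathbf{B}$. First I would fix a system of positive roots so that $\mathbf{B} = \mathbf{T} \ltimes \mathbf{U}_+$, where $\mathbf{U}_+$ is generated by the positive root subgroups $\mathbf{U}_\alpha$. Since $\mathbf{U}$ is unipotent, closed, connected (being a subgroup of $\mathbf{G}$ normalized by a Borel, it is contained in the unipotent radical of a parabolic, hence connected and split), and normalized by $\mathbf{T}$, it is $\mathbf{T}$-stable and therefore directly spanned by the root subgroups it contains: there is a closed subset $\Psi$ of positive roots such that $\mathbf{U} = \prod_{\alpha \in \Psi} \mathbf{U}_\alpha$ (product in any fixed order), and $\Psi$ is stable under adding positive roots because $\mathbf{U}$ is normalized by $\mathbf{B} \supseteq \mathbf{U}_+$. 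The commutativity hypothesis forces $\Psi$ to contain no pair $\alpha, \beta$ with $\alpha + \beta \in \Psi$, i.e.~$\Psi$ is ``closed of nilpotency class one''; this is precisely the combinatorial situation studied in \S\ref{section commutative}, so I would invoke the results there identifying such $\Psi$ (in particular that each such $\Psi$ sits inside a set containing the highest root and the corresponding $\mathbf{U}_\Psi$ is commutative with linear $\mathbf{B}$-action).

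Next I would upgrade the abstract group structure on $\mathbf{U}(k)$ to that of a $k$-vector space. Since $\mathbf{U}$ is a commutative unipotent split group over $k$ with $\mathrm{char}(k) \neq 2, 3$ — more to the point, since $\mathbf{U}$ is directly spanned by one-dimensional root subgroups with no internal commutators — the product map $\prod_{\alpha \in \Psi} \mathbf{U}_\alpha \to \mathbf{U}$ is an isomorphism of group schemes, and each $\mathbf{U}_\alpha \cong \mathbb{G}_a$ via the pinning $x_\alpha$. Hence $\mathbf{U} \cong \mathbb{G}_a^{|\Psi|}$ as a group scheme, so $\mathbf{U}(k) \cong k^{|\Psi|}$ carries a canonical $k$-vector space structure, with scalar multiplication by $\lambda \in k$ given coordinatewise by $x_\alpha(u_\alpha) \mapsto x_\alpha(\lambda u_\alpha)$. (The characteristic hypothesis is what rules out the Witt-vector-type pathologies where a commutative unipotent group is not a vector group; in our directly-spanned situation it is automatic, but stating it keeps us safely in the vector group regime.)

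It remains to check that conjugation by $\mathbf{B}(k)$ is $k$-linear for this structure. Write $b = t \cdot v$ with $t \in \mathbf{T}(k)$, $v \in \mathbf{U}_+(k)$. For $t \in \mathbf{T}(k)$ the commutation relation $t\, x_\alpha(u)\, t^{-1} = x_\alpha(\alpha(t)\, u)$ shows conjugation acts on the $\alpha$-coordinate by the scalar $\alpha(t) \in k^\times$, which is manifestly $k$-linear. For $v \in \mathbf{U}_+(k)$, the Chevalley commutator formula gives $v\, x_\alpha(u)\, v^{-1}$ as a product of $x_\alpha(u)$ with terms $x_\gamma(\,c\, u\, (\text{monomial in the coordinates of } v)\,)$ for roots $\gamma = \alpha + (\text{positive combination})$ in $\Psi$; crucially, because $\Psi$ has nilpotency class one, all the ``extra'' commutator contributions lie in root subgroups $\mathbf{U}_\gamma$ with $\gamma \in \Psi$ and each is \emph{linear} in $u$ (the higher commutators $[[\ ,\ ],\ ]$ would land outside $\Psi$ or vanish). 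So conjugation by $v$ is, in coordinates, a unipotent upper-triangular $k$-linear map. Composing, $b$-conjugation is $k$-linear. The main obstacle, and the place requiring care, is this last point: ensuring that the Chevalley commutator formula produces only $u$-linear contributions inside $\Psi$ and no higher-order terms — this is exactly where the class-one property of $\Psi$ (equivalently, the commutativity of $\mathbf{U}$ together with its $\mathbf{B}$-normality) is used, and where I would lean on the structural description of admissible $\Psi$ established in \S\ref{section commutative}.
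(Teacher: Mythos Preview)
Your proposal is correct and follows essentially the same route as the paper: identify $\mathbf{U}$ with $\mathbf{U}_\Psi$ for a closed $\Psi\subset\Phi^+$ satisfying the conditions~\ref{cond closure} and~\ref{cond commutative} (this is Proposition~\ref{fact conditions inverse}, which is where $\operatorname{char}(k)\neq 2,3$ enters), take the $k$-vector space structure from the product of \'epinglages, and check linearity of the $\mathbf{B}(k)$-action coordinatewise (Proposition~\ref{prop suitable polynomials} and Corollary~\ref{cor k-linear action borel}).

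One sharpening on the point you flagged as the main obstacle: your phrase ``higher commutators would land outside $\Psi$ or vanish'' is not quite the mechanism. The single Chevalley commutator $[\theta_\alpha(x),\theta_\beta(y)]$ for $\alpha\in\Phi^+$, $\beta\in\Psi$ already produces terms $\theta_{r\alpha+s\beta}(c\,x^r y^s)$ with $s\geq 1$, and one must rule out $s\geq 2$ directly. The paper does this in Lemma~\ref{lem sum of good roots}: if $r\alpha+s\beta\in\Phi$ with $s\geq 2$, then iterating~\ref{cond closure} gives $r\alpha+(s-1)\beta\in\Psi$, and then $(r\alpha+(s-1)\beta)+\beta\in\Phi$ contradicts~\ref{cond commutative}. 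So only $y$-linear terms survive, which is exactly the simplified relation~\eqref{eq simplified comm rel} feeding into the explicit triangular form of Proposition~\ref{prop suitable polynomials}. (Also, minor slip: commutativity gives $\alpha+\beta\notin\Phi$ for $\alpha,\beta\in\Psi$, not merely $\alpha+\beta\notin\Psi$; under~\ref{cond closure} the two are equivalent, but the former is what one uses.)
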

For a more general statement that includes the cases of characteristic $2$ and $3$, see Proposition~\ref{prop suitable polynomials}.
It fact, suitable parabolic subgroups also acts linearly onto such commutative unipotent subgroups (see Corollary~\ref{cor k-linear action parabolic}).
Secondly, in \S \ref{section Stabilizer of points in the Borel variety}, we study the intersection with such well-chosen commutative unipotent subgroups of the pointwise stabilizer of sector faces, denoted by $M_\Psi(h)$, for some parameters $h$ and $\Psi$. Using the fact that $A$ is a Dedekind domain, one can bound the commutative groups $M_\Psi(h)$ between some suitable direct products of fractional $A$-ideals (see Proposition~\ref{prop ideal contained} and Proposition~\ref{prop fractional ideals}).
Using the bounds of these commutative groups, one can define finite dimensional vector spaces $V_\Psi(h)$ (see Lemma~\ref{lemma finite dimensional}).
The linearity, given by Proposition~\ref{main prop commutative}, of the action of a suitable parabolic subgroup associated to a sector face will ensure the invariance of the dimension the $\mathbb{F}$-vector space $V_\Psi(h)$.

In \S \ref{section structure orbit space}, describing the image in the quotient of the neighboring faces of vertices in the sector faces given by Theorem~\ref{main theorem 1}, we obtain the following theorem that describes the orbit space $\mathbf{G}(A) \backslash \mathcal{X}_k$. 

\begin{theorem}\label{main theorem 2} 
There exists a set of $k$-sector faces $\{Q_{i,\Theta}: \Theta \subseteq \Delta, i \in \mathrm{I}_\Theta\}$ of the Bruhat-Tits building $\mathcal{X}_k$, called cuspidal rational sector faces, parametrized by the set of types of sector faces $\Theta \subseteq \Delta$ and some sets $\mathrm{I}_\Theta$ such that:
\begin{itemize}
    \item the $\mathbf{G}(A)$-orbits of two different cuspidal rational sector chambers $Q_{i,\emptyset}$, for $i \in \mathrm{I}_\emptyset$, do not intersect;
    \item the orbit space $ \mathbf{G}(A) \backslash \mathcal{X}_k $ can be realized as a connected CW-complex obtained as the attaching space of CW-complexes isomorphic to the closure $\overline{Q_{i,\emptyset}}$ ($i \in \mathrm{I}_\emptyset$) of the cuspidal sector chambers and a closed connected CW-complex $\mathcal{Z}$ along some well-chosen subspaces;
    \item Moreover, when $\mathbf{G}$ is $\mathrm{SL}_n$ or $\mathrm{GL}_n$, or when $\mathbb{F}$ is finite, any cuspidal sector face (non necessarily a sector chamber) embeds in $\mathbf{G}(A) \backslash \mathcal{X}_k$, and any two of these cuspidal sector faces with same type $\Theta\subseteq \Delta$ have visual boundaries in the same $ \mathbf{G}(A)$-orbit whenever they have two points in the same $ \mathbf{G}(A)$-orbit.
\end{itemize}
\end{theorem}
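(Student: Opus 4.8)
The plan is to assemble the three assertions from the embedding results of \S\ref{section structure of X}, the fractional‑ideal and dimension machinery of \S\ref{section Stabilizer of points in the Borel variety}, and a local analysis of the quotient near the tips and base walls of the cuspidal sectors. First I would fix, for each type $\Theta\subseteq\Delta$, a system of representatives for the $\mathbf{G}(A)$‑orbits of visual faces of type $\Theta$ of rational sector faces; via the Bruhat--Tits dictionary between visual faces and $k$‑rational parabolic subgroups of $\mathbf{G}_k$, together with an Iwasawa‑type decomposition and the associated class‑group computation over the Dedekind ring $A$, this set is governed by $\mathrm{Pic}(\mathcal{C}\smallsetminus\{\p\})$ and the rank of $\mathbf{G}$; denote the resulting index set $\mathrm{I}_\Theta$ (its cardinality for $\Theta=\emptyset$ being what Theorem~\ref{main theorem 3} records). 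For each such class I would pick a rational sector face realizing it and then shrink it to a subsector face $Q_{i,\Theta}$ produced by Theorem~\ref{main theorem 1} in the technical form of Corollary~\ref{cor conclusion without special hyp} --- available for $\Theta=\emptyset$ unconditionally and for every $\Theta$ under the extra hypotheses --- so that $Q_{i,\Theta}$ embeds in $\mathbf{G}(A)\backslash\mathcal{X}_k$; these are the cuspidal rational sector faces.

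For the first bullet, suppose some $g\in\mathbf{G}(A)$ satisfied $gQ_{i,\emptyset}\cap Q_{j,\emptyset}\neq\emptyset$ and pick a common point $y$ lying deep inside $Q_{j,\emptyset}$, so that $g^{-1}y$ lies deep inside $Q_{i,\emptyset}$. The special vertices of $Q_{j,\emptyset}$ near $y$ and those of $Q_{i,\emptyset}$ near $g^{-1}y$ are then pairwise in the same $\mathbf{G}(A)$‑orbit, hence carry the same dimension data $\dim_{\mathbb{F}}V_\Psi(h)$ by Lemma~\ref{lemma finite dimensional}; the invariance of this data rests on the $\mathbb{F}$‑linearity of the action of the parabolic attached to the sector face on the ambient commutative unipotent group, which is Proposition~\ref{main prop commutative} together with Corollary~\ref{cor k-linear action parabolic}. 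Since these dimensions recover the coordinates of a special vertex inside its apartment, comparing this data along the two sectors --- and using Theorem~\ref{main theorem 0} to probe, starting from the single point $y$, every direction issuing from both sector chambers --- forces the visual chambers $\partial_\infty(gQ_{i,\emptyset})$ and $\partial_\infty(Q_{j,\emptyset})$ to be $\mathbf{G}(A)$‑conjugate; as the cuspidal representatives were chosen one per orbit of visual faces, this gives $i=j$, so the orbits of $Q_{i,\emptyset}$ and $Q_{j,\emptyset}$ are disjoint whenever $i\neq j$.

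Next I would establish the attaching‑space description. By Theorem~\ref{main theorem 0} and its refinement Proposition~\ref{prop starAction}, for $y$ deep in a cuspidal sector chamber the whole $1$‑ball around $y$ in $\mathcal{X}_k$ is covered by $\mathbf{G}(A)$‑translates of the $1$‑ball of $y$ taken inside that sector chamber; since the projection $\mathcal{X}_k\to\mathbf{G}(A)\backslash\mathcal{X}_k$ is open, the image of the relative interior of each $Q_{i,\emptyset}$ is open in the quotient. Together with the embedding of $Q_{i,\emptyset}$ (Theorem~\ref{main theorem 1}) and the disjointness just proved, the images of the $\overline{Q_{i,\emptyset}}$ are pairwise disjoint off their boundary walls and touch the rest of the quotient only along those walls. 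Setting $\mathcal{Z}$ to be the complement in $\mathbf{G}(A)\backslash\mathcal{X}_k$ of the union of the images of the relative interiors of the $Q_{i,\emptyset}$, it is closed by the openness above and connected because the quotient is connected (being the continuous image of the connected complex $\mathcal{X}_k$) while each cuspidal cone is glued to $\mathcal{Z}$ along a connected nonempty subcomplex, so its removal cannot disconnect. After a barycentric subdivision making the $\mathbf{G}(A)$‑action cellular, $\mathcal{Z}$ is a CW‑subcomplex and $\mathbf{G}(A)\backslash\mathcal{X}_k$ is precisely the adjunction space built from $\mathcal{Z}$ and the CW‑complexes $\overline{Q_{i,\emptyset}}$ by identifying, for each $i$, the subcomplex of $\partial Q_{i,\emptyset}$ already realized in $\mathcal{Z}$ with its image; this is the second bullet.

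Finally, under the hypotheses of the third bullet, Corollary~\ref{cor conclusion without special hyp} applies to sector faces of every type $\Theta$, so each $Q_{i,\Theta}$ embeds; and if moreover some $g\in\mathbf{G}(A)$ makes $gQ_{i,\Theta}$ and $Q_{j,\Theta}$ (same type $\Theta$) share at least two points --- the meaning of ``two points in the same $\mathbf{G}(A)$‑orbit'' once the cuspidal representatives are taken deep enough --- then, $Q_{i,\Theta}$ and $Q_{j,\Theta}$ being cones of the same type and $g$ acting as an isometry of $\mathcal{X}_k$, the convex set $gQ_{i,\Theta}\cap Q_{j,\Theta}$ contains a common subsector face, whence $g\cdot\partial_\infty Q_{i,\Theta}=\partial_\infty Q_{j,\Theta}$ and the two visual boundaries lie in one $\mathbf{G}(A)$‑orbit. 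The main obstacle running through all three parts is the input of \S\ref{section Stabilizer of points in the Borel variety}: one must verify that the relevant parabolic really acts $\mathbb{F}$‑linearly on the chosen commutative unipotent subgroup (Proposition~\ref{main prop commutative}, with the characteristic $2,3$ refinements of Proposition~\ref{prop suitable polynomials}) and that the bounds by direct products of fractional $A$‑ideals (Propositions~\ref{prop ideal contained} and~\ref{prop fractional ideals}) are sharp enough that the dimensions $\dim_{\mathbb{F}}V_\Psi(h)$ genuinely detect the apartment coordinates; this is exactly what forbids cuspidal sectors in distinct orbits from overlapping and what upgrades ``two common points'' to ``conjugate visual boundaries''.
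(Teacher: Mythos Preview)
Your overall strategy matches the paper's, but there are two real gaps.

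\textbf{Third bullet.} You misread the hypothesis: ``have two points in the same $\mathbf{G}(A)$-orbit'' means there exist $v\in Q_{i,\Theta}$ and $w\in Q_{j,\Theta}$ (one point from each --- those are the two points) with $w=g\cdot v$ for some $g\in\mathbf{G}(A)$; compare Theorem~\ref{main teo 2 new}\ref{item mainTh3}. It does not give two matched pairs, nor a single $g$ carrying two points of $Q_{i,\Theta}$ into $Q_{j,\Theta}$. Your convex-intersection claim (``$gQ_{i,\Theta}\cap Q_{j,\Theta}$ contains a common subsector face'') therefore starts from a false premise, and would be unjustified even granting two common points: two sector faces of the same type sharing two points need not share a subsector face. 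The correct argument is the one you already invoked for the first bullet: Corollary~\ref{cor conclusion without special hyp} (specifically implication~\eqref{eq equal visual boundary 2}) gives directly that $w=g\cdot v$ forces $g\cdot D_i=D_j$, hence $g\cdot\partial_\infty Q_{i,\Theta}=\partial_\infty Q_{j,\Theta}$. In the paper this is established via the recursive $\Omega_n(v,D)$ construction of Proposition~\ref{proposition facets in the same orbit ArXiv}, not by convex geometry of the intersection.

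\textbf{Second bullet (closedness and connectedness of $\mathcal{Z}$).} You assert that each $\mathrm{pr}(\overline{Q_{i,\emptyset}})$ is glued to $\mathcal{Z}$ along a connected nonempty subcomplex and that ``its removal cannot disconnect'', but neither claim is proved, and Theorem~\ref{main theorem 1} only embeds the open sector $Q_{i,\emptyset}$, not its closure. The paper (Theorem~\ref{theorem quotient and cuspidal rational sector chambers}) first performs an extra shrinking, replacing each $Q_{i,\emptyset}$ by a subsector $Q_i$ with $\overline{Q_i}\subset Q_{i,\emptyset}$; this is what makes $\overline{Q_i}$ embed and makes the $\mathrm{pr}(\overline{Q_i})$ pairwise disjoint. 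It then shows that $\mathcal{Y}'\cap\overline{Q_i}$ is star-shaped from the tip (hence connected), and from this deduces connectedness of each gluing locus $\mathcal{Z}\cap\mathrm{pr}(\overline{Q_i})$ and finally of $\mathcal{Z}$ itself via a retraction of $\mathrm{pr}(\overline{Q_i})$ onto its boundary. Your one-line argument skips all of this machinery.
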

Theorem~\ref{main theorem 2} generalizes Theorem~\ref{serre graph} to higher dimensions and Theorem~\ref{soule quotient} to an arbitrary Dedekind domain of the form $\mathcal{O}_{\lbrace \p \rbrace}$. 
This theorem summarizes some results of \S~\ref{section structure orbit space}. Indeed, Theorem~\ref{main teo 2 new} and Theorem~\ref{theorem quotient and cuspidal rational sector chambers} provide a more precise description of the orbit space.

There may be a connection of this quotient with moduli stack of vector bundles and their Hecke correspondence, when $\mathbf{G}=\mathrm{GL}_n$.

The proof of Theorem~\ref{main theorem 1} and Theorem~\ref{main theorem 2} are totally independent from Theorem~\ref{witzel}.
Moreover, Theorem~\ref{main theorem 1} precises Theorem~\ref{witzel} since it interprets its covering in the orbit space $\mathbf{G}(A) \backslash \mathcal{X}$.
We also precise the set of cuspidal rational sector chambers in the quotient through the following results.

\begin{theorem}\label{main theorem 3}
There exists a one-to-one correspondence between the set of cuspidal rational sector chambers and $\ker(H^1_{\text{\'et}}(\mathrm{Spec}(A), \mathbf{T})) \to H^1_\text{\'et}(\mathrm{Spec}(A), \mathbf{G}))$. Moreover, if $\mathbf{G}$ is semisimple and simply connected, the image of the preceding correspondence is isomorphic to $\mathrm{Pic}(A)^{\mathbf{t}}$, where $\mathbf{t}=\mathrm{rk}(\mathbf{G})$ is the dimension of $\mathbf{T}$. In particular, this implies that the number of sector chambers in Theorem~\ref{witzel}, for $\mathcal{S}=\lbrace \p \rbrace$ as above, is exactly $\mathrm{Card}\left( \mathrm{Pic}(A)^{\mathbf{t}}\right)$.
\end{theorem}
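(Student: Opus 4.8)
The plan is to reduce the statement to a computation in nonabelian \'etale cohomology over $\mathrm{Spec}(A)$. The first step is to identify the set of cuspidal rational sector chambers with a double coset space. By the description of the orbit space obtained in \S\ref{section structure orbit space} (in particular Theorem~\ref{theorem quotient and cuspidal rational sector chambers}, together with the first bullet of Theorem~\ref{main theorem 2}), a cuspidal rational sector chamber is determined, up to the $\mathbf{G}(A)$-action and up to passage to a subsector chamber, by its chamber at infinity, and chambers at infinity lying in distinct $\mathbf{G}(A)$-orbits give rise to distinct cuspidal rational sector chambers. So the indexing set $\mathrm{I}_\emptyset$ is in bijection with the set of $\mathbf{G}(A)$-orbits on the set of chambers at infinity of $\mathcal{X}_k$; and since $\mathbf{G}$ is $k$-split, $\mathbf{G}(k)$ acts transitively on the latter with the standard Borel $\mathbf{B}(k)$ as the stabilizer of the standard chamber (Borel subgroups being self-normalizing), so the set of cuspidal rational sector chambers is in bijection with $\mathbf{G}(A)\backslash\mathbf{G}(k)/\mathbf{B}(k)$.

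The second step is the torsor exact sequence for the flag scheme. Write $\mathcal{B}=\mathbf{G}/\mathbf{B}$, a smooth projective $\mathbb{Z}$-scheme, with its canonical $\mathbf{B}$-torsor $\mathbf{G}\to\mathcal{B}$. Since $\mathcal{B}$ is proper over $A$ and $A$ is a Dedekind domain, the valuative criterion of properness gives $\mathcal{B}(A)=\mathcal{B}(k)$; and since $k$ is a field and $\mathbf{B}$ is special (an extension of a split torus by a split unipotent group, so every $\mathbf{B}$-torsor over a field is trivial), $\mathcal{B}(k)=\mathbf{G}(k)/\mathbf{B}(k)$. The exact sequence of pointed sets attached to $\mathbf{G}\to\mathcal{B}$,
\[ \mathbf{G}(A)\longrightarrow \mathcal{B}(A)\longrightarrow H^1_{\text{\'et}}(\mathrm{Spec}(A),\mathbf{B})\longrightarrow H^1_{\text{\'et}}(\mathrm{Spec}(A),\mathbf{G}), \]
together with the fact that the fibres of the connecting map are exactly the $\mathbf{G}(A)$-orbits on $\mathcal{B}(A)$, identifies $\mathbf{G}(A)\backslash\mathbf{G}(k)/\mathbf{B}(k)$ with $\ker\!\big(H^1_{\text{\'et}}(\mathrm{Spec}(A),\mathbf{B})\to H^1_{\text{\'et}}(\mathrm{Spec}(A),\mathbf{G})\big)$. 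Finally, writing $\mathbf{B}=\mathbf{T}\ltimes\mathbf{U}$ with $\mathbf{U}$ the unipotent radical and performing a d\'evissage along the $\mathbf{T}$-stable filtration of $\mathbf{U}$ with successive quotients $\mathbb{G}_a$ — using that $H^i_{\text{\'et}}(\mathrm{Spec}(A),\mathcal{L})=0$ for $i\geq 1$ and any line bundle $\mathcal{L}$ on the affine scheme $\mathrm{Spec}(A)$, applied to the twists of $\mathbb{G}_a$ by $\mathbf{T}$-torsors through the roots — one obtains that $H^1_{\text{\'et}}(\mathrm{Spec}(A),\mathbf{U})$ is trivial and that $\mathbf{T}\hookrightarrow\mathbf{B}$ induces a bijection $H^1_{\text{\'et}}(\mathrm{Spec}(A),\mathbf{T})\xrightarrow{\ \sim\ }H^1_{\text{\'et}}(\mathrm{Spec}(A),\mathbf{B})$ compatible with the maps to $H^1_{\text{\'et}}(\mathrm{Spec}(A),\mathbf{G})$. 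Combining these identifications gives the asserted bijection between cuspidal rational sector chambers and $\ker\!\big(H^1_{\text{\'et}}(\mathrm{Spec}(A),\mathbf{T})\to H^1_{\text{\'et}}(\mathrm{Spec}(A),\mathbf{G})\big)$.

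For the simply connected case, $\mathbf{T}\cong\mathbb{G}_m^{\mathbf{t}}$ gives $H^1_{\text{\'et}}(\mathrm{Spec}(A),\mathbf{T})\cong\mathrm{Pic}(A)^{\mathbf{t}}$, and it remains to see that the map to $H^1_{\text{\'et}}(\mathrm{Spec}(A),\mathbf{G})$ is trivial. A $\mathbf{T}$-torsor over $A$ becomes trivial over $k$, since $H^1_{\text{\'et}}(\mathrm{Spec}(k),\mathbf{T})\cong\mathrm{Pic}(k)^{\mathbf{t}}=0$, so its image in $H^1_{\text{\'et}}(\mathrm{Spec}(A),\mathbf{G})$ lies in $\ker\!\big(H^1_{\text{\'et}}(\mathrm{Spec}(A),\mathbf{G})\to H^1_{\text{\'et}}(\mathrm{Spec}(k),\mathbf{G})\big)$; and this last kernel is trivial (a generically trivial torsor under the reductive group $\mathbf{G}$ over the one-dimensional regular ring $A$ is trivial) by the relevant cases of the Grothendieck--Serre conjecture, e.g.\ for regular rings containing a field. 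Hence $\ker\!\big(H^1_{\text{\'et}}(\mathrm{Spec}(A),\mathbf{T})\to H^1_{\text{\'et}}(\mathrm{Spec}(A),\mathbf{G})\big)\cong\mathrm{Pic}(A)^{\mathbf{t}}$. Finally, when $\mathbb{F}$ is finite $\mathrm{Pic}(A)$ is finite, and the last bullet of Theorem~\ref{main theorem 2} (see also Theorem~\ref{main teo 2 new}) identifies the sector chambers $Q_1,\dots,Q_s$ of Theorem~\ref{witzel} for $\mathcal{S}=\{\p\}$ with the cuspidal rational sector chambers, so $s=\card\!\big(\mathrm{Pic}(A)^{\mathbf{t}}\big)$.

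The step I expect to be the main obstacle is the vanishing of $\ker\!\big(H^1_{\text{\'et}}(\mathrm{Spec}(A),\mathbf{G})\to H^1_{\text{\'et}}(\mathrm{Spec}(k),\mathbf{G})\big)$ for an arbitrary base field $\mathbb{F}$ (possibly imperfect or infinite). For $\mathbf{G}=\mathrm{SL}_n$ this is elementary: the $\mathrm{SL}_n$-torsor attached to a $\mathbf{T}$-torsor comes from a direct sum $\bigoplus_i\mathcal{L}_i$ of line bundles with $\bigotimes_i\mathcal{L}_i\cong\mathcal{O}$, which over the Dedekind domain $A$ is free, and the trivialization can be chosen compatibly with the determinant. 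This elementary observation also indicates an alternative proof of the count $\card(\mathrm{Pic}(A)^{\mathbf{t}})$ internal to the paper — via the fractional-ideal invariants $V_\Psi(h)$ of \S\ref{section structure orbit space} — that avoids appealing to Grothendieck--Serre; one should then only make sure that the identification used in the first step, of $\mathrm{I}_\emptyset$ with the set of $\mathbf{G}(A)$-orbits of chambers at infinity, is exactly what the results of \S\ref{section structure orbit space} provide.
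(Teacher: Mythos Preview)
Your proof is correct and follows essentially the same route as the paper: reduce to the double coset $\mathbf{G}(A)\backslash\mathbf{G}(k)/\mathbf{B}(k)$ via the identification of cuspidal rational sector chambers with $\mathbf{G}(A)$-orbits of chambers at infinity (this is exactly Proposition~\ref{lemma number of cusps}), then use the torsor exact sequence for $\mathbf{G}/\mathbf{B}$ together with properness and $H^1(\mathbf{B})\cong H^1(\mathbf{T})$ (the paper's Proposition~\ref{prop number of cusps equals some kernell}, which cites \cite[Exp.~XXVI, Cor.~2.3]{SGA3-3} for the latter identification, whereas you spell out the d\'evissage along the unipotent filtration---both are fine).

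The one point worth flagging is your ``main obstacle''. The paper resolves it not via the general Grothendieck--Serre conjecture but via Harder's older and sharper result \cite[Th.~2.2.1 and Cor.~2.3.2]{H1}: for a simply connected semisimple group scheme $\mathbf{G}$ over an arbitrary Dedekind domain $A$, one has $H^1_{\mathrm{Zar}}(\mathrm{Spec}(A),\mathbf{G})=0$ outright. Since $H^1_{\text{\'et}}(\mathrm{Spec}(A),\mathbf{T})=H^1_{\mathrm{Zar}}(\mathrm{Spec}(A),\mathbf{T})$ by Hilbert~90, the image of $h$ already lands in $H^1_{\mathrm{Zar}}(\mathrm{Spec}(A),\mathbf{G})=0$, so $\ker(h)$ is all of $\mathrm{Pic}(A)^{\mathbf{t}}$. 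This works for any base field $\mathbb{F}$ (imperfect, infinite, whatever), so your concern dissolves; citing Harder is both historically prior and avoids any delicacy about which cases of Grothendieck--Serre are available.
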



An application of Theorem \ref{main theorem 1} together with further investigations provides a characterization of the conjugacy classes of maximal unipotent subgroups in the $\mathcal{S}$-arithmetic group $\mathbf{G}(\mathcal{O}_{\mathcal{S}})$, and in its finite index subgroups (work in progress generalizing \cite[\S~10]{BravoLoisel}).
In particular, for any finite subset of places $\mathcal{S}$, the number of conjugacy classes of maximal unipotent subgroups of the $\mathcal{S}$-arithmetic subgroup $\mathbf{G}(\mathcal{O}_\mathcal{S})$ will be given in a further work thanks to an analogous Theorem to that of Theorem~\ref{main theorem 3}.

By using Theorem \ref{main theorem 2} and the theory of small categories without loops, which generalizes Bass-Serre theory in higher dimension, a description of the structure of the considered arithmetic groups as amalgamated sums of groups is given in \cite[\S~11]{BravoLoisel}. This is a work in progress.
These results have natural consequences on the (co)homology of these groups, as in the work of
Soul\'e~\cite[\S 3, Th. 5]{So}, Harder~\cite{H2} and Serre~\cite[Ch. II, \S 2.8]{S}.


\section{Notation on reductive group schemes and their Bruhat-Tits buildings}\label{build}

 Let $\mathcal{C}$ be a smooth, projective, geometrically integral curve defined over a field $\mathbb{F}$. Let $k$ be the function field of $\mathcal{C}$ and let $K=k_{\p}$ be its completion with respect to the discrete valuation map $\nu_{\p}$ defined from a closed point $\p$ of $\mathcal{C}$. We denote by $\mathcal{O}$ the ring of integers of $K$ and we fix a uniformizer $\pi \in \mathcal{O} \cap k$.  

In this section, we introduce and recall some classical definitions and notations used along the paper. 
We denote respectively by $\mathbb{G}_a$ and $\mathbb{G}_m$ the additive and the multiplicative group scheme.

Along the paper, we will have to use some canonical embeddings of Bruhat-Tits buildings, that are functorial with respect to field extensions or group embeddings. These functorial properties are introduced in \S~\ref{intro building embedding}. Thus, we recall, in \S~\ref{Aff build}, some elements of construction of the affine Bruhat-Tits building $\mathcal{X}(\mathbf{G},\ell)$ associated to the datum of any discretely valued field $(\ell,\nu)$ and any reductive split $\ell$-group $\mathbf{G}$.
Typically, $\ell/k$ will be a univalent extension such as $\ell = k$, $\ell = K = k_{\p}$ or an algebraic extension of degree $[\ell:k] < \infty$.
To take in account the field extensions, we introduce some definitions on rational combinatorial structures in \S~\ref{intro rational building}.

The visual boundary of a building will be an important tool to understand the quotient space $\mathbf{G}(A) \backslash \mathcal{X}(\mathbf{G},K)$. We introduce it in \S~\ref{intro visual boundary}. It is naturally linked with the spherical building structure deduced from the Borel-Tits structure results~\cite{BoTi}, introducing faces called vector faces whose we recall the definition in~\ref{intro vector faces}.

\subsection{Notation on Weyl Groups}\label{Weyl groups}
Along this paper we adopt the following notation on Weyl groups.
A character of $\mathbf{G}$ is an element of $\Chi(\mathbf{G})=\operatorname{Hom}_{k-\mathrm{group}}(\mathbf{G}, \mathbb{G}_m)$.
Choose an arbitrary maximal $k$-torus $\textbf{T}$ of $\textbf{G}$. Since $\mathbf{G}$ is $k$-split, $\mathbf{T}$ isomorphic to $\mathbb{G}_m^{\mathbf{t}}$ for some $\mathbf{t} \in \mathbb{Z}_{\geqslant 0}$.
Let $\Phi=\Phi(\mathbf{G}, \mathbf{T}) \subset \Chi(\mathbf{T})$ be the set of roots of $\textbf{G}$ with respect to $\textbf{T}$.

Let $\mathbf{B}$ be a Borel subgroup containing $\mathbf{T}$.
Since $\mathbf{B}$ is a solvable group, the set of unipotent elements $\mathbf{U}$ of $\textbf{B}$ is a subgroup.
Moreover, since $\mathbf{G}$ is a split $k$-group, there exists a $k$-isomorphism between $\textbf{B}$ and the semi-direct product $\textbf{T}\ltimes \textbf{U}$ \cite[10.6(4)]{BoA}.
From a Borel subgroup, we define a subset of positive roots $\Phi^+ = \Phi(\mathbf{T},\mathbf{B})$ as in \cite[20.4]{BoA}.
This induces a basis of simple roots $\Delta = \Delta(\mathbf{B})$ \cite[VI.1.6]{Bourbaki} of $\mathbf{G}$ relatively to the Borel $k$-subgroup $\mathbf{B}$.

For any $\alpha \in \Phi= \Phi(\mathbf{G}, \mathbf{T})$, let $\mathbf{U}_{\alpha}$ be the $\mathbf{T}$-stable unipotent subgroup of $\mathbf{G}$ defined from this.
Since $\mathbf{G}$ splits over $k$, every unipotent group $\mathbf{U}_{\alpha}$ is isomorphic to $\mathbb{G}_{a}$ via a $k$-isomorphism $\theta_{\alpha}: \mathbb{G}_{a} \rightarrow U_{\alpha}$ called an \'epinglage \cite[18.6]{BoA}.
We define $\mathbf{U}^{+}= \langle \mathbf{U}_{\alpha}: \alpha \in \Phi^{+} \rangle$ (resp. $\mathbf{U}^{-}= \langle \mathbf{U}_{\alpha}: \alpha \in \Phi^{-} \rangle$). By construction, $\mathbf{U}^+$ is the unipotent radical of $\mathbf{B}$.

Let $\mathbf{N}=\mathcal{N}_{\mathbf{G}}(\mathbf{T})$ be the normalizer of $\textbf{T}$ in $\mathbf{G}$.
We denote by $W^{\mathrm{sph}}$ the finite algebraic quotient $\mathbf{N}/\mathbf{T}$.
Since $\mathbf{G}$ is assumed to be $k$-split, $W^{\mathrm{sph}}$ is a constant $k$-group so that, for any extension $\ell/k$, we identify $W^{\mathrm{sph}}$ with the group of rational points $\mathbf{N}(\ell)/\mathbf{T}(\ell)$ \cite[21.4]{BoA}. It identifies with the Weyl group of the root system $\Phi=\Phi(\mathbf{G},\mathbf{T})$ via an action $\mathfrak{w}^{\mathrm{sph}}: \mathbf{N}(k) \to W(\Phi)$ satisfying $\mathbf{U}_{\mathfrak{w}^{\mathrm{sph}}(n)(\alpha)} = n \mathbf{U}_{\alpha} n^{-1}$ for any $\alpha \in \Phi(\mathbf{G},\mathbf{T})$ and any $n \in \mathbf{N}(k)$ \cite[14.7]{BoA}.
Moreover, the group $W^{\mathrm{sph}}$ is the group generated by the set of reflections $r_{\alpha}$ induced by $\alpha \in \Phi$ (c.f.~\cite[14.7]{BoA}).

Let $\Chi_*(\mathbf{T})= \operatorname{Hom}_{k-\text{gr}}(\mathbb{G}_m,\mathbf{T})$ be the $\mathbb{Z}$-module of cocharacter of $\mathbf{T}$.
Recall that there is a perfect dual pairing $\langle \cdot,\cdot \rangle: \Chi_*(\mathbf{T}) \times \Chi(\mathbf{T}) \to \mathbb{Z}$ \cite[8.11]{BoA}.
This pairing extends naturally over $\mathbb{R}$ into a perfect dual pairing between the finite dimensional $\mathbb{R}$-vector space $V_1 = \Chi_*(\mathbf{T}) \otimes_{\mathbb{Z}} \mathbb{R}$ and its dual $V_1^* = \Chi(\mathbf{T}) \otimes_{\mathbb{Z}} \mathbb{R}$.
Using the valuation $\nu=\nu_\p$, we have a group homomorphism $\rho: \mathbf{T}(K) \to V_1$ \cite[4.2.3]{BT2} given by 
\[
 \langle \rho(t), \chi \rangle = - \nu\big(\chi(t)\big),\qquad \forall t\in \mathbf{T}(K),\ \forall \chi \in \Chi(\mathbf{T}).
\]
We denote by $\mathbf{T}_b(K)$ its kernel.
Let $V_0=\left\{v \in V_1,\ \langle v,\alpha \rangle =0\ \forall \alpha \in \Phi\right\}$ and $V=V_1/V_0$.
Hence, for any root $\alpha \in \Phi$, the map $\langle \cdot,\alpha \rangle : V_1 \to \mathbb{R}$ induces a linear map $V \to \mathbb{R}$ and we denote by $\alpha(v)$ the value of this map at $v \in V$.
Let $\mathbb{A}$ be any $\mathbb{R}$-affine space over $V$.
One can define an action by affine transformations $\mathfrak{w}: \mathbf{N}(K) \to V \rtimes \mathrm{GL}(V) = \operatorname{Aff}(\mathbb{A})$ extending $\mathfrak{w}^{\mathrm{sph}}$ as in \cite[1.6]{L}.
The affine Weyl group $W^{\mathrm{aff}}$ is the image of $\mathfrak{w}$.
Since the kernel of $\mathfrak{w}$ is $\mathbf{T}_b(K)$, the action $\mathfrak{w}$ induces an isomorphism $\mathbf{N}(K) / \mathbf{T}_b(K) \simeq W^{\mathrm{aff}}$.
This group decomposes as $W^{\mathrm{aff}} =\mathbf{\Lambda} \ltimes W^{\mathrm{sph}}$, where $\mathbf{\Lambda}$ is a free abelian group.

\subsection{Chevalley group schemes over \texorpdfstring{$\mathbb{Z}$}{Z}}\label{intro Chevalley groups}

Let $S = \spec(\mathbb{Z})$.
According to \cite[Exp.~XXV, Cor.~1.3]{SGA3-3}, there exists a reductive $S$-group scheme $\mathbf{G}_\mathbb{Z}$ that is a $\mathbb{Z}$-form of $\mathbf{G}$, i.e.~$\mathbf{G}_\mathbb{Z} \otimes_S k = \mathbf{G}$.
There exists a maximal torus $\mathbf{T}_\mathbb{Z} \cong D_S(M)$ defined over $\mathbb{Z}$ (c.f.~\cite[Exp.~XXV, 2.2]{SGA3-3}). It follows from \cite[Exp.~IX, Def. 1.3]{SGA3-1} that, in fact, we have $\mathbf{T}_\mathbb{Z} \cong D_S(M) \cong \mathbb{G}_{m,\mathbb{Z}}^{\mathbf{t}}$ by local considerations. So, for $\mathbf{G}$, we pick the scalar extension $\mathbf{T} = \mathbf{T}_\mathbb{Z} \otimes_S k$ as maximal split torus.
With respect to this torus, the root groups $\mathbf{U}_\alpha$ for $\alpha \in \Phi(\mathbf{G},\mathbf{T})$ are defined over $\mathbb{Z}$ and there exists a Chevalley system (i.e.~a family of \'epinglages that are compatible to each other with respect to the group structure of $\mathbf{G}$) such that the \'epinglages $\theta_\alpha : \mathbb{G}_{a,S} \to \mathbf{U}_\alpha$ for $\alpha \in \Phi(\mathbf{G},\mathbf{T})$ are defined over $\mathbb{Z}$ \cite[Exp.~XXV, 2.6]{SGA3-3}.
There exists an injective homomorphism of $S$-groups $\rho: \mathbf{G}_{\mathbb{Z}} \rightarrow \mathrm{SL}_{n,\mathbb{Z}}$ (c.f.~\cite[9.1.19(c)]{BT}).

The Chevalley group scheme $\mathbf{G}_\mathbb{Z}$ provides, for any ring $R$, an abstract group $\mathbf{G}_\mathbb{Z}(R)$ that we denote $\mathbf{G}(R)$ for simplicity.

\begin{remark}
If $\mathbf{G}$ is semisimple, then $V_0 = 0$ according to \cite[8.1.8(ii)]{Springer}.
Moreover, we know that $\mathbf{T}_\mathbb{Z} \cong \big( \mathbb{G}_{m,\spec \mathbb{Z}} \big)^\mathfrak{t}$ by \cite[Exp.~XXII, 4.3.8]{SGA3-3} whenever $\mathbf{G}$ is simply connected, where $\mathfrak{t}$ denotes the rank of $\mathbf{G}$.
In this case, one can easily check from the definition and the dual pairing that $\mathbf{T}_b(K) = \mathbf{T}(\mathcal{O})$.
\end{remark}

Let $\mathbb{A}$ be the vector space $V$ seen as an affine space.
Consider the ground field $\mathbb{F} \subset A$ and let $\mathbf{N}_\mathbb{F}$ be the normalizer of $\mathbf{T}_\mathbb{F}= \mathbf{T}_\mathbb{Z} \otimes \mathbb{F}$ in $\mathbf{G}_\mathbb{F}= \mathbf{G}_\mathbb{Z} \otimes \mathbb{F}$.
For $\alpha \in \Phi$, define $m_\alpha = \theta_\alpha(1) \theta_{-\alpha}(-1) \theta_\alpha(1) \in \mathbf{N}(\mathbb{F})$.
Let $\mathbf{N}(\mathbb{F})^{\mathrm{sph}}$ be the abstract subgroup of $\mathbf{N}(\mathbb{F})$ generated by the $m_\alpha$.
Let $T_1 \cong (\mu_2(\mathbb{F}))^\mathbf{t}$ be the finite subgroup of $\mathfrak{t}$-copies of $\mu_1(\mathbb{F})$ contained in $\mathbf{T}(\mathbb{F}) \cong \big(\mathbb{G}_m(\mathbb{F})\big)^\mathbf{t}$.
We have that $\mathfrak{w}^{\mathrm{sph}}(m_\alpha) = r_\alpha$ for $\alpha \in \Phi$.
Moreover, for any $\alpha, \beta \in \Phi$, there exists a sign $\varepsilon \in \{\pm 1\}$ such that $m_\alpha m_\beta m_\alpha = m_{r_\alpha(\beta)} \big( r_\alpha(\beta)\big)^\vee(\varepsilon)$ according to \cite[Exp.~XXIII, 6.1]{SGA3-3}, where $\big( r_\alpha(\beta)\big)^\vee \in \Chi_*(\mathbf{T})$ denotes the coroot associated to $r_\alpha(\beta)$.
Hence the natural group homomorphism $\mathbf{N}(\mathbb{F})^{\mathrm{sph}} \to W^{\mathrm{sph}}$ is surjective with kernel contained in $T_1$.
In particular, the group $\mathbf{N}(\mathbf{F})^{\mathrm{sph}}$ is finite. For each $w \in W^{\mathrm{sph}}$, we denote by $n_w$ a chosen representative of $w$ in $\mathbf{N}(\mathbb{F})^{\mathrm{sph}}$ and, for each for $\alpha \in \Phi$, we denote by $n_\alpha = n_{r_\alpha}$. 
We denote this subset of representatives by $N^{\mathrm{sph}} = \lbrace n_w : w \in W^{\mathrm{sph}} \rbrace \subset \mathbf{G}(\mathbb{F})$.

\subsection{Affine Bruhat-Tits buildings of split reductive groups}\label{Aff build}

Let $(\ell,\nu)$ be a discretely valued field and $\mathbf{G}$ be a split reductive $\ell$-group.
We denote by $\mathcal{X}_\ell = \mathcal{X}(\mathbf{G},\ell)$ its Bruhat-Tits building.
Formally, it is a set $\mathcal{X}$ of points together with a collection $\mathcal{A}_\ell$ of subsets $\mathbb{A} \subset \mathcal{X}$ called apartments. Any apartment is an $\mathbb {R}$-affine space on a $\mathbb{R}$-vector space isomorphic to $V$ that is tessellated with respect to the reflections of the group $W^{\mathrm{aff}}$ (c.f.~\cite[\S 1.4]{Brown}).
Inside an apartment, the hyperplane of fixed points of a reflection in $W^{\mathrm{aff}}$ is called a wall.
A polysimplex (resp. maximal, resp. minimal) of this tessellation is called a face (resp. chamber, resp. vertex).

Let $\mathbb{A}_0$ be an affine space over $V$, together with an action of $\mathfrak{w}:\mathbf{N}(\ell)\to W^\mathrm{aff}$ on it, as defined in \S~\ref{Weyl groups}.
Let $v_0 \in \mathbb{A}_0$ be the vertex fixed by the subgroup $0 \rtimes W^{\mathrm{sph}}$ of $W^{\mathrm{aff}}$.
We realize roots $\alpha \in \Phi$ as linear maps by setting $\alpha(x) := \alpha(x-v_0)$ for any point $x \in \mathbb{A}_0$.
For any point $x \in \mathbb{A}_0$
and any root $\alpha \in \Phi$, we apply \cite[\S 7]{BT} to the valuation of the root group datum defined in \cite[4.2.3]{BT2}, in order to define 
\[\mathbf{U}_{\alpha,x}(\ell)= \left\lbrace  u \in \mathbf{U}_{\alpha}(\ell): \nu\left( \theta^{-1}_{\alpha}(u) \right) \geq -\alpha(x) \right\rbrace.\]
For any non-empty subset $\Omega\subset \mathbb{A}_0$ we can write $\mathbf{U}_{\alpha, \Omega}(\ell)=\bigcap_{x \in \Omega }\mathbf{U}_{\alpha,x}(\ell)$ (see \cite[7.1.1]{BT}). We define the unipotent group of $\Omega$ over $\ell$ as
\footnote{Note that, despite the notations, this does not define an algebraic group $\mathbf{U}_{\alpha,\Omega}$, nor $\mathbf{U}_{\Omega}$, but only a collection of topological groups.} \[\mathbf{U}_{\Omega}(\ell) = \left\langle \mathbf{U}_{\alpha, \Omega}(\ell): \alpha \in \Phi \right\rangle.\]

The Bruhat-Tits building of $(\mathbf{G},\ell)$ is the cellular complex defined by gluing of multiple copies of $\mathbb{A}$:
\[
\mathcal{X}(\textbf{G},\ell)= \textbf{G}(\ell) \times \mathbb{A}/\backsim,
\]
with respect to the equivalence relation $(g,x) \backsim (h,y)$ if and only if there is $n \in \mathbf{N}(\ell)$ such that $y=\mathfrak{w}(n)(x)$ and $g^{-1}hn \in \mathbf{U}_{x}(\ell)$ as defined in \cite[7.1.2]{BT}.
For any $g \in \mathbf{G}(\ell)$, the map $\iota_g: \mathbb{A}_0 \to \mathcal{X}(\mathbf{G},\ell)$ given by $x \mapsto (g,x)/\backsim$ is injective and we identify $\mathbb{A}_0$ with its image via $\iota_e$ where $e$ denotes the identity element of $\mathbf{G}(\ell)$ as in \cite[7.1.2]{BT}.
The apartment $\mathbb{A}_0 \subset \mathcal{X}$ is called the standard apartment associated to $\mathbf{T}$.
Thus, we formally see $g \cdot \mathbb{A}$ as an affine space over the vector space $g \cdot V$ corresponding to the torus $g \mathbf{T} g^{-1}$.

Note that the $\mathbb{R}$-affine space $\mathbb{A}_0$ only depends on $\mathbf{T}$ but its tessellation depends on the valued field $(\ell,\nu)$ and the $\ell$-group $\mathbf{G}$.

\subsection{Combinatorial structure and rationality questions in buildings}\label{intro rational building}


All the previous definitions and characterizations hold, in particular, when $\ell$ is the global field $k$, its completion $K$ with respect to the valuation $\nu = \nu_\p$ or a suitable extension of $k$.

Assume that $\ell/k$ is a univalent extension (see definition in \cite[1.6]{BT2}) and that $\mathbf{G}$ is a split reductive $k$-group. We build such an extension in Lemma~\ref{lem curve extension}. Then $\mathbf{G}_\ell := \mathbf{G} \otimes_k \ell$ is a split reductive $\ell$-group.
There is a natural building embedding $\iota: \mathcal{X}(\mathbf{G},k) \to \mathcal{X}(\mathbf{G},\ell)$ given by \cite[9.1.19 (a)]{BT}.

The collection $\mathcal{A}_\ell$ of $\ell$-apartments is given by the subsets $g \cdot \mathbb{A}_0$, for $g \in \mathbf{G}(\ell)$.
For instance, the buildings $\mathcal{X}_k = \mathcal{X}(\mathbf{G},k)$ and $\mathcal{X}_K = \mathcal{X}(\mathbf{G},K)$ have the same polysimplicial structure but not the same collections $\mathcal{A}_k$ and $\mathcal{A}_K$ of apartments (we detail this in Lemma~\ref{lema1}).
In other words, $\mathcal{X}(\mathbf{G},K)$ is the building $\mathcal{X}_k$ equipped with the complete apartment system \cite[2.3.7]{Rousseau77}.
 In fact, by rational conjugacy of maximal split tori, there is a natural one-to-one correspondence between the set of $\ell$-apartment and the set of maximal $\ell$-split tori.

In this context, a maximal $k$-split torus $\mathbf{T}$ induces a maximal $\ell$-split torus $\mathbf{T}_\ell := \mathbf{T} \otimes_k \ell$ of $\mathbf{G}_\ell$ and, therefore, the apartment $\mathbb{A}_0$ of $\mathcal{X}(\mathbf{G},k)$ associated to $\mathbf{T}$ naturally identifies via $\iota(\mathbb{A}_0) \cong \mathbb{A}_0$ with the apartment of $\mathcal{X}(\mathbf{G},\ell)$ associated to $\mathbf{T}_\ell$.
Thus any root $\alpha \in \Phi$ induces a linear map on $\mathbb{A}_0$.

We call an $\ell$-wall of $\mathbb{A}_0$, an hyperplane of the form $H_{\alpha,r} = \{x \in \mathbb{A}_0,\ \alpha(x) = r\}$ for $r \in \nu(\ell^\times)$.
The $\ell$-walls provides the tessellation of the apartment of $\mathcal{X}(\mathbf{G}_\ell,\ell)$ according to \cite[6.2.22, 9.1.19(b)]{BT}.
For $x \in \mathbb{A}_0$, we denote
$$\Phi_{x,\ell} = \{ \alpha \in \Phi,\ \exists r \in \nu(\ell^\times),\ H_{\alpha,r} \text{ is an } \ell\text{-wall}\}.$$
This is the subroot system\footnote{More generally, $\Phi_x$ is the set of roots $\alpha \in \Phi$ such that $\alpha(x) \in \Gamma'_\alpha$ where $\Gamma'_\alpha$ denotes the set of values defining the walls directed by the root $\alpha$. Here $\Gamma'_\alpha = \nu(\ell^\times)$ because we assume that $\mathbf{G}$ is split.} of $\Phi$ \cite[6.4.10]{BT} of the local spherical building at $x$.

We call an $\ell$-half apartment a half-space of $\mathbb{A}_0$ of the form $D_{\alpha,r} = \{x \in \mathbb{A}_0,\ \alpha(x) \geqslant r\}$ for $r \in \nu(\ell^\times)$.
A subset $\Omega \subseteq \mathbb{A}_0$ is said to be $\ell$-enclosed if it is the intersection of $\ell$-half apartment.
The $\ell$-enclosure of a subset $\Omega \subseteq \mathbb{A}_0$ is the intersection of $\ell$-half apartments that contain $\Omega$, as defined in \cite[7.4.11]{BT}.
We denote it by $\cl_\ell(\Omega)$ the $\ell$-enclosure of $\Omega$.

The set of $\ell$-half apartments $\Sigma$ induces an equivalence relation on the set of points of $\mathbb{A}_0$ given by
$$x \sim y \Longleftrightarrow \cl_\ell(\{x\}) = \cl_\ell(\{y\}). $$
The equivalence classes are called the $\ell$-faces.
A $\ell$-face with maximal dimension is called an $\ell$-chamber.
A $\ell$-face with codimension $1$ is called an $\ell$-panel.
An $\ell$-face with minimal dimension is a single point. It is called an $\ell$-vertex.
A vertex $x$ is called $\ell$-special if $\Phi_{x,\ell}= \Phi$.
The $\ell$-enclosure of an $\ell$-chamber is called an $\ell$-alcove.

If $\mathbb{A} = g \cdot \mathbb{A}_0$ is another $\ell$-apartment and $\Omega \subset \mathbb{A}$, we say that $\Omega$ is an $\ell$-vertex (resp. panel, alcove, wall, half-apartment) if so is $g^{-1} \cdot \Omega \subset \mathbb{A}_0$.
The enclosure of $\Omega$ is then $g \cdot \cl( g^{-1} \cdot \Omega)$ that is the intersection of $\ell$-half apartments of $\mathbb{A}$.
For simplicity, if $\ell=k$, we omit the ``$k$-'' in the following and we will say vertex, panel, alcove, wall, half-apartment, apartment instead of $k$-vertex, $k$-panel, etc.

Note that, if $\Omega$ is a subset of some apartment $\mathbb{A}$, then the enclosure $\cl(\Omega)$ is closed for the real topology and that, if $\Omega$ is a face of $\mathcal{X}$, then the enclosure $\cl(\Omega)$ is equal to the closure $\overline{\Omega}$ of $\Omega$ in $\mathcal{X}$ for the real topology.

\subsection{Spherical building: vector faces and sector faces}\label{intro spherical building}

\subsubsection{Spherical building structure}\label{intro vector building}

For $\alpha \in \Phi$, define the closed half-space $D_\alpha:=\alpha^{-1}(\mathbb{R}_+) \subset V$.
There is an equivalence relation on $V$ defined by \[x \sim y \Longleftrightarrow \left( \forall \alpha \in \Phi,\ x \in D_\alpha \Leftrightarrow y \in D_\alpha \right).\]
A vector face is an equivalence class for this relation.
A vector chamber $D$ of $V$ is a connected component of the space $V \smallsetminus \bigcup_{\alpha \in \Phi} \ker(r_{\alpha})$.
The action $\mathfrak{w}^{\mathrm{sph}}$ of the spherical Weyl group $W^{\mathrm{sph}}$ on $V$ is transitive on the set of vector chambers \cite[1.69]{Brown}.
We say that $D$ is a vector face of the apartment $g \cdot V$ if $g^{-1} \cdot D$ is a vector face of $V$.
From the BN-pair $\big(\mathbf{B}(\ell), \mathbf{N}(\ell)\big)$, one can define a (vectorial) building of $(\mathbf{G},\ell)$ (see~\cite[Cor. 10.6]{RouEuclidean}) whose faces are the vector faces on which $\mathbf{G}(k)$ acts strongly transitively.

\subsubsection{Standard vector faces}\label{intro vector faces}

Let $\Delta=\Delta(\mathbf{B})$ be the basis of the root system $\Phi=\Phi(\mathbf{G},\mathbf{T})$ associated to the Borel subgroup $\mathbf{B}$, as in section~\ref{Weyl groups}.

We define a standard vector chamber $D_0 \subset V$ associated to $(\mathbf{T},\mathbf{B})$ by
\[D_0= \{ x \in V,\ \alpha(x) > 0, \forall \alpha \in \Delta \}.\]

We denote by $W_\Theta$ the subgroup of the Weyl group $W(\Phi) = W^{\mathrm{sph}}$ generated by the reflections $r_\alpha$ for $\alpha \in \Theta$.
For any $\Theta \subset \Delta$, we denote by $\Theta^\perp = \{ x \in V,\ \alpha(x) = 0\ \forall \alpha \in \Theta \}.$
Note that $\Theta^\perp$ is the subspace of invariants of $V$ with respect to the group $W_\Theta$, i.e.~$\Theta^\perp = V^{W_\Theta} = \{x \in V,\ w(x) = x,\ \forall w \in W_\Theta\}$.
We define
\[\Phi_{\Theta}^0 = \{\alpha \in \Phi,\ \alpha \in \operatorname{Vect}_\mathbb{R}(\Theta)\}\]
the subset of $\Phi$ consisting in linear combination of elements in $\Theta$.
According to~\cite[VI.1.7, Cor.4 of Prop.20]{Bourbaki}, it is a root system with basis $\Theta$ and, by definition, its Weyl group\footnote{Because $\mathbf{G}$ is assumed to be split, $\Phi$ is reduced. But we could extend this definition to an arbitrary non reduced root system $\Phi$ by considering the sub-root system of non-divisible roots, which the same Weyl group \cite[VI.1.4~Prop.13(i)]{Bourbaki}.} is $W_\Theta$.

We denote by $\Phi_\Theta^+ = \Phi^+ \smallsetminus \Phi_{\Theta}^0$ and $\Phi_\Theta^- = \Phi^- \smallsetminus \Phi_{\Theta}^0$ so that $\Phi = \Phi_\Theta^+ \sqcup \Phi_\Theta^0 \sqcup \Phi_\Theta^-$.
We define the standard vector face of type $\Theta \subset \Delta$ by:
\begin{align*}
    D_0^\Theta &=\{x \in V,\ \alpha(x) > 0\ \forall \alpha \in \Delta \smallsetminus \Theta \text{ and } \alpha(x)=0\ \forall \alpha \in \Theta\}\\
    &= \{ x \in V,\ \alpha(x) > 0\ \forall \alpha \in \Phi_\Theta^+ \text{ and } \alpha(x)=0\ \forall \alpha \in \Phi_\Theta^0\}.
\end{align*}
Note that the notation is natural because $D_0 = D_0^\emptyset$ and the closure $\overline{D_0^\Theta}$ of $D_0^\Theta$ is the subset of invariants of the closure $\overline{D_0}$ of $D_0$ with respect to $W_\Theta$, i.e.~$\overline{D_0^\Theta} = \overline{D_0} \cap V^{W_\Theta}$.

\subsubsection{Sector faces and sector chambers}\label{intro sector faces}
We say that a subset $Q=Q(x,D)$ is an $\ell$-sector face of $\mathcal{X}(\mathbf{G},\ell)$ if there is an $\ell$-apartment $\mathbb{A} = g \cdot \mathbb{A}_0$, for some $g \in \mathbf{G}(\ell)$, a vector face $D$ of $g \cdot V$ and a point $x$ of $\mathbb{A}$ such that $Q = x +D$  (it is called a conical cell in \cite{Brown}).
The point $x=x(Q)$ is called the tip of $Q(x,D)$ and the vector face $D=D(Q)$ is called the direction of $Q(x,D)$.
Note that the tip of an $\ell$-sector face is not necessarily a vertex of $\mathcal{X}_\ell$.
We denote by $\overline{Q}(x,D)$ the closure in $\mathbb{A}$ of $Q(x,D)$.

If $D$ is a vector chamber, then $Q(x,D)$ is called an $\ell$-sector chamber.
We denote by $Q_0 = Q(v_0,D_0) \subset \mathbb{A}_0$ and we call it the standard $\ell$-sector chamber of $\mathcal{X}_\ell$. Let $L$ be the completion of $\ell$ with respect to $\nu$. Note that every $\ell$-sector chamber is an $L$-sector chamber but there may exist $L$-sector chambers of $\mathcal{X}(\mathbf{G},\ell)$ that are not $\ell$-sector chambers: i.e.~contained in the set of points $\mathcal{X}(\mathbf{G},\ell)$ but not in any $\ell$-apartment of $\mathcal{X}(\mathbf{G},\ell)$.
We denote by $\Sec(\mathcal{X}_\ell)$ the set of $\ell$-sector chambers of $\mathcal{X}_\ell$.

\subsection{Visual boundary}\label{intro visual boundary}

On the set of rays of an Euclidean building $\mathcal{X}$, we define an equivalence relation corresponding to parallelism, as in \cite[11.70]{Brown}.
We denote by $\partial_\infty \mathcal{X}$ the set of equivalence classes, which is called the visual boundary of $\mathcal{X}$.
Note that, if $\mathcal{X}$ is not equipped with a complete system of apartments, the definition of ray is restricted to those contained in some apartment.
This induces an equivalence relation on the set of sector faces by
\[
Q \sim Q' \text{ if and only if } D(Q)=D(Q'),
\]
For a subset $Q$ of $\mathcal{X}_k$ and, in particular, a sector face, we denote by $\partial_\infty Q$ the set of equivalence classes of geodesic rays contained in $Q$. We call it the visual boundary of $Q$.
Note that, for sector faces, the above equivalence relation becomes $Q \sim Q' \text{ if and only if } \partial_\infty Q = \partial_\infty Q'$.
We deduce from~\cite[11.75]{Brown} that there is a $1$ to $1$ correspondence $\partial_\infty Q \leftrightarrow D(Q)$ between the visual boundaries of sector faces and the vector faces.
Thus, for an arbitrary vector face $D$, one can denote by $\partial_\infty D := \partial_\infty Q(x,D)$ which does not depend on the tip $x$ of $Q(x,D)$.


Thus, visual boundaries of sector faces form a partition on $\partial_\infty \mathcal{X}$ and the visual boundaries of apartments form a system of apartments which equips $\partial_\infty \mathcal{X}$ of a structure of building whenever the system of apartment is ``good'' \cite[\S 11.8.4]{Brown}.
For instance, $\partial_\infty \mathcal{X}(\mathbf{G},K)$ is a spherical building since $\mathcal{X}(\mathbf{G},K)$ is equipped with a complete system of apartments.

\subsection{Functorial embeddings of the Bruhat-Tits buildings}\label{intro building embedding}

Let $\mathbf{G}$ be a connected reductive $k$-group and $\rho : \mathbf{G} \to \mathrm{SL}_{n,k}$ be a closed embedding.
Let $\mathcal{X}_k = \mathcal{X}(\mathbf{G},k)$ and $\mathcal{X}' = \mathcal{X}(\mathrm{SL}_n,k)$ be respectively the Bruhat-Tits building of $(\mathbf{G},k)$ and $(\mathrm{SL}_n,k)$.
By \cite[4.2.12]{BT2}, the Euclidean building $\mathcal{X}_k = \mathcal{X}(\mathbf{G},k)$ is isometric to $ \mathcal{X}(\rho(\mathbf{G}),k)$.
Moreover, by \cite[2.2.1]{Landvogt-functoriality}, there exists a $\rho(\mathbf{G}(K))$-equivariant closed immersion $j_K : \mathcal{X}(\rho(\mathbf{G}),K) \hookrightarrow \mathcal{X}'_K$, mapping the standard apartment $\mathbb{A}_0$ of $\mathcal{X}$ into the standard apartment $\mathbb{A}_0'$ of $\mathcal{X}'$ and multiplying the distances by a fixed constant (depending on $\rho$).
In fact, because the buildings $\mathcal{X}_k$ and $\mathcal{X}_K$ (resp. $\mathcal{X}'$ and $\mathcal{X}'_K$) only differ by their system of apartment, and since $\mathbb{A}_0$ and $\mathbb{A}'_0$ are $k$-apartments, there is a $\rho\big(\mathbf{G}(k)\big)$-equivariant closed embedding $j: \mathcal{X}\big( \rho(\mathbf{G}),k\big) \to \mathcal{X}'$ sending $\mathbb{A}_0$ onto $\mathbb{A}'_0$. If we furthermore assume that $\mathbf{G}_{\mathbb{Z}}$ is a $\mathbb{Z}$-Chevalley group scheme and that $\rho$ embeds the maximal torus $\mathbf{T}$ of $\mathbf{G}$ into a maximal torus of $\mathrm{SL}_n$ over $\mathbb{Z}$, then there is such a map $j$ that sends the special vertex $v_0 \in \mathbb{A}_0 \subset \mathcal{X}$ onto the special vertex $v'_0 \in \mathbb{A}'_0 \subset \mathcal{X}'$ \cite[9.1.19(c)]{BT}, by a descent of the valuation.

\section{The action of \texorpdfstring{$\mathbf{G}(k)$}{G(k)} on \texorpdfstring{$\mathcal{X}_k$}{X(G,k)}}\label{section action}

In all that follows, given a group $\Gamma$ acting on a set $X$ and a subset $Y \subseteq X$, we denote by $\stab_\Gamma(Y)$ the setwise stabilizer subgroup in $\Gamma$ of $Y$, and by $\fix_{\Gamma}(Y)$ the pointwise stabilizer subgroup in $\Gamma$ of $Y$.

The goal of this work is to study some properties of the abstract group $\mathbf{G}(A)$, where $A = \mathcal{O}_{\{\p\}}$ is as defined in \S~\ref{main}. In order to use the Borel-Tits theory, it is more convenient to work with $\mathbf{G}(k) \supset \mathbf{G}(A)$ where $k$ is the fraction field of $A$.
The Bruhat-Tits buildings are built from the completion $K$ of $k$ with respect to the valuation $\nu_\p$ and properties of the action of $\mathbf{G}(K)$ on $\mathcal{X}_K$ are well-known.
In this section, we recall some properties of this action that remain true for $\mathbf{G}(k)$ acting on $\mathcal{X}_k$ even if the valued field $(k,\nu_\p)$ is not complete.

At first, in order to work on $\mathcal{X}_k$ which is not equipped with a complete system of apartment in general, we introduce a subset of $k$-sectors built from the standard sector and from a subset of $\mathbf{G}(k)$ that covers the set of points of the building $\mathcal{X}_k$.

\begin{lemma}\label{Goodsector}
Let $\mathcal{X}$ be an affine building with a complete system of apartments and $\mathbb{A}$ be a given apartment of $\mathcal{X}$.
Let $x \in \mathcal{X}$ be any point and $x_0$ be a point of $\mathbb{A}$.
Then there exists a sector chamber $Q$ of $\mathbb{A}$ with tip $x_0$ and an apartment $\mathbb{A}'$ that contains $x$ and $Q$.
\end{lemma}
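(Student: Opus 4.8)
The plan is to use the two standard facts about buildings with a complete apartment system: first, any two points lie in a common apartment, and second, given a point $x$ and a chamber (or more generally a face) $C$, there is an apartment containing both $x$ and $C$ --- and, crucially, any two apartments sharing a chamber are isometric via an isomorphism fixing their intersection pointwise. So first I would pick an apartment $\mathbb{A}_1$ containing both $x$ and $x_0$; such an apartment exists since $\mathcal{X}$ has a complete system of apartments. Inside $\mathbb{A}_1$, the point $x$ determines (via parallelism, or just by choosing a vector chamber $D$ of $\mathbb{A}_1$ whose closure contains $x - x_0$ appropriately) a sector chamber $Q_1 = x_0 + D_1 \subseteq \mathbb{A}_1$ with tip $x_0$ that contains $x$; in fact one can take $Q_1$ so that $x \in \overline{Q_1}$, and then replacing $Q_1$ by a slightly generic choice one gets $x$ in $Q_1$ itself, but what we really want is a sector chamber of the \emph{prescribed} apartment $\mathbb{A}$, not of $\mathbb{A}_1$.

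The second step is therefore a transfer argument. The apartments $\mathbb{A}$ and $\mathbb{A}_1$ both contain $x_0$; I would choose a sector chamber $Q \subseteq \mathbb{A}$ with tip $x_0$ and a sector chamber $Q_1 \subseteq \mathbb{A}_1$ with tip $x_0$ that are ``opposite enough'' --- more precisely, one uses the standard fact (Brown, \emph{Buildings}, or \cite{BT}) that given a sector chamber $Q$ in $\mathbb{A}$ and any apartment $\mathbb{A}_1$, there is a subsector chamber $Q' \subseteq Q$ and an apartment $\mathbb{A}'$ containing both $Q'$ and a sector chamber of $\mathbb{A}_1$. That is not quite what we need either. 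The cleanest route: take the apartment $\mathbb{A}_1 \ni x, x_0$ from step one, pick \emph{any} sector chamber $Q$ of $\mathbb{A}$ with tip $x_0$, and then appeal to the ``sector chambers are contained in a common apartment'' axiom in the form: for the sector chamber $Q \subseteq \mathbb{A}$ and the sector chamber $Q_1 \subseteq \mathbb{A}_1$ pointing from $x_0$ toward $x$, there is an apartment $\mathbb{A}'$ containing $Q$ and a subsector chamber of $Q_1$. Since $x$ lies on the ray from $x_0$ into $Q_1$, after passing to a far enough subsector we would lose $x$; so instead I would run the argument with the \emph{germ} of $Q_1$ at $x_0$ together with the point $x$: the convex hull (enclosure) of $\{x\} \cup Q_1$ is contained in $\mathbb{A}_1$, and one can find an apartment $\mathbb{A}'$ containing this enclosure and also containing $Q$ by the retraction/isomorphism axioms.

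Concretely, here is the argument I would write down. By completeness of the apartment system, fix an apartment $\mathbb{A}_1$ with $x, x_0 \in \mathbb{A}_1$. Fix any sector chamber $Q = x_0 + D$ of $\mathbb{A}$ with tip $x_0$, where $D$ is a vector chamber of the vector space underlying $\mathbb{A}$. Now $x$ and $Q$ both lie in the union $\mathbb{A} \cup \mathbb{A}_1$, and I claim they lie in a common apartment: this is exactly the building axiom guaranteeing that a point and a sector chamber are always contained in a common apartment (see \cite[7.4.9]{BT}, or the analogous statement for conical cells in \cite{Brown}), valid because $\mathcal{X}$ has a complete apartment system. Applying this axiom to $x$ and $Q$ yields an apartment $\mathbb{A}'$ with $x \in \mathbb{A}'$ and $Q \subseteq \mathbb{A}'$, which is the desired conclusion. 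The main obstacle --- and the only place requiring care --- is checking that the ``point plus sector chamber in a common apartment'' statement is available in the stated generality: it does hold for complete apartment systems, and it is the right black box, but one must make sure the tip $x_0$ of $Q$ is allowed to be an arbitrary point of $\mathbb{A}$ rather than a vertex, which is fine since sector chambers with non-vertex tips are legitimate conical cells and the axiom is stated for these. An alternative, more self-contained route avoiding that black box would be to build $\mathbb{A}'$ by an explicit gluing: take the half-apartments of $\mathbb{A}$ containing $Q$ and graft onto them, along their walls, the appropriate half-apartments coming from $\mathbb{A}_1$ so that the result still contains $x$; but invoking the standard axiom is cleaner and is what I would do.
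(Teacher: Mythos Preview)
Your final argument has a genuine gap: the black box you invoke --- that any point and any sector chamber lie in a common apartment --- is \emph{false}, even for complete apartment systems. A tree already gives a counterexample. Take $\mathbb{A} = \mathbb{R}$, $x_0 = 0$, $Q = [0,\infty)$, and let $x$ be a point branching off $\mathbb{A}$ at $p = 5$. Any apartment (bi-infinite geodesic) containing $[0,\infty)$ must agree with $\mathbb{A}$ on all of $[0,\infty)$ and can only branch away from $\mathbb{A}$ at a point $\leq 0$; hence it never contains $x$. What \emph{is} available is the weaker statement that a point and a sector chamber share an apartment containing a \emph{subsector} of the latter --- but then the tip $x_0$ is lost, which is the whole point of the lemma. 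So choosing $Q$ arbitrarily cannot work; the choice of $Q$ must depend on $x$.

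The paper's argument makes exactly this dependence explicit. It first picks an apartment $\mathbb{A}_1$ containing $x$ and $x_0$ and a sector chamber $Q_1 \subset \mathbb{A}_1$ with tip $x_0$ and $x \in \overline{Q_1}$ (this is your first step, which is fine). Then, working in the spherical building of germs of sector chambers at $x_0$, it chooses a sector chamber $Q \subset \mathbb{A}$ with tip $x_0$ whose germ is \emph{opposite} to that of $Q_1$; the existence of such a $Q$ inside the prescribed apartment $\mathbb{A}$ is a short $W^{\mathrm{sph}}$-distance argument. Once $Q$ and $Q_1$ have opposite germs at $x_0$, property (CO) for complete apartment systems yields an apartment $\mathbb{A}'$ containing both $Q$ and $Q_1$, hence containing $x$. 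In the tree example this is exactly the choice $Q = (-\infty,0]$, the ray opposite to the one pointing toward $x$. Your exploratory remarks about opposition were on the right track; the mistake was abandoning them for a black box that does not exist.
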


\begin{proof}
Let $\mathbb{A}_1$ be an apartment of $\mathcal{X}$ that contains both $x$ and $x_0$ (existence is given by axiom~(A3) of affine buildings \cite[Def.~1.1]{Parreau}).
There is a sector chamber $Q_1$ of $\mathbb{A}_1$ with tip $x_0$ so that $x \in \overline{Q_1}$.

Consider the spherical building $\operatorname{germ}_{x_0}(\mathcal{X})$ of germs of sector faces at $x_0$ as defined in \cite[1.3.3]{Parreau}.
The spherical building $\operatorname{germ}_{x_0}(\mathcal{X})$ can be endowed with a $W^{\mathrm{sph}}$-distance $\delta: \mathcal{C}_{x_0} \times \mathcal{C}_{x_0} \to W^{\mathrm{sph}}$ \cite[4.81]{Brown} where $\mathcal{C}_{x_0}$ denotes the set of germs of sector chambers at $x_0$ that are the chambers of $\operatorname{germ}_{x_0}(\mathcal{X})$.

Fix a set of generators $S$ of the spherical Coxeter group $W^{\mathrm{sph}}$ so that $\left( W^{\mathrm{sph}},S \right)$ is a Coxeter system and recall that there is a length map $\ell : W^{\mathrm{sph}} \to \mathbb{Z}_{\geqslant 0}$ with respect to $S$.
Let $\mathcal{C}_{0} \subset \mathcal{C}_{x_0}$ be the set of germs of sector chambers of $\mathbb{A}$ at $x_0$.
Consider a germ of sector chamber $D \in \mathcal{C}_0$ that realizes the maximum of the map $D \mapsto \ell(\delta(D,D_1))$ over $\mathcal{C}_0$ where $D_1$ denotes the germ at $x_0$ of the sector chamber $Q_1$.
Denote by $w_0$ the unique element having maximal length $\ell(w_0) = \ell_0$ \cite[2.19]{Brown}.

Suppose by contradiction that $\ell(\delta(D,D_1)) < \ell_0$.
Then $\delta(D,D_1) \neq w_0$ and there exists $s \in S$ such that $\ell(s\delta(D,D_1))= 1 + \ell(\delta(D,D_1))$ since $\ell(\delta(D,D_1))$ is not maximal.
The set $\operatorname{germ}_{x_0}(\mathbb{A})$ of germs at $x_0$ of sector faces of $\mathbb{A}$ is an apartment of $\operatorname{germ}_{x_0}(\mathcal{X})$ \cite[1.3.3]{Parreau}.
In the apartment $\operatorname{germ}_{x_0}(\mathbb{A})$, there exists a germ of sector chamber $D' \in \mathcal{C}_0$ such that $s=\delta(D',D)$ \cite[4.84]{Brown}.
Hence $\ell(\delta(D',D_1)) = \ell(s\delta(D,D_1)) > \ell(\delta(D,D_1))$ \cite[5.1 (WD2)]{Brown}.
This contradicts the maximality assumption.

Thus, $\delta(D,D_1)=w_0$.
By \cite[Cor.~1.11(GG)]{Parreau}, there is an apartment of $\operatorname{germ}_x(\mathcal{X})$ containing both $D$ and $D_1$.
In this apartment, the germs $D$ and $D_1$ are opposite since $\delta(D,D_1)=w_0$.
Let $Q$ be a sector chamber of $\mathbb{A}$ with germ at $x_0$ equal to $D$.
Then $Q$ and $Q_1$ are opposite sector chambers by definition.
Thus, by property~(CO) of affine buildings with a complete system of apartments \cite[Prop.~1.12(CO)]{Parreau}, there exists an apartment $\mathbb{A}'$ containing $Q$ and $Q_1$.
But since $x$ is in $Q_1$, we get that $x \in \mathbb{A}'$.
\end{proof}

Unipotents groups $\textbf{U}_{\Omega}(K)$ are very useful to analyze the action of $\mathbf{G}(K)$ on $\mathcal{X}_K$, since these groups act transitively on the set of apartments containing $\Omega\subset \mathcal{X}$ (c.f.~\cite[9.7]{L}).
Indeed, for $\Omega= Q_{0}$ we have that $\mathbf{U}_{\Omega}(K)$ is equal to a subgroup of $\mathbf{U}^{+}(K)$, because $\mathbf{U}_{\alpha, \Omega}(K)= \lbrace 0 \rbrace$, if $\alpha \in \Phi^{-}$.
It follows from \cite[Exp.XXV~2.5]{SGA3-3} that, using the \'epinglages $\theta_{\alpha}$, there exists an isomorphism of $\mathbb{Z}$-schemes between $\mathbf{U}^{+}$ and $\prod_{\alpha \in \Phi^{+}} \mathbf{U}_{\alpha}$.
Since $\mathbb{G}_a(k)$ is dense in $\mathbb{G}_a(K)$, we can deduce that $\mathbf{U} ^{+}(k)$ is dense in $\mathbf{U}^{+}(K)$.
From this density, we get that the points of the respective buildings $\mathcal{X}(\mathbf{G},k)$ and $\mathcal{X}(\mathbf{G},K)$ are the same whereas the system of apartments may be different (whenever there exists a maximal $K$-split torus that is not defined over $k$).
More precisely, we obtain all the points of $\mathcal{X}_K$ translating a single $k$-sector as follows:

\begin{lemma}\label{lema1}
 Let $k$ be a discretely valued field with completion $K$ and $\mathbf{G}$ be a split reductive $k$-group. 
Denote respectively by $\mathcal{X}_K=\mathcal{X}(\mathbf{G},K)$ (resp. $\mathcal{X}_k=\mathcal{X}(\mathbf{G},k)$) the Bruhat-Tits building associated to $\mathbf{G}$ over $K$ (resp. $k$).
Fix a maximal split torus $\mathbf{T}$ of $\mathbf{G}$, a Borel subgroup $\mathbf{B}$ containing $\mathbf{T}$ with unipotent radical $\mathbf{U}^+$ and let $Q_0$ be the standard $k$-sector chamber defined in \S \ref{build}, associated to $(\mathbf{B},\mathbf{T})$. Then the points of $\mathcal{X}_k$ and $\mathcal{X}_K$ are the same and there exists a subset $N^{\mathrm{sph}}$ in $\mathbf{N}_\mathbf{G}(\mathbf{T})(k)$ of representatives
\footnote{Typically, $N^{\mathrm{sph}}$ will be the system of representative of $W^{\mathrm{sph}}$ in $\mathbf{N}(\mathbb{F})$ introduced in \S \ref{intro Chevalley groups}.}
of $W^{\mathrm{sph}}$ such that this set is
\[
N^{\mathrm{sph}} \mathbf{U}^{+}(k) N^{\mathrm{sph}} \cdot\, \overline{Q_0}.
\]
\end{lemma}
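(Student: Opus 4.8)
The plan is to show the two assertions separately: first that $\mathcal{X}_k$ and $\mathcal{X}_K$ have the same underlying point set, and second that these points are exhausted by the $N^{\mathrm{sph}}\mathbf{U}^+(k)N^{\mathrm{sph}}$-translates of $\overline{Q_0}$. For the first part, I would argue by density: the building $\mathcal{X}_K$ is the union of its apartments, which are in bijection with the maximal $K$-split tori of $\mathbf{G}$, while $\mathcal{X}_k$ uses only those apartments coming from maximal $k$-split tori. Since the system of apartments of $\mathcal{X}_K$ is complete, it suffices to show every point of $\mathcal{X}_K$ lies in some apartment $g\cdot\mathbb{A}_0$ with $g\in\mathbf{G}(k)$. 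The construction of $\mathcal{X}(\mathbf{G},\ell)$ recalled in \S\ref{Aff build} realizes it as $\mathbf{G}(\ell)\times\mathbb{A}/\!\backsim$, and the only obstruction to descending from $\mathbf{G}(K)$ to $\mathbf{G}(k)$ is that one needs $g\in\mathbf{G}(k)$; here the density of $\mathbf{U}^+(k)$ in $\mathbf{U}^+(K)$ (via the \'epinglages $\theta_\alpha$ and the isomorphism $\mathbf{U}^+\cong\prod_{\alpha\in\Phi^+}\mathbf{U}_\alpha$, already invoked in the paragraph preceding the statement) combined with the fact that $\mathbf{U}_\Omega(K)$ acts transitively on apartments containing $\Omega$ (c.f.~\cite[9.7]{L}) lets one replace a $K$-apartment through a given point by a $k$-apartment through the same point, by moving via a suitable element of $\mathbf{U}^+(k)$ close enough to the needed element of $\mathbf{U}^+(K)$. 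So the point sets coincide.

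For the second part, fix $x\in\mathcal{X}_k=\mathcal{X}_K$. Apply Lemma~\ref{Goodsector} to the standard apartment $\mathbb{A}=\mathbb{A}_0$, the point $x$, and $x_0=v_0$: we obtain a sector chamber $Q$ of $\mathbb{A}_0$ with tip $v_0$ and an apartment $\mathbb{A}'$ containing both $x$ and $Q$. Now $Q$ is a sector chamber of $\mathbb{A}_0$ with tip $v_0$, so its direction is $n\cdot D_0$ for some $n\in\mathbf{N}(k)$ representing an element $w\in W^{\mathrm{sph}}$ — concretely, using the representatives from \S\ref{intro Chevalley groups} we may take $n\in N^{\mathrm{sph}}$, so $Q=n\cdot Q_0$ and $\overline{Q}=n\cdot\overline{Q_0}$. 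Since $x\in\overline{Q}$ already, the remaining task is to produce a group element in $N^{\mathrm{sph}}\mathbf{U}^+(k)N^{\mathrm{sph}}$ sending $\overline{Q_0}$ to a subset containing $x$; but in fact since $x\in\overline{Q}=n\cdot\overline{Q_0}$, writing $n=n\cdot e\cdot e$ with $e\in\mathbf{U}^+(k)$ the identity would already suffice if $\overline{Q}$ were a $k$-sector chamber in the sense of lying in a $k$-apartment. The subtlety is that $\mathbb{A}'$ need not be a $k$-apartment, so $x$ need not lie in any $k$-translate of $\mathbb{A}_0$ a priori — this is precisely why the first part (equality of point sets) is needed, and why we must also conjugate $\overline{Q_0}$ so that it is large enough to swallow $x$.

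The key step, and the main obstacle, is reconciling the apartment $\mathbb{A}'$ from Lemma~\ref{Goodsector} (which is only a $K$-apartment) with the $k$-structure. I expect the argument to run as follows: by transitivity of $\mathbf{U}_{Q}(K)$ on $K$-apartments containing the sector chamber $Q$, there is $u\in\mathbf{U}_Q(K)$ with $u\cdot\mathbb{A}_0=\mathbb{A}'$, hence $u^{-1}\cdot x\in\mathbb{A}_0$; then $u^{-1}\cdot x$ lies in some $k$-half-apartment structure and, decomposing $u^{-1}\cdot x$ in terms of the $W^{\mathrm{aff}}$-action, one shows $u^{-1}\cdot x\in n'\cdot\overline{Q_0}$ for some $n'\in N^{\mathrm{sph}}$. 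Since $\mathbf{U}_Q(K)$ is conjugate under $N^{\mathrm{sph}}$ to a subgroup of $\mathbf{U}^+(K)$ (as $\mathbf{U}_{\alpha,Q_0}(K)=\{0\}$ for $\alpha\in\Phi^-$, noted just before the lemma, and $Q=n Q_0$), and using the density of $\mathbf{U}^+(k)$ in $\mathbf{U}^+(K)$ together with the fact that only finitely many root-group coordinates of $u$ are relevant to locate the point $x$ in $\overline{Q_0}$ (the relevant "combinatorial" data is a closed condition unchanged under small perturbation, since $x$ is interior enough after enclosure), one replaces $u$ by an element of $\mathbf{U}^+(k)$ and collects the two $N^{\mathrm{sph}}$-conjugations, landing $x$ in $N^{\mathrm{sph}}\mathbf{U}^+(k)N^{\mathrm{sph}}\cdot\overline{Q_0}$. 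The care needed is exactly in making the "density suffices" step rigorous: one must check that the position of $x$ relative to the walls and faces that determine membership in $\overline{Q_0}$ is detected by finitely many valuations, each of which is locally constant on $\mathbf{U}^+(K)$, so that a $k$-rational approximation of $u$ does the job.
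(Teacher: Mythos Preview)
Your overall approach is the paper's: apply Lemma~\ref{Goodsector}, conjugate the resulting sector $Q\subset\mathbb{A}_0$ to $Q_0$ via some $n_{w'}\in N^{\mathrm{sph}}$, use transitivity of $\mathbf{U}_{Q_0}(K)\subset\mathbf{U}^+(K)$ on apartments containing $Q_0$ to push $n_{w'}\cdot x$ into $\mathbb{A}_0$, replace the $K$-unipotent by a $k$-unipotent via density, and conjugate once more by $N^{\mathrm{sph}}$ to land in $\overline{Q_0}$.

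Two corrections. First, your second paragraph asserts ``$x\in\overline{Q}$ already'': this is false. Lemma~\ref{Goodsector} only gives that $x$ and $Q$ lie in a common $K$-apartment $\mathbb{A}'$; it does not place $x$ in $\overline{Q}$ (and note that $\overline{Q}$ already sits in the $k$-apartment $\mathbb{A}_0$, so the ``subtlety'' you then name---that $\overline{Q}$ might not lie in a $k$-apartment---is misidentified). Your third paragraph silently drops this error and carries the correct argument; the second paragraph should simply be discarded.

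Second, the density step is cleaner than you make it. Rather than arguing that ``combinatorial data is a closed condition unchanged under small perturbation,'' observe (as the paper does, citing \cite[12.12(ii)]{L}) that the stabilizer in $\mathbf{U}^+(K)$ of the point $x':=u\, n_{w'}\cdot x\in\mathbb{A}_0$ is an \emph{open} subgroup. The coset $\mathrm{Stab}_{\mathbf{U}^+(K)}(x')\cdot u$ is then open and nonempty, hence meets the dense subgroup $\mathbf{U}^+(k)$, giving $u'\in\mathbf{U}^+(k)$ with $u'n_{w'}\cdot x=x'$ immediately---no appeal to interiority or finitely many coordinates is needed.
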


\begin{proof}
Let $x \in \mathcal{X}_K$ be any point.
Let $v_0$ be the tip of $Q_0$. Let $\mathbb{A}'$ be a $K$-apartment and $Q'\subset \mathbb{A}_0$ a $K$-sector that satisfy Lemma~\ref{Goodsector} applied to $\mathbb{A}_0$, $v_0$ and $x$ in $\mathcal{X}_K$ (the system of apartments of $\mathcal{X}_K$ is complete since $K$ is complete \cite[2.3.7]{Rousseau77}).
Let $N^{\mathrm{sph}}\subset \mathbf{N}(\mathbb{F}) \subset \mathbf{N}(k)$ be the lift of $W^{\mathrm{sph}}$ as defined in section~\ref{intro Chevalley groups}.
Since $Q_0$ and $Q'$ have the same tip, by transitivity of the action of $W^{\mathrm{sph}}$ on the Coxeter complex, there exists $w' \in W^{\mathrm{sph}}$ such that $Q_0 = n_{w'} \cdot Q'$.
Since $\mathbb{A}_0$ and $n_{w'} \cdot \mathbb{A}'$ contain $Q_0$, by \cite[Cor.~9.7]{L}, there is $u \in \mathbf{U}_{Q_0}(K) \subset \mathbf{U}^+(K)$ satisfying $x' = (u n_{w'}) \cdot x \in \mathbb{A}_0$.
We know that the stabilizer of $x'$ in $\mathbf{U}^{+}(K)$ is an open subgroup \cite[12.12(ii)]{L} and that $\mathbf{U}^{+}(K)$ acts continuously on $\mathcal{X}_K$.
Hence, by density of $\mathbf{U}^{+}(k)$ in $\mathbf{U}^{+}(K)$, we can deduce that there exists $u' \in \mathbf{U}^{+}(k)$ such that $x'= (u'n_{w'}) \cdot x$.
Since $x'$ belongs to some closure of a sector chamber of $\mathbb{A}_0$ with tip $x_0$, we can deduce that $(n_{w}u'n_{w'}) \cdot x \in \overline{Q_0}$, for some $w \in W^{\mathrm{sph}}$.
\end{proof}

We recall the following classical result relating the standard parabolic subgroups and the standard vector chambers thanks to the spherical Bruhat decomposition.

\begin{lemma}\label{lemma stab standard face}
Let $\mathbf{B}$ be a Borel subgroup of $\mathbf{G}$ inducing a basis $\Delta$ of $\Phi$. Consider the action of $\mathbf{G}(k)$ on its spherical building. Let $\Theta \subset \Delta$.
The stabilizer in $\mathbf{G}(k)$ of the standard vector face $D_0^\Theta$ of type $\Theta$ is the group $\mathbf{P}_\Theta(k)$ of rational points of the standard parabolic subgroup associated to $\Theta$.
\end{lemma}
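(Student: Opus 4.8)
The plan is to identify the stabilizer of $D_0^\Theta$ via the spherical Bruhat decomposition and the standard theory of parabolic subgroups in the BN-pair $\big(\mathbf{B}(k),\mathbf{N}(k)\big)$ of $\mathbf{G}(k)$. First I would recall that the vector faces of the apartment $V$ are in $W^{\mathrm{sph}}$-equivariant bijection with the cosets $W^{\mathrm{sph}}/W_\Theta$ (as $\Theta$ ranges over subsets of $\Delta$), and that the closed vector chamber $\overline{D_0}$ is a strict fundamental domain for the $W^{\mathrm{sph}}$-action on $V$. Under this identification, the standard vector face $D_0^\Theta$ corresponds to the trivial coset $W_\Theta$, and the subgroup of $W^{\mathrm{sph}}$ fixing $D_0^\Theta$ (equivalently, fixing pointwise the subspace $\Theta^\perp = V^{W_\Theta}$ on which $D_0^\Theta$ lies) is exactly $W_\Theta$ — this is the well-known fact that the isotropy subgroups of the $W^{\mathrm{sph}}$-action on the Coxeter complex are the standard parabolic subgroups $W_\Theta$ of the Coxeter group, cf.~\cite[VI.1.10]{Bourbaki} or \cite[1.69]{Brown}.

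Next I would transfer this to $\mathbf{G}(k)$. Let $P := \stab_{\mathbf{G}(k)}(D_0^\Theta)$. Clearly $\mathbf{B}(k) \subseteq P$, since $\mathbf{B}(k)$ fixes the standard chamber $D_0 = D_0^\emptyset$ and hence stabilizes every face of its closure, in particular $D_0^\Theta \subset \overline{D_0}$. Moreover, for each $\alpha \in \Theta$, the representative $m_\alpha = \theta_\alpha(1)\theta_{-\alpha}(-1)\theta_\alpha(1) \in \mathbf{N}(\mathbb{F})$ introduced in \S\ref{intro Chevalley groups} acts on $V$ as the reflection $r_\alpha$, which fixes $\Theta^\perp$ pointwise and therefore stabilizes $D_0^\Theta$; hence $n_\alpha \in P$ for all $\alpha \in \Theta$. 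It follows that $P$ contains the subgroup $\langle \mathbf{B}(k), n_\alpha : \alpha \in \Theta\rangle$, which by the standard description of parabolic subgroups of a group with a BN-pair (Tits) equals $\mathbf{P}_\Theta(k)$, the group of $k$-points of the standard parabolic associated to $\Theta$; this parabolic is the disjoint union $\bigsqcup_{w \in W_\Theta} \mathbf{B}(k)\, n_w\, \mathbf{B}(k)$.

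For the reverse inclusion $P \subseteq \mathbf{P}_\Theta(k)$, I would use the spherical Bruhat decomposition $\mathbf{G}(k) = \bigsqcup_{w \in W^{\mathrm{sph}}} \mathbf{B}(k)\, n_w\, \mathbf{B}(k)$. Take $g \in P$ and write $g = b_1 n_w b_2$ with $b_1, b_2 \in \mathbf{B}(k)$ and $w \in W^{\mathrm{sph}}$; since $b_1, b_2$ already stabilize $D_0^\Theta$, the element $n_w$ must also stabilize $D_0^\Theta$, i.e.~$\mathfrak{w}^{\mathrm{sph}}(n_w) = w$ fixes $D_0^\Theta$, hence $w \in W_\Theta$ by the computation in the first paragraph, giving $g \in \mathbf{P}_\Theta(k)$. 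Combining the two inclusions yields $\stab_{\mathbf{G}(k)}(D_0^\Theta) = \mathbf{P}_\Theta(k)$.

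The main obstacle, such as it is, is purely bookkeeping: one must be careful that ``stabilizer of the vector face $D_0^\Theta$'' (as a setwise stabilizer of a subset of $V$) coincides with the subgroup of $\mathbf{G}(k)$ whose image in $W^{\mathrm{sph}}$ fixes $D_0^\Theta$, which in turn requires knowing that $\mathbf{T}(k)$ (acting trivially on $V$) and the unipotent part of $\mathbf{B}(k)$ all stabilize $D_0^\Theta$, and that the $\mathbf{G}(k)$-action on the spherical building factors appropriately through $\mathfrak{w}^{\mathrm{sph}}$ on each apartment. All of this is already packaged in the references cited in \S\ref{intro vector building} (the BN-pair construction of the vectorial building, \cite[Cor.~10.6]{RouEuclidean}), so the proof reduces to invoking the Bruhat decomposition plus the identification $\operatorname{Fix}_{W^{\mathrm{sph}}}(D_0^\Theta) = W_\Theta$.
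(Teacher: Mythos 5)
Your proposal is correct and follows essentially the same route as the paper: both identify $\stab_{\mathbf{G}(k)}(D_0^\Theta)$ via the spherical Bruhat decomposition and the BN-pair description of standard parabolic subgroups, the paper simply citing \cite[\S 6.2.4]{Brown} and \cite[21.16]{BoA} for the steps you spell out by hand. No gaps.
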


\begin{proof}
Consider the spherical Bruhat decomposition $\mathbf{G}(k) = \mathbf{B}(k) N^{\mathrm{sph}} \mathbf{B}(k)$ where $N^{\mathrm{sph}}$ is a lift in $\mathbf{N}(k)$ of the spherical Weyl group $W^{\mathrm{sph}} = \mathbf{N}(k) / \mathbf{T}(k)$.
According to~\cite[§6.2.4]{Brown}, the stabilizer of $D_0^\Theta$ is the standard parabolic subgroup associated to this Bruhat decomposition, that is
\[ \stab_{\mathbf{G}(k)}(D_0^\Theta) = \mathbf{B}(k) N_\Theta \mathbf{B}(k)\]
where $N_\Theta \subset N^{\mathrm{sph}}$ is a lift of $W_\Theta$.
It is precisely the rational points of the standard parabolic subgroup $\mathbf{P}_\Theta$ \cite[21.16]{BoA}.
\end{proof}

The strong transitive action of $\mathbf{G}(k)$ on its spherical building induces, in particular, a transitive action on the $k$-vector chambers.
We will make a strong use of the subset $H = \{n^{-1}u,\ n \in N^{\mathrm{sph}},\ u \in \mathbf{U}^+(k)\}$, which is not a subgroup in general. The following lemma explains how any $k$-vector face can be built from $H$ and the standard vector faces.

\begin{lemma}\label{lemma WUW}
For any $k$-vector face $D$ there exists $n \in N^{\mathrm{sph}}$, $u \in \mathbf{U}^{+}(k)$ and $\Theta \subset \Delta$ such that $u \cdot D = n \cdot D_0^\Theta$.
\end{lemma}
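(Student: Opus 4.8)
The plan is to reduce the statement to the transitivity properties of the action of $\mathbf{G}(k)$ on its spherical building together with the Bruhat decomposition already recalled in Lemma~\ref{lemma stab standard face}. First I would recall that every $k$-vector face $D$ lies in some $k$-apartment $g \cdot \mathbb{A}_0$, hence is of the form $g \cdot D_0^\Theta$ for a unique type $\Theta \subseteq \Delta$ and some $g \in \mathbf{G}(k)$: indeed the strong transitivity of the $\mathbf{G}(k)$-action on its (vectorial) spherical building, as stated in \S\ref{intro vector building}, gives an element of $\mathbf{G}(k)$ carrying the standard vector chamber of the standard apartment onto any chosen vector chamber adjacent to $D$, and then the type $\Theta$ is determined by which walls contain $D$.

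Next I would invoke the spherical Bruhat decomposition $\mathbf{G}(k) = \mathbf{U}^{+}(k)\, N^{\mathrm{sph}}\, \mathbf{B}(k)$ (equivalently $\mathbf{B}(k) N^{\mathrm{sph}} \mathbf{B}(k)$, since $\mathbf{B} = \mathbf{T} \ltimes \mathbf{U}^+$ and $\mathbf{T}(k) \subset \mathbf{B}(k)$ stabilizes $D_0^\Theta$ as part of $\mathbf{P}_\Theta(k)$). Writing the element $g$ above as $g = u\, n\, b$ with $u \in \mathbf{U}^{+}(k)$, $n \in N^{\mathrm{sph}}$ and $b \in \mathbf{B}(k)$, and using that $b$ fixes $D_0^\Theta$ by Lemma~\ref{lemma stab standard face}, we obtain
\[
D = g \cdot D_0^\Theta = u n b \cdot D_0^\Theta = u n \cdot D_0^\Theta .
\]
Hence $u^{-1} \cdot D = n \cdot D_0^\Theta$; replacing $u$ by $u^{-1}$ (which is again in $\mathbf{U}^+(k)$) gives exactly the asserted equality $u \cdot D = n \cdot D_0^\Theta$ for suitable $u \in \mathbf{U}^+(k)$, $n \in N^{\mathrm{sph}}$ and $\Theta \subseteq \Delta$.

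The only genuine subtlety, and the step I would be most careful about, is the identification of the type $\Theta$ and the choice of representatives. One must make sure that the $g$ produced by strong transitivity can be chosen in $\mathbf{G}(k)$ (not just in $\mathbf{G}(K)$): this is exactly the content of the BN-pair structure over $k$ quoted in \S\ref{intro vector building} via \cite[Cor.~10.6]{RouEuclidean}, so that the whole argument takes place at the level of the combinatorial spherical building of $\mathbf{G}$ over $k$. With that in hand, the decomposition is purely a matter of the Bruhat cell to which $g$ belongs, and the representative $n$ of the Weyl element can be taken in the fixed lift $N^{\mathrm{sph}} \subset \mathbf{N}(\mathbb{F})$ because, after multiplying by an element of $\mathbf{T}(k) \subset \mathbf{B}(k)$, only the class in $W^{\mathrm{sph}} = \mathbf{N}(k)/\mathbf{T}(k)$ matters. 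No further computation is needed.
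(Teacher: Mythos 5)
Your proposal is correct and follows essentially the same route as the paper: strong transitivity of $\mathbf{G}(k)$ on its spherical building, the spherical Bruhat decomposition, and Lemma~\ref{lemma stab standard face} to absorb the $\mathbf{B}(k)$-factor. The only (immaterial) difference is that the paper first treats vector chambers (the case $\Theta=\emptyset$) and then deduces the general face case by passing to a face of $D_0$, whereas you handle arbitrary faces directly via $\mathbf{B}(k)\subseteq\mathbf{P}_\Theta(k)$.
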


\begin{proof}
Let $D$ be a $k$-vector chamber.
Since $\mathbf{G}(k)$ acts strongly transitively on its spherical building (c.f.~§\ref{intro vector building}), there is $g \in \mathbf{G}(k)$ such that $ D = g \cdot D_0$.
Hence, by the spherical Bruhat decomposition (c.f.~\cite[21.15]{BoA}), $g$ can be written as $g=u^{-1} n_{w} b$ with $u \in \mathbf{U}^+(k)$, $w \in W^{\mathrm{sph}}$ and $b \in \mathbf{B}(k)$. 
Since $b \in \stab_{\mathbf{G}(k)}(D_0)$ according to Lemma~\ref{lemma stab standard face}, we deduce that $D = u^{-1} n_w \cdot D_0$, whence the result follows with $\Theta = \emptyset$.

Now, let $\widetilde{D}$ be a $k$-vector face. It is the face of some $k$-vector chamber $D$.
Thus, we have shown that there exists $u \in \mathbf{U}^+(k)$ and $n \in N^{\mathrm{sph}}$ such that $u \cdot D = n \cdot D_0$.
Thus $n^{-1} u \cdot \widetilde{D}$ is a face of $D_0$, whence there exists $\Theta\subset \Delta$ such that $n^{-1} u \cdot \widetilde{D} = D_0^\Theta$.
\end{proof}

We consider the visual boundary of $\mathcal{X}_k$ as introduced in \S \ref{intro visual boundary}.
Note that two sector chambers $Q$ and $Q'$ are equivalent if, and only if, $Q \cap Q'$ contains a common subsector chamber of $\mathcal{X}$ or, equivalently, if $Q$ and $Q'$ define the same chamber $\partial_\infty Q$ in the spherical building at infinity $\partial_{\infty}(\mathcal{X})$ (c.f.~\cite[11.77]{Brown}).

We denote by $\partial_\infty \Sec(\mathcal{X}_k)$ the set of visual boundaries of $k$-sector chambers.

\begin{lemma}\label{lemma-1}
Let $x,y \in \mathbb{A}_0$ be two special vertices.
Then $x$ and $y$ are in the same $\mathbf{G}(k)$-orbit if, and only if, they are in the same $\mathbf{T}(k)$-orbit.
\end{lemma}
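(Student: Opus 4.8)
The plan is to prove the non-trivial implication: if $x$ and $y$ are special vertices of $\mathbb{A}_0$ in the same $\mathbf{G}(k)$-orbit, then they are in the same $\mathbf{T}(k)$-orbit (the converse is immediate since $\mathbf{T}(k) \subset \mathbf{G}(k)$). So assume $g \cdot x = y$ for some $g \in \mathbf{G}(k)$. First I would use Lemma~\ref{lema1} to reduce to a normal form: since all points of $\mathcal{X}_k$ lie in $N^{\mathrm{sph}} \mathbf{U}^+(k) N^{\mathrm{sph}} \cdot \overline{Q_0}$, and since $x, y$ are already in the standard apartment $\mathbb{A}_0$, I want to exploit the transitivity of $\mathbf{N}(k)$ on the special vertices lying in a fixed sector germ, combined with the Iwasawa-type decomposition of $\mathbf{G}(k)$. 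More precisely, both $x$ and $y$ lie in $\mathbb{A}_0$, which is stabilized (setwise) by $\mathbf{N}(k)$; the key point is to show that the element $g$ carrying $x$ to $y$ can be replaced by an element of $\mathbf{N}(k)$.

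The main tool is that two apartments containing a common point are conjugate by the unipotent group fixing that point: concretely, if $g\cdot x = y$ with $x,y \in \mathbb{A}_0$, then $\mathbb{A}_0$ and $g^{-1}\cdot \mathbb{A}_0$ are two apartments both containing $x$, so by \cite[Cor.~9.7]{L} (the same citation used in the proof of Lemma~\ref{lema1}) there is $u \in \mathbf{U}_x(k)$ — here using density of $\mathbf{U}^+(k)$ in $\mathbf{U}^+(K)$ as in Lemma~\ref{lema1} to descend from $K$ to $k$ — such that $ug^{-1} \cdot \mathbb{A}_0 = \mathbb{A}_0$, i.e.\ $ug^{-1} \in \stab_{\mathbf{G}(k)}(\mathbb{A}_0) = \mathbf{N}(k)$ (the stabilizer of the standard apartment is the normalizer of the maximal split torus). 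Wait — I must be careful about which direction the unipotent element moves things and whether it fixes $x$; the cleaner route is: pick $u \in \mathbf{U}_x(k)$ with $u \cdot (g^{-1}\cdot\mathbb{A}_0) = \mathbb{A}_0$, set $n = (u g^{-1})^{-1} = g u^{-1} \in \mathbf{N}(k)$. Since $u$ fixes $x$, we have $n \cdot x = g u^{-1} \cdot x = g \cdot x = y$. Thus $n \in \mathbf{N}(k)$ satisfies $n \cdot x = y$.

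It remains to upgrade from $\mathbf{N}(k)$ to $\mathbf{T}(k)$. Write $w = \mathfrak{w}^{\mathrm{sph}}(n) \in W^{\mathrm{sph}}$ for the image of $n$ in the finite Weyl group; since $x$ and $y$ are special vertices, I would choose representatives so that $n = t \cdot n_w$ with $t \in \mathbf{T}(k)$ and $n_w \in N^{\mathrm{sph}} \subset \mathbf{N}(\mathbb{F})$ a fixed lift of $w$. The point $n_w \cdot x$ is again a special vertex of $\mathbb{A}_0$, and I claim $n_w$ can be further absorbed: because $x$ is special, the stabilizer $\fix_{\mathbf{G}(k)}(x)$ contains a lift of the full spherical Weyl group (this is exactly the definition of special — the local spherical building at $x$ has full Weyl group $\Phi_{x,k} = \Phi$), so one can choose the representative $n_w$ to fix $x$ itself. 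Hence $n \cdot x = t \cdot x = y$ with $t \in \mathbf{T}(k)$, which is the desired conclusion.

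The step I expect to be the main obstacle is the last one: justifying cleanly that for a special vertex $x$ one may choose a representative of $w$ in $\mathbf{G}(k)$ (not merely in $\mathbf{G}(K)$, and not merely in $\mathbf{N}(k)$) that fixes $x$. This requires knowing that the fixator of a special vertex contains the group generated by the $m_\al$-type elements built from the épinglages — i.e.\ the subgroup $\mathbf{N}(\mathbb{F})^{\mathrm{sph}}$ of \S\ref{intro Chevalley groups} translated to be centered at $x$ — and that its image in $W^{\mathrm{sph}}$ is all of $W^{\mathrm{sph}}$; this surjectivity was recorded in \S\ref{intro Chevalley groups}. Alternatively, one can finish without this by a direct computation: $\rho(n_w \cdot x) - \rho(y') \in \Chi_*(\mathbf{T})$ for the relevant translation parts, and the fact that $x,y$ are special vertices forces the coset to be realized by an element of $\mathbf{T}(k)$; but the building-theoretic argument via the fixator of a special vertex is the most transparent and is the one I would write up.
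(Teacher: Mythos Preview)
Your argument is correct and follows essentially the same route as the paper's proof. Both reduce first to an element $n \in \mathbf{N}(k)$ carrying $x$ to $y$, and then use that a special vertex has the full spherical Weyl group in its $\mathbf{N}(k)$-stabilizer to replace $n$ by an element of $\mathbf{T}(k)$; the paper just does this a bit more crisply, invoking the equivalence relation defining the building directly (so your appeal to density of $\mathbf{U}^+(k)$ in $\mathbf{U}^+(K)$ is unnecessary), and using $N_y = \stab_{\mathbf{N}(k)}(y)$ together with $\mathfrak{w}^{\mathrm{sph}}(N_y) = W^{\mathrm{sph}}$ to write $n = n' t$ with $n' \in N_y$ and $t \in \mathbf{T}(k)$, whence $t \cdot x = (n')^{-1} n \cdot x = (n')^{-1} \cdot y = y$. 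Your worry in the last paragraph about $k$ versus $K$ is therefore moot: once $n \in \mathbf{N}(k)$, the decomposition $\mathbf{N}(k) = N_x \cdot \mathbf{T}(k)$ (or $N_y \cdot \mathbf{T}(k)$) takes place entirely inside $\mathbf{N}(k)$, and the surjectivity $N_x \twoheadrightarrow W^{\mathrm{sph}}$ is precisely what ``special'' means.
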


\begin{proof}
Assume that $x$ and $y$ are in the same $\mathbf{G}(k)$-orbit.
Let $g \in \mathbf{G}(k)$ be such that $y=g \cdot x$.
Then, by definition of the building, there exists $n \in \mathbf{N}(k)$ such that $y = n \cdot x$ and $g^{-1} n \in \mathbf{U}_x(k)$.
Let $N_y = \stab_{\mathbf{N}(k)}(y)$.
Since $y$ is special, we know that $\mathfrak{w}^{\mathrm{sph}}(N_y) = W^\mathrm{sph}$. Thus $\mathbf{N}(k) = N_y \cdot \mathbf{T}(k)$.
Write $n = n' t$ with $n' \in N_y$ and $t \in \mathbf{T}(k)$.
Then $(n')^{-1} \cdot y = y = (n')^{-1} n \cdot x = t \cdot x$.
The converse is immediate.
\end{proof}

\begin{lemma}\label{lema0}
Let $Q$ and $Q'$ be two $k$-sector chambers of $\mathcal{X}_k$ whose tips $x$ and $x'$ are special $k$-vertices.
Then, there exists $g \in \mathbf{G}(k)$ such that $g\cdot Q=Q'$ if, and only if, $x$ and $x'$ are in the same $\mathbf{G}(k)$-orbit. In particular, $\mathbf{G}(k)$ acts transitively on $\partial_\infty \Sec(\mathcal{X}_k)$.
\end{lemma}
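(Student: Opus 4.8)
The plan is to prove the two implications separately and then deduce the transitivity statement. The implication ``if $g\cdot Q=Q'$ for some $g\in\mathbf{G}(k)$ then $x$ and $x'$ lie in the same $\mathbf{G}(k)$-orbit'' is the easy one: every element of $\mathbf{G}(k)$ acts on $\mathcal{X}_k$ by a polysimplicial automorphism, hence maps the conical cell $Q$ onto the conical cell $Q'$ carrying tip to tip, the tip of $Q(x,D)$ being intrinsically characterized as the unique point $p$ such that $\overline{Q}-p$ is a cone of apex $0$. Thus $g\cdot x=x'$.

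For the converse, the first move is to reduce to the case $x=x'$: choose $h\in\mathbf{G}(k)$ with $h\cdot x=x'$ and replace $Q$ by $hQ$, which is still a $k$-sector chamber — since $\mathbf{G}(k)$ permutes the $k$-apartments $g\cdot\mathbb{A}_0$, the set $hQ\subset h\mathbb{A}$ is again of the form (point)$\,+\,$(vector chamber) inside a $k$-apartment — now with tip $x'$. Next, pick $k$-apartments $\mathbb{A}\supseteq Q$ and $\mathbb{A}'\supseteq Q'$, both containing $x'$, and use rational conjugacy of maximal $k$-split tori to obtain $g_1,g_2\in\mathbf{G}(k)$ with $g_1\mathbb{A}=\mathbb{A}_0=g_2\mathbb{A}'$. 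Then $g_1Q$ and $g_2Q'$ are sector chambers of $\mathbb{A}_0$ with special tips $g_1x'$ and $g_2x'$, which lie in the same $\mathbf{G}(k)$-orbit; by Lemma~\ref{lemma-1} they lie in the same $\mathbf{T}(k)$-orbit, so after replacing $g_1$ by $tg_1$ for a suitable $t\in\mathbf{T}(k)\subset\stab_{\mathbf{G}(k)}(\mathbb{A}_0)$ I may assume $g_1x'=g_2x'=:v$. It remains to connect the two sector chambers $g_1Q$, $g_2Q'$ of $\mathbb{A}_0$ sharing the special tip $v$: since $v$ is special, $\stab_{W^{\mathrm{aff}}}(v)$ surjects onto $W^{\mathrm{sph}}$ via linear parts, and $W^{\mathrm{sph}}$ is simply transitive on vector chambers, so some $w\in\stab_{W^{\mathrm{aff}}}(v)$ carries the direction of $g_1Q$ onto that of $g_2Q'$, hence $w\cdot(g_1Q)=g_2Q'$; lifting $w$ to $n\in\mathbf{N}(k)$ — legitimate because $\mathfrak{w}\colon\mathbf{N}(k)\to W^{\mathrm{aff}}$ is onto, the spherical part lifting through $N^{\mathrm{sph}}\subset\mathbf{N}(\mathbb{F})$ and the translation lattice $\mathbf{\Lambda}=\rho(\mathbf{T}(k))$ lifting through $\mathbf{T}(k)$ — yields $g:=g_2^{-1}ng_1h\in\mathbf{G}(k)$ with $g\cdot Q=Q'$.

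Finally, $\partial_\infty\Sec(\mathcal{X}_k)$ is by definition the set of visual boundaries $\partial_\infty Q$ of $k$-sector chambers; each is a chamber of $\partial_\infty\mathcal{X}_k$, and conversely every chamber of $\partial_\infty\mathcal{X}_k$ arises this way (take $Q$ inside a $k$-apartment whose boundary contains it). Since $Q\mapsto\partial_\infty Q=D(Q)$ is $\mathbf{G}(k)$-equivariant and $\mathbf{G}(k)$ is transitive on the chambers of $\partial_\infty\mathcal{X}_k$ by strong transitivity on its spherical building (\S\ref{intro vector building}), the action on $\partial_\infty\Sec(\mathcal{X}_k)$ is transitive. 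The crux of the argument is the converse implication — in particular, keeping each modified copy of $Q$ a genuine $k$-sector chamber with controlled tip through the successive replacements, and the surjectivity of $\mathfrak{w}\colon\mathbf{N}(k)\to W^{\mathrm{aff}}$ over the incomplete field $k$, i.e.\ that $\rho(\mathbf{T}(k))$ already realizes the full translation lattice of $W^{\mathrm{aff}}$, which holds because $\mathbf{T}$ is split and $\nu(k^\times)=\mathbb{Z}=\nu(K^\times)$.
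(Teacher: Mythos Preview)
Your proof is correct and follows essentially the same approach as the paper: both arguments use transitivity of $\mathbf{G}(k)$ on $k$-apartments, Lemma~\ref{lemma-1} to pass from $\mathbf{G}(k)$-orbits to $\mathbf{T}(k)$-orbits of special vertices in $\mathbb{A}_0$, and simple transitivity of $W^{\mathrm{sph}}$ on vector chambers. The only difference is the order of operations---the paper first aligns directions via $n_w\in N^{\mathrm{sph}}$ and then tips via $t\in\mathbf{T}(k)$, whereas you align tips first and directions last---which makes the paper's argument one step shorter but is otherwise immaterial.
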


\begin{proof}
The necessary condition is clear.
Now, suppose that the tips $x$ and $x'$ of $Q$ and $Q'$ are in the same $\mathbf{G}(k)$-orbit and let $\mathbb{A}, \mathbb{A}'$ be two apartments of $\mathcal{X}_k$ satisfying $\mathbb{A} \supset Q$, $\mathbb{A}' \supset Q'$. By transitivity of the action of $\mathbf{G}(k)$ on the $k$-apartments of $\mathcal{X}_k$, there exists $g' \in \mathbf{G}(k)$ satisfying $g' \cdot \mathbb{A}=\mathbb{A}'$. Then, we can assume, without loss of generality, that $\mathbb{A}' = \mathbb{A}_0$.

Since $W^{\mathrm{sph}}$ acts transitively on the set of vector chambers of $\mathbb{A}_0$ we have that this group acts also transitively on the set of directions of sectors in $\mathbb{A}_0$. Hence, there exists $w \in W^{\mathrm{sph}}$ such that the directions of $(n_{w} g') \cdot Q$ and of $Q'$ are equals, whence they have a common subsector.
Observe that the tips of $(n_{w} g') \cdot Q$ and $Q'$ are special $k$-vertices and in the same $\mathbf{G}(k)$-orbit.
Thus, by Lemma~\ref{lemma-1}, there is an element $t \in \mathbf{T}(k)$ such that $(t n_{w} g') \cdot Q=Q'$. The result follows by taking $g=t n_{w} g' \in \mathbf{G}(k)$.
\end{proof}

\begin{lemma}\label{visual limit lemma}
The set $\partial_{\infty}(\Sec(\mathcal{X}_k))$ of visual boundaries of $k$-sectors of $\mathcal{X}_k$ is in bijection with the quotient $\mathbf{G}(k)/\mathbf{B}(k)$, which equals the $k$-rational points $(\mathbf{G}/\mathbf{B})(k)$ of the Borel variety $\mathbf{G}/\mathbf{B}$.
\end{lemma}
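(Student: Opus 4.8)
The plan is to combine the transitivity statement from Lemma~\ref{lema0} with the identification of stabilizers from Lemma~\ref{lemma stab standard face}, specialized to the case $\Theta = \emptyset$. Recall that $\partial_\infty$ gives a $1$ to $1$ correspondence between visual boundaries of $k$-sector chambers and $k$-vector chambers (\S\ref{intro visual boundary}), and that $\mathbf{G}(k)$ acts on the set of $k$-vector chambers. By Lemma~\ref{lema0} (or directly by strong transitivity of $\mathbf{G}(k)$ on its spherical building, as in \S\ref{intro vector building}), this action on $\partial_\infty\Sec(\mathcal{X}_k)$, equivalently on the set of $k$-vector chambers, is transitive. Hence $\partial_\infty \Sec(\mathcal{X}_k)$ is a single $\mathbf{G}(k)$-orbit, and fixing the base point $D_0$ (the standard vector chamber, corresponding to the visual boundary $\partial_\infty Q_0$) gives a bijection $\mathbf{G}(k)/\stab_{\mathbf{G}(k)}(D_0) \xrightarrow{\ \sim\ } \partial_\infty\Sec(\mathcal{X}_k)$.

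Next I would identify the stabilizer. By Lemma~\ref{lemma stab standard face} applied with $\Theta = \emptyset$, we have $\stab_{\mathbf{G}(k)}(D_0) = \stab_{\mathbf{G}(k)}(D_0^\emptyset) = \mathbf{P}_\emptyset(k) = \mathbf{B}(k)$, since the standard parabolic associated to the empty set of simple roots is the Borel subgroup $\mathbf{B}$ itself. Therefore $\partial_\infty\Sec(\mathcal{X}_k)$ is in bijection with $\mathbf{G}(k)/\mathbf{B}(k)$. One should be slightly careful that the stabilizer of the vector chamber $D_0$ as a subset of $V$ coincides with the stabilizer of its visual boundary $\partial_\infty Q_0$ as a chamber at infinity, and that the $\mathbf{G}(k)$-action is compatible with the correspondence $Q \leftrightarrow D(Q)$; this is exactly the content of \S\ref{intro visual boundary} together with the equivalence $Q \sim Q' \iff D(Q) = D(Q')$, so it is routine.

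Finally, for the identification with $(\mathbf{G}/\mathbf{B})(k)$: there is always a natural injection $\mathbf{G}(k)/\mathbf{B}(k) \hookrightarrow (\mathbf{G}/\mathbf{B})(k)$, and surjectivity holds because $\mathbf{B}$ is a Borel subgroup of the split (hence quasi-split) group $\mathbf{G}$, so the fibration $\mathbf{G} \to \mathbf{G}/\mathbf{B}$ admits rational sections locally in the Zariski topology and, more concretely, the big cell $\mathbf{U}^-\mathbf{B}/\mathbf{B}$ is open in $\mathbf{G}/\mathbf{B}$ with $\mathbf{G}(k)$-translates covering $(\mathbf{G}/\mathbf{B})(k)$ by the Bruhat decomposition; equivalently, $H^1(k,\mathbf{B}) \to H^1(k,\mathbf{G})$ has trivial kernel since $\mathbf{B}$ is a connected solvable split group so $H^1(k,\mathbf{B})$ itself vanishes. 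Thus $\mathbf{G}(k)/\mathbf{B}(k) = (\mathbf{G}/\mathbf{B})(k)$, completing the chain of bijections. The only mildly delicate point is this last surjectivity, i.e.\ that every $k$-rational point of the flag variety lifts to a $k$-point of $\mathbf{G}$; everything else is a direct assembly of the lemmas already proved.
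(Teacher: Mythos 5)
Your proposal is correct and follows essentially the same route as the paper: transitivity of $\mathbf{G}(k)$ on visual boundaries of $k$-sector chambers (Lemma~\ref{lema0}), identification of the stabilizer of $\partial_\infty Q_0$ with $\mathbf{B}(k)$ via Lemma~\ref{lemma stab standard face} for $\Theta=\emptyset$, and then $\mathbf{G}(k)/\mathbf{B}(k)\cong(\mathbf{G}/\mathbf{B})(k)$ from the cohomology exact sequence together with the vanishing of $H^1_{\text{\'et}}(\operatorname{Spec}(k),\mathbf{B})$, which the paper obtains by reducing to $\mathbf{T}\cong\mathbb{G}_m^{\mathbf{t}}$ and Hilbert~90, exactly as you indicate.
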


\begin{proof}

We have to find a bijective function between $\partial_{\infty}(\Sec(\mathcal{X}_k))$ and the quotient set $\mathbf{G} (k)/\mathbf{B}(k)$.
From the correspondence between vector chambers and visual boundaries of sector chambers (see §\ref{intro visual boundary}), we identify $\partial_{\infty}(Q_0)$ with $D_0=D_0^\emptyset$.
By Lemma~\ref{lemma stab standard face}, we deduce that $\stab_{\mathbf{G}(k)} (\partial_{\infty}(Q_0)) = \stab_{\mathbf{G}(k)} (D_0^\emptyset) = \mathbf{B}(k)$.
It follows from the Lemma~\ref{lema0}, that every element $\partial_{\infty}(Q) \in \partial_{\infty}(\Sec(\mathcal{X}_k))$ can be written as $g \cdot \partial_{\infty}(Q_0)$, for some $g \in \mathbf{G}(k)$.
Hence, we have $\mathrm{Stab}_{\mathbf{G}(k)} (\partial_{\infty}(Q))= g \mathbf{B}(k) g^{-1}$.
Let $\phi: \partial_{\infty}(\Sec(\mathcal{X}_k))\rightarrow \mathbf{G} (k)/\mathbf{B}(k)$ be the function defined by $\partial_{\infty}(Q) \mapsto \overline{g}$, where $\mathrm{Stab}_{\mathbf{G}(k)} (\partial_{\infty}(Q))= g \mathbf{B}(k) g^{-1}$. Note that the previous function is well defined and injective since $\mathbf{N}_{\mathbf{G}}(\mathbf{B})(k) = \mathbf{B}(k)$ (c.f.~\cite[Prop. 21.13]{BoA}). Obviously the function $\phi$ is surjective and then we get a bijective function between $\partial_{\infty}(\Sec(\mathcal{X}_k))$ and the set $\mathbf{G} (k)/\mathbf{B}(k)$. Finally we claim that
$$ \mathbf{G} (k)/\mathbf{B}(k) \cong  (\mathbf{G}/\mathbf{B})(k).$$
Indeed, consider the quotient exact sequence of $k$-algebraic varieties
$$
0 \rightarrow \mathbf{B} \xrightarrow{} \mathbf{G} \xrightarrow{} \mathbf{G}/\mathbf{B} \rightarrow 0.
$$
It follows from \cite[\S 4, 4.6]{DG} that there exists a long exact sequence
\begin{equation}\label{ex seq}
\lbrace 0 \rbrace \rightarrow \mathbf{B}(k) \rightarrow \mathbf{G}(k) \rightarrow \mathbf{G}/\mathbf{B}(k) \rightarrow H^1_{\text{\'et}}(\operatorname{Spec}(k), \mathbf{B}).
\end{equation}
On the other hand, by \cite[Exp.~XXVI, Cor. 2.3]{SGA3-3}, we have $H^1_{\text{\'et}}(\operatorname{Spec}(k), \mathbf{B})= H^1_{\text{\'et}}(\operatorname{Spec}(k), \mathbf{T})$. But, by hypothesis $ \mathbf{T}=  \mathbb{G}_m^{\mathbf{t}}$. So, it follows from the Hilbert 90's that $$H^1_{\text{\'et}}(\operatorname{Spec}(k), \mathbf{B})= H^1_{\text{\'et}}(\operatorname{Spec}(k), \mathbf{T})=H^1_{\text{\'et}}(\operatorname{Spec}(k), \mathbb{G}_m^{\mathbf{t}})= 0,$$
whence, from the exact sequence~\eqref{ex seq}, we get $\mathbf{G} (k)/\mathbf{B}(k) \cong  (\mathbf{G}/\mathbf{B})(k)$. Then, we result follows.
\end{proof}

\section{Commutative unipotent subgroups normalized by standard parabolic subgroups}\label{section commutative}

In this section, we assume that $k$ is a ring and that $S$ is a scheme (typically, $S = \spec \mathbb{Z}$ in subsection~\ref{sec linear action}). We consider a split reductive smooth affine $S$-group scheme $\mathbf{G}$ together with a Killing couple $(\mathbf{T},\mathbf{B})$, i.e.~a maximal split torus $\mathbf{T}=D_S(M) \cong \mathbb{G}_{m, \mathbb{Z}}^{r}$, where $M$ is a finitely generated free $\mathbb{Z}$-module, contained in a Borel subgroup $\mathbf{B}$. The existence of such couple is given in~\cite[Exp.~XXII, 5.5.1]{SGA3-3}).
Let $\Phi$ be the root system of $\mathbf{G}$ associated to $\mathbf{T}$ and $\Phi^+$ be the subset of positive roots associated to $\mathbf{B}$ (see~\cite[Exp.~XXII, 5.5.5(iii)]{SGA3-3}).

Let $\Psi \subseteq \Phi^+$ be a subset. Recall that $\Psi$ is said to be closed (see \cite[Chap.~VI, \S 1.7]{Bourbaki}) if
\begin{equation}\label{eq closed subset}
\forall \alpha,\beta\in \Psi,\ \alpha + \beta \in \Phi \Rightarrow \alpha + \beta \in \Psi.
\tag{C0}
\end{equation}

Let $\Delta$ be the corresponding basis to $\mathbf{B}$ and let $\Theta \subset \Delta$.
In this section, we want to provide a ``large enough'' smooth connected commutative unipotent subgroup $\mathbf{U}_\Psi$ normalized by the standard parabolic subgroup $\mathbf{P}_\Theta$ such that $\mathbf{P}_\Theta$ acts linearly by conjugation on $\mathbf{U}_\Psi$.
We cannot apply in our context the calculations in~\cite[Exp.~XXII, §5.5]{SGA3-3} because the ordering given by~\cite[exp~XXI, 3.5.6]{SGA3-3} may not be compatible with the commutativity condition or the ``large enough'' condition.

We consider the following conditions on $\Psi \subseteq \Phi^+$:
\begin{enumerate}[label={(C\arabic*{})}]
\item \label{cond closure} $\forall \alpha \in \Phi^+,\ \forall \beta \in \Psi,\ \alpha + \beta \in \Phi \Rightarrow \alpha + \beta \in \Psi$;
\item \label{cond commutative} $\forall \beta,\gamma \in \Psi, \beta+\gamma \not\in \Phi$;
\end{enumerate}

These conditions are related. For instance, if $\Psi$ closed and is a linearly independent family of $\operatorname{Vect}_\mathbb{R}(\Phi)$, then it satisfies~\ref{cond commutative}.
In this section, we firstly discuss the relation between these conditions and properties on groups, then we construct subset satisfying these conditions and we conclude by providing a consequence of these conditions, which is large enough with respect to some generating condition.

Iterating the conditions~\ref{cond closure} and~\ref{cond commutative}, we get that:

\begin{lemma}\label{lem sum of good roots}
The condition~\ref{cond closure} is equivalent to
\begin{equation}\label{cond closure prime}\tag{C1'}
    \forall \alpha \in \Phi^+,\ \forall \beta \in \Psi,\ \forall r,s \in \mathbb{N},\ r\alpha + s\beta \in \Phi \Rightarrow r\alpha+s\beta \in \Psi.
\end{equation}
The condition~\ref{cond commutative} is equivalent to
\begin{equation}\label{cond commutative prime}\tag{C2'}
    \forall \beta,\gamma \in \Psi,\ \forall r,s \in \mathbb{N},\ r\beta + s\gamma \not\in \Phi.
\end{equation}
If $\Psi \subseteq \Phi^+$ satisfies condition~\ref{cond commutative}, then the condition~\ref{cond closure} is equivalent to
\begin{equation} \label{cond closure second}\tag{C1''}
\forall \alpha \in \Phi^+,\ \forall \beta \in \Psi,\ \forall r,s \in \mathbb{N},\ r\alpha +s\beta \in \Phi \Rightarrow r\alpha+s\beta \in \Psi \text{ and } s = 1.
\end{equation}
\end{lemma}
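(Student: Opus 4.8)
The plan is to prove the three equivalences separately, in each case the ``$\Leftarrow$'' direction being trivial (take $r=s=1$, and for~\eqref{cond closure second} note that the extra conclusion $s=1$ only strengthens the hypothesis), so the content is in the ``$\Rightarrow$'' directions, which I would handle by induction on $r+s$ using the standard combinatorics of root strings.

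\emph{First equivalence, \ref{cond closure} $\Rightarrow$ \eqref{cond closure prime}.} Fix $\alpha\in\Phi^+$, $\beta\in\Psi$ and suppose $r\alpha+s\beta\in\Phi$ with $r,s\in\mathbb{N}$; I would induct on $r+s$, the base cases $r+s\le 2$ being immediate (either one of $r,s$ is $0$, in which case $r\alpha+s\beta$ is $\alpha$, $\beta$, $2\alpha$ or $2\beta$ — but $\Phi$ is reduced, so in fact $r+s=2$ forces $r=s=1$, which is exactly \ref{cond closure}). For the inductive step, write $\gamma=r\alpha+s\beta\in\Phi$ with $r+s\ge 3$. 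Consider the $\alpha$-root string and the $\beta$-root string through the relevant roots: since $\Phi$ is a root system, if $\gamma$ and $\alpha$ are not proportional then one of $\gamma-\alpha$, $\gamma+\alpha$ lies in $\Phi$ (more precisely, by the root-string property, $\langle\gamma,\alpha^\vee\rangle\ne 0$ forces one of $\gamma\pm\alpha\in\Phi$), and similarly for $\beta$. One shows that at least one of $\gamma-\alpha=(r-1)\alpha+s\beta$ or $\gamma-\beta=r\alpha+(s-1)\beta$ is again a root (this is where a small case analysis on the signs of the Cartan integers enters). Applying the induction hypothesis to that shorter combination places it in $\Psi$, and then a single application of \ref{cond closure} (adding back $\alpha$, respectively $\beta\in\Psi$ — note in the second case one adds $\beta\in\Psi$, and we must check $\gamma-\beta+\beta=\gamma$, using \ref{cond closure} with the roles as stated, i.e.~$\gamma-\beta\in\Phi^+$ playing the role of ``$\alpha$'' and $\beta\in\Psi$) yields $\gamma\in\Psi$.

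\emph{Second equivalence, \ref{cond commutative} $\Rightarrow$ \eqref{cond commutative prime}.} Again induct on $r+s$. Suppose for contradiction $r\beta+s\gamma\in\Phi$ for some $\beta,\gamma\in\Psi$ with $r+s$ minimal; by \ref{cond commutative} we have $r+s\ge 3$. As $\Phi$ is reduced, $r\beta+s\gamma\in\Phi$ with $r,s\ge 1$ forces, by the root-string analysis, one of $(r-1)\beta+s\gamma$ or $r\beta+(s-1)\gamma$ to be in $\Phi$, contradicting minimality unless that shorter combination has the form $\beta'$ or $\gamma'$ with the other coefficient zero — but then it would be of the form $\beta + s'\gamma$ or $r'\beta+\gamma$ with total degree $\ge 2$, and applying the induction hypothesis (or \ref{cond commutative} directly when the degree drops to $2$) gives the contradiction.

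\emph{Third equivalence.} Assume now $\Psi$ satisfies \ref{cond commutative}. By the first equivalence, \ref{cond closure} is equivalent to \eqref{cond closure prime}, so it suffices to upgrade \eqref{cond closure prime} to \eqref{cond closure second}, i.e.~to show that whenever $r\alpha+s\beta\in\Phi$ with $\alpha\in\Phi^+$, $\beta\in\Psi$, $r,s\in\mathbb{N}$, one automatically has $s\le 1$. Suppose $s\ge 2$: then by \eqref{cond closure prime} we have $\delta:=r\alpha+s\beta\in\Psi$, and also $\delta':=r\alpha+(s-1)\beta$ would be in $\Phi$ by the root-string argument (or $\delta-\beta\notin\Phi$, in which case one runs the string the other way); if $\delta'\in\Phi$ then $\delta'\in\Psi$ by \eqref{cond closure prime} again, and now $\delta=\delta'+\beta$ with both $\delta',\beta\in\Psi$ — but this contradicts \eqref{cond commutative prime}, which we just proved is equivalent to \ref{cond commutative}. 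Handling the alternative branch $\delta'\notin\Phi$ requires the symmetric observation that then $\delta+\beta\in\Phi$ or the string is trivial, and one traces it to its top to produce two elements of $\Psi$ summing into $\Phi$, again contradicting \eqref{cond commutative prime}. The main obstacle throughout is the bookkeeping in these root-string case distinctions — ensuring at each step that the shorter combination one peels off is genuinely a root and has coefficients in $\mathbb{N}$ — but this is routine given that $\Phi$ is a reduced root system; no input beyond \cite[Chap.~VI, \S 1]{Bourbaki} and the definitions is needed.
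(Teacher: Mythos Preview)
Your approach is the paper's: the backward implications are trivial with $r=s=1$, and for the forward ones you induct on $r+s$ using the key fact (cited in the paper as \cite[Chap.~VI, \S 1.6, Prop.~19]{Bourbaki}) that whenever $r\alpha+s\beta\in\Phi$ with $r,s\geq 1$ and $r+s>2$, at least one of $(r-1)\alpha+s\beta$ or $r\alpha+(s-1)\beta$ lies again in $\Phi$. Your first and second equivalences are correct, if informal about this dichotomy (your ``case analysis on the signs of the Cartan integers'' is precisely Proposition~19, and no further case analysis is actually needed).

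There is, however, a genuine gap in your third equivalence. You abandon the induction and argue directly from the $\beta$-string through $\delta=r\alpha+s\beta$, claiming that either $\delta-\beta$ or $\delta+\beta$ is a root, ``or the string is trivial, and one traces it to its top''. But if the $\beta$-string through $\delta$ is trivial (i.e.\ $\langle\delta,\beta^\vee\rangle=0$) there is nothing to trace, and neither branch of your argument applies. This situation actually occurs: in $G_2$ with $\alpha$ the long simple root and $\beta$ the short one, the highest root $\delta=2\alpha+3\beta$ has $\langle\delta,\beta^\vee\rangle=0$, so neither of $\delta\pm\beta$ is a root, yet $s=3\geq 2$. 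The Bourbaki dichotomy only guarantees that one of $\delta-\alpha$, $\delta-\beta$ is a root, not that one of $\delta\pm\beta$ is; you cannot avoid bringing $\alpha$ back into the argument. The clean fix (and what the paper does) is to keep the induction on $r+s$ for \eqref{cond closure second} as well: if $(r-1)\alpha+s\beta\in\Phi$, the induction hypothesis for \eqref{cond closure second} gives $s=1$ directly; if instead $r\alpha+(s-1)\beta\in\Phi$ (which forces $s-1\geq 1$ since $r+s>2$), then \eqref{cond closure prime} places $r\alpha+(s-1)\beta$ in $\Psi$, and \ref{cond commutative} forbids $\big(r\alpha+(s-1)\beta\big)+\beta\in\Phi$, contradicting $\delta\in\Phi$ --- so this second case is impossible and the first gives $s=1$.
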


\begin{proof}
Obviously, \eqref{cond closure second}~$\Rightarrow$~\eqref{cond closure prime}~$\Rightarrow$~\ref{cond closure} and~\eqref{cond commutative prime}~$\Rightarrow$~\ref{cond commutative}.

Conversely, we assume that $\Psi$ satisfies~\ref{cond closure} (resp.~\ref{cond commutative}, resp.~\ref{cond closure} and~\ref{cond commutative}) and we prove~\eqref{cond closure prime} (resp.~\eqref{cond commutative prime}, resp.~\eqref{cond closure second}). We proceed by induction on $r+s \geq 2$.
Let $\alpha \in\Phi^+$ and $\beta \in \Psi$.
Basis is obvious since $r=s=1$ whenever $r,s \in \mathbb{N}$, $r+s \geq 2$.
If $r+s > 2$ and $r \alpha + s \beta \in \Phi$, then $(r-1) \alpha + s \beta \in \Phi$ or $r \alpha + (s-1) \beta \in \Phi$ according to \cite[Chap.~VI, \S 1.6, Prop.~19]{Bourbaki}.

In the case $\alpha \in \Psi$ and $\Psi$ satisfies~\ref{cond commutative}, by induction assumption on~\eqref{cond commutative prime}, we get a contradiction so that $r\alpha + s \beta \not\in\Phi$.
Whence we deduce \ref{cond commutative}~$\Rightarrow$~\eqref{cond commutative prime}.

If $(r-1) \alpha + s \beta \in \Phi$, then $(r-1) \alpha + s \beta \in \Psi$ (resp. and $s=1$) by induction assumption on~\eqref{cond closure prime} (resp. on~\eqref{cond closure second}). Thus $\Big( (r-1) \alpha + s \beta \Big) + \alpha \in \Psi$ by condition~\ref{cond closure}.
 Otherwise $r \alpha + (s-1) \beta \in \Phi^+$ and $\beta \in \Psi$. Thus $\Big( r \alpha + (s-1) \beta \Big) + \beta \in \Psi$ by condition~\ref{cond closure}.
Whence we deduce \ref{cond closure}~$\Rightarrow$~\eqref{cond closure prime}.
 
Moreover, if $s > 1$, then $r \alpha + (s-1) \beta \in \Psi$ by induction assumption on~\eqref{cond closure prime}. Thus $\Big( r \alpha + (s-1) \beta \Big) + \beta \not\in \Phi$ under condition~\ref{cond commutative}. This is a contradiction with $r\alpha+s\beta \in\Phi$ so that $s=1$ whenever \ref{cond commutative} is satisfied.
Whence we deduce \ref{cond closure} and \ref{cond commutative}~$\Rightarrow$~\eqref{cond closure second}.
\end{proof}

\subsection{Associated closed subset of positive roots}

Recall that it follows from \cite[Exp.~XXII, 5.6.5]{SGA3-3} that, for any closed subset $\Psi$, there is a unique smooth closed $S$-subgroup with connected and unipotent fibers, denoted by $\mathbf{U}_\Psi$, which is normalized by $\mathbf{T}$ and with suitable action of $\mathbf{T}$ on its Lie algebra.
Moreover, for any given ordering on elements of $\Psi \subseteq \Phi^+$, there is an isomorphism of schemes
\begin{equation}\label{eq isom psi}
    \psi = \prod_{\alpha \in \Psi} \theta_\alpha : \left(\mathbb{G}_{a,k}\right)^{\Psi} \to \mathbf{U}_\Psi.
\end{equation}
The combinatorial properties on $\Psi$ correspond to the following in terms of root groups:

\begin{proposition}
\label{fact conditions}
If $\Psi \subset \Phi^+$ satisfies~\ref{cond closure}, then $\Psi$ is closed and $\mathbf{B}$ normalizes $\mathbf{U}_\Psi$.

If $\Psi$ satisfies~\ref{cond commutative}, then $\Psi$ is closed and $\mathbf{U}_{\Psi}$ is commutative.
\end{proposition}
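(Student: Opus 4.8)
The plan is to prove the two assertions of Proposition~\ref{fact conditions} separately, deducing closedness first in each case and then the relevant group-theoretic property from the commutation relations of the root groups.

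First I would handle closedness. If $\Psi$ satisfies~\ref{cond closure}, then in particular, for $\alpha, \beta \in \Psi \subseteq \Phi^+$ with $\alpha+\beta \in \Phi$, we may apply~\ref{cond closure} with this $\alpha \in \Phi^+$ and this $\beta \in \Psi$ to conclude $\alpha+\beta \in \Psi$; this is exactly condition~\eqref{eq closed subset}, so $\Psi$ is closed. If $\Psi$ satisfies~\ref{cond commutative}, then for $\alpha,\beta \in \Psi$ one has $\alpha+\beta \notin \Phi$, so the implication in~\eqref{eq closed subset} is vacuous and $\Psi$ is again closed. In both cases closedness grants us, via~\cite[Exp.~XXII, 5.6.5]{SGA3-3}, the smooth closed $S$-subgroup $\mathbf{U}_\Psi$ with connected unipotent fibers, normalized by $\mathbf{T}$, together with the scheme isomorphism $\psi = \prod_{\alpha\in\Psi}\theta_\alpha$ of~\eqref{eq isom psi} for any chosen ordering of $\Psi$.

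For the statement that $\mathbf{B}$ normalizes $\mathbf{U}_\Psi$ under~\ref{cond closure}: since $\mathbf{B} = \mathbf{T} \ltimes \mathbf{U}^+$ and $\mathbf{T}$ already normalizes $\mathbf{U}_\Psi$, it suffices to show $\mathbf{U}^+$ normalizes $\mathbf{U}_\Psi$, and since $\mathbf{U}^+$ is generated by the $\mathbf{U}_\alpha$ for $\alpha\in\Phi^+$, it suffices to show each such $\mathbf{U}_\alpha$ normalizes $\mathbf{U}_\Psi$, i.e.\ that $\Psi \cup \{\alpha\}$ is closed and $\Psi$ is an ideal in it, or more concretely that for $\alpha\in\Phi^+$ and $\beta\in\Psi$ the Chevalley commutator formula expresses $[\theta_\alpha(t),\theta_\beta(s)]$ as a product of $\theta_{i\alpha+j\beta}(c_{ij}t^is^j)$ over $i,j\in\mathbb{N}$ with $i\alpha+j\beta\in\Phi$. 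The key point is then that each such $i\alpha + j\beta$ (with $j \geq 1$, since it involves $\beta$) lies in $\Psi$: this is precisely the iterated closure property~\eqref{cond closure prime} from Lemma~\ref{lem sum of good roots}. Hence the commutator lies in $\mathbf{U}_\Psi$, which gives the normalization on the level of root groups and, passing to the groups they generate, the normalization of $\mathbf{U}_\Psi$ by $\mathbf{B}$. I would make this rigorous at the scheme level by testing on $R$-points functorially, invoking the Chevalley commutator relations over $\mathbb{Z}$ from the Chevalley system.

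For commutativity of $\mathbf{U}_\Psi$ under~\ref{cond commutative}: again using the Chevalley commutator formula, $[\theta_\beta(t),\theta_\gamma(s)]$ is a product of terms $\theta_{i\beta+j\gamma}(\cdots)$ over $i,j\in\mathbb{N}_{\geq 1}$ with $i\beta+j\gamma\in\Phi$; but condition~\eqref{cond commutative prime} of Lemma~\ref{lem sum of good roots} says there are no such roots, so the commutator is trivial for all $\beta,\gamma\in\Psi$. Since $\mathbf{U}_\Psi$ is generated by the $\mathbf{U}_\beta$, $\beta\in\Psi$, and these pairwise commute, $\mathbf{U}_\Psi$ is commutative. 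The main obstacle I anticipate is bookkeeping rather than conceptual: one must be careful that the Chevalley commutator formula is being applied with the correct set of root-pairs $(i\beta+j\gamma)$ ranging over $i,j\geq 1$, and that in a non-reduced situation one would need the caveat already flagged in the paper (here $\Phi$ is reduced, so $i,j$ range over positive integers with no subtlety about $2\beta$); and one should phrase everything functorially on $R$-points of an arbitrary $S$-scheme rather than only on field points, so that the conclusion is a statement about group schemes. Both parts reduce cleanly to Lemma~\ref{lem sum of good roots}, which is the real combinatorial content.
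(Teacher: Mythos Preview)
Your proposal is correct and follows essentially the same approach as the paper: closedness is immediate from the definitions, normalization by $\mathbf{B}$ reduces to each $\mathbf{U}_\alpha$ ($\alpha\in\Phi^+$) normalizing $\mathbf{U}_\Psi$ via the commutation relations together with the iterated condition~\eqref{cond closure prime}, and commutativity follows from the commutation relations and~\eqref{cond commutative prime}. The paper states this more tersely by citing \cite[Exp.~XXII, 5.5.2 and 5.5.4]{SGA3-3} directly, but the content is the same.
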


\begin{proof}
\ref{cond closure} or \ref{cond commutative}~$\Rightarrow$~\eqref{eq closed subset} is obvious.

Assume that $\Psi$ satisfies~\ref{cond closure}. For any $\alpha \in \Phi^+$, using commutation relations given by~\cite[Exp.~XXII, 5.5.2]{SGA3-3} and isomorphism~\eqref{eq isom psi}, the subgroup $\mathbf{U}_\alpha$ normalizes $\mathbf{U}_\Psi$.
Hence $\mathbf{B}$ normalizes $\mathbf{U}_\Psi$ since $\mathbf{B}$ is generated by $\mathbf{T}$ and the $\mathbf{U}_\alpha$ for $\alpha \in \Phi^+$ according to~\cite[Exp.~XXII, 5.5.1]{SGA3-3}.

Assume that $\Psi$ satisfies~\eqref{cond commutative prime}. Then the commutativity of $\mathbf{U}_\Psi$ is an immediate consequence of \cite[Exp.~XXII, 5.5.4]{SGA3-3} and isomorphism~\eqref{eq isom psi}.
\end{proof}


Conversely, suppose that $k$ is a field and $S = \spec(k)$.
Consider a smooth connected unipotent subgroup $\mathbf{U}$.
If $\mathbf{U}$ is normalized by $\mathbf{T}$, we know by \cite[3.4]{BoTi} that $\mathbf{U} = \mathbf{U}_\Psi$ where $\Psi$ is a quasi-closed unipotent subset of $\Phi$ (see definition in \cite[3.8]{BoTi}).
In fact, $\Psi$ is closed whenever $\operatorname{char}(k) \not\in \{2,3\}$ by \cite[2.5]{BoTi} and, under these assumptions, we get a converse for Proposition~\ref{fact conditions}.

\begin{proposition}\label{fact conditions inverse}
Assume that $k$ is a field of characteristic different from $2$ and $3$. Let $\mathbf{G}$ be a smooth affine split reductive $k$-group.
Let $\mathbf{T}$ be a maximal split $k$-torus of $\mathbf{G}$.

(1) For any smooth affine connected unipotent $k$-group $\mathbf{U}$ normalized by $\mathbf{T}$, there exists a Borel subgroup $\mathbf{B}$ containing $\mathbf{T}$ with associated positive roots $\Phi^+$ and a closed subset $\Psi \subset \Phi^+$ such that $\mathbf{U} = \mathbf{U}_\Psi \subset \mathbf{B}$.

(2) Moreover, if $\mathbf{U}$ is normalized by $\mathbf{B}$, then $\Psi$ satisfies condition~\ref{cond closure}.

(3) If $\mathbf{U}$ is commutative, then $\Psi$ satisfies condition~\ref{cond commutative}.
\end{proposition}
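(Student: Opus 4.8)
The plan is to use the structure theory of Borel--Tits \cite{BoTi} to reduce everything to a combinatorial statement about the quasi-closed subset $\Psi$, and then to upgrade ``quasi-closed'' to ``closed'' using the characteristic hypothesis. First, for part (1), since $\mathbf{U}$ is a smooth connected unipotent $k$-subgroup normalized by $\mathbf{T}$, by \cite[3.4]{BoTi} we have $\mathbf{U} = \mathbf{U}_\Psi$ for a quasi-closed unipotent subset $\Psi \subset \Phi$ (unipotent in the sense of \cite[3.8]{BoTi}, i.e.~$\Psi \cap (-\Psi) = \emptyset$). Because $\operatorname{char}(k) \notin \{2,3\}$, \cite[2.5]{BoTi} tells us that $\Psi$ is actually closed. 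Since $\Psi$ is unipotent, $\Psi$ and $-\Psi$ are disjoint, so there is a hyperplane separating $\Psi$ from $-\Psi$; equivalently, $\Psi$ is contained in the set of positive roots $\Phi^+$ for some choice of ordering, hence in $\Phi(\mathbf{G},\mathbf{B})$ for a suitable Borel $\mathbf{B} \supset \mathbf{T}$. (Concretely: extend $\Psi$ to a maximal closed unipotent subset; this is a system of positive roots, which determines $\mathbf{B}$.) Then $\mathbf{U}_\Psi \subset \mathbf{U}^+ \subset \mathbf{B}$, giving the claim. One should take care to recall why the $\mathbf{U}_\Psi$ attached to a closed subset of $\Phi^+$ via the \'epinglages agrees with the $\mathbf{U}_\Psi$ of Borel--Tits; this is the compatibility already invoked around \eqref{eq isom psi} together with \cite[Exp.~XXII, 5.6.5]{SGA3-3}.

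For part (2), assume $\mathbf{U} = \mathbf{U}_\Psi$ is normalized by $\mathbf{B}$. I want to show $\Psi$ satisfies \ref{cond closure}: given $\alpha \in \Phi^+$ and $\beta \in \Psi$ with $\alpha + \beta \in \Phi$, we must have $\alpha+\beta \in \Psi$. The idea is to commute $\mathbf{U}_\alpha \subset \mathbf{B}$ past $\mathbf{U}_\beta \subset \mathbf{U}_\Psi$: the commutation relations of \cite[Exp.~XXII, 5.5.2]{SGA3-3} produce, for generic parameters, an element of $\mathbf{U}_{\alpha+\beta}$ (times elements of $\mathbf{U}_{i\alpha+j\beta}$ for $i,j \geq 1$) with a nonzero structure constant in the $\mathbf{U}_{\alpha+\beta}$-slot — here is where $\operatorname{char}(k) \neq 2,3$ matters, to guarantee the relevant Chevalley structure constants are nonzero in $k$. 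Since $[\mathbf{U}_\alpha, \mathbf{U}_\beta] \subseteq \mathbf{U}_\Psi$ by the normalization hypothesis, and since by \eqref{eq isom psi} membership in $\mathbf{U}_\Psi$ is detected slot-by-slot in the product of root groups, the $\mathbf{U}_{\alpha+\beta}$-component being nontrivial forces $\alpha+\beta \in \Psi$. A cleaner route, avoiding the bookkeeping of higher terms, is to argue by induction on the height of $\alpha + \beta$ relative to $\beta$ (as in the proof of Lemma~\ref{lem sum of good roots}), at each stage only needing that a single structure constant is a unit in $k$.

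For part (3), assume $\mathbf{U}_\Psi$ is commutative; I must show \ref{cond commutative}, i.e.~$\beta+\gamma \notin \Phi$ for all $\beta,\gamma \in \Psi$. Suppose instead $\beta + \gamma \in \Phi$ for some $\beta,\gamma \in \Psi$; since $\Psi$ is unipotent (contained in $\Phi^+$), $\beta+\gamma \in \Phi^+$, and as $\Psi$ is closed we get $\beta+\gamma \in \Psi$. But then the commutator $[\mathbf{U}_\beta, \mathbf{U}_\gamma]$, computed via \cite[Exp.~XXII, 5.5.2]{SGA3-3}, has a nontrivial $\mathbf{U}_{\beta+\gamma}$-component with structure constant a unit in $k$ (again using $\operatorname{char}(k) \neq 2,3$), contradicting commutativity of $\mathbf{U}_\Psi$. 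Hence no such pair exists and \ref{cond commutative} holds.

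The main obstacle I anticipate is part (2): controlling the ``tail'' terms $\mathbf{U}_{i\alpha+j\beta}$ with $i\geq 2$ or $j \geq 2$ in the commutation formula, and checking that the structure constant attached to $\mathbf{U}_{\alpha+\beta}$ itself is invertible in $k$ — precisely the point where the exclusion of characteristics $2$ and $3$ is used (matching its use in \cite[2.5]{BoTi}). Organizing this as an induction on $\operatorname{ht}(\alpha+\beta) - \operatorname{ht}(\beta)$, mirroring Lemma~\ref{lem sum of good roots}, should keep each step to a single application of the rank-two commutation relations in types $A_2$, $B_2$, $G_2$, where the constants are explicitly known.
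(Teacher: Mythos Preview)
Your proposal is correct and follows essentially the same architecture as the paper for part~(1): both invoke \cite[3.4]{BoTi} to get $\mathbf{U}=\mathbf{U}_\Psi$ for a quasi-closed unipotent $\Psi$, then \cite[2.5]{BoTi} to upgrade quasi-closed to closed via the characteristic hypothesis.

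For parts~(2) and~(3), however, you take a more hands-on route than the paper. The paper simply reuses the Borel--Tits black box: in characteristic $\neq 2,3$ one has the \emph{equality} $[\mathbf{U}_\alpha,\mathbf{U}_\beta]=\mathbf{U}_{(\alpha,\beta)}$ where $(\alpha,\beta)=\{i\alpha+j\beta\in\Phi : i,j\geq 1\}$, again by \cite[2.5, 3.4]{BoTi}. From this, (2) is immediate (if $\mathbf{U}_\alpha$ normalizes $\mathbf{U}_\Psi$ then $(\alpha,\beta)\subset\Psi$ for every $\beta\in\Psi$), and (3) follows because commutativity forces $\mathbf{U}_{(\alpha,\beta)}$ to be trivial, i.e.\ $(\alpha,\beta)=\emptyset$. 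You instead unpack the Chevalley commutation relations from \cite[Exp.~XXII, 5.5.2]{SGA3-3} and verify by hand that the structure constant $c^{1,1}_{\alpha,\beta}$ is a unit in $k$, which is precisely the content of the characteristic restriction. This is correct and perhaps more self-contained, but your anticipated ``main obstacle'' (controlling the tail terms and organizing an induction on height) is exactly what the Borel--Tits citation already packages; the paper's proof is three lines for each of (2) and (3) because it does not reopen that box.
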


\begin{proof}
(1) is an immediate consequence of \cite[3.4]{BoTi} and remark in \cite[2.5]{BoTi}.

(2) If $\mathbf{B}$ normalizes $\mathbf{U}$, then so does $\mathbf{U}_\alpha$ for any $\alpha \in \Phi^+$.
Thus by \cite[2.5, 3.4]{BoTi} again, we have that $(\alpha,\beta) \subset \Psi$ for any $\alpha \in \Phi^+$ and any $\beta \in \Psi$. 

(3) If $\mathbf{U}$ is commutative, then for any $\alpha,\beta \in \Psi$, we have that $\big[ \mathbf{U}_\alpha, \mathbf{U}_\beta \big] = \mathbf{U}_{(\alpha,\beta)}$ is trivial.
Thus $(\alpha,\beta) = \emptyset$ and therefore $\alpha+\beta \not\in \Phi$.
\end{proof}

In order to provide counter-examples, we introduce the structure constants in the following commutation relations (see \cite[Exp.~XXIII, 6.4]{SGA3-3}).

For $\alpha,\beta \in \Phi$, there are elements $c^{r,s}_{\alpha,\beta} \in \mathbb{Z}$ such that we have the commutation relation:
\begin{equation}\label{commutation relation}
\big[\theta_\alpha(x),\theta_\beta(y)\big] = \prod_{\substack{r,s \in \mathbb{N}\\ r\alpha+s\beta \in \Phi}} \theta_{r\alpha+s\beta}\left( c^{r,s}_{\alpha,\beta} x^r y^s\right)
\end{equation}

\begin{example}[Counter-examples in characteristic $2$ and $3$]
Consider $\mathbf{G} = \mathrm{Sp}(4)$ (of type $B_2$) with root system $\Phi = \{ \pm \alpha, \pm \beta, \pm (\alpha + \beta), \pm (2\alpha + \beta)\}$ where $\Delta = \{\alpha,\beta\}$ is a basis with $\alpha$ short and $\beta$ long.
We have that
\[ \big[ \theta_\alpha(x),\theta_{\alpha+\beta}(y) \big] = \theta_{2\alpha+\beta}\big( 2xy \big).\]
If $\operatorname{char}(k) = 2$, then $\Psi = \{\alpha,\alpha+\beta\}$ is then quasi-closed unipotent and $\mathbf{U}_\Psi$ is commutative but $\Psi$ does not satisfies~\ref{cond commutative}.

Consider a split reductive $k$-group $\mathbf{G}$ of type $G_2$ with a maximal split $k$-torus $\mathbf{T}$.
Let $\Phi$ be the root system relatively to $\mathbf{T}$ and let $\Delta = \{\alpha,\beta\}$ be a basis of $\Phi$ with $\alpha$ short and $\beta$ long.
Consider $\Psi = \{2\alpha + \beta, 3\alpha+2\beta\}$.
Any element in $\mathbf{U}_{3\alpha+2\beta}$ commutes with any element in $\mathbf{U}^+$ since $3\alpha+2\beta$ is the highest root and, for $2\alpha+\beta$, we have that
\begin{align*}
     \big[ \theta_\alpha(x),\theta_{2\alpha+\beta}(y) \big] & = \theta_{3\alpha+\beta}\big( 3xy \big),\\
     \big[ \theta_{\alpha+\beta}(x),\theta_{2\alpha+\beta}(y) \big] &= \theta_{3\alpha+2\beta}\big( -3xy \big).
\end{align*}
Thus, if $\operatorname{char}(k)=3$, we have that $\mathbf{U}_\Psi$ is commutative and normalized by the Borel subgroup associated to $\Delta$.
The closed subset $\Psi$ satisfies the condition~\ref{cond commutative} but not the condition~\ref{cond closure}.
\end{example}

\subsection{Construction of such subset of roots}

\begin{lemma}\label{lem high roots}
Let $\Phi$ be an irreducible root system and let $\Delta$ be a basis of $\Phi$.
Denote by $\Phi^+$ the subset of positive roots and by $h \in \Phi$ the highest root with respect to the basis $\Delta$.
For any $\Theta \subseteq \Delta$, define $\Psi(\Theta) = \big(h + \operatorname{Vect}_\mathbb{R}(\Theta)\big) \cap \Phi$.

If $\Theta \neq \Delta$, then $\Psi(\Theta)$ is a non-empty subset of $\Phi^+$ that satisfies conditions~\ref{cond closure}, \ref{cond commutative} and it is stabilized by the natural action of $W_\Theta$.
Moreover, there exists $\alpha \in \Delta \smallsetminus \Theta$ such that $\Psi(\Theta \cup \{\alpha\}) \supsetneq \Psi(\Theta)$.
\end{lemma}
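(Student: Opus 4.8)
The plan is to translate the condition $\gamma \in \Psi(\Theta)$ into a statement about the coordinates of $\gamma$ in the basis $\Delta$, and then to do elementary bookkeeping. Write $h = \sum_{\alpha \in \Delta} c_\alpha \alpha$; since $\Phi$ is irreducible, $h$ has full support, so $c_\alpha \geqslant 1$ for every $\alpha \in \Delta$ (if some $\alpha$ were outside the support, an $\alpha$ adjacent to the support in the Dynkin diagram would satisfy $\langle h, \alpha^\vee \rangle < 0$, whence $h + \alpha \in \Phi^+$, contradicting maximality of $h$). For $\gamma \in \Phi$, let $m_\alpha(\gamma)$ denote its coordinate at $\alpha \in \Delta$. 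Because $\Delta$ is a basis of $\operatorname{Vect}_\mathbb{R}(\Phi)$ and $\operatorname{Vect}_\mathbb{R}(\Theta)$ is precisely the $\mathbb{R}$-span of $\Theta$, one has $\gamma \in \Psi(\Theta)$ if and only if $m_\alpha(\gamma) = c_\alpha$ for all $\alpha \in \Delta \smallsetminus \Theta$. This reformulation immediately gives non-emptiness, since $h \in \Psi(\Theta)$, and the inclusion $\Psi(\Theta) \subseteq \Phi^+$: as $\Theta \neq \Delta$ there is some $\eta \in \Delta \smallsetminus \Theta$, and then any $\gamma \in \Psi(\Theta)$ has $m_\eta(\gamma) = c_\eta \geqslant 1 > 0$, hence all its coordinates are $\geqslant 0$.

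For condition~\ref{cond closure}, I would take $\beta \in \Psi(\Theta)$ and $\alpha \in \Phi^+$ with $\alpha+\beta \in \Phi$. For $\eta \in \Delta \smallsetminus \Theta$ one has $m_\eta(\alpha+\beta) = m_\eta(\alpha) + c_\eta \geqslant c_\eta \geqslant 1$, so $\alpha+\beta \in \Phi^+$, hence $\alpha+\beta \leqslant h$ and $m_\eta(\alpha+\beta) \leqslant c_\eta$; comparing, $m_\eta(\alpha) = 0$ for every $\eta \in \Delta \smallsetminus \Theta$, so $\alpha \in \operatorname{Vect}_\mathbb{R}(\Theta)$ and therefore $(\alpha+\beta) - h = \alpha + (\beta - h) \in \operatorname{Vect}_\mathbb{R}(\Theta)$, i.e.~$\alpha+\beta \in \Psi(\Theta)$. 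Condition~\ref{cond commutative} uses the same trick: if $\beta,\beta' \in \Psi(\Theta)$ and $\beta+\beta' \in \Phi$, then $m_\eta(\beta+\beta') = 2c_\eta \geqslant 2 > 0$ for $\eta \in \Delta \smallsetminus \Theta$, so $\beta+\beta'$ is a positive root, hence $\leqslant h$, forcing $2c_\eta \leqslant c_\eta$, a contradiction. For $W_\Theta$-invariance, fix a $W$-invariant inner product on $\operatorname{Vect}_\mathbb{R}(\Phi)$; each $r_\alpha$ with $\alpha \in \Theta$ preserves $\operatorname{Vect}_\mathbb{R}(\Theta)$ and fixes $\operatorname{Vect}_\mathbb{R}(\Theta)^\perp$ pointwise, so writing $h = h_\parallel + h_\perp$ in this orthogonal decomposition, $w(h) - h = w(h_\parallel) - h_\parallel \in \operatorname{Vect}_\mathbb{R}(\Theta)$ for all $w \in W_\Theta$; thus $w$ preserves the affine subspace $h + \operatorname{Vect}_\mathbb{R}(\Theta)$ and, since $w$ permutes $\Phi$, it preserves $\Psi(\Theta) = \big(h + \operatorname{Vect}_\mathbb{R}(\Theta)\big) \cap \Phi$.

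The delicate point is the final assertion, and the irreducibility of $\Phi$ is used here in an essential way. If $\Delta \smallsetminus \Theta = \{\alpha_0\}$ is a single root, then $\Theta \cup \{\alpha_0\} = \Delta$, so $\Psi(\Theta \cup \{\alpha_0\}) = \big(h + \operatorname{Vect}_\mathbb{R}(\Phi)\big) \cap \Phi = \Phi$, which strictly contains $\Psi(\Theta)$ because $-h \notin \Psi(\Theta)$ (its coordinate at $\alpha_0$ is $-c_{\alpha_0} \neq c_{\alpha_0}$). If instead $\lvert \Delta \smallsetminus \Theta \rvert \geqslant 2$, pick any $\alpha_s \in \Delta \smallsetminus \Theta$; then $\alpha_s \notin \Psi(\Theta)$, since a second element $\alpha' \in \Delta \smallsetminus \Theta$ satisfies $m_{\alpha'}(\alpha_s) = 0 \neq c_{\alpha'}$. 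Now I would invoke the classical fact (valid since $\Phi$ is irreducible; see~\cite[Chap.~VI, \S 1]{Bourbaki}) that every positive root can be joined to $h$ by a chain $\alpha_s = \beta_0, \beta_1, \dots, \beta_m = h$ of positive roots with $\beta_{i} = \beta_{i-1} + \delta_{i-1}$, $\delta_{i-1} \in \Delta$. Since $\beta_0 = \alpha_s \notin \Psi(\Theta)$ while $\beta_m = h \in \Psi(\Theta)$, let $i$ be the least index with $\beta_i \in \Psi(\Theta)$; then $\beta_{i-1} = \beta_i - \delta_{i-1} \notin \Psi(\Theta)$ while $\beta_i - h \in \operatorname{Vect}_\mathbb{R}(\Theta)$, which forces $\delta_{i-1} \notin \Theta$. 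Setting $\alpha_0 := \delta_{i-1} \in \Delta \smallsetminus \Theta$, one gets $\beta_{i-1} - h = (\beta_i - h) - \alpha_0 \in \operatorname{Vect}_\mathbb{R}(\Theta \cup \{\alpha_0\})$, so $\beta_{i-1} \in \Psi(\Theta \cup \{\alpha_0\}) \smallsetminus \Psi(\Theta)$, and hence $\Psi(\Theta \cup \{\alpha_0\}) \supsetneq \Psi(\Theta)$. The tempting shortcut of picking a minimal-height root of $\Psi(\Theta)$ fails exactly when that root is itself simple, which is why I route the argument through the chain from $\alpha_s$ up to the highest root.
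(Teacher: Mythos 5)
Your proof is correct. The core of it --- expressing membership in $\Psi(\Theta)$ as the condition $m_\alpha(\gamma)=c_\alpha$ for $\alpha\in\Delta\smallsetminus\Theta$ and then comparing coordinates against those of the highest root to get non-emptiness, $\Psi(\Theta)\subseteq\Phi^+$, and conditions~\ref{cond closure} and~\ref{cond commutative} --- is essentially the paper's argument (your $W_\Theta$-invariance via the orthogonal decomposition $h=h_\parallel+h_\perp$ is a cosmetic variant of the paper's one-line computation $s_\alpha(x)=x-\langle x,\alpha\rangle\alpha$ with $\alpha\in\Theta$). Where you genuinely diverge is the final assertion. The paper argues by contradiction: if $\Psi(\Theta\cup\{\alpha\})=\Psi(\Theta)$ for every $\alpha\in\Delta\smallsetminus\Theta$, then $\Psi(\Theta)$ is stable under every simple reflection, hence under all of $W$, which is absurd since $s_h(h)=-h\notin\Phi^+$. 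You instead give a constructive argument: chain a simple root $\alpha_s\in\Delta\smallsetminus\Theta$ up to $h$ through the root poset and locate the first link that enters $\Psi(\Theta)$, which exhibits an explicit new element $\beta_{i-1}$ of $\Psi(\Theta\cup\{\alpha_0\})$. Both are valid; the paper's is shorter and needs nothing beyond what was already proved, while yours produces the enlarging root explicitly but leans on the classical saturated-chain property of the root poset of an irreducible system (every positive root other than $h$ absorbs some simple root), which is exactly the point where irreducibility enters and which deserves a precise citation rather than a vague pointer to Bourbaki VI~\S1. Your case split at $\lvert\Delta\smallsetminus\Theta\rvert=1$, and your observation that a minimal-height element of $\Psi(\Theta)$ is not the right starting point, are both sound.
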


\begin{proof}
Consider any $\Theta \subsetneq \Delta$.
Note that $W_\Theta$ is generated by the $s_\alpha$ for $\alpha \in \Theta$.
Let $x \in \Psi(\Theta)$. Then $s_\alpha(x) = x - \langle x,\alpha \rangle \alpha$ belongs to $s_\alpha(\Phi)=\Phi$ where $\langle x,\alpha \rangle \in \mathbb{R}$. Whence, by definition of $\Psi(\Theta)$, we deduce $s_\alpha\big( \Psi(\Theta)\big) \subseteq \Psi(\Theta)$ for any $\alpha \in \Theta$.
Thus $\Psi(\Theta)$ is stabilized by $W_\Theta$.

For any element $x \in \operatorname{Vect}_\mathbb{R} (\Phi)$, write $x = \sum_{\alpha \in \Delta} n_x^{\alpha} \alpha$ with coordinates $n_x^\alpha \in \mathbb{R}$ in the basis $\Delta$. Recall that if $x\in \Phi$, then the coordinates are either all non-negative, or all non-positive and that the highest root $h$ satisfies $n_h^\alpha \geqslant n_x^\alpha$ for any $x \in \Phi$ and any $\alpha \in \Delta$ \cite[VI.1.7 Cor.3 and VI.1.8 Prop.25]{Bourbaki}.

Let $\beta \in \Delta \smallsetminus \Theta$. Then for any $x \in \Psi(\Theta)$ we have that $n_x^\beta = n_h^\beta \geqslant 1$. In particular, $\Psi(\Theta) \subset \Phi^+$.
Moreover, if $x,y \in \Phi(\Theta)$, then $n_{x+y}^\beta = n_x^\beta + n_y^\beta = 2 n_h^\beta > n_h^\alpha$. Whence $x+y \not\in \Psi(\Theta)$. Thus $\Psi(\Theta)$ satisfies~\ref{cond commutative}.

Let $x \in \Phi^+$ and $y \in \Psi(\Theta)$ such that $x+y \in \Phi$.
For any $\beta \in \Delta \smallsetminus \Theta$, we have that $n^\beta_h \geqslant n^\beta_{x+y} = n^\beta_x + n^\beta_y \geqslant n^\beta_x + n^\beta_h$. Then $n^\beta_x = 0$ for all $\beta$, whence $x \in \operatorname{Vect}_{\mathbb{R}}(\Theta)$. Thus $x+y \in \big(\Psi(\Theta) + \operatorname{Vect}_{\mathbb{R}}(\Theta)\big) \cap \Phi = \Psi(\Theta)$.
Hence $\Psi(\Theta)$ satisfies condition~\ref{cond closure}.

Finally, suppose by contradiction that $\Psi(\Theta) = \Psi(\Theta \cup \{\alpha\})$ for all the $\alpha \in \Delta \smallsetminus \Theta$.
Then $\Psi(\Theta)$ would be stabilized by $s_\alpha \in W$ for any $\alpha \in \Delta$, thus by the whole Weyl group $W = W_\Delta$.
But $h \in \Psi(\Theta)$, $s_h \in W$ and $s_h(h) = -h \not\in \Phi^+$, which is a contradiction with a previous property of $\Psi(\Theta)$.
\end{proof}

\begin{proposition}\label{prop good increasing subsets}
Let $\Phi$ be a root system and $\Delta$ be a basis of $\Phi$.
For any $\Theta \subsetneq \Delta$, there exists a sequence of subsets $\Psi_i^\Theta$ of $\Phi^+$ for $0 \leq i \leq \operatorname{dim}(\Theta^\perp)$ such that:
\begin{itemize}
    \item for any $i$, the subset $\Psi_i^\Theta$ satisfies conditions~\ref{cond closure},~\ref{cond commutative} and is stabilized by the natural action of $W_\Theta$;
    \item the sequence of $\mathbb{R}$-vector spaces $\operatorname{Vect}\left({\Psi_i^\Theta}_{|\Theta^\perp}\right)$ is a complete flag of $(\Theta^\perp)^*$;
    \item for any $i \geq 1$ and $z \in \left(\Theta \cup \Psi_{i-1}^\Theta\right)^\perp$, the map $\alpha \in \Psi_i^\Theta \smallsetminus \Psi_{i-1}^\Theta \mapsto \alpha(z) \in \mathbb{R}$ has a constant sign (either positive, negative or zero).
\end{itemize}
\end{proposition}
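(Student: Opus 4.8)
The plan is to build the sequence $\Psi_i^\Theta$ by iterating Lemma~\ref{lem high roots}, but with a twist: rather than enlarging $\Theta$ by single simple roots, I would track how the vector spaces $\operatorname{Vect}\big(\Psi_i^\Theta{}_{|\Theta^\perp}\big)$ grow inside $(\Theta^\perp)^*$. First I would reduce to the case where $\Phi$ is irreducible: if $\Phi = \bigsqcup_j \Phi_j$ with bases $\Delta_j = \Delta \cap \Phi_j$, then $\Theta = \bigsqcup_j \Theta_j$ with $\Theta_j = \Theta \cap \Phi_j$, the space $\Theta^\perp$ decomposes as an orthogonal sum of the $\Theta_j^\perp$ (plus possibly a trivial factor if some $\Delta_j \subseteq \Theta$), and conditions~\ref{cond closure},~\ref{cond commutative} are checked component by component since no root mixes two irreducible factors; so a complete flag of $(\Theta^\perp)^*$ is assembled by concatenating flags of the $(\Theta_j^\perp)^*$, refining one factor at a time. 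For an irreducible factor $\Phi_j$ with $\Delta_j \subseteq \Theta$ there is nothing to do ($\Theta_j^\perp = 0$); otherwise $\Theta_j \subsetneq \Delta_j$ and Lemma~\ref{lem high roots} applies to $(\Phi_j, \Delta_j, \Theta_j)$.

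So assume $\Phi$ irreducible and $\Theta \subsetneq \Delta$. I would set $\Psi_0^\Theta = \Psi(\Theta) = (h + \operatorname{Vect}_\mathbb{R}(\Theta)) \cap \Phi$, which by Lemma~\ref{lem high roots} satisfies~\ref{cond closure},~\ref{cond commutative} and is $W_\Theta$-stable; since $\Psi_0^\Theta \subseteq h + \operatorname{Vect}_\mathbb{R}(\Theta)$, all its elements restrict to the \emph{same} nonzero functional on $\Theta^\perp$ (namely $h_{|\Theta^\perp} \neq 0$ because $h$ has a positive coordinate on every $\beta \in \Delta \smallsetminus \Theta$), so $\operatorname{Vect}\big(\Psi_0^\Theta{}_{|\Theta^\perp}\big)$ is $1$-dimensional and the third bullet holds trivially for $i=0$ (the functional $\alpha \mapsto \alpha(z)$ is constant on $\Psi_0^\Theta$, not merely of constant sign — and for $i \geq 1$ it will vanish on $\Psi_{i-1}^\Theta$ by the choice of $z$, so only $\Psi_i^\Theta \smallsetminus \Psi_{i-1}^\Theta$ is at issue). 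Then I would define $\Psi_{i}^\Theta$ inductively: given $\Psi_{i-1}^\Theta$, Lemma~\ref{lem high roots} (last sentence) furnishes some $\alpha_i \in \Delta \smallsetminus \Theta$ with $\Psi(\Theta \cup \{\alpha_i\}) \supsetneq \Psi(\Theta)$; more generally, iterating, one produces a chain $\Theta = \Theta_0 \subsetneq \Theta_1 \subsetneq \cdots$ of subsets of $\Delta$ with $\Psi(\Theta_{i-1}) \subsetneq \Psi(\Theta_i)$, and I would take $\Psi_i^\Theta = \Psi(\Theta_i) \cap \Phi^+$ — no, more carefully: $\Psi(\Theta_i)$ itself, which lies in $\Phi^+$ by Lemma~\ref{lem high roots} since $\Theta_i \neq \Delta$ as long as the flag is not yet complete. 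Each such $\Psi(\Theta_i)$ satisfies~\ref{cond closure},~\ref{cond commutative} and is $W_{\Theta_i}$-stable, hence a fortiori $W_\Theta$-stable.

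The heart of the argument is the second bullet: showing that $\operatorname{Vect}\big(\Psi_i^\Theta{}_{|\Theta^\perp}\big)$ strictly increases and exhausts $(\Theta^\perp)^*$, and simultaneously arranging the constant-sign condition of the third bullet. For the dimension count I would argue that $\Psi(\Theta_i) = \Psi(\Theta_{i-1}) \sqcup \big(\text{roots of the form } \beta + v,\ v \in \operatorname{Vect}_\mathbb{R}(\Theta_i) \smallsetminus \operatorname{Vect}_\mathbb{R}(\Theta_{i-1})\big)$; restricting to $(\Theta_{i-1} \cup \Psi_{i-1}^\Theta)^\perp \supseteq \Theta_i^\perp$ kills $\Psi_{i-1}^\Theta$ and the $\Theta_{i-1}$-directions, so the new roots restrict to functionals detecting exactly the one extra direction $\alpha_i$, whence $\dim \operatorname{Vect}\big(\Psi_i^\Theta{}_{|\Theta^\perp}\big) = \dim \operatorname{Vect}\big(\Psi_{i-1}^\Theta{}_{|\Theta^\perp}\big) + 1$. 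The hardest point — and the one I expect to require real care — is the constant-sign claim: for $z \in (\Theta \cup \Psi_{i-1}^\Theta)^\perp$ one must show every $\alpha \in \Psi_i^\Theta \smallsetminus \Psi_{i-1}^\Theta$ has $\alpha(z)$ of the same sign. I would exploit that these new roots all lie in the single coset $h + \operatorname{Vect}_\mathbb{R}(\Theta_i)$ and differ only by elements of $\operatorname{Vect}_\mathbb{R}(\Theta_i)$; writing $\alpha = \alpha_0 + w$ with $\alpha_0 \in \Psi_i^\Theta \smallsetminus \Psi_{i-1}^\Theta$ fixed and $w \in \operatorname{Vect}_\mathbb{R}(\Theta_i)$, and observing that on $(\Theta \cup \Psi_{i-1}^\Theta)^\perp$ the only surviving coordinate is along $\alpha_i$, the sign of $\alpha(z)$ equals $\operatorname{sign}(n_\alpha^{\alpha_i})\cdot\operatorname{sign}(\alpha_i(z))$ where $n_\alpha^{\alpha_i} \geq 1$ is the (positive) $\alpha_i$-coordinate of $\alpha \in \Phi^+$ — hence constant, equal to $\operatorname{sign}(\alpha_i(z))$. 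Finally, the flag completes: after $\dim(\Theta^\perp) = |\Delta \smallsetminus \Theta|$ steps (in the irreducible case), $\Theta_i$ has absorbed all of $\Delta$ in each relevant factor and $\operatorname{Vect}\big(\Psi_i^\Theta{}_{|\Theta^\perp}\big)$ has reached full dimension; one checks $i$ never forces $\Theta_i = \Delta$ prematurely because each step adds exactly one simple root and the flag length matches $\dim(\Theta^\perp)$ precisely.
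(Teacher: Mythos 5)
Your proposal is essentially the paper's own proof: iterate Lemma~\ref{lem high roots} to produce a chain $\Theta=\Theta_1\subsetneq\Theta_2\subsetneq\cdots$ and set $\Psi_i^\Theta=\Psi(\Theta_i)$, get the flag from the restrictions of $h$ and the added simple roots to $\Theta^\perp$, and get the constant sign by writing each new root as $h-\gamma-m\alpha_i$ with $\gamma$ vanishing on $z$ and $m>0$; the reduction to the irreducible case by concatenation is also the same. The only blemishes are cosmetic: your indexing starts the flag at dimension $1$ rather than $0$ (so $\Psi_0^\Theta$ should be empty to match the statement's $0\le i\le\dim(\Theta^\perp)$), and the sign of $\alpha(z)$ is actually $-\operatorname{sign}(\alpha_i(z))$ since the new roots are $h$ \emph{minus} a positive multiple of $\alpha_i$, which does not affect the constancy conclusion.
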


\begin{proof}
We firstly treat the case of an irreducible root system $\Phi$ with basis $\Delta$.
Define $\Theta_1 = \Theta$.
Let $m := \operatorname{Card}(\Delta \smallsetminus \Theta)$.
According to Lemma~\ref{lem high roots}, there is a sequence of pairwise distinct simple roots $\alpha_2, \dots, \alpha_m \in \Theta \smallsetminus \Delta$ such that $\Psi(\Theta_i) \subsetneq \Psi(\Theta_{i+1})$ for any $1 \leqslant i < m$ where $\Theta_i = \Theta \cup \{\alpha_2,\dots,\alpha_i\}$.
Set $\Psi_i^\Theta = \Psi(\Theta_i)$.
Then, according to Lemma~\ref{lem high roots}, the subset $\Psi_i^\Theta$ satisfies~\ref{cond closure}, \ref{cond commutative} and it is stabilized by $W_{\Theta_i}$, therefore by $W_\Theta$.

Let $h$ be the highest root with respect to $\Delta$. Then, ${\Psi_1^\Theta}_{|\Theta^\perp} = \{ h_{|\Theta^\perp}\}$ is a nonzero single point since $\alpha_{|\Theta^\perp} = 0$ for any $\alpha \in \Theta$ and $h \not\in \operatorname{Span}_\mathbb{Z}(\Theta)$.
By construction, $\operatorname{Vect}_\mathbb{R}\big( {\Psi_i^\Theta}_{|\Theta^\perp} \big) = \operatorname{Vect}_\mathbb{R}\big( h_{|\Theta^\perp}, {\alpha_2}_{|\Theta^\perp},\dots,{\alpha_i}_{|\Theta^\perp}\big)$ for any $1 < i \leqslant m$.
Since the family $\{h,\alpha_2, \dots,\alpha_m\} \cup \Theta$ is a generating family of $\operatorname{Vect}(\Phi)$, we deduce that $\{h_{|\Theta^\perp}, {\alpha_2}_{|\Theta^\perp},\dots,{\alpha_m}_{|\Theta^\perp}\}$ is a generating family of $(\Theta^\perp)^*$, hence a basis because of cardinality.
We conclude that the family $\operatorname{Vect}_\mathbb{R}\big( {\Psi_i^\Theta}_{|\Theta^\perp} \big)$, with $1 \leq i \leq m$, forms a complete flag.

Finally,  let $z \in \left(\Theta \cup \Psi_{i-1}^\Theta\right)^\perp$ and let $\alpha,\alpha' \in \Psi_i^\Theta$.
Write respectively
\[\alpha = h - \sum_{\beta \in\Theta_{i-1}} n_\alpha^\beta \beta - m \alpha_i = \gamma - m \alpha_i
 \text{ and }
\alpha' = h - \sum_{\beta \in\Theta_{i-1}} {n'}_\alpha^\beta \beta - m' \alpha_i = \gamma' - m' \alpha_i
\]
with $n_\alpha^\beta,{n'}_\alpha^\beta,m,m' \in \mathbb{Z}_{\geq 0}$ for any $\beta \in \Theta_{i-1}$.
By construction and because the restriction to $\Theta^\perp$ gives a complete flag, for any $0 \leq i \leq m$, we have that $\operatorname{Vect}_\mathbb{R}\big( \Psi_i^\Theta \cup \Theta\big) = \operatorname{Vect}_\mathbb{R}\big( \{h\} \cup \Theta_i \big)$.
Then $\gamma,\gamma' \in \operatorname{Vect}_\mathbb{R}\big( \{h\} \cup  \Theta_{i-1} \big) = \operatorname{Vect}(\Psi_{i-1}^\Theta \cup \Theta)$.
If, moreover, $\alpha,\alpha' \not\in \Psi_{i-1}^\Theta$, we deduce that $m,m' > 0$.
Thus $\alpha(z)=-m \alpha_i(z)$ and $\alpha'(z) =- m' \alpha_i(z)$ both have the sign of $-\alpha_i(z)$.
Then, the result follows for irreducible root systems.

Now, consider a general root system $\Phi$ with basis $\Delta$ and denote by $(\Phi_j,\Delta_j)_{1 \leqslant j \leqslant t}$ its irreducible components with $V_j = \operatorname{Vect}(\Phi_j)^*$.
For $1 \leqslant j \leqslant t$, let $\Theta_j := \Theta \cap \Delta_j$ and let $m_j := \operatorname{Card}(\Delta_j - \Theta_j)$.
Note that $\Theta^\perp = \bigoplus_{1 \leqslant j \leqslant t} (\Theta_j^\perp \cap V_j)$.
For $1 \leqslant j \leqslant t$, if $\Theta_j=\Delta_j$ (i.e.~$m_j=0$), then $\Theta_j^\perp \cap V_j = 0$ and we can omit this factor.
Otherwise $m_j >0$ and we obtained an increasing sequence of subsets $\Psi_i^{\Theta_j} \subset \Phi_j^+$ for $1 \leq i \leq m_j$ which are~\ref{cond closure}, \ref{cond commutative} and stabilized by $W_\Theta$.
By an elementary proof, for $j_1 \neq j_2$, the subset $\Psi_{i_1}^{\Theta_{j_1}} \cup \Psi_{i_2}^{\Theta_{j_2}}$ also is~\ref{cond closure}, \ref{cond commutative} and stabilized by $W_\Theta$.
Thus it suffices to define the concatenation of the $\Psi_i^{\Theta_j}$ by:
\[\forall 1 \leqslant j \leqslant t,\ \forall \sum_{k=1}^{j-1} m_k < i \leqslant   \sum_{k=1}^{j} m_k,\ \Psi_i^\Theta :=  \left( \bigcup_{1 \leqslant k < j} \Psi_{m_k}^{\Theta_k} \right) \cup \Psi_{i-\sum_{k=1}^{j-1} m_k}^{\Theta_j}\]
This sequence provides successively a complete flag on each direct factor $\Theta_j^\perp \cap V_j$ whence the result follows.
\end{proof}

\subsection{\texorpdfstring{$k$}{k}-linear action of \texorpdfstring{$\mathbf{B}(k)$}{B(k)}}\label{sec linear action}

Recall that any subset of positive roots $\Phi^+$ with basis $\Delta$ is a poset with the following ordering:
$\beta < \gamma$ if there exists $m \in \mathbb{N}$ and simple roots $\alpha_1,\dots,\alpha_m \in \Delta$ such that $\forall 1 \leq i \leq m,\ \beta + \alpha_1 + \dots + \alpha_i \in \Phi^+$ and $\gamma = \beta + \alpha_1 + \dots + \alpha_m$.
We will use it several times.
Since $\Phi^+$ is a poset, for any subset $\Psi \subset \Phi^+$, there is a numbering $\Psi = \{\alpha_1,\dots,\alpha_m\}$ such that:
\begin{equation}\label{cond decreasing}
    \alpha_i < \alpha_j\ \Rightarrow\ i > j\tag{C3}
\end{equation}

\begin{lemma}\label{lem any subset}
Let $\Psi \subset \Phi^+$ satisfying conditions~\ref{cond closure} and~\ref{cond commutative}.
Write $\Psi = \{\alpha_1,\dots,\alpha_m\}$ with a numbering satisfying~\eqref{cond decreasing}.
Then, for any $1 \leq \ell \leq m$, the subset $\Psi_\ell = \{\alpha_1,\dots,\alpha_\ell\}$ satisfies conditions~\ref{cond closure} and~\ref{cond commutative}.
\end{lemma}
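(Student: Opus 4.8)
The plan is to treat the two conditions separately and to reduce everything to a single, standard fact about the partial order on $\Phi^+$.

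Condition~\ref{cond commutative} is inherited by arbitrary subsets: if $\beta+\gamma\notin\Phi$ for all $\beta,\gamma\in\Psi$, then in particular for all $\beta,\gamma\in\Psi_\ell\subseteq\Psi$. So the only real point is condition~\ref{cond closure} for $\Psi_\ell$. To check it, I would take $\alpha\in\Phi^+$ and $\beta\in\Psi_\ell$ with $\alpha+\beta\in\Phi$ and aim to show $\alpha+\beta\in\Psi_\ell$. Since $\Psi$ already satisfies~\ref{cond closure} and $\beta\in\Psi$, one gets $\alpha+\beta\in\Psi$ for free; writing $\beta=\alpha_i$ and $\alpha+\beta=\alpha_j$, one has $i\leq\ell$ and the task reduces to proving $j\leq\ell$. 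Note also that $\alpha+\beta$, being a nonzero sum of two roots with nonnegative coordinates in the basis $\Delta$, lies in $\Phi^+$.

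The key step will be the purely root-theoretic fact that $\beta<\alpha+\beta$ for the order of~\eqref{cond decreasing} whenever $\alpha$, $\beta$ and $\alpha+\beta$ all lie in $\Phi^+$ --- that is, one can pass from $\beta$ to $\alpha+\beta$ by adding simple roots one at a time while staying inside $\Phi^+$. Granting this, $\alpha_i<\alpha_j$, so~\eqref{cond decreasing} forces $i>j$, hence $j<i\leq\ell$ and $\alpha+\beta=\alpha_j\in\Psi_\ell$, which finishes the proof. Conceptually this amounts to the remark that condition~\ref{cond closure} for $\Psi$ is equivalent to $\Psi$ being upward closed in the poset $(\Phi^+,<)$ (walk up a chain, applying~\ref{cond closure} at each step), that~\eqref{cond decreasing} forces each truncation $\Psi_\ell$ to be upward closed as well, and that upward-closedness together with the fact above is precisely~\ref{cond closure}.

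The remaining work, and the main obstacle, is the fact itself. I would prove it by induction on $\operatorname{ht}(\alpha)$. When $\alpha$ is simple there is nothing to do, since $\beta+\alpha\in\Phi^+$ is a chain of length one. When $\operatorname{ht}(\alpha)\geq 2$ I would pick, via~\cite[Chap.~VI, \S 1.6, Prop.~19]{Bourbaki}, a simple root $\delta$ occurring in $\alpha$ with $\alpha-\delta\in\Phi^+$. If $\beta+\delta\in\Phi$, then $\beta+\delta\in\Phi^+$, and applying the induction hypothesis to the pair $(\alpha-\delta,\beta+\delta)$ (whose sum is again $\alpha+\beta$) gives $\beta+\delta<\alpha+\beta$, hence $\beta<\beta+\delta<\alpha+\beta$. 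If $\beta+\delta\notin\Phi$, I would compare the signs of $(\alpha+\beta,\alpha-\delta)$, $(\alpha+\beta,\delta)$ and $(\alpha+\beta,\beta)$, whose sum is $\lVert\alpha+\beta\rVert^{2}>0$: the first cannot be positive, for otherwise $(\alpha+\beta)-(\alpha-\delta)=\beta+\delta$ would be a root; if the second is positive, then $(\alpha+\beta)-\delta=(\alpha-\delta)+\beta\in\Phi^+$, and two uses of the induction hypothesis (first for the pair $(\alpha-\delta,\beta)$, then for $(\delta,(\alpha-\delta)+\beta)$, whose heights are both $<\operatorname{ht}(\alpha)$) again give $\beta<\alpha+\beta$. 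The delicate point is the last possibility, where only $(\alpha+\beta,\beta)$ is positive: then $(\alpha+\beta,\alpha)\leq 0$, which is already contradictory in the simply-laced case, and one disposes of the non-simply-laced types $B$, $C$, $F_4$ and $G_2$ by a short direct string-length inspection. Alternatively, the connectedness of the ``root interval'' $[\beta,\alpha+\beta]$ invoked here is classical and can simply be cited, which shortens the argument considerably.
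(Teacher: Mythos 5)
Your proposal is correct and follows essentially the same route as the paper's proof: condition~\ref{cond commutative} passes to subsets trivially, and for condition~\ref{cond closure} one notes that $\alpha+\beta\in\Psi$, that $\beta<\alpha+\beta$ in the partial order on $\Phi^+$, and that the numbering condition~\eqref{cond decreasing} then forces $\alpha+\beta\in\Psi_\ell$. The only divergence is that the paper asserts the inequality $\beta<\alpha+\beta$ without proof, treating it as a standard property of the poset $(\Phi^+,<)$, whereas you supply an inductive argument for it; the one loose end in your write-up (the non-simply-laced subcase, deferred to a ``string-length inspection'' or a citation) concerns exactly this auxiliary fact, which is true and which the paper itself does not justify.
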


\begin{proof}
The condition~\ref{cond commutative} is automatically satisfied for any subset of $\Psi$.
Let $\ell \in \llbracket 1,m\rrbracket$.
If $\alpha \in \Phi^+$ and $\beta \in \Psi_k$ are such that $\beta'=\alpha+\beta \in\Phi$, then $\beta' \in \Psi$ by~\ref{cond closure} and $\beta' > \beta$.
Hence there are $i,j \in \llbracket 1,m\rrbracket$ such that $\beta=\alpha_i$ and $\beta'=\alpha_j$.
By condition~\eqref{cond decreasing}, we have that $i > j$ so that $\beta' = \alpha_j \in \Psi_\ell$.
Thus condition~\ref{cond closure} is satisfied for $\Psi_\ell$.
\end{proof}

\begin{proposition}\label{prop suitable polynomials}
Let $\mathbf{G}$, $\mathbf{B}$, $\mathbf{T}$ as before and assume that $S = \spec(\mathbb{Z})$.
Let $\Psi \subseteq \Phi^+$ be a subset satisfying conditions~\ref{cond closure} and~\ref{cond commutative}.
Let $\{\alpha_1,\dots,\alpha_m\} = \Psi$ be a numbering of $\Psi$ satisfying condition~\eqref{cond decreasing} and extends it to a numbering $\{\alpha_1,\dots,\alpha_M\} = \Phi^+$ of positive roots (without further conditions on $\{\alpha_{m+1},\dots,\alpha_M\}$).
Then, there exist polynomials $P_{i,j} \in \mathbb{Z}[X_1,\dots,X_M]$ for $i,j \in \llbracket 1,m\rrbracket$, depending on the choice of the numbering such that:
\begin{itemize}
    \item for any $v = \prod_{i=1}^{m} \theta_{\alpha_i}(y_{\alpha_i}) \in \mathbf{U}_\Psi$ and any $u = \prod_{i=1}^M \theta_{\alpha_i}(x_{\alpha_i}) \in \mathbf{U}^+$, we have
\[uvu^{-1} = \prod_{j=1}^{m} \theta_{\alpha_j}\left( \sum_{i=1}^{m} P_{i,j}\left( x_{\alpha_1},\dots,x_{\alpha_M}\right) y_{\alpha_i} \right)\]
where $x_\alpha, y_\beta \in \mathbb{G}_a$ for $\alpha \in \Phi^+$ and $\beta \in \Psi$,
    \item if $i < j$, then $P_{i,j} = 0$,
    \item if $i=j$, then $P_{i,j} = 1$.
\end{itemize}
\end{proposition}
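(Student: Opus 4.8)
The plan is to prove the displayed identity at the level of the $\mathbb{Z}$-group scheme $\mathbf{U}_\Psi$ and then to read the polynomials off a product of matrices. By Proposition~\ref{fact conditions}, condition~\ref{cond commutative} makes $\mathbf{U}_\Psi$ commutative and condition~\ref{cond closure} makes $\mathbf{B}$, hence $\mathbf{U}^+$, normalize $\mathbf{U}_\Psi$. Thus, for every ring $R$ and every $u \in \mathbf{U}^+(R)$, conjugation by $u$ is an automorphism of $\mathbf{U}_\Psi$. Through the isomorphism $\psi = \prod_{i=1}^{m}\theta_{\alpha_i}$ of~\eqref{eq isom psi} (using the fixed numbering of $\Psi$), which identifies $\mathbf{U}_\Psi$ with the additive group scheme with coordinates indexed by $\Psi$, I will show that this automorphism is $\mathbb{Z}$-linear, that in the coordinate basis its matrix has $1$'s on the diagonal and a nonzero entry in position $(\alpha_j,\alpha_i)$ only if $\alpha_j > \alpha_i$ in the root poset, and that, when $u = \prod_{i=1}^{M}\theta_{\alpha_i}(x_{\alpha_i})$ is written in the chosen order, its matrix entries lie in $\mathbb{Z}[X_1,\dots,X_M]$.

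First I would treat a single root subgroup of $\mathbf{U}^+$. For $\alpha \in \Phi^+$ and $\gamma \in \Psi$, combining the commutation relation~\eqref{commutation relation} with Lemma~\ref{lem sum of good roots} in the form~\eqref{cond closure second} — which forces $s = 1$ and $r\alpha + \gamma \in \Psi$ whenever $r\alpha + s\gamma \in \Phi$ — gives, inside $\mathbf{U}_\Psi$ and using its commutativity to move $\theta_\gamma(y)$ to the front,
\[
\theta_\alpha(x)\,\theta_\gamma(y)\,\theta_\alpha(x)^{-1} \;=\; \theta_\gamma(y)\prod_{\substack{r \geqslant 1 \\ r\alpha + \gamma \in \Phi}} \theta_{r\alpha + \gamma}\!\bigl(c^{r,1}_{\alpha,\gamma}\,x^{r}\,y\bigr),
\]
which is \emph{linear} in $y$ (if $\alpha \in \Psi$ the product is empty by~\ref{cond commutative}). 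Consequently, in the coordinates $\psi$, conjugation by $\theta_\alpha(x)$ is the $\mathbb{Z}$-linear map $L_\alpha(x)$ whose matrix has a $1$ in each entry $(\gamma,\gamma)$, the entry $c^{r,1}_{\alpha,\gamma}x^{r}$ in position $(r\alpha + \gamma,\gamma)$, and $0$ elsewhere; its entries lie in $\mathbb{Z}[x]$, and every nonzero off-diagonal entry $(\delta',\delta)$ has $\delta' - \delta$ equal to a nonzero non-negative sum of simple roots, hence $\delta' > \delta$ in the root poset of $\Phi^+$ — here I use the classical fact that this poset coincides with the dominance order on $\Phi^+$, which I will recall.

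Then I would compose over the factors of $u$. Writing $u = \theta_{\alpha_1}(x_{\alpha_1})\cdots\theta_{\alpha_M}(x_{\alpha_M})$, conjugation by $u$ on $\mathbf{U}_\Psi$ equals, in the coordinates $\psi$, the matrix product $L(x) := L_{\alpha_1}(x_{\alpha_1})\cdots L_{\alpha_M}(x_{\alpha_M})$. A product of matrices each of which has $1$'s on the diagonal, vanishes in every position $(\delta',\delta)$ with $\delta' \not\geqslant \delta$ in the root poset, and has entries in $\mathbb{Z}[X_1,\dots,X_M]$, has the same three properties. Setting $P_{i,j} := \bigl(L(X_1,\dots,X_M)\bigr)_{\alpha_j,\alpha_i} \in \mathbb{Z}[X_1,\dots,X_M]$, one obtains, for $v = \prod_{i=1}^{m}\theta_{\alpha_i}(y_{\alpha_i})$, that $uvu^{-1} = \psi\bigl(L(x)\,(y_{\alpha_i})_i\bigr) = \prod_{j=1}^{m}\theta_{\alpha_j}\!\bigl(\sum_{i=1}^{m} P_{i,j}(x_{\alpha_1},\dots,x_{\alpha_M})\,y_{\alpha_i}\bigr)$. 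Finally, $P_{i,i} = 1$ since $L(x)$ has $1$'s on the diagonal, and $P_{i,j} \neq 0$ forces $\alpha_j \geqslant \alpha_i$ in the root poset, hence, by~\eqref{cond decreasing}, $j \leqslant i$; that is, $P_{i,j} = 0$ whenever $i < j$.

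The only genuinely non-formal step is the single-root-subgroup identity above, and within it the appeal to~\eqref{cond closure second} to annihilate all the higher powers $y^{s}$ with $s \geqslant 2$: this linearity in $y$ is precisely what fails for poorly chosen $\Psi$ in characteristics $2$ and $3$ (cf.~the counter-examples). A minor point requiring care is that commutator terms arising from different roots $\gamma$, or from successive conjugation steps, may land in the same root subgroup and must be added there; since $\psi$ turns $\mathbf{U}_\Psi$ into an additive group and all the maps involved are $\mathbb{Z}$-linear, this reduces to ordinary matrix multiplication. The triangularity claim relies on the classical identification of the root poset of $\Phi^+$ with the dominance order, which I will recall or cite.
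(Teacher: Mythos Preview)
Your proof is correct and follows essentially the same route as the paper's: both factor $u$ into root-group elements, use~\eqref{cond closure second} to see that conjugation by each $\theta_\alpha(x)$ acts $\mathbb{Z}$-linearly on $\mathbf{U}_\Psi$ in the $\psi$-coordinates, and then compose --- the paper by an explicit induction on the number $N$ of nontrivial factors (producing the recursion $P_{i,j;N} = P_{i,j;N-1} + \sum c^{r,1}_{\alpha_N,\alpha_\ell} P_{i,\ell;N-1} X_N^r$), you by recognizing this as a product of matrices $L_{\alpha_1}(x_{\alpha_1})\cdots L_{\alpha_M}(x_{\alpha_M})$. The classical fact you flag (root poset $=$ dominance order on $\Phi^+$) is also used, without comment, in the paper's induction step where it asserts $\alpha_\ell = \alpha_j - r\alpha_N < \alpha_j$ for $\alpha_N \in \Phi^+$ not necessarily simple; so your singling it out is a clarification rather than an extra hypothesis.
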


\begin{proof}
Let $\mathbf{U}_\Psi = \prod_{\beta \in \Psi} \mathbf{U}_\beta$.
It is a commutative subgroup normalized by $\mathbf{B}$ according to Proposition~\ref{fact conditions}.

Let $\alpha \in \Phi^+$ and $\beta \in \Psi$.
Let $x,y \in \mathbb{G}_a$.
Then, using condition~\eqref{cond closure second} given by Lemma~\ref{lem sum of good roots}, the commutation relation~\eqref{commutation relation} becomes
\begin{equation}\label{eq simplified comm rel}
\left[ \theta_\alpha(x), \theta_\beta(y) \right] = \prod_{\substack{r\in\mathbb{N}\\r\alpha+\beta \in \Phi}} \theta_{r \alpha + \beta}\left( c^{r,1}_{\alpha,\beta} x^r y \right).
\end{equation}

We prove, for $i,j \in \llbracket 1,m\rrbracket$, by induction on the integer $N \geqslant 0$ such that $x_{\alpha_{N+1}} = \dots = x_{\alpha_M} = 0$, the existence of polynomials $P_{i,j;N} \in \mathbb{Z}[X_{1},\dots,X_{N}]$ satisfying:
\begin{itemize}
    \item for any $1 \leqslant \ell \leqslant N$, $P_{i,j;N}(X_{1},\dots,X_{\ell},0,\dots,0) = P_{i,j;\ell}$,
    \item and for any $v = \prod_{i=1}^{m} \theta_{\alpha_i}(y_{\alpha_i}) \in \mathbf{U}_\Psi$ and any $u = \prod_{i=1}^N \theta_{\alpha_i}(x_{\alpha_i}) \in \mathbf{U}^+$, we have
\[uvu^{-1} = \prod_{j=1}^{m} \theta_{\alpha_j}\left( \sum_{i=1}^{m} P_{i,j;N}\left( x_{\alpha_1},\dots,x_{\alpha_M}\right) y_{\alpha_i} \right)\]
where $x_\alpha, y_\beta \in \mathbb{G}_a$ for $\alpha \in \Phi^+$ and $\beta \in \Psi$.
\end{itemize}

\textbf{Basis:} when $N=0$, we immediately have $uvu^{-1} = v$ and therefore:
\[\forall i,j \in \llbracket 1,m\rrbracket,\ P_{i,j;0} = \left\{\begin{array}{rl}1 & \text{ if } i=j,\\0& \text{ otherwise.}\end{array}\right.\]

\textbf{Induction step:} assume $N \geq 1$ and write $u = \theta_{\alpha_N}(x_{\alpha_N}) \cdots \theta_{\alpha_1}(x_{\alpha_1})$.
Let $u_0 = \theta_{\alpha_{N-1}}(x_{\alpha_{N-1}}) \cdots \theta_{\alpha_1}(x_{\alpha_1})$ so that $u = \theta_{\alpha_N}(x_{\alpha_N}) u_0$.
Then, by the inductive assumption, there exist polynomials $P_{i,j;N-1} \in \mathbb{Z}[X_{1},\dots,X_{N-1}]$ such that 
\[u_0 v u_0^{-1} =  \prod_{j=1}^{m} \theta_{\alpha_j}\left( \sum_{i=1}^{m} P_{i,j;N-1}\left( x_{\alpha_1},\dots,x_{\alpha_{N-1}}\right) y_{\alpha_i} \right),\]
$P_{i,j;N-1} = 1$ for $i=j$ and $P_{i,j;N-1} = 0$ for $i<j$.
Hence 
\[uvu^{-1} = \prod_{j=1}^{m} \theta_{\alpha_N}(x_{\alpha_N}) \theta_{\alpha_j}\left( \sum_{i=1}^m P_{i,j;N-1}\left( x_{\alpha_1},\dots,x_{\alpha_{N-1}}\right) y_{\alpha_i} \right) \theta_{\alpha_N}(x_{\alpha_N})^{-1}.\]
Let $j \in \llbracket 1,m\rrbracket$.
If $\alpha_N +\alpha_j \not\in \Phi$, then
\begin{multline*}
\theta_{\alpha_N}(x_{\alpha_N}) \theta_{\alpha_j}\left( \sum_{i=1}^m P_{i,j;N-1}\left( x_{\alpha_1},\dots,x_{\alpha_{N-1}}\right) y_{\alpha_i} \right) \theta_{\alpha_N}(x_{\alpha_1})^{-1} \\= \theta_{\alpha_j} \left( \sum_{i=1}^m P_{i,j;N-1}\left( x_{\alpha_1},\dots,x_{\alpha_{N-1}}\right) y_{\alpha_i} \right).
\end{multline*}
Otherwise, $\alpha_N + \alpha_j \in \Phi$ and, according to Lemma~\ref{lem sum of good roots} and commutation relations~\eqref{eq simplified comm rel}, we have
\begin{multline*}
\theta_{\alpha_N}(x_{\alpha_N}) \theta_{\alpha_j}\left( \sum_{i=1}^m P_{i,j;N-1}\left( x_{\alpha_1},\dots,x_{\alpha_{N-1}}\right) y_{\alpha_i} \right) \theta_{\alpha_N}(x_{\alpha_N})^{-1} =\\
 \left(\prod_{\substack{r\in \mathbb{N}\\r\alpha_N + {\alpha_j} \in \Psi}}\theta_{r\alpha_N+{\alpha_j}}\left( \sum_{i=1}^m c^{r,1}_{\alpha_N,{\alpha_j}} P_{i,j;N-1}\left( x_{\alpha_1},\dots,x_{\alpha_{N-1}}\right) x_{\alpha_N}^r y_{\alpha_i} \right)
 \right)\\
 \cdot \theta_{\alpha_j}\left( \sum_{i=1}^m P_{i,j;N-1}\left( x_{\alpha_1},\dots,x_{\alpha_{N-1}}\right) y_{\alpha_i} \right).
\end{multline*}
Thus, since $\mathbf{U}_\Psi$ is a commutative group scheme, we get the desired formula by setting:
\[
P_{i,j;N} := 
P_{i,j;N-1} + \sum_{\substack{r \in \mathbb{N},\ \ell \in \llbracket 1,m\rrbracket\\ \alpha_j - r \alpha_N = \alpha_\ell\in \Psi}} c^{r,1}_{\alpha_N,\alpha_\ell} P_{i,\ell,N-1} X_{N}^r.
\]
Moreover if $i=j$ and $\alpha_\ell:=\alpha_j-r \alpha_N \in \Psi$, then $\alpha_\ell < \alpha_j$. Thus $i<\ell$ by condition~\eqref{cond decreasing} and $P_{i,j;N}= P_{i,j;N-1} = 1$.
If $i <j$ and $\alpha_j-r \alpha_N \in \Psi$, then there exists $\ell\in\llbracket 1,m\rrbracket$ such that $\alpha_j-r \alpha_N = \alpha_\ell < \alpha_j$. Thus $i<j<\ell$ by condition~\eqref{cond decreasing} and $P_{i,j;N}= 0$ since $P_{i,\ell,N-1}=0$ for all such $\ell$.
\end{proof}

\begin{corollary}\label{cor k-linear action borel}
Assume that $\Psi \subset \Phi^+$ satisfies conditions~\ref{cond closure} and~\ref{cond commutative}.
Then the subgroup $\mathbf{U}_\Psi(k)$ is naturally isomorphic to a free $k$-module of finite rank on which the action by conjugation of $\mathbf{B}(k)$ is $k$-linear.
\end{corollary}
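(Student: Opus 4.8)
The plan is to deduce the corollary directly from Proposition~\ref{prop suitable polynomials} and from the torus action on root groups; the only preliminary work is to make the $k$-module structure on $\mathbf{U}_\Psi(k)$ explicit. First I would use the isomorphism of schemes $\psi = \prod_{\alpha \in \Psi}\theta_\alpha$ of~\eqref{eq isom psi} to identify the underlying set of $\mathbf{U}_\Psi(k)$ with $k^\Psi$. Since $\Psi$ satisfies~\ref{cond commutative}, the group $\mathbf{U}_\Psi$ is commutative by Proposition~\ref{fact conditions}; as each $\theta_\alpha : \mathbb{G}_{a,k} \to \mathbf{U}_\alpha$ is a homomorphism of group schemes and the subgroups $\mathbf{U}_\alpha$ ($\alpha \in \Psi$) pairwise commute inside $\mathbf{U}_\Psi$, one obtains $\psi(y)\psi(y') = \psi(y+y')$ for all $y,y' \in k^\Psi$ (in particular $\psi$ is independent of the chosen ordering of $\Psi$). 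Thus $\psi$ is an isomorphism of abelian groups onto $(k^\Psi,+)$, which is a free $k$-module of rank $\operatorname{Card}(\Psi)<\infty$; I transport this structure to $\mathbf{U}_\Psi(k)$, so that scalar multiplication by $\lambda \in k$ sends $\prod_i \theta_{\alpha_i}(y_{\alpha_i})$ to $\prod_i \theta_{\alpha_i}(\lambda y_{\alpha_i})$.

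Next I would show that conjugation by $\mathbf{U}^+(k)$ is $k$-linear. Fix a numbering $\Psi = \{\alpha_1,\dots,\alpha_m\}$ satisfying condition~\eqref{cond decreasing} and extend it to a numbering of $\Phi^+$ as in Proposition~\ref{prop suitable polynomials}. For $u = \prod_{i=1}^{M}\theta_{\alpha_i}(x_{\alpha_i}) \in \mathbf{U}^+(k)$ and $v = \prod_{i=1}^{m}\theta_{\alpha_i}(y_{\alpha_i}) \in \mathbf{U}_\Psi(k)$, that proposition gives $uvu^{-1} = \prod_{j=1}^{m}\theta_{\alpha_j}\big(\sum_{i=1}^{m} P_{i,j}(x_{\alpha_1},\dots,x_{\alpha_M})\,y_{\alpha_i}\big)$. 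Read through $\psi$, the endomorphism $v \mapsto uvu^{-1}$ of $k^\Psi$ is given by the matrix $\big(P_{i,j}(x)\big)_{1\le i,j\le m}$; the essential point is that its entries depend only on $u$ and not on $v$, so the map is genuinely $k$-linear. By the last two items of Proposition~\ref{prop suitable polynomials} this matrix is lower unitriangular, hence invertible, so conjugation by $u$ is a $k$-linear automorphism of $\mathbf{U}_\Psi(k)$.

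Finally I would treat $\mathbf{T}(k)$ and conclude. By the definition of the Chevalley system ($\mathbf{T}$ acts on $\mathbf{U}_\alpha$ through the character $\alpha$), one has $t\,\theta_\alpha(y)\,t^{-1} = \theta_\alpha\big(\alpha(t)\,y\big)$ for $t \in \mathbf{T}(k)$, $\alpha \in \Phi$, $y \in k$, with $\alpha(t) \in k^\times$; hence conjugation by $t$ acts on $k^\Psi$ as the diagonal, invertible matrix $\mathrm{diag}\big(\alpha_1(t),\dots,\alpha_m(t)\big)$, which is $k$-linear. Since $\mathbf{B} = \mathbf{T}\ltimes\mathbf{U}^+$ yields $\mathbf{B}(k) = \mathbf{T}(k)\cdot\mathbf{U}^+(k)$, and the $k$-linear elements of $\mathrm{Aut}_{\mathrm{grp}}\big(\mathbf{U}_\Psi(k)\big)$ form a subgroup, the whole conjugation action of $\mathbf{B}(k)$ — which preserves $\mathbf{U}_\Psi(k)$ by Proposition~\ref{fact conditions} — is $k$-linear, proving the corollary. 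I do not expect a real obstacle: the substantive computation already lives in Proposition~\ref{prop suitable polynomials}, and the only delicate bookkeeping is identifying the group law on $\mathbf{U}_\Psi(k)$ with coordinatewise addition (this is exactly where~\ref{cond commutative} is used) and noting that the polynomials $P_{i,j}$ act as scalars, uniformly in $v$.
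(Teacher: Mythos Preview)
Your proof is correct and follows essentially the same approach as the paper: both transport the $k$-module structure via $\psi$, invoke Proposition~\ref{prop suitable polynomials} for the unipotent part, and use the standard character action $t\theta_\alpha(y)t^{-1}=\theta_\alpha(\alpha(t)y)$ for the torus. The only cosmetic difference is that the paper writes $b=tu$ and checks $\lambda\cdot f_b(x)=f_b(\lambda\cdot x)$ in a single computation, whereas you treat $\mathbf{U}^+(k)$ and $\mathbf{T}(k)$ separately and then appeal to $\mathbf{B}(k)=\mathbf{T}(k)\cdot\mathbf{U}^+(k)$; this is the same argument reorganized.
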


\begin{proof}
Write $\Psi = \{\alpha_1,\dots,\alpha_m\}$ with a numbering satisfying~\eqref{cond decreasing} and extend the numbering in $\Phi^+ = \{\alpha_1,\dots,\alpha_M\}$.
The $k$-module structure on $\mathbf{U}_\Psi(k)$ is given by the isomorphism
\[\psi = \prod_{i=1}^m \theta_{\alpha_i}:  k^m \to  \mathbf{U}_\Psi(k)\]
Let $b \in \mathbf{B}(k)$.
Since $\mathbf{U}_\Psi(k)$ is normalized by $\mathbf{B}(k)$, there is a natural map $f_b : \mathbf{U}_\Psi(k) \to \mathbf{U}_\Psi(k)$ given by $f_b(x) = bxb^{-1}$.

Let $x=\prod_{j=1}^m \theta_{\alpha_i}(x_{i}) \in \mathbf{U}_{\Psi}(k)$ and $\lambda \in k$.
Let us write $b=t \cdot u$, where $u=\prod_{j=1}^{M} \theta_{\alpha_j}(y_j) \in \mathbf{U}^{+}(k)$ and $t \in \mathbf{T}(k)$.
Then by Proposition~\ref{prop suitable polynomials} applied to $\Psi$, we have
\begin{align*}
f_{b}(x) =& b x b^{-1}= t \prod_{j=1}^m \theta_{\alpha_j}\left( \sum_{i =1}^m P_{i,j} (y_1, \dots, y_M) x_{i}\right) t^{-1} ,\\
=& \prod_{j=1}^m \theta_{\alpha_j}\Big( \alpha_j(t) \big( \sum_{i=1}^m P_{i,j} (y_1, \dots, y_M) x_{i}\big)\Big).
\end{align*}
So, we get that
\begin{align*}
\lambda \cdot f_{b}(x) =& \prod_{j=1}^m \theta_{\alpha_j}\bigg( \lambda \alpha_j(t) \Big( \sum_{i=1}^m P_{i,j} (y_1, \dots, y_M) x_{i}\Big)\bigg) ,\\
=& \prod_{j=1}^m \theta_{\alpha_j}\bigg( \alpha_j(t) \Big( \sum_{i=1}^m P_{i,j} (y_1, \dots, y_M) \lambda x_{i}\Big)\bigg), \\
=& f_{b} \Big( \prod_{j=1}^m \theta_{\alpha_j}(\lambda x_{j}) \Big), \\
=&  f_{b}(\lambda \cdot x).
\end{align*}
Hence $f_b$ is $k$-linear for every $b \in \mathbf{B}(k)$.
\end{proof}

\begin{corollary}\label{cor k-linear action parabolic}
Let $\Theta \subset \Delta$ be any proper subset and let $\mathbf{P}_\Theta$ be the standard parabolic subgroup containing $\mathbf{B}$ associated to $\Theta$.
Assume that $\Psi \subset \Phi^+$ satisfies conditions~\ref{cond closure} and~\ref{cond commutative} and that $W_\Theta(\Psi) = \Psi$.
Then the action by conjugation of $\mathbf{P}_\Theta(k)$ on $\mathbf{U}_\Psi(k)$ is $k$-linear.
\end{corollary}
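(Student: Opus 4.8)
The plan is to use the Levi decomposition $\mathbf{P}_\Theta = \mathbf{L}_\Theta \ltimes \mathbf{U}_{\Phi_\Theta^+}$ \cite[Exp.~XXVI]{SGA3-3}, where $\mathbf{L}_\Theta$ is generated by $\mathbf{T}$ and the root groups $\mathbf{U}_\gamma$ for $\gamma \in \Phi_\Theta^0 = \Phi \cap \operatorname{Vect}_\mathbb{R}(\Theta)$. Since $\mathbf{U}_{\Phi_\Theta^+} \subset \mathbf{U}^+ \subset \mathbf{B}$, Corollary~\ref{cor k-linear action borel} already handles $\mathbf{B}$, so the only new input needed is the behaviour of $\mathbf{U}_\gamma$ for $\gamma \in \Phi_\Theta^0$. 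First I would record that $\Psi \subseteq \Phi_\Theta^+$, i.e.~$\Psi$ meets no root of $\Phi_\Theta^0$: if $\beta \in \Psi \cap \Phi_\Theta^0$, then, since the Weyl group of each irreducible component of $\Phi_\Theta^0$ is transitive on the roots of a given length, $-\beta$ would lie in $W_\Theta(\Psi) = \Psi \subseteq \Phi^+$, which is absurd. In particular $\gamma \neq -\beta$ for $\gamma \in \Phi_\Theta^0$ and $\beta \in \Psi$, so the commutation relation~\eqref{commutation relation} legitimately applies to every such pair.

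The crucial combinatorial step is the claim: for all $\gamma \in \Phi_\Theta^0$, $\beta \in \Psi$ and $r,s \in \mathbb{N}$ with $r\gamma + s\beta \in \Phi$, one has $r\gamma + s\beta \in \Psi$ and $s = 1$. When $\gamma \in \Phi^+$ this is exactly the implication~\eqref{cond closure second} of Lemma~\ref{lem sum of good roots}. When $\gamma \in \Phi^-$, write $\gamma = -\delta$ with $\delta \in \Phi_\Theta^0 \cap \Phi^+$ and use the reflection $s_\delta \in W_\Theta$: one gets $s_\delta(r\gamma + s\beta) = r\delta + s\,s_\delta(\beta)$ with $s_\delta(\beta) \in W_\Theta(\Psi) = \Psi$, so the positive case applied to $(\delta, s_\delta(\beta))$ gives $s = 1$ and $r\delta + s_\delta(\beta) \in \Psi$; applying $s_\delta$ again yields $r\gamma + s\beta = s_\delta(r\delta + s_\delta(\beta)) \in W_\Theta(\Psi) = \Psi$. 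By~\eqref{commutation relation}, this claim shows that each $\mathbf{U}_\gamma$, $\gamma \in \Phi_\Theta^0$, normalises $\mathbf{U}_\Psi$; together with the normalisation by $\mathbf{T}$ and by $\mathbf{U}_{\Phi_\Theta^+} \subset \mathbf{B}$ (Proposition~\ref{fact conditions}), this proves that $\mathbf{P}_\Theta$ normalises $\mathbf{U}_\Psi$, so the conjugation action is well defined.

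For the linearity itself, the key is the clause ``$s = 1$'': it forces the commutation relation between $\theta_\gamma(x)$ ($\gamma \in \Phi_\Theta^0$) and $\theta_\beta(y)$ ($\beta \in \Psi$) to take the simplified shape~\eqref{eq simplified comm rel}, linear in $y$. Using this together with the commutativity of $\mathbf{U}_\Psi$ (Proposition~\ref{fact conditions}) and pushing root group factors through one at a time, exactly as in the proof of Proposition~\ref{prop suitable polynomials}, one sees that for any element of $\mathbf{P}_\Theta$ written as a product of such $\theta_\gamma(x_\gamma)$'s, of $\mathbf{U}^+$-factors and of a torus element, conjugation on $\mathbf{U}_\Psi$ is given by formulas linear in the coordinates $(y_\beta)_{\beta \in \Psi}$, with coefficients $\mathbb{Z}$-polynomial in the conjugating data. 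When $k$ is a field this finishes the proof, since $\mathbf{P}_\Theta(k)$ is then generated by $\mathbf{B}(k)$ and the $\mathbf{U}_{-\alpha}(k)$, $\alpha \in \Theta$ (Bruhat decomposition of $\mathbf{L}_\Theta$), and the set of group elements acting $k$-linearly is a subgroup. For an arbitrary ring $k$, the same computation shows that the conjugation morphism $\mathbf{P}_\Theta \times \mathbf{U}_\Psi \to \mathbf{U}_\Psi$ of $\mathbb{Z}$-schemes is linear in the second variable on the big cell $\mathbf{U}_{\Phi_\Theta^0 \cap \Phi^-}\cdot\mathbf{B}$; as this is a schematically dense open subscheme of the smooth, irreducible $\mathbb{Z}$-scheme $\mathbf{P}_\Theta$ and $\mathbf{U}_\Psi$ is separated, the morphism $(g,\lambda,v) \mapsto g(\lambda v)g^{-1} - \lambda(gvg^{-1})$ vanishes identically, so $\mathbf{P}_\Theta(R)$ acts $R$-linearly on $\mathbf{U}_\Psi(R) \cong R^{|\Psi|}$ for every $\mathbb{Z}$-algebra $R$, in particular for $R = k$.

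The hard part will be the combinatorial claim, and within it the case of a negative root $\gamma$: Lemma~\ref{lem sum of good roots} is not directly available there, and one genuinely uses the hypothesis $W_\Theta(\Psi) = \Psi$ to conjugate by $s_\delta$ back to the positive case — both to keep the sum inside $\Psi$ and, more importantly, to force $s = 1$, which is precisely what excludes Frobenius-type non-linear contributions in positive characteristic. The preliminary observation $\Psi \subseteq \Phi_\Theta^+$ is what makes the commutation relation applicable in the first place, and the passage from fields to arbitrary rings uses only the standard density of the big cell in $\mathbf{P}_\Theta$.
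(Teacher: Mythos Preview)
Your proof is correct, but it takes a genuinely different route from the paper. The paper argues via the Bruhat decomposition $\mathbf{P}_\Theta(k) = \mathbf{B}(k)\, W_\Theta\, \mathbf{B}(k)$: since $\mathbf{B}(k)$ already acts $k$-linearly by Corollary~\ref{cor k-linear action borel}, it suffices to check that each lift $n_\alpha$ of a simple reflection $s_\alpha$ ($\alpha \in \Theta$) acts $k$-linearly. Writing $n_\alpha = t_\alpha m_\alpha$ with $m_\alpha$ coming from the Chevalley system, the formula $m_\alpha \theta_\beta(x) m_\alpha^{-1} = \theta_{s_\alpha(\beta)}(\pm x)$ together with $s_\alpha(\Psi) = \Psi$ shows that $n_\alpha$ permutes the coordinates of $\mathbf{U}_\Psi(k)$ up to signs and scalars from $\mathbf{T}$, which is visibly $k$-linear. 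No analysis of negative root groups or of the set $\Phi_\Theta^0$ acting on $\Psi$ is needed.

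Your approach instead extends the combinatorics of Lemma~\ref{lem sum of good roots} to roots $\gamma \in \Phi_\Theta^0 \cap \Phi^-$ by reflecting through $s_{-\gamma} \in W_\Theta$, and then reruns the commutation-relation computation of Proposition~\ref{prop suitable polynomials} for these extra generators. This is heavier, but it buys you something the paper's proof does not: the paper's use of the Bruhat decomposition requires $k$ to be a field, whereas your argument via the big cell and schematic density genuinely works over an arbitrary base ring (which is how \S\ref{section commutative} is set up). Two minor remarks: your preliminary step $\Psi \subseteq \Phi_\Theta^+$ is more simply obtained from $s_\beta \in W_\Theta$ sending $\beta$ to $-\beta$ (no appeal to transitivity on root lengths is needed); and the density argument at the end can be phrased over $\mathbb{Z}$ directly, since all the schemes involved are defined there.
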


\begin{proof}
According to \cite[14.18]{BoA}, there is a subgroup $W_\Theta$ of the Weyl group such that $\mathbf{P}(k)$ admits the Bruhat decomposition $\mathbf{P}(k) = \mathbf{B}(k) W_\Theta \mathbf{B}(k)$.
Moreover, $W_\Theta$ is by definition spanned by the reflections $s_\alpha$ for $\alpha \in \Theta$.
Let $n_\alpha \in \mathbf{N}(k)$ be elements lifting the $s_\alpha$.
Since $\mathbf{B}(k)$ acts $k$-linearly on $\mathbf{U}_\Psi(k)$ by Corollary~\ref{cor k-linear action borel}, it suffices to prove that each $n_\alpha$ acts $k$-linearly on $\mathbf{U}_\Psi(k)$.

Since $\big( \theta_\alpha\big)_{\alpha \in \Psi}$ is a Chevalley system, one can write $n_\alpha = t_\alpha m_\alpha$ where $t_\alpha \in \mathbf{T}(k)$ and
\[ m_\alpha \theta_\beta( x ) m_\alpha^{-1} = \theta_{s_\alpha(\beta)}(\pm x),\qquad \forall x \in \mathbb{G}_a\]
for any $\beta \in \Phi$ \cite[3.2.2]{BT2}.
Hence, for any $u = \prod_{\beta \in \Psi} \theta_\beta(x_\beta) \in \mathbf{U}_\Psi(k)$, we have that
\[ n_\alpha u n_\alpha^{-1} = 
\prod_{\beta \in \Psi} \theta_{s_\alpha(\beta)}\big(\pm s_\alpha(\beta)(t_\alpha) x_\beta\big)
= \prod_{\beta \in s_\alpha(\Psi)} \theta_{\beta} \big(\pm \beta(t_\alpha) x_{s_\alpha(\beta)}\big)
\]
where the ordering of the product does not matter by commutativity of $\mathbf{U}_\Psi$, as consequence of~\ref{cond commutative}.
Thus, by $W_\Theta$-stability of $\Psi$, we deduce that $n_\alpha$ acts $k$-linearly on $\mathbf{U}_\Psi(k)$.
Whence the result follows for $\mathbf{P}_\Theta$.
\end{proof}

\subsection{Refinement for Borel subgroups}

In the case where $\Theta = \emptyset$, i.e.~$\mathbf{P}_\Theta = \mathbf{B}$ is a Borel subgroup, one can pick smaller subsets $\Psi_i^\emptyset$ satisfying conditions of Proposition~\ref{prop good increasing subsets} so that, moreover, $\Psi_{\operatorname{rk}(\mathbf{G})}^\emptyset$ forms an adapted basis to the complete flag.
In other words, here we show that $\Psi_i^\emptyset$ is a linearly independent subset of roots for every $i$.

\begin{proposition}\label{prop good roots}
Let $\Phi$ be a root system and $\Phi^+$ a choice of positive roots in $\Phi$.
There is a subset $\Psi \subset \Phi^+$ that satisfies conditions~\ref{cond closure}, \ref{cond commutative} and that is a basis of $\operatorname{Vect}_\mathbb{R}(\Phi)$.
\end{proposition}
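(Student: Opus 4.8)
The plan is to refine, in the case $\Theta = \emptyset$, the construction underlying Lemma~\ref{lem high roots} and Proposition~\ref{prop good increasing subsets}: rather than keeping the whole ``layers'' $\big(h + \operatorname{Vect}_\mathbb{R}(\Theta)\big) \cap \Phi$, one peels off a single root at each step, descending from the highest root along a chain of simple-root subtractions. First I would reduce to the irreducible case: if $\Phi = \bigsqcup_j \Phi_j$ is the decomposition into irreducible components with $\Phi_j^+ = \Phi^+ \cap \Phi_j$, and each $\Psi_j \subseteq \Phi_j^+$ satisfies \ref{cond closure} and \ref{cond commutative} and is a basis of $\operatorname{Vect}_\mathbb{R}(\Phi_j)$, then $\Psi = \bigsqcup_j \Psi_j$ satisfies the same conditions for $\Phi$, since roots from distinct components are orthogonal and never sum to a root --- this is the ``concatenation'' step already appearing at the end of the proof of Proposition~\ref{prop good increasing subsets}. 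The case $\Phi = \emptyset$ is trivial.

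So assume $\Phi$ irreducible (and reduced, passing to the non-divisible roots if necessary) of rank $n$, with basis $\Delta$ and highest root $h$. Since $h$ is dominant and nonzero it pairs positively with some simple coroot, hence $h - \alpha \in \Phi^+$ for some $\alpha \in \Delta$; more generally every non-simple positive root differs from a positive root by a simple root. Using the mechanism of Lemma~\ref{lem high roots} together with the flag argument of Proposition~\ref{prop good increasing subsets}, I would build inductively positive roots $\gamma_1 = h, \gamma_2, \dots, \gamma_n$ and pairwise distinct simple roots $\beta_1, \dots, \beta_{n-1}$ with $\gamma_{j+1} = \gamma_j - \beta_j \in \Phi^+$, choosing $\beta_j$ among the not-yet-used simple roots so that $\operatorname{Vect}_\mathbb{R}(\gamma_1, \dots, \gamma_{j+1}) = \operatorname{Vect}_\mathbb{R}(h, \beta_1, \dots, \beta_j)$ strictly increases at each step; this can be run for $n$ steps precisely because $h$, whose $\Delta$-coordinates are all $\geqslant 1$, lies in the span of no proper subset of $\Delta$. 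Then $\Psi := \{\gamma_1, \dots, \gamma_n\}$ is linearly independent, hence a basis of $\operatorname{Vect}_\mathbb{R}(\Phi)$. Condition \ref{cond commutative} is then automatic: $\Psi$ is linearly independent, so two of its elements cannot sum to a root (otherwise, $\Psi$ being closed in the sense of \eqref{eq closed subset}, it would contain a nontrivial linear relation), and $2\gamma_i \notin \Phi$ because $\Phi$ is reduced.

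The real content is condition \ref{cond closure}. The key observation is that if $\delta \in \Phi^+$ and $\gamma_j + \delta \in \Phi$, then the positive root $\gamma_j + \delta$ has all $\Delta$-coordinates bounded above by those of $h$; since $\gamma_j = h - (\beta_1 + \dots + \beta_{j-1})$ and the $\beta_i$ are pairwise distinct simple roots, this forces $\delta = \sum_{i \in S}\beta_i$ for some $S \subseteq \{1, \dots, j-1\}$, whence $\gamma_j + \delta = h - \sum_{i \in \{1,\dots,j-1\}\smallsetminus S}\beta_i$. Provided the $\beta_i$ have been enumerated so that $h - \sum_{i \in T}\beta_i$ is a root only when $T$ is an initial segment of $\{1, \dots, n-1\}$, this exhibits $\gamma_j + \delta$ as one of the $\gamma_k$, which gives \ref{cond closure}. (Alternatively one could try to extract $\Psi$ as a suitable initial segment of the set $\Psi_n^\emptyset$ produced by Proposition~\ref{prop good increasing subsets} in a numbering satisfying \eqref{cond decreasing}, invoking Lemma~\ref{lem any subset}; but then one must separately argue that the shortest spanning such initial segment is already linearly independent, which needs the same combinatorial input.)

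The main obstacle is thus the existence of a ``shortcut-free'' enumeration of the $n-1$ subtracted simple roots, compatible both with each partial sum $h - (\beta_1 + \dots + \beta_{j-1})$ being a root and with the $\beta_i$ being distinct. This is exactly where the fine structure of the highest root enters, and I would establish it by induction on $n$: remove a well-chosen simple root $\alpha^*$, apply the inductive hypothesis to the (possibly reducible) subsystem spanned by $\Delta \smallsetminus \{\alpha^*\}$, and check that prepending the step $h \to h - \beta_1$ creates no shortcut; the delicate cases are the non-simply-laced and exceptional types, where the highest root has coefficients larger than $1$, and these can, if necessary, be verified directly from the classification of irreducible root systems.
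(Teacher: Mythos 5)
Your reduction to the irreducible reduced case and your treatment of types $A_\ell$, $B_\ell$, $C_\ell$ do match the paper (in those types the paper's explicit sets are indeed chains $h, h-\beta_1, h-\beta_1-\beta_2,\dots$ with pairwise distinct simple roots $\beta_i$). But the core of your construction --- a single descending chain from the highest root, together with a ``shortcut-free'' enumeration of the subtracted simple roots --- fails already in type $D_4$, which is simply laced and which you implicitly file among the unproblematic cases. In $D_4$ (with $\alpha_1=\varepsilon_1-\varepsilon_2$, $\alpha_2=\varepsilon_2-\varepsilon_3$, $\alpha_3=\varepsilon_3-\varepsilon_4$, $\alpha_4=\varepsilon_3+\varepsilon_4$ and $h=\varepsilon_1+\varepsilon_2$), the only simple root one can subtract from $h$ while staying in $\Phi^+$ is $\alpha_2$, and an exhaustive check of the six resulting chains of length $4$ shows that every one of them violates~\ref{cond closure}: for instance the chain $h,\ h-\alpha_2,\ h-\alpha_2-\alpha_1,\ h-\alpha_2-\alpha_1-\alpha_3$ yields $\gamma_4=\varepsilon_2+\varepsilon_4$, and $\gamma_4+\alpha_1=\varepsilon_1+\varepsilon_4=h-\alpha_2-\alpha_3$ is a positive root corresponding to the non-initial subset $T=\{1,3\}$ of subtracted roots, hence lies outside $\Psi$. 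So the ``shortcut-free'' hypothesis is not a delicate point to be checked case by case; it is simply false in type $D$, and no chain of the prescribed shape can satisfy~\ref{cond closure} there. The same obstruction propagates to $E_6$, $E_7$, $E_8$.

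What the paper actually does is a case-by-case construction from the classification, and in types $D$ and $E$ the chosen set is not a chain but a branching tree of simple-root subtractions: for $D_\ell$ it takes $\{\varepsilon_1+\varepsilon_2,\dots,\varepsilon_1+\varepsilon_{\ell},\varepsilon_1-\varepsilon_\ell\}$, where both children $\varepsilon_1+\varepsilon_\ell$ and $\varepsilon_1-\varepsilon_\ell$ of $\varepsilon_1+\varepsilon_{\ell-1}$ (obtained by subtracting $\alpha_{\ell-1}$ and $\alpha_\ell$ respectively) must be kept for~\ref{cond closure} to hold; condition~\ref{cond commutative} is then secured not by linear independence but by the observation that all elements share the coefficient $n^{\alpha_1}=n_h^{\alpha_1}$ of the highest root, so that a sum of two of them exceeds the highest root in that coordinate. (Your derivation of~\ref{cond commutative} from linear independence is also circular as written: it presupposes that $\Psi$ is closed, which is exactly what~\ref{cond closure} must first deliver.) To repair your argument you would have to abandon the chain architecture in types $D$ and $E$ and exhibit the branching sets explicitly --- at which point you are reproducing the paper's case analysis rather than bypassing it.
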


The strategy is to enlarge the $\Psi_i^\emptyset$ from the highest root by subtracting a different simple root to some root of $\Psi_i^\Theta$ at each step, and check that the conditions~\ref{cond closure} and~\ref{cond commutative} are satisfied at each step.

\begin{proof}
Let $\Phi_1, \Phi_2$ be two root systems with positive roots $\Phi_1^+,\Phi_2^+$. If $\Psi_1 \subset \Phi_1^+$ and $\Psi_2 \subset \Phi_2^+$ satisfy the conditions~\ref{cond closure}, \ref{cond commutative} and are bases of $\operatorname{Vect}_\mathbb{R}(\Phi_1)$ and $\operatorname{Vect}_\mathbb{R}(\Phi_2)$ respectively, then the union $\Psi_1 \sqcup \Psi_2$ also satisfy the conditions in $\Phi_1^+ \sqcup \Phi_2^+ = \big( \Phi_1 \sqcup \Phi_2 \big)^+$.
Hence, without loss of generalities, we assume that $\Phi$ is irreducible.
Moreover, up to considering the subroot system of non-multipliable roots, we assume, without loss of generalities, that $\Phi$ is reduced.

Let $\Delta$ be the basis of $\Phi$.
Recall that any positive root $\alpha \in \Phi^+$ is a linear combination of elements in $\Delta$ with non-negative integer coefficients \cite[VI.1.6 Thm.~3]{Bourbaki}.
We proceed in a case by case consideration, using the classification \cite[VI.\S 4]{Bourbaki} and realisation of positive roots of irreducible root systems.
For each case, we provide an example of a subset $\Psi$ satisfying the conditions.
We detail why the conditions are satisfied only in the cases $D_\ell$ and $E_\ell$, which are the most technical cases.
Other cases work in the same way.

\textbf{Type $A_\ell$ ($\ell \geq 1$, see \cite[Planche I]{Bourbaki}):}
Write $\operatorname{Dyn}(\Delta)$:
\[\dynkin[labels={1,,,\ell},label macro/.code={\alpha_{#1}},edge length=1.2cm] A{} \]
Then
\[\Phi^+ = \left\lbrace \sum_{i \leq k \leq j} \alpha_k: 1 \leq i \leq j\leq \ell\right\rbrace.\]
Define $\displaystyle \beta_i = \alpha_i + \dots + \alpha_\ell$ for $1 \leq i \leq \ell$.
Then $\Psi = \lbrace \beta_1,\dots,\beta_\ell\rbrace$ answers to the lemma.

\textbf{Type $B_\ell$ ($\ell \geq 2$, see \cite[Planche II]{Bourbaki}):}
Write $\operatorname{Dyn}(\Delta)$:
\[\dynkin[labels={1,,,\ell-1,\ell},label macro/.code={\alpha_{#1}},edge length=1.2cm] B{} \]
Then
\[\Phi^+ = \left\lbrace \sum_{i \leq k \leq j} \alpha_k: 1 \leq i \leq j\leq \ell\right\rbrace
\sqcup \left\lbrace \sum_{i \leq k \leq j-1} \alpha_k + \sum_{j \leq k \leq \ell} 2\alpha_k: 1 \leq i < j \leq \ell\right\rbrace.\]
Define $\displaystyle \beta_i = (\alpha_1 + \dots + \alpha_\ell) + \sum_{i \leq k \leq \ell} \alpha_k$ for $2 \leq i \leq \ell + 1$.
Then $\Psi = \lbrace \beta_1,\dots,\beta_\ell\rbrace$ answers to the lemma.

\textbf{Type $C_\ell$ ($\ell \geq 3$ since $B_2 = C_2$, see \cite[Planche III]{Bourbaki}):}
Write $\operatorname{Dyn}(\Delta)$:
\[\dynkin[labels={1,,,\ell-1,\ell},label macro/.code={\alpha_{#1}},edge length=1.2cm] C{} \]
Then
\[\Phi^+ = \left\lbrace \sum_{i \leq k \leq j} \alpha_k: 1 \leq i \leq j\leq \ell\right\rbrace
\sqcup \left\lbrace \sum_{i \leq k \leq \ell} \alpha_k + \sum_{j \leq k \leq \ell-1} \alpha_k: 1 \leq i \leq j \leq \ell-1\right\rbrace.\]
Define $\displaystyle \beta_i = (\alpha_1 + \dots + \alpha_\ell) + \sum_{i \leq k \leq \ell-1} \alpha_k$ for $1 \leq i \leq \ell$.
Then $\Psi = \lbrace \beta_1,\dots,\beta_\ell\rbrace$ answers to the lemma.

\textbf{Type $D_\ell$ ($\ell \geq 4$, see \cite[Planche IV]{Bourbaki}):}
Write $\operatorname{Dyn}(\Delta)$:
\[\dynkin[labels={1,2,\ell-3,\ell-2,\ell-1,\ell},label macro/.code={\alpha_{#1}},edge length=1.2cm] D{} \]
Then
\begin{multline*}
    \Phi^+ = \left\lbrace \sum_{k=i}^{j} \alpha_k : 1 \leq i \leq j \leq \ell \text{ and } (i,j) \neq (\ell-1, \ell)\right\rbrace\\ \sqcup \left\lbrace \alpha_\ell + \sum_{k=i}^{\ell-2} \alpha_k : 1 \leq i \leq \ell-2 \right\rbrace\\ \sqcup \left\lbrace\alpha_\ell + \alpha_{\ell-1} + 2 \sum_{k=j}^{\ell-2} \alpha_k + \sum_{k=i}^{j-1} \alpha_k: 1 \leq i < j \leq \ell-2  \right\rbrace.
\end{multline*}
Define
\begin{itemize}
    \item $\beta_1 =\alpha_1 + \dots + \alpha_{\ell-1}$,
    \item $\displaystyle \beta_i = (\alpha_1 + \dots + \alpha_\ell) + \sum_{i \leq k \leq \ell-2} \alpha_k$ for $2 \leq i \leq \ell-1$,
    \item $\beta_\ell =\alpha_1 + \dots + \alpha_{\ell-2} + \alpha_\ell$.
\end{itemize}
Consider $\Psi = \lbrace\beta_1,\dots,\beta_\ell\rbrace$.
For any $i,j$, we have $\beta_i + \beta_j \not\in \Phi$ since $\beta_i+\beta_j = 2 \alpha_1 + \dots$ and there is not a root in $\Phi$ with coefficient $2$ in $\alpha_1$.
Hence condition~\ref{cond commutative} is satisfied.

Let $V = \operatorname{Vect}_\mathbb{R}(\Psi)$.
We have $\alpha_\ell = \beta_{\ell-1} - \beta_1$ and $\alpha_{\ell - 1} = \beta_{\ell-1} - \beta_\ell$.
Moreover, $\beta_i - \beta_{i+1} = \alpha_i$ for $2 \leq i \leq \ell-2$.
Thus $\{\alpha_2,\dots,\alpha_\ell\} \subset V$.
Hence $\alpha_1 = \beta_1 - \alpha_2 - \dots - \alpha_{\ell-1} \in V$.
Therefore $V \supset \operatorname{Vect}_\mathbb{R}(\Delta)$ whence $\Psi$ is a basis of $\operatorname{Vect}_\mathbb{R}(\Phi)$.

If $\alpha + \beta_1 \in \Phi$, write $\alpha = \sum_{i=1}^{\ell} n_i \alpha_i$ with $n_i \in \mathbb{Z}_{\geq 0}$.
Necessarily, $n_\ell = 1$ and $n_{\ell-1} = 0$.
If $\alpha = \alpha_\ell$, then $\alpha + \beta_1 = \beta_{\ell-1} \in \Psi$.
Otherwise, $\alpha = \alpha_\ell + \sum_{k=i}^{\ell-2}$ for some $1 \leq i \leq \ell-2$.
Hence $\alpha + \beta_1 = \alpha_\ell + \alpha_{\ell - 1} + 2 \sum_{k=i}^{\ell-2} \alpha_k + \sum_{k=1}^{i-1} \alpha_k = \beta_i \in \Psi$.
Therefore, condition~\ref{cond closure} is satisfied for $\beta_1$.

If $\alpha + \beta_\ell \in \Phi$, write $\alpha = \sum_{i=1}^{\ell} n_i \alpha_i$ with $n_i \in \mathbb{Z}_{\geq 0}$.
Necessarily, $n_\ell = 0 = n_1$ and $n_{\ell-1} = 1$.
Hence $\alpha = \sum_{k=i}^{\ell-1} \alpha_k$ for some $2 \leq i \leq \ell-1$.
Thus $\alpha + \beta_\ell = \alpha_\ell + \alpha_{\ell - 1} + 2 \sum_{k=i}^{\ell-2} \alpha_i + \sum_{k=1}^{i-1} \alpha_k = \beta_i \in \Psi$.
Therefore, condition~\ref{cond closure} is satisfied for $\beta_\ell$.

If $\alpha + \beta_i \in \Phi$ for some $2 \leq i \leq \ell-1$, write $\alpha = \sum_{i=1}^{\ell} n_i \alpha_i$ with $n_i \in \mathbb{Z}_{\geq 0}$.
Necessarily, $n_1 = n_{\ell-1} = n_\ell = 0$ and $n_i = n_{i+1} = \dots = n_{\ell-1} = 0$.
Hence $\alpha = \sum_{k=i'}^{j'} \alpha_k$ with $2 \leq i' \leq j' < i$.
Thus $\alpha + \beta_i = \alpha_\ell + \alpha_{\ell - 1} + 2 \sum_{k=i}^{\ell-2} \alpha_i + \sum_{k=j'+1}^{i-1} \alpha_k + 2 \sum_{k= i'}^{j'} \alpha_k + \sum_{k=1}^{i'-1} \alpha_k \in \Phi^+$.
Necessarily, $j' = i -1$ and $\alpha + \beta_i= \beta_{i'} \in \Psi$.
Therefore, condition~\ref{cond closure} is satisfied for $\beta_i$.

\textbf{Type $E_6$ (see \cite[Planche V]{Bourbaki}):}
Write $\operatorname{Dyn}(\Delta)$:
\[\dynkin[label,label macro/.code={\alpha_{#1}},edge length=1.2cm] E{6} \]
Denote by $n_1 n_2 n_3 n_4 n_5 n_6$ the root $\sum_{i=1}^6 n_i \alpha_i \in \Phi^+$.
Then the roots of $\Phi^+$ having a coefficient $n_i > 1$ for some $i$ are
\begin{multline*}\lbrace 011210, 111211, 011211, 112210, 111211, 011221,\\ 112211,111221,112221,112321,122321\rbrace.
\end{multline*}
One can take $\Psi = \lbrace 011221, 112211,111221,112221,112321,122321 \rbrace$.
Condition~\ref{cond closure} is satisfied because there are all the roots having at least $2$ coefficients greater or equal to $2$ and a non-zero coefficient $n_6$ in $\alpha_6$.
Condition~\ref{cond commutative} is satisfied considering the coefficient in $\alpha_4$.
The generating condition comes from the differences
$\alpha_1=111221-011221$,
$\alpha_2=122321-112321$,
$\alpha_3=112221-111221$,
$\alpha_4=112321-112221$,
$\alpha_5=112221-112211$.
Hence $\Psi$, being of cardinality $\ell=6$, answers to the lemma.

\textbf{Type $E_7$ (see \cite[Planche VI]{Bourbaki}):}
Write $\operatorname{Dyn}(\Delta)$:
\[\dynkin[label,label macro/.code={\alpha_{#1}},edge length=1.2cm] E{7} \]
Denote by $n_1 n_2 n_3 n_4 n_5 n_6 n_7$ the root $\sum_{i=1}^7 n_i \alpha_i \in \Phi^+$.
Then the roots of $\Phi^+$ having a coefficient $n_i > 1$ for some $i$ are
\begin{multline*}
\lbrace 0112111,1112111,0112211,1122111,1112211,0112221,1122211,1112221,1122221,\\
1123211,1123221,1223211,1123321,1223221,1223321,1224321,1234321,2234321\rbrace.
\end{multline*}
One can take 
\[\Psi = \lbrace 1223211,1123321,1223221,1223321,1224321,1234321,2234321 \rbrace.\]
Condition~\ref{cond closure} is satisfied because there are all the roots having coefficient in $\alpha_4$ greater or equal to $3$ and coefficient in $\alpha_2$ greater or equal to $2$, together with the root $1123321$.
Condition~\ref{cond commutative} is satisfied considering the coefficient in $\alpha_4$.
The subset $\Psi$ is generating because of the differences
$\alpha_1=2234321-1234321$, 
$\alpha_2=1223321-1123321$,
$\alpha_3=1234321-1224321$,
$\alpha_4=1224321-1223321$,
$\alpha_5=1223321-1223221$,
$\alpha_6=1223221-1223211$.
Hence $\Psi$, being of cardinality $\ell=7$, answers to the lemma.

\textbf{Type $E_8$ (see (10) of \cite[VI.§4]{Bourbaki}):}
Write $\operatorname{Dyn}(\Delta)$:
\[\dynkin[label,label macro/.code={\alpha_{#1}},edge length=1.2cm] E{8} \]
Denote by $n_1 n_2 n_3 n_4 n_5 n_6 n_7 n_8$ the root $\sum_{i=1}^8 n_i \alpha_i \in \Phi^+$.
Consider the subset $\Psi$ of $\Phi^+$ consisting in roots such that $n_4 \geq 5$.
One can take
\[\Psi = \lbrace 23354321,22454321,23454321,23464321,23465321,23465421,23465431,23465432\rbrace.\]
Condition~\ref{cond closure} is satisfied because there are all the roots such that $\sum_{i=1}^8 n_i \geqslant 23$ and having coefficient $n_4$ in $\alpha_4$ greater or equal to $5$.
Condition~\ref{cond commutative} is satisfied considering the coefficient in $\alpha_4$.
The generating condition comes from the differences
$\alpha_8=23465432-23465431$,
$\alpha_7=23465431-23465421$,
$\alpha_6=23465421-23465321$,
$\alpha_5=23465321-23464321$,
$\alpha_4=23464321-23454321$,
$\alpha_3=23454321-23354321$,
$\alpha_2=23454321-22454321$.
Hence $\Psi$, being of cardinality $\ell=8$, answers to the lemma.

\textbf{Type $F_4$, (see \cite[Planche VIII]{Bourbaki}):}
Write $\operatorname{Dyn}(\Delta)$:
\[\dynkin[label,label macro/.code={\alpha_{#1}},edge length=1.2cm] F{4} \]
Denote by $n_1 n_2 n_3 n_4$ the root $\sum_{i=1}^4 n_i \alpha_i \in \Phi^+$.
Consider the subset $\Psi$ of $\Phi^+$ consisting in roots such that $n_3 \geq 3$.
Consider
\[\Psi = \lbrace 1232,1242,1342,2342\rbrace.\]
Hence $\Psi$ answers to the lemma.

\textbf{Type $G_2$, (see \cite[Planche IX]{Bourbaki}):}
Write $\operatorname{Dyn}(\Delta)$:
\[\dynkin[labels={1,2},label macro/.code={\alpha_{#1}},edge length=1.2cm] G{2} \]
Then
\[\Phi^+ = \left\lbrace \alpha_1,\alpha_2,\alpha_1+\alpha_2,2\alpha_1+\alpha_2,3\alpha_1+\alpha_2,3\alpha_1+2\alpha_2\right\rbrace.\]
Hence $\Psi = \lbrace 3\alpha_1+\alpha_2,3\alpha_1+2\alpha_2\rbrace$ answers to the lemma.
\end{proof}

\begin{corollary}\label{cor good increasing subsets}
Let $\Phi$ be a root system with positive roots $\Phi^+$. Let $V= \operatorname{Vect}_\mathbb{R}(\Phi)$ and denote by $m$ its dimension.
There exists a sequence of subsets $\Psi_i^\emptyset$ of $\Phi^+$ for $0 \leq i \leq m$ such that for any $i$, the subset $\Psi_i^\emptyset$ satisfies conditions~\ref{cond closure},~\ref{cond commutative}, is linearly independent in $V$ and the family $V_i := \operatorname{Vect}_\mathbb{R}(\Psi_i^\emptyset)$, for $0 \leq i \leq m$, is a complete flag of $V$.
\end{corollary}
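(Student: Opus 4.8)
The plan is to deduce this corollary directly from Proposition~\ref{prop good roots}, in the same way that Lemma~\ref{lem any subset} is used to pass from a single good subset to a whole increasing chain. Proposition~\ref{prop good roots} already supplies a subset $\Psi \subset \Phi^+$ that satisfies conditions~\ref{cond closure} and~\ref{cond commutative} and that is a basis of $V = \operatorname{Vect}_\mathbb{R}(\Phi)$; in particular $\operatorname{Card}(\Psi) = m$. The remaining work is purely combinatorial: truncate $\Psi$ along a numbering compatible with the poset structure on $\Phi^+$ recalled at the beginning of~\S\ref{sec linear action}, and check that each truncation inherits the required properties.

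Concretely, I would first choose a numbering $\Psi = \{\alpha_1,\dots,\alpha_m\}$ satisfying condition~\eqref{cond decreasing} (possible since $\Phi^+$ is a poset), and then set $\Psi_i^\emptyset := \{\alpha_1,\dots,\alpha_i\}$ for $0 \leq i \leq m$, with the convention $\Psi_0^\emptyset = \emptyset$. By Lemma~\ref{lem any subset}, applied to $\Psi$ with this numbering, each $\Psi_i^\emptyset$ satisfies conditions~\ref{cond closure} and~\ref{cond commutative}. Since $\Psi_i^\emptyset$ is a subset of the linearly independent family $\Psi$, it is itself linearly independent in $V$, so $V_i := \operatorname{Vect}_\mathbb{R}(\Psi_i^\emptyset)$ has dimension exactly $i$. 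Therefore $V_0 = \{0\} \subsetneq V_1 \subsetneq \cdots \subsetneq V_m = V$ is a strictly increasing chain of subspaces of $V$ with $\dim V_i = i$, i.e.~a complete flag of $V$, which is exactly the assertion.

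There is no serious obstacle at this stage: the delicate point — producing a single subset of $\Phi^+$ that is simultaneously a basis of $V$ and satisfies~\ref{cond closure},~\ref{cond commutative} — has already been handled in Proposition~\ref{prop good roots} by a case-by-case analysis over the irreducible types. The only subtlety to keep in mind is that closure under condition~\ref{cond closure} must survive the truncation; this is precisely why the numbering is required to be compatible with the poset ordering (condition~\eqref{cond decreasing}), and it is exactly what Lemma~\ref{lem any subset} guarantees. So the corollary follows by assembling Proposition~\ref{prop good roots}, the poset numbering, and Lemma~\ref{lem any subset}.
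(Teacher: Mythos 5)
Your proposal is correct and follows exactly the paper's own proof: take the subset from Proposition~\ref{prop good roots}, order it compatibly with the poset structure via condition~\eqref{cond decreasing}, truncate using Lemma~\ref{lem any subset}, and deduce the complete flag from linear independence and cardinality. Nothing to add.
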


\begin{proof}
Let $\Psi_m^\emptyset$ be the subset of $\Phi^+$ given by Proposition~\ref{prop good roots}.
Pick a numbering on $\Psi_m^\emptyset = \{\alpha_1,\dots,\alpha_m\}$ satisfying~\eqref{cond decreasing}.
Then, for any $i \in \llbracket 1,m\rrbracket$, we know by Lemma~\ref{lem any subset} that $\Psi_i^\emptyset := \{\alpha_1,\dots,\alpha_i\}$ also satisfies conditions~\ref{cond closure} and~\ref{cond commutative}.
Moreover, it spans a complete flag by cardinality and linear independence.
\end{proof}

\section{Stabilizer of points in the Borel variety: integral points and some of their unipotent subgroups}\label{section Stabilizer of points in the Borel variety}

In this section, we assume that $A$ is an integral domain, that $k$ is its fraction field and that $\mathbf{G}$ is a split Chevalley group scheme over $\mathbb{Z}$.
In subsections~\ref{sec upper bound SLn} and~\ref{sec upper bound}, we will furthermore assume that the domain $A$ is Dedekind and, in subsection~\ref{sec upper bound SLn}, that $\mathbf{G} = \mathrm{SL}_n$  or $\mathrm{GL}_n$.
We denote by $\rho: \mathbf{G} \to \mathrm{SL}_{n,\mathbb{Z}}$ a closed embedding of smooth $\mathbb{Z}$-group schemes. We recall the following classical lemma which provides two point of views on the arithmetic group $\mathbf{G}(A)$: either the $A$-integral points of a group scheme, or the matrices with entries in $A$ via $\rho$.

\begin{lemma}
We have $\rho\big(\mathbf{G}(A)\big) = \rho\big(\mathbf{G}(k) \big) \cap \mathrm{SL}_n(A)$.
\end{lemma}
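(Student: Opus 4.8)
The plan is to prove the two inclusions separately, the easy one being $\rho\big(\mathbf{G}(A)\big) \subseteq \rho\big(\mathbf{G}(k)\big) \cap \mathrm{SL}_n(A)$. First I would note that since $A \subseteq k$, functoriality of the $R$-points construction gives $\mathbf{G}(A) \subseteq \mathbf{G}(k)$, hence $\rho\big(\mathbf{G}(A)\big) \subseteq \rho\big(\mathbf{G}(k)\big)$; and since $\rho$ is a morphism of $\mathbb{Z}$-group schemes, it sends $A$-points into $\mathrm{SL}_n(A)$, so $\rho\big(\mathbf{G}(A)\big) \subseteq \mathrm{SL}_n(A)$. Combining these gives one inclusion with no real content.

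For the reverse inclusion $\rho\big(\mathbf{G}(k)\big) \cap \mathrm{SL}_n(A) \subseteq \rho\big(\mathbf{G}(A)\big)$, the key point is that $\rho$ is a \emph{closed} immersion of schemes over $\mathbb{Z}$. Take $g \in \mathbf{G}(k)$ with $\rho(g) \in \mathrm{SL}_n(A)$; I want to show $g \in \mathbf{G}(A)$, i.e.\ that the $k$-point $g$ factors through $\mathrm{Spec}(A)$. Since $\rho : \mathbf{G} \hookrightarrow \mathrm{SL}_{n,\mathbb{Z}}$ is a closed immersion, $\mathbf{G}$ is the closed subscheme of $\mathrm{SL}_{n,\mathbb{Z}}$ cut out by an ideal sheaf; concretely, writing $\mathrm{SL}_{n,\mathbb{Z}} = \mathrm{Spec}(\mathbb{Z}[x_{ij},\det^{-1}]/(\det - 1))$ and $\mathbf{G}$ as $\mathrm{Spec}$ of a quotient $B$ of this ring by an ideal $I$, a point of $\mathbf{G}$ with values in a ring $R$ is the same thing as a point of $\mathrm{SL}_{n,\mathbb{Z}}$ with values in $R$ that annihilates all the polynomials in $I$. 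The matrix $\rho(g) \in \mathrm{SL}_n(A)$ provides a ring homomorphism from the coordinate ring of $\mathrm{SL}_{n,\mathbb{Z}}$ to $A$; it annihilates $I$ because its composition with $A \hookrightarrow k$ does (that composition is exactly the $k$-point $g$ of $\mathbf{G}$), and $A \hookrightarrow k$ is injective. Hence the homomorphism factors through $B$, giving an $A$-point $\tilde g \in \mathbf{G}(A)$, and by construction $\rho(\tilde g) = \rho(g)$ in $\mathrm{SL}_n(A) \subseteq \mathrm{SL}_n(k)$, so since $\rho$ is injective on points $\tilde g = g$ as $k$-points, i.e.\ $g \in \mathbf{G}(A)$.

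The main (minor) obstacle is simply to phrase the ``closed immersion detects integrality of points'' argument cleanly: one must be a little careful that $\mathrm{SL}_n(A) \to \mathrm{SL}_n(k)$ is injective (which it is, since $A \to k$ is injective and $\mathrm{SL}_{n,\mathbb{Z}}$ is affine), so that the equality $\rho(\tilde g) = \rho(g)$ of $A$-points can be checked after base change to $k$. Everything else is formal. I would present it in two short paragraphs mirroring the two inclusions, citing the closed immersion $\rho$ from \cite[9.1.19(c)]{BT} as recalled in \S\ref{intro Chevalley groups}, and emphasizing only the factorization-through-a-closed-subscheme step.

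\medskip

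\begin{proof}
Since $A \subseteq k$, functoriality gives $\mathbf{G}(A) \subseteq \mathbf{G}(k)$, and since $\rho$ is a morphism of $\mathbb{Z}$-group schemes it maps $\mathbf{G}(A)$ into $\mathrm{SL}_n(A)$. Hence $\rho\big(\mathbf{G}(A)\big) \subseteq \rho\big(\mathbf{G}(k)\big) \cap \mathrm{SL}_n(A)$.

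Conversely, let $g \in \mathbf{G}(k)$ be such that $\rho(g) \in \mathrm{SL}_n(A)$. Since $\rho : \mathbf{G}_\mathbb{Z} \hookrightarrow \mathrm{SL}_{n,\mathbb{Z}}$ is a closed immersion of affine $\mathbb{Z}$-schemes, write $\mathrm{SL}_{n,\mathbb{Z}} = \spec(C)$ and $\mathbf{G}_\mathbb{Z} = \spec(C/I)$ for some ideal $I \subseteq C$. The matrix $\rho(g)$, having entries in $A$ and determinant $1$, corresponds to a ring homomorphism $\varphi : C \to A$. Composing with the inclusion $A \hookrightarrow k$ yields the homomorphism $C \to k$ corresponding to the $k$-point $\rho(g)$ of $\mathrm{SL}_{n,k}$; since $g \in \mathbf{G}(k)$, this composite factors through $C/I$, i.e.\ it kills $I$. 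As $A \hookrightarrow k$ is injective, $\varphi$ itself kills $I$, hence factors as $C \to C/I \to A$, defining a point $\tilde g \in \mathbf{G}(A)$. By construction $\rho(\tilde g)$ and $\rho(g)$ coincide as $A$-points, hence as $k$-points; since $\rho$ is a monomorphism, $\tilde g = g$ in $\mathbf{G}(k)$, so $g \in \mathbf{G}(A)$ and $\rho(g) \in \rho\big(\mathbf{G}(A)\big)$.
\end{proof}
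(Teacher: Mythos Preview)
Your proof is correct and takes essentially the same approach as the paper: both exploit that the closed immersion $\rho$ corresponds, on coordinate rings, to a surjection $\rho^*:\mathbb{Z}[\mathrm{SL}_n]\twoheadrightarrow\mathbb{Z}[\mathbf{G}]$, so that a $k$-point $g$ of $\mathbf{G}$ with $\rho(g)\in\mathrm{SL}_n(A)$ has image $g(\mathbb{Z}[\mathbf{G}])=g\circ\rho^*(\mathbb{Z}[\mathrm{SL}_n])\subseteq A$. Your phrasing via the quotient $C/I$ and the injectivity of $A\hookrightarrow k$ is just the same argument unpacked one step further.
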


\begin{proof}
Obviously, $\rho\big(\mathbf{G}(A)\big) \subseteq \rho\big(\mathbf{G}(k) \big) \cap \mathrm{SL}_n(A)$.

Since $\rho : \mathbf{G} \to \mathrm{SL}_{n,\mathbb{Z}}$ is faithful, it corresponds to a surjective homomorphism $\rho^* \in \operatorname{Hom}_{\mathbb{Z}-alg}\big( \mathbb{Z}[\mathrm{SL}_n],\mathbb{Z}[\mathbf{G}]\big)$ \cite[1.4.5]{BT2}.
Let $g \in \mathbf{G}(k) = \operatorname{Hom}_{\mathbb{Z}-\text{alg}}(\mathbb{Z}[\mathbf{G}],k)$ and let $h = g \circ \rho^* \in \mathrm{SL}_n(k)$.
Suppose that $\rho(g) = g \circ \rho^* \in \mathrm{SL}_n(A)$.
Then $g\big(\mathbb{Z}[\mathbf{G}]\big) = g \circ \rho^*\big( \mathbb{Z}[\mathrm{SL}_n] \big) \subseteq A$.
Thus $g \in \mathbf{G}(A)$, whence the converse inclusion $\rho\big( \mathbf{G}(k) \big) \cap \mathrm{SL}_n(A) \subseteq \rho\big(\mathbf{G}(A)\big) $ follows.
\end{proof}

Let $\mathbf{B}$, $\mathbf{U}^+ \subset \mathbf{G}$ and $N^\mathrm{sph} \subset \mathbf{G}(A)$ as defined in \S~\ref{intro Chevalley groups}.
The group $\mathbf{G}(A)$ naturally acts on the Borel variety $(\mathbf{G}/\mathbf{B})(k)$ and we would like to understand stabilizer in $\mathbf{G}(A)$ of a point $D \in (\mathbf{G}/\mathbf{B})(k)$ (identified with a $k$-vector chamber).
We have seen in Lemma~\ref{lemma WUW} there are elements $u \in \mathbf{U}^+(k)$ and $n \in N^{\mathrm{sph}}$ such that $u \cdot D = n \cdot D_0$.
We introduce the subset
\begin{equation}\label{eq def H}
H := \{ n^{-1} u: n \in N^\mathrm{sph},\ u\in \mathbf{U}^+(k) \} \subset \mathbf{G}(k),
\end{equation}
so that for any $k$-vector chamber $D$, there is $h \in H$ such that $h \cdot D = D_0$.
Thus
\[ \stab_{\mathbf{G}(A)}(D) = h^{-1} \stab_{h \mathbf{G}(A) h^{-1}}(D_0) h\]
By Lemma~\ref{lemma stab standard face}, we deduce that
\[ \stab_{\mathbf{G}(A)}(D) = h^{-1} \Big( h \mathbf{G}(A) h^{-1} \cap \mathbf{B}(k) \Big) h\]
Thus, the group $\stab_{\mathbf{G}(A)}(D)$ is solvable and its unipotent part is contained in $ \mathbf{G}(A) \cap h^{-1} \mathbf{U}^+(k) h$.
We study some commutative subgroup of this group or, equivalently, of $h \mathbf{G}(A) h^{-1}  \cap \mathbf{U}^+(k)$.

Let $\Psi \subset \Phi^+$ be a closed subset satisfying~\ref{cond commutative} so that $\mathbf{U}_\Psi = \prod_{\alpha \in \Psi} \mathbf{U}_\alpha$ is a commutative subgroup of $\mathbf{U}^+$.
There is a natural isomorphism of group schemes:
\[ \psi = \prod_{\alpha \in \Psi} \theta_\alpha: \big(\mathbb{G}_a\big)^\Psi \to \mathbf{U}_\Psi\]
More generally, for such a subset $\Psi$ and an arbitrary element $h \in \mathbf{G}(k)$, we define a subgroup of the $k$-vector space $k^\Psi$ by:
\begin{equation}\label{eq def MPsi}
    M_\Psi(h) := \psi^{-1} \Big( \mathbf{U}_{\Psi}(k) \cap h \mathbf{G}(A) h^{-1} \Big).
\end{equation}

In particular, when $\Psi = \{\alpha\}$ is a root, it defines a subgroup\footnote{It follows from a straightforward calculation, using Proposition~\ref{prop suitable polynomials}, that when $n=1$, i.e.~$h = u \in \mathbf{U}^+(k)$, then $ M_\Psi(u)$ is an $A$-module and $M_\alpha(u)$ is a fractional $A$-ideal.} of the additive group $k$ by
\begin{equation}
    M_\alpha(h) := \theta_\alpha^{-1} \Big( \mathbf{U}_{\alpha}(k) \cap h \mathbf{G}(A) h^{-1} \Big).
\end{equation}

If we take $h = n_w^{-1} u \in H$ with $w \in W^\mathrm{sph}$ and $u \in \mathbf{U}^+(k)$, then 
\[ n_w \theta_\alpha\big( M_\alpha(h) \big) n_w^{-1} = n_w \mathbf{U}_\alpha(k) n_w^{-1} \cap u \mathbf{G}(A) u^{-1}= \mathbf{U}_\beta(k) \cap u \mathbf{G}(A) u^{-1}\] 
where $\beta = w(\alpha) \in \Phi$ since $n_w \mathbf{U}_\alpha n_w^{-1} = \mathbf{U}_\beta$.
Thus we extend the definition by setting:
\begin{equation}
    M_\beta(u) := \theta_\beta^{-1}\big( \mathbf{U}_{\beta}(k) \cap u \mathbf{G}(A) u^{-1}\big)
\end{equation}
for any $\beta \in \Phi$ and any $u \in \mathbf{U}^+(k)$.

The goal of this section is to provide some $A$-modules which bound the subgroups $M_\Psi(h)$. 
More specifically, in this section, on one hand, we prove that, for $\beta \in \Phi$ and $u \in \mathbf{U}^+(k)$, the group $M_\beta(u)$ always contains a non-zero $A$-ideal, denoted by $J_\beta(u)$ in Proposition~\ref{prop ideal contained}.
On the other hand, we prove that, for suitable subsets $\Psi \subset \Phi^+$, there are non-zero fractional $A$-ideals $\mathfrak{q}_{\Psi,\alpha}(h)$ for $h \in H$ and $\alpha \in \Psi$ such that
\[
    M_\Psi(h) \subseteq \bigoplus_{\alpha \in \Psi} \mathfrak{q}_{\Psi,\alpha}(h).
\]

\begin{example}
Here we exhibit the preceding ideals $J_{\alpha}(h)$ and $\mathfrak{q}_{\Psi,\alpha}(h)$ in the context where $\mathbf{G} = \mathrm{SL}_2$. Let $(\mathbf{B},\mathbf{T})$ be the pair of upper triangular and diagonals matrices. Then, the unipotent subgroup defined from to the unique root $\alpha \in \Phi^{+}$ is exactly the set of unipotent upper triangular matrices. Also, recall that $W^{\mathrm{sph}}= \lbrace \mathrm{id}, w \rbrace$ in this context. In particular, this implies that $\Psi=\Phi^{+}=\lbrace \alpha\rbrace$, whence the unique relevant ideals are $J_{\alpha}(h)$ and $\mathfrak{q}_{\alpha}(h):=\mathfrak{q}_{\Psi,\alpha}(h)$, for $h \in H$. Also, recall that $(\mathbf{G}/\mathbf{B})(k) \cong \mathbb{P}^1(k)$ in this context. Thus, to fix an element $x \in \mathbb{P}^1(k)$ is equivalent to fix a $k$-vector chamber in $\mathcal{X}(\mathbf{G}, k)$. Moreover, the canonical vector chamber $D_0$ corresponds with $\infty \in \mathbb{P}^1(k)$ via the preceding identification. To start, let $x=\infty$, for which we can consider $h=\mathrm{id}$. Thus
$$\mathrm{Stab}_{\mathbf{G}(A)}(x) = \mathbf{G}(A) \cap \mathbf{B}(k)= \mathbf{B}(A) = \left\lbrace \begin{pmatrix} a & c\\0 & b\end{pmatrix} : a,b \in \mathbb{F}^{\times}, \, \, ab=1, \, \,  c \in A \right\rbrace.$$
In particular, we get that $M_{\alpha}(h)=\mathfrak{q}_{\alpha}(h)=J_{\alpha}(h)=A$ in this case.

Now, assume that $x \in k$. Then, we can consider 
\[
    h = n_w \theta_\alpha(x) = \small \begin{pmatrix} 0 & -1\\1 & 0\end{pmatrix} \begin{pmatrix} 1 & -x\\ 0 & 1\end{pmatrix}= \begin{pmatrix} 0 & -1\\ 1 & -x\end{pmatrix} \normalsize.
\]
When $x \in \mathbb{F}$, it follows from \cite[Lemma 3.2]{M} and Proposition~\ref{prop igual stab} that $ \mathrm{Stab}_{\mathbf{G}(A)}(x) = h^{-1} \mathbf{B}(A)h$. So, also in this case, we conclude $M_{\alpha}(h)=\mathfrak{q}_{\alpha}(h)=J_{\alpha}(h)=A$.

Finally, assume that $x \in k \smallsetminus \mathbb{F}$. Then, it follows from \cite[Lemma 3.4]{M} and Proposition~\ref{prop igual stab} that
$$\mathrm{Stab}_{\mathbf{G}(A)}(x) = \small \left\lbrace \begin{pmatrix} a+xc & (b-a)x-x^2c\\c & b-xc\end{pmatrix} \normalsize
: \begin{matrix}
a,b \in \mathbb{F}^{\times}, \, \,  ab=1, \\
c \in A \cap A x^{-1} \cap ((b-a)x^{-1}+Ax^{-2})
\end{matrix}
\right\rbrace. $$
Thus, we obtain
\begin{multline*}
   \mathbf{G}(A) \cap h^{-1} \mathbf{U}^{+}(k) h  = \left\lbrace \begin{pmatrix} 1+xc & -x^2c\\c & 1-xc\end{pmatrix} : 
c \in A \cap A x^{-1} \cap Ax^{-2}
\right\rbrace, \\
= h^{-1} \left\lbrace \begin{pmatrix} 1 & c\\0 & 1\end{pmatrix} : 
c \in A \cap A x^{-1} \cap Ax^{-2}
\right\rbrace  h. \quad \quad\quad \quad \quad \quad
\end{multline*}
In particular, we conclude that $M_{\alpha}(h)=\mathfrak{q}_{\alpha}(h)=J_\alpha(h) =A \cap  A x^{-1} \cap Ax^{-2}$.
\end{example}

\begin{example}
In this example, we provide a set $M_{\Psi}(h)$ that is not a product of fractional $A$-ideals. Let $\mathbf{G}=\mathrm{SL}_3$ and let $A=\mathbb{F}[t]$. Let $(\mathbf{B},\mathbf{T})$ be the pair of upper triangular and diagonal matrices in $\mathbf{G}$. Then, the set of positive roots $\Phi^{+}=\Phi(\mathbf{B}, \mathbf{T})^{+}$ of $\mathrm{SL}_3$ equals $\lbrace \alpha, \beta, \alpha+ \beta \rbrace$, when each root defines exactly one of the following unipotent subgroups:
\begin{align*}
    \mathbf{U}_\alpha = & \left\{ 
   \small \left(
        \begin{matrix}1&z&0\\0&1&0 \\0&0&1\end{matrix} \right) \normalsize
    : z \in \mathbb{A}^1\right\}, &
    \mathbf{U}_\beta = & \left\{ 
    \small \left(
        \begin{matrix}1&0&0\\0&1&z \\0&0&1\end{matrix} \right) \normalsize
    : z \in \mathbb{A}^1\right\},\\
    \mathbf{U}_{\alpha+\beta} = & \left\{ 
        \small \left(
        \begin{matrix}1&0&z\\0&1&0 \\0&0&1\end{matrix} \right) \normalsize : z \in \mathbb{A}^1\right\}.
\end{align*}
Consider the subset $\Psi=\lbrace \alpha+ \beta, \alpha \rbrace$ and the subgroup $\mathbf{U}_{\Psi}=   \left\{ \left(
        \begin{smallmatrix}1&x&z\\0&1&0 \\0&0&1\end{smallmatrix} \right)
    : x,z \in \mathbb{A}^1\right\}.$
Now, let us consider the element $h=n_{w} u$, where
$$ n_w= \small \left(
        \begin{matrix}0&0&1\\0&-1&0 \\1&0&0\end{matrix} \right) \normalsize \in N^{\mathrm{sph}} , \text{ and } u=\small \left(
        \begin{matrix}1&0&1/t\\0&1&1/t \\0&0&1\end{matrix} \right) \in \mathbf{U}(k)^{+} \normalsize. $$
We claim that $M_{\Psi}(h)$ is not equal to the product of two fractional ideals. Indeed, it is easy to check that
\small
$$ h \mathbf{U}_{\Psi} h^{-1}=  \left\{ 
         \left(
        \begin{matrix}1+z/t& -x/t& 1/t \cdot (x/t-z/t)
        \\z/t&1-x/t& 1/t \cdot (x/t-z/t) \\
        z&-x&1+x/t-z/t\end{matrix} \right)  : x,z \in \mathbb{A}^1\right\}. $$
\normalsize
Thus, we deduce that $h \mathbf{U}_{\Psi} h^{-1} \in \mathbf{G}(\mathbb{F}[t])$ if and only if $x, z, x/t-z/t \in t \mathbb{F}[t]$. Set $\mathfrak{q}_{\alpha}=\mathfrak{q}_{\alpha+\beta}= t \mathbb{F}[t]$. Then, we have that
\begin{multline*}
\quad\quad\quad\quad\quad \quad M_{\Psi}(h)= \left\lbrace (x,z) \in \mathfrak{q}_{\alpha} \times \mathfrak{q}_{\alpha+\beta}  : x-z \in t^2\mathbb{F}[t]\right\rbrace, \\
 = \left\lbrace (tx_0,tz_0) \in \mathfrak{q}_{\alpha} \times \mathfrak{q}_{\alpha+\beta} :  x_0-z_0 \in t\mathbb{F}[t]\right\rbrace. \quad \quad\quad\quad\quad\quad    
\end{multline*}
Now, note that for each $tx_0 \in  \mathfrak{q}_{\alpha} $, we have that $z_0:=x_0 $ satisfies $(tx_0, tz_0)\in M_{\Psi}(h)$. Then, we obtain $\pi_{\alpha}(M_{\Psi}(h))=\mathfrak{q}_{\alpha}$. Analogously, we deduce $\pi_{\alpha+\beta}(M_{\Psi}(h))=\mathfrak{q}_{\alpha+\beta}$. But $(t(t+1), t^2) \in \mathfrak{q}_{\alpha} \times \mathfrak{q}_{\alpha+\beta} \smallsetminus M_{\Psi}(h)$. Thus, the claim follows.

\end{example}

\subsection{Lower bound}

\begin{lemma}\label{lemma polynomial ideal contained}
Let $\big( P_i(t) \big)_{1 \leq i \leq r} \subset k[t]$ be a finite family of polynomials satisfying $P_i(0)=0$, for all $i \in \llbracket 1,r \rrbracket$.
Let $\big( I_i \big)_{1 \leq i \leq r}$ be a finite family of non-zero (resp. fractional) $A$-ideal.
Set $ M= \lbrace y \in k: P_i(y) \in I_i,\ \forall i \in \llbracket 1,r \rrbracket  \rbrace.$
Then, there exists a non-zero (resp. fractional) $A$-ideal $J$ of $A$ contained in $M$.
\end{lemma}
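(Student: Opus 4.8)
The plan is to produce a \emph{principal} fractional ideal $J = aA$ contained in $M$, for a well-chosen nonzero $a \in A$. Write $P_i(t) = \sum_{j=1}^{\deg P_i} c_{i,j} t^j$ for each $i \in \llbracket 1,r\rrbracket$; the hypothesis $P_i(0)=0$ is precisely what guarantees that the summation starts at $j = 1$, which will be crucial. For $y = ab$ with $b \in A$ one has $P_i(y) = \sum_{j \geq 1} c_{i,j}(ab)^j = \sum_{j\geq 1}(c_{i,j}a^j)\,b^j$; since $b^j \in A$ and $I_i$ is an $A$-module, it therefore suffices to choose $a$ so that $c_{i,j}a^j \in I_i$ for every index $(i,j)$.

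First I would fix, for each $i$, a nonzero element $f_i \in I_i$, which exists because $I_i$ is a nonzero (fractional) ideal; this yields $f_i A \subseteq I_i$. For each pair $(i,j)$ with $c_{i,j} \neq 0$, the scalar $f_i^{-1}c_{i,j}$ lies in $k = \operatorname{Frac}(A)$, so there is a nonzero $q_{i,j} \in A$ clearing its denominator: $q_{i,j}f_i^{-1}c_{i,j} \in A$. Then I set $a := \prod_{(i,j)} q_{i,j} \in A \smallsetminus \{0\}$ (with the convention that an empty product equals $1$, which covers the degenerate case where all $P_i$ vanish).

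The core verification is then a one-line divisibility computation. For $j \geq 1$ and $c_{i,j}\neq 0$, since $q_{i,j}$ divides $a$ in $A$, both $a/q_{i,j}$ and $a^{j-1}$ belong to $A$, so
\[
c_{i,j}a^j = f_i \cdot \big(q_{i,j}f_i^{-1}c_{i,j}\big)\cdot \big(a/q_{i,j}\big)\cdot a^{j-1} \in f_i A \subseteq I_i.
\]
Multiplying by $b^j \in A$ and summing over $j$ gives $P_i(ab) \in I_i$ for every $b \in A$ and every $i$, that is, $aA \subseteq M$. Taking $J = aA$ finishes the proof: it is a nonzero ideal of $A$, hence a nonzero fractional ideal, and it is an honest integral ideal when the $I_i$ are integral, so the two assertions are obtained simultaneously.

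All the ingredients are elementary and there is no genuine obstacle; the only delicate points are to notice that the vanishing of $P_i$ at $0$ is exactly what allows the exponent $a^{j-1}$ to stay in $A$ (a constant term would break the argument), and to clear denominators multiplicatively so that the construction of $a$ works over an arbitrary integral domain rather than requiring $A$ to be Dedekind or principal.
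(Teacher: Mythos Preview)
Your proof is correct. Both the paper and you clear denominators and exploit $P_i(0)=0$, but the implementations differ. The paper writes each $P_i = Q_i/b_i$ with $Q_i \in A[t]$, sets $J_i = b_i I_i$, and takes $J = \bigcap_i J_i$, using that $Q_i(y) \in yA \subseteq J_i$ once $y \in J \subseteq A$. You instead fix $f_i \in I_i$, clear denominators coefficient by coefficient via the $q_{i,j}$, and produce a single \emph{principal} ideal $aA$. Your route is slightly more self-contained: it yields a principal $J \subseteq A$ directly and the verification $c_{i,j}a^j \in f_iA$ works uniformly whether the $I_i$ are integral or fractional, whereas the paper's intersection $\bigcap_i b_i I_i$ need not land inside $A$ in the genuinely fractional case (take $A=\mathbb{Z}$, $P(t)=t^2$, $I=\tfrac12\mathbb{Z}$: then $J=\tfrac12\mathbb{Z}$ but $(\tfrac12)^2\notin I$), so an extra intersection with $A$ is tacitly needed there. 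The paper's argument is a touch shorter in the integral case, which is the only one invoked later.
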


\begin{proof}
Without loss of generality, we can assume that every $P_i$ is non-zero.
Thus, each polynomial $P_i(t)$ can be written as the quotient of a polynomial $Q_i(t)= a_{1,i}t + \cdots + a_{d(i),i} t^{d(i)} \in A[t]$ by a non-zero element $b_i \in A \smallsetminus \{0\}$. For each $i \in \llbracket 1,r \rrbracket$, we set $J_i= b_i I_i$. Then, we get $y \in M$ if and only if $Q_i(y) \in J_i$, for all $i \in \llbracket 1,r \rrbracket$. In particular, we have that $J= \bigcap_{i=1}^{r} J_i$ is contained in $M$.
Since $A$ is assumed to an integral domain, $J$ is non-zero as finite intersection of non-zero fractional $A$-ideals. Thus the result follows.
\end{proof}

\begin{proposition}\label{prop ideal contained}
For every $\alpha \in \Phi$ and every $h \in \mathbf{G}(k)$, there exists a non-zero ideal $J_\alpha(h)$ of $A$ such that $J_\alpha(h) \subseteq M_\alpha(h)$.
\end{proposition}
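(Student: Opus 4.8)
The plan is to reduce the statement for general $\mathbf{G}$ to the already-understood case where $h = u \in \mathbf{U}^+(k)$ (treated via Proposition~\ref{prop suitable polynomials}), and then to handle that case by a polynomial-ideal argument, namely Lemma~\ref{lemma polynomial ideal contained}. First I would write an arbitrary $h \in \mathbf{G}(k)$ using the spherical Bruhat decomposition: there exist $u_1, u_2 \in \mathbf{U}^+(k)$, $n_w \in N^{\mathrm{sph}}$ and $t \in \mathbf{T}(k)$ such that $h = u_1 n_w t u_2$, hence $h = u_1 n_w u_2'$ with $u_2' = t u_2 t^{-1} \in \mathbf{U}^+(k)$ after absorbing $t$ (using that $\mathbf{T}(k)$ normalizes $\mathbf{U}^+(k)$ and acts on each $\mathbf{U}_\beta$ by a scalar; the scalar only rescales the ideal and does not affect the conclusion). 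Thus it suffices to treat $h = u_1 n_w u_2$, and since conjugation by $u_1$ sends $\mathbf{G}(A)$ to $u_1 \mathbf{G}(A) u_1^{-1}$ and sends $\mathbf{U}_\alpha$ to itself only if $u_1$ normalizes it — which it need not — I instead argue directly on $\mathbf{U}_\alpha(k) \cap h\mathbf{G}(A)h^{-1}$.

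The cleaner route: fix $\alpha \in \Phi$ and $h \in \mathbf{G}(k)$. An element $\theta_\alpha(y) \in \mathbf{U}_\alpha(k)$ lies in $h \mathbf{G}(A) h^{-1}$ if and only if $h^{-1}\theta_\alpha(y) h \in \mathbf{G}(A)$. Using the closed embedding $\rho : \mathbf{G} \to \mathrm{SL}_{n,\mathbb{Z}}$ and the lemma $\rho(\mathbf{G}(A)) = \rho(\mathbf{G}(k)) \cap \mathrm{SL}_n(A)$, this is equivalent to requiring that every matrix entry of $\rho\big(h^{-1}\theta_\alpha(y) h\big)$ lies in $A$ (note $\rho\big(h^{-1}\theta_\alpha(y)h\big) \in \mathrm{SL}_n(k)$ automatically). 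Now $y \mapsto \theta_\alpha(y)$ is a one-parameter unipotent subgroup, so $\rho(\theta_\alpha(y)) = I + y \cdot X$ for a fixed nilpotent matrix $X \in M_n(\mathbb{Z})$ (where here one uses that $\rho \circ \theta_\alpha$ is a morphism $\mathbb{G}_a \to \mathrm{SL}_n$, hence of the form $\exp(y X)$ with $X$ the image of the generator of $\operatorname{Lie}(\mathbb{G}_a)$; more carefully $\rho(\theta_\alpha(y))$ is polynomial in $y$ with entries in $\mathbb{Z}[y]$ vanishing-at-$0$ up to the identity). Conjugating by the fixed invertible matrix $\rho(h) \in \mathrm{GL}_n(k)$, each entry of $\rho(h)^{-1}\big(\rho(\theta_\alpha(y)) - I\big)\rho(h)$ is a polynomial $P_{i,j}(y) \in k[y]$ with $P_{i,j}(0) = 0$. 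Therefore $M_\alpha(h) = \{ y \in k : P_{i,j}(y) \in A \text{ for all } i,j \}$.

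Finally, apply Lemma~\ref{lemma polynomial ideal contained} with the finite family $\{P_{i,j}\}$ and the ideals $I_{i,j} = A$: it produces a non-zero $A$-ideal $J_\alpha(h) \subseteq M_\alpha(h)$, which is exactly the claim. The main thing to be careful about — the only genuine obstacle — is justifying that $\rho(\theta_\alpha(y))$ is genuinely \emph{polynomial} in $y$ (not merely a morphism of schemes) with the entries lying in $\mathbb{Z}[y]$: this follows because $\theta_\alpha$ and $\rho$ are morphisms of affine $\mathbb{Z}$-schemes, so on coordinate rings they give $\mathbb{Z}[\mathrm{SL}_n] \to \mathbb{Z}[\mathbf{U}_\alpha] \cong \mathbb{Z}[y]$, and the matrix-coordinate functions pull back to polynomials in $y$; evaluating at $0$ gives $\rho(\theta_\alpha(0)) = \rho(e) = I$ so each off-diagonal entry and each (diagonal entry $-1$) vanishes at $y=0$. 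Everything else is the formal reduction above, and one should remark that the construction depends on the chosen $\rho$ only up to shrinking $J_\alpha(h)$, which is harmless for all later uses.
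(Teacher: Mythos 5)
Your ``cleaner route'' is essentially the paper's own proof: embed via $\rho$, observe that the entries of $\rho\big(h^{-1}\theta_\alpha(y)h\big)-I_n$ are polynomials in $k[y]$ vanishing at $0$ (obtained by conjugating the integral matrix coefficients of $\rho\circ\theta_\alpha$ by $\rho(h)$), and apply Lemma~\ref{lemma polynomial ideal contained} with all $I_{i,j}=A$. The opening Bruhat-decomposition reduction is an unnecessary detour that you rightly abandon, and the passing claim $\rho(\theta_\alpha(y))=I+yX$ is not needed since you immediately replace it by the correct statement that the entries lie in $\mathbb{Z}[y]$; the argument is correct as it stands.
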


\begin{proof}
For $1 \leq i,j \leq n$, denote by $\pi_{i,j} : \mathrm{SL}_{n,\mathbb{Z}} \to \mathbb{A}^1_{\mathbb{Z}}$ the canonical $\mathbb{Z}$-morphism of projection onto the $(i,j)$-coordinate.
Consider the composite $\mathbb{Z}$-morphism:
\[ f_\alpha^{i,j} := \pi_{i,j} \circ \rho \circ \theta_\alpha : \mathbb{G}_{a,\mathbb{Z}} \to \mathbb{A}^1_{\mathbb{Z}}.
\]
It corresponds to a ring morphism $\big(f_\alpha^{i,j}\big)^*: \mathbb{Z}[t] \to \mathbb{Z}[t]$. Let us define
\[ Q_{i,j}(t) := \big(f_\alpha^{i,j}\big)^*(t) - \delta_{i,j} \in \mathbb{Z}[t].\]

Thus, for any $y \in k$, we have that $\rho \circ \theta_\alpha(y) = I_n + \big( Q_{i,j}(y) \big)_{1 \leq i,j \leq n} \in \mathrm{SL}_n(k)$
where $Q_{i,j}(0) = 0$ since $\rho \circ \theta_\alpha(0) = I_n$.
Hence there are matrices $\big( N_i \big)_{1 \leq i \leq d} \in \mathcal{M}_n(\mathbb{Z})$ for $d = \max \{ \operatorname{deg}( Q_{i,j}),\ 1 \leq i,j \leq n \}$ such that
\[ \rho \circ \theta_\alpha(y) = I_n + \sum_{i=1}^d y^i N_i\qquad \forall y \in \mathbb{G}_a.\]
Set $U = \rho(h) \in \mathrm{SL}_n(k)$ and $V = U^{-1} = \rho(h^{-1}) \in \mathrm{SL}_n(k)$.
Then
\[ \rho( h^{-1} \theta_\alpha(y) h) = I_n + \sum_{i=1}^d y^i V N_i U\]
Let $\big( P_{i,j} \big)_{1 \leq i,j \leq n}$ be the family of polynomials in $t k[t]$ such that 
\[\sum_{i=1}^d y^i V N_i U = \big( P_{i,j}(y) \big)_{1 \leq i,j \leq n} \]
Applying Lemma~\ref{lemma polynomial ideal contained} to the family $\big( P_{i,j} \big)_{1 \leq i,j \leq n}$ and ideals $I_{i,j}= A$, we get an $A$-ideal $J$ such that $y \in J \Rightarrow P_{i,j}(y) \in A$.
Thus, by setting $J_\alpha(h) = J$, we get that
\[ \rho \big( h^{-1} \theta_\alpha( J ) h \big) \subseteq \mathrm{SL}_n(k) \cap \mathcal{M}_n(A) \cap \rho\big(\mathbf{G}(k)\big) = \mathrm{SL}_n(A) \cap \rho\big(\mathbf{G}(k)\big) = \rho\big( \mathbf{G}(A) \big).\]
Since $\rho$ is faithful, we deduce that
\[\theta_\alpha\big( J_\alpha(h)\big) \subseteq h \mathbf{G}(A) h^{-1} \cap \mathbf{U}_\alpha(k) = \theta_\alpha\big( M_\alpha(h) \big)\]
whence the result follows since $\theta_\alpha$ is an isomorphism.
\end{proof}

\subsection{Upper bound for \texorpdfstring{$\mathrm{SL}_n$ and $\mathrm{GL}_n$}{SL(n) and GL(n)}}\label{sec upper bound SLn}

Let $n \in \mathbb{N}$. 
In this subsection only, we assume that $\mathbf{G} = \mathrm{SL}_{n}$ or $ \mathrm{GL}_{n}$ over $\mathbb{Z}$ and we consider its usual matrix realization.
Let $\mathbf{T} \subset \mathbf{G}$ be the maximal $k$-torus consisting in diagonal matrices.
Recall that roots of $\mathbf{G}$ with respect to $\mathbf{T}$ are the characters $\chi_{i,j} : \mathbf{T} \to \mathbb{G}_m \in \Phi(\mathbf{G},\mathbf{T})$ given by \[\chi_{i,j} (\begin{pmatrix} d_1 & & \\ & \ddots & \\&&d_n \end{pmatrix})= d_i d_j^{-1}\] for $i \neq j$.
Let $\mathbf{B}$ (resp. $\mathbf{U}$) be the closed solvable (resp. unipotent) subgroup scheme of $\mathbf{G}$ consisting in upper triangular matrices.
Denote by $E_{i,j}$ the matrix with coefficient $1$ at position $(i,j)$ and $0$ elsewhere.
Then the group scheme homomorphism $e_{i,j} : \mathbb{G}_a \to \mathbf{G}$ given by $e_{i,j}(x) = I_n + x E_{i,j}$ parametrizes the root group $U_{i,j}$ of $\mathbf{G}$ with respect to $\chi_{i,j}$.
Recall that, for any ordering of $\lbrace (i,j): 1\leq i<j \leq n\rbrace$, there is an isomorphism of $k$-varieties:
\[\begin{array}{cccc}\phi: &\mathbb{A}^{\frac{n(n-1)}{2}}_k &\to& \mathbf{U}_k\\
&(x_{i,j})_{1 \leq i < j \leq n} & \to & \prod_{1\leq i<j \leq n} e_{i,j}(x_{i,j})
\end{array}.\]

\begin{lemma}\label{lemma upper bound SL(n,A)}
Assume that $\mathbf{G} = \mathrm{GL}_n$ or $\mathrm{SL}_n$.
Let $A$ be a Dedekind domain and $k$ be its fraction field.
For any $h \in \mathbf{G}(k)$.
There are non-zero fractional $A$-ideals $\big( J_{i,j}(h) \big)_{1 \leq i < j \leq n}$ (that does not depend on the ordering of the product) such that 
\[ \prod_{1 \leq i < j \leq n} e_{i,j}(x_{i,j}) \in h \mathbf{G}(A) h^{-1} \Rightarrow \forall 1 \leq i < j \leq n,\ x_{i,j} \in J_{i,j}(h)\]
\end{lemma}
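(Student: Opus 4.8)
The plan is to work with the matrix realisation of $\mathbf{G} = \mathrm{GL}_n$ or $\mathrm{SL}_n$ and to analyse directly which entries of a conjugate $h u h^{-1}$, for $u \in \mathbf{U}(k)$ unipotent upper triangular, can be forced into $A$. Write $U = \rho(h) = (u_{i,j}) \in \mathrm{GL}_n(k)$ and $V = U^{-1} = (v_{i,j}) \in \mathrm{GL}_n(k)$; since there are only finitely many entries, there is a common non-zero denominator, i.e.~a non-zero $c \in A$ with $c U, cV \in \mathcal{M}_n(A)$. Given $u = \prod_{1 \le i < j \le n} e_{i,j}(x_{i,j}) = I_n + X$, where $X$ is strictly upper triangular with entries that are the polynomial functions of the $x_{i,j}$ described by the map $\phi$ in the excerpt, the condition $huh^{-1} \in \mathbf{G}(A)$ means $V(I_n + X)U = I_n + VXU \in \mathcal{M}_n(A)$, i.e.~$VXU \in \mathcal{M}_n(A)$. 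The plan is to extract from this a system of linear conditions on the $x_{i,j}$ with coefficients in $k$, and then invoke a lemma of the type of Lemma~\ref{lemma polynomial ideal contained} (in its fractional-ideal form) — or rather its ``upper bound'' counterpart — to bound each $x_{i,j}$ inside a fractional ideal.

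The key point that makes this work, and which I expect to be the main obstacle, is that the entries of $VXU$ are \emph{linear} in the $x_{i,j}$: indeed $X = \sum_{i<j} x_{i,j} E_{i,j}$ exactly (the parametrisation $\phi$ for the full upper unipotent group of $\mathrm{GL}_n$ is linear, since $E_{i,j}E_{k,l} = \delta_{j,k} E_{i,l}$ and in a product $\prod e_{i,j}(x_{i,j})$ each pairwise product $E_{i,j}E_{k,l}$ contributing a quadratic term has $j = k$, hence $i < j = k < l$, so such terms only affect coefficients $E_{i,l}$ with a larger ``gap'' — one must order the product suitably, or simply observe that for the \emph{full} upper triangular unipotent group one may choose the ordering so that $\phi$ is the affine-linear map $X \mapsto I_n + X$ after a triangular change of coordinates). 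Rather than fight with the precise polynomial shape, I would argue by induction on the gap $j - i$: the $(i,j)$-entry of $VXU$ is $\sum_{p \le q} v_{i,p} x_{p,q} u_{q,j}$, and I would show that the entries $x_{i,j}$ of minimal gap are constrained to lie in fractional ideals determined by the $v_{i,p}, u_{q,j}$ and the ideal $cA$, then proceed upward. In fact the cleanest route is: the map $X \mapsto VXU$ is a $k$-linear automorphism of the space of all matrices, it restricts to a $k$-linear isomorphism from the space $\mathfrak{n}$ of strictly-upper-triangular matrices onto some subspace, and we want the preimage of $\mathfrak{n} \cap \mathcal{M}_n(A)$.

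Concretely, the plan is: (1) reduce to showing that $M := \{ (x_{i,j})_{i<j} \in k^{n(n-1)/2} : V X U \in \mathcal{M}_n(A) \text{ where } X = \sum x_{i,j}E_{i,j}\}$ is contained in a product of fractional $A$-ideals; (2) note $VXU \in \mathcal{M}_n(A)$ iff $X \in U \mathcal{M}_n(A) V$, and since $cU, cV \in \mathcal{M}_n(A)$ and $c U^{-1}, cV^{-1} \in \mathcal{M}_n(A)$ (replacing $c$ by $c \cdot \det$ if needed), we get the two-sided bound $c^{2}\mathcal{M}_n(A) \cdot (\text{something}) \subseteq \{X\} \subseteq c^{-2} \mathcal{M}_n(A)$ after clearing denominators; more precisely $X \in U\mathcal{M}_n(A)V$ forces every entry $x_{i,j}$ of $X$ to lie in the fractional $A$-ideal generated by all products $u_{i,p} v_{q,j}$ with $p,q$ arbitrary; (3) define $J_{i,j}(h)$ to be the fractional $A$-ideal generated by $\{ u_{i,p}\, v_{q,j} : 1 \le p, q \le n\}$, which is non-zero and finitely generated hence a genuine fractional ideal of the Dedekind domain $A$, and visibly independent of the ordering of the product defining $u$ since it is defined purely from the matrix entries of $U$ and $V$; (4) conclude that $\prod e_{i,j}(x_{i,j}) \in h\mathbf{G}(A)h^{-1}$ implies $x_{i,j} \in J_{i,j}(h)$ for all $i < j$.

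The main obstacle is step (2)–(3): writing $Y = VXU \in \mathcal{M}_n(A)$ we have $X = U Y V$, so $x_{i,j} = \sum_{p,q} u_{i,p} y_{p,q} v_{q,j}$ with $y_{p,q} \in A$; this shows immediately $x_{i,j} \in \sum_{p,q} u_{i,p} v_{q,j} A$, which is exactly the fractional ideal $J_{i,j}(h)$ I proposed — so in fact the argument is short and the ``linearity'' observation is doing all the work, the point merely being that for $\mathrm{GL}_n/\mathrm{SL}_n$ conjugation of a unipotent by a fixed matrix is affine-linear in the strictly-upper-triangular coordinates. I would then remark that the independence from the ordering is clear because $J_{i,j}(h)$ was defined without reference to the parametrisation $\phi$, and that non-vanishing follows since $U, V$ are invertible so no row of $U$ and no column of $V$ is identically zero, giving some nonzero product $u_{i,p}v_{q,j}$. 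Finally, in the $\mathrm{SL}_n$ case one intersects with $\det = 1$, which only shrinks the set and hence does not affect the stated containment.
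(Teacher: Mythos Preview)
Your computation that $u = I_n + X \in h\mathbf{G}(A)h^{-1}$ forces $X \in U\,\mathcal{M}_n(A)\,V$, and hence that the $(i,j)$-entry of $X$ lies in the fractional ideal $\sum_{p,q} u_{i,p}v_{q,j}\,A$, is correct and is exactly the paper's first step (its ideals $I_{i,j}$). But there is a genuine gap in your ``cleanest route'': in the line ``$x_{i,j} = \sum_{p,q} u_{i,p}\,y_{p,q}\,v_{q,j}$'' you have silently identified the parameter $x_{i,j}$ appearing in $\prod e_{i,j}(x_{i,j})$ with the $(i,j)$-entry of $X = u - I_n$. These agree only when $j = i+1$ (or for one specially chosen ordering); for a general ordering the $(i,j)$-entry of $X$ is $x_{i,j} + \sum_{i=\ell_0<\cdots<\ell_s=j}\varepsilon_\ast \prod_t x_{\ell_{t-1},\ell_t}$ with $\varepsilon_\ast\in\{0,1\}$ depending on the ordering. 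So your $J_{i,j}(h)$ bounds the matrix entries of $X$, not the parameters the lemma is about, and your claim of ordering-independence is circular: it holds for the matrix entries precisely because those are not the $x_{i,j}$.

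The paper closes the argument exactly by the induction on the gap $r=j-i$ that you mention but then abandon: from the bound on the matrix entry and the inductive bounds $x_{\ell_{t-1},\ell_t}\in J_{\ell_{t-1},\ell_t}$ for the shorter-gap factors, one solves for $x_{i,j}$ and sets
\[
J_{i,j}\ :=\ I_{i,j}\ +\ \sum_{\substack{s\ge 2\\ i=\ell_0<\cdots<\ell_s=j}}\ \prod_{t=1}^{s} J_{\ell_{t-1},\ell_t},
\]
which is a non-zero fractional ideal since sums and products of such are again such in a Dedekind domain. Summing over \emph{all} chains (rather than only those with $\varepsilon_\ast=1$) is what makes $J_{i,j}$ genuinely independent of the ordering. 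So your ingredients are right, but the inductive passage from matrix entries to parameters is essential and cannot be replaced by the linear shortcut.
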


\begin{proof}
Recall that, in a Dedekind domain, a finite sum and product of non-zero fractional ideals is also a non-zero fractional ideal (c.f.~\cite[\S 6]{Lang}). We use this fact along the proof. 
In the following, given fractional $A$-ideals $I_{i,j}$ for $1 \leq i,j \leq n$, we denote by $\big( I_{i,j} \big)_{i,j=1}^n$ the subset of matrices in $\mathcal{M}_n(k)$ whose coefficient at entry $(i,j)$ is contained in $I_{i,j}$.
Let us write $h = \big( h_{i,j} \big)_{i,j=1}^n$ and $h^{-1} = \big( h'_{i,j} \big)_{i,j=1}^n$
Then, an elementary calculation shows that
\[h \mathbf{G}(A) \subseteq h \mathcal{M}_n(A) \subseteq (I'_{i,j})_{i,j=1}^{n},\]
where, for any $(i,j)$, the ideal $I'_{i,j}= \sum_{\ell=1}^{n} h_{i,\ell} A$ is a non-zero fractional $A$-ideal as finite sum of fractional $A$-ideals with at least one non-zero since there is $\ell$ such that $h_{i,\ell} \neq 0$ because $h \in \mathbf{G}(k)$.
Thus, an analogous elementary calculation shows that
\begin{equation}\label{eq conj SL(n,A) in ideals}h \mathbf{G}(A) h^{-1} \subseteq \big( I'_{i,j} \big)_{i,j=1}^n h^{-1} \subseteq \big( I_{i,j} \big)_{i,j=1}^n,\end{equation}
where, for any $(i,j)$, the ideal $I_{i,j}= \sum_{\ell=1}^{n} (h'_{\ell,j}) I'_{i,\ell}$ is a non-zero fractional $A$-ideal as finite sum of fractional $A$-ideals with at least one non-zero.
Since for $i<j$ and $i'<j'$ we have $E_{i,j}E_{i',j'}$ equals $E_{i,j'}$ with $i <j'$, if $j=i'$, and $0$ otherwise, a straightforward matrix calculation gives that
\begin{equation}\label{eq straightforward in U(n)}\prod_{1 \leq i < j \leq n} (I_n + x_{i,j} E_{i,j}) = I_n + \sum_{1 \leq i < j\leq n} \Big( x_{i,j} + \sum_{\substack{2 \leq s \leq j-i\\i = \ell_0 < \cdots < \ell_s=j}} \big( \varepsilon_{\ell_0,\dots,\ell_s} \prod_{t=1}^s x_{\ell_{t-1},\ell_t} \big) \Big) E_{i,j}\end{equation}
where each $\varepsilon_{\ell_0,\dots,\ell_s} \in \{0,1\}$ depends on the ordering of the factors $(I_n + x_{i,j} E_{i,j})$ in the product.
Hence Inclusion~\eqref{eq conj SL(n,A) in ideals} and Equation~\eqref{eq straightforward in U(n)} give that
\begin{multline}\label{implication straightforward}
    \prod_{1 \leq i < j \leq n} (I_n + x_{i,j} E_{i,j}) \in g \mathbf{G}(A) g^{-1}  \Rightarrow 
   \\\bigg( \forall 1 \leq i < j \leq n,\ \Big( x_{i,j} + \sum_{\substack{2 \leq s \leq j-i\\i = \ell_0 < \cdots < \ell_s=j}} \big( \varepsilon_{\ell_0,\dots,\ell_s} \prod_{t=1}^s x_{\ell_{t-1},\ell_t}\big) \Big) \in I_{i,j}\bigg).
\end{multline}
We define, by induction on $r:=j-i$ for $1 \leq i < j \leq n$, fractional $A$-ideals by  
\[J_{i,j} := I_{i,j} + \Big( \sum_{\substack{2 \leq s \leq j-i\\i = \ell_0 < \cdots < \ell_s=j}} \prod_{t=1}^s J_{\ell_{t-1},\ell_t}\Big)\]
By induction on $r=j-i$, we obtain that $J_{i,j}$ is a non-zero fractional $A$-ideal as finite sum and product of such fractional $A$-ideals.
With this definition of $J_{i,j}$, the result then follows from~\eqref{implication straightforward}.
\end{proof}

\subsection{Upper bound: general case}\label{sec upper bound}

\begin{lemma}\label{lemma monomials in Dedekind}
Let $A$ be a Dedekind domain and let $k$ be its fraction field.
Let $J$ be a non-zero fractional $A$-ideal and $P \in k[t]$ a non-constant monomial.
There is a non-zero fractional $A$-ideal $\mathfrak{q}$ such that for any $x \in k$
\[ P(x) \in J \Rightarrow x \in \mathfrak{q}.\]
\end{lemma}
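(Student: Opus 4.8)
The plan is to reduce to the case of a single monomial and then argue prime-by-prime using the unique factorization of non-zero fractional ideals in a Dedekind domain. Since $P$ is a non-constant monomial, write $P(t) = a t^n$ with $a \in k^\times$ and $n \geqslant 1$. Then for $x \in k$ the condition $P(x) \in J$ is equivalent to $x^n \in I$, where $I := a^{-1} J$ is again a non-zero fractional $A$-ideal (using that $a \neq 0$). So it suffices to exhibit a non-zero fractional $A$-ideal $\mathfrak{q}$ with $x^n \in I \Rightarrow x \in \mathfrak{q}$.

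First I would factor $I = \prod_{\mathfrak{p}} \mathfrak{p}^{e_\mathfrak{p}}$, where $\mathfrak{p}$ ranges over the maximal ideals of $A$, the exponents $e_\mathfrak{p} \in \mathbb{Z}$ are uniquely determined, and $e_\mathfrak{p} = 0$ for all but finitely many $\mathfrak{p}$ (this is the standard structure theorem for fractional ideals of a Dedekind domain, as in \cite[\S 6]{Lang}). Then I would define
\[
\mathfrak{q} := \prod_{\mathfrak{p}} \mathfrak{p}^{\lceil e_\mathfrak{p} / n \rceil},
\]
which is again a non-zero fractional $A$-ideal precisely because only finitely many of the exponents $\lceil e_\mathfrak{p}/n\rceil$ are non-zero. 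For the verification, recall that for a non-zero $x \in k$ and any maximal ideal $\mathfrak{p}$, the $\mathfrak{p}$-adic valuation $v_\mathfrak{p}(x) \in \mathbb{Z}$ is additive and satisfies $x \in \mathfrak{q} \Leftrightarrow v_\mathfrak{p}(x) \geqslant \lceil e_\mathfrak{p}/n \rceil$ for every $\mathfrak{p}$. Now if $x^n \in I$, then for each $\mathfrak{p}$ one has $n\, v_\mathfrak{p}(x) = v_\mathfrak{p}(x^n) \geqslant e_\mathfrak{p}$, hence $v_\mathfrak{p}(x) \geqslant e_\mathfrak{p}/n$; since $v_\mathfrak{p}(x)$ is an integer, this forces $v_\mathfrak{p}(x) \geqslant \lceil e_\mathfrak{p}/n \rceil$, and therefore $x \in \mathfrak{q}$. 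The case $x = 0$ is trivial since $0$ lies in every fractional ideal.

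I do not expect a genuine obstacle in this argument; the only points requiring a little care are checking that $\mathfrak{q}$ is a bona fide fractional ideal (which comes down to the finiteness of the support of the prime factorization of $I$) and the elementary observation that rounding up preserves the inequality $v_\mathfrak{p}(x) \geqslant e_\mathfrak{p}/n$ because $v_\mathfrak{p}(x) \in \mathbb{Z}$. Note also that $\mathfrak{q}$ is produced canonically from $I$ and $n$, so no choices are involved.
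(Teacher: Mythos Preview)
Your proof is correct and follows essentially the same approach as the paper's: both reduce to $x^n \in a^{-1}J$, factor this fractional ideal into primes, and take $\mathfrak{q}$ with exponents $\lceil e_\mathfrak{p}/n\rceil$. Your presentation via $\mathfrak{p}$-adic valuations is slightly cleaner than the paper's ideal-containment argument, and your choice of exactly $\lceil e_\mathfrak{p}/n\rceil$ is the optimal one (the paper allows any $b_i \leq \lceil a_i/n\rceil$).
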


\begin{proof}
Let us write $P(t)= z t^n$, where $n \in \mathbb{N}$ and $z \in k^*$. Since $A$ is a Dedekind domain, there exist $s \in \mathbb{N}$, non-trivial pairwise distinct prime ideals $P_i$ and non-zero integers $a_i \in \mathbb{Z} \setminus \{0\}$, for $i \in \llbracket 1,s\rrbracket$, such that we can write $J \cdot (z)^{-1}   = P_1^{a_1} \cdots P_s^{a_s}$. See~\cite[Page 20]{Lang} for details.
For each $i \in \llbracket 1,s \rrbracket$, set $b_i \in \mathbb{Z}$ such that $\lceil a_i/n \rceil \geq b_i$.

Let $x \in k$ be an element satisfying that $P(x) = zx^n \in J$. The latter condition is equivalent to:
$$(z) \cdot (x) ^n =(  zx^n ) \subseteq J.$$
In other words, we get that  $(x) ^n \subseteq J \cdot (z) ^{-1}   = P_1^{a_1} \cdots P_s^{a_s}.$
Then, let us write $(x) =  Q_1^{c_1} \cdots Q_r^{c_r},$ where $r \in \mathbb{N}$, $\big( Q_j \big )_{1 \leq j\leq r}$ is a family of pairwise distinct non-trivial prime ideals of $A$, and $\lbrace c_j \rbrace_{j=1}^r \subset \mathbb{Z} \smallsetminus \{0\}$ is a set of non-zero integers.
Therefore, we get that:
\begin{equation}\label{eq ideals}
  Q_1^{n c_1} \cdots Q_r^{n c_r} \subseteq  P_1^{a_1} \cdots P_s^{a_s}.  
\end{equation}
This implies that $\{P_1,\dots,P_s\} \subseteq \{Q_1,\dots,Q_r\} $ and, in particular, that $s\leq r$. See \cite[Page 21]{Lang} for details. Thus, up to exchanging the numbering of these prime ideals, we can assume that, for each $i \in \llbracket 1,s \rrbracket$, we have $P_i=Q_i$. Moreover, the inclusion~\eqref{eq ideals} implies that, for any $i \in \llbracket 1,s \rrbracket$, we have $c_i \geq a_i/n$, whence we deduce $c_i \geq b_i$.
In particular, we obtain that $$x \in (x) =  Q_1^{c_1} \cdots Q_r^{c_r} \subseteq P_1^{c_1} \cdots P_s^{c_s}  \subseteq  P_1^{b_1} \cdots P_s^{b_s}.$$
Thus, the result follows by taking $\mathfrak{q} =P_1^{b_1} \cdots P_s^{b_s}$ which does not depend on $x$.
\end{proof}

The following proposition generalizes Lemma~\ref{lemma upper bound SL(n,A)} under a suitable assumption on $\Psi$.

\begin{proposition}\label{prop fractional ideals}
Let $A$ be a Dedekind domain and $k$ be its fraction field.
Let $\Psi \subset \Phi^+$ be a closed subset of roots satisfying condition~\ref{cond commutative} which is a linearly independent family in $\operatorname{Vect}_\mathbb{R}(\Phi)$.
Let $h \in H$ and let $M_\Psi(h)$ as defined by equation~\eqref{eq def MPsi}.
There are non-zero fractional $A$-ideals $\mathfrak{q}_{\Psi,\alpha}(h)$ such that
\[
\big( x_\alpha \big)_{\alpha \in \Psi} \in M_\Psi(h) \Rightarrow \forall \alpha \in \Psi,\ x_\alpha \in \mathfrak{q}_{\Psi,\alpha}(h).\]
\end{proposition}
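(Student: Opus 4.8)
The plan is to transport the problem into $\mathrm{SL}_n$ via a closed embedding $\rho\colon\mathbf{G}\to\mathrm{SL}_{n,\mathbb{Z}}$ and then combine the ``$\mathrm{SL}_n$'' upper bound of Lemma~\ref{lemma upper bound SL(n,A)} with Lemma~\ref{lemma monomials in Dedekind}. Since $M_\Psi(h)$ is defined intrinsically --- in terms of $\mathbf{U}_\Psi(k)$, the \'epinglages $\theta_\alpha$, and the abstract group $\mathbf{G}(A)$ --- it does not depend on any matrix realisation, so I am free to choose $\rho$ conveniently. First I would choose $\rho$ defined over $\mathbb{Z}$ (so that $\rho(\mathbf{G}(A))\subseteq\mathrm{SL}_n(A)$) and with $\rho(\mathbf{T})$ contained in the diagonal torus: a split maximal torus of $\mathrm{SL}_{n,\mathbb{Z}}$ determines a grading of $\mathbb{Z}^n$ by its (free) weight submodules, and an adapted basis conjugates it into the diagonal torus by some $\gamma\in\mathrm{GL}_n(\mathbb{Z})$; replacing $\rho$ by $\gamma\rho\gamma^{-1}$ preserves both desired properties since $\gamma$ normalises $\mathrm{SL}_n(A)$. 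After this reduction, writing $\rho(t)=\operatorname{diag}(\lambda_1(t),\dots,\lambda_n(t))$ with $\lambda_i\in\Chi(\mathbf{T})$, the root $\chi_{i,j}$ of $\mathrm{SL}_n$ pulls back to $\lambda_i-\lambda_j$.

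The core of the argument is a monomiality statement, and it is here that linear independence of $\Psi$ enters. For $v=\prod_{\alpha\in\Psi}\theta_\alpha(x_\alpha)\in\mathbf{U}_\Psi(k)$ --- a well-defined element independently of the order of the product, by commutativity of $\mathbf{U}_\Psi$ (condition~\ref{cond commutative}) --- each entry $\bigl(\rho(v)\bigr)_{i,j}$ is a polynomial $P_{i,j}\in\mathbb{Z}[(X_\alpha)_{\alpha\in\Psi}]$ evaluated at $(x_\alpha)$. I would apply the functorial identity $\rho(t)\rho(v)\rho(t)^{-1}=\rho(tvt^{-1})$, using $tvt^{-1}=\prod_\alpha\theta_\alpha(\alpha(t)x_\alpha)$, to obtain in the appropriate Laurent-polynomial ring the relation $(\lambda_i-\lambda_j)(t)\,P_{i,j}=P_{i,j}\bigl((\alpha(t)X_\alpha)_\alpha\bigr)$. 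Comparing coefficients shows that every monomial $\prod_\alpha X_\alpha^{n_\alpha}$ occurring in $P_{i,j}$ satisfies $\sum_\alpha n_\alpha\,\alpha=\lambda_i-\lambda_j$ in $\Chi(\mathbf{T})$; since $\Psi$ is linearly independent, $(n_\alpha)\mapsto\sum_\alpha n_\alpha\alpha$ is injective on $\mathbb{Z}^\Psi$, so each $P_{i,j}$ is either $0$ or a single monomial $c_{i,j}\prod_\alpha X_\alpha^{n_\alpha^{(i,j)}}$ with $c_{i,j}\in k^\times$. Then, for each fixed $\alpha\in\Psi$, since $\rho\circ\theta_\alpha$ is a closed immersion it is non-constant, so some entry $X\mapsto\bigl(\rho(\theta_\alpha(X))\bigr)_{i_\alpha,j_\alpha}$ is non-constant; specialising the monomial $P_{i_\alpha,j_\alpha}$ at $X_\beta=0$ for all $\beta\neq\alpha$ forces $P_{i_\alpha,j_\alpha}=c_\alpha X_\alpha^{m_\alpha}$ with $c_\alpha\in k^\times$ and $m_\alpha\geqslant1$ --- a genuine one-variable monomial.

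The conclusion is then short. Given $(x_\alpha)_\alpha\in M_\Psi(h)$, i.e.\ $v:=\psi((x_\alpha))\in\mathbf{U}_\Psi(k)\cap h\mathbf{G}(A)h^{-1}$, one has $\rho(v)\in\rho(h)\mathrm{SL}_n(A)\rho(h)^{-1}$, and the elementary inclusion~\eqref{eq conj SL(n,A) in ideals} of the proof of Lemma~\ref{lemma upper bound SL(n,A)}, applied to $\mathbf{G}=\mathrm{SL}_n$ and to $\rho(h)\in\mathrm{SL}_n(k)$, yields non-zero fractional $A$-ideals $I_{i,j}(\rho(h))$, depending only on $h$, with $\bigl(\rho(v)\bigr)_{i,j}\in I_{i,j}(\rho(h))$ for all $i,j$. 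In particular $c_\alpha x_\alpha^{m_\alpha}\in I_{i_\alpha,j_\alpha}(\rho(h))$, i.e.\ $x_\alpha^{m_\alpha}$ lies in the non-zero fractional ideal $c_\alpha^{-1}I_{i_\alpha,j_\alpha}(\rho(h))$; applying Lemma~\ref{lemma monomials in Dedekind} to the non-constant monomial $t^{m_\alpha}$ produces a non-zero fractional $A$-ideal $\mathfrak{q}_{\Psi,\alpha}(h)$, depending only on $h,\Psi,\alpha$, with $x_\alpha\in\mathfrak{q}_{\Psi,\alpha}(h)$.

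The step I expect to be the main obstacle is the monomiality statement: one must make the torus-equivariance precise over $\mathbb{Z}$ (working with the universal point in a Laurent-polynomial ring rather than evaluating on geometric points) and, more importantly, see clearly that it is the \emph{linear independence} of $\Psi$ --- not the weaker conditions~\ref{cond closure}--\ref{cond commutative} --- that collapses the weight-homogeneous entries to single monomials. This is consistent with the example in the previous pages exhibiting an $M_\Psi(h)$ that is not a product of fractional ideals when $\Psi$ fails to be independent.
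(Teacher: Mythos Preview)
Your argument is correct and follows the same strategy as the paper: embed in $\mathrm{SL}_n$ with $\mathbf{T}$ diagonal, use torus-equivariance together with the linear independence of $\Psi$ to force each relevant coordinate to be a single monomial, locate for each $\alpha\in\Psi$ an entry that is a non-constant monomial in $x_\alpha$ alone, and finish with the $\mathrm{SL}_n$ upper bound and Lemma~\ref{lemma monomials in Dedekind}. There is one streamlining worth noting: the paper additionally conjugates by some $\tau\in\mathrm{SL}_n(k)$ so that $\rho_1(\mathbf{B})$ lands in upper-triangular matrices, then writes $\rho_1(v)\in\mathbf{U}_n$ in the $e_{i,j}$-coordinates via $\phi^{-1}$ and invokes the full Lemma~\ref{lemma upper bound SL(n,A)} with $g=\rho_1(h)\tau$; you instead work directly with raw matrix entries $(\rho(v))_{i,j}$ and need only the elementary inclusion~\eqref{eq conj SL(n,A) in ideals} from that lemma's proof, applied with $\rho(h)$. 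This lets you diagonalize $\rho(\mathbf{T})$ over $\mathbb{Z}$ (via the weight decomposition of $\mathbb{Z}^n$ under the split torus $\rho(\mathbf{T})$---not a maximal torus of $\mathrm{SL}_{n,\mathbb{Z}}$, as your phrasing suggests, but the argument is the same) and dispense with $\tau$ entirely.
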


\begin{proof}
At first we provide a suitable faithful linear representation of $\mathbf{G}_k$ which is a closed embedding. Let $\chi_{ij}$, $\mathbf{T}_n$, $\mathbf{U}_n$, $E_{ij}$, $e_{ij}$ and $U_{ij}$ be defined as in the beginning of \S \ref{sec upper bound SLn}. Recall that, for any ordering of $\lbrace (i,j): 1\leq i<j \leq n\rbrace$, there is an isomorphism of $k$-varieties:
\[\begin{array}{cccc}\phi: &\mathbb{A}^{\frac{n(n-1)}{2}}_k &\to& \mathbf{U}_n\\
&(z_{i,j})_{1 \leq i < j \leq n} & \to & \prod_{1\leq i<j \leq n} e_{i,j}(z_{i,j})
\end{array}.\]

Let $(\mathbf{B}, \mathbf{T})$ be a Killing couple of $\mathbf{G}$. By conjugacy over a field of Borel subgroups, there is an element $\tau_1 \in \mathrm{SL}_n(k)$ such that $\tau_1 \rho\big( \mathbf{B} \big) \tau_1^{-1} \subset \mathbf{T}_n$ \cite[20.9]{BoA}.
Let $T_1$ be a maximal $k$-torus of $\mathbf{T}_n$ containing the $k$-torus $\tau_1 \rho\big( \mathbf{T} \big) \tau_1^{-1}$
By conjugacy over a field $k$ of Levi factors \cite[20.5]{BoA}, there is an element $\tau_2 \in \mathbf{U}_n(k) = \mathcal{R}_u(\mathbf{T}_n)(k)$ such that $\tau_2 T_1 \tau_2^{-1} \subset \mathbf{D}_n$.
Thus, we deduce a faithful linear representation over $k$ which is a closed embedding:
\[\begin{array}{cccc} \rho_1: & \mathbf{G}_k & \to & \mathrm{SL}_{n,k}\\& g & \mapsto & \tau \rho(g) \tau^{-1}\end{array}\]
where $\tau = \tau_2 \tau_1 \in \mathbf{G}(k)$ so that $\rho_1(\mathbf{B}) \subset \mathbf{T}_n$ and $\rho_1(\mathbf{T}) \subset \mathbf{D}_n$.

Write $\Psi = \{\alpha_1,\dots,\alpha_m\}$ for $m \in \mathbb{N}$.
Since $\Psi$ satisfies (C2) the morphism $\psi = \prod_{i=1}^m \theta_{\alpha_i}$ is an isomorphism of $k$-group schemes $\big( \mathbb{G}_a \big)^m \to \mathbf{U}_\Psi$. This isomorphism
For $1 \leq i < j \leq n$, we can define morphisms $f_{i,j}$ of $k$-varieties, from $\mathbb{A}_k^m$ to $\mathbb{A}^1_k$, by composing:
$$
\big( \mathbb{G}_{a,k}\big)^m \xrightarrow{\psi} \mathbf{U}_{\Psi} \xrightarrow{\rho_1} \mathbf{U}_n \xrightarrow{\phi^{-1}} \mathbb{A}^{\frac{n(n-1)}{2}}_k \xrightarrow{\pi_{i,j}} \mathbb{A}^1_k,
$$
where $\pi_{i,j}$ is the canonical projection onto the coordinate $(i,j)$.
Let $f_{i,j}^{*}:k[t] \rightarrow k[X_1,\dots,X_m]$ be the $k$-algebra homomorphism associated to $f_{i,j}$, and let 
$$r_{i,j}(X_1,\dots,X_m)=f_{i,j}^{*}(t) \in k[X_1,\dots,X_m].$$

Thus, there are polynomials $r_{i,j} \in k[X_1,\dots,X_m]$ such that for any $(x_1,\dots,x_m) \in \big( \mathbb{G}_{a,k} \big)^m$, we have that
\begin{equation}\label{eq param by polynomials}
    \rho_1 \circ \psi (x_1,\dots,x_m) = \prod_{1 \leq i<j \leq n} e_{i,j}\big(r_{i,j}(x_1,\dots,x_m)\big).
\end{equation}

\textbf{First claim:} The $r_{i,j}$ are, in fact, monomials.

For $i \in \llbracket 1,m\rrbracket$, since $\theta_{\alpha_i}$ parametrizes a root group, we have, for any $t \in \mathbf{T}$ and $x_i \in \mathbb{G}_a$, that
\[ t \cdot \theta_{\alpha_i}(x_i) \cdot t^{-1} = \theta_\alpha\big(\alpha_i(t) x_i\big).\]
Applying $\rho_1$ to these equalities, we get that
\[\forall t \in \mathbf{T},\quad \rho_1(t) \cdot \rho_1 \circ \psi(x_1,\dots,x_m) \cdot \rho_1(t)^{-1} = \rho_1 \circ \psi\big(\alpha_1(t)x_1,\dots,\alpha_m(t)x_m\big).\]
Hence, for the given ordering of positive roots of $\mathrm{SL}_n$, we have on the one hand that for any $t \in \mathbf{T}$
\[\prod_{1\leq i<j\leq n} \rho_1(t) \cdot e_{i,j}\big( r_{i,j}(x_1,\dots,x_m) \big) \cdot \rho_1(t)^{-1}\\
    = \prod_{1\leq i<j \leq n} e_{i,j}\Big( r_{i,j}\big( \alpha_1(t) x_1,\dots,\alpha_m(t) x_m \big)\Big),\]
and on the other hand, since $\rho_1(t) \in \mathbf{D}_n$, that for any $t \in \mathbf{T}$
\[\prod_{1\leq i<j\leq n} \rho_1(t) \cdot e_{i,j}\big( r_{i,j}(x_1,\dots,x_m) \big) \cdot \rho_1(t)^{-1}\\
    = \prod_{1\leq i<j \leq n} e_{i,j}\Big( \chi_{i,j}\big( \rho_1(t) \big) r_{i,j}\big( x_1,\dots,x_m \big)\Big).\]
Hence, the isomorphism $\phi$ provides the equalities
\[\chi_{i,j} \circ \rho_1(t) \cdot r_{i,j}(x_1,\dots,x_m) = r_{i,j}\big( \alpha_1(t) x_1,\dots,\alpha_m(t)x_m \big)\]
for any $1\leq i<j\leq n$, any $(x_1,\dots,x_m) \in \big( \mathbb{G}_{a,k}\big)^m$ and any $t \in \mathbf{T}$.

Fix $1\leq i<j\leq n$.
Write the polynomial 
\[r_{i,j} = \sum_{\underline{\ell}=(\ell_1,\dots,\ell_m) \in \mathbb{Z}_{\geq 0}^m} a_{\underline{\ell}} X_1^{\ell_1} \cdots X_m^{\ell_m} \in k[X_1,\dots,X_m]\]
Then the polynomial identity
\[\sum_{\underline{\ell} \in \mathbb{Z}_{\geq 0}^m} \chi_{i,j} \circ \rho_1(t) a_{\underline{\ell}} x_1^{\ell_1} \cdots x_m^{\ell_m} = \sum_{\underline{\ell} \in \mathbb{Z}_{\geq 0}^m} a_{\underline{\ell}} \alpha_1(t)^{\ell_1} \cdots \alpha_m(t)^{\ell_m} x_1^{\ell_1} \cdots x_m^{\ell_m}\]
for any $(x_1,\dots,x_m) \in \big( \mathbb{G}_{a,k} \big)^m$ and $t \in \mathbf{T}$ allows us to identify the coefficients
\begin{equation}\label{eq on each factor}
a_{\underline{\ell}} \cdot \chi_{i,j} \circ \rho_1(t) =a_{\underline{\ell}} \cdot \alpha_1(t)^{\ell_1} \cdots \alpha_m(t)^{\ell_m}
\end{equation} 
for any $t \in \mathbf{T}$ and any $\underline{\ell} = (\ell_1,\dots,\ell_m) \in \mathbb{Z}_{\geq 0}^m$.

Suppose that $a_{\underline{\ell}} \neq 0$.
Since equation~\eqref{eq on each factor} holds for any $t \in \mathbf{T}$ and $\chi_{i,j} \circ \rho : \mathbf{T} \to \mathbf{G}_{m,k}$ is a character on $\mathbf{T}$, we deduce the equation in the module of characters $X^*(\mathbf{T})$ (with additive notation):
\[
    \chi_{i,j} \circ \rho_1 = \ell_1 \alpha_1 + \dots + \ell_m \alpha_m
\]
Since $\Psi$ is a linearly independent family, the elements $\ell_1,\dots,\ell_m$ are uniquely determined.
Hence $r_{i,j}$ is a monomial whence the first claim follows.

\textbf{Second claim:}
For any $\ell \in \llbracket 1,m\rrbracket$, there is an index $(i_\ell,j_\ell)$ and a non-constant monomial $R_\ell \in k[t]$ such that $r_{i_\ell,j_\ell}(x_1,\dots,x_m) = R_\ell(x_\ell)$.

Let $\ell \in \llbracket 1,m\rrbracket$. For $1 \leq i < j \leq n$, define $R_{i,j,\ell}(t) = r_{i,j}(0,\dots,0,t,0,\dots,0) \in k[t]$ where the non-zero coefficient is at coordinate $\ell$.
By the first claim, $R_{i,j,\ell}$ is a monomial and there is a $k$-group schemes homomorphism
\[
    \begin{array}{cccc}
        \rho_1 \circ \theta_{\alpha_\ell} :& \mathbb{G}_{a,k}& \to& \mathbf{U}_n\\
        & x & \mapsto & \prod_{1 \leq i < j \leq n} e_{i,j}\big( R_{i,j,\ell}(x) \big)
    \end{array}
\]
deduced from Equation~\eqref{eq param by polynomials}.
Since it is non-constant by composition of closed embedding, there is an index $(i_\ell,j_\ell)$ such that $R_{i_\ell,j_\ell,\ell}$ is non-constant.
But since $r_{i_\ell,j_\ell} \in k[X_1,\dots,X_m]$ is a monomial, we deduce that $$ r_{i_\ell,j_\ell}(X_1,\dots,X_m) = r_{i_\ell,j_\ell}(0,\dots,X_\ell,\dots,0) = R_{i_\ell,j_\ell,\ell}(X_\ell),$$
whence the second claim follows.

As a consequence, we deduce from equations~\eqref{eq def MPsi} and~\eqref{eq param by polynomials} that
\begin{align*}
    (x_1,\dots,x_m) \in M_\Psi(h) \Rightarrow & \rho_1 \circ \psi(x_1,\dots,x_m) \in \rho_1(h) \rho_1\big( \mathbf{G}(A) \big) \rho_1(h^{-1})\\
    \Rightarrow & \prod_{1 \leq i < j \leq n} e_{i,j}\big( r_{i,j}(x_1,\dots,x_m) \big) \in \rho_1(h) \tau \mathrm{SL}_n(A) \tau^{-1} \rho_1(h)^{-1}
\end{align*}
Applying Lemma~\ref{lemma upper bound SL(n,A)} with $g=\rho_1(h) \tau \in \mathrm{SL}_n(k)$, we get that there are non-zero fractional $A$-ideals $\big( J_{i,j} \big)_{1 \leq i < j \leq n}$ such that
\[ (x_1,\dots,x_m) \in M_\Psi(h) \Rightarrow r_{i,j}(x_1,\dots,x_m) \in J_{i,j}.\]
Applying Lemma~\ref{lemma monomials in Dedekind}, for each $\ell \in \llbracket 1,m\rrbracket$, to the monomial $ r_{i_\ell,j_\ell}$ given by the second claim and to the non-zero fractional $A$-ideal $J_{i_\ell,j_\ell} $, we deduce that there are non-zero fractional $A$-ideals $\big( \mathfrak{q}_\ell \big)_{1 \leq \ell \leq m}$ such that
\[ (x_1,\dots,x_m) \in M_\Psi(h) \Rightarrow x_\ell \in \mathfrak{q}_\ell,\quad \forall \ell \in \llbracket 1,m\rrbracket.\]
We conclude by setting $\mathfrak{q}_{\Psi,\alpha_\ell}(h) := \mathfrak{q}_\ell$ for $1 \leq \ell \leq m$.
\end{proof}

\section{Image of sector faces in \texorpdfstring{$\mathbf{G}(A) \backslash \mathcal{X}_k$}{G(A)\textbackslash Xk}}\label{section structure of X}

 In order to describe the images of sector faces in the quotient space $\mathbf{G}(A) \backslash \mathcal{X}_k$, we will consider different foldings via some elements of root groups. 

\subsection{Specialization of vertices and scalar extension}~

 Along this section, in order to have enough foldings via unipotent elements, we have to introduce several well-chosen coverings $\mathcal{D} \to \mathcal{C}$, their associated ring extensions $B/A$ and field extensions $\ell/k$, and their associated embeddings of buildings $\mathcal{X}_k \to \mathcal{X}_\ell$.
In return, it will provide information of the action of $\mathbf{G}(A)$ on $\mathcal{X}_k$ by considering the action of $\mathbf{G}(B)$ over $\mathcal{X}_\ell$. Thus, we start by introducing the following arithmetical result that we extensively use along this section.

\begin{lemma}\label{lem curve extension}
Let $\mathcal{C}$ be a smooth projective curve over $\mathbb{F}$ and let $A$ be the ring of functions on $\mathcal{C}$ that are regular outside $\lbrace \p \rbrace$. Let $e$ be a positive integer. Then, there exist a finite extension $\mathbb E / \mathbb F$, a smooth projective curve $\mathcal{D}$ defined over $\mathbb E$, a cover $\phi: \mathcal{D} \to \mathcal{C}$, and a unique closed point $\p'$ of $\mathcal{D}$ over $\p$ such that:
\begin{itemize}
    \item the ring $B$ of functions on $\mathcal{D}$ that are regular outside $\lbrace \p' \rbrace$ is an extension of $A$,
    \item the field $\ell= \mathrm{Frac}(B)$ is an extension of $k$ of degree $e$, which is totally ramified at $\p$ and
    \item $\mathbb E$ is contained in the algebraic closure $\tilde{\mathbb{F}}$ of $\mathbb F$ in $k$.
\end{itemize}
\end{lemma}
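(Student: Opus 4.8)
The plan is to construct the desired extension function-field-theoretically, using the Riemann existence / Kummer–Artin–Schreier machinery to produce a totally ramified extension at $\p$, and then to take the integral closure to get the curve $\mathcal{D}$ and the ring $B$. First I would work on the completion: let $K = k_\p$ be the completion of $k$ at $\p$, with residue field $\kappa(\p)$, a finite extension of $\mathbb{F}$. Over the complete discretely valued field $K$ there exists a totally ramified extension $L/K$ of degree exactly $e$ (for instance, adjoin a root of a suitable Eisenstein polynomial; when $\operatorname{char}$ divides $e$ one uses a combination of Eisenstein and, if necessary, an unramified twist, but since we only need \emph{some} totally ramified degree-$e$ extension, an Eisenstein polynomial $X^e - \pi$ works whenever $e$ is prime to the residue characteristic, and in general one can still produce a totally ramified extension of the prescribed degree — this is classical local field theory). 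The residue field of $L$ is then $\kappa(\p)$ itself since the extension is totally ramified.

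Next I would globalize. The key point is that a local extension of $K/k$ can be realized by a global extension $\ell/k$ of the \emph{same} degree that is totally ramified at $\p$ and "as unramified as possible" elsewhere, at the cost of possibly enlarging the constant field. Concretely: choose a primitive element for $L/K$ satisfying an Eisenstein polynomial $f \in \mathcal{O}_K[X]$; approximate the coefficients of $f$ by elements of $k$ (using density of $k$ in $K$, and Krasner's lemma to guarantee the approximated polynomial still defines an extension $K$-isomorphic to $L$) to get $\tilde f \in k[X]$ which is still Eisenstein at $\p$. Let $\ell = k[X]/(\tilde f)$. Then $[\ell : k] = e$ and, by the Eisenstein criterion at $\p$, the place $\p$ is totally ramified in $\ell/k$ with a unique place $\p'$ above it. Now $\ell$ need not have $\mathbb{F}$ as its exact constant field, but its constant field $\mathbb{E}$ is a finite extension of $\mathbb{F}$; since $\ell \subseteq \bar k$ and $\mathbb{E}$ is algebraic over $\mathbb{F}$ inside $\ell$, we get $\mathbb{E} \subseteq \tilde{\mathbb{F}}$. (Here I am using the hypothesis from the introduction that one may assume $\tilde{\mathbb{F}} = \mathbb{F}$ for $k$ itself, so the constant field of $\ell$ lies in the algebraic closure $\tilde{\mathbb{F}}$ of $\mathbb{F}$ \emph{in $k$}, i.e.\ it equals $\mathbb{F}$ — but I would state the conclusion in the slightly weaker form matching the lemma, allowing $\mathbb{E}/\mathbb{F}$ finite.)

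Then I would set up the geometry: let $\mathcal{D}$ be the smooth projective model of the function field $\ell$ over $\mathbb{E}$, i.e.\ $\mathcal{D}$ is the unique smooth projective geometrically integral curve over $\mathbb{E}$ with $\mathbb{E}(\mathcal{D}) = \ell$, and let $\phi : \mathcal{D} \to \mathcal{C}$ be the finite morphism of curves induced by the inclusion $k \hookrightarrow \ell$. Let $B$ be the integral closure of $A$ in $\ell$; since $A$ is a Dedekind domain with fraction field $k$ and $\ell/k$ is finite separable, $B$ is a Dedekind domain, finitely generated as an $A$-module, with fraction field $\ell$, and it is the ring of functions on $\mathcal{D}$ regular outside $\phi^{-1}(\{\p\}) = \{\p'\}$ — the last equality precisely because $\p$ is totally (in particular, non-split) ramified. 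This gives all three bullet points: $B \supseteq A$, $[\ell:k] = e$ with $\p$ totally ramified, and $\mathbb{E}$ algebraic over $\mathbb{F}$ inside $k$.

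The main obstacle I anticipate is the wild case: when the residue characteristic $p = \operatorname{char}(\kappa(\p))$ divides $e$, the naive Eisenstein polynomial $X^e - \pi$ need not generate a degree-$e$ extension (it may not even be irreducible if $p \mid e$, e.g.\ in characteristic $p$ the polynomial $X^p - \pi$ gives an inseparable, hence not separable of degree $p$, extension — though it is still totally ramified and purely inseparable of degree $p$). To handle this cleanly one should instead invoke the existence, over any complete discretely valued field, of a totally ramified \emph{separable} extension of every prescribed degree $e \geq 1$: for $e$ prime to $p$ this is Eisenstein $X^e - \pi$; for general $e$ one can, for instance, build a tower combining a degree-$(e/p^v)$ tame piece with $v$ successive degree-$p$ Artin–Schreier-type totally ramified steps, or simply cite the structure theory of local fields (every finite extension of $K$ decomposes into unramified $\circ$ tamely ramified $\circ$ wildly ramified, and totally ramified separable extensions of arbitrary degree exist). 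I would make sure the chosen local extension is separable so that Krasner's lemma applies and so that $B/A$ is the integral closure in a separable extension, keeping $\mathcal{D}$ smooth. Apart from this wild-ramification bookkeeping, the remaining steps are standard algebraic-geometry / ramification-theory facts that I would state with references rather than prove in detail.
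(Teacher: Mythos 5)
Your overall route is the same as the paper's: take an Eisenstein polynomial at $\p$ with coefficients in $k$, let $\ell$ be its rupture field, observe via the fundamental identity that $\p$ is totally ramified with a unique place $\p'$ above it, take the smooth projective model $\mathcal{D}$ of $\ell$ and the integral closure $B$ of $A$. (Your detour through the completion and Krasner's lemma is unnecessary: since $k$ is dense in $K$ you may choose the uniformizer $\pi_\p$ in $k$ from the start and work with $x^e-\pi_\p$ over $k$ directly, which is what the paper does. Also, your parenthetical that $X^e-\pi$ ``may not even be irreducible if $p\mid e$'' is wrong — Eisenstein polynomials are irreducible in any characteristic; the only issue in the wild case is separability, which you correctly identify and which the paper itself does not address.)

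There is, however, a genuine gap in your treatment of the third bullet. You assert that $\mathbb{E}\subseteq\tilde{\mathbb{F}}$ ``since $\ell\subseteq\bar k$ and $\mathbb{E}$ is algebraic over $\mathbb{F}$ inside $\ell$.'' But $\tilde{\mathbb{F}}$ is the algebraic closure of $\mathbb{F}$ \emph{in $k$}, so the claim amounts to $\mathbb{E}\subseteq k$, and an element of $\bar k$ algebraic over $\mathbb{F}$ need not lie in $k$ (e.g.\ $\mathbb{F}_{q^2}\subseteq\bar k$ for $k=\mathbb{F}_q(t)$). The content of this bullet is precisely that the construction introduces \emph{no} genuine constant field extension, and this must be deduced from total ramification at $\p$: the intermediate field $E=\mathbb{E}(\mathcal{C})=\mathbb{E}k$ is a constant field extension of $k$, hence unramified at every place with inertia degree $[E:k]$ at the unique place below $\p'$; since $\ell/k$ is totally ramified at $\p$, the inertia degree $f(\ell_{\p'}/k_\p)=1$, forcing $[E:k]=1$, i.e.\ $\mathbb{E}\subseteq k$ and therefore $\mathbb{E}\subseteq\tilde{\mathbb{F}}$. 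This is exactly the step the paper spends the second half of its proof on, and your argument as written skips it.
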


\begin{proof}
Let $\pi_{\p} \in \mathcal{O}_{\p}$ be a local uniformizing parameter.
Since $k$ is dense in the completion $K=k_{\p}$ we can assume that $\pi_{\p}$ belongs to $k$. Set $p(x)=x^{e}-\pi_\p$. This is an Eisenstein polynomial at $\p$, whence it is irreducible over $K$, and then also over $k$. Thus, the rupture field $\ell$ of $p(x)$ is an extension of $k$ of degree $e$. Moreover, $\ell$ is a function field, since it is a finite extension of $k$ (c.f.~\cite[3.1.1]{Stichtenoth}). Hence, there exists a finite extension $\mathbb{E}$ of $\mathbb{F}$, and a smooth projective curve $\mathcal{D}$ defined over $\mathbb{E}$, such that $\ell= \mathbb{E}(\mathcal{D})$. Moreover, since $k \subseteq \ell$, there exists a branched covering $\phi: \mathcal{D} \to \mathcal{C}$. Recall that, at each point $Q$ of $\mathcal{C}$ we have that
$$ e = [\ell:k]= \sum_{Q'/Q} e(\ell_{Q'}/k_Q) f(\ell_{Q'}/k_Q),$$
where $Q'$ is a point over $Q$, $e(\ell_{Q'}/k_Q) $ is the local ramification index at $Q'$, and $f(\ell_{Q'}/k_Q)$ is the local inertia degree at $Q'$. This identity shows that there exists a unique point $\p'$ of $\mathcal{D}$ over $\p$, and that $\ell_{\p'}$ is a totally ramified extension of $k_{\p}$.

Set $E=\mathbb{E}(\mathcal{C})$. Then, $E $ is a function field satisfying that $k \subseteq E \subseteq \ell$. Note that $ [E:k] \leq [\mathbb{E}:\mathbb{F}]$, with the equality whenever $k$ is purely transcendental over $\mathbb{F}$. Since there exists exactly one point $\p'$ in $\mathcal{D}$ over $\p$, we get that there exists precisely one point $\p''$ of $E$ over $\p$. Moreover, it follows from \cite[Theo. 3.6.3, \S 3.6]{Stichtenoth}
that $E$ is an unramified extension of $k$ at each closed point. In particular, this is unramified at $\p''$ over $\p$. Thus, we have that $f(E_{\p''}/k_{\p})=[E:k]$. So, since $f(E_{\p''}/k_{\p})$ divides $f(\ell_{\p'}/k_{\p})=1$, we get that $[E:k]=1$. This implies that $E=k$, whence $\mathbb{E} \subset k$. So, since each element of $\mathbb{E}$ is algebraic over the ground field $\mathbb{F}$ of $k$, we finally conclude that $\mathbb{E} \subseteq \tilde{\mathbb{F}}$. 

Now, let $B$ be the integral closure of $A$ in $\ell$. Recall that $A$ can be characterized as the ring of functions of $\mathcal{C}$ with a positive valuation at each closed point different from $\p$. Then $B$ is the ring of functions of $\mathcal{D}$ with a positive valuation at each closed point different from $\p'$. This implies that $B$ contains only functions that are regular outside $\lbrace \p' \rbrace$. Hence, the result follows.
\end{proof}

\begin{definition}
A point of a building $\mathcal{X}(\mathbf{G},k)$ is called a $k$-center\index{$k$-center} if it is the arithmetic mean of the vertices of a $k$-face of $\mathcal{X}(\mathbf{G},k)$ as points in a real affine space.

For instance, $k$-vertices and middle of $k$-edges are $k$-centers.
\end{definition}

\begin{lemma}\label{lemma becomes special}
There exists a finite extension $\ell$ of $k$, which is totally ramified at $\p$, such that all the $k$-centers (e.g.~$k$-vertices) of $\mathcal{X}(\mathbf{G},k)$ are in the same $\mathbf{G}(\ell)$-orbit.
In particular, $k$-centers are special $\ell$-vertices (see Figure~\ref{figure walls extension}).
\end{lemma}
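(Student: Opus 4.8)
The plan is to produce the required totally ramified extension $\ell/k$ by an explicit valuation-theoretic computation, relying on the structure of the tessellation of the standard apartment recalled in \S\ref{intro rational building}. Recall that in the apartment $\mathbb{A}_0$, the $k$-walls are the hyperplanes $H_{\alpha,r} = \{x,\ \alpha(x)=r\}$ for $r \in \nu(k^\times) = \mathbb{Z}$, whereas the $\ell$-walls are those with $r \in \nu(\ell^\times)$, which for a totally ramified extension of ramification index $e$ is $\frac{1}{e}\mathbb{Z}$. A $k$-center $x$ of a $k$-face has, in the basis of simple roots (equivalently, for each root $\alpha$), coordinates $\alpha(x)$ lying in $\frac{1}{N}\mathbb{Z}$ for some integer $N$ that is bounded only in terms of the root system $\Phi$: indeed a $k$-face is a polysimplex whose vertices have integer coordinates $\alpha(v) \in \mathbb{Z}$, and the barycenter of at most $\dim(\mathbb{A}_0)+1$ such vertices has coordinates in $\frac{1}{(\dim \mathbb{A}_0 + 1)!}\mathbb{Z}$ — in any case some fixed $N = N(\mathbf{G})$ works.

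First I would fix such an integer $N$ depending only on $\mathbf{G}$ (for instance the least common multiple of the orders of the stabilizers of faces, or simply $(\operatorname{rk}\mathbf{G}+1)!$) so that every $k$-center $x$ satisfies $\alpha(x) \in \frac{1}{N}\mathbb{Z}$ for every $\alpha \in \Phi$. Then I would apply Lemma~\ref{lem curve extension} with $e = N$ to obtain a finite extension $\ell/k$ of degree $N$, totally ramified at $\p$, with $\nu(\ell^\times) = \frac{1}{N}\mathbb{Z}$ after suitable normalization of the valuation. By the description of the tessellation over $\ell$ (using \cite[6.2.22, 9.1.19(b)]{BT} as in \S\ref{intro rational building}), every $\ell$-wall of $\mathbb{A}_0$ is an $H_{\alpha,r}$ with $r \in \frac1N\mathbb{Z}$; since each $k$-center $x$ satisfies $\alpha(x) \in \frac1N\mathbb{Z}$ for all $\alpha$, the point $x$ is an intersection of $\ell$-walls, one for each direction $\alpha \in \Phi$, hence $\Phi_{x,\ell} = \Phi$, i.e.~$x$ is a special $\ell$-vertex. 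Next I would invoke the fact, recalled just before Lemma~\ref{lemma-1} and implicit in \cite[Cor.~9.7]{L}, together with Lemma~\ref{lemma-1} applied over $\ell$: any two special $\ell$-vertices of $\mathbb{A}_0$ are in the same $\mathbf{N}(\ell)$-orbit — because $W^{\mathrm{aff}}$ over $\ell$ acts transitively on special vertices of the Coxeter complex of $W^{\mathrm{aff}}_\ell = \mathbf{\Lambda}_\ell \rtimes W^{\mathrm{sph}}$ (the translation lattice $\mathbf{\Lambda}_\ell$ acting transitively on special vertices of each type and $W^{\mathrm{sph}}$ permuting types). Finally, an arbitrary $k$-center lies in some $k$-apartment, hence in some $\ell$-apartment, which is $\mathbf{G}(\ell)$-conjugate to $\mathbb{A}_0$; transporting back into $\mathbb{A}_0$ reduces to the case already treated, so all $k$-centers lie in a single $\mathbf{G}(\ell)$-orbit.

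The main obstacle, and the point that needs the most care, is twofold: first, pinning down the integer $N$ cleanly — one must argue that $k$-faces of $\mathcal{X}(\mathbf{G},k)$ are genuine polysimplices with vertices of integral coordinates and uniformly bounded "denominator'' of their barycenters, which uses that $\mathcal{X}_k$ and $\mathcal{X}_K$ have the same polysimplicial structure and that the walls over $k$ are the integral ones because $\mathbf{G}$ is split; second, verifying that over $\ell$ the affine Weyl group really does act transitively on the set of \emph{all} special $\ell$-vertices (not merely on those of a fixed type), which requires $W^{\mathrm{sph}}$ to act transitively on the relevant coset types of special points — this is where one uses that after the ramified base change, \emph{every} $k$-center has become special, so the type issue collapses. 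I would also remark that the statement "$k$-centers are special $\ell$-vertices'' is in fact the heart of the lemma and the orbit statement follows formally; and I would point to Figure~\ref{figure walls extension} as the picture of how the finer $\ell$-tessellation subdivides the $k$-tessellation so that old barycenters become new vertices. A minor technical check is that the normalization of $\nu$ on $\ell$ extending $\nu_\p$ on $k$ indeed has value group $\frac1e\mathbb{Z}$, which is exactly the total ramification statement furnished by Lemma~\ref{lem curve extension}.
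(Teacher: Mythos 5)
Your first half is sound and matches the start of the paper's argument: a $k$-center $x$ satisfies $\alpha(x)\in\tfrac{1}{N}\mathbb{Z}$ for all $\alpha\in\Phi$ with a uniform $N$, so after a totally ramified extension of index $e$ divisible by $N$ it lies on an $\ell$-wall in every root direction and is therefore a special $\ell$-vertex; the reduction of the general case to $\mathbb{A}_0$ via transitivity on $k$-apartments is also fine. The gap is in the orbit statement. Your claim that any two special $\ell$-vertices of $\mathbb{A}_0$ lie in the same $\mathbf{N}(\ell)$-orbit is false: the group $W^{\mathrm{aff}}_\ell=\mathbf{\Lambda}_\ell\rtimes W^{\mathrm{sph}}$ preserves the class of a special vertex modulo the translation lattice $\mathbf{\Lambda}_\ell$ (the image of $\mathbf{T}(\ell)$), because $W^{\mathrm{sph}}$ acts \emph{trivially} on the quotient of the special-vertex lattice by $\mathbf{\Lambda}_\ell$; it does not ``permute types''. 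Concretely, for $\mathbf{G}=\mathrm{SL}_2$ the special $\ell$-vertices of $\mathbb{A}_0$ form $\tfrac{1}{e}\mathbb{Z}\varpi$ while $\mathbf{T}(\ell)$ translates only by $\tfrac{1}{e}\mathbb{Z}\alpha^\vee=\tfrac{2}{e}\mathbb{Z}\varpi$, so there are two $\mathbf{G}(\ell)$-orbits (by Lemma~\ref{lemma-1} applied over $\ell$, $\mathbf{G}(\ell)$-orbits of special vertices of $\mathbb{A}_0$ coincide with $\mathbf{T}(\ell)$-orbits); this is just the bipartition of the tree. With your choice $e=N=2$ the $k$-centers $\tfrac{1}{2}\mathbb{Z}\varpi$ meet both orbits, so the conclusion genuinely fails for $e=N$.

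The fix is the paper's: the relevant lattice is not the value group but the translation lattice. Let $V_{1,k}$ be the lattice spanned by the $k$-centers and let $\Lambda_{0,k}$ be the lattice of translations of $\mathbb{A}_0$ coming from $\mathbf{T}(k)$; since $\Lambda_{0,k}$ is cocompact in $V_{1,k}$, one can choose $e$ so large that $V_{1,k}\subseteq\tfrac{1}{e}\Lambda_{0,k}$. This forces $e$ to absorb not only your denominator $N$ but also the index of $\Lambda_{0,k}$ in the vertex lattice (in the $\mathrm{SL}_2$ example, $e=4$ rather than $2$). After base change, $\mathbf{T}(\ell)$ translates by exactly $\tfrac{1}{e}\Lambda_{0,k}$, so any two $k$-centers of $\mathbb{A}_0$ differ by a translation realized in $\mathbf{T}(\ell)$ and hence lie in one $\mathbf{T}(\ell)$-orbit; your reduction to $\mathbb{A}_0$ then finishes the proof, and the ``special $\ell$-vertex'' assertion follows for free since $v_0$ is special and speciality is preserved by $\mathbf{G}(\ell)$.
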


\begin{proof}
Because the valuation $\nu_{\p}$ is discrete, the set of $k$-vertices $V_{0,k}:=\operatorname{vert}(\mathbb{A}_0,k)$ forms a full lattice in the finite dimensional $\mathbb{R}$-vector space $\mathbb{A}_0$.
By construction, $\mathcal{N}_\mathbf{G}(\mathbf{T})(k)$ is the stabilizer of $\mathbb{A}_0$ in $\mathbf{G}(k)$ \cite[13.8]{L} and the subgroup $\mathbf{T}(k)$ acts by translations on $\mathbb{A}_0$, stabilizes $V_{0,k}$ and the set of these translations forms a sublattice $\Lambda_{0,k}$ of $V_{0,k}$ \cite[1.3,1.4]{L}.
Moreover, for any $k$-face $F$ with $d_F$ vertices, the $k$-center of $F$ is contained in $\frac{1}{d_F} V_{0,k}$ as arithmetic mean of $d_F$ points.
Let $V_{1,k} \subseteq \frac{1}{N} V_{0,k}$ be the lattice spanned by the $k$-centers of the $k$-faces of $\mathbb{A}_0$, where $N$ is the least common multiple of the $d_F$. We observe that $\Lambda_{0,k}$ is a cocompact sublattice of $V_{1,k}$.
Thus, there is a positive integer $e \in \mathbb{N}$ such that $V_{1,k}$ becomes a sublattice of $\frac{1}{e} \Lambda_{0,k}$.
Let $\mathcal{D}$ be a curve defined over a finite extension $\mathbb E / \mathbb F$, as in Lemma~\ref{lem curve extension}, and let $B/A$ be the corresponding ring extension.
Let $\ell = \operatorname{Frac}(B)$ be the fraction field of $B$.
Since there exists a unique closed point $\p'$ of $\mathcal{D}$ over $\p$, we get that there exists a unique discrete valuation $\nu_{\p'}$ on $\ell$ extending $\nu_{\p}$ on $k$.
Moreover, since the local ramification index is $e(\ell_{\p'}/k_{\p})=e$, we have that $\nu_{\p'}(\ell^\times) = \frac{1}{e} \nu_{\p}(k^\times)$.
Considering the canonical embedding $\mathcal{X}(\mathbf{G},k) \hookrightarrow \mathcal{X}(\mathbf{G},\ell)$ as defined in \S~\ref{intro rational building},
we claim that the $k$-centers of $\mathcal{X}(\mathbf{G},k)$ are in a single $\mathbf{G}(\ell)$-orbit in $\mathcal{X}(\mathbf{G},\ell)$.
Indeed, $\mathbf{G}(k)$ acts transitively on the set of $k$-apartments and any $k$-vertex is contained in some $k$-apartment by definition. Since $\mathbf{G}(k)$ is a subgroup of $\mathbf{G}(\ell)$, it suffices to prove it for $k$-centers of the standard $k$-apartment $\mathbb{A}_{0,k}$.
By construction of the Bruhat-Tits building and the action of $\mathbf{G}(\ell)$ on it, the group $\mathbf{T}(\ell)$ acts by translations on the standard $\ell$-apartment $\mathbb{A}_{0,\ell}$ and the set of translations is precisely $\Lambda_{0,\ell} = \frac{1}{e} \Lambda_{0,k}$ \cite[5.1.22]{BT2}.
Thus, all $k$-centers of the image of $\mathbb{A}_{0,k}$ in $\mathbb{A}_{0,\ell}$ are in the same $\mathbf{T}(\ell)$-orbit, whence all $k$-centers of the image of $\mathcal{X}(\mathbf{G},k)$ in $\mathcal{X}(\mathbf{G},\ell)$ are in the same $\mathbf{G}(\ell)$-orbit.
\end{proof}

\begin{figure}
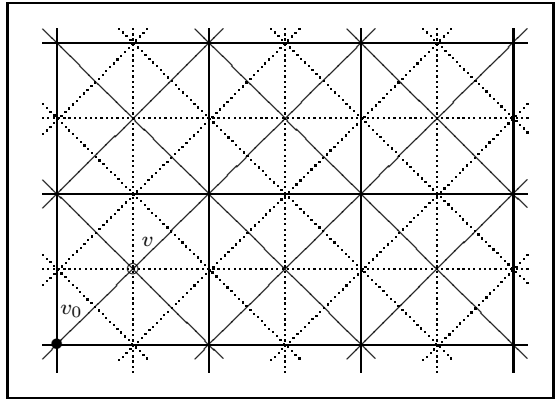

$$ 
\fbox{ 
\xygraph{
!{<0cm,0cm>;<1.0cm,0cm>:<0cm,1.0cm>::}
!{(0,-0.48) }*+{}="a0"
!{(2,-0.48) }*+{}="a1"
!{(4,-0.48) }*+{}="a2"
!{(6,-0.48) }*+{}="a3"
!{(0,4.3) }*+{}="b0"
!{(2,4.3) }*+{}="b1"
!{(4,4.3) }*+{}="b2"
!{(6,4.3) }*+{}="b3"
!{(-0.3,0) }*+{}="c0"
!{(-0.3,2) }*+{}="c1"
!{(-0.3,4) }*+{}="c2"
!{(6.3,0) }*+{}="d0"
!{(6.3,2) }*+{}="d1"
!{(6.3,4) }*+{}="d2"
!{(-0.3,1.7) }*+{}="p1"
!{(-0.3,-0.3) }*+{}="p2"
!{(1.7,-0.3) }*+{}="p3"
!{(3.7,-0.3) }*+{}="p4"
!{(2.3,4.3) }*+{}="q1"
!{(4.3,4.3) }*+{}="q2"
!{(6.3,4.3) }*+{}="q3"
!{(6.3,2.3) }*+{}="q4"
!{(2.3,-0.3) }*+{}="r0"
!{(4.3,-0.3) }*+{}="r1"
!{(6.3,-0.3) }*+{}="r2"
!{(6.3,1.7) }*+{}="r3"
!{(-0.3,2.3) }*+{}="s0"
!{(-0.3,4.3) }*+{}="s1"
!{(1.7,4.3) }*+{}="s2"
!{(3.7,4.3) }*+{}="s3"
!{(1,-0.48) }*+{}="x0"
!{(3,-0.48) }*+{}="x1"
!{(5,-0.48) }*+{}="x2"
!{(1,4.3) }*+{}="y0"
!{(3,4.3) }*+{}="y1"
!{(5,4.3) }*+{}="y2"
!{(-0.3,1) }*+{}="z0"
!{(-0.3,3) }*+{}="z1"
!{(6.3,1) }*+{}="w0"
!{(6.3,3) }*+{}="w1"
!{(-0.3,2.7) }*+{}="u1"
!{(-0.3,0.7) }*+{}="u2"
!{(0.7,-0.3) }*+{}="u3"
!{(2.7,-0.3) }*+{}="u4"
!{(4.7,-0.3) }*+{}="u5"
!{(1.3,4.3) }*+{}="v1"
!{(3.3,4.3) }*+{}="v2"
!{(5.3,4.3) }*+{}="v3"
!{(6.3,3.3) }*+{}="v4"
!{(6.3,1.3) }*+{}="v5"
!{(1.3,-0.3) }*+{}="e0"
!{(3.3,-0.3) }*+{}="e1"
!{(5.3,-0.3) }*+{}="e2"
!{(6.3,0.7) }*+{}="e3"
!{(6.3,2.7) }*+{}="e4"
!{(-0.3,1.3) }*+{}="f0"
!{(-0.3,3.3) }*+{}="f1"
!{(0.7,4.3) }*+{}="f2"
!{(2.7,4.3) }*+{}="f3"
!{(4.7,4.3) }*+{}="f4"
!{(1,1) }*+{\circ}="vert2"
!{(1.2,1.3) }*+{{}^{v}}="vert2f"
!{(0.2,0.4) }*+{{}^{v_0}}="vert1f"
!{(0.0,0.0) }*+{\bullet}="vert1"
"a0"-"b0" "a1"-"b1" "a2"-"b2" "a3"-"b3" 
"c0"-"d0" "c1"-"d1" "c2"-"d2" 
 "p1"-"q1" "p2"-"q2" "p3"-"q3" "p4"-"q4" 
 "r0"-"s0" "r1"-"s1" "r2"-"s2" "r3"-"s3" 
 "x0"-@{.}"y0" "x1"-@{.}"y1" "x2"-@{.}"y2" 
 "z0"-@{.}"w0" "z1"-@{.}"w1" 
 "u1"-@{.}"v1" "u2"-@{.}"v2" "u3"-@{.}"v3" "u4"-@{.}"v4" "u5"-@{.}"v5" 
 "e0"-@{.}"f0" "e1"-@{.}"f1" "e2"-@{.}"f2" "e3"-@{.}"f3" "e4"-@{.}"f4" 
 }}
$$
\caption{Continuous lines represent the walls in the affine building $\mathcal{X}_k$ of $\mathrm{Sp}_4$ over $k$. Here, dotted lines describe the new walls in the affine building $\mathcal{X}_{\ell}$ of $\mathrm{Sp}_4$ over $\ell$, for $e=2$. Note that $v \notin \mathrm{Sp}_4(k) \cdot v_0$, while $v \in \mathrm{Sp}_4(\ell) \cdot v_0$.} \label{figure walls extension}
\end{figure}

\subsection{Stabilizers of ``far enough'' points}\label{subsection stab far point}

\begin{lemma}\label{lem subsector face and roots}
Let $\Phi$ be a root system with basis $\Delta$ and let $\Theta \subset \Delta$.
Let $\Psi \subseteq \Phi^+_\Theta$ be a subset of positive roots.

\begin{enumerate}[label=(\arabic*)]
\item\label{item subsector real} For any family of fixed real numbers $(n_{\alpha})_{\alpha \in \Psi}$ and any $w_0 \in \mathbb{A}_0$, there exists a subsector face $Q(w_1,D_0^\Theta) \subset Q(w_0,D_0^\Theta)$, such that $\alpha(v):= \langle v, \alpha \rangle > n_{\alpha}$, for any point $v \in Q(w_1,D_0^\Theta)$ and any $\alpha \in \Psi$.

\item\label{item subsector enclosure} Moreover, if $w_0$ is a special vertex, for any such $w_1 \in Q(w_0,D_0^\Theta)$, there is a special vertex $w_2 \in Q(w_1,D_0^\Theta)$ such that $\cl\big( Q(w_2,D_0^\Theta) \big) \subset Q(w_1,D_0^\Theta)$.
\end{enumerate}
\end{lemma}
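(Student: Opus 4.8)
The plan is to prove~\ref{item subsector real} first, then deduce~\ref{item subsector enclosure} from it (applied to $\Psi=\Phi_\Theta^+$) together with a lattice-point argument that produces a special vertex.

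For~\ref{item subsector real}: by the definition of $D_0^\Theta$ recalled in~\S\ref{intro vector faces}, every $\alpha\in\Phi_\Theta^+\supseteq\Psi$ satisfies $\alpha(d)>0$ for all $d\in D_0^\Theta$, and $D_0^\Theta\neq\emptyset$ (for instance $\sum_{\beta\in\Delta\smallsetminus\Theta}\varpi_\beta^\vee\in D_0^\Theta$, where the $\varpi_\beta^\vee$ denote the fundamental coweights, which are well defined since $\Delta$ is a basis of $\operatorname{Vect}_\mathbb{R}(\Phi)$). Fix $d_0\in D_0^\Theta$. For each of the finitely many $\alpha\in\Psi$ the map $t\mapsto\alpha(w_0+t d_0)=\alpha(w_0)+t\,\alpha(d_0)$ is strictly increasing, so one may pick $t>0$ with $\alpha(w_0+t d_0)\geqslant n_\alpha$ for all $\alpha\in\Psi$; set $w_1:=w_0+t d_0$. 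As $D_0^\Theta$ is a convex cone and $t>0$, $w_1-w_0\in D_0^\Theta$, hence $Q(w_1,D_0^\Theta)\subseteq Q(w_0,D_0^\Theta)$; and every $v\in Q(w_1,D_0^\Theta)$ has $v-w_1\in D_0^\Theta$, so $\alpha(v)=\alpha(w_1)+\alpha(v-w_1)>\alpha(w_1)\geqslant n_\alpha$ for every $\alpha\in\Psi$. (If $\Psi=\emptyset$, take any $w_1\in Q(w_0,D_0^\Theta)$.)

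For~\ref{item subsector enclosure}: apply~\ref{item subsector real} with $\Psi=\Phi_\Theta^+$, tip $w_1$ and thresholds $n_\alpha:=\alpha(w_1)$ to get a subsector face $Q(w_1',D_0^\Theta)\subseteq Q(w_1,D_0^\Theta)$ on which $\alpha(\cdot)>\alpha(w_1)$ for all $\alpha\in\Phi_\Theta^+$. Recall that a point of $\mathbb{A}_0$ is a special $k$-vertex exactly when $\alpha(x)\in\nu_\p(k^\times)=\mathbb{Z}$ for all $\alpha\in\Phi$. Since $\alpha(\varpi_\beta^\vee)\in\mathbb{Z}$ for every root $\alpha$, translation by an element of the coweight lattice $P^\vee=\bigoplus_{\beta\in\Delta}\mathbb{Z}\varpi_\beta^\vee$ preserves the wall arrangement $\{H_{\alpha,r}:\alpha\in\Phi,\ r\in\mathbb{Z}\}$, hence sends special $k$-vertices to special $k$-vertices; as $w_0$ is one, all of $w_0+P^\vee$ consists of special $k$-vertices. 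Now $D_0^\Theta\subseteq\Theta^\perp=\operatorname{Vect}_\mathbb{R}\{\varpi_\beta^\vee:\beta\in\Delta\smallsetminus\Theta\}$ is a nonempty cone that is open in $\Theta^\perp$, and $P^\vee\cap\Theta^\perp\supseteq\bigoplus_{\beta\in\Delta\smallsetminus\Theta}\mathbb{Z}\varpi_\beta^\vee$ is a full lattice of $\Theta^\perp$; since $w_1,w_1'\in Q(w_0,D_0^\Theta)$, both $Q(w_1',D_0^\Theta)$ and $w_0+(P^\vee\cap\Theta^\perp)$ lie in the affine subspace $w_0+\Theta^\perp$, where $Q(w_1',D_0^\Theta)$ is open. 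A full lattice meets any translate of a nonempty open cone (rescale the cone so that it contains a ball wider than a fundamental domain of the lattice), so I can choose $w_2\in\big(w_0+(P^\vee\cap\Theta^\perp)\big)\cap Q(w_1',D_0^\Theta)$: it is a special $k$-vertex lying in $Q(w_1,D_0^\Theta)$, with $\alpha(w_2)>\alpha(w_1)$ for $\alpha\in\Phi_\Theta^+$ and $\alpha(w_2)=\alpha(w_1)$ for $\alpha\in\Phi_\Theta^0$ (because $w_2-w_1\in\Theta^\perp$). Finally, for a root $\alpha$ and $r\in\mathbb{Z}$ one has $D_{\alpha,r}\supseteq w_2+D_0^\Theta$ iff $r\leqslant\alpha(w_2)+\inf_{d\in D_0^\Theta}\alpha(d)$, and that infimum is $0$ for $\alpha\in\Phi_\Theta^+$, is $0$ (with supremum also $0$) for $\alpha\in\Phi_\Theta^0$, and is $-\infty$ for $\alpha\in\Phi_\Theta^-$; using $\alpha(w_2)\in\mathbb{Z}=\nu_\p(k^\times)$ for \emph{every} root $\alpha$ (this is exactly where the specialness of $w_2$ enters), the tightest enclosing $k$-half-apartments yield
\[ \cl\big(Q(w_2,D_0^\Theta)\big)=\big\{x\in\mathbb{A}_0:\ \alpha(x)\geqslant\alpha(w_2)\ \forall\alpha\in\Phi_\Theta^+,\ \alpha(x)=\alpha(w_2)\ \forall\alpha\in\Phi_\Theta^0\big\}. \]
For such $x$ we get $\alpha(x)\geqslant\alpha(w_2)>\alpha(w_1)$ if $\alpha\in\Phi_\Theta^+$ and $\alpha(x)=\alpha(w_2)=\alpha(w_1)$ if $\alpha\in\Phi_\Theta^0$, i.e.~$x-w_1\in D_0^\Theta$, i.e.~$x\in Q(w_1,D_0^\Theta)$; hence $\cl\big(Q(w_2,D_0^\Theta)\big)\subset Q(w_1,D_0^\Theta)$.

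The main obstacle is the middle of~\ref{item subsector enclosure}: producing a genuine \emph{special} vertex deep inside the sector face. The two delicate points are that coweight translations preserve special $k$-vertices (so $w_0+P^\vee$ is a reservoir of them) while $P^\vee\cap\Theta^\perp$ stays full-dimensional in $\Theta^\perp$ (so this reservoir is dense enough in the subspace carrying the type-$\Theta$ sector face), and that in the enclosure computation one must observe that the enclosure is ``thin'' — an affine-subspace condition, not an interval — in the $\Phi_\Theta^0$ directions \emph{precisely because} $w_2$ is special, a non-special tip being able to push the enclosure outside $Q(w_1,D_0^\Theta)$.
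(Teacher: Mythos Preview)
Your proof is correct and follows essentially the same approach as the paper: for~\ref{item subsector real} you translate the tip along a direction in $D_0^\Theta$ until all the finitely many linear inequalities hold, and for~\ref{item subsector enclosure} you exploit that translates of the special vertex $w_0$ by the coweight lattice remain special and are dense enough to land inside the subsector face, then compute the enclosure using $\alpha(w_2)\in\mathbb{Z}$ for every root. The paper carries out the same two steps with explicit coordinate formulas (defining $n_\beta^*$ and $v^*$ via the simple-root decomposition) rather than your more abstract lattice-meets-open-cone argument, but the underlying mechanism is identical.
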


Recall that, for a subset $\Omega$ of an apartment $\mathbb{A}$, $\cl(\Omega)$ denotes the enclosure of $\Omega$ as defined in \S \ref{intro rational building}.

\begin{proof}
\ref{item subsector real} We know that for any $\alpha \in \Phi^{+}$ there are non-negative integers $\lbrace b_{\alpha,\beta} \rbrace_{\beta \in \Delta}$ such that $ \alpha= \sum_{\beta \in \Delta} b_{\alpha,\beta} \beta$.
For $\alpha \in \Psi \subseteq \Phi_\Theta^+$, we define a positive integer
\[c_\alpha := \sum_{\substack{\beta \in \Delta \smallsetminus \Theta\\ b_{\alpha,\beta} \neq 0}}  b_{\alpha,\beta} \geq 1.\]
For $\beta \in \Delta$, we define 
\[m_\beta = \max \left\{ \frac{n_\alpha-\langle w_0,\alpha \rangle}{c_\alpha}: \alpha \in \Psi \text{ and } b_{\alpha,\beta} > 0 \right\}.\]
Then, for $\beta \in \Delta \smallsetminus \Theta$, set $n^*_{\beta} > \max(m_{\beta},0)$ and, for $\beta \in \Theta$, set $n^*_{\beta} = 0$.
Since $\Delta$ is a basis of $V^*$, there exists a unique $v^* \in \Theta^\perp \subseteq V$ such that $\langle v^*, \beta \rangle =  n^*_{\beta }$, for any $\beta \in \Delta$.
Note that $v^* \in D_0^\Theta$ by definition.
Consider $w_1 = w_0 +v^* \in Q(w_0,D_0^\Theta)$.
For any $\alpha \in \Psi \subseteq\Phi_\Theta^+$, for any point $v \in Q(w_1,D_0^\Theta)$, since $v-w_1 \in D_0^\Theta$, we have that $\langle v-w_1, \alpha \rangle  > 0$. Thus
\[
    \langle v,\alpha \rangle
    > \langle w_1,\alpha \rangle
    = \langle w_0,\alpha \rangle + \sum_{\beta \in \Delta} b_{\alpha,\beta} \langle v^*, \beta \rangle
    =\langle w_0,\alpha \rangle + \sum_{\substack{\beta \in \Delta \smallsetminus \Theta\\ b_{\alpha,\beta} \neq 0}}  b_{\alpha,\beta} n^*_\beta.
\]
Since $n^*_\beta > \frac{n_\alpha-\langle w_0,\alpha \rangle}{c_\alpha}$ for any $\beta \in \Delta \smallsetminus \Theta$ such that $b_{\alpha,\beta} \neq 0$, we therefore have that
\[\langle v,\alpha \rangle  
    > \langle w_0,\alpha \rangle + \sum_{\substack{\beta \in \Delta \smallsetminus \Theta\\ b_{\alpha,\beta} \neq 0}}  b_{\alpha,\beta} \left( \frac{n_\alpha - \langle w_0,\alpha \rangle}{c_\alpha} \right) = n_\alpha.
\]

\ref{item subsector enclosure} Without loss of generalities, assume that the walls of $\mathbb{A}_0$ are the kernels of the affine roots $\alpha+k$ for $\alpha \in \Phi$ and $k \in \mathbb{Z}$.
Assume that $w_0$ is a special vertex and let $w_1$ be any point such that $\langle w_1,\alpha \rangle \geqslant n_\alpha,\ \forall \alpha\in \Psi$.
Applying the above construction with the family $\left( \langle w_1,\alpha \rangle \right)_{\alpha \in \Phi_\Theta^+}$ and considering that the numbers $n_\beta^*$ satisfying $n_\beta^* > \max(m_\beta,0)$ are non-negative integers for $\beta \in \Theta$, it provides a vector $v_2^*$ such that $\langle v_2^*,\beta \rangle = n_\beta^*$, for any $\beta \in \Delta$.
Set $w_2 = w_0+v_2^*$ so that $\langle \alpha, w_2 \rangle > \langle \alpha, w_1 \rangle$, for any $\alpha \in \Phi_\Theta^+$.
We have that $\langle w_0, \beta \rangle \in \mathbb{Z}$, for every $\beta \in \Phi$, by definition of a special vertex.
Hence, for any $\alpha = \sum_{\beta \in \Delta} b_{\alpha,\beta} \beta \in \Phi^+$, we have that $\langle w_2, \alpha \rangle = \sum_{\beta \in \Delta} b_{\alpha,\beta} \big( \langle w_0,\beta \rangle + n^*_\beta \big) \in \mathbb{Z}$.
Thus, $w_2 \in Q(w_1,D_0^\Theta)$ also is a special vertex.
Hence, the closure of $Q(w_2,D_0^\Theta)$ is enclosed, whence any point $v \in \cl\big( Q(w_2,D_0^\Theta\big)$ satisfies $\langle v,\beta \rangle \geqslant \langle w_2,\beta \rangle > \langle w_1,\beta \rangle$, for any $\beta \in \Phi_\Theta^+$.
Thus $v \in Q(w_1,D_0^\Theta)$.
\end{proof}

\begin{lemma}\label{lemma finite special vertices}
Let $\Phi$ be a root system with basis $\Delta$ and let $\Theta \subseteq \Delta$.
Let $Q(x,D_0^\Theta)$ be a sector face of $\mathbb{A}_0$ with an arbitrary tip $x \in \mathbb{A}_0$.
There exists a finite subset $\Omega$ of $\cl\big( Q(x,D_0^\Theta) \big)$ consisting of special vertices such that any special vertex of $\cl\big( Q(x,D_0^\Theta) \big)$ belongs to $\cl\big(Q(\omega,D_0^\Theta)\big) = \overline{Q}(\omega,D_0^\Theta)$, for some $\omega \in \Omega$.
\end{lemma}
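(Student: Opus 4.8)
The plan is to reduce the statement to a finiteness fact of Gordan--Dickson type for lattice points in a polyhedral cone. Write $K:=\cl\big(Q(x,D_0^\Theta)\big)$; by definition it is an intersection of $k$-half-apartments $D_{\alpha,r}$ ($\alpha\in\Phi$, $r\in\mathbb Z$), hence a closed convex polyhedron of $\mathbb A_0$ containing $Q(x,D_0^\Theta)$. Let $S\subseteq\mathbb A_0$ be the set of special vertices; since $\mathbf G$ is split and walls in direction $\alpha$ occur at the values $\nu_\p(k^\times)=\mathbb Z$, a vertex is special exactly when $\alpha(x)\in\mathbb Z$ for every $\alpha\in\Phi$, equivalently when $\beta(x)\in\mathbb Z$ for every $\beta\in\Delta$; thus $S$ is a coset of the full lattice $L=\{v\in V:\ \beta(v)\in\mathbb Z\ \forall\beta\in\Delta\}$, which admits the basis $(e_\beta)_{\beta\in\Delta}$ of $V$ dual to $\Delta$ as a $\mathbb Z$-basis. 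The special vertices of $\cl\big(Q(x,D_0^\Theta)\big)$ are precisely the points of $K\cap S$, and for a special vertex $\omega$ one checks $\cl\big(Q(\omega,D_0^\Theta)\big)=\omega+\overline{D_0^\Theta}=\overline Q(\omega,D_0^\Theta)$, since all the enclosing inequalities $\alpha(v)\ge\alpha(\omega)$ are already at integer level (compare Lemma~\ref{lem subsector face and roots}\ref{item subsector enclosure}). Hence it suffices to produce a finite $\Omega\subseteq K\cap S$ with $K\cap S\subseteq\bigcup_{\omega\in\Omega}\big(\omega+\overline{D_0^\Theta}\big)$.

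First I would determine the recession cone $\operatorname{rec}(K)=\{w\in V:\ K+w\subseteq K\}$. As $\operatorname{rec}\big(Q(x,D_0^\Theta)\big)=\overline{D_0^\Theta}$ and $Q(x,D_0^\Theta)\subseteq K$, one gets $\overline{D_0^\Theta}\subseteq\operatorname{rec}(K)$. Conversely, each $\alpha\in\Phi_\Theta^0$ is constant on $Q(x,D_0^\Theta)$, so only finitely many half-apartments with normal direction $\pm\alpha$ contain $Q(x,D_0^\Theta)$ and therefore $\alpha$ is bounded above and below on $K$; and each $\alpha\in\Phi_\Theta^+$ satisfies $\alpha\ge\lfloor\alpha(x)\rfloor$ on $Q(x,D_0^\Theta)$, whence $K\subseteq D_{\alpha,\lfloor\alpha(x)\rfloor}$. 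Consequently $\operatorname{rec}(K)\subseteq\{w\in V:\ \alpha(w)=0\ \forall\alpha\in\Phi_\Theta^0,\ \alpha(w)\ge0\ \forall\alpha\in\Phi_\Theta^+\}=\overline{D_0^\Theta}$, so $\operatorname{rec}(K)=\overline{D_0^\Theta}=:\sigma$. In particular $K+\sigma=K$; together with the $L$-stability of $S$ and the inclusion $\sigma\subseteq\Theta^\perp$, this shows that $K\cap S$ is an up-set for the partial order $u\preceq v\Leftrightarrow v-u\in\sigma$.

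Next I would slice along $\Theta^\perp$. If $u\preceq v$ with $u,v\in K\cap S$ then $v-u\in\sigma\subseteq\Theta^\perp$, so $\preceq$ compares only points lying in a common coset of $\Theta^\perp$. Such a coset is determined by $(\beta(v))_{\beta\in\Theta}$, which on $K\cap S$ ranges over a finite set of integer tuples (by boundedness of each $\alpha\in\Phi_\Theta^0$ on $K$); hence $K\cap S$ meets only finitely many cosets $A_1,\dots,A_r$ of $\Theta^\perp$. Fixing $j$ and a point $p_j\in S\cap A_j$, we get $S\cap A_j=p_j+\Lambda$ with $\Lambda:=L\cap\Theta^\perp$ a full lattice of $\Theta^\perp$ having $\mathbb Z$-basis $(e_\beta)_{\beta\in\Delta\smallsetminus\Theta}$, while $\sigma=\overline{D_0^\Theta}=\operatorname{cone}(e_\beta:\ \beta\in\Delta\smallsetminus\Theta)$. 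In the resulting coordinates, $S\cap A_j\cong\mathbb Z^{d}$ with $d=\operatorname{Card}(\Delta\smallsetminus\Theta)$, $\sigma\cap\Lambda\cong(\mathbb Z_{\ge0})^{d}$, and $\preceq$ is the coordinatewise order; moreover $K\cap A_j$ is a polyhedron with recession cone $\sigma$, hence bounded below in each coordinate. By Dickson's lemma the set of coordinatewise-minimal points of $K\cap S\cap A_j$ is finite. Let $\Omega$ be the union over $j$ of these finite sets; it is a finite subset of $K\cap S$, and any $v\in K\cap S$ lies in some $A_j$ and dominates a minimal point $\omega\in\Omega$ of $K\cap S\cap A_j$, so $v\in\omega+\sigma=\cl\big(Q(\omega,D_0^\Theta)\big)=\overline Q(\omega,D_0^\Theta)$, which gives the lemma.

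The main obstacle I anticipate is the second step: correctly identifying the recession cone of the enclosure $\cl\big(Q(x,D_0^\Theta)\big)$, i.e.\ verifying that, for a non-special tip $x$, passing to the enclosure thickens the sector face only in the directions transverse to $D_0^\Theta$ and by a bounded amount. Once that is in place, the combinatorial core is the standard Gordan--Dickson finiteness applied slice by slice, made transparent by the unimodularity of the pair $\big(\overline{D_0^\Theta},\,L\cap\Theta^\perp\big)$ arising from the basis of $V$ dual to $\Delta$.
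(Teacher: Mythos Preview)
Your proof is correct and follows essentially the same approach as the paper's: both define $\Omega$ as the set of $\overline{D_0^\Theta}$-minimal special vertices in the enclosure and show this set is finite by a Dickson-type argument (you invoke Dickson's lemma by name after identifying the recession cone and slicing by $\Theta^\perp$-cosets, while the paper proves it inline via monotone subsequence extraction in the fundamental-coweight coordinates). The only cosmetic difference is that you compute $\operatorname{rec}(K)=\overline{D_0^\Theta}$ explicitly, whereas the paper just records the coordinate bounds $\alpha(x)-1\leqslant\alpha(\omega)\leqslant\alpha(x)+1$ for $\alpha\in\Theta$ and $\alpha(\omega)\geqslant\alpha(x)-1$ for $\alpha\in\Delta\smallsetminus\Theta$ directly.
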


\begin{proof}
Let $(\varpi_\alpha)_{\alpha \in \Delta}$ be the basis of fundamental coweight associated to the basis $\Delta$.
In this basis, the set of special vertices in $\mathbb{A}_0$ is the lattice $\Lambda = \bigoplus_{\alpha\in \Delta} \mathbb{Z}\varpi_\alpha$ and, from the definition given in \S \ref{intro vector faces}, the vector face $D_0^\Theta$ can be written as $D_0^\Theta=\left\{ \sum_{\alpha \in \Delta \smallsetminus \Theta} x_\alpha \varpi_\alpha,\ \forall \alpha \in \Delta \smallsetminus \Theta,\ x_\alpha > 0\right\}$.
\footnote{Note that we focus on special vertices. According to \cite[Chap.VI,\S2.2, Cor. of Prop.~5]{Bourbaki}, the lattice of (non necessarily special) vertices would be $\displaystyle \bigoplus_{\alpha\in \Delta} \mathbb{Z} \tfrac{\varpi_\alpha}{n_\alpha}$ where the highest root of $\Phi$ is $\displaystyle \sum_{\alpha \in \Delta} n_\alpha \alpha$ with $n_\alpha$ some positive integers.}

Denote by $\Lambda':=\Lambda \cap \cl\big(Q(x,D_0^\Theta)\big)$ the set of special vertices of the enclosure of $Q(x,D_0^\Theta)$.
Note that $\cl\big(Q(\omega,D_0^\Theta)\big) \subset \cl\big(Q(x,D_0^\Theta)\big)$ for any $\omega \in \Lambda'$.
Consider the subset $\Omega$ of special vertices of $\cl\big(Q(x,D_0^\Theta)\big)$ that are not contained in any enclosed sector face directed by $D_0^\Theta$ whose tip is a special vertex of $\cl\big( Q(x,D_0^\Theta)\big)$, in which they are not the tip, in other words:
\[\Omega := \left\{ \omega \in \Lambda',\ \forall \omega' \in \Lambda' \smallsetminus \{\omega\},\ \omega \not\in \overline{Q}(\omega',D_0^\Theta)\right\}.\]

\textbf{First claim:} For any $\omega' \in \Lambda'$, there exists $\omega \in \Omega$ such that $\omega' \in \overline{Q}(\omega,D_0^\Theta)$.

We can define a sequence  $\omega'_m$ such that $\omega'_0 := \omega'$ and $\omega'_m \in \overline{Q}(\omega'_{m+1},D_0^\Theta)$.
Indeed, for any $\omega'_m \in \Lambda'$, we have either $\omega'_{m+1}:=\omega'_m \in \Omega$ or there exists $\omega'_{m+1} \in \Lambda'$ such that $\omega'_{m+1} \in \Lambda' \smallsetminus \{\omega'_m\}$ and $\omega'_m \in \overline{ Q}(\omega'_{m+1},D_0^\Theta)$.
Therefore, this defines a sequence $(\omega'_m)_{m}$ by induction.
For any $\alpha \in \Delta$, we have that $\alpha(\omega'_{m+1}) \leqslant \alpha(\omega'_m)$ since $\omega'_m \in \overline{Q}(\omega'_{m+1},D_0^\Theta)$.
Each coordinate is lower bounded by $\alpha(\omega'_m) \geqslant \alpha(x) -1$ since $\omega'_m \in \cl\big(Q(x,D_0^\Theta)\big)$, whence it is eventually constant since $\alpha(\omega'_m) \in \mathbb{Z}$.
Since $\Delta$ is finite, the sequence $\omega'_m$ also is eventually constant, whence eventually belongs to $\Omega$ by construction.
Hence, for any $\omega'_0 \in \Lambda'$, there is $m\in\mathbb{N}$ such that $\omega'_0 \in \overline{Q}(\omega'_m,D_0^\Theta)$ and $\omega:=\omega'_m \in \Omega$.

\textbf{Second claim:} $\Omega$ is finite.

Suppose by contradiction that $\Omega$ is infinite and let $(\omega_n)_{n\in\mathbb{N}}$ be a sequence of pairwise distinct special vertices in $\Omega$.

For any $\alpha \in \Theta$, we have that $ \alpha(x)-1\leqslant\alpha(\omega_n) \leqslant \alpha(x)+1$, whence $\alpha(\omega_n)$ can take finitely many values.
Hence, there exists an extraction $\varphi_0$ such that  $\left(\omega_{\varphi_0(n)}\right)_n$ is constant in the coordinate $\alpha$, for every $\alpha \in \Theta$.

Let $\Delta\smallsetminus\Theta = \{\alpha_1,\dots,\alpha_m\}$. For any $1 \leq i \leq m$, we have that $\forall n \in \mathbb{N},\ \alpha_i(\omega_n) \geqslant \alpha(x) -1$ since $\omega_n \in \cl\big( Q(x,D_0^\Theta)\big)$.
One can extract recursively non-decreasing subsequences.
Hence, there are extractions $\varphi_1,\dots,\varphi_m$ such that for any $1 \leq i \leq m$ the sequence $\alpha_i\left(\omega_{\varphi_0 \circ \cdots \circ \varphi_m(n)}\right)$ is non-decreasing.
Hence, by definition, we have that $\forall n \in \mathbb{N},\ \omega_{\varphi_0 \circ \cdots \circ \varphi_m(n)} \in \overline{Q}\big( \omega_{\varphi_0 \circ \cdots \circ \varphi_m(0)}, D_0^\Theta\big)$, which contradicts $\omega_{\varphi_0 \circ \cdots \circ \varphi_m(1)}  \in \Omega$ since the elements of this sequence are assumed to be pairwise distinct. Whence the claim follows.
\end{proof}

 The first main result of this section claims that, ``far enough'', for the action of $\mathbf{G}(A)$ on $\mathcal{X}_k$, the stabilizer of a point coincides with the stabilizer of a sector face.

\begin{proposition}\label{prop igual stab}
Let $x$ be any point of $\mathcal{X}_k$, and let $Q(x,D)$ be a $k$-sector face of $\mathcal{X}_k$. There exists a subsector face $Q(y_1,D) \subseteq Q(x,D)$
\nomenclature{$y_1$}{Proposition~\ref{prop igual stab}}
such that for any point $v \in \overline{Q}(y_1,D)$:
\begin{equation}\label{eq igual stab}
\mathrm{Stab}_{{\mathbf{G}(A)}}(v)=\mathrm{Fix}_{{\mathbf{G}(A)}}\big(Q(v,D)\big).  
\end{equation}
\end{proposition}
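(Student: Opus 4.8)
The plan is to reduce to the case where $D = D_0^\Theta$ is a standard vector face and then to show that, for $v$ far enough along the sector, every element of $\mathbf{G}(A)$ fixing $v$ must also fix the germ of the sector at $v$, hence fix the whole sector face $Q(v,D)$ since the action on the building is by cellular isometries and a sector face is determined by its tip and direction. First I would use Lemma~\ref{lemma WUW} (together with the transitivity of $\mathbf{G}(k)$ on rational apartments) to write $Q(x,D)$ in the form $g \cdot Q(x_0, D_0^\Theta)$ for suitable $g \in \mathbf{G}(k)$, $x_0 \in \mathbb{A}_0$, $\Theta \subseteq \Delta$; conjugating the stabilizer/fixer equality by $g$ shows it suffices to treat the standard case, possibly after replacing $\mathbf{G}(A)$ by $g^{-1}\mathbf{G}(A) g$, which is still a group acting properly with the same type of structure. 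Actually, to stay inside the paper's setup, I would instead keep $Q(x,D)$ with $D$ a general vector face but use the ingredients already developed: the key point is that $\mathrm{Fix}_{\mathbf{G}(A)}(Q(v,D)) \subseteq \mathrm{Stab}_{\mathbf{G}(A)}(v)$ is trivial, so only the reverse inclusion needs work.

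The heart of the argument is the reverse inclusion. Let $\gamma \in \mathbf{G}(A)$ fix $v$. Because $\mathbf{G}(A)$ acts on $\mathcal{X}_k$ properly (discretely) and $v$ is, after passing to a suitable totally ramified extension $\ell/k$ via Lemma~\ref{lemma becomes special}, a special $\ell$-vertex — or more simply because $\mathrm{Stab}_{\mathbf{G}(A)}(v)$ is a finite group when $\mathbb{F}$ is finite and in general a group whose unipotent part we have bounded in \S\ref{section Stabilizer of points in the Borel variety} — the set of possible ``directions'' in which $\gamma$ can move a sector germ at $v$ is controlled. The strategy I would carry out: first, using Lemma~\ref{lem subsector face and roots}\ref{item subsector real} applied to the finitely many roots $\alpha \in \Phi$, choose a subsector $Q(y_1,D)$ such that for every $v \in \overline{Q}(y_1,D)$ the relevant unipotent root subgroups $\mathbf{U}_{\alpha,v}(k)$ that could contribute to $\mathrm{Stab}_{\mathbf{G}(A)}(v)$ force $\alpha$ to lie in the span of the directions along which $Q(v,D)$ is flat; then show that any $\gamma \in \mathrm{Stab}_{\mathbf{G}(A)}(v)$ whose image in $\mathbf{N}(k)/\mathbf{T}_b(k)$ is nontrivial would, applied to $v$, produce a point whose coordinates contradict $v$ being far out in the sector (this is where the foldings/Riemann–Roch flavour from Theorem~\ref{main theorem 0} and Lemma~\ref{lemma polynomial ideal contained} enter, giving that $\mathrm{Stab}_{\mathbf{G}(A)}(v) \subseteq \mathbf{B}'(k)$ for the Borel $\mathbf{B}'$ adapted to the direction $D$). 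Finally, once $\gamma$ lies in the stabilizer of the direction $D(Q)$ and fixes $v$, it fixes the tip and the direction of $Q(v,D)$, hence fixes $Q(v,D)$ pointwise because a sector face is the union of $v + $ (a set of representatives of $\partial_\infty D$) and the $W_\Theta$-invariance forces $\gamma$ to act trivially on the flat part.

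More precisely, I expect the clean route is: (i) reduce via conjugation to $D = D_0^\Theta$; (ii) invoke Lemma~\ref{lemma stab standard face} to identify $\mathrm{Stab}_{\mathbf{G}(k)}(D_0^\Theta) = \mathbf{P}_\Theta(k)$, so $\mathrm{Stab}_{\mathbf{G}(A)}(Q(v,D_0^\Theta)) \subseteq \mathbf{P}_\Theta(A)$ once we know $\gamma$ stabilizes the direction; (iii) show that for $v$ far enough, $\mathrm{Stab}_{\mathbf{G}(A)}(v)$ already stabilizes $D_0^\Theta$ — this uses that $\gamma \cdot v = v$, writing $\gamma = n u$ with $n \in \mathbf{N}(k)$, $u \in \mathbf{U}_v(k)$ via the building axioms, observing $\mathfrak{w}(n)$ fixes $v$, and using that $v$ lies deep in the sector so that the affine reflections fixing $v$ all belong to $W_\Theta$ (which is exactly what Lemma~\ref{lem subsector face and roots}\ref{item subsector real} buys when the $n_\alpha$ are chosen larger than the finitely many wall-values near the ``boundary'' of the sector); (iv) conclude that $\gamma \in \mathbf{P}_\Theta(A) \cap \mathrm{Stab}(v)$, and finally that such a $\gamma$ fixes all of $Q(v,D_0^\Theta)$ because $Q(v,D_0^\Theta) \subseteq \mathrm{Fix}(\gamma)$: indeed $\mathrm{Fix}(\gamma)$ is a closed convex subcomplex containing $v$ and stable under the direction $D_0^\Theta$, so it contains the whole sector.

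The main obstacle I anticipate is step (iii): ruling out that some $\gamma \in \mathbf{G}(A)$ fixes the far-out point $v$ while genuinely permuting the walls through $v$ in a way not induced by $W_\Theta$ — i.e. controlling the ``vertical'' part of the stabilizer. This is exactly why one needs the arithmetic input: the bounds on $M_\alpha(h)$ and $M_\Psi(h)$ by fractional ideals (Propositions~\ref{prop ideal contained} and~\ref{prop fractional ideals}) and the construction of subsectors via Lemma~\ref{lem subsector face and roots} must be combined so that, for $v \in \overline{Q}(y_1,D)$, the only unipotent elements of $\mathbf{G}(A)$ fixing $v$ are those already fixing $Q(v,D)$, and the semisimple/torus part is handled by properness of the action together with Lemma~\ref{lemma-1}. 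I would also need to be a little careful that passing to a subsector is compatible with enclosures — here Lemma~\ref{lem subsector face and roots}\ref{item subsector enclosure} lets me further shrink $Q(y_1,D)$ so that $\cl(Q(y_1,D))$ behaves well, which is what makes the final convexity argument in step (iv) go through.
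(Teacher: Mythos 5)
Your overall strategy --- reduce to a standard direction, show that far out along the sector the arithmetic bounds on $\mathbf{G}(A)$ force $\mathrm{Stab}_{\mathbf{G}(A)}(v)$ into the parabolic attached to $D$, then conclude by a convexity/ideal-point argument --- is the same in spirit as the paper's, and your final step (iv) is essentially sound. But the central step (iii) is left as an acknowledged ``obstacle'' and the mechanism you sketch for it does not work as stated. First, the claim that for $v$ deep in the sector ``the affine reflections fixing $v$ all belong to $W_\Theta$'' is false: if $v$ is a special vertex then \emph{every} reflection through $v$ fixes $v$, so no choice of subsector via Lemma~\ref{lem subsector face and roots} can rule out the Weyl part on geometric grounds alone; the constraint must come from arithmetic. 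Second, writing $\gamma = nu$ with $n \in \mathbf{N}(K)$ and $u \in \mathbf{U}_v(K)$ puts the factors in $\mathbf{G}(K)$, not in $\mathbf{G}(A)$, so the fractional-ideal bounds of \S\ref{section Stabilizer of points in the Borel variety} cannot be applied to $n$ and $u$ separately. This is exactly the point where the paper changes tools: it reduces $\mathrm{Fix}\big(Q(v,D)\big)$ to an intersection of fixators of half-lines $[w,z)$, transports everything into $\mathrm{SL}_n$ via the functorial embedding $j$ of \S\ref{intro building embedding}, and uses the explicit valuation-theoretic description of $\widehat{P}_{w}$ and $\widehat{P}_{[w,z)}$ by matrix entries. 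The arithmetic input is then elementary and applies to $\gamma$ itself: every nonzero entry of an element of $\tau\,\mathrm{SL}_n(A)\,\tau^{-1}$ lies in a fixed fractional $A$-ideal, hence has $\nu_\p$ bounded above, while membership in $\widehat{P}_{w}$ forces $\nu(g_{ij}) \geq -\alpha_{ij}(w)$, which is unboundedly large for $\alpha_{ij} \in \Phi^-_{\Theta}$ as $w$ moves out; so those entries vanish and $\gamma$ fixes every half-line. Your proposal never produces this (or an equivalent) quantitative contradiction.

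A second, lesser omission: the tip $x$ and the point $v$ are arbitrary points of $\mathcal{X}_k$, not vertices. Passing to the extension of Lemma~\ref{lemma becomes special} only makes $k$-centers (barycenters of faces) into special $\ell$-vertices; a generic point of a face is not a vertex over any finite extension. The paper needs a genuine extra argument here (Lemma~\ref{lemma finite special vertices} plus the covering of $\cl_k\big(Q(x,D)\big)$ by finitely many sectors $Q(\omega,D)$ with special tips, and the observation that $\mathrm{Stab}(v)$ fixes the $k$-center $v_c$, which lands in one of those sectors after a controlled translation) to deduce the statement at arbitrary points from the statement at special $\ell$-vertices. Your proposal should at least reduce to $k$-centers and explain why the chosen subsector works uniformly for all of them.
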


\begin{proof}
If $D = 0$, there is nothing to prove. Thus, we assume in the following that $D\neq 0$.
We consider a finite ramified extension $\ell/k$ as given by Lemma~\ref{lemma becomes special} and we denote by $L$ the completion of $\ell$ with respect to $\omega_\p$.

We firstly assume that $x$ is a $k$-vertex.
Let $v_0$ be the standard vertex in $\mathcal{X}(\mathbf{G},L)$ so that $x \in \mathbf{G}(\ell) \cdot v_0$.
Then, it follows from Lemma~\ref{lema0} that there exists $\tau \in \mathbf{G}(\ell)$ such that $\tau \cdot Q(x,D)=Q(v_0,D_0^{\Theta}) \subset \mathbb{A}_0$ for some standard vector face $D_0^{\Theta}$ with $\Theta \subsetneq \Delta$.
For any point $z \in Q(x,D)$,
extending the notation in the standard apartment of~\cite[7.4.4]{BT},
we denote by $\widehat{P}_z:=\stab_{\mathbf{G}(L)}(z)$ the stabilizer of $z$ and by $\widehat{P}_{z+D}:=\fix_{\mathbf{G}(L)}\big( Q(z,D) \big)$ the pointwise stabilizers of the subsector face $Q(z,D)$ for the action of $\mathbf{G}(L)$ on $\mathcal{X}(\mathbf{G},L)$.

We want to show that
\begin{equation}\label{statement stab} \exists y_1 \in Q(x,D),\ \forall z \in \overline{Q}(y_1,D),\ \widehat{P}_z \cap \mathbf{G}(A) = \widehat{P}_{z+D} \cap \mathbf{G}(A).\end{equation}
Up to translating by $\tau$, statement~\eqref{statement stab} is equivalent to show that
\begin{equation}
\exists w_0 \in Q(v_0,D_0^{\Theta}), \, \forall w \in \overline{Q}(w_0,D_0^{\Theta}),\ \widehat{P}_w \cap \tau \mathbf{G}(A) \tau^{-1}= \widehat{P}_{w+D_0^{\Theta}} \cap \tau \mathbf{G}(A) \tau^{-1}.
\end{equation}

For any $w \in Q(v_0,D_0^\Theta) \subset \mathbb{A}_0$ and any $z \in \overline{Q}(w,D_0^\Theta)$, $z \neq w$, we denote by $[w,z)$ the half-line with origin $w$ and direction $z -w$ and, for $z=w$, we denote $[w,z)=\{w\}$ by convention.
We denote by $\widehat{P}_{[w,z)}$ the pointwise stabilizer in $\mathbf{G}(L)$ of $[w,z)$.
We have that $\overline{Q}(w,D_0^\Theta) = \bigcup_{z \in \overline{Q}(w,D_0^\Theta)} [w,z)$, whence \[\widehat{P}_{w+D_0^\Theta} = \bigcap_{z \in \overline{Q}(w,D_\Theta^0)} \widehat{P}_{[w,z)}\] according to \cite[7.1.11]{BT}. Because $\widehat{P}_{w+D} \subset \widehat{P}_w$, we are reduced to prove that
\begin{multline}\label{eq fixator of line}
\exists w_0 \in Q(v_0,D_0^{\Theta}), \, \forall w \in \overline{Q}(w_0,D_0^{\Theta}),\ \forall z \in \overline{Q}(w,D_0^\Theta),\\
\widehat{P}_w \cap \tau \mathbf{G}(A) \tau^{-1}\subseteq \widehat{P}_{[w,z)} \cap \tau \mathbf{G}(A) \tau^{-1}.
\end{multline}

Now, using a building embedding, we show that it is enough to verify Identity~\eqref{eq fixator of line} in the context where $\mathbf{G}= \mathrm{SL}_n$ and $\Theta \neq \Delta$.
Indeed, let $\tau_0=\rho(\tau) \in \rho(\mathbf{G}(\ell))$.
The $\mathbf{G}(L)$-equivariant immersion $j: \mathcal{X}_L\hookrightarrow \mathcal{X}(\mathrm{SL}_n, L)$ introduced in~§\ref{intro building embedding} sends the standard vertex $v_0 \in \mathrm{vert}(\mathbb{A}_0)$ onto the standard vertex $v_0' \in \mathcal{X}(\mathrm{SL}_n, L)$ and embeds the closure $\overline{Q}(v_0,D_0)$ of the standard sector chamber of $\mathcal{X}_L$ into the closure $\overline{Q}(v_0',D_0')$ of the standard sector chamber of $\mathcal{X}(\mathrm{SL}_n,L)$.
Denote by $\Phi'$ the canonical root system of $\mathrm{SL}_n$ and by $\Delta'$ the canonical basis of $\Phi'$.
Since $j(v_0) = v'_0$ and $j\big(\overline{Q}(v_0,D_0^\Theta)\big) \subseteq j\big(\overline{Q}(v_0,D_0)\big) \subseteq \overline{Q}(v'_0,D_0')$, the enclosure of $j\big(\overline{Q}(v_0,D_0)\big)$ is a closed sector face of $\overline{Q}(v'_0,D_0')$.
In other words, there is a subset of simple roots $\Theta' \subseteq \Delta'$ such that $\operatorname{cl}\Big(j\big(Q(v_0,D_0^\Theta)\big)\Big) = \overline{Q}(v'_0,D_0^{\Theta'})$.
More precisely, $\Theta' = \left\{ \alpha \in \Delta',\ \alpha\Big(j\big(Q(v_0,D_0^\Theta)\big)\Big) = \{0\} \right\}$.
Assume that Identity~\eqref{eq fixator of line} holds for $B$, $\mathrm{SL}_n$, $\tau_0$ and $\Theta'$, that is 
\begin{multline}\label{eq fixator of line SLn}
\exists w'_0 \in Q(v'_0,D_0^{\Theta'}), \, \forall w' \in \overline{Q}(w'_0,D_0^{\Theta'}),\ \forall z' \in \overline{Q}(w',D_0^{\Theta'}),\\
\widehat{P}_{w'} \cap \tau_0 \mathrm{SL}_n(B) \tau_0^{-1}\subseteq \widehat{P}_{[w',z')} \cap \tau_0 \mathrm{SL}_n(B) \tau_0^{-1}.
\end{multline}

We claim that the intersection $j\big(\overline{Q}(v_0,D_0^\Theta)\big) \cap \overline{Q}(w'_0,D_0^{\Theta'})$ is nonempty.
Indeed, recall that the standard vertex $v'_0$ satisfies $\alpha(v'_0)=0,\ \forall \alpha \in \Phi'$, so that $j\big( \overline{Q}(v_0,D_0^\Theta) \big)$ is seen as a non-empty convex cone with tip $v'_0=0$ in $\mathbb{A}'_0 \cong \mathbb{R}^{n-1}$.
By definition of $\Theta'$, for any $\alpha \in \Delta' \smallsetminus \Theta'$, there exists $\delta_\alpha \in j\big( \overline{Q}(v_0,D_0^\Theta) \big)$ such that $\alpha(\delta_\alpha) >0$.
Since $\delta_\alpha \in \overline{Q}(v'_0,D_0^{\Theta'})$, we also have that $\beta(\delta_\alpha) \geq 0$ for any $\beta \in \Delta' \smallsetminus \Theta'$, and $\beta(\delta_\alpha) = 0$ for any $\beta \in \Theta'$.
Let $\delta = \sum_{\alpha \in \Theta'} \delta_\alpha$.
Then, $\alpha(\delta) > 0$, for all $\alpha \in \Delta' \smallsetminus \Theta'$ and $\alpha(\delta)=0$, for all $\alpha \in \Theta'$.
Hence, there exists $t \in \mathbb{R}$ such that $\alpha(t\delta) \geq \alpha(w'_0)$, for all $\alpha \in \Delta' \smallsetminus \Theta'$ and $\alpha(t \delta)=0$, for all $\alpha \in \Theta'$.
Thus $t \delta \in \overline{Q}(w'_0,D_0^{\Theta'})$ by definition.
Moreover, $t \delta \in j\big( \overline{Q}(v_0,D_0^\Theta) \big)$ as positive linear combination of such elements in a convex cone.
Hence there exists $w_0 \in \overline{Q}(v_0,D_0^\Theta)$ such that $j(w_0) = t \delta \in \overline{Q}(w'_0,D_0^{\Theta'})$. Whence the claim follows.

Thus, for any $w \in \overline{Q}(w_0,D_0^\Theta)$ and any $z \in \overline{Q}(w,D_0^\Theta)$, we have that
\[j(w) \in j\big( \overline{Q}(w_0,D_0^\Theta) \big) \subseteq \overline{Q}\big( j(w_0), D_0^{\Theta'}\big) \subseteq \overline{Q}(w'_0,D_0^{\Theta'}),\]
and 
\[ j(z) \in j\big(\overline{Q}(w,D_0^\Theta)\big) \subseteq \overline{Q}\big( j(w), D_0^{\Theta'}\big).\]
Moreover, $j([w,z)) = [j(w),j(z))$ whence
\[ \widehat{P}_{j(w)} \cap \tau_0 \mathrm{SL}_n(B) \tau_0^{-1}\subseteq \widehat{P}_{j([w,z))} \cap \tau_0 \mathrm{SL}_n(B) \tau_0^{-1}.\]
Thus, if we intersect the previous inclusion with $\tau_0 \rho(\mathbf{G}(A)) \tau_0^{-1}$ we obtain
$$\widehat{P}_{j(w)} \cap \rho(\tau \mathbf{G}(A) \tau^{-1})=\widehat{P}_{j([w,z))} \cap \rho(\tau \mathbf{G}(A) \tau^{-1}).$$
Since the immersion $j: \mathcal{X}_L\hookrightarrow \mathcal{X}(\mathrm{SL}_n, L)$ is $\mathbf{G}(L)$-equivariant, we have that $\widehat{P}_{j(w)} \cap \rho(\mathbf{G}(L)) = \rho(\widehat{P}_w)$ and $\widehat{P}_{j([w,z))} \cap \rho(\mathbf{G}(L)) = \rho(\widehat{P}_{[w,z)})$.
We conclude that $\widehat{P}_w \cap \tau \mathbf{G}(A) \tau^{-1} \subseteq \widehat{P}_{[w,z)} \cap \tau \mathbf{G}(A) \tau^{-1}$.
Thus, Identity~\eqref{eq fixator of line SLn} implies Identity~\eqref{eq fixator of line}. In particular, we are reduced to prove~\eqref{eq fixator of line SLn}.

By abuse of notation, let us denote by $\nu$ the valuation map on $L$ induced by $\p'$.
We assume that we are in the situation with $\mathbf{G} = \mathrm{SL}_n$ and any $\Theta' \subseteq \Delta' \subset \Phi'$.
Let $w' \in \overline{Q}(v'_0,D_0^{\Theta'})$ be any point.
The stabilizer in $\mathbf{G}(L)$ of the point $w' \in \mathbb{A}_0$ is, with the notation\footnote{We are in the case with $E=\{0\}$ and $\delta(g)=0$ because $g \in \mathrm{SL}_n$ has determinant $1$.} of \cite[10.2.8]{BT}:
\begin{equation}\label{equality Pw}
\widehat{P}_{w'}=\left\lbrace g=(g_{ij})_{i,j=1}^n \in \mathrm{SL}_n(L) : \nu(g_{ij})+(a_j-a_i)(w') \geq 0,\ 1 \leq i,j \leq n \right\rbrace,
\end{equation}
where $(a_i)_{1 \leq i \leq n}$ is the canonical basis of $\mathbb{R}^n$ seen as linear forms on $\mathbb{A}_0 \simeq \mathbb{R}^{n-1}$ so that $\Phi' = \{ \alpha_{ij} := a_j-a_i,\ 1\leq i, j \leq n \text{ and } i \neq j\}$ and $\Delta' = \{\alpha_{i\ i+1},\ 1 \leq i \leq n-1\}$.
Let $z' \in \overline{Q}(w',D_0^{\Theta'})$ and $g = (g_{ij})_{i,j=1}^n \in \widehat{P}_{[w',z')}$.
Denote by $\delta := z' - w'$ the direction of the half-line $[w',z')$.
We have $\delta \in \overline{D_0^{\Theta'}}$ so that
\[\left\{\begin{array}{ll}\alpha(\delta) \geq 0 & \text{ for } \alpha \in {\Phi'}_{\Theta'}^{+},\\
\alpha(\delta) = 0 &  \text{ for } \alpha \in {\Phi'}_{\Theta'}^{0},\\
\alpha(\delta) \leq 0 &  \text{ for } \alpha \in {\Phi'}_{\Theta'}^{-}.\\
\end{array} \right.\]
For any $t \in \mathbb{R}_{\geqslant 0}$, consider $z'_t = w' + t \delta \in [w',z')$.
Because $g \in \widehat{P}_{[w',z')} \subset \widehat{P}_{z'_t}$, we have, by Equality~\eqref{equality Pw}, that:
\begin{equation*}
    \nu(g_{i j}) \geq (a_i-a_j)(w') + t (a_i - a_j)(\delta),\ \forall 1 \leq i,j \leq n,\ i \neq j, \forall t \in \mathbb{R}_{\geq 0}
\end{equation*}
For $1 \leq i,j \leq n$, $i \neq j$, we deduce that:
\begin{equation*}
    \left\{\begin{array}{cc}
      g_{ij} = 0   &  \text{ if } (a_j-a_i)(\delta) < 0,\\
      \nu(g_{ij}) \geq (a_i - a_j)(w')   &  \text{ if } (a_j-a_i)(\delta) \geq 0.\\
    \end{array}\right.
\end{equation*}
Conversely, Equality~\eqref{equality Pw} immediately gives that any such $g=(g_{ij})$ satisfies $g \in \widehat{P}_{z'_t}$ for all $t \geq 0$, whence $g \in \widehat{P}_{[w',z')}$.
Hence we get that:
\begin{equation}\label{equality P[w[}
\widehat{P}_{[w',z')}= \left\lbrace (g_{ij})_{i,j=1}^n \in \mathrm{SL}_n(L):
\begin{array}{rl}
g_{ij}=0,& \text{ if } (a_j-a_i)(\delta)<0 \\
\nu(g_{ij})+(a_j-a_i)(w') \geq 0,& \text{ if } (a_j-a_i)(\delta)\geq 0
\end{array}
\right\rbrace.
\end{equation}

Now, let $\mathcal{I} = (I_{i j})_{1 \leq i,j \leq n}$ be a family of proper fractional $B$-ideals.
We denote by $\mathcal{M}_n(\mathcal{I})$ the $A$-module of matrices whose $(i,j)$-coefficient is $I_{ij}$, for any $i,j \in \lbrace 1, \cdots, n \rbrace$.
Then, we have that the group $\tau_0 \mathrm{SL}_n(A) \tau_0^{-1} \subseteq \tau_0 \mathrm{SL}_n(B) \tau_0^{-1}$ is contained in $\mathcal{M}_n(\mathcal{I})$, for some family of fractional ideals $\mathcal{I}$. In particular, the set 
\begin{multline*}
    \Pi:= \left\lbrace \nu(g_{ij}): g_{ij}\neq 0 \text{ and } g=(g_{ij})_{i,j=1}^n \in \tau_0\mathrm{SL}_n(A)\tau_0^{-1} \right\rbrace\\
    \subseteq \bigcup_{1 \leq i,j \leq n} \nu\big( I_{ij} \smallsetminus \{0\} \big) \subset \frac{1}{N}\mathbb{Z}
\end{multline*} is upper bounded since so are each $\nu\big( I_{i,j} \smallsetminus \{0\}\big)$.

Since $\Pi$ is an upper bounded set, it follows from Lemma~\ref{lem subsector face and roots}\ref{item subsector real} that there exists $w'_0 \in Q(v'_0,D_0^{\Theta'})$ such that for any $\alpha = \alpha_{ij} \in {\Phi'}_{\Theta'}^{-}$ we have $\max(\Pi) + \alpha_{ij}(w'_0) < 0$.
Thus, for any $g \in \widehat{P}_{w'} \cap \tau_0 \mathrm{SL}_n(A) \tau_0^{-1}$ such that $g_{ij} \neq 0$, we have $\nu(g_{ij}) + \alpha_{ij}(w'_0) < 0$.
Hence
\[\forall w' \in \overline{Q}(w'_0,D_0^{\Theta'}),\ \nu(g_{ij}) + \alpha_{ij}(w') < \alpha_{ij}(w') - \alpha_{ij}(w'_0) \leq 0\]
because $\alpha_{ij} \in {\Phi'}_{\Theta'}^{-}$ and $w'-w'_0 \in \overline{D_0^{\Theta'}}$.
Thus, Equality~\eqref{equality Pw} implies that $g_{ij}=0$ for any point $w' \in \overline{Q}(w'_0,D_0^{\Theta'})$, any negative root $\alpha_{ij} = a_j - a_i \in {\Phi'}_{\Theta'}^{-}$, and any $g \in \widehat{P}_{w'} \cap \tau_0 \mathrm{SL}_n(A) \tau_0^{-1}$.
Hence, Equality~\eqref{equality P[w[} gives that any such $g$ satisfies $g \in \widehat{P}_{[w',z')}$ for any $z' \in \overline{Q}(w',D_0^{\Theta'})$.
Therefore, we conclude that Statement~\eqref{eq fixator of line SLn} holds.
Hence, Statement~\eqref{statement stab} holds when $x$ is a $k$-vertex.\\

Secondly, assume that $x$ is a special $\ell$-vertex. Applying the previous situation replacing $\ell/k$ by some extension $\ell'/\ell$ given by Lemma~\ref{lem curve extension}, we deduce that there exists a subsector $Q(y_1,D)$ such that for any point $v \in \overline{Q}(y_1,D)$, we have that
\[
\stab_{\mathbf{G}(B)}(v) = \fix_{\mathbf{G}(B)}\big( Q(v,D) \big).\]
Hence, the proposition remains true for special $\ell$-vertices by intersecting the previous equality with $\mathbf{G}(A)$, which is a subgroup of $\mathbf{G}(B)$.\\

Finally, assume that $x \in \mathcal{X}(\mathbf{G},k)$ is any point.
Let $\mathbb{A}$ be a $k$-apartment containing the sector face $Q(x,D)$ and note that the enclosure $\operatorname{cl}_k\big(Q(x,D)\big)$ is contained in $\mathbb{A}$ by definition.
By Lemma~\ref{lemma finite special vertices}, there is a finite subset $\Omega$ of special $\ell$-vertices such that any special $\ell$-vertex and, in particular, any $k$-center, of $\operatorname{cl}_k\big(Q(x,D)\big)$ is contained in some $Q(\omega,D)$ for $\omega \in \Omega$.

For any point $z \in \mathbb{A}$, we denote by $z_c$ the $k$-center of the $k$-face containing $z$.
Because the $k$-centers span a maximal rank lattice of $\mathbb{A}$, hence cocompact, there is a positive real number $\eta_1$ such that $\forall z \in \mathbb{A},\ d(z,z_c) \leq \eta_1$.
Because $\Omega$ is finite, there is a positive real number $\eta_2$ such that $\forall \omega \in \Omega,\ d(x,\omega) < \eta_2$.
Let $V = \operatorname{Vect}(D)$ be the $\mathbb{R}$-vector subspace of $\mathbb{A}$ spanned by $D$.
Let $C'$ be the closed ball centered at $0$ of radius $\eta_1 + \eta_2$ in $V$.
For $\omega \in \Omega$, denote by $y_\omega^1$ the $y_1$ given by Statement~\eqref{statement stab} applied to $Q(\omega,D)$.
By construction of $V$, there is a subsector face $Q(y_\omega,D) \subseteq Q(y_\omega^1,D)$ such that $Q(y_\omega,D) + C' \subseteq Q(y_\omega^1,D)$.
Denote by $x_\omega := x + y_\omega - \omega$.
Then $x_\omega - x = y_{\omega}-\omega \in D$ because $y_\omega \in Q(y_\omega^1,D) \subseteq Q(\omega,D)$.
Hence $x_\omega \in Q(x,D)$.
Thus, there is a point $y_1 \in Q(x,D)$ such that $Q(y_1,D) \subseteq \bigcap_{\omega \in \Omega} Q(x_\omega,D)$ as finite intersection of subsector faces of $Q(x,D)$.

Let $z \in Q(y_1,D)$. The group $\mathcal{P}_z$ stabilizes the $k$-face of $z$.
Hence, it fixes the $k$-center $z_c$ of the face containing $z$, whence $\widehat{P}_z \subseteq \widehat{P}_{z_c}$.
Let $\omega \in \Omega$ be such that $z_c \in Q(\omega,D)$.
Let $y := y_\omega + z - x_\omega$.
Then $y \in Q(y_\omega,D)$ since $z-y_1 \in D$ and $y_1 - x_\omega \in D$.
Moreover, we have $z_c-y = (z_c - \omega) + (\omega-y_\omega) + (y_\omega - y) \in V$ and $z_c - y = z_c - y_\omega + x_\omega - z = (z_c - z) + (x-\omega)$, so that $d(z_c,y) \leq d(z_c,z) + d(x,\omega) \leq \eta_1 + \eta_2$.
Thus $z_c - y \in C'$ and $y \in Q(y_\omega,D)$ provides $z_c \in y + C' \subseteq Q(y_\omega,D) + C' \subseteq Q(y_\omega^1,D)$.
Thus, by~\eqref{statement stab} with $z_c \in Q(y_\omega^1,D)$, we have that $\widehat{P}_{z_c} \cap \mathbf{G}(A) = \widehat{P}_{Q(z_c,D)} \cap \mathbf{G}(A)$.
Hence $\widehat{P}_{z} \cap \mathbf{G}(A)$ fixes $z \cup Q(z_c,D)$ so that it fixes the closed convex hull of $z \cup Q(z_c,D)$ which contains $Q(z,D)$.
Therefore, $\widehat{P}_z\cap \mathbf{G}(A) \subseteq \widehat{P}_{z+D} \cap \mathbf{G}(A)$.
Whence the result follows.
\end{proof}

\subsection{Foldings along a subsector face}\label{subsection foldings}

Let $J$ be a non-zero fractional $A$-ideal. See it as a line bundle, associated to a divisor $D_J$, on the affine curve $\mathrm{Spec}(A)= \mathcal{C} \smallsetminus \lbrace \p \rbrace$.
We write $\deg(J)=\deg(D_J)$.\nomenclature[]{$\deg(J)$}{degree of the Cartier divisor $D_J$}
Normalize $\nu_\p$ so that $\nu_\p(k^\times) = \mathbb{Z}$ and let $\pi \in \mathcal{O}_{\p}$ be a uniformizer.\nomenclature[]{$\pi$}{uniformizer in $A$ of $\nu_\p$}
Let $m \in \mathbb{Z}_{\geq 0}$.
Define $ J[m]$ as the vector bundle $\mathcal{L}(-D_J+m\p)$ associated to the divisor $-D_J+m\p$.\nomenclature[]{$J[m]$}{some truncated ideal}
Equivalently, $J[m]:=\left\lbrace x \in J:\nu_\p(x) \geq -m\right\rbrace = J \cap \pi^{-m}\mathcal{O}$.
This set is always a finite dimensional $\mathbb{F}$-vector space (c.f.~\cite[\S 1, Prop.~1.4.9]{Stichtenoth}).
We denote by $g$ the genus of $\mathcal{C}$ and by $d$ the degree of $\p$.\nomenclature[]{$g$}{genus of $\mathcal{C}$}\nomenclature[]{$d$}{degree of $\p$}
Then, by the Riemann-Roch Theorem (c.f.~\cite[\S 1, Thm.~1.5.17]{Stichtenoth}) the finite-dimensional vector space $J[m]$ satisfies
$$\operatorname{dim}_{\mathbb{F}}(J[m])= \deg(-D_J+m\p)+ 1-g,$$
whenever $\deg(-D_J+m\p)\geq 2g-1$.
Hence, since
$$\deg(-D_J+m\p)=-\deg(J)+m\deg(\p),$$
we finally get,
\begin{equation}\label{eq rr0}
\operatorname{dim}_{\mathbb{F}}(J \cap \pi^{-m}\mathcal{O})= -\deg(J)+md+ 1-g,\quad\text{ whenever }\quad md\geq \deg(J)+2g-1.
\end{equation}

\begin{notation}\label{notation directed star}
Let $x \in \mathcal{X}_k$ be a point.
Let $\Phi_x = \Phi_{x,k}$ be the sub-root system of $\Phi$ associated to $x$ as defined in §\ref{intro rational building}.
We define the star\index{star} of $x$ in $\mathcal{X}_k$ as the subcomplex $\mathcal{X}_k(x)$\nomenclature[]{$\mathcal{X}_k(x)$}{star of $x$}
of $\mathcal{X}_k$ whose faces $F$ are the $k$-faces of $\mathcal{X}_k$ containing $x$ in their closure:
\[ \mathcal{X}_k(x) = \left\{ F \text{ is a } k\text{-face of } \mathcal{X}_k,\ x \in \overline{F}\right\}.\]
By abuse of notation, we also denote by $\mathcal{X}_k(x)$ the set of points $z \in \mathcal{X}_k$ such that there is a $k$-face $F \in \mathcal{X}_k$ such that $z \in F$.
The complex $\mathcal{X}_k(x)$ is a spherical building of type $\Phi_x$ whose the set of its chambers is
\[ \mathcal{R}_k(x) := \left\{ C \text{ is a } k\text{-alcove of } \mathcal{X}_k,\ x \in \overline{C}\right\},\]
\nomenclature[]{$\mathcal{R}_k(x)$}{residue at $x$}
and the set of its apartments is
\[ \mathcal{A}_k(x) = \left\{ \mathbb{A} \cap \mathcal{X}_k(x),\ \mathbb{A} \text{ is a } k\text{-apartment of } \mathcal{X}_k \right\}.\]
\nomenclature[]{$\mathcal{A}_k(x)$}{apartments of the star of $x$} 
See \cite[4.6.33]{BT2}, we give details of these facts in the proof of Proposition~\ref{prop directed building}.
If $F$ is a $k$-face and $x \in F$ is a generic point, we define $\mathcal{X}_k(F) = \mathcal{X}_k(x)$ and $\mathcal{R}_k(F) = \mathcal{R}_k(x)$.
\end{notation}

\begin{definition}
Let $\mathbb{A}$ be a $k$-apartment containing $x$ and $V$ be a vector space so that $x+V$ is an affine subspace of $\mathbb{A}$.
We define the star directed\index{star!directed} by $V$ of $x$ by:
\[ \mathcal{X}_k(x,V) = \left\{ z \in \mathcal{X}_k(x),\ \exists \mathbb{A}'\ k\text{-apartment},\ \mathbb{A}' \supseteq z \cup (x+V)\right\}
.\]
\nomenclature[]{$\mathcal{X}_k(x,V)$}{star of $x$ directed by $V$}
Note that, if $V=0$, then $\mathcal{X}_k(x,V) = \mathcal{X}_k(x)$;
and that, if $x+V = \mathbb{A}$, then $\mathcal{X}_k(x,V) = \mathcal{X}_k(x) \cap \mathbb{A}$.
We denote by
\[ \mathcal{R}_k(x,V) = \mathcal{R}_k(x) \cap \mathcal{X}_k(x,V)\]
\nomenclature[]{$\mathcal{R}_k(x,V)$}{chambers of $\mathcal{X}_k$ in $\mathcal{X}_k(x,V)$}
and by
\[\mathcal{A}_k(x,V) = \left\{ \mathbb{A}' \cap \mathcal{X}_{k}(x,V),\ \mathbb{A}' \text{ is a } k\text{-apartment containing } x+V \right\}.\]
\nomenclature[]{$\mathcal{A}_k(x,V)$}{apartments of $\mathcal{X}_k(x,V)$}
\end{definition}
The star directed by $V$ of $x$ is not a classical definition coming from Bruhat-Tits theory. We introduce it since it appears to be useful in the following.\\

In the following, we will focus on the case where $V$ is the vector subspace of an apartment spanned by the direction of some sector face. Roughly speaking, $\mathcal{X}_k(x,V_0^\Theta)$ will be the subset of the spherical building $\mathcal{X}_k(x)$ obtained by folding the standard apartment with respect to its affine subspace $(x+\Theta^\perp)$ for some $\Theta \subset \Delta$.
Hence $\mathcal{X}_k(x,V_0^\Theta)$ becomes a spherical building of type $\Phi_\Theta^0 \cap \Phi_x$, with Weyl group contained in $W_\Theta$.

\begin{lemma}\label{lemma subroot system}
Let $\Phi$ be a root system and let $\Delta$ be a basis of $\Phi$ and denote by $\Phi^+$ the positive roots.
Let $\Phi'$ be a sub-root system of $\Phi$ and let $\Delta'$ be the basis of $\Phi'$ such that the positive roots associated to $\Delta'$ in $\Phi'$ are $\Phi^+ \cap \Phi'$.
For any $\Theta \subset \Delta$, there is a unique $\Theta' \subset \Delta'$ such that $\Phi' \cap \Phi_\Theta^0 = (\Phi')_{\Theta'}^0$. Moreover $\Theta'$ is a basis of this sub-root system.
\end{lemma}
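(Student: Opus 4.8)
The plan is to construct $\Theta'$ explicitly as $\Theta' := \Delta' \cap \Phi_\Theta^0$ and then verify both the equality $\Phi' \cap \Phi_\Theta^0 = (\Phi')_{\Theta'}^0$ and the uniqueness. First I would recall the hypotheses: $\Delta'$ is by definition the \emph{unique} basis of $\Phi'$ whose positive system is $\Phi^+ \cap \Phi'$ (existence and uniqueness of such a basis is a standard fact for sub-root systems, see~\cite[VI.1.7]{Bourbaki}, and is used elsewhere in the excerpt, e.g.\ in Proposition~\ref{prop fractional ideals} and in the proof of Proposition~\ref{prop igual stab}). Since $\Phi_\Theta^0 = \Phi \cap \operatorname{Vect}_\mathbb{R}(\Theta)$ is itself a closed sub-root system of $\Phi$ with basis $\Theta$, the intersection $\Phi' \cap \Phi_\Theta^0$ is a sub-root system of $\Phi'$; its positive system, relative to the positive system $\Phi^+ \cap \Phi'$ of $\Phi'$, is $\Phi^+ \cap \Phi' \cap \Phi_\Theta^0$, which is simultaneously the positive system of $\Phi' \cap \Phi_\Theta^0$ inside $\Phi$ viewed as a sub-root system of $\Phi$. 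Hence $\Phi' \cap \Phi_\Theta^0$ has a well-defined basis $\Theta'$ compatible with $\Phi^+$, namely the set of indecomposable elements of $\Phi^+ \cap \Phi' \cap \Phi_\Theta^0$.

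Next I would check $\Theta' \subset \Delta'$: any $\alpha \in \Theta'$ is a positive root of $\Phi'$ that is indecomposable within $\Phi' \cap \Phi_\Theta^0$; I must show it is indecomposable within all of $\Phi' \cap \Phi^+$. If $\alpha = \beta + \gamma$ with $\beta,\gamma \in \Phi' \cap \Phi^+$, then expressing everything in the basis $\Delta$ and using that $\alpha \in \operatorname{Vect}_\mathbb{R}(\Theta)$ while all coordinates of $\beta,\gamma$ in $\Delta$ are non-negative, one forces the $\Delta \smallsetminus \Theta$-coordinates of $\beta$ and $\gamma$ to vanish, so $\beta, \gamma \in \Phi_\Theta^0$, contradicting indecomposability of $\alpha$ in $\Phi' \cap \Phi_\Theta^0$. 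This gives $\Theta' \subset \Delta'$. Then $(\Phi')_{\Theta'}^0 = \Phi' \cap \operatorname{Vect}_\mathbb{R}(\Theta')$, and I would prove the two inclusions: $(\Phi')_{\Theta'}^0 \subseteq \Phi' \cap \Phi_\Theta^0$ is immediate since $\Theta' \subseteq \Phi_\Theta^0$ which is a linear subspace intersected with $\Phi$; conversely $\Phi' \cap \Phi_\Theta^0 \subseteq (\Phi')_{\Theta'}^0$ because $\Theta'$ was defined to be a basis of the root system $\Phi' \cap \Phi_\Theta^0$, so every element of $\Phi' \cap \Phi_\Theta^0$ is a $\mathbb{Z}$-combination of $\Theta'$, hence lies in $\operatorname{Vect}_\mathbb{R}(\Theta')$.

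For uniqueness: if $\Theta'_1, \Theta'_2 \subset \Delta'$ both satisfy $\Phi' \cap \Phi_\Theta^0 = (\Phi')_{\Theta'_i}^0$, then $(\Phi')_{\Theta'_1}^0 = (\Phi')_{\Theta'_2}^0$ as sub-root systems of $\Phi'$; since each $\Theta'_i$ is the set of elements of $\Delta'$ lying in $\operatorname{Vect}_\mathbb{R}((\Phi')_{\Theta'_i}^0)$ — this is the characterization of standard parabolic sub-root systems by subsets of the simple roots, cf.~\cite[VI.1.7, Cor.~4 of Prop.~20]{Bourbaki} — they must coincide. The last assertion, that $\Theta'$ is a basis of the sub-root system $\Phi' \cap \Phi_\Theta^0$, is exactly what the construction provided. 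I do not anticipate a serious obstacle here; the only point requiring a little care is the coordinate argument showing $\Theta' \subseteq \Delta'$, i.e.\ that indecomposability is not lost when enlarging from $\Phi' \cap \Phi_\Theta^0$ to $\Phi' \cap \Phi^+$, which hinges on the sign-coherence of root coordinates in the basis $\Delta$ (\cite[VI.1.6]{Bourbaki}).
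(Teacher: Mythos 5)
Your proof is correct, and it arrives at the same set $\Theta' = \Delta' \cap \Phi_\Theta^0$ as the paper, but the verification is organized differently. The paper takes $\Theta' := \Delta' \cap \Phi_\Theta^0$ as the definition from the outset and proves both inclusions of $\Phi' \cap \Phi_\Theta^0 = (\Phi')_{\Theta'}^0$ by a direct computation with the two coordinate systems, using the composite identity $n_\alpha(\beta) = \sum_{\alpha' \in \Delta'} n'_{\alpha'}(\beta)\, n_\alpha(\alpha')$ together with sign-coherence: if $\beta \in (\Phi^+\cap\Phi') \smallsetminus (\Phi')_{\Theta'}^0$ then some $n'_{\alpha'}(\beta) > 0$ with $\alpha' \in \Delta' \smallsetminus \Theta'$, and since $\alpha' \notin \Phi_\Theta^0$ some $\Delta\smallsetminus\Theta$-coordinate of $\alpha'$, hence of $\beta$, is positive, so $\beta \notin \Phi_\Theta^0$. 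You instead construct $\Theta'$ as the indecomposable elements of the positive system of the sub-root system $\Phi' \cap \Phi_\Theta^0$, prove $\Theta' \subseteq \Delta'$ by the indecomposability argument (which uses the same sign-coherence input), and then obtain the nontrivial inclusion $\Phi' \cap \Phi_\Theta^0 \subseteq (\Phi')_{\Theta'}^0$ for free from the fact that a root system lies in the span of any of its bases. Your route buys the "$\Theta'$ is a basis" assertion as part of the construction rather than by citation of the standard fact on $(\Phi')_{\Theta'}^0$, and you also make the uniqueness argument explicit (via $\Theta'_i = \Delta' \cap \operatorname{Vect}_\mathbb{R}(\Theta'_i)$, which holds by linear independence of $\Delta'$), a point the paper leaves implicit. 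Both arguments are complete; neither has a gap.
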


\begin{proof}
Let $\Theta' = \Delta' \cap \Phi_\Theta^0$.
By construction $(\Phi')_{\Theta'}^0$ is a root system with basis $\Theta'$.
For $\beta \in \Phi' \cap \Phi^+$, write $\beta = \sum_{\alpha \in \Delta} n_\alpha(\beta) \alpha = \sum_{\alpha' \in \Delta'} n'_{\alpha'}(\beta) \alpha'$ where the $n_\alpha(\beta)$ and $n'_{\alpha'}(\beta)$ are non-negative integers uniquely determined by the bases $\Delta$ and $\Delta'$.
For $\alpha' \in \Delta'$, if $\forall \alpha \in \Delta\smallsetminus \Theta,\ n_\alpha(\alpha') =0$, then $\alpha' \in \Phi_\Theta^0 \cap \Delta' = \Theta'$.

If $\beta \in (\Phi')_{\Theta'}^0$, then $\beta \in \operatorname{Vect}(\Theta') \subset \operatorname{Vect}(\Theta)$ since $\Theta' \subset \Phi_\Theta^0$.
Thus $\beta \in \Phi' \cap \Phi_\Theta^0$.

If $\beta \in (\Phi^+ \cap \Phi') \smallsetminus (\Phi')_{\Theta'}^0$, then there exists $\alpha' \in \Delta' \smallsetminus \Theta'$ such that $n'_{\alpha'}(\gamma) > 0$. Since $\alpha' \in \Delta' \smallsetminus \Theta'$, there is $\alpha \in \Delta \smallsetminus \Theta$ such that $n_\alpha(\alpha') > 0$.
Thus $n_\alpha(\beta) = \sum_{\alpha' \in \Delta'} n'_{\alpha'}(\beta) n_\alpha(\alpha') > 0$.
Hence $\beta \not\in \Phi_\Theta^0$.
Replacing $\beta$ by $-\beta$, which changes all the signs, we get that the same holds for $\beta \in (\Phi^- \cap \Phi') \smallsetminus (\Phi')_{\Theta'}^0$. Hence $\Phi' \cap \Phi_\Theta^0 = (\Phi')_{\Theta'}^0$.
\end{proof}

By definition, the group $\mathbf{G}(k)$ acts transitively on the set of $k$-apartments of $\mathcal{X}_k$. Thus, in order to describe $\mathcal{X}_k(x,V)$, we can assume without loss of generalities that $x+V$ is an affine subspace of the standard apartment $\mathbb{A}_0$.
Because the star of a point is a spherical building, we get, using the corresponding BN-pair, the following description of the star of a point directed by an affine subspace of some apartment.

\begin{proposition}\label{prop directed building}
Let $\Delta \subset \Phi$ be a basis and $x \in \mathbb{A}_0$.
\begin{enumerate}[label=(\arabic*)]
    \item\label{item dir building full} $\mathcal{X}_k(x)$ together with the collection of apartments $\mathcal{A}_k(x)$ is a spherical building of type $\Phi_x$ on which the group $\mathbf{U}_x(k)$ acts strongly transitively;
    \item\label{item dir building partial} for any $\Theta\subseteq \Delta$, the set $\mathcal{X}_k(x,V_0^\Theta)$ together with the collection of apartment $\mathcal{A}_k(x,V_0^\Theta)$ is naturally equipped with a spherical building structure of type $\Phi_\Theta^0 \cap \Phi_x$ (which is a root system by Lemma~\ref{lemma subroot system}) on which the group $\mathbf{U}_{x+V_0^\Theta}(k)$ acts strongly transitively;
\end{enumerate}

Note that~\ref{item dir building full} is the particular case of~\ref{item dir building partial} where $\Theta = \Delta$.
\end{proposition}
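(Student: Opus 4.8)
The plan is to deduce assertion~\ref{item dir building partial} from assertion~\ref{item dir building full} by realizing $\mathcal{X}_k(x,V_0^\Theta)$ as a residue inside the spherical building $\mathcal{X}_k(x)$ and by transporting strong transitivity along this residue; here $V_0^\Theta = \operatorname{Vect}(D_0^\Theta)=\Theta^\perp$, and~\ref{item dir building full} is the degenerate case $\Theta=\Delta$, for which $V_0^\Theta=\{0\}$. The main obstacle I expect is the identification of $\mathcal{X}_k(x,V_0^\Theta)$ with the residue of a well-chosen facet together with the computation of its type, as well as the recurring rationality bookkeeping needed to pass from $K$-apartments to $k$-apartments.

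\emph{Proof of~\ref{item dir building full}.} Recall from~\cite[6.4.10]{BT} that $\Phi_x$ is a subroot system of $\Phi$. By~\cite[4.6.33]{BT2} applied to $\mathcal{X}(\mathbf{G},K)$, the star $\mathcal{X}_K(x)$ together with $\mathcal{A}_K(x)$ is a spherical building, canonically the spherical building of the reductive quotient $\overline{\mathbf{G}}_x$ attached to $x$ over the residue field $\kappa$ of $(k,\nu_\p)$; since $\mathbf{G}$ is split, $\overline{\mathbf{G}}_x$ is split with root system $\Phi_x$, so this building is of type $\Phi_x$. The $\mathbb{Z}$-épinglages $\theta_\alpha$ reduce modulo $\p$ to épinglages of $\overline{\mathbf{G}}_x$, and since the valuation ring $\mathcal{O}\cap k$ of $(k,\nu_\p)$ surjects onto $\kappa$, the reduction restricts, for each $\alpha\in\Phi_x$, to a surjection $\mathbf{U}_{\alpha,x}(k)\twoheadrightarrow\overline{\mathbf{U}}_\alpha(\kappa)$; for $\alpha\in\Phi\smallsetminus\Phi_x$ the group $\mathbf{U}_{\alpha,x}(k)$ fixes a neighbourhood of $x$, hence acts trivially on $\mathcal{X}_k(x)$. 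Therefore $\mathbf{U}_x(k)=\langle\mathbf{U}_{\alpha,x}(k):\alpha\in\Phi\rangle$ acts on $\mathcal{X}_K(x)$ through a group surjecting onto the subgroup of $\overline{\mathbf{G}}_x(\kappa)$ generated by all root groups $\overline{\mathbf{U}}_\alpha$, $\alpha\in\Phi_x$, and the latter acts strongly transitively on $\mathcal{X}(\overline{\mathbf{G}}_x,\kappa)$ by~\cite[Cor.~10.6]{RouEuclidean}. Finally, $\mathbf{U}_x(k)$ is transitive on $\mathcal{A}_k(x)$: this follows from transitivity of $\mathbf{U}_x(K)$ on $\mathcal{A}_K(x)$ (see~\cite[9.7]{L}), density of $\mathbf{U}_x(k)$ in $\mathbf{U}_x(K)$ (the $\theta_\alpha$ are defined over $\mathbb{Z}$ and $\mathbb{G}_a(k)$ is dense in $\mathbb{G}_a(K)$), and openness of the pointwise fixer of $\mathbb{A}_0\cap\mathcal{X}_k(x)$ in $\mathbf{U}_x(K)$, exactly as in the proof of Lemma~\ref{lema1}. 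Hence $\mathcal{A}_k(x)$ is a complete apartment system for the same chamber complex, and~\ref{item dir building full} follows.

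\emph{Localisation of the unipotent group.} Write $\mathbf{U}_{x+V_0^\Theta}(k)=\langle\mathbf{U}_{\alpha,x+V_0^\Theta}(k):\alpha\in\Phi\rangle$, where $\mathbf{U}_{\alpha,x+V_0^\Theta}(k)=\bigcap_{v\in V_0^\Theta}\mathbf{U}_{\alpha,x+v}(k)$. For $\alpha\in\Phi_\Theta^0$ one has $\alpha_{|V_0^\Theta}=\alpha_{|\Theta^\perp}=0$, so $\mathbf{U}_{\alpha,x+V_0^\Theta}(k)=\mathbf{U}_{\alpha,x}(k)$; for $\alpha\notin\Phi_\Theta^0$ the linear form $\alpha_{|\Theta^\perp}$ is non-zero, hence unbounded on the subspace $\Theta^\perp$, so $\mathbf{U}_{\alpha,x+V_0^\Theta}(k)=\{e\}$. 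Thus $\mathbf{U}_{x+V_0^\Theta}(k)=\langle\mathbf{U}_{\alpha,x}(k):\alpha\in\Phi_\Theta^0\rangle$, and by the surjectivity established above its image in $\overline{\mathbf{G}}_x(\kappa)$ is the subgroup $\overline{\mathbf{L}}$ generated by the root groups $\overline{\mathbf{U}}_\alpha$, $\alpha\in\Phi_\Theta^0\cap\Phi_x$ (the roots of $\Phi_\Theta^0\smallsetminus\Phi_x$ acting trivially on the star). By Lemma~\ref{lemma subroot system} applied to $\Phi'=\Phi_x$ there is a subset $\Theta'$ of a basis $\Delta_x$ of $\Phi_x$ with $(\Phi_x)_{\Theta'}^0=\Phi_\Theta^0\cap\Phi_x$, so $\overline{\mathbf{L}}$ is the subgroup generated by the root groups of the standard Levi subgroup of $\overline{\mathbf{G}}_x$ associated to $\Theta'$, and it acts strongly transitively on its own spherical building of type $(\Phi_x)_{\Theta'}^0$ by~\cite[Cor.~10.6]{RouEuclidean}.

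\emph{Identification as a residue, and conclusion.} In the spherical building $\mathcal{X}_k(x)$, consider the facet $F_\Theta:=\germ_x\big(Q(x,D_0^\Theta)\big)$ of the sector face $Q(x,D_0^\Theta)=x+D_0^\Theta\subseteq\mathbb{A}_0$. Using that, on $D_0^\Theta$, a root $\alpha$ vanishes exactly for $\alpha\in\Phi_\Theta^0$ and keeps a constant non-zero sign for $\alpha\in\Phi^+\smallsetminus\Phi_\Theta^0$, one checks that $F_\Theta$ is a single facet of $\mathcal{X}_k(x)$ lying on the walls $H_{\alpha,\alpha(x)}$ with $\alpha\in\Phi_\Theta^0\cap\Phi_x$ and on no other wall through $x$, hence a facet of cotype $\Theta'$. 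Since $D_0^\Theta$ is relatively open in $V_0^\Theta$, a point $z$ of the closed star $\mathcal{X}_k(x)$ lies in a common $k$-apartment with the affine subspace $x+V_0^\Theta$ if and only if its germ at $x$ and $F_\Theta$ lie in a common apartment of $\mathcal{X}_k(x)$, and $\mathcal{A}_k(x,V_0^\Theta)$ is precisely the set of apartments of $\mathcal{X}_k(x)$ containing $F_\Theta$. Therefore $\mathcal{X}_k(x,V_0^\Theta)$, with its $k$-faces and $\mathcal{A}_k(x,V_0^\Theta)$, is identified with the residue of $F_\Theta$ in $\mathcal{X}_k(x)$, which is a spherical building of type $W_{\Theta'}=W\big(\Phi_\Theta^0\cap\Phi_x\big)$ (see~\cite[\S4--5]{Brown}) and is canonically the spherical building over $\kappa$ of the standard Levi subgroup of $\overline{\mathbf{G}}_x$ associated to $\Theta'$. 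Combining the strong transitivity of $\overline{\mathbf{L}}$ on this residue with the surjection $\mathbf{U}_{x+V_0^\Theta}(k)\twoheadrightarrow\overline{\mathbf{L}}$ and with the transitivity of $\mathbf{U}_{x+V_0^\Theta}(k)$ on the $k$-apartments containing $x+V_0^\Theta$ (again deduced from the completed case by density and openness, as for Lemma~\ref{lema1}), we conclude that $\mathbf{U}_{x+V_0^\Theta}(k)$ acts strongly transitively on $\mathcal{X}_k(x,V_0^\Theta)$, which proves~\ref{item dir building partial}. The delicate points, beyond the standard density bookkeeping, are the facet/residue identification of the previous sentence and checking that the possible discrepancy between $\Theta^\perp$ and the span of the walls through $x$ only contributes a trivial spherical factor, hence affects neither the type nor the strong transitivity.
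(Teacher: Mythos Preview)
Your treatment of~\ref{item dir building full} is essentially the paper's: reduction via~\cite[4.6.33]{BT2} to the spherical building of the reductive quotient $\overline{G}_x$ over the residue field, together with the density argument to pass from $K$ to $k$. That part is fine.

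For~\ref{item dir building partial} your strategy---realize $\mathcal{X}_k(x,V_0^\Theta)$ as the residue of the facet $F_\Theta=\germ_x\big(Q(x,D_0^\Theta)\big)$ inside $\mathcal{X}_k(x)$---is genuinely different from the paper, which instead builds a BN-pair $(\overline{B}_{x,\Theta},\overline{N}_{x,\Theta})$ for the subgroup $\overline{G}_{x,\Theta}=\mu\big(\mathbf{U}_{x+V_0^\Theta}(K)\big)$ directly, exhibits the fundamental chamber $\mathfrak{C}_{x,\Theta}=\bigcap_{\alpha\in\Phi_\Theta^0\cap\Phi_x^+}D_{\alpha,x}\subset A_x$, and computes its pointwise stabilizer. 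Your computation of $\mathbf{U}_{x+V_0^\Theta}(k)$ and of the type $\Theta'$ via Lemma~\ref{lemma subroot system} is correct and matches the paper's.

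However, your key identification has a genuine gap. The biconditional ``$z$ lies in a common $k$-apartment with the affine subspace $x+V_0^\Theta$ if and only if $z$ and $F_\Theta$ lie in a common apartment of $\mathcal{X}_k(x)$'' is false: by the building axioms any face of $\mathcal{X}_k(x)$ shares an apartment with the facet $F_\Theta$, so your right-hand side is all of $\mathcal{X}_k(x)$. Take $\Theta=\emptyset$: then $x+V_0^\emptyset=\mathbb{A}_0$, so $\mathcal{X}_k(x,V_0^\emptyset)=\mathbb{A}_0\cap\mathcal{X}_k(x)$ is a single apartment, while your right-hand side is everything. The underlying issue is that $F_\Theta$ is only a one-sided germ: a local apartment containing $F_\Theta$ need not come from a $k$-apartment containing the full unbounded subspace $x+V_0^\Theta$. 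Likewise, as \emph{sets} inside $\mathcal{X}_k(x)$, neither the residue of $F_\Theta$ (the star of $F_\Theta$) nor the union of apartments through $F_\Theta$ coincides with $\mathcal{X}_k(x,V_0^\Theta)=\overline{G}_{x,\Theta}\cdot A_x$; the paper's chambers $\mathfrak{C}_{x,\Theta}$ are unions of several chambers of $\mathcal{X}_k(x)$, not single chambers in the residue.

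What is true is that the abstract building you obtain and the residue of $F_\Theta$ are isomorphic (both are spherical buildings of the same Levi over $\kappa$), and $\overline{G}_{x,\Theta}$ fixes $F_\Theta$. So your route can be salvaged, but only after one has already \emph{defined} the chamber structure on $\mathcal{X}_k(x,V_0^\Theta)$ and exhibited the $\overline{G}_{x,\Theta}$-equivariant bijection ``chamber $\mapsto$ its unique subchamber containing $F_\Theta$''---which amounts to carrying out the paper's BN-pair computation first. The paper's direct approach, identifying $\mathfrak{C}_{x,\Theta}$ and checking that $\overline{B}_{x,\Theta}$ is its stabilizer, is what actually closes this gap.
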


\begin{proof}

Using~\cite[6.4.23]{BT}, one can define\footnote{Precisely, we take $f=f_x$, we take $X^*=X$ to be the pointwise stabilizer of $\mathcal{X}_k(x)$ in $\mathbf{T}(K)$, so that the group $P_x = X \cdot \mathbf{U}_{x}(K)$ and $P_x^*$ correspond respectively to $X \cdot U_f$ and $X^* \cdot U_f^*$ and $f^*=f_{\Omega_w}$.}
subgroups $P_x$ and $P^*_x$ of $\mathbf{G}(K)$ that satisfies the following properties:
\begin{itemize}
    \item $P^*_x $ fixes $\mathcal{X}_k(x)$;
    \item for any $\alpha \in \Phi$, we have $P_x \supset \mathbf{U}_{\alpha,x}(K)$;
    \item the quotient group $\overline{G}_x = P_x / P^*_x$ admits a root group datum $\left(\overline{T}_x,(\overline{U}_\alpha)_{\alpha \in \Phi_x}\right)$ of type $\Phi_x$;
    \item by construction, $\overline{G}_x$ is generated by the $\overline{U}_\alpha$.
\end{itemize}

Denote by $\mu : P_x \to \overline{G}_x$ the canonical projection.
By construction, $\overline{U}_\alpha = \mu\left( \mathbf{U}_{\alpha,x}(K) \right)$ for any $\alpha \in \Phi_x$.
By definition of $\mathbf{U}_{\alpha,x}(K)$, of $P^*_x$ and by the writing in~\cite[6.4.9]{BT}, we have that:
\begin{equation}\label{eq Ubar alpha}
\mu\big( \mathbf{U}_{\alpha,x}(K) \big) \cong \theta_\alpha\big( \{ x \in K,\ \nu_\p(x) \geq - \alpha(x) \} \big) / \theta_\alpha\big( \{ x \in K,\ \nu_\p(x) > - \alpha(x) \} \big)
\end{equation}
In particular, $\mu\left( \mathbf{U}_{\alpha,x}(K) \right)$ is trivial for any $\alpha \in \Phi \smallsetminus \Phi_x$.
The existence of the root group datum of type $\Phi_x$ of $\overline{G}_x$ induces a spherical building structure on the star $\mathcal{X}_k(x)$ of $x$ together with a strongly transitive action of $\overline{G}_x$ on this building.
Its apartments are the intersection of $K$-apartments of $\mathcal{X}_K$ with $\mathcal{X}_k(x)$ (see~\cite[4.6.35]{BT2}).
Because $K$ is the completion of $k$ and the both have same residue field, we deduce from~\eqref{eq Ubar alpha} and isomorphism $\theta_\alpha: \mathbb{G}_a \to \mathbf{U}_\alpha$ that $\mu\big(\mathbf{U}_{\alpha,x}(K)\big) = \mu\big( \mathbf{U}_{\alpha,x}(k) \big)$ for any $\alpha \in \Phi$.
Hence $\mathcal{A}_k(x)$ is, in fact, the set of apartments of $\mathcal{X}_k(x)$ since $\overline{G}_x$ acts transitively on it and is generated by the $\mu\big( \mathbf{U}_{\alpha,x}(k) \big)$.
Since $\overline{G}_x$ acts strongly transitively on $\mathcal{X}_k(x)$, we deduce that $P_x$ acts strongly transitively on the spherical building $\mathcal{X}_k(x)$ via $\mu$.
Since $\mathbf{U}_x(k)$ is, by definition, generated by the $\mathbf{U}_{\alpha,x}(k)$ for $\alpha \in \Phi$, the group $\mathbf{U}_x(k)$ acts strongly transitively on $\mathcal{X}_k(x)$ via $\mu$.

Let $A_x := \mathbb{A}_0 \cap \mathcal{X}_k(x) \in \mathcal{A}_k(x)$, and let $\Omega_{x,\Theta} := \cl\big( x+V_0^\Theta \big)  \cap \mathcal{X}_k(x) \subset A_x$.
Denote by $\overline{G}_{x,\Theta} = \mu\big( \mathbf{U}_{x+V_0^\Theta}(K) \big) \subset \overline{G}_x$ and by $\overline{N}_{x,\Theta}= \mu\big( \mathbf{N}(K) \cap \mathbf{U}_{x+V_0^\Theta}(K) \big)$, so that $\overline{N}_{x,\Theta} = \overline{N}_x \cap \overline{G}_{x,\Theta}$ is the setwise stabilizer in $\overline{N}_x$ of $\Omega_{x,\Theta}$.
Indeed, $\mathbf{U}_{x+V_0^\Theta}(K)$ pointwise stabilizes $x+V_0^\Theta$ in $\mathcal{X}_K$ \cite[9.3]{L}, whence it setwise stabilizes its enclosure and, $\overline{N}_x$ setwise stabilizes $\mathcal{X}_k(x)$.

By definition, if $\alpha \in \Phi_\Theta^0$, then $\mathbf{U}_{\alpha,x+V_0^\Theta}(K)=\mathbf{U}_{\alpha,x}(K)$; and $\mathbf{U}_{\alpha,x+V_0^\Theta}(K)$ is trivial otherwise. Whence by~\cite[6.4.9]{BT}, we have
\[\mathbf{U}_{x+V_0^\Theta}(K) = \left(\prod_{\alpha \in \Phi_\Theta^0 \cap \Phi^+} \mathbf{U}_{\alpha,x}(K) \right)\left( \prod_{\alpha \in \Phi_\Theta^0 \cap \Phi^-} \mathbf{U}_{\alpha,x}(K) \right) N_{x+V_0^\Theta}\]
where $N_{x+V_0^\Theta} = \mathbf{N}(K) \cap \mathbf{U}_{x+V_0^\Theta}(K)$ according to~\cite[8.9(iv)]{L}.
Hence the group $\overline{G}_{x,\Theta}$ admits a generating root group datum $\left( \overline{T}_x, \left( \overline{U}_\alpha \right)_{\alpha \in \Phi_x \cap \Phi_\Theta^0} \right)$ inducing the BN-pair $\big( \overline{B}_{x,\Theta}, \overline{N}_{x,\Theta} \big)$ of type $\Phi_\Theta^0 \cap \Phi_x$ where $\overline{B}_{x,\Theta} = \overline{T}_x \cdot \left(\prod_{\alpha \in \Phi_\Theta^0 \cap \Phi_x^+} \overline{U}_\alpha\right)$ for any ordering on the product \cite[Prop. 10.5]{RouEuclidean}.

According to~\cite[9.7(i)]{L}, the group $\mathbf{U}_{x+V_0^\Theta}(K)$ acts transitively on the set of $K$-apartments of $\mathcal{X}_K$ containing $x+V_0^\Theta$. Hence, since the group $\overline{G}_{x,\Theta} = \mu\big( \mathbf{U}_{x + V_0^\Theta}(K) \big)$ is generated by the $\overline{U}_\alpha = \mu\big( \mathbf{U}_{\alpha,x}(k) \big)$ for $\alpha \in \Phi_\Theta^0 \cap \Phi_x $, it acts transitively on the set $\mathcal{A}_k(x,V_0^\Theta)$ of intersections of a $k$-apartment containing $x+V_0^\Theta$ with $\mathcal{X}_k(x)$.
Hence, the action of $\overline{G}_x$ on $\mathcal{X}_k(x)$ induces an action of the group $\overline{G}_{x,\Theta}$ on the subset $\mathcal{X}_k(x,V_0^\Theta)$.

The set of points of $\mathcal{X}_k(x,V_0^\Theta)$ fixed by $\overline{T}_x = \overline{B}_{x,\Theta} \cap \overline{N}_{x,\Theta}$ is
\[\{ z \in\mathcal{X}_k(x,V_0^\Theta),\ \forall t \in \overline{T}_x,\ t \cdot z = z\}= A_x \cap \mathcal{X}_k(x,V_0^\Theta) = A_x.\]
Since $A_x$ is the apartment of the spherical building $\mathcal{X}_k(x)$ corresponding to $\overline{T}_x = \overline{B}_x \cap \overline{N}_x$, we know that the pointwise stabilizer of $A_x$ in $\overline{G}_x$ is $\overline{T}_x$.
Hence, the pointwise stabilizer of $A_x$ in $\overline{G}_{x,\Theta}$ is $\overline{T}_x$.

Let $\mathfrak{C}_{x,\Theta}$ be the set of points in $\mathcal{X}_k(x,V_0^\Theta)$ fixed by $\overline{B}_{x,\Theta}$.
Then $\mathfrak{C}_{x,\Theta} \subset A_x$ since it is fixed by $\overline{T}_x$.
For any $\alpha \in \Phi_\Theta^0 \cap \Phi_x^+$, the root group $\overline{U}_\alpha$ fixes exactly an half-apartment of $A_x$ that we denote by $D_{\alpha,x} = \{ z \in A_x,\ -\alpha(z) \geq -\alpha(x)\}$.
Hence 
\begin{equation}\label{eq directed chamber}
\mathfrak{C}_{x,\Theta} = \bigcap_{\alpha \in \Phi_\Theta^0 \cap \Phi_x^+} D_{\alpha,x} = \{ z \in A_x,\ \forall \alpha \in \Phi_\Theta^0 \cap \Phi_x^+,\ -\alpha(z) \geq -\alpha(x)\}.
\end{equation}
Let $g \in \overline{G}_{x,\Theta}$ be a element pointwise stabilizing $\mathfrak{C}_{x,\Theta}$.
Then, there exists $\tilde{g} \in \mathbf{U}_{\mathfrak{C}_{x,\Theta}}(K)$ such that $g = \mu(\tilde{g})$.
Write it $\tilde{g} = nu^-u^+$ with $n \in \mathbf{N}(K) \cap \mathbf{U}_{\mathfrak{C}_{x,\Theta}}(K)$, $u^- \in \prod_{\alpha \in \Phi_\Theta^0 \cap \Phi_x^-} \mathbf{U}_{\alpha,\mathfrak{C}_{x,\Theta}}(K)$ and $u^+ \in \prod_{\alpha \in \Phi_\Theta^0 \cap \Phi_x^+} \mathbf{U}_{\alpha,\mathfrak{C}_{x,\Theta}}(K)$.
Then, by definition, $\mu\big( \mathbf{U}_{\alpha,\mathfrak{C}_{x,\Theta}}(K) \big)$ is trivial, for any $\alpha \in \Phi_\Theta^0 \cap \Phi_x^-$. Thus $\mu(u^-)$ is trivial and $\mu(u^+) \in \overline{B}_{x,\Theta}$ fixes $\mathfrak{C}_{x,\Theta}$.
Hence $\mu(n) \in \overline{N}_{x,\Theta}$ fixes $\mathfrak{C}_{x,\Theta}$ which is the Weyl chamber of $A_x$ with respect to the Weyl group $\overline{N}_{x,\Theta} / \overline{T}_x$, whence $\mu(n) \in \overline{T}_x$.
Hence $\overline{B}_{x,\Theta}$ is the pointwise stabilizer in $\overline{G}_{x,\Theta}$ of $\mathfrak{C}_{x,\Theta}$.

As a consequence, the BN-pair $\big( \overline{B}_{x,\Theta}, \overline{N}_{x,\Theta} \big)$ induces a spherical building structure of type $\Phi_\Theta^0 \cap \Phi_x$ on $\mathcal{X}_k(x,V_0^\Theta)$ whose the set of apartments is $\mathcal{A}_k(x,V_0^\Theta) = \overline{G}_{x,\Theta} \cdot A_x$ and the set of chambers is $\overline{G}_{x,\Theta} \cdot \mathfrak{C}_{x,\Theta}$.
Since $\overline{G}_{x,\Theta}$ is generated by the $\overline{U}_\alpha = \mu\big( \mathbf{U}_{\alpha,x+V_0^\Theta}(k) \big)$ and $\mathbf{U}_{x+V_0^\Theta}(k)$ is generated by the $\mathbf{U}_{\alpha,x+V_0^\Theta}(k)$, we get the result via $\mu$.
\end{proof}

\begin{proposition}
Let $x \in \mathcal{X}_k$ be any point.
Let $\mathbb{A}$ be a $k$-apartment containing $x$ and $V$ be the vector space spanned by some vector face $D$ of $\mathbb{A}$ so that $x+V$ is the affine subspace of $\mathbb{A}$ spanned by the sector face $Q(x,D)$.
Then, the star $\mathcal{X}_k(x,V)$ directed by $V$ of $x$ is naturally equipped with a structure of spherical building and the intersection of its chambers is
\[\Omega(x,V) := \cl(x+V) \cap \mathcal{X}_k(x).\]

Moreover, for any $k$-apartment $\mathbb{A}'$ containing $\Omega(x,V)$ and any vector space $V'$ spanned by some vector face $D'$ such that $x+V'$ is the affine subspace of $\mathbb{A}'$ spanned by $Q(x,D')$ such that $(x+V') \cap \mathcal{X}_k(x) = (x+V) \cap \mathcal{X}_k(x)$, the spherical buildings $\mathcal{X}_k(x,V)$ and $\mathcal{X}_k(x,V')$ are the same.
\end{proposition}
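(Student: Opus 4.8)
The plan is to reduce to the standard situation of Proposition~\ref{prop directed building} by moving everything into the standard apartment $\mathbb{A}_0$ with an element of $\mathbf{G}(k)$, apply that proposition, and transport the conclusion back. First I would choose the reducing element. Since $\mathbf{G}(k)$ acts transitively on the $k$-apartments of $\mathcal{X}_k$, there is $g_1 \in \mathbf{G}(k)$ with $g_1 \cdot \mathbb{A} = \mathbb{A}_0$; then $g_1 \cdot D$ is a vector face of $\mathbb{A}_0$, and since $W^{\mathrm{sph}}$ acts transitively on the vector chambers of $\mathbb{A}_0$ and every vector face is a face of a vector chamber, there are $n_w \in N^{\mathrm{sph}} \subset \mathbf{N}(k)$ and $\Theta \subseteq \Delta$ with $g_1 \cdot D = \mathfrak{w}^{\mathrm{sph}}(n_w) \cdot D_0^\Theta$ (as at the end of the proof of Lemma~\ref{lemma WUW}). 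Putting $h := n_w^{-1} g_1 \in \mathbf{G}(k)$ and $y := h \cdot x$, we get $h \cdot \mathbb{A} = \mathbb{A}_0$ (since $n_w$ stabilises $\mathbb{A}_0$), $h \cdot D = D_0^\Theta$, hence $h \cdot V = V_0^\Theta$, $h \cdot (x+V) = y + V_0^\Theta$ and $h \cdot Q(x,D) = Q(y,D_0^\Theta)$ with $y \in \mathbb{A}_0$.

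Since $h$ acts on $\mathcal{X}_k$ as an automorphism permuting $k$-apartments and $k$-faces and preserving enclosures of subsets of apartments, and since $\mathcal{X}_k(x,V)$, $\mathcal{A}_k(x,V)$ and $\Omega(x,V) = \cl(x+V) \cap \mathcal{X}_k(x)$ are defined purely in those terms, the map $z \mapsto h \cdot z$ restricts to a bijection $\mathcal{X}_k(x,V) \to \mathcal{X}_k(y,V_0^\Theta)$ carrying $\mathcal{A}_k(x,V)$ onto $\mathcal{A}_k(y,V_0^\Theta)$ and $\Omega(x,V)$ onto $\Omega(y,V_0^\Theta) := \cl(y+V_0^\Theta) \cap \mathcal{X}_k(y)$. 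Proposition~\ref{prop directed building}\ref{item dir building partial} then endows $\mathcal{X}_k(y,V_0^\Theta)$, with the apartment system $\mathcal{A}_k(y,V_0^\Theta)$, with a spherical building structure of type $\Phi_\Theta^0 \cap \Phi_y$, whose Coxeter group $W'$ is the Weyl group of that root system. To identify the intersection of its chambers with $\Omega(y,V_0^\Theta)$ I would use the description in the proof of that proposition: each apartment of the building is $\mathbb{A}'' \cap \mathcal{X}_k(y)$ for a $k$-apartment $\mathbb{A}'' \supseteq y + V_0^\Theta$, hence it contains $\cl(y+V_0^\Theta) \cap \mathcal{X}_k(y) = \Omega(y,V_0^\Theta)$; the reflections generating $W'$ are the $r_\alpha$ for $\alpha \in \Phi_\Theta^0 \cap \Phi_y$, whose walls $H_{\alpha,\alpha(y)}$ all contain $y + V_0^\Theta$ because such $\alpha$ is constant equal to $\alpha(y)$ on $V_0^\Theta = \Theta^\perp$; hence $W'$ fixes $\Omega(y,V_0^\Theta)$ pointwise, so $\Omega(y,V_0^\Theta)$ lies in the closure of every chamber and equals the intersection of the chambers of any one apartment. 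Since any two chambers of a building lie in a common apartment, the intersection of all chambers of $\mathcal{X}_k(y,V_0^\Theta)$ is $\Omega(y,V_0^\Theta)$; transporting back by $h^{-1}$ gives the first assertion.

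For the second assertion, the point is that $\mathcal{X}_k(x,V)$ and $\mathcal{A}_k(x,V)$ depend only on the subset $(x+V) \cap \mathcal{X}_k(x)$ of the star. Indeed $\mathcal{X}_k(x)$ is an open neighbourhood of $x$ in $\mathcal{X}_k$ (it contains the open star of the face of $x$), so $(x+V) \cap \mathcal{X}_k(x)$ is a neighbourhood of $x$ in the affine subspace $x+V$ and therefore affinely spans it; consequently, for any $k$-apartment $\mathbb{A}''$ one has $\mathbb{A}'' \supseteq x+V$ if and only if $\mathbb{A}'' \cap \mathcal{X}_k(x) \supseteq (x+V) \cap \mathcal{X}_k(x)$, and likewise with $V'$. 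As $(x+V') \cap \mathcal{X}_k(x) = (x+V) \cap \mathcal{X}_k(x)$ by hypothesis, the defining conditions of $\mathcal{X}_k(x,V)$ and $\mathcal{X}_k(x,V')$, and of $\mathcal{A}_k(x,V)$ and $\mathcal{A}_k(x,V')$, coincide, so these sets are equal; by the first part each carries a spherical building structure compatible with this common point set and apartment system, and since the chambers and the type are recovered from the apartment system through its walls, the two structures coincide.

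The main obstacle I anticipate is the bookkeeping in the second paragraph: pinning down how $\cl(y+V_0^\Theta) \cap \mathcal{X}_k(y)$ sits inside the apartments of the directed star-building and checking it is the pointwise fixed set of $W'$ there (matching the set denoted $\Omega_{x,\Theta}$ in the proof of Proposition~\ref{prop directed building}), together with the elementary topological fact used in the third paragraph that the star of a point is a neighbourhood, which is what makes the condition $\mathbb{A}'' \supseteq x+V$ detectable inside $\mathcal{X}_k(x)$.
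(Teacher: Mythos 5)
The first half of your argument is essentially the paper's: after conjugating $\mathbb{A}$ onto $\mathbb{A}_0$ and $D$ onto $D_0^\Theta$, you invoke Proposition~\ref{prop directed building}\ref{item dir building partial} and identify the intersection of the chambers with the pointwise fixed set of the Weyl group $W'$ of $\Phi_\Theta^0 \cap \Phi_y$ inside one apartment. One small caveat: ``$W'$ fixes $\Omega(y,V_0^\Theta)$ pointwise'' only gives the inclusion of $\Omega(y,V_0^\Theta)$ into the intersection of the closed chambers of $A_y$; the asserted equality also needs the converse computation that every $z \in A_y$ with $\alpha(z)=\alpha(y)$ for all $\alpha \in \Phi_\Theta^0 \cap \Phi_y$ lies in $\cl(y+V_0^\Theta) \cap \mathcal{X}_k(y)$. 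This is routine (the paper also dismisses it as a straightforward calculation), but it should be said.

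The second half has a genuine gap. Your key claim is that, because $(x+V)\cap\mathcal{X}_k(x)$ affinely spans $x+V$, a $k$-apartment $\mathbb{A}''$ contains $x+V$ if and only if $\mathbb{A}''\cap\mathcal{X}_k(x)\supseteq(x+V)\cap\mathcal{X}_k(x)$. The backward implication is false: an apartment is convex, so it contains the convex hull of any subset it contains, but it need not contain the unbounded affine span computed inside a \emph{different} apartment. Already for $\mathbf{G}=\mathrm{SL}_2$ (a tree), take $x$ a vertex and $V$ the full line, so that $x+V=\mathbb{A}$ and $(x+V)\cap\mathcal{X}_k(x)$ is the pair of edges of $\mathbb{A}$ at $x$; any apartment through these two edges satisfies the right-hand condition, yet most such apartments diverge from $\mathbb{A}$ away from $x$ and do not contain $x+V$. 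Since the definitions of $\mathcal{X}_k(x,V)$ and $\mathcal{A}_k(x,V)$ quantify over apartments containing the \emph{whole} of $x+V$, they are not literally local in $(x+V)\cap\mathcal{X}_k(x)$, and your identification of the defining conditions for $V$ and $V'$ does not follow. What is true, and is what the paper proves, is that the resulting \emph{traces} on the star coincide: using the decomposition of $\mathbf{U}_{\Omega(x,V_0^\Theta)}(K)$ from \cite[6.4.9]{BT} one shows $\mu\big(\mathbf{U}_{\Omega(x,V_0^\Theta)}(K)\big)=\mu\big(\mathbf{U}_{x+V_0^\Theta}(K)\big)=\overline{G}_{x,\Theta}$, so the directed star $\mathcal{X}_k(x,V_0^\Theta)=\overline{G}_{x,\Theta}\cdot A_x$ and its apartment system are determined by $\Omega(x,V_0^\Theta)$ alone, and $\Omega(x,V)=\Omega(x,V')$ under the hypothesis. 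Some group-theoretic input of this kind (transitivity of $\mathbf{U}_{\Omega}(K)$ on apartments containing $\Omega$) is needed; the purely affine spanning argument cannot replace it.
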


\begin{proof}
Because $\mathbf{G}(k)$ acts transitively on the set of $k$-apartments of $\mathcal{X}_k$ by definition, we can assume without loss of generalities that $\mathbb{A} = \mathbb{A}_0$.
By definition, every apartment containing $x+V$ contains its enclosure. Because the Weyl group induces a subgroup of $\mathbf{N}(k) \subset \mathbf{G}(k)$, we can assume without loss of generalities that $V=V_0^\Theta$ for some $\Theta \subset \Delta$.

We keep the notation introduced in the proof of Proposition~\ref{prop directed building}.
Since, by the writing~\eqref{eq directed chamber}, $\mathfrak{C}_{x,\Theta}$ is the intersection with $A_x$ of some half-apartments $D_{\alpha,x}$ that contain $x+V_0^\Theta$, we get that $\mathfrak{C}_{x,\Theta} \supset \cl(x+V_0^\Theta) \cap A_x$.
For any $g \in \overline{G}_{x,\Theta}$, since $g$ fixes $(x+V_0^\Theta) \cap A_x$, we get that $g \cdot \mathfrak{C}_{x,\Theta}$ also contains $\cl(x+V_0^\Theta) \cap A_x$.
Thus the intersection of these chambers contains $\cl(x+V_0^\Theta) \cap A_x$.

Conversely, the intersection of the chambers $g \cdot\mathfrak{C}_{x,\Theta}$, for $g \in \overline{G}_{x,\Theta}$, is contained in the intersection of the $n \cdot \mathfrak{C}_{x,\Theta}$, for $n \in \overline{N}_{x,\Theta}$, which is $\{z \in A_x,\ \forall \alpha \in \Phi_\Theta^0 \cap \Phi_x,\ -\alpha(z) =-\alpha(x)\}$ as intersection of the Weyl chambers of $A_x$, which is precisely $\cl(x+V_0^\Theta) \cap A_x$ thanks to a straightforward calculation.

According to~\cite[6.4.9]{BT} again, one can write
\[\mathbf{U}_{\Omega(x,V_0^\Theta)}(K) = \left( \prod_{\alpha \in \Phi^+} \mathbf{U}_{\Omega(x,V_0^\Theta)}(K) \right)\left( \prod_{\alpha \in \Phi^-} \mathbf{U}_{\Omega(x,V_0^\Theta)}(K) \right) N_{\Omega(x,V_0^\Theta)}\]
where $N_{\Omega(x,V_0^\Theta)} = \mathbf{N}(K) \cap \mathbf{U}_{\Omega(x,V_0^\Theta)}(K)$.
Observe that, by definition, $\mu\big(N_{\Omega(x,V_0^\Theta)}\big) = \mu\big( N_{x+V_0^\Theta}\big) = \overline{N}_{x+V_0^\Theta}$ is the stabilizer in $\overline{N}_x$ of $\Omega(x,V_0^\Theta)$.
For any $\alpha \in \Phi$, if $\alpha \not\in \Phi_x \cap \Phi_0^\Theta$, we have that $\mu\big( \mathbf{U}_{\alpha,\Omega(x,V_0^\Theta)}(K) \big)$ is trivial and, if $\alpha \in \Phi_x \cap \Phi_0^\Theta$, we have $\mu\big( \mathbf{U}_{\alpha,\Omega(x,V_0^\Theta)}(K) \big) = \overline{U}_\alpha$.
Hence $\mu\big( \mathbf{U}_{\Omega(x,V_0^\Theta)}(K)\big) = \mu\big( \mathbf{U}_{x+V_0^\Theta}(K)\big) = \overline{G}_{x,\Theta}$.

As a consequence, for any $k$-apartment $\mathbb{A}'$ containing $\Omega(x,V_0^\Theta)$ and any vector space $V'$ so that $x+V'$ is an affine subspace of $\mathbb{A}'$ such that $(x+V') \cap \mathcal{X}_k(x) = (x+V_0^\Theta) \cap \mathcal{X}_k(x)$, we have that $\Omega(x,V_0^\Theta) = \Omega(x,V')$ and, therefore, $\mathcal{X}_k(x,V_0^\Theta)= \overline{G}_{x,\Theta} \cdot A_x= \mu\big( \mathbf{U}_{\Omega(x,V_0^\Theta)}(K)\big) \cdot A_x  = \mathcal{X}_k(x,V') $.
\end{proof}

\begin{notation}\label{not directed face}
Given a $k$-sector face $Q(v,D)$ with tip $v$ and direction $D$ of the building $\mathcal{X}_k$, we denote by $\mathfrak{C}(v,D)$ the maximal $k$-face of the enclosure of the germ at $v$ of the $k$-sector face $Q(v,D)$.\nomenclature[]{$\mathfrak{C}(v,D)$}{see Notation~\ref{not directed face}}
\end{notation}

\begin{proposition}\label{prop starAction}
Let $Q(x,D)$ be any $k$-sector face of $\mathcal{X}_k$.
There exists a subsector face $Q(y_2,D)\subseteq Q(x,D)$ and a $k$-apartment $\mathbb{A}$ containing $Q(y_2,D)$ such that for any point $v \in Q(y_2,D)$:
\nomenclature[]{$y_2$}{see Proposition~\ref{prop starAction}}
\begin{enumerate}[label=(\arabic*)]
    \item \label{item starAction fix} the stabilizer of $v$ in $\mathbf{G}(A)$ fixes $Q(v,D)$;
    \item \label{item starAction orbits} if $V$ is the vector space such that $v+V$ is the affine subspace spanned by $Q(v,D)$ in $\mathbb{A}$, then the $\stab_{\mathbf{G}(A)}(v)$-orbits of elements in $\mathcal{X}_k(v,V)$ cover $\mathcal{X}_k(v)$;
    \item \label{item starAction fixed faces} for any face $F \in \mathcal{X}_k(v)$, if for any $g \in \stab_{\mathbf{G}(A)}(v)$, we have $g \cdot F = F$, then $\mathcal{R}_k\big(\mathfrak{C}(v,D)\big) \cap \mathcal{R}_k(F) \neq \emptyset$
\end{enumerate}

Moreover, there is $h \in H$ such that $h \cdot Q(y_2,D) \subset h \cdot \mathbb{A} = \mathbb{A}_0$.
\end{proposition}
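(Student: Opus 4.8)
The statement of Proposition~\ref{prop starAction} combines three assertions about a suitable subsector face $Q(y_2,D)$ of a given $k$-sector face $Q(x,D)$, together with the final $H$-conjugacy claim. The plan is to obtain $Q(y_2,D)$ by successively refining $Q(x,D)$, using the preceding results in the section. First I would apply Proposition~\ref{prop igual stab} to $Q(x,D)$ to get a subsector face $Q(y_1,D)$ along which, for every $v\in\overline{Q}(y_1,D)$, one has $\stab_{\mathbf{G}(A)}(v)=\fix_{\mathbf{G}(A)}\big(Q(v,D)\big)$; this immediately gives assertion~\ref{item starAction fix} for any subsector face inside $Q(y_1,D)$. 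Then, using Lemma~\ref{lema0} and Lemma~\ref{lemma becomes special} together with the functorial embedding $\iota:\mathcal{X}_k\hookrightarrow\mathcal{X}_\ell$ of \S\ref{intro rational building}, I would move to a totally ramified extension $\ell/k$ and choose a $k$-apartment $\mathbb{A}$ and an element $g'\in\mathbf{G}(k)$ conjugating $Q(y_1,D)$ into a standard-type sector face $Q(w,D_0^\Theta)\subset\mathbb{A}_0$, reducing to the ``standard'' case; this also produces, via Lemma~\ref{lemma WUW} and the definition~\eqref{eq def H} of $H$, the element $h\in H$ with $h\cdot Q(y_2,D)\subset h\cdot\mathbb{A}=\mathbb{A}_0$ that appears in the ``moreover'' clause.

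For assertion~\ref{item starAction orbits} I would use Proposition~\ref{prop directed building}: the star $\mathcal{X}_k(v)$ is a spherical building of type $\Phi_v$ on which $\mathbf{U}_v(k)$ acts strongly transitively, and $\mathcal{X}_k(v,V)$ is a sub-building of type $\Phi_\Theta^0\cap\Phi_v$ on which $\mathbf{U}_{v+V}(k)$ acts strongly transitively. The point is that every chamber of $\mathcal{X}_k(v)$ is obtained from a chamber of $\mathcal{X}_k(v,V)$ by an element of $\mathbf{U}_v(k)$; to descend this to $\stab_{\mathbf{G}(A)}(v)$ I would invoke the lower bound Proposition~\ref{prop ideal contained}, which guarantees that each root group $\mathbf{U}_\alpha$ for $\alpha\in\Phi_v\setminus\Phi_\Theta^0$ contributes a full non-zero $A$-ideal $J_\alpha(h)$ worth of elements of $\mathbf{U}_{v,\alpha}(k)$ lying in $h\mathbf{G}(A)h^{-1}$. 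Pulling back through $h$ and applying Lemma~\ref{lem subsector face and roots}\ref{item subsector real} to push $v$ far enough into $Q(y_1,D)$, these ideals become large enough (relative to $-\alpha(v)$) that $\mathbf{U}_{v,\alpha}(k)\cap\stab_{\mathbf{G}(A)}(v)$ surjects onto the relevant $\overline{U}_\alpha$ in the residue group $\overline{G}_v$. Strong transitivity of the resulting subgroup on $\mathcal{X}_k(v)$ then yields~\ref{item starAction orbits}.

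For assertion~\ref{item starAction fixed faces}, suppose $F\in\mathcal{X}_k(v)$ is fixed by all of $\stab_{\mathbf{G}(A)}(v)$. Using~\ref{item starAction orbits}, some $\stab_{\mathbf{G}(A)}(v)$-translate $F'$ of $F$ lies in $\mathcal{X}_k(v,V)$; but $F$ is fixed, so $F=F'\in\mathcal{X}_k(v,V)$, whence $F$ is contained in the enclosure $\Omega(v,V)=\cl(v+V)\cap\mathcal{X}_k(v)$, which is the intersection of the chambers of the sub-building $\mathcal{X}_k(v,V)$ and contains $\mathfrak{C}(v,D)$. A chamber of $\mathcal{X}_k(v)$ containing $\mathfrak{C}(v,D)$ in its closure then lies in $\mathcal{R}_k\big(\mathfrak{C}(v,D)\big)\cap\mathcal{R}_k(F)$, which is therefore non-empty. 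Finally I would set $Q(y_2,D)$ to be a common subsector face of the $Q(y_1,D)$ obtained from Proposition~\ref{prop igual stab} and the subsector faces produced by the applications of Lemma~\ref{lem subsector face and roots} (one for each $\alpha\in\Phi_v\setminus\Phi_\Theta^0$), which is again a subsector face of $Q(x,D)$.

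**Main obstacle.** The delicate point is the quantitative step in~\ref{item starAction orbits}: I must ensure that, \emph{simultaneously for every point $v$ of the subsector face} $Q(y_2,D)$ and every relevant root $\alpha$, the $A$-ideal $J_\alpha(h)$ (which depends on the ``base'' element $h\in H$, not on $v$) is large enough, after pulling back through $h$ and re-expressing in the residue group at $v$, to surject onto $\overline{U}_\alpha$. This is exactly why the problem is phrased with a subsector face rather than a single point: the values $-\alpha(v)$ grow linearly as $v$ moves into the sector, so the uniform choice is possible precisely because of the linear-growth estimate in Lemma~\ref{lem subsector face and roots}\ref{item subsector real} applied to the finitely many roots in $\Phi\setminus\Phi_\Theta^0$ and the bound on $\nu(J_\alpha(h)\setminus\{0\})$. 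Keeping track of the interplay between the three layers — the building $\mathcal{X}_\ell$ over the ramified extension (needed so $v$ is $\ell$-special and Proposition~\ref{prop directed building} applies cleanly), the conjugation by $h\in H$, and the residue buildings $\overline{G}_v$ — without losing the uniformity is the technical heart of the argument.
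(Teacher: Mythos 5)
Your overall architecture matches the paper's: Proposition~\ref{prop igual stab} for~\ref{item starAction fix}, reduction to the standard apartment by an element $h=n^{-1}u\in H$ obtained from Lemma~\ref{lemma WUW}, and the Riemann--Roch estimate on the ideals $J_\alpha(h)$ of Proposition~\ref{prop ideal contained} to make the arithmetic stabilizer surject onto the residue root groups $\overline{U}_\alpha$ for $\alpha\in\Phi_\Theta^+\cap\Phi_v$, uniformly along a subsector face chosen with Lemma~\ref{lem subsector face and roots}. Two points, however, do not go through as written. First, in~\ref{item starAction orbits} you conclude by ``strong transitivity of the resulting subgroup on $\mathcal{X}_k(v)$'': the subgroup you actually control only covers the root groups $\overline{U}_\alpha$ for $\alpha\in\Phi_\Theta^+\cap\Phi_v$ (not those for $\alpha\in\Phi_\Theta^0$ or $\Phi_\Theta^-$), and this subgroup is nowhere near strongly transitive on the star. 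What is needed — and what the paper does — is the ordered factorization $\mathbf{U}_{\mathfrak{C}(v,D_0^\Theta)}(K)=N\cdot(\prod_{\Phi^-})\cdot(\prod_{\Phi_\Theta^0\cap\Phi^+})\cdot(\prod_{\Phi_\Theta^+})$ from \cite[6.4.9]{BT}, together with the observation that the first three factors preserve $\mathcal{X}_k(v,V_0^\Theta)$ modulo $\mu$, so that only the $\Phi_\Theta^+$-factor has to be realized inside $\stab_{\mathbf{G}(A)}(v)$.

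Second, and more seriously, your argument for~\ref{item starAction fixed faces} contains a false step: from ``$F$ is fixed, hence $F\in\mathcal{X}_k(v,V)$'' you infer ``$F$ is contained in $\Omega(v,V)=\cl(v+V)\cap\mathcal{X}_k(v)$''. Being a face of the directed star $\mathcal{X}_k(v,V)$ is not the same as lying in the intersection of all its chambers; $\mathcal{X}_k(v,V_0^\Theta)$ is a (generally thick) spherical building of type $\Phi_\Theta^0\cap\Phi_v$ with many faces outside $\Omega(v,V)$, and the root groups $\overline{U}_\alpha$ with $\alpha\in\Phi_\Theta^0$ that move those faces are precisely the ones you have \emph{not} shown to lie in the image of $\stab_{\mathbf{G}(A)}(v)$. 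The paper's proof is a genuine separating-wall argument: assuming $\mathcal{R}_k(F)\cap\mathcal{R}_k\big(\mathfrak{C}(v,D)\big)=\emptyset$, one first moves $F$ into $A_w$ by some $u\in\prod_{\alpha\in\Phi_\Theta^0\cap\Phi_w^+}\overline{U}_\alpha$, finds a wall of $A_w$ associated to a root $\beta$ separating the two convex chamber sets, shows $\beta\in\Phi_\Theta^+\cap\Phi_w$ (this uses that $\beta\in\Phi_\Theta^0$ would contradict the separation), picks $u_\beta\in\overline{U}_\beta$ not fixing $u\cdot F$, and finally uses the root-datum axiom (DR2) to see that $u^{-1}u_\beta u$ lies in $\prod_{\gamma\in\Phi_\Theta^+\cap\Phi_w}\overline{U}_\gamma$, hence in the image of $\stab_{\mathbf{G}(A)}(v)$ by~\eqref{equation root groups of stabilizer}; this element moves $F$. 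None of this is recoverable from the containment you asserted, so~\ref{item starAction fixed faces} is a genuine gap in your proposal.
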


\begin{proof}
According to Proposition~\ref{prop igual stab}, it suffices to find a subsector face of $Q(y_1,D)$ satisfying statements~\ref{item starAction orbits} and~\ref{item starAction fixed faces}. In order to provide it, we extend the ideas of Soul\'e, exposed in \cite[1.2]{So}, to our context.
Using Lemma~\ref{lemma WUW}, we consider $n \in N^{\mathrm{sph}}$, $\Theta \subseteq \Delta$ and $u \in \mathbf{U}^+(k)$ such that $u \cdot D = n \cdot D_0^\Theta$.
We denote by $h = n^{-1} u \in H$ where $H$ is defined as in Formula~\eqref{eq def H}.
Let $\mathbb{A} = h^{-1} \cdot \mathbb{A}_0 = u^{-1} \cdot \mathbb{A}_0$.
Then, there is a subsector $Q(y'_1,D)$ of $Q(y_1,D)$ such that $Q(y'_1,D) \subset \mathbb{A}$ and $h \cdot Q(y'_1,D) \subset \mathbb{A}_0$.
Let $V_0^\Theta = \operatorname{Vect}(D_0^\Theta) \subseteq \mathbb{A}_0$ and let $V = h^{-1} \cdot V_0^\Theta$ so that $y'_1+V$ is the affine subspace of $\mathbb{A}$ spanned by $Q(y'_1,D)$.
Let $v \in Q(y'_1,D)$ and set $w = h \cdot v \in \mathbb{A}_0$.
Then $\mathcal{X}_k(w) = h \cdot \mathcal{X}_k(v)$ and $\mathcal{X}_k(w,V_0^\Theta) = h \cdot \mathcal{X}_k(v,V)$.
Hence $$ \stab_{\mathbf{G}(A)}(v) \cdot \mathcal{X}_k(v,V) = \mathcal{X}_k(v),$$ is equivalent to $$ \left( h \mathbf{G}(A) h^{-1} \cap \stab_{\mathbf{G}(K)}(w) \right) \cdot \mathcal{X}_k(w,V_0^\Theta) = \mathcal{X}_k(w).$$
Thus, we are reduced to work inside the standard apartment $\mathbb{A}_0$ in which it suffices to prove that for any $h \in H $, any $\Theta \subset \Delta$ and any $w_0 \in \mathbb{A}_0$, there exists a subsector face $Q(y_2',D_0^\Theta) \subseteq Q(w_0, D_0^\Theta)$ such that
\begin{multline}\label{equality action on star}
    \left( h \mathbf{G}(A) h^{-1} \cap \stab_{\mathbf{G}(K)}(w) \right) \cdot \mathcal{X}_k(w,V_0^\Theta) = \mathcal{X}_k(w),\\ \text{ for any point } w \in Q(y_2',D_0).
\end{multline}
For any root $\alpha \in \Phi$, we fix a non-zero $A$-ideal $J_{\alpha}(h) \subseteq M_{\alpha}(h)$ given by Proposition~\ref{prop ideal contained}.

%
It follows from Lemma~\ref{lem subsector face and roots}\ref{item subsector real} that there exists a subsector $Q(y_2',D_0^\Theta)$ of $Q(w_0,D_0^\Theta)$ such that for any $w \in Q(y_2',D_0^\Theta)$ and any $\alpha \in \Phi_\Theta^+$, we have
$$\big(\alpha(w)-1\big) d\geq \deg\big(J_{\alpha}(h)\big)+2g-1,$$
and then, we deduce from the Riemann-Roch identity~\eqref{eq rr0} that the equality:
\begin{equation}\label{rri}
\dim_{\mathbb{F}}\left( J_{\alpha}(h) \cap \pi^{-m}\mathcal{O} \right) =m\deg(\p)-\deg\big(J_{\alpha}(h)\big)+1-g,
\end{equation}
holds for any $m \in \mathbb{N}$ such that $m \geq \alpha(w)-1$.

Consider any point $w$ of $Q(y_2',D_0^\Theta)$ and denote by $A_w = \mathbb{A}_0 \cap \mathcal{X}_k(w)$.
By definition of buildings, for any point $z \in \mathcal{X}_k(w)$, there is a $k$-apartment $\mathbb{A}_z$ of $\mathcal{X}_k$ containing both $z$ and the $k$-face $\mathfrak{C}(w,D_0^{\Theta})$.
According to~\cite[9.7(i)]{L}, we know that $\mathbf{U}_{\mathfrak{C}(w,D_0^\Theta)}(K)$ acts transitively on the set of $K$-apartments of $\mathcal{X}_K$ containing $\mathfrak{C}(w,D_0^\Theta)$.
Hence, there is an element $g \in \mathbf{U}_{\mathfrak{C}(w,D_0^\Theta)}(K)$ such that $g \cdot \mathbb{A}_z = \mathbb{A}_0$.
Since $g \cdot w = w$ and $z \in \mathcal{X}_k(w)$, we also have that $g \cdot z \in g \cdot \mathcal{X}_k(w) = \mathcal{X}_k(w)$.
Hence $g \cdot z \in A_w$.

According to~\cite[6.4.9]{BT}, one can write
\[\mathbf{U}_{\mathfrak{C}(w,D_0^\Theta)}(K) = N_{\mathfrak{C}(w,D_0^\Theta)} \left(\prod_{\alpha \in \Phi^-} \mathbf{U}_{\alpha,\mathfrak{C}(w,D_0^\Theta)}(K) \right) \left(\prod_{\alpha \in \Phi^+} \mathbf{U}_{\alpha,\mathfrak{C}(w,D_0^\Theta)}(K) \right),\]
for any ordering on each of both products of root groups,
where $N_{\mathfrak{C}(w,D_0^\Theta)} = \mathbf{N}(K) \cap \mathbf{U}_{\mathfrak{C}(w,D_0^\Theta)}(K)$ is the subgroup of element of $\mathbf{N}(K)$ pointwise stabilizing $\mathfrak{C}(w,D_0^\Theta)$ according to~\cite[8.9(iv)]{L}.
In particular, decomposing $\Phi^+ = (\Phi_\Theta^0 \cap \Phi^+) \sqcup \Phi_\Theta^+$, one can write $g = n g_- g_0 g_+$ with
\begin{align*} n \in& N_{\mathfrak{C}(w,D_0^\Theta)},& g_- \in& \left(\prod_{\alpha \in \Phi^-} \mathbf{U}_{\alpha,\mathfrak{C}(w,D_0^\Theta)}(K) \right),\\
g_0 \in & \left(\prod_{\alpha \in \Phi_\Theta^0 \cap \Phi^+} \mathbf{U}_{\alpha,\mathfrak{C}(w,D_0^\Theta)}(K) \right), &
g_+ \in & \left(\prod_{\alpha \in \Phi_\Theta^+} \mathbf{U}_{\alpha,\mathfrak{C}(w,D_0^\Theta)}(K) \right).
\end{align*}

Let $\mu : \mathbf{U}_w(K) \to \overline{G}_w$ be the quotient morphism defined as in proof of Proposition~\ref{prop directed building}.
Because $\mathbf{N}(K)$ stabilizes $\mathbb{A}_0$, we have that $n \in N_{\mathfrak{C}(w,D_0^\Theta)}$ stabilizes $\mathbb{A}_0 \cap \mathcal{X}_k(w)$, whence $\mu(n) \cdot A_w = A_w$.
For $\alpha \in \Phi_\Theta^-$, the group $\mu\left(\mathbf{U}_{\alpha,\mathfrak{C}(w,D_0^\Theta)}(K)\right)$ is trivial, whence 
$\mu(g_-) \in \left(\prod_{\alpha \in \Phi_\Theta^0 \cap \Phi^-} \overline{U}_\alpha\right)$.
For $\alpha \in \Phi \smallsetminus \Phi_w$, the group $\mu\big( \mathbf{U}_{\alpha,\mathfrak{C}(w,D_0^\Theta)}(K)\big)$ is trivial, whence $\mu(n),\mu(g_0),\mu(g_+) \in \overline{G}_{w,\Theta}$ with the notation of the proof of Proposition~\ref{prop directed building}.
Thus
\[g_+ \cdot z = \mu(g_+) \cdot z \in \overline{G}_{w,\Theta} \cdot A_w =\mathcal{X}_k(w,V_0^\Theta).\]

Thus, in order to deduce Equality~\eqref{equality action on star}, it suffices to prove that any such $g_+$ satisfies $\mu(g_+) \in \mu\left( h \mathbf{G}(A) h^{-1} \cap \stab_{\mathbf{G}(K)}(w)\right)$.
In particular, because $\mu\big( \mathbf{U}_{\alpha,\mathfrak{C}(w,D_0^\Theta)}(K) \big)$ is trivial whenever $\alpha \in \Phi \smallsetminus \Phi_w$, it suffices to prove that:
\begin{equation}\label{equation root groups of stabilizer}
    \mu\left(\mathbf{U}_{\alpha,w}(K) \right) = \overline{U}_\alpha \subseteq \mu\left( h \mathbf{G}(A) h^{-1} \cap \mathbf{U}_w(K)\right) \qquad \forall \alpha \in \Phi_w \cap \Phi_\Theta^+.
\end{equation}

Let $\alpha \in \Phi_w \cap \Phi_\Theta^+$.
By definition of $\Phi_w$, we have that $n(\alpha,w) := \alpha(w) \in \Gamma'_\alpha$ where $\Gamma'_\alpha$ is the set of values associated to $\alpha$.
Because $\mathbf{G}$ is split, we have $\Gamma'_\alpha = \mathbb{Z}$ with the usual normalization $\nu_\p(k^\times) = \mathbb{Z}$.
Thus, it follows from the isomorphism $\theta_\alpha: \mathbb{G}_a \stackrel{\simeq}{\to} \mathbf{U}_\alpha$, the definitions of $\mathbf{U}_{\alpha,w}(K)$, of $\Phi_w$ and of $\mu$ that
$$\mu\big( \mathbf{U}_{\alpha,w}(K) \big) = \theta_\alpha\big(\pi^{-n(w,\alpha)} \mathcal{O} \big) / \theta_\alpha\big(\pi^{-n(w,\alpha)+1} \mathcal{O} \big)
\cong \mathbf{U}_\alpha(\mathbb{F}(\p)),$$
where $\mathbb{F}(\p)$ is the residue field of $K$ and $\pi$ is a uniformizer.
Note that since $h \in \mathbf{G}(k)$, we have that
$$h\mathbf{G}(A) h^{-1}  \cap \mathbf{U}_{\alpha,w}(K) = h \mathbf{G}(A) h^{-1} \cap \mathbf{U}_\alpha(k) \cap \mathbf{U}_{\alpha,w}(K) = \theta_\alpha\left( M_\alpha(h) \cap \pi^{-n(w,\alpha)} \mathcal{O} \right).$$
Hence, since $M_{\alpha}(h) \supseteq J_\alpha(h)$ and $\mathbf{U}_w(K) \supseteq \mathbf{U}_{\alpha,w}(K)$, we have
$$\mu\Big( h \mathbf{G}(A) h^{-1} \cap \mathbf{U}_w(K) \Big) \supseteq \mu\bigg( \theta_\alpha\Big( J_\alpha(h) \cap \pi^{-n(w,\alpha)} \mathcal{O} \Big) \bigg).$$
But 
$$\mu\bigg( \theta_\alpha\Big( J_\alpha(h) \cap \pi^{-n(w,\alpha)} \mathcal{O} \Big) \bigg)
\supseteq \theta_\alpha\Big( J_\alpha(h) \cap \pi^{-n(w,\alpha)} \mathcal{O} \Big) / \theta_\alpha\Big( J_\alpha(h) \cap \pi^{-n(w,\alpha)+1} \mathcal{O} \Big).$$
The latter is equal to $\theta_\alpha\Big(\pi^{-n(w,\alpha)} \mathcal{O} \Big) / \theta_\alpha\Big(\pi^{-n(w,\alpha)+1} \mathcal{O} \Big)$ if and only if we have
\begin{equation}\label{eq J alpha last}
\dim_{\mathbb{F}}\left( ( J_{\alpha}(h) \cap \pi^{-n(w,\alpha)}\mathcal{O} )/( J_{\alpha}(h) \cap \pi^{-n(w,\alpha)+1}\mathcal{O} )\right)= \deg(\p).
\end{equation}
Applying Equality~\eqref{rri} to $m=n(w,\alpha)$ and to $m=n(w,\alpha)-1$, we deduce Equality~\eqref{eq J alpha last}.
Hence, we deduce Equality~\eqref{equation root groups of stabilizer} for any $\alpha \in \Phi_w \cap \Phi_\Theta^+$ and this concludes the proof of statement~\eqref{item starAction orbits}.

Consider the BN-pair $(\overline{B}_w,\overline{N}_w)$ introduced in the proof of Proposition~\ref{prop directed building} and denote by $C_w^f \in \mathcal{R}_k(w)$ the standard chamber of the spherical building $\mathcal{X}_k(w)$ associated to this BN-pair.

Let $C \in \mathcal{R}_k(w,V_0^\Theta)$ and note that $C_w^f \in \mathcal{R}_k(w,V_0^\Theta)$.
In the spherical building $\mathcal{X}_k(w,V_0^\Theta)$, the $k$-chambers $C$ and $C_w^f$ of $\mathcal{X}_k$ are contained, as subsets, in some chambers of the spherical building $\mathcal{X}_k(w,V_0^\Theta)$.
Hence there is an apartment of this spherical building containing both $C$ and $C_w^f$.
Thus, by definition of the set of apartments $\mathcal{A}_k(w,V_0^\Theta)$ of the building $\mathcal{X}_k(w,V_0^\Theta)$, there is a $k$-apartment $\mathbb{A}$ containing $C$, $C_w^f$ and $w+V_0^\Theta$.

Because the apartment $\mathbb{A}_0$ contains $C_w^f$ and $w+V_0^\Theta$, by~\cite[13.7(i)]{L}, there exists an element $g \in \mathbf{U}_{C_w^f \cup (w+ V_0^\Theta)}(K)$ such that $g \cdot \mathbb{A} = \mathbb{A}_0$.
Hence $g \cdot C \in \mathbb{A}_0 \cap \mathcal{R}_k(w) = \overline{N}_w \cdot C_w^f$.
Moreover $g$ stabilizes $C_w^f$ whence, by uniqueness of the writings of $\overline{B}_w = \prod_{\alpha \in \Phi_w^+} \overline{U}_\alpha \cdot \overline{T}_w$ and~\cite[6.4.9(iii)]{BT}
\[\mu\left( \mathbf{U}_{w+V_0^\Theta}(K) \right) = \left(\prod_{\alpha \in \Phi_0^\Theta \cap \Phi_w^+} \overline{U}_\alpha\right)\left(\prod_{\alpha \in \Phi_0^\Theta \cap \Phi_w^-} \overline{U}_\alpha\right) \mu\left(\mathbf{U}_{w+V_0^\Theta}(K) \cap \mathbf{N}(K)\right) ,\] we get
\[\mu(g) \in \overline{B}_w \cap \mu\left( \mathbf{U}_{w+V_0^\Theta}(K) \right) \subseteq \left(\prod_{\alpha \in \Phi_\Theta^0 \cap \Phi_w^+} \overline{U}_\alpha\right) \cdot \overline{T}_w.\]
Thus, we have that
\begin{equation}\label{eq directed star}
    \mathcal{R}_k(w,V_\Theta^0) = \left( \prod_{\alpha \in \Phi_\Theta^0 \cap \Phi_w^+} \overline{U}_\alpha \right) \overline{N}_w \cdot C_w^f.
\end{equation}
Analogously, using the same arguments, we get that $\mu\Big( \mathbf{U}_{\mathfrak{C}(w,D_0^\Theta)}(K) \Big)$ acts transitively on $\mathcal{R}_k\big( \mathfrak{C}(w,V_\Theta^0) \big)$, whence we also have that
\begin{equation}\label{eq directed residue}
    \mathcal{R}_k\big( \mathfrak{C}(w,D_0^\Theta) \big) =   \left( \prod_{\alpha \in \Phi_\Theta^0 \cap \Phi_w^+} \overline{U}_\alpha \right) \overline{N}_{\mathfrak{C}(w,D_0^\Theta)} \cdot C_w^f,
\end{equation}
where $\overline{N}_{\mathfrak{C}(w,D_0^\Theta)}$ denotes the subgroup of $\overline{N}_w$ preserving $\mathfrak{C}(w,D_0^\Theta)$.

Suppose that $F \in \mathcal{X}_k(w,V_0^\Theta)$ is a $k$-face such that $\mathcal{R}_k(F) \cap \mathcal{R}_k\big( \mathfrak{C}(w,D_0^\Theta)\big) = \emptyset$.
According to Equation~\eqref{eq directed star}, there is $u \in \prod_{\alpha \in \Phi_\Theta^0 \cap \Phi_w^+} \overline{U}_\alpha$ such that $u \cdot F \subset A_w$.
By Equation~\eqref{eq directed residue}, we also have that $u \cdot \mathcal{R}_k\big(\mathfrak{C}(w,D_0^\Theta)\big) = \mathcal{R}_k\big(\mathfrak{C}(w,D_0^\Theta)\big)$.
Thus $\mathcal{R}_k(u \cdot F) \cap \mathcal{R}_k\big(\mathfrak{C}(w,D_0^\Theta)\big) = u \cdot \Big( \mathcal{R}_k(F) \cap \mathcal{R}_k\big(\mathfrak{C}(w,D_0^\Theta)\big) \Big) = \emptyset$.
In particular, the convex sets of chambers $ \mathbb{A}_0 \cap \mathcal{R}_k\big(\mathfrak{C}(w,D_0^\Theta)\big)$ and $ \mathbb{A}_0 \cap \mathcal{R}_k\big(u \cdot F\big)$ of the apartment $A_w = \mathbb{A}_0 \cap \mathcal{X}_k(w)$ does not intersect.
Hence, in the spherical building $\mathcal{X}_k(w)$ of type $\Phi_w$, there is a wall associated to some root $\beta \in \Phi_w^+$ separating those sets.

We claim that $\beta \not\in \Phi_\Theta^0$.
Indeed, if $\beta$ were in $\Phi_\Theta^0$, then the reflection with respect to $\beta$ of the Weyl group $\overline{N}_w / \overline{T}_w$ would fix $w+V_0^\Theta \supset \mathfrak{C}(w,D_0^\Theta)$.
Thus, there would be an element $n_\beta \in \overline{N}_{\mathfrak{C}(w,D_0^\Theta)}$ lifting $s_\beta$.
We would have two chambers $n_\beta \cdot C_w^f$ and $C_w^f$ both in $\mathcal{R}_k\big( \mathfrak{C}(w,D_0^\Theta)\big)$ and separated by the wall with respect to $\beta$, which is a contradiction with the definition of $\beta$.
Thus $\beta \in \Phi_w^+ \smallsetminus \Phi_\Theta^0 = \Phi_\Theta^+ \cap \Phi_w$.

In the affine apartment $\mathbb{A}_0$, the wall with respect to $\beta$ containing $w$ is the affine subspace $\{ z \in \mathbb{A}_0,\ \beta(z) = \beta(w)\}$.
Because $\beta(C_w^f) > \beta(w)$, we have that $\beta(C) < \beta(w)$ for any $C\in \mathcal{R}_k(u \cdot F) \cap \mathbb{A}_0$. Hence $\beta(u \cdot F) < \beta(w)$ by the same argument as before.
Thus, there exists $u_\beta \in \overline{U}_\beta = \mu\big( \mathbf{U}_{w,\beta}(K) \big)$ that does not fix $u \cdot F$.
Indeed, the set of points fixed by any element of $\mathbf{U}_{w,\beta}(K)$ with valuation $-\beta(w)$ is exactly $\left\{z \in \mathbb{A}_0,\ \beta(z) \geqslant \beta(w) \right\}$ for $\beta \in \Phi_w$ \cite[7.4.5]{BT}.

Thus, we have shown that there exists a root $\beta \in \Phi_\Theta^+ \cap \Phi_w$, an element $u_\beta \in \overline{U}_\beta $ and an element $u \in \prod_{\alpha \in \Phi_\Theta^0 \cap \Phi_w^+} \overline{U}_\alpha$ such that $u_\beta u \cdot F \neq u \cdot F$.
For any $\alpha \in \Phi_\Theta^0 \cap \Phi_w^+$, the root group $\overline{U}_\alpha$ normalizes the group $\prod_{\gamma \in \Phi_\Theta^+ \cap \Phi_w} \overline{U}_\gamma$ (by axioms of a root group datum \cite[6.1.1(DR2)]{BT}).
Hence $u^{-1} u_\beta u \in \prod_{\gamma \in \Phi_\Theta^+ \cap \Phi_w} \overline{U}_\gamma$.
But since $\overline{U}_\gamma \subset \mu\big( \mathbf{U}_\gamma(K) \cap \stab_{\mathbf{G}(A)}(w) \big)$ for any $\gamma \in \Phi_\Theta^+ \cap \Phi_w$ according to~\eqref{equation root groups of stabilizer}, we deduce that there is $g \in \stab_{\mathbf{G}(A)}(w)$ such that $\mu(g) = u^{-1} u_\beta u$.
Hence $g \cdot F = u^{-1} u_\beta u \cdot F \neq F$.
Thus $\stab_{\mathbf{G}(A)}(w)$ does not stabilizes $F$.

As a consequence, if $F$ is stabilized by $\stab_{\mathbf{G}(A)}(w)$, we get that $F \subset \mathcal{X}_k(w,V_0^\Theta)$ by~\ref{item starAction orbits} and, in that case, we have shown that $\mathcal{R}_k(F) \cap \mathcal{R}_k\big( \mathfrak{C}(w,D_0^\Theta)\big) \neq \emptyset$. Whence the result of~\ref{item starAction fixed faces} follows.
\end{proof}

\begin{corollary}\label{corollary unique fixed face}
Let $\Theta \subset \Delta$. Let $Q(x,D)$ be a $k$-sector face such that $D \in \mathbf{G}(k) \cdot D_0^\Theta$.
Let $y_2$ as in Proposition~\ref{prop starAction} and let $z \in Q(y_2,D)$ be an point.
Recall that $\mathcal{X}_k(z)$ is a spherical building of type $\Phi_z$ (see Proposition~\ref{prop directed building}).
Let $\Delta_z$ be the basis of $\Phi_z$ induced by $\Delta$ and let $\Theta_z \subset \Delta_z$ so that $\Phi_z \cap \Phi_\Theta^0 = (\Phi_z)_{\Theta_z}^0$ (see Lemma~\ref{lemma subroot system}).
Consider the action of $\stab_{\mathbf{G}(A)}(z)$ on the set of faces of $\mathcal{X}_k(z)$.
Then the set of faces setwise stabilized by this action (i.e.~$F$ such that $g \cdot F = F\ \forall g \in \stab_{\mathbf{G}(A)}(z)$) contains a unique face of type $\Theta_z$, which is $\mathfrak{C}(z,D)$.
\end{corollary}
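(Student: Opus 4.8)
The plan is to feed $Q(x,D)$ into Proposition~\ref{prop starAction} and combine its three conclusions with the description of the spherical building $\mathcal{X}_k(z)$ and of the face $\mathfrak{C}(z,D)$ obtained in Proposition~\ref{prop directed building}, together with the type bookkeeping of Lemma~\ref{lemma subroot system}. There are three things to check: that $\mathfrak{C}(z,D)$ is a face of $\mathcal{X}_k(z)$ of type $\Theta_z$, that it is setwise stabilized by $\stab_{\mathbf{G}(A)}(z)$, and that it is the unique such face.

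First I would establish the type statement. Fix a $k$-apartment $\mathbb{A}$ containing $Q(z,D)$ and transport everything to the standard apartment using the element $h\in H$ provided by Proposition~\ref{prop starAction}: one has $h\cdot D = D_0^{\Theta}$ and $w:=h\cdot z$ lies in a sector face $Q(y_2',D_0^{\Theta})\subset\mathbb{A}_0$, so $h$ carries $\mathcal{X}_k(z)$ isomorphically onto $\mathcal{X}_k(w)$ and $\mathfrak{C}(z,D)$ onto $\mathfrak{C}(w,D_0^{\Theta})$, compatibly with the type functions via the induced identification $\Phi_z\cong\Phi_w$ (both bases $\Delta_z$ and $\Delta_w$ being induced by $\Delta$). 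Now, as in the proof of Proposition~\ref{prop directed building}, the enclosure of the germ at $w$ of $Q(w,D_0^{\Theta})$ inside $\mathcal{X}_k(w)$ is $\Omega(w,V_0^{\Theta}) = \cl(w+V_0^{\Theta})\cap\mathcal{X}_k(w)$, which is precisely the intersection of the chambers of the sub-building $\mathcal{X}_k(w,V_0^{\Theta})$; the latter has type $\Phi_w\cap\Phi_{\Theta}^0 = (\Phi_w)_{\Theta_w}^0$, where $\Theta_w$ is given by Lemma~\ref{lemma subroot system}. Hence the maximal $k$-face $\mathfrak{C}(w,D_0^{\Theta})$ of $\Omega(w,V_0^{\Theta})$ is, for the BN-pair structure on $\mathcal{X}_k(w)$, the standard face of type $\Theta_w$; and Lemma~\ref{lemma subroot system} identifies this $\Theta_w$ with the $\Theta_z$ of the statement. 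For the stabilization, Proposition~\ref{prop starAction}\ref{item starAction fix} gives that $\stab_{\mathbf{G}(A)}(z)$ fixes $Q(z,D)$ pointwise, hence fixes its germ at $z$, hence maps to itself the face $\mathfrak{C}(z,D)$, which by Notation~\ref{not directed face} is canonically attached to that germ.

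Next I would prove uniqueness. Let $F\in\mathcal{X}_k(z)$ be a face of type $\Theta_z$ with $g\cdot F = F$ for all $g\in\stab_{\mathbf{G}(A)}(z)$. By Proposition~\ref{prop starAction}\ref{item starAction fixed faces}, we have $\mathcal{R}_k\big(\mathfrak{C}(z,D)\big)\cap\mathcal{R}_k(F)\neq\emptyset$; picking a $k$-alcove $C$ in this intersection, both $F$ and $\mathfrak{C}(z,D)$ are faces of the chamber of the spherical building $\mathcal{X}_k(z)$ determined by $C$. Since a chamber of a building has exactly one face of each type and $F$, $\mathfrak{C}(z,D)$ share the type $\Theta_z$, we conclude $F = \mathfrak{C}(z,D)$. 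Together with the previous paragraph, this shows that the set of faces of $\mathcal{X}_k(z)$ of type $\Theta_z$ setwise stabilized by $\stab_{\mathbf{G}(A)}(z)$ is exactly $\{\mathfrak{C}(z,D)\}$.

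The only genuinely delicate point is the type bookkeeping of the second paragraph: one must verify carefully that the face $\mathfrak{C}(z,D)$, which is defined purely from the sector face $Q(z,D)$, corresponds under the identifications above to the standard face of type $\Theta_z$ in the BN-pair description of $\mathcal{X}_k(z)$ from Proposition~\ref{prop directed building}, and that the two a priori different meanings of ``$\Theta_z$'' (the one coming from the chamber-intersection $\Omega(z,V_0^\Theta)$ and the one coming from Lemma~\ref{lemma subroot system}) coincide; this is, however, an unwinding of definitions rather than a new argument. Everything else is an immediate consequence of Proposition~\ref{prop starAction} and of the elementary fact about faces of a fixed chamber in a building.
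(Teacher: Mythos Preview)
Your proof is correct and follows essentially the same approach as the paper's: use Proposition~\ref{prop starAction}\ref{item starAction fix} to show $\mathfrak{C}(z,D)$ is setwise stabilized, then Proposition~\ref{prop starAction}\ref{item starAction fixed faces} to get a common chamber $C$ with any stabilized face $F$, and conclude by uniqueness of the face of a given type in a chamber. The paper is terser about the type computation, simply asserting that $\mathfrak{C}(z,D)$ is of type $\Theta_z$ ``by construction'', whereas you spell out the transport to $\mathbb{A}_0$ via $h$ and invoke the description of $\Omega(w,V_0^\Theta)$; this extra care is harmless and arguably clarifying.
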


\begin{proof}
According to Proposition~\ref{prop starAction}\ref{item starAction fix}, the subset $Q(z,D) \cap \mathcal{X}_k(z)$ is fixed by $\stab_{\mathbf{G}(A)}(z)$, whence $\cl \big( Q(z,D) \big) \cap \mathcal{X}_k(z)$ is setwise stabilized by  $\stab_{\mathbf{G}(A)}(z)$.
Thus the maximal $k$-face $\mathfrak{C}(z,D)$ of $\cl \big( Q(z,D) \big) \cap \mathcal{X}_k(z)$ is setwise stabilized, which is of type $\Theta_z$ by construction.

Let $F$ be a $k$-face contained in $\mathcal{X}_k(z)$ which is setwise stabilized by $\stab_{\mathbf{G}(A)}(z)$.
According to Proposition~\ref{prop starAction}\ref{item starAction fixed faces}, there is a $k$-chamber $C \in \mathcal{R}_k\big(\mathfrak{C}(z,D)\big)$ such that $F$ is a $k$-face of $C$.
Suppose that $F$ is of type $\Theta_z$.
Thus inside the spherical building $\mathcal{X}_k(z)$ of type $\Phi_z$, since $\mathfrak{C}(z,D)$ is a $k$-face of $C$ of type $\Theta_z$, we get that $F = \mathfrak{C}(z,D)$, by uniqueness of a face with given type of a chamber.
Hence $F = \mathfrak{C}(z,D)$ is the unique face of $\mathcal{X}_k(z)$ of type $\Theta_z$ which is stabilized by $\stab_{\mathbf{G}(A)}(z)$.
\end{proof}

\begin{corollary}\label{cor-transportAlcoves}
Let $Q=Q(x,D)$ and $Q'=Q(x',D')$ be two $k$-sector faces in $\mathcal{X}_k$ such that $D,D' \in \mathbf{G}(k) \cdot D_0^{\Theta}$, for some $\Theta \subset \Delta$.
Let $Q(y_2,D) \subseteq Q(x,D)$ and $Q(y'_2,D) \subseteq Q(x',D')$ be the subsectors given by Proposition~\ref{prop starAction}.
For any two points $z \in Q(y_2,D) \subseteq Q$, $z' \in Q(y_2',D') \subseteq Q'$, and any $g \in \mathbf{G}(A)$:
\[
   g \cdot z=z'  \Rightarrow  g \cdot \mathfrak{C}(z,D) = \mathfrak{C}(z',D').
\]
\end{corollary}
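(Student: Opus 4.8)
The plan is to reduce the statement to the characterization of $\mathfrak{C}(z,D)$ as the unique face of $\mathcal{X}_k(z)$ of a given type that is setwise stabilized by $\stab_{\mathbf{G}(A)}(z)$, which is precisely the content of Corollary~\ref{corollary unique fixed face}. First I would fix $g \in \mathbf{G}(A)$ with $g \cdot z = z'$. Since $g$ is an automorphism of $\mathcal{X}_k$ sending $z$ to $z'$, it restricts to an isomorphism of stars $g : \mathcal{X}_k(z) \to \mathcal{X}_k(z')$, and it conjugates the two point-stabilizers: $g \, \stab_{\mathbf{G}(A)}(z) \, g^{-1} = \stab_{\mathbf{G}(A)}(z')$. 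Moreover, $g$ sends a $k$-face of type $\tau$ in the spherical building $\mathcal{X}_k(z)$ to a $k$-face of type $\tau$ in $\mathcal{X}_k(z')$, because the building embedding and the type functions are compatible with the $\mathbf{G}(k)$-action (the type of a $k$-face is determined by the sub-root system $\Phi_z$ and its basis $\Delta_z$, which are transported equivariantly). So $g \cdot \mathfrak{C}(z,D)$ is a $k$-face of $\mathcal{X}_k(z')$ of the same type $\Theta_z = \Theta_{z'}$ (noting that $z,z'$ lie in sectors with directions in the common orbit $\mathbf{G}(k)\cdot D_0^\Theta$, so the induced subsets of simple roots agree under the identification provided by the equivariant structure).

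Next I would check that $g \cdot \mathfrak{C}(z,D)$ is setwise stabilized by $\stab_{\mathbf{G}(A)}(z')$. Indeed, for any $h' \in \stab_{\mathbf{G}(A)}(z')$ we have $g^{-1} h' g \in \stab_{\mathbf{G}(A)}(z)$, and by Proposition~\ref{prop starAction}\ref{item starAction fix} together with Corollary~\ref{corollary unique fixed face} the face $\mathfrak{C}(z,D)$ is setwise stabilized by $\stab_{\mathbf{G}(A)}(z)$; hence $(g^{-1} h' g) \cdot \mathfrak{C}(z,D) = \mathfrak{C}(z,D)$, so $h' \cdot \big( g \cdot \mathfrak{C}(z,D)\big) = g \cdot \mathfrak{C}(z,D)$. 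Thus $g \cdot \mathfrak{C}(z,D)$ is a face of $\mathcal{X}_k(z')$ of type $\Theta_{z'}$ which is setwise stabilized by $\stab_{\mathbf{G}(A)}(z')$. Applying Corollary~\ref{corollary unique fixed face} to the point $z' \in Q(y'_2,D')$, such a face is unique and equals $\mathfrak{C}(z',D')$. Therefore $g \cdot \mathfrak{C}(z,D) = \mathfrak{C}(z',D')$, which is the desired conclusion.

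The main obstacle I anticipate is the bookkeeping in identifying the types: one must be careful that the ``type $\Theta_z$'' appearing in Corollary~\ref{corollary unique fixed face} is genuinely transported by $g$ onto ``type $\Theta_{z'}$''. This requires knowing that the element $g \in \mathbf{G}(A) \subset \mathbf{G}(k)$ maps the local root system $\Phi_z$ onto $\Phi_{z'}$ compatibly with the bases induced by $\Delta$, and that the directions $D,D'$ both lying in $\mathbf{G}(k)\cdot D_0^\Theta$ forces $\Theta_z$ and $\Theta_{z'}$ to correspond under this identification. This should follow from the functoriality of the construction of $\mathcal{X}_k(z)$ as a spherical building (Proposition~\ref{prop directed building}) and from Lemma~\ref{lemma subroot system}, but it is the point where one has to argue carefully rather than invoke a one-line fact. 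Once the type-matching is settled, the proof is a short formal argument combining equivariance of the star construction with the uniqueness statement of Corollary~\ref{corollary unique fixed face}.
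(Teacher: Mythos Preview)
Your proposal is correct and follows essentially the same argument as the paper: both reduce to Corollary~\ref{corollary unique fixed face} by observing that $g$ carries $\mathcal{X}_k(z)$ isomorphically onto $\mathcal{X}_k(z')$, conjugates $\stab_{\mathbf{G}(A)}(z)$ onto $\stab_{\mathbf{G}(A)}(z')$, and hence sends $\mathfrak{C}(z,D)$ to a face of $\mathcal{X}_k(z')$ of type $\Theta_{z'}$ stabilized by $\stab_{\mathbf{G}(A)}(z')$, which must then equal $\mathfrak{C}(z',D')$ by uniqueness. The paper handles the type-matching you flag exactly as you suggest, invoking Lemma~\ref{lemma subroot system} together with $g\cdot D \in \mathbf{G}(k)\cdot D_0^\Theta$ and $\Phi_z=\Phi_{z'}$.
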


\begin{proof}
Let $z \in Q(y_2,D)$, $z' \in Q(y_2',D')$ and $g \in \mathbf{G}(A)$ be such that $g \cdot z = z'$.
By definition, we have that $g \cdot \mathcal{X}_k(z) = \mathcal{X}_k(z')$ whence the spherical buildings $\mathcal{X}_k(z)$ and $\mathcal{X}_k(z')$ have the same type $\Phi_z = \Phi_{z'}$.
Then, according to Corollary~\ref{corollary unique fixed face}, the $k$-face $\mathfrak{C}(z,D)$ (resp.~$\mathfrak{C}(z',D')$) is the unique face of type $\Theta_z$ stabilized by the group $\stab_{\mathbf{G}(A)}(z)$ (resp.~$\stab_{\mathbf{G}(A)}(z')$), in the spherical building $\mathcal{X}_k(z)$ (resp.~$\mathcal{X}_k(z')$).
Thus, $g \cdot \mathfrak{C}(z,D) = \mathfrak{C}(z',g \cdot D)$ is a $k$-face which is a face of $g \cdot \mathcal{X}_k(z) = \mathcal{X}_k(z')$.
It is of type $\Theta_{z}=\Theta_{z'}$, by Lemma~\ref{lemma subroot system}, since $g \cdot D \in \mathbf{G}(k) \cdot D_0^\Theta$ and $\Phi_z = \Phi_{z'}$.
This face  $g \cdot \mathfrak{C}(z,D)$ is stabilized by the group $g \stab_{\mathbf{G}(A)}(z) g^{-1} = \stab_{\mathbf{G}(A)}(z')$ whence, by uniqueness, we get that $g \cdot \mathfrak{C}(z,D) = \mathfrak{C}(z',D')$.
\end{proof}

\begin{notation}
For any $\Theta \subset \Delta$, denote by $\Sec^\Theta_k$ the set of $k$-sector faces whose the direction $D$ belongs to $\mathbf{G}(k) \cdot D_0^\Theta$.
We denote by $\sSec_k^\Theta$ the $k$-sector faces in $\Sec_k^\Theta$ whose the tip is a special vertex.

For instance, $\Sec_k^\Delta$ is the set of points and $\sSec_k^\Delta$ is the set of special $k$-vertices.
The set of rational sector chambers is $\Sec_k^\emptyset = \Sec(\mathcal{X}_k)$.
\end{notation}

We now recursively apply Corollary \ref{cor-transportAlcoves} in order to prove that, inside a small enough subsector face of a given $k$-sector face $Q(x,D)$, if any two vertices are in the same $\mathbf{G}(A)$-orbit, the action of a certain element of $\mathbf{G}(A)$ onto the subsector stabilizes its visual boundary. More precisely,

\begin{proposition}\label{proposition facets in the same orbit ArXiv}
For any $\Theta \subset \Delta$, there exists a map $t_\Theta: \sSec_k^\Theta \to \sSec_k^\Delta$ such that for any sector faces $Q(x,D), Q(x',D') \in \Sec_k^\Theta$ whose tips $x,x'$ are special vertices, the two subsector faces $Q\big(y_3,D) \subseteq Q(x,D)$ and $Q(y_3',D') \subseteq Q(x',D')$ with $y_3 = t_\Theta\big( Q(x,D) \big)$ and $y'_3 = t_\Theta\big( Q(x',D') \big)$ satisfy that,
for any $v \in Q(y_3,D)$, any $w \in Q(y_3',D')$ and any $g \in \mathbf{G}(A)$: 
\[
  w=g \cdot v \Rightarrow  D'= g \cdot D.
\]
\end{proposition}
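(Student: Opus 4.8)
The plan is to prove the statement by induction on the number $m := \operatorname{Card}(\Delta \smallsetminus \Theta) = \dim(\Theta^\perp)$, using the chain of subsets $\Psi_i^\Theta$ given by Proposition~\ref{prop good increasing subsets} together with the invariants built from the commutative unipotent subgroups $\mathbf{U}_{\Psi_i^\Theta}$. The base case $m = 0$ is trivial: here $\Theta = \Delta$, the sector faces are single special vertices, and there is nothing to prove beyond $w = g \cdot v \Rightarrow D' = g \cdot D$, which holds vacuously since $D = D' = \{0\}$. So assume $m \geq 1$ and that the result holds for all subsets of $\Delta$ of strictly larger cardinality (equivalently, smaller codimension).

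First I would set up the invariant. Given a special vertex $z$ lying in a suitable subsector face $Q(z_0, D_0^\Theta)$ of the standard apartment, and given $h \in H$ with $h \cdot D = D_0^\Theta$, I would consider the group $M_{\Psi_i^\Theta}(h)$ from~\eqref{eq def MPsi} intersected with the appropriate ``truncation'' dictated by the coordinates $\alpha(z)$ for $\alpha \in \Psi_i^\Theta$; using the upper bound of Proposition~\ref{prop fractional ideals} (which applies because $\Psi_i^\Theta$ is linearly independent only in the $\Theta = \emptyset$ refinement — here one uses instead the general direct-sum-of-fractional-ideals bound, valid since $\mathbf{G} = \mathrm{SL}_n$ or $\mathrm{GL}_n$, or $\mathbb{F}$ finite, or $Q$ a sector chamber, per the hypotheses inherited from Theorem~\ref{main theorem 1}) together with the lower bound $J_\alpha(h) \subseteq M_\alpha(h)$ of Proposition~\ref{prop ideal contained}, one gets that the quotient is a finite-dimensional $\mathbb{F}$-vector space $V_{\Psi_i^\Theta}(h)$ once $z$ is far enough in the sector (apply Lemma~\ref{lem subsector face and roots}\ref{item subsector real} and the Riemann--Roch identity~\eqref{eq rr0} exactly as in the proof of Proposition~\ref{prop starAction}). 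The key point is that $\mathbf{P}_\Theta(k)$ acts $k$-linearly on $\mathbf{U}_{\Psi_i^\Theta}(k)$ by Corollary~\ref{cor k-linear action parabolic} (using $W_\Theta(\Psi_i^\Theta) = \Psi_i^\Theta$), so $\dim_{\mathbb{F}} V_{\Psi_i^\Theta}(h)$ is invariant under the relevant conjugations and therefore, by Lemma~\ref{lemma-1}-type arguments in each star, becomes a genuine $\mathbf{G}(A)$-orbit invariant of the pair $(z, D)$.

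Next I would use this to run the inductive step. Given $Q(x,D), Q(x',D') \in \Sec_k^\Theta$ with special tips, choose $y_3 = t_\Theta(Q(x,D))$ deep enough that: (i) Proposition~\ref{prop starAction} applies (so stabilizers of points equal fixators of subsector faces and $\mathcal{R}_k(\mathfrak{C}(v,D))$ controls the fixed faces); (ii) the Riemann--Roch dimension formula~\eqref{rri} holds at every $v \in Q(y_3,D)$ and for every $\alpha \in \Psi_m^\Theta$; and (iii) the subsector is small enough that Corollary~\ref{cor-transportAlcoves} transports $\mathfrak{C}(v,D)$ to $\mathfrak{C}(w,D')$ whenever $g \cdot v = w$. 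Now suppose $w = g \cdot v$. By Corollary~\ref{cor-transportAlcoves}, $g \cdot \mathfrak{C}(v,D) = \mathfrak{C}(w,D')$; passing to the spherical building structure at $v$ (Proposition~\ref{prop directed building}), the face $\mathfrak{C}(v,D)$ determines a type-$\Theta_v$ face and $g$ maps the star $\mathcal{X}_k(v,V)$ to $\mathcal{X}_k(w,V')$. The direction $D$ is recovered from the tower $\mathfrak{C}(v,D) \subset \mathfrak{C}(v,\Psi_{m-1}^\Theta\text{-face}) \subset \cdots$: each enlargement $\Psi_i^\Theta \supsetneq \Psi_{i-1}^\Theta$ corresponds (via the third bullet of Proposition~\ref{prop good increasing subsets}, the constant-sign property) to a vector face of strictly larger type, and by the induction hypothesis applied to $\Theta \cup \{\alpha\}$ for the appropriate $\alpha \in \Delta \smallsetminus \Theta$ given by Lemma~\ref{lem high roots}, the corresponding larger sector face has its direction preserved by $g$. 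Intersecting these constraints over the complete flag $\operatorname{Vect}(\Psi_i^\Theta|_{\Theta^\perp})$ pins down $D$ itself, giving $g \cdot D = D'$. The map $t_\Theta$ is then obtained by composing the subsector-shrinking operations from Proposition~\ref{prop starAction} (yielding $y_2$), Lemma~\ref{lem subsector face and roots} (yielding the Riemann--Roch threshold), and the inductively-defined maps $t_{\Theta \cup \{\alpha\}}$, taking a common refinement.

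The main obstacle I anticipate is verifying that the invariant $\dim_{\mathbb{F}} V_{\Psi_i^\Theta}(h)$ genuinely distinguishes the coordinate of $z$ along the direction singled out at step $i$, i.e.~that the dimension jump between consecutive $\Psi_{i-1}^\Theta$ and $\Psi_i^\Theta$ truncations is governed by a \emph{single} linear form $\alpha_i$ on $\Theta^\perp$ (this is where the constant-sign conclusion of Proposition~\ref{prop good increasing subsets} and the monomial structure from the first claim in the proof of Proposition~\ref{prop fractional ideals} are essential) and, crucially, that this dimension is $\mathbf{G}(A)$-invariant rather than merely invariant under the point stabilizer. Bridging that gap requires carefully tracking how $g \in \mathbf{G}(A)$ with $g \cdot v = w$ conjugates $\mathbf{U}_{\Psi_i^\Theta}$ and the relevant truncation lattices; one must check that, after writing $g = h'^{-1}(\text{something in } \mathbf{P}_{\Theta_v}) h$ using Lemma~\ref{lemma WUW} and the star-building structure, the linearity of Corollary~\ref{cor k-linear action parabolic} really does apply to the conjugating element. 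The rest — choosing the subsectors deep enough — is routine but bookkeeping-heavy, combining Lemma~\ref{lem subsector face and roots}, Lemma~\ref{lemma finite special vertices}, and the thresholds from Riemann--Roch.
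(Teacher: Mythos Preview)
Your approach conflates Proposition~\ref{proposition facets in the same orbit ArXiv} with the later Proposition~\ref{prop racional ArXiv}, and this creates a genuine circularity. The dimension invariants $\dim_{\mathbb{F}} V_{\Psi_i^\Theta}(h)[\underline{z}]$ built from $M_{\Psi_i^\Theta}(h)$ together with the $k$-linearity of the $\mathbf{P}_\Theta(k)$-action (Corollary~\ref{cor k-linear action parabolic}) are precisely what the paper uses in Proposition~\ref{prop racional ArXiv} to show that two special vertices of a given subsector face lie in distinct $\mathbf{G}(A)$-orbits --- \emph{after} the present proposition has already delivered $g \cdot D = D'$, which is exactly what allows one to write $b = hgh^{-1} \in \mathbf{P}_\Theta(k)$ (via Lemma~\ref{lemma stab standard face}) and hence invoke the linearity. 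Without that conclusion in hand, you have no control on where conjugation by $g$ sends $\mathbf{U}_{\Psi_i^\Theta}$, and the local information $g \cdot \mathfrak{C}(v,D) = \mathfrak{C}(w,D')$ from Corollary~\ref{cor-transportAlcoves} only constrains the image of $g$ in the quotient $\overline{G}_v$ acting on the star, not $g$ as a global element. Your ``tower of faces'' induction on $\Theta$ does not rescue this: the induction hypothesis for $\Theta \cup \{\alpha\}$ concerns sector faces of that larger type, which lie in the boundary $\overline{Q(x,D)} \smallsetminus Q(x,D)$, not at the point $v \in Q(y_3,D)$ where you need information; and the subsets $\Psi_i^\Theta$ are sets of positive roots indexing commutative unipotent subgroups, not types of vector faces. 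Finally, your proposal imports the three alternative hypotheses of Theorem~\ref{main theorem 1} (needed for Lemma~\ref{lemma finite dimensional}), whereas the present proposition carries no such hypothesis --- a clear sign that this machinery does not belong here.

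The paper's proof is entirely different and avoids all of this. It simply \emph{iterates} Corollary~\ref{cor-transportAlcoves}: starting from $\Omega_0(v,D) = \{v\}$, set $\Omega_{n+1}(v,D) = \bigcup_{\nu \in \Omega_n(v,D)} \cl\big(\mathfrak{C}(\nu,D)\big)$. One checks that (i)~$\Omega_\infty(v,D) \subset \cl\big(Q(y_3,D)\big)$; (ii)~the enclosure $\cl\big(\Omega_\infty(v,D)\big)$ contains $Q(v,D)$, via a uniform lower bound $\varepsilon > 0$ on the growth of each $\alpha \in \Phi_D^+$ along a step $\nu \mapsto \mathfrak{C}(\nu,D)$; and (iii)~$\partial_\infty \cl\big(Q(y_3,D)\big) = \partial_\infty D$. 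Since Corollary~\ref{cor-transportAlcoves} transports each $\mathfrak{C}(\nu,D)$ to $\mathfrak{C}(g\cdot\nu,D')$, an induction on $n$ gives $g \cdot \Omega_n(v,D) = \Omega_n(w,D')$ for all $n$, hence $g \cdot \cl\big(\Omega_\infty(v,D)\big) = \cl\big(\Omega_\infty(w,D')\big)$, and taking visual boundaries yields $g \cdot D = D'$. No Riemann--Roch, no upper bounds on $M_\Psi(h)$, no $\mathbf{P}_\Theta$-linearity, and no induction on $\Theta$ are needed.
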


In fact, one can omit the ``special vertices'' assumption making a use of ramified extensions. See Corollary~\ref{cor conclusion without special hyp} for a more general statement.

\begin{proof}
Given a $k$-sector face $Q(x,D)$, there exists a point $y_2(x,D) \in \mathcal{X}_k$ such that $Q\big(y_2(x,D),D\big) \subseteq Q(x,D)$ satisfies the conditions of Proposition~\ref{prop starAction}.
According to Lemma~\ref{lem subsector face and roots}\ref{item subsector enclosure}, whenever $x$ is special, there exists a special vertex $y_3(x,D) \in Q\big(y_2(x,D),D\big)$ such that $\cl_k\Big( Q\big( y_3(x,D), D\big) \Big) \subset Q\big(y_2(x,D),D\big) \subseteq Q(x,D)$.
Hence, we can define a special $k$-vertex by $t_\Theta\big(Q(x,D)\big) := y_3(x,D) \in \mathcal{X}_k(D)$.
This defines a map $t_\Theta : \sSec_k^\Theta \to \sSec_k^\Delta$.

Consider $Q(x,D)$ and $Q(x',D') \in \sSec_k^\Theta$ and denote by $y_3 := t_\Theta\big( Q(x,D) \big)$ and by $y'_3 := t_\Theta\big( Q(x',D') \big)$.
We introduce some notation for this proof.
Let $\mathbb{A}$ be an apartment of $\mathcal{X}_k$ that contains $Q(y_3,D)$ and let $T_1$ be the maximal $k$-torus of $\mathbf{G}_k := \mathbf{G} \otimes k$ associated to $\mathbb{A}$.
Denote by $\Phi_1=\Phi(\mathbf{G}_k,T_1)$ the root system of $\mathbf{G}_k$ associated to $T_1$.
There is a vector chamber $\widetilde{D}$ so that $D$ is a face of $\widetilde{D}$ and a basis $\Delta_1=\Delta(\widetilde{D})$ of $\Phi_1$ associated to $\widetilde{D}$.
Denote by $\Phi_1^+$ the subset of positive roots of $\Phi_1$ associated to $\Delta_1$.
Let $\Theta_D \subset \Delta_1$ the subset of simple roots associated to $D$, i.e.
\[D=\{x \in \mathbb{A}: \forall \alpha \in \Delta_1 \smallsetminus \Theta_D, \, \alpha(x) > 0\  \text{ and } \forall \alpha \in \Theta_D, \, \alpha(x)=0\ \}.\]
As in section~\ref{intro vector faces}, introduce $\Phi_{D}^0 = \{\alpha \in \Phi:\ \alpha \in \operatorname{Vect}_\mathbb{R}(\Theta_D)\}$, $\Phi_D^+ = \Phi_1^+ \smallsetminus \Phi_{D}^0$ and $\Phi_D^- = \Phi_1^- \smallsetminus \Phi_{D}^0$ so that $\Phi_1 = \Phi_D^+ \sqcup \Phi_D^0 \sqcup \Phi_D^-$.
We identify the roots of $\Phi_1$ with linear forms on $\mathbb{A}$ so that, for all $v \in Q(y_3,D)$, we have:
\begin{equation}\label{eq Q en A}
    Q(v,D) = \{ z \in \mathbb{A}:\ \forall \alpha \in \Delta_1 \smallsetminus \Theta_D, \, \alpha(z) \geqslant \alpha(v) \text{ and } \forall \alpha \in \Theta_D, \, \alpha(z) = \alpha(v)\}.
\end{equation}
If $D=D'=0$ (i.e.~$\Theta=\Delta$), there is nothing to prove. Thus, in the following, we assume that $D\neq 0$, i.e.~$\Theta_D \neq \Delta_1$.

For any point $v \in Q(y_3,D)$, we define recursively subsets $\Omega_n(v,D)$ by:
\begin{itemize}
    \item $\Omega_0(v,D) := \{v\}$ and $\Omega_1(v,D) := \cl\big(\mathfrak{C}(v,D)\big)$;
    \item $\displaystyle \Omega_{n+1}(v,D) := \bigcup_{w \in  \Omega_n(v,D)} \cl\big(\mathfrak{C}(w,D)\big)$.
\end{itemize}
The sequence $\big( \Omega_n(v,D) \big)_{n \in \mathbb{N}}$ is increasing and we set $\Omega_{\infty}(v,D) := \bigcup_{n \in \mathbb{N}} \Omega_n(v,D)$.
We divide the proof in five steps, where the first two steps study the subsets $\Omega_\infty(v,D)$.

\textbf{First claim:} For every point $v \in \operatorname{cl} \big( Q(y_3,D) \big)$, we have $\cl\big(\mathfrak{C}(v,D)\big) \subset \operatorname{cl} \big( Q(y_3,D) \big)$.

Let $v \in \operatorname{cl} \big( Q(y_3,D) \big)$ be an arbitrary point.
By definition~\ref{not directed face}, $\cl\big(\mathfrak{C}(v,D)\big)$ is the enclosure of the germ at $v$ of the $k$-sector face $Q(v,D)$. This enclosure is contained in the enclosure of the germ at $v$ of $Q(y_3,D)$, which is contained in $\operatorname{cl} \big( Q(y_3,D) \big)$. Thus, $\cl\big(\mathfrak{C}(v,D)\big)$ is contained in $\operatorname{cl} \big( Q(y_3,D) \big)$, whence the claim follows.

\textbf{Second claim:} For the enclosure of $\Omega_\infty(v,D)$, we have the inclusions
\[ Q(v,D) \subseteq \operatorname{cl} \big( Q(v,D) \big)\subseteq \operatorname{cl}\big( \Omega_\infty(v,D) \big) \subseteq \operatorname{cl} \big( Q(y_3,D) \big).\]

We start by proving the last inclusion. By induction, we prove that $\Omega_n(v,D) \subset \operatorname{cl} \big( Q(y_3,D) \big)$.
Indeed, $\Omega_0(v,D) = \{v\} \subset Q(y_3,D) \subseteq \operatorname{cl} \big( Q(y_3,D) \big)$ by definition. Moreover, if $\Omega_n(v,D) \subset \operatorname{cl} \big( Q(y_3,D) \big)$, then for every $w \in \Omega_n(v,D)$, we have by first claim that $\cl\big(\mathfrak{C}(w,D)\big) \subset \operatorname{cl} \big( Q(y_3,D) \big)$.
Therefore $\Omega_{n+1}(v,D) \subset \operatorname{cl} \big( Q(y_3,D) \big)$.
Thus $\Omega_\infty(v,D) = \bigcup_{n \in \mathbb{N}} \Omega_n(v,D) \subseteq \operatorname{cl} \big( Q(y_3,D) \big)$. Hence, we get $\operatorname{cl}\big( \Omega_\infty(v,D) \big) \subseteq \operatorname{cl} \big( Q(y_3,D) \big)$.

In order to prove that $\operatorname{cl} \big( Q(v,D) \big)\subseteq \operatorname{cl}\big( \Omega_\infty(v,D) \big)$, we consider the quantity
\[\varepsilon = \min_{\alpha \in \Phi^+_D} \bigg( \inf_{w_0 \in \operatorname{vert}\big( \operatorname{cl}\big( Q(v,D) \big)\big)}\Big( \sup \left\{\alpha(w) - \alpha(w_0),\ w \in \mathfrak{C}(w_0,D) \right\} \Big) \bigg).\]
Since any $k$-face $\mathfrak{C}(w_0,D)$ is a bounded open subset of $\mathbb{A}$ we have that $\varepsilon < \infty$. Moreover, since  $\mathbb{A}$ contains a finite number of
$k$-faces up to isomorphism
\[\inf_{w_0 \in \operatorname{vert}\big( \operatorname{cl}\big( Q(v,D) \big)\big)}\Big( \sup \left\{\alpha(w) - \alpha(w_0),\ w \in \mathfrak{C}(w_0,D) \right\} \Big)>0,\]
whence $\varepsilon>0$ by finiteness of $\Phi_D^+$. We prove by induction that
\begin{equation}\forall n \in \mathbb{N},\ \forall \alpha \in \Phi^+_D,\ \exists w \in \operatorname{vert}\big( \operatorname{cl}\big( \Omega_n(v,D) \big)\big),\ \alpha(w) - \alpha(v) \geqslant n \varepsilon
\label{eq far vertices}\end{equation}
For $n=0$, it is obvious.
Induction step: Let $\alpha \in \Phi^+_D$ and $n \in \mathbb{N}$.
Assume that $\exists w \in \operatorname{vert}\big( \Omega_{n-1}(v,D) \big),\ \alpha(w) -\alpha(v) \geqslant (n-1) \varepsilon$.
Since $\cl\big(\mathfrak{C}(w,D)\big) = \overline{\mathfrak{C}(w,D)}$ is a polytope in an Euclidean space, the supremum
\[\sup \left\{\alpha(w') - \alpha(w),\ w' \in \mathfrak{C}(w,D) \right\} = \max \left\{\alpha(w') - \alpha(w),\ w' \in  \cl\big(\mathfrak{C}(w,D)\big) \right\}\]is a maximum reached at some vertex $w_\alpha \in \cl\big(\mathfrak{C}(w,D)\big)$.
Since $w \in \operatorname{vert}\big( \Omega_{n-1}(v,D) \big)$, we have by definition that $w_\alpha \in \Omega_n(v,D)$.
By definition of $\varepsilon$, we get that $\alpha(w_\alpha) - \alpha(w) \geqslant \varepsilon$.
Thus $$\alpha(w_\alpha) - \alpha(v) = \alpha(w_\alpha)- \alpha(w) + \alpha(w) - \alpha(v) \geqslant \varepsilon + (n-1)\varepsilon = n \varepsilon. $$
It concludes the induction and proves property~\eqref{eq far vertices}.

For $\alpha \in \Phi_1$ and $\lambda \in \mathbb{R}$, denote by $D(\alpha,\lambda) = \{ w\in \mathbb{A},\ \alpha(w) \geqslant \lambda\}$ the half-space of $\mathbb{A}$ with boundary in the hyperplane $H(\alpha, \lambda)=\ker (\alpha - \lambda)$.
For $\alpha \in \Phi^-_D$, the property~\eqref{eq far vertices} applied to $-\alpha \in \Phi_D^+$ gives that 
$$\forall n \in \mathbb{N},\ \exists w \in \Omega_\infty(v,D), \alpha(w) \leqslant  \alpha(v) - n \varepsilon$$
Thus, there is no half-apartment of the form $D(\alpha,\lambda)$, with $\alpha \in \Phi_D^{-}$, which contains $\Omega_\infty(v,D)$. For $\alpha \in \Phi_D^{0} \cup \Phi_D^{+}$, we prove that, if $D(\alpha, \lambda)$ contains $\Omega_{\infty}(v,D)$, then $D(\alpha, \lambda)$ contains $Q(v,D)$. Indeed, since $v \in \Omega_\infty(v,D)$, if $D(\alpha, \lambda)$ contains $\Omega_{\infty}(v,D)$, then $\alpha(v) \geq \lambda$. Moreover, since $\alpha \in \Phi_D^{0} \cup \Phi_D^{+}$, it follows from Equation~\eqref{eq Q en A} that $\alpha(z) \geq \alpha(v)$, for all $z \in Q(v,D)$, with equality for all $z$ exactly when $\alpha \in \Phi_D^{0}$. Hence, we get $\alpha(z) \geq \alpha(v) \geq \lambda$, whence we deduce that $D(\alpha, \lambda)$ contains $Q(v,D)$. Thus, by definition of the enclosure, we conclude that $\operatorname{cl}\big( \Omega_\infty(v) \big) \supseteq \operatorname{cl}\big( Q(v,D) \big)$. This proves the second claim.

\textbf{Third claim:} The visual boundary of $ \operatorname{cl} \big( Q(y_3,D) \big)$ equals $\partial_\infty Q(y_3,D) = \partial_\infty D$.

Recall that visual equivalence on geodesical rays is introduced in~\ref{intro visual boundary}.
Obviously, $\partial_\infty \cl\big( Q(y_3,D)\big) \supseteq \partial_\infty Q(y_3,D)$.

Conversely, by definition $Q(y_3,D) = \bigcap_{\alpha \in \Phi_D^{0}} H(\alpha,\lambda_{\alpha}) \cap \bigcap_{\alpha \in \Phi_D^{+}} D(\alpha, \lambda_{\alpha})$, where $\lambda_{\alpha}=\alpha(y_3) \in \mathbb{R}$. Then, by definition of the enclosure, we obtain that
\[\operatorname{cl} \big( Q(y_3,D) \big) = \bigcap_{\alpha \in \Phi_D^0 \cup \Phi_D^+} D(\alpha,b_\alpha)\]
where $b_{\alpha}= \max\{\mu \in \Gamma_\alpha,\ \mu \leq \lambda_\alpha\}$ (with $\Gamma_\alpha = \mathbb{Z}$ since $\mathbf{G}$ is split).
Let $\mathfrak{r}$ be a geodesical ray in $\operatorname{cl} \big( Q(y_3,D) \big)$, which is contained in some $k$-apartment. For any point $w \in \mathfrak{r}$ and any $\alpha \in \Phi_D^{0}$, we have that $b_{\alpha} \leq \alpha(w) \leq - b_{-\alpha}$.
This implies that the linear form $\alpha$ is bounded whence constant on $\mathfrak{r}$.
Let us fix a point $w_0 \in \mathfrak{r}$. Then, $\mathfrak{r}$ is in the same parallelism class as $\mathfrak{r}-w_0+y_3$, which is contained in $Q(y_3,D)$, whence the result follows.

\textbf{Fourth claim:} For any vertices $v \in Q(y_3,D)$, $w \in Q(y_3',D')$, any $g \in \mathbf{G}(A)$ such that $w = g \cdot v$, and any $n \in \mathbb{N}$, we have $g \cdot \Omega_n(v,D) = \Omega_n(w,D')$.

We prove the fourth claim by induction on $n$. For $n=1$, since $v$ is a point of $Q(y_3,D)$ and $w$ is a point of $Q(y_3',D')$, Corollary~\ref{cor-transportAlcoves} shows that $g \cdot \mathfrak{C}(v,D) = \mathfrak{C}(w,D')$, whence $g \cdot \Omega_1(v,D) = \Omega_1(w,D')$ by taking the enclosure.

Now, suppose $n \geqslant 2$ and $g \cdot \Omega_{n-1}(v,D) = \Omega_{n-1}(w,D')$. Let $\omega \in \Omega_{n}(v,D)$ be any point. Then, by definition, there is a point $\nu \in  \Omega_{n-1}(v,D) $ such that $\omega \in \cl\big(\mathfrak{C}(\nu,D)\big)$.
Since $\nu \in \Omega_\infty(v,D) \subseteq \cl\big(Q(y_3,D)\big) \subset Q(y_2,D)$, we deduce that $\nu$ is an point of $Q(y_2,D)$.
Moreover, since $g \cdot \nu \in \Omega_{n-1}(w,D') \subseteq \cl\big(Q(y_3',D')\big) \subset Q(y_2',D')$, the point $g \cdot \nu$ is a point in $Q(y_2',D')$.
Thus, by Corollary~\ref{cor-transportAlcoves}, we have that $g\cdot \omega \in g \cdot \mathfrak{C}(\nu,D) = \mathfrak{C}(g \cdot \nu,D')$.
Since $\nu \in \Omega_{n-1}(v,D)$, we have that $g \cdot \nu \in \Omega_{n-1}(w,D')$ by induction assumption.
Thus $g \cdot \omega \in \Omega_n(w,D')$ by definition.
Hence, we have shown that $g \cdot \Omega_n(v,D) \subseteq \Omega_n(w,D')$.
Since $v = g^{-1} \cdot w$ with $g^{-1} \in \mathbf{G}(A)$, we deduce the converse inclusion, which concludes the induction step.

\textbf{End of the proof:}
Let $v \in Q(y_3,D)$, $w \in Q(y_3',D')$ be two points and $g \in \mathbf{G}(A)$ be such that $g \cdot v= w$.
Then, the fourth claim gives that $g \cdot \Omega_\infty(v,D) = \Omega_\infty(w,D')$.
Since the action of $\mathbf{G}(A)$ preserves the simplicial structure, we have that
$g \cdot \operatorname{cl}\big( \Omega_\infty(v,D) \big)= \operatorname{cl}\big(\Omega_\infty(w,D')\big)$.
Since, by definition and the third claim, we have that $\partial^\infty Q(v,D) = \partial^\infty \operatorname{cl}\big( Q(y_3,D) \big)= \partial_\infty D$ and $\partial^\infty Q(w,D') = \partial^\infty \operatorname{cl}\big( Q(y_3',D') \big)= \partial_\infty D'$, the second claim gives that $\partial^\infty \operatorname{cl}\big( \Omega_\infty(v,D) \big) = \partial_\infty D$ and $\partial^\infty \operatorname{cl}\big( \Omega_\infty(w,D') \big) = \partial_\infty D'$, whence $g \cdot \partial_\infty D = \partial_\infty D'$.
Thus, by the correspondence between vector chambers and their visual boundaries (c.f.~\S \ref{intro visual boundary}), we deduce that $g \cdot D = D'$.
\end{proof}

\subsection{\texorpdfstring{$\mathbf{G}(A)$}{G(A)}-orbits of special vertices of subsector faces}\label{subsection G-orbits}

As stated in \S~\ref{sec upper bound}, there are various cases in which one can upper bound the abstract group $M_\Psi(h)$ by a suitable $A$-module but, because of non-commutativity of $\mathbf{U}_\Psi(k)$, we cannot upper bound this group for an arbitrary Chevalley group and an arbitrary ground field $\mathbb{F}$. Thus, we do the following assumption in order to provide some finite dimensional properties on some $\mathbb{F}$-vector spaces:

\begin{lemma}\label{lemma finite dimensional}
Let $\Psi \subset \Phi^+$ be a closed subset of positive roots and let $h \in \mathbf{G}(k)$.
Assume one of the following cases is satisfied:
\begin{enumerate}[label=(case \Roman*)]
    \item\label{case SLn} $\mathbf{G} = \mathrm{SL}_n$ or $\mathrm{GL}_n$ for some $n \in \mathbb{N}$,
    \item\label{case Borel} $\Psi$ is linearly independent in $\operatorname{Vect}_{\mathbb{R}}(\Phi)$ and $h \in H = \{n^{-1}u,\ n \in N^{\mathrm{sph}},\ u \in \mathbf{U}^+(k)\}$ as defined in \S~\ref{section Stabilizer of points in the Borel variety}, Equation~\eqref{eq def H},
    \item\label{case finite} $\mathbb{F}$ is a finite field.
\end{enumerate}
Then, for any family $\underline{z} = \big( z_\alpha \big)_{\alpha \in \Psi}$ of values $z_\alpha \in \mathbb{R}$, the $\mathbb{F}$-vector space spanned by
\[
    \Big\{ \big(x_\alpha\big)_{\alpha \in \Psi} \in M_\Psi(h): \nu(x_\alpha) \geq z_\alpha,\ \forall \alpha \in \Psi\Big\},
\]
in the $k$-vector space $k^\Psi \cong \big(\mathbb{G}_a(k)\big)^\Psi= \psi^{-1}\big(\mathbf{U}_\Psi(k)\big)$ is finite dimensional over $\mathbb{F}$.
\end{lemma}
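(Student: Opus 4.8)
The plan is to reduce the statement in each of the three cases to the bounding results already established in Section~\ref{section Stabilizer of points in the Borel variety}, and then to apply the Riemann--Roch dimension estimate~\eqref{eq rr0}. First I would observe that, in all three cases, it suffices to produce non-zero fractional $A$-ideals $\mathfrak{q}_\alpha$ (for $\alpha \in \Psi$) such that $M_\Psi(h) \subseteq \bigoplus_{\alpha \in \Psi} \mathfrak{q}_\alpha$, via the identification $\psi^{-1}\big(\mathbf{U}_\Psi(k)\big) \cong k^\Psi$. Indeed, once such a containment is available, the $\mathbb{F}$-vector space in the statement is contained in $\bigoplus_{\alpha \in \Psi}\big(\mathfrak{q}_\alpha \cap \pi^{-m_\alpha}\mathcal{O}\big)$ where $m_\alpha = \lfloor -z_\alpha \rfloor$ (using the normalization $\nu(k^\times) = \mathbb{Z}$), and each summand $\mathfrak{q}_\alpha \cap \pi^{-m_\alpha}\mathcal{O} = \mathfrak{q}_\alpha[m_\alpha]$ is a finite-dimensional $\mathbb{F}$-vector space by~\cite[\S 1, Prop.~1.4.9]{Stichtenoth} (and its dimension is given explicitly by~\eqref{eq rr0} once $m_\alpha d \geq \deg(\mathfrak{q}_\alpha) + 2g-1$). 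A finite direct sum of finite-dimensional $\mathbb{F}$-vector spaces is finite-dimensional, so the spanned subspace is finite-dimensional.

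It then remains to produce the ideals $\mathfrak{q}_\alpha$ in each case. In~\ref{case Borel}, this is exactly Proposition~\ref{prop fractional ideals}: since $\Psi$ is closed, satisfies~\ref{cond commutative}, is linearly independent in $\operatorname{Vect}_\mathbb{R}(\Phi)$, and $h \in H$, that proposition yields non-zero fractional $A$-ideals $\mathfrak{q}_{\Psi,\alpha}(h)$ with $M_\Psi(h) \subseteq \bigoplus_{\alpha \in \Psi}\mathfrak{q}_{\Psi,\alpha}(h)$. In~\ref{case SLn}, I would argue directly with Lemma~\ref{lemma upper bound SL(n,A)}: writing $\mathbf{U}_\Psi(k) \subseteq \mathbf{U}^+(k)$ and parametrizing elements of $\mathbf{U}^+(k)$ by the matrix coordinates $\prod e_{i,j}(x_{i,j})$, the condition $v \in h\mathbf{G}(A)h^{-1}$ forces each $x_{i,j} \in J_{i,j}(h)$; since $\psi = \prod_{\alpha\in\Psi}\theta_\alpha$ picks out a sub-collection of these coordinates (the root subgroups $\mathbf{U}_\alpha$, $\alpha \in \Psi$, correspond to specific index pairs), each $x_\alpha$ lies in the corresponding fractional ideal $J_{i(\alpha),j(\alpha)}(h)$, and we set $\mathfrak{q}_\alpha := J_{i(\alpha),j(\alpha)}(h)$. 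For $\mathrm{GL}_n$ the same computation applies verbatim after restricting to the determinant-$1$ locus or noting that conjugation is insensitive to scalars. Finally, case~\ref{case finite} is the easiest: when $\mathbb{F}$ is finite, every ring $\mathfrak{q}_\alpha[m_\alpha]$ above is a \emph{finite set}, so I can bypass the ideal-bounding step entirely — it suffices to note that $M_\Psi(h) \cap \prod_\alpha \pi^{-m_\alpha}\mathcal{O}$ embeds (via $\rho$ and the matrix coordinates) into a product of sets of the form $I_{i,j} \cap \pi^{-m}\mathcal{O}$ for suitable $m$, each of which is finite because residue fields are finite; alternatively, one still has Proposition~\ref{prop ideal contained} giving $J_\alpha(h) \subseteq M_\alpha(h)$, but here I only need finiteness, not a module structure, so I would invoke the elementary fact that $h\mathbf{G}(A)h^{-1} \cap \mathcal{M}_n(\mathcal{O}_v\text{-truncations})$ is finite when $\mathbb{F}$ is finite.

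The main obstacle, and the reason the three cases are separated, is that $\mathbf{U}_\Psi(k)$ need not be commutative for an arbitrary Chevalley group in characteristic $2$ or $3$ (see the counter-examples in Section~\ref{section commutative}), so one cannot in general bound $M_\Psi(h)$ by a direct sum of fractional ideals — the set $M_\Psi(h)$ may fail to be an $A$-module, as the $\mathrm{SL}_3$ example shows. Thus the genuinely delicate point is case~\ref{case Borel}, which is handled by the monomial analysis in the proof of Proposition~\ref{prop fractional ideals} (the linear independence of $\Psi$ forcing the parametrizing polynomials to be monomials, so that Lemma~\ref{lemma monomials in Dedekind} applies). In the write-up I would present the reduction to fractional-ideal bounds as the common core, then dispatch the three cases by citing Proposition~\ref{prop fractional ideals} (case~\ref{case Borel}), Lemma~\ref{lemma upper bound SL(n,A)} (case~\ref{case SLn}), and the finiteness of $\mathbb{F}$ together with $\mathfrak{q}_\alpha[m]$ being finite (case~\ref{case finite}), and conclude with the Riemann--Roch / finite-dimensionality argument in~\eqref{eq rr0}.
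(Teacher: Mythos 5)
Your proposal is correct and follows essentially the same route as the paper: cases~\ref{case SLn} and~\ref{case Borel} are handled exactly as you describe, via Lemma~\ref{lemma upper bound SL(n,A)} and Proposition~\ref{prop fractional ideals} respectively, giving $M_\Psi(h) \subseteq \bigoplus_{\alpha\in\Psi}\mathfrak{q}_\alpha(h)$ and then invoking the finite dimensionality of each $\mathfrak{q}_\alpha(h)\cap\pi^{m_\alpha}\mathcal{O}$. The only (minor) divergence is in case~\ref{case finite}, where the paper does not pass through matrix coordinates but instead observes that $A$ is discrete in $K$, so $M_\Psi(h)$ is a discrete subgroup of $K^\Psi$ whose intersection with the compact set $\bigoplus_{\alpha\in\Psi}\pi^{m_\alpha}\mathcal{O}$ is finite --- the same finiteness conclusion you reach via the injective embedding into truncated fractional ideals.
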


\begin{notation}
For any family $\underline{z} = \big( z_\alpha \big)_{\alpha \in \Psi}$ of values $z_\alpha \in \mathbb{R}$, we denote by $V_\Psi(h)[\underline{z}]$ the $\mathbb{F}$-vector space spanned by $\Big\{ \big(x_\alpha\big)_{\alpha \in \Psi} \in M_\Psi(h): \nu(x_\alpha) \geq z_\alpha,\ \forall \alpha \in \Psi\Big\}$, as given by Lemma~\ref{lemma finite dimensional}.
\nomenclature[]{$V_\Psi(h)[\underline{z}]$}{some finite dimensional $\mathbb{F}$-vector space}
\end{notation}

\begin{proof}
Let $\pi \in k$ be a uniformizer of the complete discretely valued ring $\mathcal{O}$.
For any $\alpha \in \Psi$, denote by $m_\alpha \in \mathbb{Z}$ the integer such that $\{ x \in \mathcal{O},\ \nu(x) \geqslant z_\alpha\} = \pi^{m_\alpha} \mathcal{O}$.

In the \ref{case SLn}, consider the non-zero fractional $A$-ideals $J_{i,j}(h)$ defined by Lemma~\ref{lemma upper bound SL(n,A)}.
For $\alpha = \alpha_{i,j} \in \Psi$, denote by $\mathfrak{q}_\alpha(h) = J_{i,j}(h)$.
In the \ref{case Borel}, for $\alpha \in \Psi$, denote by $\mathfrak{q}_\alpha(h) = \mathfrak{q}_{\Psi,\alpha}(h)$ the non-zero fractional $A$-ideals given by Proposition~\ref{prop fractional ideals}.
Then, in both cases, 
\[
    M_\Psi(h) \subseteq \bigoplus_{\alpha \in \Psi} \mathfrak{q}_\alpha(h).
\]
Since $\mathbb{F} \subseteq A \cap \mathcal{O}$, we have that any intersection of a fractional $A$-ideal with a fractional $\mathcal{O}$-ideal is an $\mathbb{F}$-vector space. Thus, we deduce that
\[
    V_\Psi(h)[\underline{z}] \subseteq \bigoplus_{\alpha \in \Psi} \big( \mathfrak{q}_{\alpha}(h) \cap \pi^{m_\alpha} \mathcal{O} \big).
\]
Moreover, since each $\mathbb{F}$-vector space $\mathfrak{q} \cap \pi^m \mathcal{O}$ is finite dimensional (c.f.\cite[\S 1, Prop.~1.4.9]{Stichtenoth}), we deduce that $\bigoplus_{\alpha \in \Psi} \big( \mathfrak{q}_{\alpha}(h) \cap \pi^{m_\alpha} \mathcal{O} \big)$ is also a finite dimensional $\mathbb{F}$-vector space. Hence, so is $V_\Psi(h)[\underline{z}]$ as subspace.

Now, assume that $\mathbb{F}$ is a finite field \ref{case finite}. As in \cite[Lemma 1.1]{M}, we know that $\nu_{\p}(x) \leq 0$, for all $x \in A$. Then, for each $x \in A$, the open ball centered at $x$ of radius $r<1$ does not intersect $A$. This proves that the ring $A$ is a discrete subset of the local field $K$ for the $\nu_\p$-topology.
Hence, the topological group $\mathbf{G}(A)$ is a discrete subgroup of $\mathbf{G}(K)$.
Thus, we get that $\mathbf{U}_\Psi(k) \cap h \mathbf{G}(A) h^{-1}$ is a discrete subgroup of $\mathbf{U}_\Psi(K)$.
Since the isomorphism $\psi : K^\Psi \to \mathbf{U}_\Psi(K)$ is naturally a homeomorphism, the group $M_\Psi(h)$ is a discrete subgroup of $K^\Psi$.
As intersection of the discrete subset $M_\Psi(h)$ with the compact subset $\bigoplus_{\alpha \in \Psi} \pi^{m_\alpha} \mathcal{O}$ of $K^\Psi$, the set $\Big\{ \big(x_\alpha\big)_{\alpha \in \Psi} \in M_\Psi(h): \nu(x_\alpha) \geq z_\alpha,\ \forall \alpha \in \Psi\Big\}$ is finite.
Hence it spans a finite dimensional $\mathbb{F}$-vector space of $k^\Psi.$
\end{proof}

\begin{proposition}\label{prop racional ArXiv}
Let $Q=Q(x,D)$ be a $k$-sector face of $\mathcal{X}_k$.
Assume that $x$ is a special $k$-vertex and that one of the following cases is satisfied:
\begin{enumerate}[label=(case \Roman*)]
    \item\label{case SLn rat} $\mathbf{G} = \mathrm{SL}_n$ or $\mathrm{GL}_n$ for some $n \in \mathbb{N}$;
    \item\label{case Borel rat} $Q$ is a sector chamber;
    \item\label{case finite rat} $\mathbb{F}$ is a finite field.
\end{enumerate}
Then there exists a subsector face $Q(y_3,D)$ such that any two different special $k$-vertices $v,w \in Q(y_3,D)$ are not $\mathbf{G}(A)$-equivalent.
\end{proposition}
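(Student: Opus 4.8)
The plan is to obtain the subsector face by a sequence of shrinkings of $Q(x,D)$, and to separate the $\mathbf{G}(A)$-orbits of the special vertices it contains by attaching to each such vertex a family of finite-dimensional $\mathbb{F}$-vector spaces whose dimensions are simultaneously $\mathbf{G}(A)$-invariant and sufficient to reconstruct the vertex.

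\emph{Reduction to the standard apartment.} If $D=0$ there is nothing to prove, so assume $D\neq0$. By Lemma~\ref{lemma WUW} there are $h\in H$ and $\Theta\subsetneq\Delta$ with $h\cdot D=D_0^\Theta$, so $D\in\mathbf{G}(k)\cdot D_0^\Theta$ and $Q(x,D)\in\sSec_k^\Theta$. Set $y_3=t_\Theta\big(Q(x,D)\big)$ as in Proposition~\ref{proposition facets in the same orbit ArXiv}; by construction of $t_\Theta$ via Proposition~\ref{prop starAction} and Lemma~\ref{lem subsector face and roots}\ref{item subsector enclosure}, the tip $y_3$ is a special $k$-vertex, $Q(y_3,D)$ lies in a $k$-apartment $\mathbb{A}$ with $h\cdot Q(y_3,D)=Q(h\cdot y_3,D_0^\Theta)\subseteq\mathbb{A}_0$, and, by Proposition~\ref{proposition facets in the same orbit ArXiv} applied to the pair $Q(x,D),Q(x,D)$, any $g\in\mathbf{G}(A)$ carrying a special vertex $v\in Q(y_3,D)$ to another special vertex $w\in Q(y_3,D)$ satisfies $g\cdot D=D$. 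After a further shrinking of $Q(y_3,D)$ using Lemma~\ref{lem subsector face and roots}\ref{item subsector real} we may also assume that $\alpha(u)$ exceeds all the Riemann--Roch thresholds appearing below, for every $\alpha\in\Phi_\Theta^+$ and every special vertex $u$ of the subsector. Since an automorphism of $\mathcal{X}_k$ induced by $\mathbf{G}(k)$ preserves the type of the star of a vertex and hence $k$-speciality, the points $u:=h\cdot v$ and $u':=h\cdot w$ are special $k$-vertices of $\mathbb{A}_0$ lying in the sector $Q(h\cdot y_3,D_0^\Theta)$, and $p:=hgh^{-1}$ belongs to $\stab_{\mathbf{G}(k)}(D_0^\Theta)=\mathbf{P}_\Theta(k)$ by Lemma~\ref{lemma stab standard face} and satisfies $p\big(h\mathbf{G}(A)h^{-1}\big)p^{-1}=h\mathbf{G}(A)h^{-1}$. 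It thus suffices to prove $u=u'$.

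\emph{The invariant.} Fix, via Proposition~\ref{prop good increasing subsets} (or Corollary~\ref{cor good increasing subsets} when $\Theta=\emptyset$, which is exactly case \ref{case Borel rat}), a chain $\emptyset=\Psi_0^\Theta\subsetneq\Psi_1^\Theta\subsetneq\cdots\subsetneq\Psi_r^\Theta\subseteq\Phi_\Theta^+$ with $r=\dim\Theta^\perp$, each $\Psi_i^\Theta$ satisfying \ref{cond closure}, \ref{cond commutative} and $W_\Theta$-stability, the spaces $\operatorname{Vect}_\mathbb{R}\!\big({\Psi_i^\Theta}_{|\Theta^\perp}\big)$ forming a complete flag of $(\Theta^\perp)^*$, and the sign-constancy property of the third bullet holding. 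For a special $k$-vertex $u$ of the subsector and each $i$, put $\underline z_i(u)=(-\alpha(u))_{\alpha\in\Psi_i^\Theta}$ and consider $V_{\Psi_i^\Theta}(h)[\underline z_i(u)]$, finite-dimensional by Lemma~\ref{lemma finite dimensional} (cases \ref{case SLn rat}, \ref{case Borel rat}, \ref{case finite rat} feed respectively cases \ref{case SLn}, \ref{case Borel}, \ref{case finite} of that lemma). Ordering $\Phi^+$ so that $\Psi_i^\Theta$ comes first shows $\psi^{-1}\big(\mathbf{U}_{\Psi_i^\Theta}(K)\cap\stab_{\mathbf{G}(K)}(u)\big)=\prod_{\alpha\in\Psi_i^\Theta}\{x\in K:\nu(x)\geq-\alpha(u)\}$, so $V_{\Psi_i^\Theta}(h)[\underline z_i(u)]$ is the $\mathbb{F}$-span of $M_{\Psi_i^\Theta}(h)\cap\psi^{-1}\big(\mathbf{U}_{\Psi_i^\Theta}(K)\cap\stab_{\mathbf{G}(K)}(u)\big)$. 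Now conjugation by $p\in\mathbf{P}_\Theta(k)$ acts $k$-linearly on $\mathbf{U}_{\Psi_i^\Theta}(k)$ (Corollary~\ref{cor k-linear action parabolic}, using $W_\Theta(\Psi_i^\Theta)=\Psi_i^\Theta$), preserves $M_{\Psi_i^\Theta}(h)$ because it preserves $h\mathbf{G}(A)h^{-1}$, and carries $\stab_{\mathbf{G}(K)}(u)$ onto $\stab_{\mathbf{G}(K)}(u')$; hence it restricts to a $k$-linear isomorphism between the two generating sets and $\dim_\mathbb{F}V_{\Psi_i^\Theta}(h)[\underline z_i(u)]=\dim_\mathbb{F}V_{\Psi_i^\Theta}(h)[\underline z_i(u')]$ for every $i$.

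\emph{Recovering the vertex.} It remains to see that the integers $d_i(u):=\dim_\mathbb{F}V_{\Psi_i^\Theta}(h)[\underline z_i(u)]$ determine $u$. One first checks that $M_\Psi(h)$ is a finitely generated $A$-module, using the monomial description of the parametrization obtained in the proof of Proposition~\ref{prop fractional ideals} together with Proposition~\ref{prop suitable polynomials} (so that $V_\Psi^u:=M_\Psi(h)\cap\prod_{\alpha}\pi^{-\alpha(u)}\mathcal{O}$ already equals its own $\mathbb{F}$-span). Interpreting $M_\Psi(h)$ as the global sections of a coherent sheaf on $\mathcal{C}\smallsetminus\{\p\}$ extended to $\mathcal{C}$ by the $\mathcal{O}$-lattice $\prod_\alpha\pi^{-\alpha(u)}\mathcal{O}$ at $\p$, the Riemann--Roch formula~\eqref{eq rr0} gives, once $u$ lies in the stable range, $d_i(u)=d\sum_{\alpha\in\Psi_i^\Theta}\alpha(u)+\gamma_i(h)$ for a constant $\gamma_i(h)$ independent of $u$ (in case \ref{case finite rat} the same affine formula follows by counting the finite sets $M_\Psi(h)\cap\prod_\alpha\pi^{-\alpha(u)}\mathcal{O}$, using discreteness of $\mathbf{G}(A)$ in $\mathbf{G}(K)$). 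Hence $f_i(u):=\sum_{\alpha\in\Psi_i^\Theta}\alpha(u)$ is determined by $d_i(u)$; since $u$ and $u'$ lie in a common sector of type $\Theta$, we have $u-u'\in\Theta^\perp$, so $f_i(u-u')=0$ for all $i$. Finally, the sign-constancy property forces $\{f_i{}_{|\Theta^\perp}\}_{1\le i\le r}$ to be a basis of $(\Theta^\perp)^*$: for each $i$ one picks $z\in(\Theta\cup\Psi_{i-1}^\Theta)^\perp$ with $\sum_{\alpha\in\Psi_i^\Theta\smallsetminus\Psi_{i-1}^\Theta}\alpha(z)\neq0$, which is possible because the flag strictly increases, so $f_i{}_{|\Theta^\perp}\notin\operatorname{Vect}_\mathbb{R}\!\big({\Psi_{i-1}^\Theta}_{|\Theta^\perp}\big)$. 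Therefore $u=u'$, i.e. $v=w$.

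\emph{Main obstacle.} The crux is the \emph{exactness} of the affine formula for $d_i(u)$ in the stable range: Proposition~\ref{prop ideal contained}, Proposition~\ref{prop fractional ideals} and Lemma~\ref{lemma upper bound SL(n,A)} only sandwich $V_\Psi^u$ between products of truncated fractional ideals whose Riemann--Roch dimensions differ by a fixed positive constant, so to see that $u\mapsto\dim_\mathbb{F}V_\Psi^u$ is affine with a \emph{single} constant one must genuinely identify $M_\Psi(h)$ with the global sections of a fixed coherent sheaf; establishing the $A$-module structure and finite generation of $M_\Psi(h)$ (also in case \ref{case finite rat}), via the character-telescoping hidden in the monomial description, is the main technical input. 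A secondary point is the careful tracking of $k$-speciality under the translations by $h$, and — should one wish to drop the ``special vertex'' hypothesis — the passage to the totally ramified extensions of Lemma~\ref{lemma becomes special}, which is the content of Corollary~\ref{cor conclusion without special hyp}.
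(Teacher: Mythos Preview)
Your reduction and invariant are exactly the paper's: translate by $h\in H$ to $\mathbb{A}_0$, use Proposition~\ref{proposition facets in the same orbit ArXiv} to force $g\cdot D=D$ so that $p=hgh^{-1}\in\mathbf{P}_\Theta(k)$, build the chain $\Psi_i^\Theta$ from Proposition~\ref{prop good increasing subsets}/Corollary~\ref{cor good increasing subsets}, and use Corollary~\ref{cor k-linear action parabolic} plus Lemma~\ref{lemma finite dimensional} to get $\dim_\mathbb{F}V_{\Psi_i^\Theta}(h)[\underline z_i(u)]=\dim_\mathbb{F}V_{\Psi_i^\Theta}(h)[\underline z_i(u')]$. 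Up to here the two arguments coincide.

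The divergence, and the gap, is in the ``recovering the vertex'' step. You want an \emph{exact} affine law $d_i(u)=d\sum_{\alpha\in\Psi_i^\Theta}\alpha(u)+\gamma_i(h)$ by interpreting $M_{\Psi_i^\Theta}(h)$ as global sections of a coherent sheaf. This needs $M_{\Psi_i^\Theta}(h)$ to be a finitely generated (indeed locally free) $A$-module; the paper only asserts the $A$-module property for $h\in\mathbf{U}^+(k)$ (a footnote near Eq.~\eqref{eq def MPsi}), and the example following that equation shows $M_\Psi(h)$ need not even be a product of fractional ideals, so the sheaf identification is genuinely nontrivial. In case~\ref{case finite rat} your ``counting finite sets'' sentence presupposes the same $\mathbb{F}$-linearity of $M_\Psi(h)$ that is at issue. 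You flag this yourself as ``the main technical input'' but do not supply it, so the argument is incomplete as written.

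The paper sidesteps this entirely. Instead of an exact formula for $d_i(u)$, it argues by induction on $i$: assuming $\beta_j(v')=\beta_j(w')$ for $j<i$, the sign-constancy clause of Proposition~\ref{prop good increasing subsets} (applied to $z'=v'-w'\in(\Theta\cup\Psi_{i-1}^\Theta)^\perp$) gives $\alpha(v')\le\alpha(w')$ for \emph{all} $\alpha\in\Psi_i^\Theta$, whence by Eq.~\eqref{eq Mi intersection} one has the \emph{set-theoretic} inclusion $M_i[v']\subseteq M_i[w']$, hence $V_i[v']\subseteq V_i[w']$, and equal dimension forces $V_i[v']=V_i[w']$ as subspaces. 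Only then does the paper intersect with the single coordinate line $V_{\beta_i}=\psi_i\circ\theta_{\beta_i}^{-1}\big(J_{\beta_i}(h)\big)$ and apply Riemann--Roch~\eqref{rri} to the \emph{one} fractional ideal $J_{\beta_i}(h)$ to conclude $\beta_i(v')=\beta_i(w')$. So Riemann--Roch is used only coordinate-by-coordinate on a genuine ideal, never on the full $M_\Psi(h)$; this is precisely what your proposal is missing.
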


\begin{proof}
We get, as in the proof of Proposition~\ref{prop starAction} that there are a subset $\Theta \subset \Delta$, elements $u \in \mathbf{U}^+(k)$ and $n \in N^{\mathrm{sph}}$ such that the element $h = n^{-1} u \in \mathbf{G}(k)$ satisfies $h \cdot D= D_0^\Theta$.
Let $y_3 = t_\Theta\big( Q(x,D) \big)$ be the special $k$-vertex given by Proposition~\ref{proposition facets in the same orbit ArXiv}.
Define $y_3' = h \cdot y_3\in \mathbb{A}_0$.

Let $v,w \in Q(y_3,D)$ be two arbitrary $k$-special vertices, and set $v'=h \cdot v$, $w' =h \cdot w \in Q(y_3',D_0^\Theta)$.
Assume that there exists $g \in \mathbf{G}(A)$ such that $g \cdot v=w$. We want to prove that $v=w$.
By Proposition~\ref{proposition facets in the same orbit ArXiv}, we have $g \cdot D= D$. Equivalently, the element $b=hgh^{-1}$ satisfies $b \cdot D_0^\Theta=D_0^\Theta$.
Hence, it follows from Lemma~\ref{lemma stab standard face} that $b \in \mathbf{P}_\Theta(k)$.
Note that by definition
\begin{equation}\label{eq stab 1}
g \, \stab_{\mathbf{G}(k)}(v) \, g^{-1}=\stab_{\mathbf{G}(k)}(w).
\end{equation}
Moreover, $b \cdot v' = hg \cdot v = h \cdot w = w'$. Hence, we get that
\begin{equation}\label{eq stab 2}
   b\, \stab_{\mathbf{G}(k)}(v')\, b^{-1} =\stab_{\mathbf{G}(k)}(w').
\end{equation}
Consider the subgroup $G^h= h \mathbf{G}(A) h^{-1} \subseteq \mathbf{G}(k)$.
Since $b \in G^{h}$, we deduce by intersecting both sides of Equality~\eqref{eq stab 2} with $G^{h}$, that
\begin{equation}\label{eq stab 3}
b \, \stab_{G^h}(v')\, b^{-1}= \stab_{G^h}(w').
\end{equation}

Let $\big( \Psi_i^\Theta \big)_{1 \leq i \leq m}$ be the family of subsets of $\Phi^+$ given by Proposition~\ref{prop good increasing subsets} when $\Theta \neq \emptyset$, or by Corollary~\ref{cor good increasing subsets} when $\Theta=\emptyset$, and pick a family $(\beta_i)_{1 \leq i \leq m}$ of positive roots such that $\beta_i \in \Psi_i^\Theta \smallsetminus \Psi_{i-1}^\Theta$ (with $\Psi_0^\Theta = \emptyset$).
We prove by induction on $i \in \llbracket 1,m \rrbracket$ that $\beta_i(v') = \beta_i(w')$.

Intersecting both sides of the Equality~\eqref{eq stab 3} with the subgroup $\mathbf{U}_{\Psi_i^\Theta}(k)$, we get
$$ b \, \stab_{G^h}(v')\, b^{-1} \cap \mathbf{U}_{\Psi_i^\Theta}(k)  = \stab_{G^h}(w') \cap \mathbf{U}_{\Psi_i^\Theta}(k).$$
Since $\Psi_i^\Theta$ is $W_\Theta$-stable, and satisfies~\ref{cond closure} and~\ref{cond commutative}, applying  Corollary~\ref{cor k-linear action parabolic}, we get that $\mathbf{U}_{\Psi_i^\Theta}$ is normalized by $\mathbf{P}_\Theta$. Hence
\begin{equation}\label{eq stab 4}
b \, \big(\stab_{G^h}(v') \cap \mathbf{U}_{\Psi_i^\Theta}(k) \big) \, b^{-1}  = \stab_{G^h}(w') \cap \mathbf{U}_{\Psi_i^\Theta}(k).
\end{equation} 

Denote by $\psi_i= \prod_{\alpha \in \Psi_i^\Theta} \theta_{\alpha} : k^{\Psi_i^\Theta} \to \mathbf{U}_{\Psi_i^\Theta}(k)$ the natural group isomorphism deduced from the \'epinglage.
For any special vertex $z \in Q(y_3',D_0^\Theta)$, we consider the group
\begin{align}
    M_i[z]:=& \psi_i^{-1}\Big( \stab_{G^h}(z) \cap \mathbf{U}_{\Psi_i^\Theta}(k) \Big)\notag\\
    =& \psi_i^{-1}\big( G^h \cap \mathbf{U}_{\Psi_i^\Theta}(k) \big) \cap
\psi_i^{-1}\big(\stab_{\mathbf{G}(K)}(z) \cap \mathbf{U}_{\Psi_i^\Theta}(k) \big) \subseteq k^{\Psi_i^\Theta},\label{eq def Miz}
\end{align}
and we analyse separately the both sides of the latter intersection.
On the one hand, by definition given by~\eqref{eq def MPsi}, we have that
\[ G^h \cap \mathbf{U}_{\Psi_i^\Theta}(k) = \psi_i\big( M_{\Psi_i^\Theta}(h) \big).\]
On the other hand, since $z$ is a special vertex, for each $\alpha \in \Psi_i^\Theta$ we have that $\alpha(z) \in \mathbb{Z}$.
Thus, it follows from \cite[6.4.9]{BT}, applied to $\mathbf{G}$ which is split, that 
\[ \stab_{\mathbf{G}(K)}(z) \cap \mathbf{U}_{\Psi_i^\Theta}(k) = \prod_{\alpha \in \Psi_i^\Theta} \mathbf{U}_{\alpha,z}(k) = \prod_{\alpha \in \Psi_i^\Theta} \theta_{\alpha}\big(\pi^{-\alpha(z)} \mathcal{O} \big) = \psi_i\left( \bigoplus_{\alpha \in \Psi_i^\Theta} \pi^{-\alpha(z)} \mathcal{O} \right),\]
where $\pi \in k$ is a uniformizer of $\mathcal{O}$. Thus Equation~\eqref{eq def Miz} becomes
\begin{equation}\label{eq Mi intersection}
    M_i[z] = M_{\Psi_i^\Theta}(h) \cap \bigoplus_{\alpha \in \Psi_i^\Theta} \pi^{-\alpha(z)} \mathcal{O}.
\end{equation}

If $Q$ is a sector chamber \ref{case Borel rat}, then $\Theta = \emptyset$, whence $\Psi_i^\emptyset$ is linearly in dependant according to Corollary~\ref{cor good increasing subsets}.
Hence, in any of the cases~\ref{case SLn rat}, \ref{case Borel rat} or~\ref{case finite rat}, we deduce by Lemma~\ref{lemma finite dimensional} that
\[
    V_i[z] := V_{\Psi_i^\Theta}\Big[\big( -\alpha(z) \big)_{\alpha \in\Psi_i^\Theta}\Big] = \operatorname{Vect}_\mathbb{F}\big( M_i[z] \big),
\]
is a finite dimensional $\mathbb{F}$-vector space.
Define
\[ V_{\beta_i}:= \psi_i \circ \theta_{\beta_i}^{-1}\big( J_{\beta_i}(h) \big),\]
where $J_{\beta_i}(h)$ is one of the non-zero $A$-ideals considered in Riemann-Roch identity~\eqref{rri} (this is possible because $\Phi_z = \Phi$ since $z$ is special, whence $\beta_i \in \Phi_z^+$).
In particular, $V_{\beta_i}$ is an $\mathbb{F}$-vector space contained in the subset $M_{\Psi_i^\Theta}(h)$.
Thus, since $V_{\beta_i} \subseteq M_{\Psi_i^\Theta}(h)$, we get that
\begin{multline*}
    V_{\beta_i} \cap \big(\bigoplus_{\alpha \in \Psi_i^\Theta} \pi^{-\alpha(z)} \mathcal{O} \big)
     \subseteq V_{\beta_i} \cap M_{\Psi_i^\Theta}(h) \cap \big(\bigoplus_{\alpha\in \Psi_i^\Theta} \pi^{-\alpha(z)} \mathcal{O} \big) \\
     \subseteq V_{\beta_i} \cap V_i[z]
     \subseteq V_{\beta_i} \cap \big(\bigoplus_{\alpha \in \Psi_i^\Theta} \pi^{-\alpha(z)} \mathcal{O} \big).
\end{multline*}
Hence we get that
\begin{equation}\label{eq intersection with subspace}
    V_i[z] \cap V_{\beta_i} = V_{\beta_i} \cap \big(\bigoplus_{\alpha \in \Psi_i^\Theta} \pi^{-\alpha(z)} \mathcal{O} \big) = \psi_i \circ \theta_{\beta_i}^{-1}\big( J_{\beta_i}(h) \cap  \pi^{-{\beta_i}(z)} \mathcal{O} \big).
\end{equation}

According to Corollary~\ref{cor k-linear action parabolic} applied to $\Psi_i^\Theta$, we obtain a  $k$-linear automorphism $f_b : k^{\Psi_i^\Theta} \to k^{\Psi_i^\Theta}$ such that
\[ \psi_i \circ f_b \circ \psi_i^{-1}: x \mapsto bxb^{-1}.\]
Thus, using Equality~\eqref{eq stab 4}, it induces an $\mathbb{F}$-linear isomorphism
\[ V_i[w'] = \operatorname{Vect}_\mathbb{F}\Big( M_i[w'] \Big) = \operatorname{Vect}_\mathbb{F}\Big( f_b\big( M_i[v'] \big) \Big)
=f_b\big( V_i[v'] \big) \cong V_i[v'].
\]

Up to exchanging $v'$ and $w'$, we can assume without loss of generality that $\beta_i(v') \leq \beta_i(w')$ so that $\pi^{-\beta_i(v')} \mathcal{O} \subseteq \pi^{-\beta_i(w')} \mathcal{O} $.

By construction, the family $\left((\beta_i)_{|\Theta^\perp}\right)_{1 \leq i \leq m}$ is a basis which is adapted to the complete flag $\operatorname{Vect}_\mathbb{R}\left(\left(\Psi_i^\Theta\right)_{|\Theta^\perp}\right)$.
Moreover, by induction assumption $\beta_j(v') = \beta_j(w')$ for each $j <i$.
In particular, since $D_0^\Theta \subset \Theta^\perp$, we have that $\alpha(v') = \alpha(w')$ for any $\alpha \in \Psi_{i-1}^\Theta$.
Since $z' = v'-w' \in \left( \Theta \cup \Psi_{i-1}^\Theta\right)^\perp$, we deduce by Proposition~\ref{prop good increasing subsets} that for any $\alpha \in \Psi_{i}^\Theta \smallsetminus \Psi_{i-1}^\Theta$, the sign of $\alpha(z')$ is the same as that of $\beta_i(z')$.
As a consequence, we have $\alpha(v') \leq \alpha(w')$ for any $\alpha \in \Psi_i^\Theta$.

Hence, Equation~\eqref{eq Mi intersection} gives that $M_i[v'] \subseteq M_i[w'] $ and therefore $V_i[v']$ is a subspace of $V_i[w']$. Since they have the same dimension over $\mathbb{F}$, we deduce that $V_i[v']=V_i[w']$.
Hence, Equality~\eqref{eq intersection with subspace} applied to $v'$ and to $w'$ gives that
\[
    \operatorname{dim}_{\mathbb{F}} J_{\beta_i}(h)\big[\beta_i(v')\big] 
    = \operatorname{dim}_{\mathbb{F}} V_i[v'] \cap V_{\beta_i}
    = \operatorname{dim}_{\mathbb{F}} V_i[w'] \cap V_{\beta_i}
    = \operatorname{dim}_{\mathbb{F}} J_{\beta_i}(h)\big[\beta_i(w')\big],
\]
since $\psi_i^{-1}\circ \theta_{\beta_i}$ is an injective $k$-linear endomorphism.
As a consequence, because $v',w' \in Q(y_3',D_0^\Theta)$, the Riemann-Roch Equality~\eqref{rri} gives that $\beta_i(v') = \beta_i(w')$, whence we conclude the induction.

Therefore, we deduce that $\beta_i(v')=\beta_i(w')$ for every $i \in \llbracket 1,m\rrbracket$.
Because $\left(\Psi_m^\Theta\right)_{\Theta^\perp}$ is a generating set of $(\Theta^\perp)^*$, we deduce that $v'=w'$ and therefore $v=w$, which concludes the proof.
\end{proof}

\subsection{\texorpdfstring{$\mathbf{G}(A)$}{G(A)}-orbits of arbitrary points}\label{subsection G-orbits ar points}

\begin{lemma}\label{lemma integral polytope}
Let $A \in \mathcal{M}_{m,n}(\mathbb{Z})$.
There exists an integer $d_A \in \mathbb{N}$, depending on $A$, such that for any $b \in \left( \mathbb{Z} \right)^n$, the vertices of the polytope
$P_b := \left\{ z \in \mathbb{R}^n,\ A z \leqslant b \right\}$
belongs to $\left( \frac{1}{d_A} \mathbb{Z} \right)^n$.
\end{lemma}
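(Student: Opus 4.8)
\textbf{Proof plan for Lemma~\ref{lemma integral polytope}.}

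The plan is to reduce the statement to a bound on the denominators of solutions of square linear systems with integer coefficients, using Cramer's rule. First I would recall that a polytope of the form $P_b = \{ z \in \mathbb{R}^n,\ Az \leqslant b\}$ has the property that each of its vertices is the unique solution of a subsystem obtained by turning $n$ linearly independent inequalities among the rows of $Az \leqslant b$ into equalities. Indeed, a point $z_0$ is a vertex of $P_b$ if and only if the set of rows $a_i$ of $A$ for which $\langle a_i, z_0\rangle = b_i$ contains $n$ linearly independent vectors; picking such a subset gives an invertible square submatrix $A' \in \mathrm{GL}_n(\mathbb{Q})$ extracted from $A$ and a corresponding subvector $b'$ of $b$ with $A' z_0 = b'$, whence $z_0 = (A')^{-1} b'$.

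Next I would invoke Cramer's rule: writing $z_0 = (A')^{-1} b'$, each coordinate of $z_0$ equals $\det(A'_j)/\det(A')$, where $A'_j$ is $A'$ with its $j$-th column replaced by $b'$. Since $A'$ has integer entries, $\det(A') \in \mathbb{Z} \smallsetminus \{0\}$, and since $b' \in \mathbb{Z}^n$, the numerator $\det(A'_j)$ is also an integer. Therefore each coordinate of $z_0$ lies in $\frac{1}{|\det(A')|}\mathbb{Z} \subseteq \frac{1}{d_A}\mathbb{Z}$ once we set
\[
 d_A := \operatorname{lcm}\big\{ |\det(A')| : A' \text{ an invertible } n\times n \text{ submatrix of } A \big\},
\]
with the convention $d_A = 1$ if $A$ has rank $< n$ (in which case $P_b$ has no vertices, so the statement is vacuous). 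This integer depends only on $A$, not on $b$, which is exactly what is required.

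The only genuinely substantive point is the combinatorial characterization of vertices of $P_b$ in terms of active constraints — that a vertex is cut out by $n$ linearly independent tight inequalities — which is a standard fact from polyhedral geometry (a point of a polyhedron is a vertex, i.e.\ an extreme point, if and only if the normals of the constraints active at it span $\mathbb{R}^n$). I would cite this rather than prove it. After that, everything is the routine Cramer's rule computation above, so there is no real obstacle; the main care needed is just to handle the degenerate case where $A$ does not have full column rank and to note that the set of invertible $n\times n$ submatrices of $A$ is finite, so the $\operatorname{lcm}$ defining $d_A$ is well-defined.
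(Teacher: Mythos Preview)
Your proposal is correct and follows essentially the same approach as the paper: both define $d_A$ as the least common multiple of the (absolute values of) determinants of the invertible $n\times n$ submatrices of $A$, use the characterization of a vertex as the unique solution of a full-rank subsystem $A'z=b'$, and conclude via Cramer's rule (equivalently, the adjugate formula $(A')^{-1} = \det(A')^{-1}\operatorname{Com}(A')^{\top}$). The only cosmetic difference is that the paper treats the degenerate case by the same convention $d_A=1$ without elaboration, whereas you spell out that $P_b$ then has no vertices.
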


\begin{proof}
Define $d \in \mathbb{N}$ as the lowest common multiple of the determinants of submatrices of rank $n$ of $A$.
Note that $d \neq 0$ is well-defined by assumption on the rank equal to $n$ of the considered submatrices, and that $d=1$ whenever $A$ is not of rank $n$.
Let $b \in \left( \mathbb{Z}\right)^n$.
Let $y \in P_b$ be any vertex (if there exists).
Then, there is a submatrix $A'$ of $A$ of rank $n$ and a corresponding submatrix $b'$ of $b$ so that $A'y = b'$.
Thus $y' = (A')^{-1} b' = \frac{1}{\operatorname{det}(A')} \operatorname{Com}(A')^\top b' \in \left( \frac{1}{d} \mathbb{Z} \right)^n$
since $\operatorname{det}(A') | d$.
Whence the result follows.
\end{proof}

Let $\Phi$ be a root system with Weyl group $W=W(\Phi)$ and let $\mathbb{A}_0$ be the Coxeter complex associated to $\Phi$ whose walls are the hyperplanes associated to the affine forms $\alpha + \ell$ for $\alpha \in \Phi$ and $\ell \in \mathbb{Z}$.
Let $\Delta$ be a basis of $\Phi$ and $W=$ be the finite Weyl group associated to $\Phi$.
Denote by $\varpi_\alpha$ the fundamental coweight associated to $\alpha \in \Delta$.
Let $\operatorname{Aff}(\mathbb{A}_0) = \mathrm{GL}(\mathbb{A}_0) \ltimes \mathbb{A}_0$ be the affine group the real affine space $\mathbb{A}_0$.
Then, the subgroup of $\operatorname{Aff}(\mathbb{A}_0)$ preserving the tiling of $\mathbb{A}_0$ is $\widetilde{W} = W \ltimes \bigoplus_{\alpha \in \Delta} \mathbb{Z} \varpi_\alpha$\footnote{In general, the affine Weyl group $W^\mathrm{aff} = W \ltimes \bigoplus_{\alpha \in \Delta} \mathbb{Z} \alpha^\vee$ is a proper subgroup of $\widetilde{W}$.}.

\begin{proposition}\label{prop fixed point Weyl affine}
Let $V = \mathbb{R}^n$ be a finite dimensional vector space and let $\Phi \subset V^*$ be a root system of rank $n$, with basis $\Delta$ and Weyl group $W \subset \mathrm{GL}_n(\mathbb{Z})$.
Denote by $\mathbb{A}_0$ the affine Coxeter complex structure on $V$ associated to $\Phi$ and let $\widetilde{W} = W \ltimes \mathbb{Z}^n$ be the affine subgroup of $\mathrm{GL}(V) \ltimes V$ preserving the Coxeter complex structure.

There exists a positive integer $e\in \mathbb{N}$ (only depending on $\Phi$) such that for any $\widetilde{g} \in \widetilde{W}$ and any point $x \in V$ fixed by $\widetilde{g}$, there exists $z \in V$ such that the translation $\tau_z$ by $z$ commutes with $\widetilde{g}$ and $\tau_z \cdot x = x+z \in \cl(x) \cap \left( \frac{1}{e} \mathbb{Z}\right)^n \subset \mathbb{R}^n$.
\end{proposition}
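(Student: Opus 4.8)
The statement is entirely combinatorial, taking place in the affine Coxeter complex $\mathbb{A}_0 \cong \mathbb{R}^n$ with its tiling by the affine Weyl group $W^{\mathrm{aff}}$ and the larger ``extended'' tiling group $\widetilde{W} = W \ltimes \mathbb{Z}^n$. The point $x$ is fixed by some $\widetilde{g} \in \widetilde{W}$, and we want to move $x$, by a translation $\tau_z$ that commutes with $\widetilde{g}$, into the enclosure $\cl(x)$ of $x$ (which is a polytope: the closure of the face of $x$, intersected with a bounded chunk does not matter since $\cl(x)$ of a single point is the closure of that face) and simultaneously onto a point with coordinates in $\tfrac{1}{e}\mathbb{Z}$ for a uniform $e$. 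The first observation is that $\cl(x)$, the enclosure of the singleton $\{x\}$, is exactly the closed face of the tiling containing $x$ in its relative interior; call it $F$. Since $\widetilde{g}$ preserves the tiling and fixes $x$, it setwise stabilizes $F$, hence fixes pointwise the affine span $\operatorname{aff}(F)$ is not automatic, but $\widetilde{g}$ permutes the vertices of $\overline{F}$, so $\widetilde{g}$ fixes the barycenter of $\overline{F}$ and acts on $\overline F$ as an element of finite order.

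\textbf{Step 1: reduce to the fixed affine subspace of $\widetilde{g}$.} Write $\widetilde{g} = (w, \lambda)$ with $w \in W$, $\lambda \in \mathbb{Z}^n$. The fixed-point set $\mathrm{Fix}(\widetilde{g})$, when nonempty (and it contains $x$), is an affine subspace $E = x_0 + \ker(w - \mathrm{id})$ for some particular solution $x_0$; and the translations $\tau_z$ commuting with $\widetilde{g}$ are exactly those with $z \in V^{w} := \ker(w - \mathrm{id})$, i.e.\ translations along the direction of $E$. So the whole problem happens inside the affine slice $E$: I want $z \in V^w$ with $x + z \in \overline{F} \cap (\tfrac1e \mathbb{Z})^n$. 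Since $x \in E$ and $z$ ranges over the direction space of $E$, the candidates $x+z$ range over $E$ itself; the constraint is $x+z \in \overline{F}$. Thus I must show $E \cap \overline{F}$ contains a point of $(\tfrac1e\mathbb{Z})^n$ for a uniform $e$.

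\textbf{Step 2: describe $E \cap \overline{F}$ by integral inequalities and apply Lemma~\ref{lemma integral polytope}.} The closed face $\overline{F}$ is cut out by finitely many affine inequalities of the form $\alpha(y) \geqslant \ell_\alpha$ (and equalities $\alpha(y) = \ell_\alpha$) with $\alpha \in \Phi$ and $\ell_\alpha \in \mathbb{Z}$, i.e.\ by an integer matrix $A_1 y \leqslant b_1$ with $A_1$ having rows among $\pm \Phi$ — crucially $A_1$ is drawn from a \emph{finite} set depending only on $\Phi$ (the possible facet-normals of a face, which are roots). The affine subspace $E$ is defined by $(w - \mathrm{id})y = (w-\mathrm{id})x$; writing $w \in \mathrm{GL}_n(\mathbb{Z})$, the left-hand side is an integer matrix $A_2$ depending only on $w$, hence only on finitely many possibilities given $\Phi$, and the right-hand side $(w-\mathrm{id})x$ — here one needs that it lies in $\mathbb{Z}^n$ (or in $\tfrac1N\mathbb{Z}^n$ for controlled $N$), which follows because $x$ is a vertex of a \emph{bounded} sub-situation: more carefully, $x$ itself need not be integral, but $(w-\mathrm{id})x = x - w^{-1}(wx)$ and $\widetilde g x = x$ gives $wx + \lambda = x$, so $(w-\mathrm{id})x = -\lambda \in \mathbb{Z}^n$. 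Excellent — so $E$ is cut out by $A_2 y = -\lambda$ with $A_2$ from a finite family and $\lambda \in \mathbb{Z}^n$. Combining, $E \cap \overline{F} = \{ y : A y \leqslant b\}$ where $A = \begin{pmatrix} A_1 \\ A_2 \\ -A_2\end{pmatrix}$ is built from a finite list of integer matrices (depending only on $\Phi$) and $b \in \mathbb{Z}^{m}$. By Lemma~\ref{lemma integral polytope} applied to each matrix in this finite list, there is a uniform $d = d_\Phi$ such that every vertex of $E \cap \overline{F}$ lies in $(\tfrac1d\mathbb{Z})^n$. Since $E \cap \overline{F}$ is a nonempty polytope (it contains $x$) in the slice $E$, and $E \cap \overline{F}$ — being the intersection of $E$ with a bounded face $\overline F$ — is bounded, hence has at least one vertex $y_0 \in (\tfrac1d\mathbb{Z})^n$. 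Setting $z := y_0 - x \in V^w$ (it lies in the direction space of $E$ because both $x, y_0 \in E$) gives $\tau_z$ commuting with $\widetilde g$ and $x + z = y_0 \in \cl(x) \cap (\tfrac1d\mathbb{Z})^n$. Taking $e := d_\Phi$ finishes the proof.

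\textbf{Main obstacle.} The delicate points are (i) confirming that $\cl(\{x\})$ really is the closed face $\overline F$ and that it is bounded, so that $E \cap \overline F$ is a genuine (bounded) polytope with vertices — this uses that $\Phi$ has full rank $n$, so that the intersection of all half-spaces defining a face is bounded in the directions transverse to the face, but along the face $\overline{F}$ it \emph{is} bounded since a face of a locally finite tiling is a bounded polytope; and (ii) the uniformity of $e$: one must check that the relevant matrices $A_1$ (facet normals of faces) and $A_2 = w - \mathrm{id}$ ($w \in W$) range over a finite set determined by $\Phi$ alone, independent of $x$ and $\widetilde g$, so that a single $d_\Phi$ from Lemma~\ref{lemma integral polytope} works for all of them (take the lcm over the finitely many matrices). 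A minor subtlety is the reduction that the commuting translations are exactly those along $V^w$ and that $(w-\mathrm{id})x = -\lambda$; both are short linear-algebra computations. I expect step (ii), the bookkeeping that makes $e$ depend only on $\Phi$, to be the part requiring the most care.
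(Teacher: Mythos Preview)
Your proof is correct and follows essentially the same strategy as the paper: identify the fixed-point set $E=\mathrm{Fix}(\widetilde g)$ as an affine subspace, observe that the commuting translations are exactly those along $V^w=\ker(w-\mathrm{id})$, describe $E\cap\cl(x)$ as a bounded polytope with integer data, and apply Lemma~\ref{lemma integral polytope} uniformly over the finitely many possible constraint matrices (indexed by $w\in W$).

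Your argument is in fact slightly more streamlined than the paper's. The paper first decomposes $x=x_1+x'$ along $V_w(1)\oplus V'_w(1)$, computes $x'=(w-\mathrm{id})^{-1}(-v')\in(\tfrac{1}{c_w}\mathbb{Z})^n$ for an auxiliary constant $c_w$, then shifts the polytope by $x'$ and scales the root inequalities by $c_w$ to make the right-hand side integral before invoking Lemma~\ref{lemma integral polytope}. You bypass this detour with the single observation $(w-\mathrm{id})x=-\lambda\in\mathbb{Z}^n$, which lets you write $E$ directly by integer equations $(w-\mathrm{id})y=-\lambda$ and combine them with the integer inequalities $\alpha(y)\geqslant\lfloor\alpha(x)\rfloor$ defining $\cl(x)$; the combined matrix then depends only on $w$ and $\Phi$, and the right-hand side is integral without any rescaling. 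Both approaches land on the same vertex-of-a-rational-polytope conclusion, but yours avoids the bookkeeping with $c_w$.
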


\begin{proof}
Let $V_\mathbb{Q} = \mathbb{Q}^n$ and let $(\varpi_1,\dots,\varpi_n)$ be its canonical basis of fundamental coweights.
Let $w \in W$ and consider the eigenspace $V_{w}(1) = \ker (w-\operatorname{id})$ in $V_\mathbb{Q}$.
Since $W$ is a finite group, $w \in \mathrm{GL}_n(\mathbb{Z})$ is semisimple over $\mathbb{Q}$, whence there exists a $w$-stable complementary subspace $V'_w(1)$ of $V_w(1)$ in $V_\mathbb{Q}$.
Since $w-\operatorname{id}$ is invertible over $V'_w(1) \cong \mathbb{Q}^m$, there is a positive integer $t_w$ so that $(w-\operatorname{id})_{V'_w}^{-1} = \big( u_{i,j} \big)_{1 \leq i,j \leq m} \in \mathrm{GL}_m(\mathbb{Q})$ has coefficients in $\frac{1}{t_w} \mathbb{Z}$.

Let $(e_1,\dots,e_n)$ be an adapted basis to the decomposition $V_\mathbb{Q} = V'_w(1) \oplus V_w(1)$ so that $(e_1,\dots,e_m)$ is a basis of $V'_w(1)$ and $(e_{m+1},\dots,e_n)$ is a basis of $V_w(1)$ (which are eigenvectors).
There exist positive integers $r_w,s_w \in \mathbb{N}$ and rational coefficients $\lambda_{i,j} \in \frac{1}{r_w} \mathbb{Z}$ and $\mu_{i,j} \in \frac{1}{s_w} \mathbb{Z}$ so that $\varpi_i = \sum_{j=1}^n \lambda_{i,j} e_j$ and $e_j = \sum_{i=1}^n \mu_{i,j} \varpi_i$.
Define $c_w = r_w s_w t_w \in \mathbb{N}$.

We claim that for any $y \in \mathbb{Z}^n \cap V'_w(1)$, we have that $(w-\operatorname{id})^{-1}(y) \in \left( \frac{1}{c_w} \mathbb{Z}\right)^n$.
Indeed, write $y = \sum_{i=1}^n y_i \varpi_i = \sum_{\substack{1 \leq i \leq n\\ 1 \leq j \leq m}} y_i \lambda_{i,j} e_j$.
Then \[(w-\operatorname{id})^{-1}(y) = \sum_{\substack{1 \leq i \leq n\\ 1 \leq j \leq m\\ 1 \leq k \leq m}} y_i \lambda_{i,j} u_{k,j} e_k = \sum_{\substack{1 \leq i \leq n\\ 1 \leq j \leq m\\ 1 \leq k \leq m\\ 1 \leq \ell \leq n}}  y_i \lambda_{i,j} u_{k,j} \mu_{\ell,k} \varpi_\ell\] where $y_i \lambda_{i,j} u_{k,j} \mu_{\ell,k} \in \frac{1}{r_ws_wt_w} \mathbb{Z} = \frac{1}{c_w} \mathbb{Z}.$


Let $\widetilde{g}\in \widetilde{W}$ and let $x \in V = V_\mathbb{Q} \otimes \mathbb{R}$ be a fixed point of $\widetilde{g} = (w,v)$ with $w \in W$ and $v \in \mathbb{Z}^n$.
Write $x = x_1 + x'$ and $v=v_1+v'$ with $x_1,v_1 \in V_w(1) \otimes \mathbb{R}$ and $x',v' \in V'_w(1) \otimes \mathbb{R}$.
Then $\widetilde{g} \cdot x = w(x) + v = x$, whence $w(x_1) -x_1 = -v_1$ and $w(x')-x' = -v'$.
Thus $v_1 = 0$ and $x' = (w-\operatorname{id})_{V'_w}^{-1}(-v')$.

By construction $x' \in \left( \frac{1}{c_w} \mathbb{Z} \right)^n$.
For $\alpha \in \Phi$, define $b_\alpha = \lfloor \alpha(x) \rfloor - \alpha(x') \in \frac{1}{c_w} \mathbb{Z}$.
Since the roots $\alpha \in \Phi$ are linear forms with coefficients in $\mathbb{Z}$, the $c_w b_\alpha$ belongs to $\mathbb{Z}$ and $w \in \mathrm{GL}_n(\mathbb{Z})$,
the polytope
\begin{align*}
    P\ :=&\big(x' + V_1(w)\otimes \mathbb{R}\big) \cap \cl(x)\\
    =&\left\{x'+z,\ z \in V_1(w)\otimes \mathbb{R},\ \text{ and } \forall \alpha \in \Phi, \alpha(x'+z) \geqslant \lfloor \alpha(x) \rfloor \right\}\\
    =&x' + \left\{ z \in \mathbb{R}^n,\ (w-\operatorname{id})(z) \geqslant 0 \text{ and } (\operatorname{id}-w)(z) \geqslant 0 \text{ and } \forall \alpha \in \Phi,\ c_w \alpha(z) \geqslant c_w b_\alpha \right\}
\end{align*}
is defined by inequalities with coefficients in $\mathbb{Z}$.
Hence, according to Lemma~\ref{lemma integral polytope}, there is an integer $d_w$ which only depends on $w \in W$ and $\Phi$, but not on $v'$, such that the vertices of $P$ belongs to $\left( \frac{1}{d_w} \mathbb{Z} \right)^n$.

Define $e := \operatorname{lcm}(d_w,\ w \in W)$ and let $y \in \cl(x)$ be any vertex of $P$.
Then $y \in \left( \frac{1}{e} \mathbb{Z} \right)^n$.
Define $z = y-x \in V_w(1)\otimes \mathbb{R}$ and $\tau_z$ the translation by $z$, so that $\tau_z(x) = y \in \cl(x) \cap \left( \frac{1}{e} \mathbb{Z} \right)^n$.
Then $z$ is a fixed point of $w$ so that $\tau_z$ commutes with $\widetilde{g}$.
Whence the result follows for the integer $e \in \mathbb{N}$, only depending on $\Phi$.
\end{proof}

\begin{corollary}\label{cor image of sector faces}
Let $Q=Q(x,D)$ be a $k$-sector face of $\mathcal{X}_k$ whose tip $x$ is an arbitrary point (not necessarily a vertex).
Assume that $\mathbf{G} = \mathrm{SL}_n$ or $\mathrm{GL}_n$, or that $Q$ is a sector chamber, or that $\mathbb{F}$ is finite.
Then, there exists a subsector face $Q(y_4,D)$ such that any two different points $v,w \in Q(y_4,D)$ are not $\mathbf{G}(A)$-equivalent.
\end{corollary}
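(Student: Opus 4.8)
The plan is to reduce the statement about an arbitrary-tip sector face $Q(x,D)$ to the already-established case of a sector face with a \emph{special vertex} as tip, namely Proposition~\ref{prop racional ArXiv}, by passing to a suitable ramified field extension and using the fixed-point structure of the extended affine Weyl group. First I would choose, via Lemma~\ref{lemma becomes special}, a finite totally ramified extension $\ell/k$ such that every $k$-center of $\mathcal{X}_k$ becomes a special $\ell$-vertex in $\mathcal{X}_\ell$, together with the integer $e\in\mathbb{N}$ attached to the root system $\Phi$ by Proposition~\ref{prop fixed point Weyl affine}; I may need to take $\ell$ large enough that the denominator $e$ is absorbed, i.e.~so that $\frac1e\mathbb{Z}^n$-points become vertices of $\mathcal{X}_\ell$ (this is exactly the kind of enlargement Lemma~\ref{lem curve extension} provides). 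Write $B$ for the associated ring extension of $A$ and $L$ for the completion of $\ell$.

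Next I would work inside a $k$-apartment $\mathbb{A}$ containing $Q(x,D)$, identified with $\mathbb{R}^n$ with its $\Phi$-Coxeter structure. The key observation is that if $v,w\in Q(x,D)$ satisfy $w=g\cdot v$ for some $g\in\mathbf{G}(A)$, then $g$ maps the $k$-face of $v$ onto the $k$-face of $w$, hence the $k$-center $v_c$ onto the $k$-center $w_c$; by Lemma~\ref{lemma becomes special} these centers are special $\ell$-vertices. Moreover $g$ stabilizes $\partial_\infty D$ once we have shrunk to a subsector given by Proposition~\ref{proposition facets in the same orbit ArXiv}, so $g$ acts on $\mathbb{A}$ (after translating $\mathbb{A}$ to $\mathbb{A}_0$) through an element $\widetilde{g}$ of the extended affine Weyl group $\widetilde{W}$ fixing $v$. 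I would then apply Proposition~\ref{prop fixed point Weyl affine} to $\widetilde{g}$ and the fixed point $v$: it produces a translation $\tau_z$ commuting with $\widetilde{g}$ and carrying $v$ to a point $v+z\in\cl(v)\cap\frac1e\mathbb{Z}^n$, i.e.~to a special $\ell$-vertex $v'$ still in the closure of the face of $v$; the same translation applied to $w$ yields a special $\ell$-vertex $w'$ with $w'=g\cdot v'$ (commutation of $\tau_z$ with $\widetilde{g}$ is what makes this work). Because $z$ lies in the direction $D$ (it is a fixed vector of the linear part, parallel to $D$ since $g$ preserves $D$), both $v'$ and $w'$ lie in $Q(x,D)$.

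Having produced special $\ell$-vertices $v',w'$ in $Q(x,D)$ that are $\mathbf{G}(A)$-equivalent, hence $\mathbf{G}(B)$-equivalent, I would invoke Proposition~\ref{prop racional ArXiv} over the ring $B$ (which is legitimate: $B$ is a Dedekind domain of the form $\mathcal{O}_{\{\p'\}}$, and $\mathbf{G}=\mathrm{SL}_n,\mathrm{GL}_n$, or $Q$ a sector chamber, or $\mathbb{F}$ — and hence the residue field situation — under control; in the finite-field case $\mathbb{E}$ is again finite). That proposition furnishes a subsector face on which no two distinct special $\ell$-vertices are $\mathbf{G}(B)$-equivalent, forcing $v'=w'$, and then $v=v'-z=w'-z=w$. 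The one subtlety requiring care — and the main obstacle — is the bookkeeping ensuring that after all the shrinkings ($Q(y_3,D)$ from Proposition~\ref{proposition facets in the same orbit ArXiv} over $\ell$, then the Proposition~\ref{prop racional ArXiv} subsector over $B$) together with the uniformly bounded displacement $\|z\|$ (bounded because $v+z\in\cl(v)$, a face of bounded diameter), one can find a \emph{single} subsector face $Q(y_4,D)\subseteq Q(x,D)$ such that for every $v\in Q(y_4,D)$ the perturbed point $v'$ still lies in the good subsector; this is the same finite-diameter neighborhood argument as at the end of the proof of Proposition~\ref{prop igual stab}, replacing $\Omega$ by the orbit of the bounded set $\{z\}$ under $W$, and I would carry it out in exactly that way.
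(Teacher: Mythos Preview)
Your overall architecture---pass to a ramified extension so that certain points become special $\ell$-vertices, invoke Proposition~\ref{prop racional ArXiv} over $B$, and use Proposition~\ref{prop fixed point Weyl affine} to perturb to such a vertex via a translation commuting with the Weyl-group element---is exactly the paper's strategy. But there is a genuine gap at the point where you invoke Proposition~\ref{prop fixed point Weyl affine}.

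You write that $g$ ``acts on $\mathbb{A}$ \ldots\ through an element $\widetilde{g}$ of the extended affine Weyl group $\widetilde{W}$ fixing $v$.'' This is precisely what you are trying to prove: we only know $\widetilde{g}\cdot v=w$, and the goal is $v=w$. Worse, $\widetilde{g}$ need not have \emph{any} fixed point. Its linear part lies in $W_\Theta$, which fixes the whole subspace $V_0^\Theta=\Theta^\perp$ pointwise; if the translation part of $\widetilde{g}$ has a nonzero component along $V_0^\Theta$, then $\widetilde{g}$ is a glide with no fixed points, and Proposition~\ref{prop fixed point Weyl affine} does not apply to it at all. (Relatedly, your claim that the perturbation vector $z$ ``lies in the direction $D$'' is not justified: being fixed by the linear part only puts $z$ in $V_1(\widetilde{w})\supseteq V_0^\Theta$, not in the cone $\overline{D}$, so $v+z$ need not stay in $Q(x,D)$.)

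The paper repairs this by splitting $\mathbb{A}_0=V_0^\Theta\oplus V_0^{\Delta\smallsetminus\Theta}$ and observing that, because $v_0,w_0$ both lie in a sector face with direction $D_0^\Theta\subset\Theta^\perp$, their $V_0^{\Delta\smallsetminus\Theta}$-components $\hat v,\hat w$ coincide. The projection $\hat{w}_\Theta$ of $\widetilde{g}$ onto that factor therefore \emph{does} fix $\hat v$, and Proposition~\ref{prop fixed point Weyl affine} is applied to the pair $(\hat{w}_\Theta,\hat v)$, not to $(\widetilde{g},v)$. A preliminary integral translation along $V_0^\Theta$ is also needed to make the remaining coordinates integral before this step. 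Finally, since the perturbed point $v''_0$ lies only in $\cl_k(v'_0)$ and not necessarily in the original subsector face, the paper uses the finite set $\Omega$ of special $\ell$-vertices from Lemma~\ref{lemma finite special vertices} (not just a uniform diameter bound) to ensure $h^{-1}\cdot v''_0$ and $h^{-1}\cdot w''_0$ land in a common $Q\big(y_4(\omega),D\big)$ where the separation result applies.
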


\begin{proof}
Consider the positive integer $e \in \mathbb{N}$ given by Proposition~\ref{prop fixed point Weyl affine} applied to the root system $\Phi$.
Let $\mathcal{D}\to\mathcal{C}$ (resp. $B/A$, $\ell/k$, etc.) be a finite extension of degree $e$ ramified at $\p$ as given by Lemma~\ref{lem curve extension}.

We prove this corollary in $3$ steps:
firstly, $x,v,w$ are assumed to be $\ell$-vertices and $x$ is furthermore assumed to be special; secondly $x$ is assumed to be a special $\ell$-vertex and $v,w$ are any points in the sector face; finally, $x$, $v$, $w$ are in the general consideration of the statement.

\textbf{First step:} Assume that $x \in \mathcal{X}_k$ is a special $\ell$-vertex.

Let $\ell' / \ell$ be a field extension given by Lemma~\ref{lemma becomes special}, and $B'/B$ the corresponding ring extension.
Let $i : \mathcal{X}_\ell \to \mathcal{X}_{\ell'}$ be the canonical $\mathbf{G}(\ell)$-equivariant embedding as introduced in section~\ref{intro rational building}.
Note that since $\mathbf{G}$ is split, one can identify the standard apartment of $\mathcal{X}_{\ell'}$ with that of $\mathcal{X}_\ell$ \cite[9.1.19(b)]{BT} and the $\ell$-walls are sent onto some $\ell'$-walls by $i$.
Hence, the vertices of $\mathcal{X}_\ell$ identifies via $i$ with the $\ell$-vertices of $\mathcal{X}_{\ell'}$.
In particular, the standard vertex $x_0=i(x_0)$ is an $\ell$-vertex of $\mathcal{X}_{\ell'}$.
Thus, by assumption on $\ell'/\ell$, the $\ell$-vertex $x$, identified with $i(x)$, is in the $\mathbf{G}(\ell')$-orbit of the standard special vertex $i(x_0)$ of $\mathcal{X}_{\ell'}$ because both are $\ell$-vertices.
Since $\mathbf{G}$ is split, the root systems of $\mathbf{G}_\ell$ and $\mathbf{G}_{\ell'}$ are the same, whence $Q(x,D)$ identifies with an $\ell'$-sector face in $\mathcal{X}_{\ell'}$.
According to Proposition~\ref{prop racional ArXiv}, there is a point $y_3$ of $Q(x,D)$ such that any special $\ell'$-vertices $v,w$ of the subsector face $Q(y_3,D)$ are not $\mathbf{G}(B')$-equivalent, therefore not $\mathbf{G}(B)$-equivalent, whence not $\mathbf{G}(A)$-equivalent.
In particular, this proves the corollary for $\ell$-vertices of $\ell$-sector faces whose tip is a special $\ell$-vertex.

According to  Lemma~\ref{lem subsector face and roots}\ref{item subsector enclosure}, since $x$ is a special $\ell$-vertex, there exists a special $k$-vertex $y_4$ such that $\cl_\ell\big( Q(y_4,D)\big) \subset Q(y_3,D)$.

\textbf{Second step:} Assume that $v,w$ are any two points of $Q(y_4,D)$ that are $\mathbf{G}(A)$-equivalent and write $w = g \cdot v$ with $g \in \mathbf{G}(A)$.

Let $\overline{F_v}=\cl_\ell(v)$ and $\overline{F_w}=\cl_\ell(w)$ be the $\ell$-enclosure of the points $v$ and $w$ respectively (that are closure of $\ell$-faces).
These enclosure of faces are contained in $Q(y_3,D)$.
Because the action of $\mathbf{G}(A) $ preserves the simplicial structure of $\mathcal{X}_\ell$, we have that $g$ sends the $\ell$-vertices of $\overline{F_w} \subset Q(y_3,D)$ onto the $\ell$-vertices of $\overline{F_v} \subset Q(y_3,D)$. Hence $g$ fixes these vertices according to the previous step of the proof, whence $g$ fixes $\overline{F_w} = \overline{F_v}$. In particular, $w=v$.

\textbf{Third step:} Assume that $x$ is any point of $\mathcal{X}_k$.

Let $h = n^{-1}u \in H$ (with $u \in \mathbf{U}^+(k)$ and $n \in N^{\mathrm{sph}}$) so that $D = h^{-1} \cdot D_0^\Theta$ and $\mathbb{A} := h^{-1} \cdot \mathbb{A}_0$ is a $k$-apartment containing $Q(y_2,D)$ as given by Proposition~\ref{prop starAction}.

According to Lemma~\ref{lemma finite special vertices}, there is a finite subset $\Omega$ consisting of special $\ell$-vertices such that any special $\ell$-vertex of $\cl_k\big(Q(x,D)\big)$ belongs to $\cl_k\big(Q(\omega,D)\big)$ for some $\omega \in \Omega$.

For any $\omega \in \Omega$, there is a point $y_4(\omega)$, given by second step, such that any two points $v,w \in Q\big(y_4(\omega),D\big)$ are not $\mathbf{G}(A)$-equivalent.
Let $T_1$ be the $K$-torus associated to $\mathbb{A}$ as in the proof of Proposition~\ref{proposition facets in the same orbit ArXiv}, and let $\Theta_D$ be the simple roots in a basis $\Delta_D$ defining the vector face $D$.
Let $y_4(x)$ be the point of $Q(x,D) \subset \mathbb{A}$ defined so that 
\[
\left\{
\begin{array}{ll}
\forall \alpha \in \Theta_D,&\alpha\big( y_4(x) \big) = \max \Big\{ \alpha\big( y_4(\omega)\big)+1,\ \omega \in \Omega \Big\};\\
\forall \alpha \in \Delta_D \smallsetminus \Theta_D,&\alpha\big( y_4(x)\big) = \alpha(x).
\end{array}
\right.
\]
Let $z$ be any special $\ell$-vertex of $\cl_k\Big( Q\big(y_4(x),D\big)\Big)$.
Then $z \in \cl_k\Big( Q\big(x,D\big)\Big)$ and, by definition of $\Omega$, there is $\omega \in \Omega$ such that $z \in \cl_k\big(Q(\omega,D)\big)$.
Moreover, for any $\alpha \in \Theta_D$, we have that
\[\alpha(z) > \alpha\big( y_4(x)\big)-1 \geqslant y_4(\omega).\]
Hence $z \in Q\big(y_4(\omega),D\big)$.

Let $v,w \in Q\big(y_4(x),D\big)$ and $g \in \mathbf{G}(A)$ so that $g \cdot v = w$.
Let $v_0 = h \cdot v$ and $w_0 = h \cdot w \in \mathbb{A}_0$. Let $g_0 = hgh^{-1}$ so that $w_0 = g_0 \cdot v_0$.
Let $F_v,F_w$ be the $k$-faces containing $v$ and $w$ respectively.
Let $z_v$ be any $k$-vertex of $\overline{F_v}$.
Since the action of $\mathbf{G}(A)$ preserves the $k$-structure, $z_w := g \cdot z_v$ is a $k$-vertex of $\overline{F_w}$.
These two vertices $z_v,z_w$ are special $\ell$-vertices, hence they belongs to some $Q\big(y_4(\omega_v),D\big)$ and $Q\big(y_4(\omega_w),D\big)$ respectively, for some $\omega_v,\omega_w \in \Omega$.
Therefore, by Proposition~\ref{proposition facets in the same orbit ArXiv} applied with these two special $\ell$-vertices $z_v,z_w$ in the suitable subsector faces $Q\big(y_4(\omega_v),D\big)$ and $Q\big(y_4(\omega_w),D\big)$, we deduce that $g \cdot D = D$ so that $g_0 \cdot D_0^\Theta = D_0^\Theta$,
whence $g_0 \in \mathbf{P}_\Theta(k)$ by Lemma~\ref{lemma stab standard face}.

Applying \cite[7.4.8]{BT} to $g_0$, there exists $n \in \mathbf{N}(K)$ such that $\forall z \in \mathbb{A}_0 \cap g_0^{-1} \mathbb{A}_0, g_0 \cdot z = n \cdot z$.
Hence $n^{-1}g_0$ fixes $ \mathbb{A}_0 \cap g_0^{-1} \mathbb{A}_0$ which contains a subsector face of the form $Q(y_0,D_0^\Theta)$.
Hence $n^{-1}g_0 \cdot D_0^\Theta = D_0^\Theta$ and $g_0 \cdot D_0^\Theta = D_0^\Theta$, whence $n \in \mathbf{N}(K) \cap \mathbf{P}_\Theta(K)$.
Thus, $n \in N_\Theta \mathbf{T}(K)$ has image $\widetilde{g} \in W_\Theta \ltimes \bigoplus_{\alpha \in \Delta} \mathbb{Z} \varpi_\alpha$ in the affine group $\mathrm{Aff}(\mathbb{A}_0)$, and $\forall z \in \mathbb{A}_0 \cap g_0^{-1} \mathbb{A}_0,\ g_0 \cdot z = \widetilde{g} \cdot z = n \cdot z$.
In particular, for any $z \in \cl_k\big( \mathfrak{C}(v_0,D_0^\Theta) \big)$, we have that $\widetilde{g} \cdot z= g_0 \cdot z$.

Decompose $\mathbb{A}_0 = V_0^\Theta \oplus V_0^{\Delta \smallsetminus \Theta}$.
In the basis of fundamental coweights, $V_0^\Theta = \bigoplus_{\alpha \in \Delta \smallsetminus \Theta} \mathbb{R} \varpi_\alpha$.
Write $v_0 = v_\Theta + \hat{v}$, $w_0 = w_\Theta + \hat{w}$ where $v_\Theta, w_\Theta \in V_0^\Theta$ and $\hat{v},\hat{w} \in V_0^{\Delta\smallsetminus \Theta}$.
Write $\widetilde{g} = \widetilde{w} \ltimes (\tau_\Theta+\hat{\tau})$
with $\widetilde{w}\in W_\Theta$, $\tau_\Theta \in V_0^\Theta$ and $\hat{\tau} \in V_0^{\Delta \smallsetminus \Theta}$.
Write $v_\Theta = \sum_{\alpha \in \Delta\smallsetminus \Theta} z_\alpha \varpi_\alpha$ and define $v'_\Theta = \sum_{\alpha \in \Delta\smallsetminus \Theta} \lceil z_\alpha \rceil \varpi_\alpha \in \bigoplus_{\alpha \in \Delta\smallsetminus \Theta} \mathbb{Z} \varpi_\alpha$.
Then $z = v'_\Theta - v_\Theta \in D_0^\Theta$ is fixed by $\widetilde{w}$ so that the translation by $z$ commutes with $\widetilde{g}$.
Define $w'_0 := w_0 + z \in Q(w_0,D_0^\Theta)$ and $v' := v_0+z = \hat{v}+v_0'$, so that $w'_0 = \widetilde{g} \cdot v'_0$ since $\tau_z$ commutes with $\widetilde{g}$.

Note that $\alpha(v_0) = \alpha(w_0)$ for all $\alpha \in \Theta$ since $v_0,w_0 \in Q\big(h \cdot y_4(x),D_0^\Theta\big)$.
Then, we have that 
$\alpha(w'_0)=\alpha(w_0) + \alpha(z) = \alpha(v_0)+ \alpha(z)=\alpha(v'_0)$
for all $\alpha \in \Theta$.
Thus $\hat{v} = \hat{w}$.
Let $\hat{w}_\Theta= \widetilde{w} \ltimes \hat{\tau}$.
Then $\widetilde{g} \cdot v'_0 = w'_0$ gives $\hat{w}_\Theta \cdot v'_0 = w'_0$, whence $\hat{w}_\Theta \cdot \hat{v} = \hat{w}$.
Hence $\hat{v}$ is a fixed point of $\hat{w}_\Theta$ and therefore $v'_0$ is a fixed points of $\hat{w}_\Theta$.
By Proposition~\ref{prop fixed point Weyl affine}, there is $\hat{z}\in V_0$ such that the translation $\tau_{\hat{z}}$ by $\hat{z}$ commutes with $\hat{w}_\Theta$, whence it commutes with $\widetilde{g}$, and such that $v'_0+\hat{z} \in \cl_k(v'_0) \cap \bigoplus_{\alpha \in \Delta} \left( \frac{1}{e} \mathbb{Z} \varpi_\alpha \right)$. Note that, by definition of $\ell$, the set $\bigoplus_{\alpha \in \Delta} \left( \frac{1}{e} \mathbb{Z} \varpi_\alpha \right)$ is that of special $\ell$-vertices of $\mathbb{A}_0$.

Define $v''_0 = v'_0+\hat{z} \in \cl_k(v'_0)\subset \cl_k\big( Q(v_0,D_0^\Theta) \big)$ and $w''_0 = \widetilde{g} \cdot v''_0$.
Note that $v''_0$ is a special $\ell$-vertex, whence so is $w''_0$.
Since $g_0 \cdot Q(v_0,D_0^\Theta) = Q(w_0,D_0^\Theta)$ and it preserves the simplicial $k$-structure, we have that $g_0 \cdot \cl_k(v'_0) = \cl_k(w'_0) \subset \mathbb{A}_0$.
Hence $v''_0 \in \mathbb{A}_0 \cap g_0^{-1} \mathbb{A}_0$ so that $g_0 \cdot v''_0=\widetilde{g} \cdot v''_0= w''_0$.

Since $h^{-1} v'_0 \in Q\big(y_4(x),D\big)$, there is $\omega \in \Omega$ such that $h^{-1} \cdot v''_0, \in Q\big(y_4(\omega),D\big)$ as being a special $\ell$-vertex contained in $\cl_k\Big(Q\big(y_4(x),D\big)\Big)$.
Since $\widetilde{g} \in W_\Theta \ltimes V_0$, we have, for any $\alpha \in \Delta_D \smallsetminus \Theta_D$ that $\alpha(\omega) = \alpha(h^{-1} \cdot v''_0) = \alpha(h^{-1} \cdot w''_0)$.
Furthermore, because $h^{-1} \cdot w'_0 \in Q\big(y_4(x),D\big)$ and $h^{-1} \cdot w''_0 \in \cl_k(w'_0)$, for any $\alpha \in \Theta_D$ we have that $\alpha(h^{-1} \cdot w''_0) \geqslant \max \left\{ \alpha\big(y_4(\omega)\big), \omega \in \Omega \right\}$ by definition of $y_4(x)$.
Hence $h^{-1} \cdot w''_0 \in Q\big( y_4(\omega),D\big)$.

Hence, according to first step applied with the special vertices $h^{-1} \cdot v''_0, h^{-1} \cdot w''_0 \in Q\big(y_4(\omega),D\big)$, we have that $h^{-1} \cdot v''_0 = h^{-1} \cdot w''_0$.
The translation $\widetilde{\tau} := \tau_z \tau_{\hat{z}}$ commutes with $\widetilde{g}$, hence
\[\widetilde{\tau} \cdot v_0 =v''_0 =w''_0= \widetilde{g} \cdot v''_0 = \widetilde{g} \widetilde{\tau} \cdot v_0 = \widetilde{\tau} \widetilde{g} \cdot v_0 = \widetilde{\tau} \cdot w_0.\]
Therefore $v_0 = w_0$, whence $v=w$.
\end{proof}

Note that, in third step of the proof, the set $\Omega$ depends on $\ell, x, D$ but not on $g,v,w$.
Thus, we can summarize this section in the following result:

\begin{corollary}\label{cor conclusion without special hyp}
For any $\Theta \subset \Delta$, one can define a map $t_\Theta: \Sec_k^\Theta \to \Sec_k^\Delta$ by $Q(x,D) \mapsto y_4(x)$ that satisfies:
\begin{itemize}
    \item for any sector faces $Q(x,D), Q(x',D') \in \Sec_k^\Theta$, the two subsector faces $Q\big(y_4,D) \subseteq Q(x,D)$ and $Q(y_4',D') \subseteq Q(x',D')$ with $y_4 = t_\Theta\big( Q(x,D) \big)$ and $y'_4 = t_\Theta\big( Q(x',D') \big)$ satisfy that,
for any $v \in Q(y_4,D)$, any $w \in Q(y_4',D')$ and any $g \in \mathbf{G}(A)$: 
\begin{equation}\label{eq equal visual boundary 2}
  w=g \cdot v \Rightarrow  D'= g \cdot D;
\end{equation}
\item for any sector face $Q(x,D) \in \Sec_k^\Theta$, the subsector face $Q\big(y_4,D) \subseteq Q(x,D)$ with $y_4 = t_\Theta\big( Q(x,D) \big)$ satisfy that,
for any $v,w \in Q(y_4,D)$ and any $g \in \mathbf{G}(A)$: 
\begin{equation}
  w=g \cdot v \Rightarrow  w=v.
\end{equation}
\end{itemize} 
\end{corollary}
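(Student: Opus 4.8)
The plan is to assemble Corollary~\ref{cor conclusion without special hyp} directly from the constructions already carried out in the three previous subsections, since essentially all the work has been done. First I would fix $\Theta \subset \Delta$ and define the map $t_\Theta : \Sec_k^\Theta \to \Sec_k^\Delta$ exactly as in the third step of the proof of Corollary~\ref{cor image of sector faces}: given $Q(x,D) \in \Sec_k^\Theta$, one picks an element $h = n^{-1}u \in H$ with $h \cdot D = D_0^\Theta$, the $k$-apartment $\mathbb{A} = h^{-1}\cdot \mathbb{A}_0$ containing a subsector $Q(y_2,D)$ as furnished by Proposition~\ref{prop starAction}, the finite set $\Omega$ of special $\ell$-vertices given by Lemma~\ref{lemma finite special vertices} applied to $\cl_k\big(Q(x,D)\big)$ (where $\ell/k$ is the degree-$e$ totally ramified extension from Lemma~\ref{lem curve extension}, $e$ being the integer of Proposition~\ref{prop fixed point Weyl affine}), and finally the point $y_4(x)$ defined by the displayed system of equations in terms of the $y_4(\omega)$, $\omega \in \Omega$. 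One then sets $t_\Theta\big(Q(x,D)\big) := y_4(x)$, and notes — as remarked just before the corollary — that $\Omega$, hence $y_4(x)$, depends only on the data $\ell, x, D$ (equivalently on $Q(x,D)$) and not on any $g, v, w$, so the map is well defined.

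Next I would verify the second bullet. Given $Q(x,D) \in \Sec_k^\Theta$ and two points $v, w \in Q(y_4,D)$ with $w = g\cdot v$ for some $g \in \mathbf{G}(A)$, this is literally the conclusion $v = w$ established in the third step of the proof of Corollary~\ref{cor image of sector faces}: one replaces $v, w$ by the special $\ell$-vertices $h^{-1}\cdot v_0'', h^{-1}\cdot w_0''$ lying in a common subsector $Q\big(y_4(\omega),D\big)$, applies the first step (i.e.~Proposition~\ref{prop racional ArXiv} through a further ramified extension) to conclude these special vertices coincide, and then unwinds the commuting translations $\widetilde\tau = \tau_z\tau_{\hat z}$ to get $v_0 = w_0$, hence $v = w$. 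So this bullet requires no new argument beyond citing that proof.

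For the first bullet, the new ingredient relative to Corollary~\ref{cor image of sector faces} is that it compares \emph{two different} sector faces $Q(x,D), Q(x',D')$ of the same type $\Theta$ and asks only for $g\cdot D = D'$ (not $x=x'$). Here I would invoke Proposition~\ref{proposition facets in the same orbit ArXiv}, whose map $t_\Theta : \sSec_k^\Theta \to \sSec_k^\Delta$ is compatible with the construction above, and whose conclusion is exactly implication~\eqref{eq equal visual boundary 2} for \emph{special vertices} $v \in Q(y_3,D)$, $w \in Q(y_3',D')$. To upgrade to arbitrary tips and arbitrary points $v, w$, I would argue as in the third step of Corollary~\ref{cor image of sector faces}: if $w = g\cdot v$ with $g \in \mathbf{G}(A)$, pick a $k$-vertex $z_v$ of the $k$-face $\overline{F_v}$ containing $v$; then $z_w := g\cdot z_v$ is a $k$-vertex of $\overline{F_w}$, and by finiteness of $\Omega$ these special $\ell$-vertices lie in suitable subsectors $Q\big(y_4(\omega_v),D\big)$, $Q\big(y_4(\omega_w),D'\big)$; applying Proposition~\ref{proposition facets in the same orbit ArXiv} to $z_v, z_w$ yields $g\cdot D = D'$.

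The main obstacle — really a bookkeeping one rather than a mathematical one — is ensuring the map $t_\Theta$ chosen here genuinely refines \emph{all} of the earlier auxiliary maps ($y_1$ from Proposition~\ref{prop igual stab}, $y_2$ from Proposition~\ref{prop starAction}, $y_3 = t_\Theta$ from Proposition~\ref{proposition facets in the same orbit ArXiv}, and the $y_4(\omega)$ from Corollary~\ref{cor image of sector faces}) simultaneously, so that both bullets hold for the \emph{same} subsector face $Q(y_4,D)$. This is handled by taking a common subsector (a finite intersection of subsectors of $Q(x,D)$, each of which is again a subsector by an elementary translation argument as in Lemma~\ref{lem subsector face and roots}\ref{item subsector real}), exactly as $y_4(x)$ was already defined by maximizing the relevant $\Theta_D$-coordinates over $\Omega$. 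Once one checks that this single choice dominates all the intermediate points, the two bullets follow verbatim from the cited propositions, and the corollary is proved.
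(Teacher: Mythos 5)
Your proposal is correct and takes essentially the same route as the paper, which in fact gives no separate proof of Corollary~\ref{cor conclusion without special hyp} but presents it as a summary of the section: the map $t_\Theta$ is the $y_4(x)$ from the third step of the proof of Corollary~\ref{cor image of sector faces}, the second bullet is that corollary's conclusion, and the first bullet is Proposition~\ref{proposition facets in the same orbit ArXiv} upgraded to arbitrary points via the special $\ell$-vertices $z_v$, $z_w = g\cdot z_v$ exactly as you describe. Your closing remark about the single subsector dominating all intermediate points matches how $y_4(x)$ is defined by maximizing over the finite set $\Omega$ (and note that, as the paper's own use of the corollary in Theorem~\ref{main teo 2 new} confirms, the second bullet is only invoked under the case hypotheses of Corollary~\ref{cor image of sector faces}, which your citation correctly carries along).
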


\section{Structure of the quotient of the Bruhat-Tits building by \texorpdfstring{$\mathbf{G}(A)$}{X}}
\label{section structure orbit space}

Let $\mathrm{pr}: \mathcal{X} \to \mathbf{G}(A) \backslash \mathcal{X}$ be the canonical projection. In this section, we describe the topological structure of $\mathrm{pr}(\mathcal{X}_k)$.
We start this section by describing the image of certain $k$-sector chambers in the quotient space $\mathrm{pr}(\mathcal{X}_k)$. In order to do this, let us introduce the following concept.

\begin{definition}
We define a cuspidal rational sector chamber\index{sector chamber!cuspidal} of $\mathcal{X}_k$ as a $k$-sector chamber $Q \subset \mathcal{X}$, such that:
\begin{enumerate}[label=(Cusp\arabic*)]
\item\label{cusp folding} for any vertex $y \in Q$, any $k$-face of $\mathcal{X}_k(y)$ is $\mathbf{G}(A)$-equivalent to a $k$-face contained in $\mathcal{X}_k(y) \cap Q$,
\item\label{cusp spreading} any two different points in $Q$ belong to different $\mathbf{G}(A)$-orbits.
\end{enumerate}
\end{definition}

The following result describes the image in the quotient $\mathrm{pr}(\mathcal{X}_k)$ of any cuspidal rational sector chamber.

\begin{lemma}\label{inc of cuspidal sector chambers}
Let $Q$ be a cuspidal rational sector chamber of $\mathcal{X}_k$.
Then, its image $\mathrm{pr}(Q) \subseteq \mathrm{pr}(\mathcal{X}_k)$ is topologically and combinatorially isomorphic to $Q$.
\end{lemma}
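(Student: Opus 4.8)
The claim is that the restriction of the canonical projection $\mathrm{pr}: \mathcal{X}_k \to \mathbf{G}(A)\backslash\mathcal{X}_k$ to a cuspidal rational sector chamber $Q$ is a topological and combinatorial isomorphism onto its image. The plan is to verify two things: that $\mathrm{pr}|_Q$ is injective, and that it is a combinatorial isomorphism onto $\mathrm{pr}(Q)$, which here means it induces a bijection on $k$-faces that respects the incidence structure; the topological statement will follow since $\mathrm{pr}$ is a continuous open map (it is the quotient by a group action preserving the cell structure) and $\mathrm{pr}|_Q$ will be a continuous open bijection onto $\mathrm{pr}(Q)$ with the subspace topology.

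First I would address injectivity. By condition~\ref{cusp spreading}, any two distinct points of $Q$ lie in distinct $\mathbf{G}(A)$-orbits, so $\mathrm{pr}|_Q$ is injective on the level of points. This is the easy half and follows immediately from the definition of a cuspidal rational sector chamber; in fact by Corollary~\ref{cor conclusion without special hyp} (or Proposition~\ref{prop racional ArXiv} and Proposition~\ref{proposition facets in the same orbit ArXiv}) such sector chambers have been constructed as subsector chambers of arbitrary rational sector chambers, so the property is available.

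Next I would establish that $\mathrm{pr}|_Q$ is a combinatorial isomorphism onto $\mathrm{pr}(Q)$. The key point is surjectivity on faces adjacent to $Q$ together with the fact that incidence is detected locally. Let $F$ be a $k$-face of $\mathcal{X}_k$ whose image meets $\mathrm{pr}(Q)$, i.e.\ there is $g \in \mathbf{G}(A)$ with $g\cdot F \cap Q \neq \emptyset$; replacing $F$ by $g\cdot F$, we may assume $\overline{F} \cap Q \neq\emptyset$, so $\overline{F}$ contains a point $y$ of $Q$, and hence $\overline{F}$ contains a vertex $y'$ of $\overline{Q}$ that lies in $Q$ after passing to a subface if necessary — more carefully, $F$ is a face of $\mathcal{X}_k(y)$ for some vertex $y$ in $\overline{Q}$, and one reduces to $y \in Q$ using that $Q$ is open in $\overline{Q}$ and that faces meeting $Q$ have a vertex whose star one can work in. Then condition~\ref{cusp folding} says that $F$ is $\mathbf{G}(A)$-equivalent to a $k$-face $F'$ contained in $\mathcal{X}_k(y)\cap Q$. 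Thus every $\mathbf{G}(A)$-orbit of a face that touches $\mathrm{pr}(Q)$ already has a representative inside $Q$; this gives surjectivity of the induced map on faces. For injectivity on faces: if $F_1, F_2$ are $k$-faces contained in $Q$ with $g\cdot F_1 = F_2$ for some $g\in\mathbf{G}(A)$, pick an interior point $z_1$ of $F_1$; then $g\cdot z_1$ is an interior point of $F_2$, both points lie in $Q$, so by~\ref{cusp spreading} we get $g\cdot z_1 = z_1$, hence $g$ fixes the open face $F_1$ pointwise (the stabilizer of a point fixes its carrier face), so $F_1 = F_2$. Incidence relations are preserved because $\mathrm{pr}$ is a cellular map and, by injectivity just proved, no two distinct faces of $Q$ are identified, so the face poset of $Q$ maps isomorphically onto the face poset of $\mathrm{pr}(Q)$.

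Finally, for the topological statement, I would note that $\mathrm{pr}$ is an open continuous map onto the quotient with its quotient topology, that $Q$ is a union of (relatively open) faces, and that $\mathrm{pr}|_Q$ is a continuous bijection onto $\mathrm{pr}(Q)$ carrying each face homeomorphically onto its image by the combinatorial statement; since the cell structures match up face by face, $\mathrm{pr}|_Q$ is a homeomorphism of CW-complexes onto $\mathrm{pr}(Q)$ with the induced CW-structure. The main obstacle I anticipate is the bookkeeping in the surjectivity argument — specifically, making precise that any $k$-face whose orbit meets $\mathrm{pr}(Q)$ can be moved (via $\mathbf{G}(A)$, then via the folding in~\ref{cusp folding} anchored at a suitable vertex of $Q$) into $\mathcal{X}_k(y)\cap Q$ for some vertex $y \in Q$; one must be careful that "meeting $Q$'' is taken with closures and that the vertex $y$ one anchors at genuinely lies in $Q$ rather than merely in $\overline{Q}$, which is where openness of $Q$ in $\overline{Q}$ and the cone structure of sector chambers are used.
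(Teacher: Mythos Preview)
Your proposal is correct and follows essentially the same strategy as the paper: injectivity from~\ref{cusp spreading}, the folding from~\ref{cusp folding} to control faces around $Q$, and then the topological conclusion via bijectivity plus local structure. The paper organizes the combinatorial step slightly differently: instead of starting from an arbitrary face whose image meets $\mathrm{pr}(Q)$ and hunting for a vertex anchor inside $Q$, it fixes a vertex $v\in Q$ from the outset, works entirely inside the star $\mathcal{X}_k(v)$, and shows that $\mathrm{pr}\big(\mathcal{X}_k(v)\big)=\mathrm{pr}\big(\mathcal{X}_k(v)\cap Q\big)$ is isomorphic to $\mathcal{X}_k(v)\cap Q$. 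This sidesteps the reduction you flag as the main obstacle (finding $y\in Q$ rather than $y\in\overline{Q}$): since the paper only ever invokes~\ref{cusp folding} at vertices already chosen in $Q$, the anchoring issue never arises. The local star description then gives the local homeomorphism directly, and bijectivity from~\ref{cusp spreading} upgrades it to a global homeomorphism.
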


\begin{proof} Let us write $Q=Q(x,D)$, for certain point $x \in \mathcal{X}_k$ and certain vector chamber $D$ of $\mathcal{X}_k$.

We claim that, given a $k$-face $F$ in the complex-theoretical union
\begin{align*}
    \mathcal{X}_k(Q) := \bigcup \lbrace \mathcal{X}_k(v): v \in \mathrm{vert}(Q) \rbrace,
\end{align*}
there exists a unique $k$-face $F'$ in $Q$ such that $F$ and $F'$ belong to the same $\mathbf{G}(A)$-orbit.
\nomenclature[]{$\mathcal{X}_k(Q)$}{$1$-combinatorial neighbourhood of $Q$}
Indeed, by definition of $\mathcal{X}_k(Q)$, there exists a vertex $v$ of $Q$ such that $F \subset\mathcal{X}_k(v) $.
It follows from Property~\ref{cusp folding} that
there exists $g \in \mathbf{G}(A)$ such that $g \cdot F \subset \mathcal{X}_k(v) \cap Q$.
Therefore, given a $k$-face $F$ in $\mathcal{X}_k(Q)$, there exists at least one $k$-face $F'$ in $Q$ that belongs to the $\mathbf{G}(A)$-orbit of $F$.
Moreover, applying Property~\ref{cusp spreading}, we deduce that the preceding $k$-face $F'\subset Q$ is unique. Thus, the claim follows.

Let $v$ be a vertex of $Q$. It follows from the preceding claim that the image of the star $\mathcal{X}_k(v)$ in the quotient $\mathrm{pr}(\mathcal{X}_k)$ is isomorphic to $\mathcal{X}_k(v) \cap Q$. In particular, the neighboring vertices of $\mathrm{pr}(v)$ in $\mathrm{pr}(\mathcal{X}_k)$ are exactly the vertices $\mathrm{pr}(w)$, with $w \in \mathrm{vert}(\mathcal{X}_k(v) \cap Q)$. This implies that $\mathrm{pr}(Q)$ is combinatorially isomorphic to $Q$.
Moreover, since for any vertex $v \in Q$, the image of the star $\mathcal{X}_k(v)$ in the quotient $\mathrm{pr}(\mathcal{X}_k)$ is isomorphic to $\mathcal{X}_k(v) \cap Q$, the projection $\mathrm{p}: Q \to \mathrm{pr}(Q)$ is a local homeomorphism. Since Property~\ref{cusp spreading} implies that the restriction $\mathrm{p}$ of $\mathrm{pr}$ to $Q$ is bijective, we deduce that $\mathrm{p}$ is an homeomorphism, which concludes the proof.
\end{proof}


On the set of cuspidal rational sector chambers, we define an equivalence relation by setting:
\begin{equation}\label{eq rel on cup sec cham}
Q \sim_{\mathbf{G}(A)} Q' \text{ if and only if } \exists g \in \mathbf{G}(A): \, \, g \cdot \partial_{\infty}Q=\partial_{\infty} Q'.
\end{equation}
Let us fix a system of representatives $C_{\emptyset}$ for the preceding equivalence relation.

Note that, from the definition of $\sim_{\mathbf{G}(A)}$, we know that any subsector chamber $Q'$ of a given chamber $Q$ satisfies $Q' \sim_{\mathbf{G}(A)} Q$.
Then, up to replacing the elements of $C_{\emptyset}$ by their subsector chambers given by Corollary~\ref{cor conclusion without special hyp}, we assume that, for any $Q,Q' \in C_{\emptyset}$, we have that

\begin{equation}\label{cusp visual boundary}
\forall v \in Q,\ \forall w \in Q', \ g \cdot v = w \Rightarrow g \cdot \partial_{\infty}(Q)= \partial_{\infty}(Q').
\end{equation}

Thus, it follows from Property~\ref{cusp spreading}, Property~\eqref{cusp visual boundary} and the definition of $\sim_{\mathbf{G}(A)}$ that, for any $Q,Q' \in C_{\emptyset}$, with $Q\neq Q'$, we have $\mathrm{pr}(Q) \cap \mathrm{pr}(Q')=\emptyset$. It follows from Proposition~\ref{prop starAction} and Corollary~\ref{cor image of sector faces} that given an arbitrary $k$-sector chamber, it contains a subsector chamber that is a cuspidal rational sector chamber. Since $\partial_{\infty}Q=\partial_{\infty} Q'$ implies $Q \sim_{\mathbf{G}(A)} Q'$, we have that the $\mathbf{G}(A)$-orbits of the visual boundaries of chambers in $C_{\emptyset}$ cover all the chambers in $\partial_{\infty}(\mathcal{X}_k)$.

In order to describes the topological structure of $\mathrm{pr}(\mathcal{X}_k)$, in the next result, we introduce some sets $C_\Theta$ consisting of $k$-sector faces of type $\Theta$, for $\Theta \subset \Delta$.

\begin{theorem}\label{main teo 2 new}
There exists a family $\left( C_\Theta \right)_{\Theta \subset \Delta}$ of sets $C_{\Theta}=\lbrace Q_{i,\Theta}: i\in \mathrm{I}_\Theta \rbrace$ consisting of $k$-sector faces of type $\Theta$ of $\mathcal{X}_k$ and there exists a subspace $\mathcal{Y}$ of $\mathcal{X}_k$ such that
\begin{enumerate}[label=(\arabic*)]
    \item\label{item mainTh1} for each $i\in \mathrm{I}_\Theta $, there exists $x \in \mathcal{X}_k$ and $D \in \mathbf{G}(k) \cdot D_0^{\Theta}$ such that $Q_{i,\Theta}$ equals $Q(x,D)$, 
    \item\label{item mainTh2} the set $C_{\emptyset}$ is a system of representatives for the equivalence relation defined on the set of cuspidal rational sector chambers by Equation~\eqref{eq rel on cup sec cham}. In particular, any two different sector chambers in $C_{\emptyset}$ do not intersect.
    \item\label{item mainTh3} given a non-empty set $\Theta \subset \Delta$, for each pair of indices $i,j \in \mathrm{I}_\Theta$, if $\mathbf{G}(A) \cdot Q_{i,\Theta} \cap \mathbf{G}(A) \cdot Q_{j,\Theta} \neq \emptyset$, then $\mathbf{G}(A) \cdot \partial^{\infty}(Q_{i,\Theta}) \cap \mathbf{G}(A) \cdot \partial^{\infty}(Q_{j,\Theta}) \neq \emptyset$,
    \item\label{item mainTh4} each sector chamber in $C_{\emptyset}$ embeds in the quotient space $\mathrm{pr}(\mathcal{X}_k)=\mathbf{G}(A)\backslash \mathcal{X}_k$,
    \item\label{item mainTh5} the quotient space $\mathrm{pr}(\mathcal{X}_k)$ is a CW-complex obtained as the attaching space of the images $\mathrm{pr}( \overline{Q_{i,\Theta}}) \subseteq \mathrm{pr}(\mathcal{X}_k)$ of the closures $\overline{Q_{i,\Theta}}$ of all the $k$-sector faces $Q_{i,\Theta}$ and the image $\mathrm{pr}(\mathcal{Y})$ of $\mathcal{Y}$ along certain subsets.
\end{enumerate}
Moreover, when $\mathbf{G}=\mathrm{SL}_n$ or $\mathrm{GL}_n$ or $\mathbb{F}$ is finite, we have that any sector face in any $C_{\Theta}$ embeds in $\mathrm{pr}(\mathcal{X}_k)$.

\end{theorem}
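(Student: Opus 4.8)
The plan is to assemble Theorem~\ref{main teo 2 new} from the machinery built in \S\ref{section structure of X}. The key point is that the maps $t_\Theta : \Sec_k^\Theta \to \Sec_k^\Delta$ of Corollary~\ref{cor conclusion without special hyp} already provide, for each type $\Theta$, subsector faces with good separation and boundary-rigidity properties; one only needs to organize the orbits.

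First I would handle the case $\Theta = \emptyset$, which is the backbone. By Proposition~\ref{prop starAction} together with Corollary~\ref{cor image of sector faces}, any $k$-sector chamber contains a subsector chamber $Q$ satisfying both \ref{cusp folding} (from Proposition~\ref{prop starAction}\ref{item starAction fix} and \ref{item starAction fixed faces}, which say the stabilizer of a vertex $y$ folds the star $\mathcal{X}_k(y)$ onto $\mathcal{X}_k(y)\cap Q$) and \ref{cusp spreading} (from Corollary~\ref{cor image of sector faces}). Thus cuspidal rational sector chambers exist. I then take $C_{\emptyset}$ to be a system of representatives of these cuspidal rational sector chambers modulo the relation $\sim_{\mathbf{G}(A)}$ of Equation~\eqref{eq rel on cup sec cham}, after shrinking each representative to the subsector chamber given by Corollary~\ref{cor conclusion without special hyp} so that property~\eqref{cusp visual boundary} holds; this gives~\ref{item mainTh2}. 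Since $\partial_\infty Q = \partial_\infty Q'$ forces $Q \sim_{\mathbf{G}(A)} Q'$, every chamber of $\partial_\infty(\mathcal{X}_k)$ is covered by a $\mathbf{G}(A)$-translate of a $\partial_\infty Q$ with $Q \in C_{\emptyset}$. Property~\ref{item mainTh4} is exactly Lemma~\ref{inc of cuspidal sector chambers} applied to the elements of $C_{\emptyset}$, and the disjointness in the quotient of $\mathrm{pr}(Q)$ and $\mathrm{pr}(Q')$ for distinct $Q,Q' \in C_{\emptyset}$ follows from \ref{cusp spreading}, \eqref{cusp visual boundary} and the definition of $\sim_{\mathbf{G}(A)}$ as already noted in the text.

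Next I would treat general $\Theta \subset \Delta$. For each type $\Theta$, I choose a set of representatives of $\mathbf{G}(A)$-orbits of $k$-sector faces of type $\Theta$ (more precisely, of their visual boundaries, using Lemma~\ref{visual limit lemma} which identifies $\partial_\infty \Sec_k^\Theta$ with $(\mathbf{G}/\mathbf{P}_\Theta)(k)$-type data), shrink each representative using $t_\Theta$ from Corollary~\ref{cor conclusion without special hyp}, and let $C_\Theta = \{Q_{i,\Theta} : i \in \mathrm{I}_\Theta\}$ be the resulting family; property~\ref{item mainTh1} is then immediate from Lemma~\ref{lemma WUW}. Property~\ref{item mainTh3} is precisely the content of Equation~\eqref{eq equal visual boundary 2} in Corollary~\ref{cor conclusion without special hyp}: if $g \cdot v = w$ with $v \in Q(y_4,D)$, $w \in Q(y_4',D')$, then $g\cdot D = D'$, hence $g$ sends $\partial_\infty Q_{i,\Theta}$ into the $\mathbf{G}(A)$-orbit of $\partial_\infty Q_{j,\Theta}$. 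For the last assertion — that when $\mathbf{G} = \mathrm{SL}_n, \mathrm{GL}_n$ or $\mathbb{F}$ finite \emph{every} sector face in every $C_\Theta$ embeds in $\mathrm{pr}(\mathcal{X}_k)$ — I would invoke Proposition~\ref{prop racional ArXiv} (in case~\ref{case SLn rat} or~\ref{case finite rat}) and the second bullet of Corollary~\ref{cor conclusion without special hyp}, which give that distinct points of a suitable subsector face lie in distinct $\mathbf{G}(A)$-orbits; combined with Proposition~\ref{prop starAction}\ref{item starAction fix} this yields that the restriction of $\mathrm{pr}$ to $Q_{i,\Theta}$ is an injective local homeomorphism onto its image, exactly as in the proof of Lemma~\ref{inc of cuspidal sector chambers}.

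Finally, for~\ref{item mainTh5}, I would define $\mathcal{Y}$ to be a suitable $\mathbf{G}(A)$-invariant "core" complement — concretely, one removes from $\mathcal{X}_k$ the $\mathbf{G}(A)$-translates of the open sector faces $\bigcup_{i \in \mathrm{I}_\emptyset} Q_{i,\emptyset}$ (and, in the $\mathrm{SL}_n/\mathrm{GL}_n$ or finite-$\mathbb{F}$ case, of all the $Q_{i,\Theta}$), take $\mathcal{Y}$ a $\mathbf{G}(A)$-stable subcomplex whose projection is the residual piece, and check that $\mathrm{pr}(\mathcal{X}_k)$ is recovered by gluing $\mathrm{pr}(\mathcal{Y})$ to the $\mathrm{pr}(\overline{Q_{i,\Theta}})$ along $\mathrm{pr}(\overline{Q_{i,\Theta}} \smallsetminus Q_{i,\Theta})$, using that $\mathcal{X}_k$ is a CW-complex and $\mathbf{G}(A)$ acts cellularly. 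The main obstacle here, and the step I expect to require the most care, is the bookkeeping for~\ref{item mainTh5}: showing that the attaching-space description is globally consistent — that the images $\mathrm{pr}(\overline{Q_{i,\Theta}})$ for different $\Theta$ (lower-dimensional faces appearing in the closures of higher-dimensional ones) are glued compatibly, and that the resulting quotient CW-structure is well-defined and connected. This amounts to carefully tracking how the closure relations among sector faces of different types descend to the quotient, and it is deferred in the text to the more detailed Theorem~\ref{theorem quotient and cuspidal rational sector chambers}, so in the present proof I would only set up $\mathcal{Y}$ and the gluing data and reduce the precise combinatorial verification to that later statement.
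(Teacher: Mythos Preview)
Your proposal is correct in its overall architecture and cites the right ingredients (Proposition~\ref{prop starAction}, Corollary~\ref{cor image of sector faces}, Corollary~\ref{cor conclusion without special hyp}, Lemma~\ref{inc of cuspidal sector chambers}), but your construction of the $C_\Theta$ for $\Theta\neq\emptyset$ differs from the paper's in a way worth noting.

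The paper does \emph{not} take, for each $\Theta$, a set of representatives of $\mathbf{G}(A)$-orbits of visual boundaries. Instead it proceeds by induction on $\operatorname{Card}(\Theta)$: the set $C_\Theta^0$ consists of those $k$-sector faces of type $\Theta$ lying in the complement of the $\mathbf{G}(A)$-saturation of all previously chosen $Q_{i,\Theta'}$ with $\operatorname{Card}(\Theta')<\operatorname{Card}(\Theta)$; one then shrinks each such face via Proposition~\ref{prop starAction} and Corollary~\ref{cor conclusion without special hyp} to get $C_\Theta^1$, and finally takes $C_\Theta$ to be a set of representatives for the equivalence $Q\sim^* Q'\iff \exists\,g\in\mathbf{G}(A):g\cdot Q=Q'$ on the sector faces themselves (not on their visual boundaries). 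The space $\mathcal{Y}$ is then always defined as the complement of the $\mathbf{G}(A)$-translates of \emph{all} the $Q_{i,\Theta}$, not just those with $\Theta=\emptyset$.

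Both constructions satisfy the theorem as stated, since~\ref{item mainTh5} is just a covering assertion and $\mathcal{Y}$ absorbs whatever is left. But the paper's inductive stratification buys something your version does not: it makes $\mathcal{Y}$ as small as possible (every point not already in the orbit of some chosen sector face of some type), which is what one wants for the finer structural statement of Theorem~\ref{theorem quotient and cuspidal rational sector chambers} and for comparison with the Bux--K\"ohl--Witzel covering. Your choice of one representative per visual-boundary orbit would leave uncountably many parallel sector faces in $\mathcal{Y}$, so the decomposition would be formally correct but much less informative. Two minor side remarks: Lemma~\ref{visual limit lemma} only treats sector chambers ($\Theta=\emptyset$), so your parenthetical appeal to it for general $\Theta$ is imprecise; and for the embedding of sector faces in the last clause you do not need the local-homeomorphism argument from Lemma~\ref{inc of cuspidal sector chambers}---injectivity from Corollary~\ref{cor image of sector faces} together with the openness of $\mathrm{pr}$ already gives a homeomorphism onto the image.
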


\begin{proof} Note that Statement~\ref{item mainTh2} defines the set $C_{\emptyset}$.
Moreover, note that Statement~\ref{item mainTh4} directly follows Lemma~\ref{inc of cuspidal sector chambers}.
When $\Theta$ is not empty, we define the sets $C_{\Theta}$ by induction on the cardinality of $\Theta$. 
Firstly, assume that $\Theta$ has just one element. 
We denote by $C^0_{\Theta}$ the set of $k$-sector faces $Q(x,D)$ in $\mathcal{X}_k \smallsetminus \bigcup \lbrace g \cdot Q_{i, \emptyset} : i \in \mathrm{I}_{\emptyset}, g \in \mathbf{G}(A) \rbrace$ whose direction $D$ belong to $\mathbf{G}(k) \cdot D_0^{\Theta}$.
Given a $k$-sector face $Q=Q(x,D) \in C^0_{\Theta}$ there exists a subsector face of $Q(y_4,D) \subseteq Q$ that satisfies the Statements~\ref{item starAction fix}, \ref{item starAction orbits} and~\ref{item starAction fixed faces} of Proposition~\ref{prop starAction}, Equation~\eqref{eq equal visual boundary 2} in Corollary~\ref{cor conclusion without special hyp}, and without two different points in the same $\mathbf{G}(A)$-orbit when $\mathbf{G}=\mathrm{SL}_n$ or $\mathrm{GL}_n$ or $\mathbb{F}$ is finite, as Corollary~\ref{cor image of sector faces} shows.
We denote $C^1_{\Theta}$ the set of all the aforementioned $k$-subsector faces. Then, we define $C_{\Theta}$ as a representative set of $C^1_{\Theta}$ by the equivalence relation
\begin{equation}\label{eq rel on sector faces}
Q \sim^{*} Q' \text{ if and only if } \exists g \in \mathbf{G}(A): \, \, g \cdot Q=Q',
\end{equation} 
Assume that we have defined $C_{\Theta'}$, for each set $\Theta'$ such that $\mathrm{Card}(\Theta') \leq n-1$. Then, we are able to define $C_{\Theta}$, for each set $\Theta$ with $n$ elements. Indeed, let us denote by $C_{\Theta}^{0}$ the set of $k$-sector faces $Q(x,D)$ contained in 
$$\mathcal{X}_k \smallsetminus \bigcup \left\lbrace g \cdot Q_{i, \Theta'} : i \in \mathrm{I}_{\Theta'}, \mathrm{Card}(\Theta')\leq n-1, g \in \mathbf{G}(A) \right\rbrace,$$
whose direction $D$ belongs to $\mathbf{G}(k) \cdot D_0^{\Theta}$.
As in the case of cardinality one, recall that, given a $k$-sector face $Q=Q(x,D) \in C^0_{\Theta}$ there exists a subsector face of $Q(y_4,D) \subseteq Q$ that satisfies the Statements~\ref{item starAction fix}, \ref{item starAction orbits} and~\ref{item starAction fixed faces} of Proposition~\ref{prop starAction}, Equation~\eqref{eq equal visual boundary 2} in Corollary~\ref{cor conclusion without special hyp}, and without two different points in the same $\mathbf{G}(A)$-orbit when $\mathbf{G}=\mathrm{SL}_n$  or $\mathrm{GL}_n$ or $\mathbb{F}$ is finite, as Corollary~\ref{cor image of sector faces} shows.
We denote $C^1_{\Theta}$ the set of such $k$-subsector faces.
Then, we define $C_{\Theta}$ as a representative set for the equivalence relation defined in $C_{\Theta}^{1}$ by Equation~\eqref{eq rel on sector faces}.
Thus, by induction, we have defined all the sets $C_{\Theta}$, with $\Theta \subset \Delta$. Thus, Statement~\ref{item mainTh1} follows by definition. Moreover, Statement~\ref{item mainTh3} directly follows from Corollary~\ref{cor conclusion without special hyp}.

Now, we define $\mathcal{Y}$ as the space $\mathcal{X}_k \smallsetminus \bigcup \left\lbrace g \cdot Q_{i, \Theta} : i \in \mathrm{I}_{\Theta}, \Theta \subset \Delta, g \in \mathbf{G}(A) \right\rbrace$.
Let us denote by $\mathrm{pr}( \overline{Q_{i,\Theta}} )$ and $\mathrm{pr}(\mathcal{Y})$ the image in $\mathrm{pr}(\mathcal{X}_k)$ of $\overline{Q_{i,\Theta}}$, with $Q_{i,\Theta} \in C_{\Theta}$, and $\mathcal{Y}$, respectively.
Since $\mathcal{X}_k=\mathcal{Y} \cup \bigcup \left\lbrace g \cdot \overline{Q_{i, \Theta}} : i \in \mathrm{I}_{\Theta}, \Theta \subset \Delta, g \in \mathbf{G}(A) \right\rbrace,$ we have
\[ \mathrm{pr}(\mathcal{X}_k)= \mathrm{pr}(\mathcal{Y}) \cup \bigcup \left\lbrace \mathrm{pr}(\overline{Q_{i, \Theta}}): i \in \mathrm{I}_{\Theta}, \Theta \subset \Delta \right\rbrace. \]
whence Statement~\ref{item mainTh5} follows.
\end{proof}


The following result describes the intersection between certain cuspidal rational sector chambers.

\begin{lemma}\label{int with the closure}
Let $Q$ and $Q'$ be two rational sector chambers contained in a system of representatives for the equivalence relation defined in Equation~\eqref{eq rel on cup sec cham}. If $\overline{Q} \cap g \cdot Q'\neq \emptyset$, for some $g \in \mathbf{G}(A)$, then $Q=Q'$ and $g \in \mathrm{Stab}_{\mathbf{G}(A)}( \partial_{\infty}(Q))$.
\end{lemma}

\begin{proof}
Let $w$ be a point in $\overline{Q} \cap g \cdot Q'$.

We claim that $Q \cap g \cdot Q' \neq \emptyset$. If $w \in Q$, then the claim follows. In any other case, $w$ belongs to $\overline{Q} \smallsetminus Q$. Since $g \cdot Q'$ is open, there exists an open ball $O \subset g \cdot Q'$ containing $w$. Since $w \in \overline{Q} \smallsetminus Q$ and $O$ is open, the intersection $O \cap Q$ is not empty, whence the claim follows.

Let $w'$ be a point in $Q \cap g \cdot Q'$. Since $w' \in Q$ and $g^{-1} \cdot w' \in Q'$ belong to the same $\mathbf{G}(A)$-orbit, it follows from Equation~\eqref{cusp visual boundary} that $\partial_{\infty}(Q)=g^{-1} \cdot \partial_{\infty}(Q')$.
Moreover, since $Q,Q'\in C_{\emptyset}$, we deduce that $Q=Q'$, whence $g \in \mathrm{Stab}_{\mathbf{G}(A)}( \partial_{\infty}(Q))$.
\end{proof}

\begin{theorem}\label{theorem quotient and cuspidal rational sector chambers}
There exists a system of representatives $C=\lbrace Q_i: i \in \mathrm{I}_{\emptyset} \rbrace$ for the equivalence relation on the cuspidal rational sector chambers defined in Equation~\eqref{eq rel on cup sec cham}, and a closed connected subspace $\mathcal{Z}$ of $\mathcal{X}$ such that:
\begin{enumerate}[label=(\alph*)]
    \item\label{item 86a} the quotient space $\mathrm{pr}(\mathcal{X}_k)$ is the union of $\mathcal{Z}$ and of the $\mathrm{pr}( \overline{Q_{i}}) \subseteq \mathrm{pr}(\mathcal{X}_k)$, for $i \in \mathrm{I}_{\emptyset}$, 
    \item\label{item 86b'} any $\overline{Q_i}$ is contained in a cuspidal rational sector chamber, in particular,
    \item\label{item 86b} $\mathrm{pr}(\overline{Q_i})$ is topologically and combinatorially isomorphic to $\overline{Q}_i$,
    \item\label{item 86rem} for each point $x \in \overline{Q}_i$, we have $\mathrm{Stab}_{{\mathbf{G}(A)}}(x)=\mathrm{Fix}_{{\mathbf{G}(A)}}\big(x+\partial_{\infty}(Q_i)\big)$,
    \item\label{item 86c} for each $i \neq j$, $\mathrm{pr}( \overline{Q_i}) \cap \mathrm{pr}( \overline{Q_j})=\emptyset$,
    \item\label{item 86d} the intersection of $\mathcal{Z}$ with any $\mathrm{pr}( \overline{Q_i})$ is connected.
\end{enumerate}
\end{theorem}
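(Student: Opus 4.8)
The plan is to construct the system of representatives $C$ and the subspace $\mathcal{Z}$ by refining the data already produced in Theorem~\ref{main teo 2 new}. First I would take the family $C_\emptyset$ furnished by Theorem~\ref{main teo 2 new}\ref{item mainTh2}, which is already a system of representatives for the relation~\eqref{eq rel on cup sec cham}, and replace each of its members by a subsector chamber small enough to be a cuspidal rational sector chamber: this is legitimate because Proposition~\ref{prop starAction} and Corollary~\ref{cor image of sector faces} guarantee that every rational sector chamber contains a subsector chamber verifying~\ref{cusp folding} and~\ref{cusp spreading}, and because shrinking does not change the visual boundary, so the $\sim_{\mathbf{G}(A)}$-class is preserved. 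After a further shrinking, I arrange that~\eqref{cusp visual boundary} holds for all pairs in $C$, exactly as in the discussion preceding Theorem~\ref{main teo 2 new}. This gives~\ref{item 86b'}, and then~\ref{item 86b} is an immediate consequence of Lemma~\ref{inc of cuspidal sector chambers}. Property~\ref{item 86rem} follows from Proposition~\ref{prop igual stab}: after shrinking, each $\overline{Q_i}$ lies inside the subsector face $Q(y_1,D)$ provided there, so $\mathrm{Stab}_{\mathbf{G}(A)}(x) = \mathrm{Fix}_{\mathbf{G}(A)}(Q(x,D))$, and since $Q(x,D)$ and $x+\partial_\infty(Q_i)$ have the same enclosure (hence the same pointwise stabilizer by the convexity of fixator sets, cf.~\cite[7.1.11]{BT}), we get the stated equality.

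Next I would define $\mathcal{Z}$. The naive candidate $\mathcal{X}_k \smallsetminus \bigcup\{ g\cdot Q_i\}$ need not be closed or connected, so instead I set $\mathcal{Z} := \mathcal{X}_k \smallsetminus \bigcup\{ g\cdot (Q_i \smallsetminus \overline{Q_i^{\mathrm{sub}}})\}$ where $Q_i^{\mathrm{sub}}$ is a proper subsector chamber of $Q_i$; more carefully, I remove from $\mathcal{X}_k$ the $\mathbf{G}(A)$-orbits of the \emph{open} parts of the $Q_i$ lying strictly beyond a chosen ``collar'', so that $\mathcal{Z}$ is the closed complement and still contains a collar neighbourhood of the tip region of each $Q_i$. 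Closedness is then clear, and for connectedness I would argue that $\mathcal{X}_k$ is connected (it is a building), that removing the open orbits of the $g\cdot Q_i$ disconnects it into the ``main'' piece together with the cuspidal rays, and that keeping the collars glues the cuspidal pieces back to the main piece; thus $\mathcal{Z}$ as a subset of $\mathrm{pr}(\mathcal{X}_k)$ (abusively, $\mathcal{Z}$ denotes $\mathrm{pr}$ of this set, or one works $\mathbf{G}(A)$-equivariantly downstairs directly) is connected. Statement~\ref{item 86a} is then the decomposition $\mathcal{X}_k = \mathcal{Z}_{\mathrm{lift}} \cup \bigcup\{ g\cdot \overline{Q_i}\}$ pushed down by $\mathrm{pr}$.

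For~\ref{item 86c}, the disjointness $\mathrm{pr}(\overline{Q_i}) \cap \mathrm{pr}(\overline{Q_j}) = \emptyset$ for $i\neq j$ reduces, after applying $\mathrm{pr}$, to showing $\overline{Q_i} \cap g\cdot \overline{Q_j} = \emptyset$ for all $g \in \mathbf{G}(A)$; this is precisely Lemma~\ref{int with the closure}, which forces $Q_i = Q_j$ and hence $i = j$ since $C$ is a system of representatives. For~\ref{item 86d}, the intersection $\mathcal{Z} \cap \mathrm{pr}(\overline{Q_i})$ is, by construction, the image of the collar part of $\overline{Q_i}$, which is $\mathrm{pr}$ of a set of the form $\overline{Q(x,D)} \smallsetminus Q(x',D)$ for a subsector — a closed ``truncated'' sector chamber, which is convex and hence connected; by~\ref{item 86b}, $\mathrm{pr}$ is a homeomorphism on $\overline{Q_i}$, so the image stays connected. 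The main obstacle I anticipate is the careful bookkeeping in the construction of $\mathcal{Z}$: one must choose the collars simultaneously compatibly with the shrinkings already imposed to get~\eqref{cusp visual boundary} and with Lemma~\ref{int with the closure}, and verify that after these choices $\mathcal{Z}$ is genuinely closed, genuinely connected, and meets each $\mathrm{pr}(\overline{Q_i})$ in a connected set — this is where the definitions must be pinned down precisely rather than invoked schematically, and it is essentially the content that distinguishes this theorem from the looser Theorem~\ref{main teo 2 new}.
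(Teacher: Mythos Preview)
Your setup for items~\ref{item 86b'}, \ref{item 86b}, \ref{item 86rem} is correct and matches the paper: shrink each $Q_{i,\emptyset}$ from Theorem~\ref{main teo 2 new} so that $\overline{Q_i} \subset Q_{i,\emptyset}$ (Lemma~\ref{lem subsector face and roots}\ref{item subsector enclosure}) and so that Proposition~\ref{prop igual stab} applies on the closure. One correction on~\ref{item 86c}: it is not ``precisely Lemma~\ref{int with the closure}'', because that lemma treats $\overline{Q} \cap g\cdot Q'$ with $Q'$ open, whereas here you need $\overline{Q_i} \cap g\cdot \overline{Q_j}$. The paper instead uses that $\overline{Q_i} \subset Q_{i,\emptyset}$ and $\overline{Q_j} \subset Q_{j,\emptyset}$ and applies Corollary~\ref{cor conclusion without special hyp} (equivalently your arranged property~\eqref{cusp visual boundary}) directly to get $g\cdot \partial_\infty Q_{i,\emptyset} = \partial_\infty Q_{j,\emptyset}$, hence $i=j$.

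The genuine gap is in the construction of $\mathcal{Z}$, and you correctly identify this as the crux. Your two descriptions of the collar construction are mutually inconsistent (the first formula keeps $Q_i^{\mathrm{sub}}$ and removes the collar; the prose does the opposite), and under the intended reading there is a real problem you do not address: the set $\overline{Q_i} \cap \big(\mathcal{X}_k \smallsetminus \bigcup_{g,j} g\cdot Q_j^{\mathrm{sub}}\big)$ is \emph{not} simply $\overline{Q_i}\smallsetminus Q_i^{\mathrm{sub}}$. By Lemma~\ref{int with the closure} the only translates meeting $\overline{Q_i}$ are $g\cdot Q_i^{\mathrm{sub}}$ with $g \in \mathrm{Stab}_{\mathbf{G}(A)}(\partial_\infty Q_i)$, but these have the same direction $D_i$ and \emph{a priori} different tips, so their union intersected with $\overline{Q_i}$ need not be a single subsector chamber nor have convex complement. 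Your claim ``the intersection is a truncated sector chamber, hence convex'' is thus unjustified, and the connectedness argument for $\mathcal{Z}$ itself (``removing open orbits disconnects into a main piece plus rays'') is purely heuristic.

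The paper resolves this differently. It sets $\mathcal{Y}' = \mathcal{X}_k \smallsetminus \bigcup_{g,i} g\cdot Q_i$ (no collar), takes $\mathcal{Z}' = \mathrm{pr}(\mathcal{Y}')$, and then defines $\mathcal{Z} = \mathcal{Z}' \cup \bigcup_i \big(\mathrm{pr}(\overline{Q_i}) \smallsetminus \mathrm{pr}(Q_i)\big)$. The key technical step replacing your convexity claim is that $\overline{Q_i} \cap \mathcal{Y}'$ is \emph{star-shaped} with respect to the tip $x_i$: if $y \in \overline{Q_i}\cap \mathcal{Y}'$ and some $w \in [x_i,y]$ lay in $g\cdot Q_j$, Lemma~\ref{int with the closure} forces $j=i$ and $g\cdot Q_i = Q(z,D_i)$, and then $y - w \in D_i$ gives $y \in Q(z,D_i)$, contradicting $y\in\mathcal{Y}'$. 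This handles exactly the multiplicity of translates that your collar argument ignores. Connectedness of $\mathcal{Z}$ is then obtained by retracting each $\mathrm{pr}(\overline{Q_i})\cong \overline{Q_i}$ onto its boundary $\overline{Q_i}\smallsetminus Q_i$ inside $\mathrm{pr}(\mathcal{X}_k)$ and checking that the resulting space coincides with $\mathcal{Z}$, using the nonemptiness of $\mathcal{Z}' \cap \big(\mathrm{pr}(\overline{Q_i})\smallsetminus \mathrm{pr}(Q_i)\big)$ (the tip $x_i$ cannot lie in any $g\cdot Q_j$ without contradicting~\ref{cusp spreading}) as the glue.
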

\begin{proof}
Let $C_{\emptyset}=\lbrace Q_{i,\emptyset}: i \in \mathrm{I}_{\emptyset} \rbrace$ be given by Theorem~\ref{main teo 2 new}\ref{item mainTh2}. It follows from Proposition~\ref{prop igual stab} that, up to replacing some $Q_{i,\emptyset}$ by subsection if needed, we can assume that for each point $x \in \overline{Q}_{i,\emptyset}$ we have $\mathrm{Stab}_{{\mathbf{G}(A)}}(x)=\mathrm{Fix}_{{\mathbf{G}(A)}}\big(x+\partial_{\infty}(Q_i)\big).$

Let us define $C:=\lbrace Q_{i} :i \in \mathrm{I}_{\emptyset} \rbrace$ as a set of sector chambers satisfying $\overline{Q_{i}} \subseteq Q_{i,\emptyset}$, for all $i \in \mathrm{I}_{\emptyset}$ (c.f.~Lemma~\ref{lem subsector face and roots}). Then, Statements~\ref{item 86b'} and \ref{item 86rem} are satisfied.
Since $Q_{i,\emptyset}$ does not have two different points in the same $\mathbf{G}(A)$-orbit, the same holds for $\overline{Q_i}$.
Moreover, since $\mathrm{pr}(Q_{i,\emptyset})$ is topologically and combinatorially isomorphic to $Q_{i,\emptyset}$, Statement~\ref{item 86b} follows.
Now, assume, by contradiction, that $\mathrm{pr}(\overline{Q_i})\cap \mathrm{pr}(\overline{Q_j}) \neq \emptyset$.
Then, there exists $x_1 \in \overline{Q_{i}}$, $x_2 \in \overline{Q_{j}}$ and $g_1, g_2 \in \mathbf{G}(A)$ such that $g_1 \cdot x_1= g_2 \cdot x_2$.
Since $x_1 \in Q_{i,\emptyset}$ and $x_2 \in Q_{j,\emptyset}$ we deduce from Corollary~\ref{cor conclusion without special hyp} that $ (g_2^{-1} g_1) \cdot \partial_{\infty}(Q_{i,\emptyset}) =\partial_{\infty}(Q_{j,\emptyset})$.
Then, by definition of $C_{\emptyset}$, we deduce that $i=j$.
Thus, Statement~\ref{item 86c} follows.

In that follows we define $\mathcal{Z}$, we prove that it is closed connected and we check Statements~\ref{item 86a} and \ref{item 86d}.

\textbf{First step:} Let us denote by $\mathcal{Y}'$ the space $\mathcal{X}_k \smallsetminus \bigcup \lbrace g \cdot Q_{i} : i \in \mathrm{I}_{\emptyset}, g \in \mathbf{G}(A) \rbrace$.
We claim that $\mathcal{Y}'$ is closed, $\mathbf{G}(A)$-stable and that $\overline{Q_{i}} \cap \mathcal{Y}'$ is a stared space.

Since the sector chambers in $\mathcal{X}_k$ are open, the space $\mathcal{Y}'$ is closed.
Moreover, note that, since the union of the sector chambers $g \cdot Q_{i}$, for $i \in \mathrm{I}_{\emptyset}$ and $g \in \mathbf{G}(A)$, is the union of a certain number of $\mathbf{G}(A)$-orbits of sector chambers, we have that $\mathcal{Y}'$ is $\mathbf{G}(A)$-stable.
For each $i \in \mathrm{I}_{\emptyset}$, let us write $Q_{i}$ as $Q(x_i,D_i)$.
Note that $\overline{Q_{i}} \cap \mathcal{Y}'$ is contained in $\overline{Q_{i}} \cap (\mathcal{X} \smallsetminus Q_{i, \emptyset}) = \overline{Q_{i}} \smallsetminus Q_{i}$.

Now, we prove that $\overline{Q_{i}} \cap \mathcal{Y}'$ is a stared space.
Let $y$ be a point in $ \overline{Q_{i}} \cap \mathcal{Y}'$ and let $[x_i,y]= \lbrace ty+(1-t)x_i: t \in [0,1] \rbrace$ be the geodesic path joining $y$ with $x_i$. 
We have to prove that $[x_i,y]$ is contained in $\overline{Q_{i}} \cap \mathcal{Y}'$. 
Assume, by contradiction, that $[x_i,y]$ is not contained in $\overline{Q_{i}} \cap \mathcal{Y}'$.
Then, since $[x_i,y] \subset \overline{Q(x_i,D_i)}$, there exists a point $w \in [x_i,y]$ that is not contained in $\mathcal{Y}'$.
In other words, there exists a point $w$ that is contained in $ g \cdot Q_{j}$, for some $g \in \mathbf{G}(A)$ and some $j \in \mathrm{I}_{\emptyset}$.
Thus, it follows from Lemma~\ref{int with the closure} that $Q_{i}=Q_{j}$ and $g \in \mathrm{Stab}_{\mathbf{G}(A)}( D_{i})$.
In particular $g \cdot Q_{j}=Q(z,D_i)$, for some point $z \in \mathcal{X}_k$, and then $w \in Q(z,D_i)$.
If $w = y$, then $y \in Q(z,D_i)$, which contradicts the choice of $y$.
Otherwise, since $w \in [x_i,y]$, there is $\lambda > 0$ such that $y - w = \lambda (y - x_i) \in \mathbb{A}_i$, where $\mathbb{A}_i$ is an apartment containing $Q(x_i,D_i)$.
Since $y - x_i \in D_i$, we have that $y-w \in D_i$, whence $y \in Q(w,D_i)$.
Therefore, in an apartment containing $Q(z,D_i) \supset Q(w,D_i)$, we have that $y \in Q(z,D_i)$ since $w \in Q(z,D_i)$.
We deduce that $y \notin \mathcal{Y}'$, which contradicts the choice of $y$.
Thus, we conclude that $\overline{Q_{i}} \cap \mathcal{Y}'$ is stared.

\textbf{Second step:} Let us define $\mathcal{Z}'$ as the image of $\mathcal{Y}'$ in $\mathrm{pr}(\mathcal{X}_k)$.
We claim that $\mathcal{Z}'$ is closed and that $\mathcal{Z}'\cap \mathrm{pr}(\overline{Q_i})$ is connected.

Since $\mathrm{pr}: \mathcal{X}_k \to \mathbf{G}(A) \backslash \mathcal{X}_k$ is an open continuous map, it follows from the first step that $\mathrm{pr}(\mathcal{Y}'^c)$ is an open set of $\mathrm{pr}(\mathcal{X})$.
Since $\mathrm{pr}$ is surjective, we have that $\mathrm{pr}(\mathcal{Y}')^c \subseteq \mathrm{pr}(\mathcal{Y}'^c)$.
Now, let $y \in \mathrm{pr}(\mathcal{Y}'^c)$. Then $y=\mathrm{pr}(x)$, where $x \notin \mathcal{Y}'$. If $y=\mathrm{pr}(z)$, with $z \in \mathcal{Y}'$, then $x$ and $z$ belong to the same $\mathbf{G}(A)$-orbit.
Then, since $\mathcal{Y}'$ is $\mathbf{G}(A)$-stable, we deduce that $x,z \in \mathcal{Y}'$, which is impossible.
Thus $y \in \mathrm{pr}(\mathcal{Y}')^c$, whence we conclude that $\mathrm{pr}(\mathcal{Y}')^c = \mathrm{pr}(\mathcal{Y}'^c)$.
Hence, we get that $\mathcal{Z}'=\mathrm{pr}(\mathcal{Y}')$ is a closed set of $\mathrm{pr}(\mathcal{X})$.

Now, we prove that, for any $i \in \mathrm{I}_{\emptyset}$, we have that the intersection $\mathcal{Z}' \cap \mathrm{pr}(\overline{Q_{i}})=\mathrm{pr}(\mathcal{Y}') \cap \mathrm{pr}(\overline{Q_{i}})$ is connected.
Note that $y \in \mathrm{pr}(\mathcal{Y}') \cap \mathrm{pr}(\overline{Q_{i}})$ exactly when $y=\mathrm{pr}(x)$, where $x= g_1 \cdot x_1$ and $x= g_2 \cdot x_2$, for some $x_1 \in \mathcal{Y}'$, $x_2 \in \overline{Q_{i}}$ and $g_1, g_2 \in \mathbf{G}(A)$. Since $\mathrm{pr}(g_1^{-1} \cdot x)=\mathrm{pr}(x)$, up to replacing $x$ by another representative, we assume that $g_1$ is trivial.
Thus, we get that
\[ \mathrm{pr}(\mathcal{Y}') \cap \mathrm{pr}(\overline{Q_{i}})=\mathrm{pr}\big(\bigcup_{g \in \mathbf{G}(A)} \mathcal{Y}' \cap g \cdot \overline{Q_{i}} \big)=\bigcup_{g \in \mathbf{G}(A)} \mathrm{pr}\big(\mathcal{Y}' \cap g \cdot\overline{Q_{i}} \big).\]
Since $\mathcal{Y}'$ is $\mathbf{G}(A)$-invariant, we have that
\[\mathrm{pr}\big(\mathcal{Y}' \cap g \cdot \overline{Q_{i}} \big)= \mathrm{pr}\big(g^{-1} \cdot \mathcal{Y}' \cap \overline{Q_{i}} \big) = \mathrm{pr}\big(\mathcal{Y}' \cap \overline{Q_{i}} \big),\]
which implies that $\mathrm{pr}(\mathcal{Y}') \cap \mathrm{pr}(\overline{Q_{i}})=\mathrm{pr}\big(\mathcal{Y}' \cap \overline{Q_{i}} \big)$. Then, since $\mathrm{pr}$ is continuous and the first step shows that $\mathcal{Y'} \cap \overline{ Q_{i}}$ is connected, we have that $\mathrm{pr}\big(\mathcal{Y}' \cap  \overline{Q_{i}} \big)$ is also connected. This proves that $\mathcal{Z}'\cap \mathrm{pr}(\overline{Q_i})$ is connected.

\textbf{Third step:} For any $i \in \mathrm{I}_{\emptyset}$, we claim that $\mathcal{Z}' \cap \left( \mathrm{pr}(\overline{Q_{i}})) \smallsetminus \mathrm{pr}(Q_{i})) \right) \neq \emptyset$.

First, assume that $\mathcal{Y}' \cap (\overline{Q_{i}} \smallsetminus Q_i )$ is empty. Then $\overline{Q_{i}} \smallsetminus Q_i \subseteq \mathcal{Y}'^c$.
Let $x_i$ be the tip of $Q_i$, which is contained in $\overline{Q_{i}} \smallsetminus Q_i $.
Then $x_i \in g \cdot Q_j$, for some $g \in \mathbf{G}(A)$ and some $j \in \mathrm{I}_{\emptyset}$.
In particular $\overline{Q_i} \cap g \cdot Q_j \neq \emptyset$.
Hence, it follows from Lemma~\ref{int with the closure} that $Q_{i}=Q_{j}$.
Thus, we get that $x_i \in g \cdot Q_i$. In other words $x_i= g\cdot y_i$, for some $y_i \in Q_i$.
Since $\overline{Q}_i$ does not have two different points in the same $\mathbf{G}(A)$-orbit and $x_i \neq y_i$, we obtain a contradiction.
Thus, we conclude that $\mathcal{Y}' \cap (\overline{Q_{i}} \smallsetminus Q_i )$ is not empty.

Now, let $x \in \mathcal{Y}' \cap (\overline{Q_{i}} \smallsetminus Q_i )$.
Then, we have that $\mathrm{pr}(x)$ belongs to $\mathcal{Z}' \cap \left( \mathrm{pr}(\overline{Q_{i}}) \smallsetminus \mathrm{pr}(Q_{i}) \right)$.
Indeed, $\mathrm{pr}(x) \in \mathcal{Z}' \cap \mathrm{pr}(\overline{Q}_i)$.
Moreover, if $\mathrm{pr}(x)\in \mathrm{pr}(Q_i)$, then $x$ is in the same $\mathbf{G}(A)$-orbit as that of a point $y \in Q_i$, whence $x=y$, since $\overline{Q}_i$ does not have two different points in the same $\mathbf{G}(A)$-orbit.
This contradicts the choice of $x$.
Thus, the element $\mathrm{pr}(x)$ belongs to $\mathcal{Z}' \cap \left( \mathrm{pr}(\overline{Q_{i}}) \smallsetminus \mathrm{pr}(Q_{i}) \right)$. In particular, it is not empty.


\textbf{Fourth step:} Set $\mathcal{Z}:=\mathcal{Z'} \cup \bigcup \lbrace \mathrm{pr}(\overline{Q_i}) \smallsetminus \mathrm{pr}(Q_i) : i \in \mathrm{I}_{\emptyset} \rbrace$. We claim that $\mathcal{Z}$ satisfies Statements~\ref{item 86a} and~\ref{item 86d}.

Firstly, we check Statement~\ref{item 86a}. Since $\mathcal{X}= \mathcal{Y}' \cup \bigcup \lbrace g \cdot \overline{Q_{i}}: i \in \mathrm{I}_{\emptyset}, g \in \mathbf{G}(A) \rbrace$, we have that
\begin{equation}\label{eq pr(x), Z and cusps}
\mathrm{pr}(\mathcal{X})=\mathcal{Z}' \cup \bigcup \lbrace  \mathrm{pr}(\overline{Q_{i}}): i \in \mathrm{I}_{\emptyset}\rbrace.
\end{equation}
In particular, we deduce that $\mathrm{pr}(\mathcal{X})=\mathcal{Z} \cup \bigcup \lbrace  \mathrm{pr}(\overline{Q_{i}}): i \in \mathrm{I}_{\emptyset}\rbrace$, since $\mathcal{Z}' \subseteq \mathcal{Z}$. 

Secondly, we prove Statement~\ref{item 86d}.
For each $i \in \mathrm{I}_{\emptyset}$, we have that
\[ \mathrm{pr}(\overline{Q_i}) \cap  \mathcal{Z} = \mathrm{pr}(\overline{Q_i}) \cap \left( \mathcal{Z'} \cup \bigcup \lbrace \mathrm{pr}(\overline{Q_j}) \smallsetminus \mathrm{pr}(Q_j) : j \in \mathrm{I}_{\emptyset} \rbrace \right).\]
Then
$$\mathrm{pr}(\overline{Q_i}) \cap  \mathcal{Z}= \left(\mathrm{pr}(\overline{Q_i}) \cap  \mathcal{Z}' \right)  \cup  \left( \bigcup \lbrace \mathrm{pr}(\overline{Q_i}) \cap \left( \mathrm{pr}(\overline{Q_j}) \smallsetminus \mathrm{pr}(Q_j) \right) : j \in \mathrm{I}_{\emptyset} \rbrace \right),$$
which equals $\left( \mathrm{pr}(\overline{Q_i}) \cap  \mathcal{Z}' \right) \cup \left( \mathrm{pr}(\overline{Q_i}) \smallsetminus \mathrm{pr}(Q_i) \right)$, since $\mathrm{pr}(\overline{Q_j}) \cap \mathrm{pr}(\overline{Q_i})=\emptyset$ if $i \neq j$.
Note that $\left( \mathrm{pr}(\overline{Q_i}) \cap  \mathcal{Z}' \right) \cap \left( \mathrm{pr}(\overline{Q_i}) \smallsetminus \mathrm{pr}(Q_i) \right)$ equals $\mathcal{Z}' \cap \left( \mathrm{pr}(\overline{Q_{i}}) \smallsetminus \mathrm{pr}(Q_{i}) \right)$. In particular, the third step shows that $\left( \mathrm{pr}(\overline{Q_i}) \cap  \mathcal{Z}' \right) \cap \left( \mathrm{pr}(\overline{Q_i}) \smallsetminus \mathrm{pr}(Q_i) \right)$ is not empty.
Therefore, since $\left( \mathrm{pr}(\overline{Q_i}) \cap  \mathcal{Z}' \right)$ and $\left( \mathrm{pr}(\overline{Q_i}) \smallsetminus \mathrm{pr}(Q_i) \right)$ are connected, Statement~\ref{item 86d} follows.

\textbf{Fifth step:} The space $\mathcal{Z}$ is closed and connected.

First, we check that $\mathcal{Z}$ is connected.
Since $\mathcal{X}$ is connected and $\mathrm{pr}$ is continuous, we have that $\mathrm{pr}(\mathcal{X})$ is connected.
Note that the closure $\overline{Q}$ of any sector chamber $Q$ can be retracted onto its boundary, i.e.~on $\overline{Q} \smallsetminus Q$.
Hence, the space $\mathcal{W}$ obtaining by retracting each $\mathrm{pr}(\overline{Q_{i}}) \cong \overline{Q_{i}}$ onto its boundary is connected.
Applying this retraction in each term of the union in Equation~\eqref{eq pr(x), Z and cusps}, we obtain that \[ \mathcal{W}= \left( \mathcal{Z}' \smallsetminus \bigcup \lbrace  \mathcal{Z}' \cap \mathrm{pr}(Q_{j}): j \in \mathrm{I}_{\emptyset}\rbrace \right) \cup \bigcup \lbrace \mathrm{pr}(\overline{Q_j}) \smallsetminus \mathrm{pr}(Q_j): j \in \mathrm{I}_{\emptyset} \rbrace.\]
Let $\mathcal{W}_1=\left( \mathcal{Z}' \smallsetminus \bigcup \lbrace  \mathcal{Z}' \cap \mathrm{pr}(Q_{j}): j \in \mathrm{I}_{\emptyset}\rbrace \right)$ and $\mathcal{W}_2=\bigcup \lbrace \mathrm{pr}(\overline{Q_j}) \smallsetminus \mathrm{pr}(Q_j): j \in \mathrm{I}_{\emptyset} \rbrace$.
Recall that, by second step, any space $\mathcal{Z}' \cap \mathrm{pr}(\overline{Q_i})$ is connected. Moreover, note that $\mathcal{W} \cap \left( \mathcal{Z}' \cap \mathrm{pr}(\overline{Q_i})\right)$ equals the union of
$$ \mathcal{W}_1 \cap \left( \mathcal{Z}' \cap \mathrm{pr}(\overline{Q_i})\right)= \left(\mathcal{Z}' \cap \mathrm{pr}(\overline{Q_i}) \right) \smallsetminus \bigcup \lbrace  \mathcal{Z}' \cap \mathrm{pr}(Q_{j}): j \in \mathrm{I}_{\emptyset}\rbrace$$ 
with 
$$ \mathcal{W}_2 \cap \left( \mathcal{Z}' \cap \mathrm{pr}(\overline{Q_i})\right)=\bigcup \lbrace \left(\mathcal{Z}'\cap \mathrm{pr}(\overline{Q_i}) \right) \cap \left( \mathrm{pr}(\overline{Q_j})  \smallsetminus \mathrm{pr}(Q_j) \right): j \in \mathrm{I}_{\emptyset} \rbrace.$$
Then, since $\mathrm{pr}(\overline{Q_i}) \cap \mathrm{pr}(\overline{Q_j})=\emptyset$, if $i \neq j$, we have that
\[ \mathcal{W}_1 \cap \left( \mathcal{Z}' \cap \mathrm{pr}(\overline{Q_i})\right)= \mathcal{Z}' \cap \left( \mathrm{pr}(\overline{Q_{i}}) \smallsetminus \mathrm{pr}(Q_{i}) \right)=\mathcal{W}_2 \cap \left( \mathcal{Z}' \cap \mathrm{pr}(\overline{Q_i})\right).\]
Thus, we conclude that $\mathcal{W} \cap \left( \mathcal{Z}' \cap \mathrm{pr}(\overline{Q_i})\right)=\mathcal{Z}' \cap \left( \mathrm{pr}(\overline{Q_{i}}) \smallsetminus \mathrm{pr}(Q_{i}) \right)$. 
In particular, the third step implies that $\mathcal{W} \cap \left( \mathcal{Z}' \cap \mathrm{pr}(\overline{Q_i})\right)$ is not empty. 
Hence, since $\mathcal{W}$ and each $\mathcal{Z}' \cap \mathrm{pr}(\overline{Q_i})$ are connected, we conclude that $\mathcal{W} \cup \bigcup \lbrace \mathcal{Z}' \cap \mathrm{pr}(\overline{Q_i}) : i \in \mathrm{I}_{\emptyset} \rbrace$ is connected. 
But, by definition $\mathcal{W} \cup \bigcup \lbrace \mathcal{Z}' \cap \mathrm{pr}(\overline{Q_i}) : i \in \mathrm{I}_{\emptyset} \rbrace$ equals the union of $\mathcal{W}_1=\left( \mathcal{Z}' \smallsetminus \bigcup \lbrace  \mathcal{Z}' \cap \mathrm{pr}(Q_{j}): j \in \mathrm{I}_{\emptyset}\rbrace \right)$ with $\bigcup \lbrace \mathcal{Z}' \cap \mathrm{pr}(\overline{Q_i}) : i \in \mathrm{I}_{\emptyset} \rbrace$ and with $\mathcal{W}_2=\bigcup \lbrace \mathrm{pr}(\overline{Q_j}) \smallsetminus \mathrm{pr}(Q_j): j \in \mathrm{I}_{\emptyset} \rbrace.$ 
Therefore, since
$$\mathcal{Z}'= \left( \mathcal{Z}' \smallsetminus \bigcup \lbrace  \mathcal{Z}' \cap \mathrm{pr}(Q_{j}): j \in \mathrm{I}_{\emptyset}\rbrace \right) \cup \bigcup \lbrace \mathcal{Z}' \cap \mathrm{pr}(\overline{Q_i}) : i \in \mathrm{I}_{\emptyset} \rbrace,$$
we conclude that 
$$\mathcal{Z}=\mathcal{Z'} \cup \bigcup \lbrace \mathrm{pr}(\overline{Q_i}) \smallsetminus \mathrm{pr}(Q_i) : i \in \mathrm{I}_{\emptyset} \rbrace=\mathcal{W} \cup \bigcup \lbrace \mathcal{Z}' \cap \mathrm{pr}(\overline{Q_i}) : i \in \mathrm{I}_{\emptyset} \rbrace,$$
is connected.

We finally prove that $\mathcal{Z}$ is closed. Since the second step shows that $\mathcal{Z}'$ is closed, we just need to prove that $\mathcal{Q}:=\bigcup \lbrace \mathrm{pr}(\overline{Q_i}) \smallsetminus \mathrm{pr}(Q_i) : i \in \mathrm{I}_{\emptyset} \rbrace$ is closed. Let $(x_n)_{n=1}^{\infty}$ be a sequence in $\mathcal{Q}$ that converges to $x \in \mathrm{pr}(\mathcal{X})$.
In particular $(x_n)_{n=1}^{\infty}$ is a Cauchy sequence. 
Then, since $\mathrm{pr}(\overline{Q}_i) \subset \mathrm{pr}(Q_{i,\emptyset})$, where $\mathrm{pr}(Q_{i,\emptyset})$ is open, and since $\mathrm{pr}(Q_{i,\emptyset}) \cap \mathrm{pr}( Q_{j,\emptyset})=\emptyset$, if $i \neq j$, we have that there exists $i \in \mathrm{I}_{\emptyset}$ and $N \in \mathbb{Z}_{\geq 0}$ such that $x_n \in \mathrm{pr}(\overline{Q}_i) \smallsetminus \mathrm{pr}(Q_i)$, for all $n \geq N$. 
Since $ \mathrm{pr}(\overline{Q}_i) \smallsetminus \mathrm{pr}(Q_i) \cong \overline{Q}_i \smallsetminus Q_i $, which is a complete space, we deduce that $x \in \mathrm{pr}(\overline{Q}_i) \smallsetminus \mathrm{pr}(Q_i) \subseteq \mathcal{Q}$.
Thus, we conclude that $\mathcal{Z}$ is closed. 
\end{proof}

\begin{remark}

Note that, in Theorem~\ref{theorem quotient and cuspidal rational sector chambers}, the subset $\mathcal{Y}' \subset \mathcal{X}_k$ that we considered to build $\mathcal{Z}'$ and $\mathcal{Z}$ is not necessarily connected, even if so is $\mathcal{Z}$.

Furthermore, the connected set $\mathcal{Z}$ contains subsets that can be lifted in non-zero sector faces: it is far to deserve finite or compact properties.
In Theorem~\ref{main teo 2 new}, despite it is not stated, one can expect that $\mathcal{Y}$ is, in fact, finite whenever $\mathbb{F}$ is finite and, up to replacing the $Q_{i,\Theta}$ for $i \in \mathrm{I}_\Theta$ by $Q_{j,\Theta} \times C_j$ for some well-chosen finite CW-complexes $C_j$, one can expect to reduce the set of parameters $i\in \mathrm{I}_\Theta$ by some finite set of parameters $j \in \mathrm{J}_\Theta$.

\end{remark}

\section{Number of orbits of cuspidal rational sector faces}\label{sec number cusp faces}

In the previous section, we introduce an equivalence relation on the set of cuspidal rational sector chambers (c.f.~Equation~\eqref{eq rel on cup sec cham}).
We also fix a set $C_\emptyset$ of representatives for this equivalence relation.
In this section, we describe $C_{\emptyset}$ thanks to the \'etale cohomology.
Indeed, we show that $C_{\emptyset}$ is parametrized by the double coset $\mathbf{G}(A) \backslash \mathbf{G}(k) /\mathbf{B}(k)$, which is in bijection with $\mathrm{Pic}(A)^\mathbf{t},$ when $\mathbf{G}_k$ is semisimple and simply connected.
In order to establish this correspondence, we start by a general approach, studying the family of double cosets of the form $\mathbf{G}(A) \backslash \mathbf{G}(k) /\mathbf{P}_{\Theta}(k)$, where $\mathbf{P}_{\Theta}$ is the standard parabolic subgroup defined from $\Theta \subset \Delta$.

\subsection{Double cosets of the form \texorpdfstring{$\mathbf{G}(A) \backslash \mathbf{G}(k) /\mathbf{P}_{\Theta}(k)$}{X}.}

Recall that, according to \cite[Exp.~XXVI, Prop. 1.6]{SGA3-3}, any standard parabolic subgroup $\mathbf{P}_\Theta$ admits a Levi decomposition, i.e.~it can be written as the semi-direct product of a reductive group $\mathbf{L}_\Theta$ and its unipotent radical $\mathbf{U}_\Theta$.

\begin{proposition}\label{prop number of cusps equals some kernell}
Let $h : H^1_{\text{\'et}}\big(\mathrm{Spec}(A),\mathbf{L}_{\Theta}\big) \to H^1_{\text{\'et}}\big(\mathrm{Spec}(A),\mathbf{G}\big)$ be the map defined from the natural exact sequence $1 \to \mathbf{L}_{\Theta} \to \mathbf{G} \to \mathbf{G}/\mathbf{L}_{\Theta} \to 1$. There exists a bijective map from $\mathbf{G}(A) \backslash \mathbf{G}(k) /\mathbf{P}_{\Theta}(k)$ to $\mathrm{ker}(h)$.
\end{proposition}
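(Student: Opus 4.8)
The strategy is to run the standard non-abelian cohomology argument that computes orbits of rational points in a quotient variety, specialized to the scheme $\mathrm{Spec}(A)$ and the quotient $\mathbf{G}/\mathbf{L}_\Theta$. First I would recall, via the long exact sequence of pointed sets attached to $1 \to \mathbf{L}_\Theta \to \mathbf{G} \to \mathbf{G}/\mathbf{L}_\Theta \to 1$ over $\mathrm{Spec}(A)$ (as in \cite[§ 4, 4.6]{DG}), that the orbit set $\mathbf{G}(A) \backslash (\mathbf{G}/\mathbf{L}_\Theta)(A)$ injects into $H^1_{\text{\'et}}(\mathrm{Spec}(A),\mathbf{L}_\Theta)$ with image exactly $\ker(h)$, where $h : H^1_{\text{\'et}}(\mathrm{Spec}(A),\mathbf{L}_\Theta) \to H^1_{\text{\'et}}(\mathrm{Spec}(A),\mathbf{G})$; here one uses that $\mathbf{G}$ is a Chevalley group over $\mathbb{Z}$, hence smooth affine, so that the quotient $\mathbf{G}/\mathbf{L}_\Theta$ is representable and the sequence of sheaves is exact for the étale topology. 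So it suffices to produce a bijection between $\mathbf{G}(A) \backslash \mathbf{G}(k)/\mathbf{P}_\Theta(k)$ and $\mathbf{G}(A) \backslash (\mathbf{G}/\mathbf{L}_\Theta)(A)$.

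The key intermediate step is therefore to identify $(\mathbf{G}/\mathbf{L}_\Theta)(A)$ with $\mathbf{G}(k)/\mathbf{P}_\Theta(k)$ in a $\mathbf{G}(A)$-equivariant way. The plan is: over the field $k$, the Levi decomposition $\mathbf{P}_\Theta = \mathbf{L}_\Theta \ltimes \mathbf{U}_\Theta$ and the fact that $\mathbf{U}_\Theta$ is a split unipotent $k$-group give $H^1_{\text{\'et}}(\mathrm{Spec}(k),\mathbf{U}_\Theta) = \{*\}$ and $H^1_{\text{\'et}}(\mathrm{Spec}(k),\mathbf{P}_\Theta) = H^1_{\text{\'et}}(\mathrm{Spec}(k),\mathbf{L}_\Theta)$; the same vanishing of $H^1_{\text{\'et}}$ of a split unipotent group scheme over the Dedekind base $\mathrm{Spec}(A)$ gives $H^1_{\text{\'et}}(\mathrm{Spec}(A),\mathbf{P}_\Theta) = H^1_{\text{\'et}}(\mathrm{Spec}(A),\mathbf{L}_\Theta)$. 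Now I would use that the projective homogeneous scheme $\mathbf{G}/\mathbf{P}_\Theta$ is proper over $\mathbb{Z}$, so by the valuative criterion each $A$-point extends uniquely from its generic fibre: the natural map $(\mathbf{G}/\mathbf{P}_\Theta)(A) \to (\mathbf{G}/\mathbf{P}_\Theta)(k)$ is a bijection. Combining the exact sequence for $1 \to \mathbf{P}_\Theta \to \mathbf{G} \to \mathbf{G}/\mathbf{P}_\Theta \to 1$ over $\mathrm{Spec}(A)$ and over $\mathrm{Spec}(k)$ — using Lemma~\ref{visual limit lemma} at the generic point to see $\mathbf{G}(k)/\mathbf{P}_\Theta(k) = (\mathbf{G}/\mathbf{P}_\Theta)(k)$, since $H^1_{\text{\'et}}(\mathrm{Spec}(k),\mathbf{P}_\Theta) = H^1_{\text{\'et}}(\mathrm{Spec}(k),\mathbf{T})=0$ by Hilbert 90 — and the identification $\mathbf{G}/\mathbf{P}_\Theta \cong \mathbf{G}/\mathbf{L}_\Theta$ (same quotient since $\mathbf{U}_\Theta$ is the unipotent radical; alternatively $\mathbf{G}/\mathbf{L}_\Theta$ fibers over $\mathbf{G}/\mathbf{P}_\Theta$ with affine-space fibres, so induces a bijection on $A$- and $k$-points by properness plus unipotent vanishing), one threads these maps together to get a chain of $\mathbf{G}(A)$-equivariant bijections $(\mathbf{G}/\mathbf{L}_\Theta)(A) \simeq (\mathbf{G}/\mathbf{P}_\Theta)(A) \simeq (\mathbf{G}/\mathbf{P}_\Theta)(k) \simeq \mathbf{G}(k)/\mathbf{P}_\Theta(k)$. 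Passing to $\mathbf{G}(A)$-orbits on each side and invoking the first paragraph yields the desired bijection $\mathbf{G}(A)\backslash\mathbf{G}(k)/\mathbf{P}_\Theta(k) \simeq \ker(h)$.

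The main obstacle I anticipate is bookkeeping the compatibility of the various coboundary maps and making sure the identification of the $\mathbf{G}(A)$-orbit structure is transported correctly: concretely, showing that two $A$-points of $\mathbf{G}/\mathbf{L}_\Theta$ (equivalently, two $k$-points of $\mathbf{G}/\mathbf{P}_\Theta$) lie in the same $\mathbf{G}(A)$-orbit if and only if the associated $\mathbf{L}_\Theta$-torsors over $\mathrm{Spec}(A)$ are isomorphic, and that this is exactly the condition of lying in $\ker(h)$ rather than merely mapping to the same class in $H^1_{\text{\'et}}(\mathrm{Spec}(A),\mathbf{G})$. This requires the standard twisting argument: the fibre of the map $(\mathbf{G}/\mathbf{L}_\Theta)(A) \to H^1_{\text{\'et}}(\mathrm{Spec}(A),\mathbf{L}_\Theta)$ over a class $[\xi]$ is either empty (if $h([\xi])\neq *$) or a single $\mathbf{G}(A)$-orbit (if $h([\xi]) = *$), which is precisely what pins down $\ker(h)$ as the orbit set. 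A secondary point requiring a little care is justifying $H^1_{\text{\'et}}(\mathrm{Spec}(A), \mathbf{U}_\Theta) = \{*\}$: since $\mathbf{U}_\Theta$ is a split unipotent $\mathbb{Z}$-group it admits a filtration with successive quotients $\mathbb{G}_{a}$, and $H^1_{\text{\'et}}(\mathrm{Spec}(A),\mathbb{G}_a) = H^1_{\mathrm{Zar}}(\mathrm{Spec}(A),\mathcal{O}) = 0$ as $\mathrm{Spec}(A)$ is affine, so one concludes by dévissage along the filtration.
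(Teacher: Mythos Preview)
Your overall strategy matches the paper's: use the non-abelian long exact sequence from \cite[§4, 4.6]{DG}, the valuative criterion of properness for $\mathbf{G}/\mathbf{P}_\Theta$ to get $(\mathbf{G}/\mathbf{P}_\Theta)(A)=(\mathbf{G}/\mathbf{P}_\Theta)(k)$, the identity $(\mathbf{G}/\mathbf{P}_\Theta)(k)=\mathbf{G}(k)/\mathbf{P}_\Theta(k)$, and the unipotent vanishing $H^1(\mathrm{Spec}(A),\mathbf{P}_\Theta)=H^1(\mathrm{Spec}(A),\mathbf{L}_\Theta)$ (the paper quotes \cite[Exp.~XXVI, Cor.~2.3]{SGA3-3} for this last step rather than running the dévissage by hand).

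There is, however, a genuine error in how you thread the argument. You work with the sequence $1\to\mathbf{L}_\Theta\to\mathbf{G}\to\mathbf{G}/\mathbf{L}_\Theta\to 1$ and then claim that $(\mathbf{G}/\mathbf{L}_\Theta)(A)\simeq(\mathbf{G}/\mathbf{P}_\Theta)(A)$ as sets, writing ``the identification $\mathbf{G}/\mathbf{P}_\Theta\cong\mathbf{G}/\mathbf{L}_\Theta$ (same quotient \dots; alternatively $\mathbf{G}/\mathbf{L}_\Theta$ fibers over $\mathbf{G}/\mathbf{P}_\Theta$ with affine-space fibres, so induces a bijection on $A$- and $k$-points)''. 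Both claims are false: $\mathbf{G}/\mathbf{L}_\Theta$ and $\mathbf{G}/\mathbf{P}_\Theta$ are not isomorphic schemes (they differ in dimension by $\dim\mathbf{U}_\Theta$), and the projection $(\mathbf{G}/\mathbf{L}_\Theta)(A)\to(\mathbf{G}/\mathbf{P}_\Theta)(A)$ is surjective but not injective, since each fibre is a torsor under a unipotent group with many $A$-points. What \emph{is} true is that this map induces a bijection on $\mathbf{G}(A)$-orbits, precisely because both orbit sets are identified with the same kernel via $H^1(\mathrm{Spec}(A),\mathbf{L}_\Theta)\cong H^1(\mathrm{Spec}(A),\mathbf{P}_\Theta)$; but you have not argued this, and your stated chain of bijections does not hold before passing to orbits.

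The cleanest fix is exactly what the paper does: run the long exact sequence for $1\to\mathbf{P}_\Theta\to\mathbf{G}\to\mathbf{G}/\mathbf{P}_\Theta\to 1$ over $\mathrm{Spec}(A)$, obtaining $\mathbf{G}(A)\backslash(\mathbf{G}/\mathbf{P}_\Theta)(A)\cong\ker\big(H^1(\mathrm{Spec}(A),\mathbf{P}_\Theta)\to H^1(\mathrm{Spec}(A),\mathbf{G})\big)$, and only then identify $H^1(\mathrm{Spec}(A),\mathbf{P}_\Theta)$ with $H^1(\mathrm{Spec}(A),\mathbf{L}_\Theta)$. This avoids the affine quotient $\mathbf{G}/\mathbf{L}_\Theta$ entirely and lets properness of $\mathbf{G}/\mathbf{P}_\Theta$ do all the work at the level of points.
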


\begin{proof}
Firstly, we show that $(\mathbf{G}/\mathbf{P}_{\Theta})(A)= (\mathbf{G}/\mathbf{P}_{\Theta})(k)$ by using a classical argument via the valuative criterion for properness and patching (c.f.~\cite[Prop. 1.6, \S 4]{Liu})
Set $x \in (\mathbf{G}/\mathbf{P}_{\Theta})(k)$.
By the valuative criterion of properness, for any prime ideal $\p'$ of $A$ there exists a unique element $x_{\p'} \in (\mathbf{G}/\mathbf{P}_{\Theta})(A_{\p'})$ such that $x=\operatorname{Res}(x_{\p'})$.
Since the functor of points $\mathfrak{h}_V$ of $V=\mathbf{G}/\mathbf{P}_{\Theta}$ is faithfully flat, we can suppose that $x_{\p'} \in V(A_{f_{\p'}})$ where $f_{\p'} \notin \p'$. Moreover, $\operatorname{Spec}(A)$ can be covered by a finite set $\lbrace\operatorname{Spec}(A_{f_{\p'_i}})\rbrace_{i=1}^n$.
Hence, by a patching argument, we find $\underline{x} \in V(A)$ such that $x_{\p'_i}=\operatorname{Res}(\overline{x})$, and then $x=\operatorname{Res}(\overline{x})$.
This element is unique by local considerations. Thus, we conclude the claim.

Now, let us consider the exact sequence of algebraic varieties
$$
1 \rightarrow \mathbf{P}_{\Theta} \xrightarrow{\iota} \mathbf{G} \xrightarrow{p} \mathbf{G}/\mathbf{P}_{\Theta} \rightarrow 1.
$$
It follows from \cite[\S 4, 4.6]{DG} that there exists a long exact sequence
\[
1 \to \mathbf{P}_{\Theta}(A) \rightarrow \mathbf{G}(A) \rightarrow (\mathbf{G}/\mathbf{P}_{\Theta})(A) \rightarrow H^1_{\text{\'et}}\big(\operatorname{Spec}(A), \mathbf{P}_{\Theta}\big) \xrightarrow{} H^1_{\text{\'et}}\big(\operatorname{Spec}(A),\mathbf{G}\big).
\]
Moreover, it follows from \cite[Exp.~XXVI, Cor. 2.3]{SGA3-3} that \[H^1_{\text{fppf}}\big(\operatorname{Spec}(A), \mathbf{P}_{\Theta}\big)= H^1_{\text{fppf}}\big(\operatorname{Spec}(A), \mathbf{L}_{\Theta}\big).\]
But, since $\mathbf{P}_{\Theta}$ and $\mathbf{L}_{\Theta}$ are both smooth over $\mathrm{Spec}(A)$, we have that \[H^1_{\text{fppf}}\big(\operatorname{Spec}(A),\mathbf{P}_{\Theta}\big) = H^1_{\text{\'et}}\big(\operatorname{Spec}(A), \mathbf{P}_{\Theta}\big), H^1_{\text{fppf}}\big(\operatorname{Spec}(A), \mathbf{L}_{\Theta}\big)=H^1_{\text{\'et}}\big(\operatorname{Spec}(A), \mathbf{L}_{\Theta}\big).\]
This implies that $H^1_{\text{\'et}}\big(\operatorname{Spec}(A), \mathbf{P}_{\Theta}\big)= H^1_{\text{\'et}}\big(\operatorname{Spec}(A), \mathbf{L}_{\Theta}\big)$. Thus, there exists a long exact sequence
$$
1 \rightarrow \mathbf{P}_{\Theta}(A) \rightarrow \mathbf{G}(A) \rightarrow (\mathbf{G}/\mathbf{P}_{\Theta})(k) \rightarrow H^1_{\text{\'et}}\big(\operatorname{Spec}(A), \mathbf{L}_{\Theta}\big) \rightarrow H^1_{\text{\'et}}\big(\operatorname{Spec}(A),\mathbf{G}\big).
$$
This implies that
$$\ker \left( H^1_{\text{\'et}}(\operatorname{Spec}(A), \mathbf{L}_{\Theta}) \to H^1_{\text{\'et}}(\operatorname{Spec}(A),\mathbf{G})\right) \cong \mathbf{G}(A) \backslash (\mathbf{G}/\mathbf{P}_{\Theta})(k).$$
According to~\cite[4.13(a)]{BoTi}, $(\mathbf{G}/\mathbf{P}_{\Theta})(k) = \mathbf{G}(k)/\mathbf{P}_{\Theta}(k)$, whence the result follows.
\end{proof}

\begin{corollary}\label{corollary cusps number in terms of pic}
Assume that the split reductive $k$-group $\mathbf{G}_k$ is semisimple and simply connected.
Denote by $\mathfrak{t}$ its semisimple rank which is the dimension of its maximal split torus $\mathbf{T}$.
Then, there is a one-to-one correspondence between $\mathbf{G}(A) \backslash \mathbf{G}(k) /\mathbf{B}(k)$ and $\operatorname{Pic}(A)^\mathfrak{t}$.
\end{corollary}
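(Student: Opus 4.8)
The plan is to deduce Corollary~\ref{corollary cusps number in terms of pic} directly from Proposition~\ref{prop number of cusps equals some kernell} applied to the Borel subgroup $\mathbf{B} = \mathbf{P}_\emptyset$, whose Levi factor is the maximal torus $\mathbf{T}$. Thus, by Proposition~\ref{prop number of cusps equals some kernell}, there is a bijection
\[
\mathbf{G}(A) \backslash \mathbf{G}(k) / \mathbf{B}(k) \;\longleftrightarrow\; \ker\left( H^1_{\text{\'et}}\big(\operatorname{Spec}(A),\mathbf{T}\big) \to H^1_{\text{\'et}}\big(\operatorname{Spec}(A),\mathbf{G}\big)\right),
\]
so it remains to identify this kernel with $\operatorname{Pic}(A)^\mathfrak{t}$. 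First I would use the hypothesis that $\mathbf{G}_k$ is split, semisimple and simply connected: by the remark following \S~\ref{intro Chevalley groups} (citing \cite[Exp.~XXII, 4.3.8]{SGA3-3}), the $\mathbb{Z}$-torus $\mathbf{T}_\mathbb{Z}$ is isomorphic to $\big(\mathbb{G}_{m,\operatorname{Spec}\mathbb{Z}}\big)^\mathfrak{t}$, hence $\mathbf{T} \cong \mathbb{G}_{m,A}^\mathfrak{t}$ over $\operatorname{Spec}(A)$. Therefore, since étale cohomology commutes with finite products,
\[
H^1_{\text{\'et}}\big(\operatorname{Spec}(A),\mathbf{T}\big) \cong H^1_{\text{\'et}}\big(\operatorname{Spec}(A),\mathbb{G}_m\big)^\mathfrak{t} \cong \operatorname{Pic}(A)^\mathfrak{t},
\]
the last isomorphism being the classical identification of $H^1_{\text{\'et}}(\operatorname{Spec}(A),\mathbb{G}_m)$ with $\operatorname{Pic}(A)$ (Hilbert 90 / descent of line bundles).

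The remaining step is to show that the map $h : \operatorname{Pic}(A)^\mathfrak{t} \cong H^1_{\text{\'et}}\big(\operatorname{Spec}(A),\mathbf{T}\big) \to H^1_{\text{\'et}}\big(\operatorname{Spec}(A),\mathbf{G}\big)$ is trivial, i.e.~has image the single distinguished class, so that $\ker(h)$ is all of $\operatorname{Pic}(A)^\mathfrak{t}$. This is where I expect the main work to lie. The natural strategy is to factor $h$ through $H^1_{\text{\'et}}\big(\operatorname{Spec}(A),\mathbf{B}\big)$, which equals $H^1_{\text{\'et}}\big(\operatorname{Spec}(A),\mathbf{T}\big)$ by \cite[Exp.~XXVI, Cor.~2.3]{SGA3-3} (as already used in the proof of Proposition~\ref{prop number of cusps equals some kernell}), and then to invoke the fact that for a split semisimple simply connected group scheme over a Dedekind domain, every $\mathbf{G}$-torsor that is reducible to the Borel is trivial; concretely, the point is that a reduction of structure group of a $\mathbf{G}$-torsor to $\mathbf{T} \cong \mathbb{G}_m^\mathfrak{t}$ gives, via the cocharacter lattice and the simply-connectedness (so that the coroots form a basis of $\Chi_*(\mathbf{T})$), a way to write the class as a product of classes each pushed forward from an $\mathrm{SL}_2$ (or from $\mathbb{G}_m \hookrightarrow \mathrm{SL}_2$ via a coroot), and $H^1_{\text{\'et}}(\operatorname{Spec}(A),\mathrm{SL}_2)$ vanishes over a Dedekind domain (indeed over any ring, $\mathrm{SL}_n$-torsors classify rank-$n$ vector bundles with trivialized determinant, and over a Dedekind ring the projectivity forces freeness after twisting). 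I would make this precise by using that $\mathbf{G}$ simply connected is generated by the root subgroups $\mathbf{U}_{\pm\alpha}$, each pair together with the coroot $\alpha^\vee(\mathbb{G}_m)$ generating a copy of $\mathrm{SL}_2$, and the composite $\mathbb{G}_m \xrightarrow{\alpha^\vee} \mathbf{T} \hookrightarrow \mathbf{G}$ factors through this $\mathrm{SL}_2$.

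Putting these together: $h$ is the composite $\operatorname{Pic}(A)^\mathfrak{t} \to \bigoplus_{\alpha \in \Delta} H^1_{\text{\'et}}(\operatorname{Spec}(A),\mathbf{G}_\alpha)$ followed by the maps induced by $\mathbf{G}_\alpha \hookrightarrow \mathbf{G}$, where $\mathbf{G}_\alpha \cong \mathrm{SL}_2$; since $H^1_{\text{\'et}}(\operatorname{Spec}(A),\mathrm{SL}_2) = \{*\}$, each factor dies, so $h$ is trivial and $\ker(h) = \operatorname{Pic}(A)^\mathfrak{t}$. Combining with the bijection from Proposition~\ref{prop number of cusps equals some kernell} gives the one-to-one correspondence $\mathbf{G}(A) \backslash \mathbf{G}(k) / \mathbf{B}(k) \leftrightarrow \operatorname{Pic}(A)^\mathfrak{t}$, as claimed. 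The one subtlety I would be careful about is ensuring that the cocharacter lattice identification uses the simply-connected hypothesis correctly — it is exactly here that $\Chi_*(\mathbf{T})$ being spanned by the coroots (equivalently $\mathbf{T}_\mathbb{Z} \cong \mathbb{G}_m^\mathfrak{t}$ with the coroots as a basis) lets one split the torus compatibly with the $\mathrm{SL}_2$'s; for non-simply-connected $\mathbf{G}$ this argument breaks, consistent with the fact that Theorem~\ref{main theorem 3} only claims the $\operatorname{Pic}(A)^\mathbf{t}$ description in the simply connected case.
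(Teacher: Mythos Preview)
Your setup is correct and matches the paper: apply Proposition~\ref{prop number of cusps equals some kernell} with $\Theta=\emptyset$ so that $\mathbf{L}_\emptyset=\mathbf{T}$, obtain the bijection with $\ker(h)$, and identify $H^1_{\text{\'et}}(\operatorname{Spec}(A),\mathbf{T})\cong\operatorname{Pic}(A)^{\mathfrak t}$ via $\mathbf{T}_\mathbb{Z}\cong\mathbb{G}_m^{\mathfrak t}$ and Hilbert~90.

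The gap is in your argument that $h$ is trivial. You assert that $h$ factors as
\[
\operatorname{Pic}(A)^{\mathfrak t}\;\longrightarrow\;\bigoplus_{\alpha\in\Delta} H^1_{\text{\'et}}\big(\operatorname{Spec}(A),\mathbf{G}_\alpha\big)\;\longrightarrow\;H^1_{\text{\'et}}\big(\operatorname{Spec}(A),\mathbf{G}\big),
\]
but no such factorization exists: the rank-one subgroups $\mathbf{G}_\alpha\cong\mathrm{SL}_2$ do not commute in $\mathbf{G}$, so there is no homomorphism $\prod_\alpha\mathbf{G}_\alpha\to\mathbf{G}$ and hence no second arrow. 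What your coroot argument does show is that for each $\alpha\in\Delta$ the composite $\mathbb{G}_m\xrightarrow{\alpha^\vee}\mathbf{T}\hookrightarrow\mathbf{G}$ factors through $\mathbf{G}_\alpha$, so the restriction of $h$ to the $\alpha$-th coordinate axis $\operatorname{Pic}(A)\hookrightarrow\operatorname{Pic}(A)^{\mathfrak t}$ is trivial. But $H^1_{\text{\'et}}(\operatorname{Spec}(A),\mathbf{G})$ is only a pointed set and $h$ is not a homomorphism, so triviality on each axis says nothing about a general class $[P]=[P_1]+\cdots+[P_{\mathfrak t}]$ in the abelian group $H^1(\mathbf{T})$: even if every $h([P_i])$ is the base point, there is no mechanism to conclude that $h([P])$ is.

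The paper closes this gap by a different route. Since line bundles are Zariski-locally trivial, $H^1_{\text{Zar}}(\operatorname{Spec}(A),\mathbf{T})=H^1_{\text{\'et}}(\operatorname{Spec}(A),\mathbf{T})$, so the image of $h$ already lands in $H^1_{\text{Zar}}(\operatorname{Spec}(A),\mathbf{G})$. One then invokes Harder's theorem \cite[Th.~2.2.1 and Cor.~2.3.2]{H1}: for a semisimple simply connected group scheme over a Dedekind ring, $H^1_{\text{Zar}}(\operatorname{Spec}(A),\mathbf{G})$ is trivial. This is the substantive input --- it subsumes your $\mathrm{SL}_2$ observation but is genuinely stronger --- and is where the simply connected hypothesis is actually consumed.
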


\begin{proof}

Note that, when $\Theta=\emptyset$, we have $\mathbf{P}_{\Theta}=\mathbf{B}$, whence $\mathbf{L}_{\Theta}=\mathbf{T}$.
Consider the map $h_0 : H^1_{\text{\'et}}\big(\mathrm{Spec}(A),\mathbf{T}\big) \to H^1_{\text{\'et}}\big(\mathrm{Spec}(A),\mathbf{G}\big)$ defined from the exact sequence of $k$-varieties $1 \to \mathbf{T} \to \mathbf{G} \to \mathbf{G}/\mathbf{T} \to 1$.
Then, it follows from Proposition~\ref{prop number of cusps equals some kernell} that there is a one-to-one correspondence between $\mathbf{G}(A) \backslash \mathbf{G}(k) /\mathbf{B}(k)$ and $\mathrm{ker}(h_0)$.

Since $\mathbf{T}$ is split over $\mathbb{Z}$, we have that $\mathbf{T} \cong \mathbb{G}_{m,\mathbb{Z}}^{\mathbf{t}}$. It follows from Hilbert's Theorem~90 (c.f.~\cite[Ch. III, Prop. 4.9]{Mi}) that 
\[H^1_{\text{Zar}}\big(\operatorname{Spec}(A), \mathbb{G}_m\big) = H^1_{\text{\'et}}\big(\operatorname{Spec}(A), \mathbb{G}_m\big) \cong \operatorname{Pic}(A).\]
Thus, we get that
$$H^1_{\text{Zar}}\big(\operatorname{Spec}(A), \mathbf{T}\big)= H^1_{\text{\'et}}\big(\operatorname{Spec}(A), \mathbf{T}\big) \cong \operatorname{Pic}(A)^\mathbf{t}.$$
Since $h_0\big(H^1_{\text{Zar}}(\operatorname{Spec}(A), \mathbf{T})\big) \subseteq H^1_{\text{Zar}}\big(\operatorname{Spec}(A), \mathbf{G}\big)$ and $H^1_{\text{Zar}}\big(\operatorname{Spec}(A), \mathbf{T}\big)$ equals $ H^1_{\text{\'et}}\big(\operatorname{Spec}(A), \mathbf{T}\big)$, we get that
\[
\mathrm{ker}(h_0) 
=\ker \left( H^1_{\text{Zar}}\big(\operatorname{Spec}(A), \mathbf{T}\big)  \to H^1_{\text{Zar}}\big(\operatorname{Spec}(A),\mathbf{G}\big)\right).
\]
Moreover, since $\mathbf{G}_k$ is a simply connected semisimple $k$-group scheme, according to~\cite[Th. 2.2.1 and Cor. 2.3.2]{H1}, $H^1_{\text{Zar}}(\operatorname{Spec}(A),\mathbf{G})$ is trivial, for any integral model $\mathbf{G}$ of $\mathbf{G}_k$.
We conclude that
$$\ker(h_0) = H^1_{\text{Zar}}\big(\operatorname{Spec}(A),\mathbf{T}\big) \cong  \operatorname{Pic}(A)^\mathbf{t},$$
whence the result follows.
\end{proof}

\begin{corollary}\label{coro finite kernell in cohomology}
Assume that $\mathbb{F}$ is finite. Then, for each $\Theta \subset \Delta$, the kernel of the map $h_{\Theta}: H^1_{\text{\'et}}(\mathrm{Spec}(A), \mathbf{L}_{\Theta}) \to H^1_{\text{\'et}}(\mathrm{Spec}(A),\mathbf{G})$ is finite.
\end{corollary}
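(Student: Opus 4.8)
The plan is to deduce Corollary~\ref{coro finite kernell in cohomology} from Proposition~\ref{prop number of cusps equals some kernell} together with a finiteness input on class groups in the function field setting. By Proposition~\ref{prop number of cusps equals some kernell}, there is a bijection between $\ker(h_\Theta)$ and the double coset space $\mathbf{G}(A) \backslash \mathbf{G}(k) / \mathbf{P}_\Theta(k)$, so it suffices to prove that this double coset space is finite when $\mathbb{F}$ is finite. The natural surjection $\mathbf{G}(A) \backslash \mathbf{G}(k) / \mathbf{P}_\Theta(k) \twoheadrightarrow \mathbf{G}(A) \backslash \mathbf{G}(k) / \mathbf{B}(k)$ (since $\mathbf{B} \subseteq \mathbf{P}_\Theta$) shows it is enough to bound the fibers and the base; in fact it is cleaner to bound $\mathbf{G}(A) \backslash \mathbf{G}(k) / \mathbf{B}(k)$ directly, because $\ker(h_\emptyset)$ surjects onto $\ker(h_\Theta)$ via the inclusion $\mathbf{T} \hookrightarrow \mathbf{L}_\Theta$ through the functoriality of the pointed sets $H^1_{\text{\'et}}$ applied to $\mathbf{T} \hookrightarrow \mathbf{L}_\Theta \hookrightarrow \mathbf{G}$: indeed $h_\emptyset$ factors as $H^1_{\text{\'et}}(\mathrm{Spec}(A),\mathbf{T}) \to H^1_{\text{\'et}}(\mathrm{Spec}(A),\mathbf{L}_\Theta) \to H^1_{\text{\'et}}(\mathrm{Spec}(A),\mathbf{G})$, and the intermediate map sends $\ker(h_\emptyset)$ into $\ker(h_\Theta)$; the surjectivity onto $\ker(h_\Theta)$ follows because over $\mathbf{L}_\Theta$ the torus $\mathbf{T}$ is a maximal torus, so a reduction-of-structure-group argument (or the analogue of Proposition~\ref{prop number of cusps equals some kernell} applied inside $\mathbf{L}_\Theta$, noting that $\mathbf{L}_\Theta$-torsors trivial over $\mathrm{Frac}(A)$ reduce to $\mathbf{B}_{\mathbf{L}_\Theta}$-torsors hence to $\mathbf{T}$-torsors exactly as in Lemma~\ref{visual limit lemma}) shows every element of $\ker(h_\Theta)$ lifts. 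Hence $\#\ker(h_\Theta) \leq \#\ker(h_\emptyset)$.

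It then remains to show $\ker(h_\emptyset)$ is finite. By the argument in the proof of Corollary~\ref{corollary cusps number in terms of pic}, using $\mathbf{T} \cong \mathbb{G}_{m,\mathbb{Z}}^{\mathbf{t}}$ and Hilbert~90, we have $H^1_{\text{\'et}}(\mathrm{Spec}(A),\mathbf{T}) \cong \operatorname{Pic}(A)^{\mathbf{t}}$, so $\ker(h_\emptyset)$ is a subset of $\operatorname{Pic}(A)^{\mathbf{t}}$. Thus the statement reduces to the classical fact that $\operatorname{Pic}(A)$ is finite when $\mathbb{F}$ is a finite field: $A = \mathcal{O}_{\{\p\}}$ is the ring of functions on $\mathcal{C}$ regular outside $\p$, and $\operatorname{Pic}(A)$ is a quotient of the degree-zero part of the divisor class group $\operatorname{Pic}^0(\mathcal{C})$ of the smooth projective curve $\mathcal{C}$ over $\mathbb{F}$ (one has the exact sequence relating $\operatorname{Pic}(\mathcal{C})$, $\operatorname{Pic}(A)$ and the free group on $\p$), and $\operatorname{Pic}^0(\mathcal{C})$ is finite for $\mathbb{F}$ finite by the theory of the Jacobian over finite fields, equivalently by the finiteness of the class number of a global function field (see e.g.~\cite[V.1]{Stichtenoth}). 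Therefore $\operatorname{Pic}(A)$ is finite, hence so is $\operatorname{Pic}(A)^{\mathbf{t}}$, hence so is $\ker(h_\emptyset)$, and finally so is $\ker(h_\Theta)$.

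The only subtle point — and the one I would present most carefully — is the comparison $\#\ker(h_\Theta) \leq \#\ker(h_\emptyset)$, i.e.~the claim that every $\mathbf{L}_\Theta$-torsor over $\mathrm{Spec}(A)$ that becomes trivial over $\mathbf{G}$ (i.e.~lies in $\ker(h_\Theta)$) is induced from a $\mathbf{T}$-torsor in $\ker(h_\emptyset)$. A clean way to see this is the double-coset description: by the same patching plus valuative-criterion argument as in Proposition~\ref{prop number of cusps equals some kernell}, one has $\ker(h_\Theta) \leftrightarrow \mathbf{G}(A)\backslash\mathbf{G}(k)/\mathbf{P}_\Theta(k)$ and $\ker(h_\emptyset) \leftrightarrow \mathbf{G}(A)\backslash\mathbf{G}(k)/\mathbf{B}(k)$, and the obvious map $\mathbf{G}(A)\backslash\mathbf{G}(k)/\mathbf{B}(k) \to \mathbf{G}(A)\backslash\mathbf{G}(k)/\mathbf{P}_\Theta(k)$ is surjective since $\mathbf{B}(k) \subseteq \mathbf{P}_\Theta(k)$. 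This already gives $\#\ker(h_\Theta) \leq \#(\mathbf{G}(A)\backslash\mathbf{G}(k)/\mathbf{B}(k)) = \#\ker(h_\emptyset)$ without needing any torsor-reduction lemma at all, so I would route the argument entirely through the double cosets of Proposition~\ref{prop number of cusps equals some kernell}, which makes the proof essentially two lines once the finiteness of $\operatorname{Pic}(A)$ is invoked. I do not expect any genuine obstacle; the main thing to get right is citing the right form of the class-number finiteness and being explicit that the reduction from $\mathbf{P}_\Theta$ to $\mathbf{B}$ uses only the containment of subgroups.
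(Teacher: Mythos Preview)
Your proposal is correct and, once you route through the double cosets in the final paragraph, it is exactly the paper's argument: use Proposition~\ref{prop number of cusps equals some kernell} to identify $\ker(h_\Theta)$ with $\mathbf{G}(A)\backslash\mathbf{G}(k)/\mathbf{P}_\Theta(k)$, use $\mathbf{B}(k)\subseteq\mathbf{P}_\Theta(k)$ to get a surjection from $\mathbf{G}(A)\backslash\mathbf{G}(k)/\mathbf{B}(k)$, and invoke finiteness of $\operatorname{Pic}(A)$ (Weil) to bound the latter.

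Two minor remarks. First, the arrow in your opening sentence is reversed: the surjection goes from the $\mathbf{B}$-double coset to the $\mathbf{P}_\Theta$-double coset, not the other way; you silently fix this in your third paragraph. Second, the torsor-reduction detour in the middle is unnecessary, as you yourself conclude; drop it. In fact your observation that $\ker(h_\emptyset)\subseteq H^1_{\text{\'et}}(\mathrm{Spec}(A),\mathbf{T})\cong\operatorname{Pic}(A)^{\mathbf{t}}$ suffices for finiteness is marginally cleaner than the paper, which at this step cites Corollary~\ref{corollary cusps number in terms of pic} (hence implicitly the simply connected hypothesis) to obtain the same bound.
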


\begin{proof}
Assume that $\mathbb{F}$ is finite. Then, it follows from Weyl Theorem (c.f.~\cite[Ch. II, \S 2.2]{S}) that $\mathrm{Pic}(A)$ is finite. Hence, Corollary~\ref{corollary cusps number in terms of pic} shows that $\mathbf{G}(A) \backslash \mathbf{G}(k) /\mathbf{B}(k)$ is finite. Moreover, since, for each $\Theta \subset \Delta$, we have $\mathbf{B}(k) \subseteq \mathbf{P}_{\Theta}(k)$, we deduce that $\mathbf{G}(A) \backslash \mathbf{G}(k) /\mathbf{P}_{\Theta}(k)$ is finite. Thus, the result follows from Proposition~\ref{prop number of cusps equals some kernell}.
\end{proof}

\subsection{Number of non-equivalent cuspidal rational sector chambers}

In this section we count the number of orbits of cuspidal rational sector chambers defined by Equation~\eqref{eq rel on cup sec cham}. In other words, we count the cardinality of $C_{\emptyset}$, or equivalently the cardinality of $C$ given by Theorem~\ref{theorem quotient and cuspidal rational sector chambers}, or $\mathrm{I}_{\emptyset}$ given by Theorem~\ref{main teo 2 new}.
 
\begin{proposition}\label{lemma number of cusps}
 The set $\partial_\infty C_\emptyset := \{ \partial_\infty Q,\ Q \in C_\emptyset \}$ is a set of representatives of the $\mathbf{G}(A)$-orbits of the set of chambers of the spherical building at infinity $\partial_{\infty}(\mathcal{X}_k)$.
In particular, there exists a one-to-one correspondence between $C_{\emptyset}$ and the double coset $ \mathbf{G}(A) \backslash  \mathbf{G}(k)/\mathbf{B}(k)$.
\end{proposition}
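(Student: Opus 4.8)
The statement splits into two claims: first, that $\partial_\infty C_\emptyset$ is a complete, irredundant set of representatives for the $\mathbf{G}(A)$-action on the chambers of $\partial_\infty \mathcal{X}_k$; second, that $C_\emptyset$ is in bijection with $\mathbf{G}(A) \backslash \mathbf{G}(k) / \mathbf{B}(k)$. The plan is to prove the first claim directly from the constructions already made in \S\ref{section structure orbit space}, and then deduce the second from Lemma~\ref{visual limit lemma}.

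For the first claim I would proceed as follows. \emph{Surjectivity of orbits.} Let $\partial_\infty Q$ be an arbitrary chamber of $\partial_\infty \mathcal{X}_k$, where $Q$ is a $k$-sector chamber. By Proposition~\ref{prop starAction} and Corollary~\ref{cor image of sector faces}, $Q$ contains a subsector chamber $Q''$ that is a cuspidal rational sector chamber: indeed, properties~\ref{cusp folding} and~\ref{cusp spreading} are exactly what these results provide for a subsector chamber (this was already observed in the paragraph preceding Theorem~\ref{main teo 2 new}). Since $Q'' \subseteq Q$, we have $\partial_\infty Q'' = \partial_\infty Q$, and since $C_\emptyset$ is a system of representatives for the equivalence relation~\eqref{eq rel on cup sec cham}, there is $Q_i \in C_\emptyset$ and $g \in \mathbf{G}(A)$ with $g \cdot \partial_\infty Q_i = \partial_\infty Q'' = \partial_\infty Q$. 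This shows every chamber at infinity is $\mathbf{G}(A)$-equivalent to some $\partial_\infty Q_i$. \emph{Irredundancy.} Suppose $Q_i, Q_j \in C_\emptyset$ and $g \in \mathbf{G}(A)$ satisfy $g \cdot \partial_\infty Q_i = \partial_\infty Q_j$. Then $Q_i \sim_{\mathbf{G}(A)} Q_j$ by the very definition~\eqref{eq rel on cup sec cham} of the equivalence relation, and since $C_\emptyset$ is a \emph{system of representatives}, $Q_i = Q_j$. Hence the map $Q \mapsto \partial_\infty Q$ on $C_\emptyset$ induces a bijection from $C_\emptyset$ onto the set of $\mathbf{G}(A)$-orbits of chambers of $\partial_\infty \mathcal{X}_k$.

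For the second claim, recall from \S\ref{intro visual boundary} (and the paragraph after Lemma~\ref{visual limit lemma}) that the chambers of the spherical building $\partial_\infty \mathcal{X}_k$ are exactly the visual boundaries $\partial_\infty Q$ of $k$-sector chambers $Q$, i.e.\ the set $\partial_\infty \Sec(\mathcal{X}_k)$. By Lemma~\ref{visual limit lemma}, there is a $\mathbf{G}(k)$-equivariant bijection $\partial_\infty \Sec(\mathcal{X}_k) \xrightarrow{\sim} \mathbf{G}(k)/\mathbf{B}(k)$, where $\mathbf{G}(k)$ acts on the left on both sides. Passing to $\mathbf{G}(A)$-orbits, this bijection descends to a bijection between the set of $\mathbf{G}(A)$-orbits of chambers of $\partial_\infty \mathcal{X}_k$ and the double coset space $\mathbf{G}(A) \backslash \mathbf{G}(k)/\mathbf{B}(k)$. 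Composing with the bijection from $C_\emptyset$ onto the set of such orbits established in the first claim gives the desired one-to-one correspondence $C_\emptyset \leftrightarrow \mathbf{G}(A) \backslash \mathbf{G}(k)/\mathbf{B}(k)$.

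The argument is essentially bookkeeping: the real content has already been extracted into Proposition~\ref{prop starAction}, Corollary~\ref{cor image of sector faces}, and Lemma~\ref{visual limit lemma}. The only point requiring a little care — and the step I would flag as the most delicate — is ensuring that the construction of $C_\emptyset$ (with the refinement to subsector chambers imposed via Corollary~\ref{cor conclusion without special hyp} in the paragraph before Theorem~\ref{main teo 2 new}, so that property~\eqref{cusp visual boundary} holds) is compatible with both directions of the orbit bijection; in particular that shrinking a cuspidal sector chamber to a subsector chamber never changes its visual boundary, which is immediate, but must be invoked to match $\partial_\infty Q''$ with $\partial_\infty Q_i$ in the surjectivity step. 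Everything else is a direct translation through the equivalence relation~\eqref{eq rel on cup sec cham} and the equivariant bijection of Lemma~\ref{visual limit lemma}.
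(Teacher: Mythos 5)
Your proof is correct and follows essentially the same route as the paper's: surjectivity via Proposition~\ref{prop starAction} and Corollary~\ref{cor image of sector faces} (every $k$-sector chamber contains a cuspidal rational subsector with the same visual boundary), irredundancy from the definition of $C_\emptyset$ as a system of representatives for~\eqref{eq rel on cup sec cham}, and the second claim by descending the $\mathbf{G}(k)$-equivariant bijection of Lemma~\ref{visual limit lemma} to $\mathbf{G}(A)$-orbits. Your write-up is in fact slightly more careful than the paper's (which leaves irredundancy and the equivariance of $\phi$ implicit), but the content is the same.
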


\begin{proof}
Let $C_{\emptyset}^1$ be the set of cuspidal rational sector chambers in $\mathcal{X}_k$. By definition, there are one-to-one correspondence between the sets $C_{\emptyset}$, $C_{\emptyset}^1/\sim_{\mathbf{G}(A)}$ and $\partial_\infty C_\emptyset$
(c.f.~Equation~\eqref{eq rel on cup sec cham}).
It follows from Proposition~\ref{prop starAction} and Corollary~\ref{cor image of sector faces} that given an arbitrary $k$-sector chamber, it contains a subsector chamber that is a cuspidal rational sector chamber. This implies that $\partial_{\infty}(C_{\emptyset}^1)$ covers all the chambers in $\partial_{\infty}(\mathcal{X}_k)$, whence the first statement follows.

Now, since Lemma~\ref{visual limit lemma} shows that the set of chambers $\partial_\infty \Sec(\mathcal{X}_k)$ in $\partial_{\infty}(\mathcal{X}_k)$ is in one-to-one correspondence with $\mathbf{G}(k) / \mathbf{B}(k)$, we conclude that there exists a bijection between $C_{\emptyset}$ and the double coset $ \mathbf{G}(A) \backslash  \mathbf{G}(k)/\mathbf{B}(k)$.
\end{proof}

\begin{corollary}\label{cor cusps and H1}
There exists a bijective map between $C_{\emptyset}$ and the kernel of the map $h: H^1_{\text{\'et}}(\mathrm{Spec}(A), \mathbf{T}) \to H^1_{\text{\'et}}(\mathrm{Spec}(A),\mathbf{G})$.
\end{corollary}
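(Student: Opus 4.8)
\textbf{Proof plan for Corollary~\ref{cor cusps and H1}.}
The statement is an immediate chaining of results already available in the excerpt, so the plan is simply to identify the intermediate bijections and compose them. First I would invoke Proposition~\ref{lemma number of cusps}, which establishes a one-to-one correspondence between $C_\emptyset$ and the double coset space $\mathbf{G}(A) \backslash \mathbf{G}(k) / \mathbf{B}(k)$. Next, I would apply Proposition~\ref{prop number of cusps equals some kernell} in the particular case $\Theta = \emptyset$: here $\mathbf{P}_\emptyset = \mathbf{B}$, and the Levi decomposition of $\mathbf{B}$ has reductive part $\mathbf{L}_\emptyset = \mathbf{T}$ (the maximal torus) with unipotent radical $\mathbf{U}^+$. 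Proposition~\ref{prop number of cusps equals some kernell} then yields a bijection from $\mathbf{G}(A) \backslash \mathbf{G}(k) / \mathbf{B}(k)$ onto $\ker(h)$ where $h : H^1_{\text{\'et}}(\mathrm{Spec}(A), \mathbf{T}) \to H^1_{\text{\'et}}(\mathrm{Spec}(A), \mathbf{G})$ is the connecting map of the exact sequence $1 \to \mathbf{T} \to \mathbf{G} \to \mathbf{G}/\mathbf{T} \to 1$. Composing these two bijections gives the desired map between $C_\emptyset$ and $\ker(h)$.

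The only point deserving a line of justification is the identification $\mathbf{L}_\emptyset = \mathbf{T}$, so that the ``$\mathbf{L}_\Theta$'' appearing in Proposition~\ref{prop number of cusps equals some kernell} becomes exactly the torus $\mathbf{T}$ figuring in the statement of the corollary. This follows from the Levi decomposition of parabolic subgroups recalled just before Proposition~\ref{prop number of cusps equals some kernell} (cf.~\cite[Exp.~XXVI, Prop.~1.6]{SGA3-3}): the standard Borel $\mathbf{B} = \mathbf{P}_\emptyset$ decomposes as $\mathbf{T} \ltimes \mathbf{U}^+$, and the reductive Levi factor of a Borel is a maximal torus. I would also note, as already used in the proof of Proposition~\ref{prop number of cusps equals some kernell}, that the map $h$ in the statement is precisely $h_\emptyset$ in the notation of that proposition, so there is no ambiguity.

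There is no real obstacle here: the corollary is a formal consequence of two already-proven propositions, and the short argument above is essentially the entire proof. The substantive content — the bijection between cuspidal rational sector chambers and a space of rational points, and then the cohomological interpretation via the valuative criterion of properness and the long exact sequence in non-abelian cohomology — is contained in Proposition~\ref{lemma number of cusps} and Proposition~\ref{prop number of cusps equals some kernell} respectively. Accordingly, the write-up will be two or three sentences: cite Proposition~\ref{lemma number of cusps} for the first bijection, specialize Proposition~\ref{prop number of cusps equals some kernell} to $\Theta = \emptyset$ (noting $\mathbf{L}_\emptyset = \mathbf{T}$) for the second, and compose.

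\begin{proof}
According to Proposition~\ref{lemma number of cusps}, there is a one-to-one correspondence between $C_{\emptyset}$ and the double coset $\mathbf{G}(A) \backslash \mathbf{G}(k) / \mathbf{B}(k)$.
On the other hand, taking $\Theta = \emptyset$ in Proposition~\ref{prop number of cusps equals some kernell}, we have $\mathbf{P}_{\emptyset} = \mathbf{B}$ and, by the Levi decomposition $\mathbf{B} = \mathbf{T} \ltimes \mathbf{U}^+$ (cf.~\cite[Exp.~XXVI, Prop.~1.6]{SGA3-3}), the reductive Levi factor is $\mathbf{L}_{\emptyset} = \mathbf{T}$.
Hence Proposition~\ref{prop number of cusps equals some kernell} provides a bijective map from $\mathbf{G}(A) \backslash \mathbf{G}(k) / \mathbf{B}(k)$ onto the kernel of $h : H^1_{\text{\'et}}(\mathrm{Spec}(A), \mathbf{T}) \to H^1_{\text{\'et}}(\mathrm{Spec}(A), \mathbf{G})$.
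Composing these two bijections yields a bijective map between $C_{\emptyset}$ and $\ker(h)$, as desired.
\end{proof}
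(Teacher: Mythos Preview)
Your proof is correct and follows exactly the paper's approach: the paper's proof is the single sentence ``It is an immediate consequence of Proposition~\ref{lemma number of cusps} and Proposition~\ref{prop number of cusps equals some kernell},'' and you have merely spelled out the specialization $\Theta = \emptyset$, $\mathbf{L}_\emptyset = \mathbf{T}$ (which the paper records in the proof of Corollary~\ref{corollary cusps number in terms of pic}).
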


\begin{proof}
It is an immediate consequence of Proposition~\ref{lemma number of cusps} and Proposition~\ref{prop number of cusps equals some kernell}.
\end{proof}

\begin{corollary}\label{coro cusps and pic}
Assume that $\mathbf{G}_k$ is semisimple and simply connected.
\begin{enumerate}
    \item There exists a one-to-one correspondence between the set $C_{\emptyset}$ and $\mathrm{Pic}(A)^{\mathbf{t}}$.
    \item In particular, $C_{\emptyset}$ is finite whenever $\mathbb{F}$ is a finite field.
\end{enumerate}
\end{corollary}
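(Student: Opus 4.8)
The final statement to prove is Corollary~\ref{coro cusps and pic}: assuming $\mathbf{G}_k$ is semisimple and simply connected, there is a one-to-one correspondence between $C_{\emptyset}$ and $\mathrm{Pic}(A)^{\mathbf{t}}$, and $C_{\emptyset}$ is finite when $\mathbb{F}$ is finite.

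The plan is to assemble three results already established in the excerpt. First I would invoke Corollary~\ref{cor cusps and H1}, which provides a bijection between $C_{\emptyset}$ and $\ker\big( h : H^1_{\text{\'et}}(\mathrm{Spec}(A),\mathbf{T}) \to H^1_{\text{\'et}}(\mathrm{Spec}(A),\mathbf{G})\big)$. Then I would apply Corollary~\ref{corollary cusps number in terms of pic}, whose proof already computes exactly this kernel: using that $\mathbf{T} \cong \mathbb{G}_{m,\mathbb{Z}}^{\mathbf{t}}$ (since $\mathbf{G}_k$ is simply connected, by \cite[Exp.~XXII, 4.3.8]{SGA3-3} as recalled in the remark of \S\ref{intro Chevalley groups}), Hilbert~90 gives $H^1_{\text{\'et}}(\mathrm{Spec}(A),\mathbf{T}) \cong \mathrm{Pic}(A)^{\mathbf{t}}$, and Harder's vanishing \cite[Th.~2.2.1, Cor.~2.3.2]{H1} gives $H^1_{\text{Zar}}(\mathrm{Spec}(A),\mathbf{G}) = \{*\}$, so the relevant kernel is all of $\mathrm{Pic}(A)^{\mathbf{t}}$. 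Composing the two bijections yields the desired one-to-one correspondence between $C_{\emptyset}$ and $\mathrm{Pic}(A)^{\mathbf{t}}$; in fact this is essentially the content of Corollary~\ref{corollary cusps number in terms of pic} transported along Corollary~\ref{cor cusps and H1}, so part~(1) is immediate.

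For part~(2), I would recall that when $\mathbb{F}$ is a finite field, $\mathrm{Pic}(A)$ is finite: this is the function-field analogue of the finiteness of the class number, which follows from Weil's theorem on the finiteness of the divisor class group of degree zero together with the fact that $A = \mathcal{O}_{\{\p\}}$ has $\mathrm{Pic}(A)$ a quotient of $\mathrm{Pic}^0(\mathcal{C})$ by the subgroup generated by the class of $\p$; this is exactly the argument cited as \cite[Ch.~II, \S2.2]{S} in the proof of Corollary~\ref{coro finite kernell in cohomology}. Hence $\mathrm{Pic}(A)^{\mathbf{t}}$ is finite, and by part~(1) so is $C_{\emptyset}$. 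Alternatively, one can cite Corollary~\ref{coro finite kernell in cohomology} directly (with $\Theta = \emptyset$) combined with Corollary~\ref{cor cusps and H1}.

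There is essentially no obstacle here: the statement is a formal corollary, and every ingredient — the bijection of Corollary~\ref{cor cusps and H1}, the triviality of $\mathbf{T}$ over $\mathbb{Z}$ in the simply connected case, Hilbert~90, and Harder's vanishing theorem — is already in place in the excerpt or cited there. The only point requiring a word of care is that one should make sure the $\mathbb{Z}$-model $\mathbf{G}$ of $\mathbf{G}_k$ is the one for which Harder's result applies, but this is no restriction since \cite[Th.~2.2.1, Cor.~2.3.2]{H1} holds for any integral model, as noted in the proof of Corollary~\ref{corollary cusps number in terms of pic}. So the proof is a two-line deduction, and I would present it simply as: combine Corollary~\ref{cor cusps and H1} with Corollary~\ref{corollary cusps number in terms of pic} for~(1), and for~(2) use the finiteness of $\mathrm{Pic}(A)$ when $\mathbb{F}$ is finite.
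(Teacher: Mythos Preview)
Your proposal is correct and matches the paper's proof essentially line for line: the paper derives part~(1) by combining Corollary~\ref{corollary cusps number in terms of pic} and Corollary~\ref{cor cusps and H1}, and part~(2) from the finiteness of $\mathrm{Pic}(A)$ via \cite[Ch.~II, \S 2.2]{S}. Your additional unpacking of the ingredients (Hilbert~90, Harder's vanishing, the split torus structure) is exactly what is inside the proof of Corollary~\ref{corollary cusps number in terms of pic}, so nothing is missing or different.
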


\begin{proof}
The first statement directly follows from Corollary~\ref{corollary cusps number in terms of pic} and Corollary~\ref{cor cusps and H1}.
If $\mathbb{F}$ is finite, then $\operatorname{Pic}(A)$ is finite according to Weyl Theorem (c.f.~\cite[Ch. II, \S 2.2]{S}).
Therefore, the number $\operatorname{Card}(C_\emptyset) = \operatorname{Card}\big( \operatorname{Pic}(A) \big)^\mathrm{t}$ of cuspidal rational sector chambers is finite.
\end{proof}

\begin{remark}
Note that, when $\Theta \neq \emptyset$, the double coset $\mathbf{G}(A) \backslash \mathbf{G}(k)/\mathbf{P}_{\Theta}(k)$ does not count the number of sector faces $Q_{i,\Theta}$ in Theorem~\ref{main teo 2 new}.
Indeed, in rank $\mathbf{t} \geqslant 2$, there always exists infinitely many sector faces (that are not sector chambers) whose the points are pairwise non-equivalent but with the same visual boundary (for instance a family of sector faces contained in a given cuspidal sector chamber).
This shows that, using the cohomological approach via double cosets, the unique context where we can count the non-equivalent cuspidal rational sector faces of a given type $\Theta$ is for sector chambers, which involves the use of the minimal parabolic subgroups, i.e.~Borel subgroups.
\end{remark}

\section*{Acknowledgements}
The authors thank P.~Gille for valuable comments on \'Etale cohomology. 
The authors thank G.~Lucchini~Arteche and L.~Arenas-Carmona for useful discussions.

The first author was partially supported by Anid-Conicyt by the Doctoral fellowship No $21180544$.
The second author was partially supported by the GeoLie project (ANR-15-CE40-0012, The French National Research Agency).


\bibliographystyle{amsalpha}
\bibliography{refs.bib}

\end{document}